\documentclass[11pt, reqno]{amsart}
\usepackage[dvipsnames]{xcolor}
\usepackage{subfiles}
\usepackage[utf8]{inputenc}
\usepackage{amscd,amsmath,amssymb,amsthm,yfonts,mathrsfs,hhline,tikz,dsfont,tikz-cd,mathabx,esvect,accents,dsfont}
\usepackage{wasysym}
\usetikzlibrary{arrows, decorations.markings, decorations.pathreplacing, patterns, calc, intersections, through}
\tikzset{>=latex}
\tikzcdset{arrow style=math font}
\usetikzlibrary{decorations.pathmorphing, intersections, shapes}

%%%%%%%%%%%%%%%

\usepackage[bookmarks=true, bookmarksopen=true,%
bookmarksdepth=3,bookmarksopenlevel=2,%
colorlinks=true,%
linkcolor=blue,%
citecolor=blue,%
filecolor=blue,%
menucolor=blue,%
urlcolor=blue]{hyperref}

%%%%%%%%%%%%%%%%%

\usepackage[centering,margin=1.2in]{geometry}
\usepackage[matrix,arrow,curve]{xy}
\relpenalty=10000
\global\binoppenalty=10000
\usepackage{todonotes,setspace} 
\usepackage{verbatim}
\usepackage{graphicx}
\usepackage[capitalise]{cleveref}
\DeclareMathAlphabet{\mathpzc}{OT1}{pzc}{m}{it}
% ===============================================================
% Theorems
% ===============================================================

\newtheorem{theorem}{Theorem}[section]
\newtheorem*{maintheorem}{Main Theorem}
\newtheorem{lemma}[theorem]{Lemma}
\newtheorem{convention}[theorem]{Convention}
\newtheorem{prop}[theorem]{Proposition}
\newtheorem{cor}[theorem]{Corollary}
\newtheorem{conjecture}[theorem]{Conjecture}

\theoremstyle{definition}
\newtheorem{defn}[theorem]{Definition}
\newtheorem{remark}[theorem]{Remark}
\newtheorem{example}[theorem]{Example}%[subsection]
\newtheorem{notation}[theorem]{Notation}

\numberwithin{equation}{section}

% ===============================================================
% Equations
% ===============================================================

\def\beq{\begin{equation}}
\def\eeq{\end{equation}}

% ===============================================================
% Arrows
% ===============================================================

\newcommand{\longra}{\longrightarrow}
\newcommand{\into}{\hookrightarrow}
\newcommand{\onto}{\twoheadrightarrow}
\newcommand{\hooklongrightarrow}{\lhook\joinrel\longrightarrow}

% ===============================================================
% Braces
% ===============================================================

\newcommand{\hr}[1]{\left(#1\right)} % round, aligned
\newcommand{\hm}[1]{\left|#1\right|} % modulo, aligned
 % norm, aligned
\newcommand{\ha}[1]{\left\langle#1\right\rangle} % angle, aligned
 % ceil, aligned
 % floor, aligned
\newcommand{\hs}[1]{\left[#1\right]} % square, aligned
\newcommand{\hc}[1]{\left\{#1\right\}} % calligraphic, aligned
 % round/sqaure, aligned
 % square/round, aligned

% ===============================================================
% Other
% ===============================================================

\def\le{\leqslant}
\def\ge{\geqslant}

% ===============================================================
% Local notations
% ===============================================================

\def\Ac{\mathcal A}
\def\Ad{\operatorname{Ad}}
\def\Agt{\mathfrak A}
\def\Aut{\operatorname{Aut}}

\def\b{\mathfrak b}

\def\bi{\mathbf{i}}
\def\bs{\boldsymbol}

\def\C{\mathbb C}
\def\Cc{\mathcal C}
\def\Ch{\operatorname{Ch}}
\def\Cl{\operatorname{Cl}}
\def\cluster{\mathcal B}

\def\Cut{\mathcal{C}}
\def\cut{\mathrm{cut}}

\def\decS{\mathbb S}
\def\dim{\operatorname{dim}}
\def\Dbb{\mathbb D}
\def\Dc{\mathcal D}
\def\Dcr{\mathscr D}

\def\Diffeo{\operatorname{Diffeo}}
\def\Dres{\Dc_{\mathrm{res}}}

\def\eff{\xi}
\def\eps{\varepsilon}

\def\Eff{\Xi}

\def\Fcr{\mathscr F}
\def\Frac{\operatorname{Frac}}

\def\gr{\operatorname{gr}}

\def\Gc{\mathcal G}
\def\Glue{\mathcal{G}}
\newcommand\Grm{\mathrm G}

\def\Hc{\mathcal H}

\def\i{\mathbf i}
\def\icox{\bi_{-c,c}}

\def\ifull{\bi_{\overline{w}_0,w_0}}
\def\ihalf{\bi_{\overline{w}_0,c}}

\def\isf{\bi_{\overline{w}_0,c^*,c}}
\def\isym{\bi_{\overline{w}_0,c^*}}
\def\Id{\operatorname{Id}}
\def\iiota{\iota}

\def\Kc{\mathcal K}

\def\la{\lambda}
\def\len{\mathrm{len}}

\def\loc{\mathrm{loc}}
\def\longra{\longrightarrow}
\def\La{\Lambda}
\def\Lbb{\mathbb L}
\def\Lbba{\Lbb^\Ac}
\def\Lbbx{\Lbb^\Xc}
\def\Lc{\mathcal L}

\newcommand\Mor{\operatorname{Mor}}

\def\Oc{\mathcal O}

\def\pt{\mathrm{pt}}
\def\Pbb{\mathbb P}
\def\Pc{\mathcal P}
\def\Prm{\mathrm P}
\newcommand\Pt{\operatorname{Pt}}
\newcommand\Ptoh{\widehat\Pt \vphantom{\Pt}^o}

\def\Q{\mathbb Q}
\def\Qc{\mathcal Q}
\def\Qcl{{\bs Q}}
\def\Qcone{Q_{\mathrm{cone}}^n}

\def\Qcyl{Q_{\mathrm{cyl}}^n}
\def\Qf{\bs{\mathrm Q}}
\def\Qsf{Q_{\mathrm{sf}}^n}
\def\Qop{\mathbf{Q}}
\def\rat{\mathrm{rat}}

\def\res{\operatorname{res}}

\def\Rc{\mathcal R}
\def\Res{\mathrm{Res}}

\def\sym{\mathrm{sym}}

\def\tri{\underline\Delta}

\def\Tc{\mathcal T}
\def\Tca{\Tc^\Ac}
\def\Tcx{\Tc^\Xc}
\def\Toda{\mathrm{Toda}}
\def\Uc{\mathcal U}

\def\Vcr{\mathscr V}

\def\Wbb{\mathbb{W}}
\def\Wc{\mathcal W}

\def\Xc{\mathcal X}

\def\yrm{\mathrm{y}}

\def\Z{\mathbb Z}

%%%%%% Graph and lattice notations

%%%% notation for elements of quasidual basis in definition of compatible pair: f clashes with faces of bipartite graph, maybe use \xi instead?

\def\quiver{Q}

% ===============================================================
% Notes
% ===============================================================

\setlength{\marginparwidth}{2cm}

% ===============================================================
% Colors
% ===============================================================

% ===============================================================

\title[Algebraic modular functor conjecture in type $A_n$ Teichm\"uller theory]{The algebraic modular functor conjecture in type $A_n$ quantum Teichm\"uller theory}

\author[Gus Schrader]{Gus Schrader}
\author[Alexander Shapiro]{Alexander Shapiro}
% {\tiny Department of Mathematics, Northwestern University}\\
% {\tiny Department of Mathematics, University of Edinburgh}\\
% \\
%}

\begin{document}

\maketitle

\begin{abstract}
Fock and Goncharov introduced a quantization of higher Teichm\"uller theory using cluster Poisson varieties and their noncommutative deformations, associating to a complex semisimple Lie group $G$ and a marked surface $S$ a quantum algebra $\Lbb_{G,S}$ equipped with an action of the surface mapping class group. They conjectured that these quantizations form an algebraic analog of a modular functor: cutting a surface along a simple closed curve should correspond to a canonical gluing isomorphism for the associated algebras. In this paper we prove this conjecture for $G =\mathrm{PGL}_{n+1}$. Our approach requires two extensions of the Fock–Goncharov framework: (1) enhanced moduli spaces incorporating additional boundary data, providing algebro-geometric analogs of Fenchel–Nielsen twist coordinates; and (2) the residue universal Laurent ring, a refinement of the quantum universal Laurent ring obtained by localizing and imposing residue conditions. Using these tools, we construct canonical cutting isomorphisms that are equivariant under mapping class group actions and suffice to reconstruct the entire algebra $\Lbb_{G,S}$ from data associated to the cut surface.
\end{abstract}

\setcounter{tocdepth}{1}
\tableofcontents

\section{Introduction}
%$$
%\mathbb T, \Gamma, \bs\Gamma, \Gabb, \mathds{L}, \Lbbx, \cluster
%$$

In a series of papers roughly two decades ago, Fock and Goncharov described a precise sense in which higher rank Teichm\"uller theory admits a \emph{quantization}.  Their approach to constructing this quantization involved two key insights. The first of these is that if $S$ is a surface with a nonempty collection of interior marked points and $G=\mathrm{PGL}_{n}(\mathbb{C})$, the \emph{moduli space of decorated $G$-local systems on $S$ } provides a natural setting for an algebro-geometric approach to rank $n$ Teichm\"uller theory. Indeed, in~\cite{FG06b} it is shown that these moduli spaces are \emph{cluster Poisson varieties}: they admit `cluster atlases' whose charts are split algebraic tori with distinguished coordinate frames, and whose gluing maps are composites of certain special subtraction-free birational maps known as cluster mutations. Moreover, the natural action of the surface mapping class group $\Gamma_S$ on the moduli space is realized in these coordinates as a composite of cluster transformations. Any cluster Poisson variety has a well-defined totally positive locus, and for the moduli spaces above with $G=\mathrm{PGL}_2$, Fock and Goncharov show that a certain symplectic leaf in this locus is canonically identified with the classical Teichm\"uller space for the surface obtained by regarding the interior marked points as punctures. This identification is compatible with the mapping class group actions on both sides. 

The second key insight, developed in~\cite{FG09a,FG09b}, is that cluster Poisson varieties admit `chartwise' non-commutative deformations, in which the combinatorics of the cluster atlas is used to construct something like a `non-commutative scheme.' In more detail, one assigns to each chart the atlas a non-commutative `quantum torus algebra' $\Tc^q$ defined over $\mathbb{Z}[q^{\pm1}]$, whose $q=1$ specialization recovers the coordinate ring of the chart in question. And for a pair of charts related by a single mutation, it was explained in~\cite{FG09a} that there exists a non-commutative analog of the classical gluing map, defined using the quantum dilogarithm function. Crucially, they show that any relation between classical mutations lifts to a relation between the corresponding quantum ones, ensuring the consistency of the gluing at the quantum level. Taking `global sections' of the resulting object, we get a single non-commutative algebra $\Lbb$ known as the \emph{quantum universal Laurent ring} associated to the cluster Poisson variety, which consists of elements that are skew Laurent polynomials in every quantum torus in the cluster atlas. Moreover, any symmetry of the classical variety which can be realized via cluster transformations automatically extends to an automorphism of $\Lbb$.

In this way, given a pair $(G,S)$ as above Fock and Goncharov construct in~\cite{FG09a} a non-commutative algebra $\Lbb_{G,S}$ carrying an action of the mapping class group $\Gamma_S$ by quantum cluster transformations. Yet several natural structures and properties of these quantizations have remained conjectural since their definition. First, associated to each simple closed curve $c$ on S and finite dimensional representation $V$ of $G$, there is a canonical function $\chi_V(c)$ on the classical moduli space computing the trace of the monodromy of the local system along $c$ in the representation $V$. In~\cite{FG09a},  it was conjectured that there should exist corresponding elements $\chi^q_V(c)$ of the quantum universal Laurent ring $\Lbb_{G,S}$, and that for a fixed curve $c$ these elements should generate a commutative subalgebra $R_G(c)$ in $\Lbb_{G,S}$ isomorphic to a copy of the representation ring of $G$.

In the same paper, Fock and Goncharov also made a deeper conjecture: based on the physical interpretation of the moduli space of flat connections on a surface as parametrizing solutions of the equations of motion in Chern-Simons theory, they suggested that the moduli spaces of decorated local systems and their quantizations should provide an algebraic avatar of a topological quantum field theory. A little more precisely, they conjectured that for fixed $G$, the assignment $S\rightsquigarrow \Lbb_{G,S}$ should behave like an algebraic analog of a modular functor in 2D conformal field theory in the following sense.

It is immediate from the construction of $\Lbb_{G,S}$ that each interior `puncture' point $p$ on $S$ gives rise to a central (and thus mutation-invariant) subalgebra $R_T(p)\subset\Lbb_{G,S}$ isomorphic to the representation ring of the maximal torus $T$ of $G$. Less obviously, whenever $p$ is not the only special point on its connected component, there is also an action of the Weyl group $W_G$ on $\Lbb_{G,S}$ by cluster transformations, which restricts to the reflection representation on the central subalgebra $R_T(p)$.  Hence the subalgebra of invariants $R_G(p) = R_T(p)^{W_p}$ gives a canonical copy of the representation ring of $G$ associated to each such puncture. 

Now suppose that $c$ is a simple closed curve on $S$, and $S'$ the surface obtained by cutting $S$ open along $c$, and $S^\circ$ the surface obtained by shrinking the corresponding boundary circles $c_\pm$ on $S'$ to punctures $p_\pm$. Then there is a central ideal $\mathcal{I}_c\subset\Lbb_{G,S'}$ generated by the relations $\chi_{p_+}=\chi^*_{p_-}$ identifying the central element in $R_T(p_+)$ corresponding to a character $\chi$ with the dual character in $R_T(p_-)$. Assuming that neither of $p_\pm$ is the only special point on its connected component in $S'$, we have an action of the product of Weyl groups $W_{p_+}\times W_{p_-}$, and the ideal $\mathcal{I}_c$ is preserved by the diagonal subgroup $W(c)$.
The centralizer $\Gamma_{S;c}$ of the Dehn twist along $c$ in the mapping class group $\Gamma_S$ can be identified with the quotient of the mapping class group $\Gamma_{S'}$ by the central subgroup generated by the product $\tau_{c_+}\tau_{c_-}$ of Dehn twists along $c_\pm$.

\begin{conjecture}
\label{conj:intro-MF}[Algebraic modular functor conjecture for $G=\mathrm{PGL}_{n+1}$; see~\cite{FG09a} Section 6.2, ~\cite{GS19} Conjectures 2.27 and 2.29.]
\label{conj:MF}
For any essential simple closed curve $c$ as above, there is a canonical subalgebra $R_G(c)\subset \Lbb_{G,S}$, and an isomorphism of algebras
\begin{align}
\label{eq:intro-eta}
\eta_c \colon \Lbb_{G,S}^{R_G(c)} \simeq \left( \Lbb_{G,S'}/\mathcal{I}_c \right)^{W(c)}
\end{align}
where  $\Lbb_{G,S}^{R_G(c)}$ denotes the centralizer of $R_G(c)$ in $\Lbb_{G,S}$. The map~\eqref{eq:intro-eta} is equivariant with respect to the action of $\Gamma_{S;c}$ by cluster transformations on both sides, and restricts to an isomorphism $R_G(c)\simeq R_G(p)$.
\end{conjecture}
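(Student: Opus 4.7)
The plan is to choose, on the original surface $S$, a cluster seed arising from an ideal triangulation adapted to $c$, in which the quantum torus factors (up to a small Heisenberg correction) as a skew tensor product of the quantum torus of an analogous seed on the cut surface $S'$ with a pair of ``twist'' generators encoding the Fenchel--Nielsen twist direction around $c$. In such an adapted seed, the subalgebra $R_G(c)\subset\Lbb_{G,S}$ is the algebra generated by the quantum trace elements $\chi^q_V(c)$ for $V$ running over finite-dimensional representations of $G$; the existence, pairwise commutativity, and identification with $R(G)$ of these elements is the content of the quantum trace conjecture addressed earlier in the paper, which I would invoke.

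With $R_G(c)$ in place, I would construct $\eta_c$ as follows. The centralizer $\Lbb_{G,S}^{R_G(c)}$ consists, in the adapted seed, of those skew Laurent elements that commute with the twist-direction generators and hence are insensitive to the Fenchel--Nielsen angle conjugate to $R_G(c)$. On the cut side, the classical gluing condition identifying the monodromies around the two new punctures $p_\pm$ (up to duality) quantizes to the central ideal $\mathcal{I}_c$, and the \emph{residue universal Laurent ring} introduced in the paper is precisely the tool that makes the quotient $\Lbb_{G,S'}/\mathcal{I}_c$ compatible with cluster mutations after localization along $\mathcal{I}_c$. In the distinguished chart, $\eta_c$ is essentially the tautological identification between skew Laurent elements on the two sides after imposing $\chi_{p_+}=\chi_{p_-}^*$ and taking $W(c)$-invariants. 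The universal Laurent property combined with the compatibility of residues with quantum cluster transformations --- the main content of the residue Laurent ring machinery --- then extends this chart-level identification uniquely to the required isomorphism of algebras, and the restriction to $R_G(c) \simeq R_G(p)$ is automatic since both sides record monodromy around the same loop and its puncture-limits.

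Equivariance under $\Gamma_{S;c}$ reduces to equivariance under $\Gamma_{S'}$ via the given identification $\Gamma_{S;c} \cong \Gamma_{S'}/\langle \tau_{c_+}\tau_{c_-}\rangle$: the central generator $\tau_{c_+}\tau_{c_-}$ of the kernel acts on $\Lbb_{G,S'}$ as a Fenchel--Nielsen twist along the glued curve and hence becomes trivial modulo $\mathcal{I}_c$, so $\Gamma_{S'}$-equivariance descends. The latter equivariance is in turn a consequence of the cluster-transformation realization of $\Gamma_{S'}$, together with the fact that the construction of $\eta_c$ via residues and invariants respects cluster mutations by design.

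The main obstacle I anticipate is the \emph{globalization} step: promoting the chart-level identification to a well-defined isomorphism between the full universal Laurent rings. This requires showing that the cut operation on cluster seeds is compatible with every quantum mutation after passage to the residue Laurent ring --- precisely the compatibility for which the enhanced moduli spaces and the residue universal Laurent ring were engineered. A secondary challenge is verifying surjectivity onto the $W(c)$-invariants, which should follow from a combinatorial comparison of the cluster atlases of $S$ and $S'$ induced by collapsing a tubular neighborhood of $c$ to the two punctures $p_\pm$, together with a careful accounting of how the diagonal Weyl action intertwines with the residue.
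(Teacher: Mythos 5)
Your outline reproduces the high-level shape of the paper's argument (adapted cluster charts for a $c$-isolating cylinder, a residue-constrained Laurent ring on the cut surface, Weyl invariants, mapping-class equivariance by descent), but it has two genuine gaps that hide the actual technical work.

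First, calling $\eta_c$ ``essentially the tautological identification between skew Laurent elements on the two sides'' misses the central mechanism. In a $c$-isolating cluster the Dehn twist concentrates on a rank-$2n$ subquiver which is precisely the quiver of the $GL_{n+1}$ open $q$-difference Toda chain, and the candidate $R_G(c)$ is generated by the Toda Hamiltonians $H_k(c)$ defined via a Baxter mutation sequence (Definition~\ref{def:fund-hamiltonians}); the paper's identification of monodromy traces is in fact deferred to~\cite{SS25}, so you cannot ``invoke the quantum trace conjecture addressed earlier.'' The gluing map does not identify Laurent elements tautologically: it must intertwine this position-space description of the Toda chain with the spectral description on the cut side, where the commuting variables are weights $\bs w$ on the maximal torus. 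That intertwiner is the discrete $q$-Whittaker transform $\mathcal{W}$ of Section~\ref{sec:AlgWhit}, built from $q$-Whittaker polynomials (Macdonald $t=0$), and $\mathbb{W}\colon\Lbb_{\Toda}\to\Dres$ sends the $H_k$ to elementary symmetric functions and the time-evolved cluster $\Ac$-variables to minuscule divided-difference (Coulomb branch) operators. The residue conditions~\eqref{eq:residue-condition} are not a compatibility-with-$\mathcal{I}_c$ device as you describe; they are the GKV-style constraint ensuring the spectral-side operators preserve $\Oc(T)^W$, and imposing them is why $\eta_c$ lands in an honest subalgebra rather than merely a quotient. Without the Whittaker transform you have no map even in a single chart.

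Second, you flag globalization as ``the main obstacle'' without a strategy. The paper spends all of Section~\ref{sec:3-to-3} and much of Section~\ref{sec:alg-gluing} on exactly this: the one-step Berenstein--Zelevinsky criterion (Corollary~\ref{cor:1-step}) reduces universal Laurentness to mutations at the handle vertices $h_\pm$, which are then factored through Baxter sequences whose intertwining under $\mathcal{W}$ is the content of Propositions~\ref{lem:whit-bax} and~\ref{prop:whit-bibax}; coherence across different isolating cylinders requires the refined umbral moves and bi-fundamental Baxter operators of Section~\ref{sec:3-to-3}, proved via extensive tropical computations. None of this is visible in your sketch, and it is where most of the difficulty actually lives.
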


The main result of this paper is a proof this conjecture. Our proof involves developing extensions of both of the original key insights of Fock and Goncharov: the classical limit of our story is most naturally formulated in terms of a new version of the moduli space of decorated local systems incorporating extra data associated to the boundary components $c_\pm$ created when cutting $S$ along $c$, while to construct its quantization we need to go beyond the standard theory of cluster algebras and work with a new object we call the \emph{residue universal Laurent ring.}

The need to work with an enhanced version of the moduli spaces can be understood by analogy with classical Teich\"muller theory for bordered Riemann surfaces with parametrized geodesic boundaries of non-fixed length, as considered by Mirzakhani~\cite{Mir07} and Alekseev--Meinrenken~\cite{AM24}. Indeed, for each pants decomposition of a surface $S$ with $s$ parametrized geodesic boundaries (i.e. a presentation of $S$ as the collection of 3-holed spheres glued along a collection of pairs of boundary curves $c_i$) there is a corresponding system of Fenchel--Nielsen length/twist coordinates $(l_i,\tau_i)_{i=1}^{3g-3+2s}$ on $\mathrm{Teich}_{\mathrm{bordered}}(S)$ which are canonically conjugate with respect to the Weil--Petersson form. The coordinates $l_i$ parametrizes the geodesic length of the boundary curve $c_i$, while the dual coordinates $\tau_i$ are the twist parameters used to glue the pieces together.  In this context, the gluing map amounts to a \emph{symplectic reduction}: in order to glue the pants along a pair of geodesic boundaries $c_\pm$ we require $l_{c_+}=l_{c_-}$, while  simultaneously twisting both boundaries $c_\pm$ by the same small angle results in the same hyperbolic structure after gluing. 

In the original algebro-geometric approach of Fock--Goncharov, the length coordinates $l_i$ associated to a boundary component $c_+$ has a direct interpretation in terms of the Poisson-central subalgebra $R_T(p)$ for the surface where we shrink $c_+$ to a puncture, but the dual twist coordinate is necessarily absent, since it does not Poisson commute with $l_i$ . In Definition~\ref{def:Pdiamond} we introduce a new kind of algebro-geometric datum associated to boundary components which will allows us to incorporate analogs of the twist coordinates, and therefore construct well-behaved gluing maps. The components to which we associate such data we call \emph{tacked circles}. We construct a cluster Poisson structure on this new moduli space $\Pc^{\diamond}_{G,S}$, allowing us to quantize it following the procedure of Fock--Goncharov. And in parallel to the classical gluing construction sketched above, given a surface $S'$ with pair of tacked circles $c_\pm$ and a diffeomorphism $\phi \colon c_+\simeq c_-$ we show there is a canonical quantum Hamiltonian reduction $\Lbb^{c}_{G,S'}$ of the corresponding quantum universal Laurent ring which carries an action of the Weyl group $W(c_\pm)$ as in Conjecture~\ref{conj:MF}. 

However it turns out that the symplectically reduced universal Laurent ring $\Lbb^{c}_{G,S'}$ is still not sufficient to reconstruct the universal Laurent ring $\Lbb_{G,S}$ associated to the glued surface. To achieve this, we define the residue universal Laurent ring $\Lbb_{G,S';\phi}$ associated to the gluing datum $(S',\phi)$ by localizing $\Lbb_{G,S'}$ at a canonical mutation-invariant collection of divisors $Ø(c_\pm)$, taking invariants for $W(c_\pm)$, and then passing to the subalgebra consisting of elements having only simple poles at each such divisor with residues satisfying a certain condition --- see Definition~\ref{def:Tres} for details. With these definitions, we in fact prove the following stronger result allowing us to completely reconstruct $\Lbb_{G,S}$ in terms of data associated to the cut surface $S'$:

\begin{maintheorem}

\label{thm:main-intro}
Let $(G,S,c)$ be as in Conjecture~\ref{conj:intro-MF}. Then there is a $\Gamma_{S;c}$ equivariant algebra isomorphism
\begin{align}
\label{eq:intro-full-eta}
\eta_c\colon \Lbb_{G,S}\simeq \Lbb_{G,S';\phi}.
\end{align}
\end{maintheorem}
Let us now give a sketch of how we prove Theorem~\ref{thm:main-intro}. In the Fock--Goncharov theory, the most important charts in the cluster atlas for moduli spaces of decorated local systems on $S$ are those associated to \emph{ideal triangulations} of $S$. Our construction of the gluing isomorphism~\eqref{eq:intro-full-eta} when $\mathrm{rank}(G)>1$ involves different charts specially adapted to a simple closed curve $c$ of the form described in Conjecture~\ref{conj:intro-MF}, where each component of $S'$ has at least one boundary component or special point apart from the tacked circles $c_\pm$.  Any such curve on S can be isolated in an ideal cylinder formed by two ideal triangles. Given the choice of such a cylinder,  we construct a special cluster chart in which the action of the Dehn twist by cluster transformations localizes: for $G=PGL_{n+1}$, it is given by a composite of just $n$ commuting mutations occuring within a quantum sub-torus of rank $2n$. In fact, the quantum universal Laurent ring associated to this subquiver is precisely the quantized coordinate ring of the phase space for the $GL_{n+1}$ open Toda integrable system, whose commuting quantum Hamiltonians provide a candidate for the commutative subalgebra $R_G(c)$ associated to $c$.  In Theorem~\ref{thm:alg-MF-coherence} we prove that the $\chi^q_V(c)$ defined this way do not depend on the choice of isolating ideal cylinder, and are thus indeed canonically associated to the curve $c$.

In Section~\ref{sec:AlgWhit} we develop an algebraic version of the Whittaker transform for the $GL_{n+1}$ $q$-difference open Toda chain, identifying the corresponding local universal Laurent ring with a variant of the spherical part of the nil double affine Hecke algebra for $GL_{n+1}$. Then using a variant of the residue characterization of Hecke algebras developed by Ginzburg, Kapranov, and Vasserot in~\cite{GKV97}, we construct in Theorem~\ref{thm:alg-MF} a cutting isomorphism $\eta_c(\tri)\colon\Lbb_{G,S}\simeq \Lbb_{G,S;\phi}$ associated to each isolating ideal cylinder for $c$. Finally, we show in Theorem~\ref{thm:alg-MF-coherence} that these isomorphisms do not depend on the auxiliary choice of isolating cylinder, and in Theorem~\ref{thm:alg-MF-equivariance} that the resulting canonically defined isomorphism~\eqref{eq:intro-full-eta} is equivariant with respect to the action of $\Gamma_{S;c}$.

Working with the residue universal Laurent rings opens the door to extending Conjecture~\ref{conj:intro-MF} and Theorem~\ref{thm:main-intro} to the exceptional case that the surface $S'$ obtained by cutting $S$ along $c$  has a connected component containing a single boundary circle $c_-$. Although there is no action of the Weyl group $W(c_\pm)$ on $\Lbb^c_{S'}$ by cluster transformations in this case, we can use the trick of first cutting $S$ along an auxiliary cycle $a$, cutting along $c$, and gluing back along $a$ in order to define (see Definition~\ref{def:exceptional-lres}) an analog of the residue Laurent ring in this setting. Note that this would not be possible at the level of the cutting map in the original conjecture, which is defined only on a centralizer subalgebra in $\Lbb_S$. In Theorem~\ref{thm:secondlast} we show that when $g(S_-)>1$ the resulting isomorphism does not depend on the choice of auxiliary curve $a$, and establish its $\Gamma_{S;c}$ equivariance in Theorem~\ref{thm:last}. 

The corresponding results in the remaining corner case $g(S_-)=1$ require different techniques, and will be treated in the companion paper~\cite{SS25}, where given a simple closed curve $b$ on $S$ which crosses the cutting curve $c$, we derive explicit formulas for the image of the elements $\chi^q_V(b)$ under the cutting map $\eta_c$.

\subsection*{Acknowledgements}
We want to thank {M.\,Bershtein, F.\,Bonahon, A.\,Braverman, K.\,De Commer, P.\,Di Francesco, V.\,Fock, I.\,Frenkel, M.\,Gekhtman, A.\,Goncharov, I.\,Ip, R.\,Kedem, S.\,Khoroshkin, D.\,Jordan, M.\,Shapiro, N.\,Reshetikhin, J.\,Teschner, and C.\,Voigt} for many helpful discussions during the long gestation of the article.  The work of G.S. was supported by the NSF under grant DMS 2302624. The work of A.S. has been supported by the European Research Council under the European Union’s Horizon 2020 research and innovation programme under grant agreement No 948885 and by the Royal Society University Research Fellowship.
%\phantom{.}\blue{M.\,Bershtein, F.\,Bonahon, A.\,Braverman, K.\,De Commer, P.\,Di Francesco, V.\,Fock, I.\,Frenkel, M.\,Gekhtman, A.\,Goncharov, I.\,Ip, R.\,Kedem, S.\,Khoroshkin, D.\,Jordan, M.\,Shapiro, N.\,Reshetikhin, J.\,Teschner, C.\,Voigt.} 
%\red{Katherine and Ira for tolerating us?}

\section{Decorated surfaces and Ptolemy groupoids}
\label{sec:ptolemy}

\subsection{Moduli spaces of decorated local systems}

In this section we recall the setup of stratified surfaces and decorated local systems following~\cite{JLSS21}. The moduli spaces of decorated local systems considered in~\cite{FG06b} and~\cite{GS19} can then be recovered as particular cases of this construction.

\begin{defn}%FG06b
A \emph{stratified surface} is an oriented surface $S$ together with a finite collection of non-intersecting simple curves embedded into $S$ in such a way, that for any curve $c$ in the collection we have $\partial c \subset \partial S$ and $c \smallsetminus \partial c \subset S \smallsetminus \partial S$. We refer to the curves as \emph{walls} and denote their union by $C \subset S$. Connected components of $S \smallsetminus C$ are called \emph{regions.}
\end{defn}

%A \emph{gate} is an open segment in $\partial S \smallsetminus \partial\Cc$, where $\partial \Cc = \bigcup_{C_i \in \Cc} \partial C_i$. We require that gates do not intersect, and that each connected component of $\partial S \smallsetminus \partial\Cc$ either does not contain a gate or is covered by gates up to a finite subset of points. We denote gates by thin segments of $\partial S$.

\begin{convention}
    We will always follow the convention that the orientation of $S$ induces an orientation of its boundary $\partial S$ via the following recipe: if $\vec n$ is an inwards pointing tangent vector at a point $x\in \partial S$, and $\vec v$ a positively oriented tangent vector in the tangent space to $\partial S$ at x, then $(\vec v,\vec n)$ forms a positively-oriented frame in  $T_xS$. When drawing surfaces on the page, we will use the convention that the ordered pair consisting of a right-horizontal and an up-vertical vector forms a positive frame.
\end{convention}

Fix a connected reductive group $G$, an embedding $B \into G$ of a Borel subgroup, and a projection $B \onto T$ onto the quotient $T = B/N$, where $N$ is the unipotent radical of $B$. A \emph{$(G,T)$-coloring} of a stratified surface $S$ is an assignment of a ``color'' $G$ or $T$ to every region of $S$ in such a way that no two regions sharing a wall have the same color. Note that a coloring is defined by the color of any of the regions, and thus there are either two $(G,T)$-colorings of $S$ or none.

\begin{defn}
A \emph{decorated surface} $\decS$ is a stratified surface $S$ together with a $(G,T)$-coloring. We denote the union of all regions colored $G$ by $\decS_G$ and that of all regions colored $T$ by $\decS_T$.
%, and write $|\decS_G|, |\decS_T|$ for the number of $G$- and $T$-regions respectively.
\end{defn}
 
\begin{defn}
A stratified surface $S$ is \emph{marked} if it is endowed with a possibly empty collection $M \subset S \smallsetminus C$ of \emph{marked} points. A decorated surface $\decS$ is \emph{marked}\footnote{Marked points on a decorated surface play the same role as gates in~\cite{JLSS21}.} if the underlying stratified one is. Furthermore, we set $M_G = M \cap \decS_G$ and $M_T = M \cap \decS_T$.
\end{defn}

\begin{example}
On Figure~\ref{fig:triang-ex} we show examples of decorated surfaces, where $\decS_G$ is shown in purple and $\decS_T$ in yellow.

\begin{figure}[h]
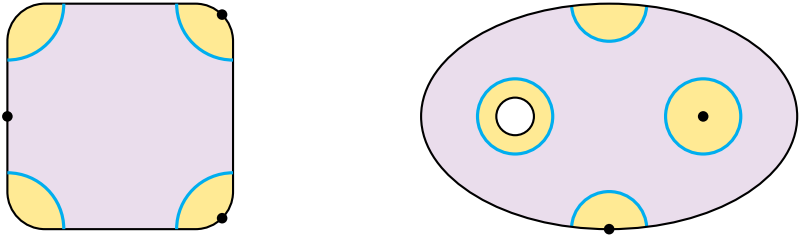

\subfile{triang-ex.tex}
\caption{Examples of decorated surfaces.}
\label{fig:triang-ex}
\end{figure}
\end{example}

\begin{defn}
Let $\decS_1$ and $\decS_2$ be decorated surfaces.  A \emph{decorated embedding} $\iota \colon \decS_1 \hookrightarrow \decS_2$ is an oriented embedding of surfaces which respects walls, regions, colors, and marked points. A \emph{decorated isotopy} is a continuous path through the space of decorated embeddings.
\end{defn}

In what follows, we shall only consider decorated surfaces up to decorated isotopies.

Recall that a data of a (Betti) $G$-local system is equivalent to that of a principal $G$-bundle with a flat connection. A reduction of a local system to a subgroup $H$ amounts to the specification of an $H$-subbundle preserved by the parallel transport. We will use this notion for the Borel $B$ considered as a subgroup of $G\times T$-via the map $B \into G\times T$ obtained as the composition of the diagonal embedding of $B$ into $B \times B$ with the product of the embedding $B \into G$ and the projection $B \onto T$.

%\red{Discussion of the different moduli spaces here seems kind of confusing}
%Let us denote
%\begin{align*}
%\partial\decS_T &= \partial S \cap \decS_T, \hspace{1cm} M_T = M \cap \decS_T, \\
%\partial\decS_G &= \partial S \cap \decS_G, \hspace{1cm} M_G = M \cap \decS_G.
%\end{align*}

%\begin{defn}
%We say that a marked stratified surface $\decS$ \emph{admits a triangulation} if there exists a collection $E = (e_1, \dots, e_k)$ of arcs embedded into $S$ such that
%\begin{itemize}
%\item $\decS \smallsetminus E$ is a disjoint union of triangles $\Dbb_3$;
%\item endpoints of any arc belong to $\partial\decS_T \cup M_T$;
%\item a pair of arcs may only intersect at a common endpoint $v \in M_T$;
%\item let $v$ be an endpoint of an arc, and $c$ be a connected component of $\partial\decS_T$ which contains~$v$. Then $v \in c \cap M$ unless $c \cap M = \varnothing$.
%\end{itemize}
%If $\decS$ admits a triangulation we refer to arcs $e_i \in E$ as \emph{diagonals.}
%\end{defn}
%
%\begin{defn}
%A marked stratified surface is \emph{simple} if it admits a triangulation, $M_G = \varnothing$, each $T$-region contains at most one marked point, and each marked point lies on the boundary $\partial S$ unless the $T$-region it belongs to is internal.
%\end{defn}

\begin{defn}
\label{def:decloc}
Let $\decS$ be a decorated surface. A \emph{decorated local system} $\Lc$ on $\decS$ is a triple $(\Lc_G,\Lc_T,\Lc_B)$, consisting of a $G$-local system $\Lc_G$ on $\decS_G$, a $T$-local system $\Lc_T$ on $\decS_T$, and a reduction $\Lc_B$ to $B$ of the $G\times T$-local system $\Lc_G\times \Lc_T$ over $C$. A decorated local system $\Lc$ is \emph{framed} if it is endowed with a trivialization of $\Lc_G$ and $\Lc_T$ at each point in $M_G$ and $M_T$ respectively. A morphism of framed, decorated local systems is a pair of morphisms between the corresponding $G,T$-local systems on $\decS_G,\decS_T$ respecting the $B$-reduction and trivialization data. 
\end{defn}

\begin{defn}
The \emph{(framed) decorated character stack} is the moduli stack of (framed) decorated local systems on $\decS$. We denote the decorated character stack by $\Ch(\decS)$ and the framed one by $\Ch_\bullet(\decS)$.
%In other words,
%$$
%\Ch(\decS) = \underline\Lc(\decS)/\Gc(\decS) \qquad\text{and}\qquad \Ch_\bullet(\decS) = \underline\Lc_\bullet(\decS)/\Gc(\decS)
%$$
%are respectively the quotient stacks of spaces $\underline\Lc(\decS)$ and $\underline\Lc_\bullet(\decS)$ by the group $\Gc(\decS) = G^{|\decS_G|} \times T^{|\decS_T|}$.
% If $\Ch(\decS)$ or $\Ch_\bullet(\decS)$ is an affine variety, we will also call it a \emph{moduli space} of (framed) decorated local systems.
% \red{I'd delete this last sentence. The moduli spaces we deal with are pretty much never affine.}
\end{defn}

The stack $\Ch_\bullet(\decS)$ carries a natural action of the group $\Gc_M = G^{\hm{M_G}} \times T^{\hm{M_T}}$, which changes trivialization at marked points.
%\footnote{We warn the reader of a notational clash with~\cite{JLSS21}, the group $\Gc_M$ was denoted $\Gc_\decS$ in \emph{loc.\,cit.}}
In particular, we have
$$
\Ch(\decS) \simeq \Ch_\bullet(\decS)/\Gc_M.
$$
We refer the reader to~\cite{IO23} for a comprehensive discussion of character stacks.

\begin{example}
\label{ex:basic-disks}
The decorated character stacks corresponding to decorated surfaces on Figure~\ref{fig:monogons}, read left to right as follows: $G/N$, $G/B$, $\pt/N$, $\pt/B$, where $\pt$ denotes a point. For the reader's convenience, let us briefly review how to arrive at this description, starting from Definition~\ref{def:decloc}. Suppose that $\mathcal{L}$ is a framed decorated local system on the surface shown in the left pane of Figure~\ref{fig:monogons}. Choose a path connecting the two marked points and intersecting the blue domain wall at a single point $p$. Parallel transporting the trivializations  at the two marked points along this path, we obtain a distinguished basepoint $e_p$ in the fiber at $p$ of the $G\times T$ local system $\mathcal{L}_G\times\mathcal{L}_T$ on the wall. The data of a $B$-reduction of the latter local system is that of a $B$-orbit $Bf_p$ in this fiber. Since the fiber is a $G\times T$ torsor, there is a well-defined $B$-coset in $G\times T$ given by $(g,t)B$, where $(g,t)\in G\times T$ is the unique element such that $(g,t)f_p=e_p$. Applying the same construction to an isomorphic decorated framed local system $\mathcal{L}'\simeq\mathcal{L}$ evidently results in the same such coset. Finally, since $T$ normalizes $N$, we can identify $(G \times T)/B \simeq G/N$ via $(g,t)B\mapsto gt^{-1}N$. 
The other three stacks are obtained from $G/N$ via taking further quotients by $G$ on the left and $T$ on the right.
\begin{figure}[h]
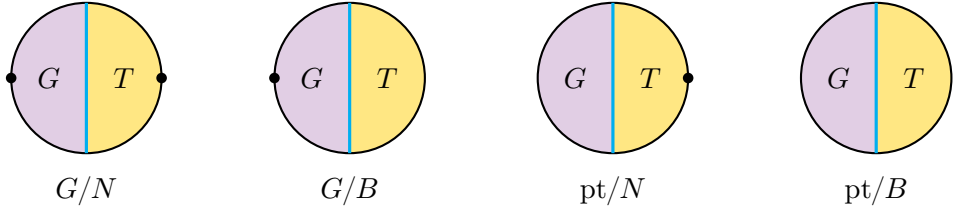

\subfile{monogons.tex}
\caption{Decorated monogons.}
\label{fig:monogons}
\end{figure}
\end{example}

\begin{example}
\label{ex:4-disks}
Let $\pi \colon G/N \to G/B$ denote the natural projection. Then the decorated character stacks corresponding to surfaces shown on Figure~\ref{fig:4-disks} are non-canonically isomorphic to the ones parametrizing the following data:
\begin{enumerate}
\item pairs $(g,\bar F) \in G \times G/N$, such that $g\pi(\bar F) = \pi(\bar F)$;
\item pairs $(g, F) \in G \times G/B$, such that $gF = F$;
\item pairs $(g,\bar F) \in G \times G/N$, such that $g \bar F = \bar F$. This forces $g$ to be unipotent;
\item pairs $(g, F) \in G \times G/B$, such that $gF = F$ and $g$ is unipotent.
\end{enumerate}

For the reader's convenience, we again sketch how to obtain the explicit description of the moduli space (1). 
% Let $P$ be such a decorated framed local system, and fix two arbitrary points in the fibers of the $G,T$ local systems over the special points, so that given $P'\simeq P$ there is a unique automorphism of $P$ identifying    
In the left pane of Figure~\ref{fig:4-disks}, choose a line segment connecting the two marked points, intersecting the domain wall at a single point $p$. Using parallel transport of the $G$ and $T$ local systems, we get this way a distinguished point in the fiber over $p$ of the $G\times T$-local system on the wall.  This distinguished point allows us to associate to the loop around the wall an element $(g,t)\in G\times T$ recording the corresponding monodromy, along with a $B$-coset $(h,s)B$ in $G\times T$ preserved by $(g,t)$: we have $(gh,ts)b = (h,s)$ for some $b\in B$. To reconcile this description with that given in (1), note that
in particular the $B$-coset $\bar{F}=hB$ in $G$ is preserved by $g$, and moreover that the semisimple part of $b$ is exactly $t^{-1}$, that is $b=nt^{-1}$ with $n\in N$ and $gh n = ht $. Since $G/N$ is a principal $T$-bundle over $G/B$, the element $t$ can be recovered as the monodromy of $hN\in G/N$ under $g$: it is the unique element of $T$ such that $ghN = htN$. Hence the data in (1) indeed suffice to recover the original framed decorated local system up to isomorphism.

As was noted in~\cite{JLSS21}, the moduli space $(2)$ is the quotient of $(1)$ by the $T$-action changing the trivialization, whereas $(3) \subset (1)$ is the preimage of the identity with respect to a multiplicative $T$-valued moment map $\mu \colon (1) \to T^*$. Accordingly, $(4)$ is the multiplicative Hamiltonian reduction of $(1)$.

\begin{figure}[h]
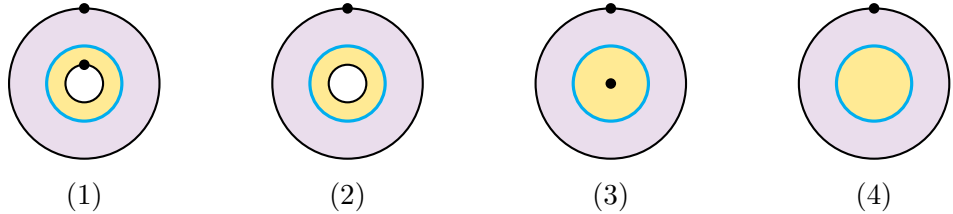

\subfile{4-disks.tex}
\caption{Decorated cylinders and disks.}
\label{fig:4-disks}
\end{figure}
\end{example}

%\begin{example}
%The decorated surfaces shown on Figure~\ref{fig:disks-2} are almost identical to those in the previous example, except for we replaced $G$-trivializations with pairs of boundary $T$-disks, each with a $T$-trivialization. The resulting decorated character stacks are obtained from those above by adding to each pair $(g,F)$ or $(g, \bar F)$ two more framed flags $\bar F_1, \bar F_2 \in G/N$ and quotenting the resulting quadruple by the group $G$. As was shown in~\cite{FG06b, GS19}, the moduli space (2) can be endowed with the structure of a cluster Poisson variety if $G$ is of adjoint type, and the moduli space (3) can be endowed with the structure of a cluster $K_2$-variety if $G$ is simple. In what follows, we shall also define structures of a cluster Poisson and a cluster $K_2$-varieties on the moduli space~(1).
%
%
%\begin{figure}[h]
% \subfile{disks-2.tex}
%\caption{Stratified cylinders and disks with $M_G = \varnothing$.}
%\label{fig:disks-2}
%\end{figure}
%
%\end{example}

\begin{defn}
A region of a decorated surface $\decS$ is said to be \emph{boundary} if its intersection with $\partial\decS$ is non-empty, and the region can be retracted onto this intersection while preserving $\partial\decS$. A region is \emph{internal} if it does not intersect $\partial\decS$. A \emph{decorated $n$-gon} $\Dbb_n$ is a decorated disk with a single $G$-region and $n$ boundary $T$-disks.
%We refer to $\Dbb_n \smallsetminus \partial\Dbb_n$ as an \emph{open decorated $n$-gon.}
\end{defn}

One can see boundary $T$-annuli on the first two pictures of Figure~\ref{fig:4-disks}, and internal $T$-disks on the second two. Decorated monogons and a decorated triangle are shown respectively on Figures~\ref{fig:monogons} and~\ref{fig:triangle}. Note that a boundary region is necessarily either an annulus or a disk.
%We say that two boundary $T$-disks are \emph{adjacent} if they intersect the same connected component of $\partial S$, and one immediately follows the other as we traverse this connected component.

\begin{figure}[h]
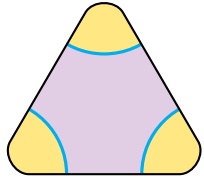

\subfile{triangle.tex}
\caption{Decorated triangle.}
\label{fig:triangle}
\end{figure}

We say that a decorated surface is \emph{simple}\footnote{We warn the reader that our definition of a simple decorated surface is more general than the one given in~\cite{JLSS21}.} if each of its $T$-regions is either a boundary $T$-disk with a single marked point, an interior $T$-disk with a single marked point, or a boundary $T$-annulus with at most one marked point, the unique marked point in a $T$-region is boundary if the region is such, and the $G$-region contains no marked points. In the remainder of the article, we will only work with simple decorated surfaces.

\begin{defn}
\label{def:marked}
    A \emph{marked surface} is an oriented surface $S$ together with a subset $M\subset \partial S$ of boundary marked points, a subset $P$ of interior marked points (which following~\cite{FG06b} we refer to as \emph{punctures}), and a subset $\Cc = \hc{c_j}$ of connected components of $\partial S$, each of which comes with a distinguished point $t_j \in c_j$. We refer to the elements of $\Cc$ as \emph{tacked circles}, to the points $t_j$ as their \emph{tacks.} A \emph{special point} on a marked surface is one which is either a boundary marked point, a puncture, or a tack. We denote the set of special points by $V = V(S)$.
\end{defn}
% Marked surfaces without tacked circles as defined here are the same as the marked surfaces defined in ~\cite{FG06b}. 
% Note that if $S$ is a marked surface in the sense of Definition~\ref{def:marked}, then $S_\circ$ is one in the sense of~\cite{FG06b}.

% We refer to these diamonds as \emph{tacks,} and to connected components of $\partial S$ containing them as \emph{tacked circles.}

From a simple decorated surface $\decS$ one can produce a marked surface $S$ using the following recipe:
\begin{itemize}
    \item Retract each boundary or interior $T$-disk to the unique marked point it contains, thereby creating respectively a boundary marked point or a puncture on $S$;
    \item Retract each boundary $T$-annulus containing a marked point to its intersection with $\partial\decS$ and replace the marked point with a tack, thereby creating a tacked circle on $S$;
    \item Retract each boundary $T$-annulus without marked points to its intersection with $\partial\decS$ and glue in a disk with an interior marked point, thereby creating a puncture on $S$.
\end{itemize}
We show an example of a decorated surface and the corresponding marked surface on Figure~\ref{fig:marked}. In order to graphically disambiguate boundary marked points from tacks, we draw the latter as diamonds.

\begin{figure}[h]
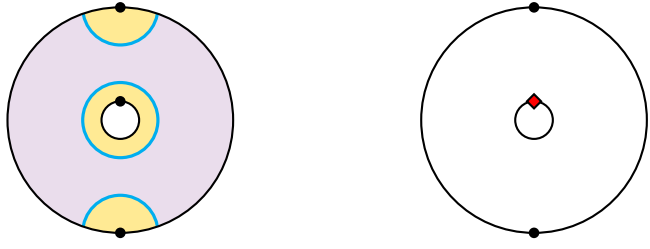

\subfile{marked.tex}
\caption{Decorated surface and marked surface.}
\label{fig:marked}
\end{figure}

Note that the above recipe does not distinguish the interior marked points on a decorated surface from its boundary circles without marked points, and thus different decorated surfaces can be assigned the same marked surface. 

\begin{defn}
Given a marked surface $S$, we define its \emph{$\mathcal{A}$-lift} $\decS_{\mathcal{A}}$ to be the decorated surface where all punctures are replaced by internal $T$-disks with a single marked point, and its \emph{$\mathcal{P}$-lift}  $\decS_{\mathcal{P}}$  to be the one where all punctures are replaced by boundary circles without marked points.
\end{defn}

\begin{notation}
    Given a marked surface $S$, we write $S_\circ$ for the marked surface obtained from $S$ by replacing every tacked circle of $S$ by a puncture. By construction, $S_\circ$ is then a marked surface in the sense of~\cite{FG06b}.
\end{notation}

\begin{defn}
Given a marked surface $S$, we denote by $\Diffeo^+(S)$ the group of diffeomorphisms of $S$ which preserve its orientation, as well as the sets of punctures, marked points, and tacked circles. We write $\Diffeo_0^+(S)$ for the connected component of the identity in $\Diffeo^+(S)$. The \emph{mapping class group} of $S$ is the quotient
$$
\Gamma_S = \Diffeo^+(S)/\Diffeo_0^+(S).
$$
\end{defn}

\begin{defn}
An \emph{arc} on a marked surface $S$ a curve $a \colon [0,1] \rightarrow S$ satisfying the following conditions:
\begin{itemize}
	\item $a(t_1) = a(t_2)$ implies $\hc{t_1,t_2} = \hc{0,1}$;
	\item $\partial a \in V(S)$ and $(a \smallsetminus \partial a) \cap V(S) = \varnothing$;
	\item $a$ does not retract onto $\partial a$.
\end{itemize}
\end{defn}

We shall only consider arcs up to the action of $\Diffeo_0^+(S)$ and abuse notation by referring to their $\Diffeo_0^+(S)$-orbits by arcs as well.

\begin{defn}
Two arcs are \emph{compatible} if they do not intersect in $S \smallsetminus V(S)$. An \emph{ideal triangulation} of a marked surface $S$ is a maximal collection of distinct, pairwise compatible arcs. The arcs of a triangulation cut the surface S into \emph{ideal triangles} and are often referred to as \emph{edges.} A triangle is \emph{self-folded} if two of its edges coincide, and is \emph{special} if one of its edges is a tacked circle. An edge $e$ of a triangle is \emph{boundary} if $e \subset \partial S$.
\end{defn}

Ideal triangulations are always considered up to the action of $\Diffeo_0^+(S)$. In what follows, we  will often write ``triangulation'' instead of `ideal triangulation' whenever it does not cause confusion.

\begin{defn}
We say that a marked surface $S$ is \emph{admissible} if the surface $S_\circ$ admits an ideal triangulation.
\end{defn}

Let $g(S)$ denote the genus of $S$. Then it is not hard to see that $S$ is admissible if and only if it satisfies one of the following:
\begin{itemize}
\item $g(S)>0$ and $V(S) \ne \varnothing$;
\item $g(S)=0$ and $|V(S)| \ge 3$;
\item $g(S)=0$, $|V(S)| = 2$, and $|M(S)|=1$.
\end{itemize}

% A \emph{flip} $F_e$ of a non-boundary edge $e$ is the replacement of $e$ with the other diagonal of the unique 4-gon containing $e$ as a diagonal.

% Assume that $\decS$ is such and write  $S$ for the underlying oriented surface. Let us retract each boundary $T$-disk of $\decS$ to its marked point and each boundary $T$-annulus to its intersection with $\partial S$.
%, and shrink to punctures those connected components of $\partial S$ which do not contain marked points.
% In order to disambiguate marked points on $\partial S$ which came from boundary $T$-disks from those which came from boundary $T$-annuli, we draw the latter as diamonds, see Figure~\ref{fig:marked}. We refer to these diamonds as \emph{tacks,} and to connected components of $\partial S$ containing them as \emph{tacked circles.}

% The stack $\Ch(\decS)$ parametrizes $G$-local systems on the $G$-region of $\decS$ together with a flat section of the associated $G/B$-bundle over every $T$-region. \red{[Wait, didn't we say the decoration is a flat section of the $G\times T/B$ bundle?]} 

Let $\decS$ be a simple decorated surface, $D$ its boundary $T$-disk, and $D'$ the boundary $T$-disk immediately following $D$ as we traverse $\partial S$ in the positive direction, note that $D$ and $D'$ may coincide. Consider a framed decorated local system $\Lc$ on $\decS$ and set $s, s'$ to be the sections of the associated $G/N$-bundles over the walls $c \subset \partial D$, $c' \subset \partial D'$. Furthermore, let $\rho(s)$ to be the parallel transport of $s$ along the path traversing $\partial S$ in the positive direction from $c$ to $c'$, so that $\rho(s)$ is a section of the $G/N$ bundle over $c'$. Recall that a pair of framed flags $\bar F_1, \bar F_2 \in G/N$ is said to be in relative position $u \in W$, where $W$ is the Weyl group of $G$, if its $G$-coset can be presented as $NtuN \subset G\backslash(G/N)^2\simeq N\backslash G/N$ for some $t\in T$. The framed decorated local system $\Lc$ is \emph{generic at $D$} if $\rho(s)$ and $s'$ are in relative position $w_0$, with the latter being the longest element of the Weyl group $W$. We say that $\Lc$ is \emph{generic} if it is such at each boundary $T$-disk. Given a simple decorated surface $\decS$, we define $\Ch^\circ_\bullet(\decS) \subset \Ch_\bullet(\decS)$ to be the substack which parametrizes generic framed decorated local systems. We then set $\Ch^\circ(\decS) = p(\Ch^\circ_\bullet(\decS))$, where $p \colon \Ch_\bullet(\decS) \to \Ch(\decS)$ is the natural projection. We remark that $\Ch^\circ(\decS)$ and $\Ch^\circ_\bullet(\decS)$ are in fact quasi-projective varieties, see~\cite{IO23}.

\begin{remark}
\label{rem:cluster-structure-ex}
If $S$ is an admissible marked surface $S$ without tacked circles, then the following is known:\begin{itemize}
\item[a)]Suppose $G$ is simply connected, and let $\decS_{\mathcal A}$ be the decorated surface obtained as the $\mathcal{A}$-lift of $S$. % Assume that every $T$-region of $\decS$ is a disk, internal or boundary, with a single marked point, and the group $G$ is simply connected. 
Then $\Ch^\circ_\bullet(\decS_{\mathcal A})$ is (non-canonically) isomorphic to the moduli space $\Ac_{G,S}$ of \emph{twisted} framed local systems on $S$, see Section~\ref{subsec:cluster-coord}, which carries a natural pre-symplectic form. In~\cite{FG06b, GS19}, there was constructed a cluster $K_2$-variety $\Ac_{\Qcl(G,S)}$ sharing with $\Ac_{G,S}$ a subset of toric charts, on which the pre-symplectic form of $\Ac_{\Qcl(G,S)}$ coincides with that of $\Ac_{G,S}$. It is expected that the ring of functions on the cluster $K_2$-variety coincides with that on $\Ac_{G,S}$, however we do not know of a reference where this statement is proven in full generality.
\item[b)] If $G$ is of adjoint type, i.e. has has trivial center, then by~\cite[Lemma 3.13]{GS19}, the moduli space $\Ch^\circ_\bullet(\decS)$ coincides with the moduli space $\Pc_{G,S}$ of {\it framed $G$-local systems with pinnings} introduced in {\it loc.\,cit.} In~\cite{FG06b, GS19},  there was constructed a cluster Poisson variety $\Pc_{\Qcl(G,S)}$ sharing with $\Pc_{G,S}$ a subset of toric charts, on which the Poisson structure of $\Pc_{\Qcl(G,S)}$ coincides with that of $\Pc_{G,S}$. The isomorphism $\Oc\big(\Pc_{\Qcl(G,S)}\big) \simeq \Oc(\Pc_{G,S})$ of coordinate rings was then proven in~\cite{She22}, by showing that the complement to the union of joint toric charts is of codimension at least two in $\Pc_{G,S}$.
%A similar statement is expected in the case of $\mathcal{A}_{G,S}$, although we do not know of a reference. }
\end{itemize}
\end{remark}

% \begin{defn}
% \label{def:marked}
% \red{Do we want to allow punctures here?} A \emph{marked surface} is an oriented surface $S$ together with a collection $P \subset S$ of punctures, a subset $M \subset \partial S$ of boundary marked points, and a subset $\Cc \subset \pi_0(\partial S)$ of tacked circles, where each tacked circle contains a unique marked point. We write $\mathbb{M}$ for the union of $M$ with the set of marked points on all tacked circles of $S$.

% We also write $S_\circ$ the marked surface obtained from $S$ by replacing every tacked circle of the latter by a puncture. 
% \end{defn}

% Note that if $S$ is a marked surface in the sense of Definition~\ref{def:marked}, then $S_\circ$ is one in the sense of~\cite{FG06b}.

% \begin{defn}
% Let $g(S)$ denote the genus of $S$. We say that a marked surface $S$ is \emph{admissible} if one of the following holds:
% \begin{itemize}
% \item $g(S)>1$ and $\hm{P(S_\circ) \sqcup M(S_\circ)} \ge 1$;
% \item $g(S)=0$ and $\hm{P(S_\circ) \sqcup M(S_\circ)} \ge 3$;
% \item $g(S)=0$ and $\hm{P(S_\circ)} = \hm{M(S_\circ)} = 1$.
% \end{itemize}
% \end{defn}

% It is easy to see that admissible marked surfaces are in bijection with admissible decorated surfaces without internal $T$-disks. \red{?} In what follows we will always assume that a marked surface is admissible.

Most important for us will be the class of simple decorated surfaces whose $T$-regions are boundary disks and annuli containing one marked point each. In this case, all special points of the corresponding marked surface are boundary marked points and tacks. As indicated in Example~\ref{ex:4-disks}, the moduli spaces for all other simple decorated surfaces can be recovered as various kinds of reductions of those for surfaces of this type.
\begin{defn}
\label{def:Pdiamond}
Let $S$ be a marked surface without punctures and $\decS$ be the corresponding decorated surface. We define the moduli space
$$
\Pc_{G,S}^\diamond = \Ch_\bullet^\circ(\decS).
$$
\end{defn}

In this text we will only consider Lie groups of Dynkin type $A_n$, whose simply connected form is $SL_{n+1}$ and whose adjoint form is $PGL_{n+1}$. For a marked surface $S$ without punctures but possibly with tacked circles, in Section~\ref{sec:cluster-coord} we will construct, in the sense of Remark~\ref{rem:cluster-structure-ex}, the structure of a non-degenerate cluster ensemble on the moduli spaces $\hr{\Pc_{SL_{n+1},S}^\diamond,\Pc_{PGL_{n+1},S}^\diamond}$. Among the symplectic toric charts in its atlas are those introduced in~\cite{BK23}.

% \red{Do we need to worry about adjoint versus simply-connected forms here? I.e. have $\mathscr{A}$-type coordinates on the space with $G=SL_{n+1}$, and $\mathscr{X}$-type ones for the $G=PGL_{n+1}$ version?}

%Recall that central characters of the group $G$ are in bijection with regular semisimple conjugacy classes. Let $\bs\la = \hc{\la_p \,|\, p \in P(S)}$ be a collection of central characters of $G$. Then
%$$
%\underline\Lc_\bullet^\circ(\decS) = \bigsqcup_{\bs\la} \underline\Lc_{\bullet, \bs\la}^\circ(\decS),
%$$
%where $\underline\Lc_{\bullet, \bs\la}^\circ(\decS)$ is the subspace of framed decorated local systems on $\decS$, such that for every puncture $p \in P(S)$ the monodromy around $p$ belongs to the conjugacy class defined by $\la_p$. We denote the corresponding moduli space by
%$$
%\Ch_{\bullet,\bs\la}^\circ(\decS) = \underline\Lc_{\bullet, \bs\la}^\circ(\decS)/\Gc_\decS.
%$$
%
%\begin{defn}
%Let $S$ be a marked surface and $\decS$ be the corresponding decorated surface. We define the moduli space
%$$
%\Pc_{G,S,\bs\la}^\diamond = \Ch_{\bullet,\bs\la}^\circ(\decS).
%$$
%\end{defn}

\subsection{Ptolemy groupoids}
% \red{In this section, $S$ will be a marked surface without punctures, although possibly with tacked circles.}
% \begin{defn}
% Consider a marked surface $S$. Let $\Diffeo^+(S)$ be the group of diffeomorphisms of $S$ which preserve the orientation, as well as the sets of punctures, marked points, and tacked circles. Denote by $\Diffeo_0^+(S)$ the connected component of identity in $\Diffeo^+(S)$. The \emph{mapping class group} of $S$ is the quotient
% $$
% \Gamma_S = \Diffeo^+(S)/\Diffeo_0^+(S).
% $$
% \end{defn}

% \begin{defn}
% Let $S$ be a marked surface. An \emph{ideal triangulation} of $S$ is a finite collection of triangles covering $S$, such that the set of vertices coincides with the full set of marked points $\mathbb{M}$, and any pair of distinct triangles intersect by a possibly empty union of edges and vertices. A triangle is \emph{self-folded} if two of its edges coincide, and is \emph{special} if one of its edges is a tacked circle. An edge $e$ of a triangle is \emph{boundary} if $e \subset \partial S$. A \emph{flip} $F_e$ of a non-boundary edge $e$ is the replacement of $e$ with the other diagonal of the unique 4-gon containing $e$ as a diagonal.
% \end{defn}
\begin{defn}
Let $S$ be a marked surface with a triangulation $\tri$, and $e$ be a non-boundary edge in $\tri$. We say that the triangulation $\tri'$ is related to $\tri$ by a \emph{flip} $F_e$ at $e$ if $\tri'$ is obtained from $\tri$ by replacing $e$ with the other diagonal of the unique 4-gon containing $e$ as a diagonal.
\end{defn}

% In what follows, we always consider ideal triangulations of $S$ up to an action of $\Diffeo_0^+(S)$. 
% The notion of a flip naturally descends to $\Diffeo_0^+(S)$-orbits of ideal triangulations. 

\begin{defn}
The \emph{Ptolemy complex} $\Pt_2(S)$ of a marked surface $S$ is a 2-groupoid, whose objects are ideal triangulations,\footnote{Let us point out that our definition of the Ptolemy complex differs from the one given in~\cite{Tes05}, where the objects are ideal triangulations whose edges are labelled.} 1-morphisms are sequences of flips, and 2-morphisms are compositions of
\begin{itemize}
\item \emph{bigons,} $F_e^2 = \Id$;
\item \emph{4-gons,} $F_e F_f = F_f F_e$, where $e$ and $f$ do not share a vertex;
\item \emph{pentagons,} see Figure~\ref{fig:pentagon}.
\end{itemize}
\end{defn}

\begin{remark}
The Ptolemy complex can be presented in more topological terms. Namely, consider a 3-manifold $M_S$ defined as $M_S = S \times [0,1]/(\partial S \times [0,1])$. Then 1-morphisms in the Ptolemy complex are the 3d-triangulations of $M$, i.e.\ presentations of $M$ as a union of tetrahedra, while pentagon relations are the 2-3 Pachner moves.
\end{remark}

\begin{defn}
The \emph{Ptolemy groupoid} $\Pt(S)$ is the 1-truncation of the Ptolemy complex. 
\end{defn}
Concretely, this means that $\Pt(S)$ is the 1-category with the same objects as the Ptolemy complex $\Pt_2(S)$, but with 1-morphisms given by isomorphism classes of $1$-morphisms in $\Pt_2(S)$. 
\begin{figure}[h]
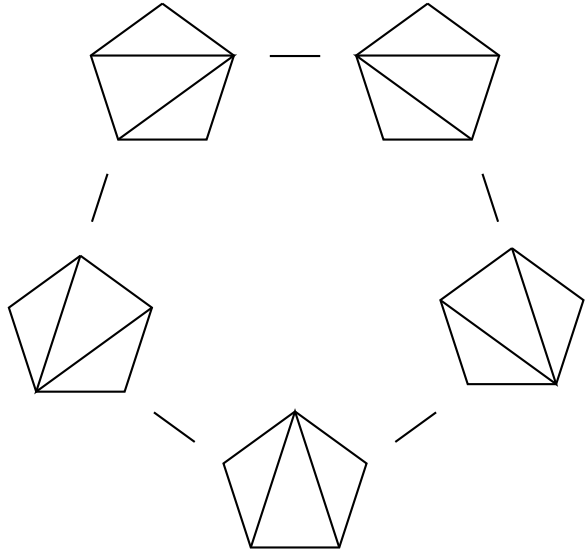

\subfile{pentagon.tex}
\caption{Pentagon relation.}
\label{fig:pentagon}
\end{figure}

\begin{prop}[\cite{CF99}]
The Ptolemy complex is connected and simply connected:
\begin{enumerate}
\item for any pair of objects, $\tri_1,\tri_2$, there exists a 1-morphism $F \colon \tri_1 \to \tri_2$;
\item for any pair of 1-morphisms, $F,F' \colon \tri_1 \to \tri_2$, there exists a 2-morphism $\Phi \colon F \Rightarrow F'$.
\end{enumerate}
\end{prop}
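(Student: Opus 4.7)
The plan is to prove (1) by a complexity-reduction argument and (2) by first reducing to the polygon case and then invoking a contractibility input from Harer--Hatcher--Penner for general surfaces.

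For (1), I would fix the target triangulation $\tri_2$ and define the complexity
\[
c(\tri_1,\tri_2) = \sum_{e \in \tri_1,\, f \in \tri_2} i(e,f),
\]
where $i(e,f)$ denotes the minimal number of interior intersection points between isotopy representatives of $e$ and $f$ in $S \smallsetminus V(S)$. If $c = 0$, then every arc of $\tri_1$ is compatible with every arc of $\tri_2$, so by the maximality of ideal triangulations $\tri_1 = \tri_2$. Otherwise, pick an arc $e \in \tri_1$ that crosses some arc of $\tri_2$, examine the 4-gon of $\tri_1$ containing $e$ as a diagonal, and argue by a short local case analysis that either $F_e$ strictly decreases $c$ or, if the 4-gon is degenerate (e.g.\ self-folded), a neighboring flip does. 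Iterating terminates at $\tri_1 = \tri_2$, producing the required 1-morphism.

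For (2), I would first treat the case where $S$ is a polygon, i.e.\ a disk with boundary marked points and no other special points. There, ideal triangulations correspond bijectively to planar binary trees, flips to tree rotations, and $\Pt_2(S)$ is identified with the 2-skeleton of Stasheff's associahedron $K_n$. The 2-cells of $K_n$ are precisely squares (for commuting non-adjacent rotations) and pentagons (for associativity), matching the 4-gon and pentagon 2-morphisms; together with the bigon relation they generate $\pi_1$ of $K_n^{(1)}$, and since $K_n$ is a convex polytope, its simple-connectedness transports directly to $\Pt_2(S)$. For general admissible $S$, I would invoke the contractibility of the complex of ideal triangulations (equivalently, the arc complex) established by Harer, Hatcher, and Penner. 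Direct inspection of the CW structure shows that its 2-cells are of precisely the three types (bigons, 4-gons, pentagons) appearing in the definition of $\Pt_2(S)$; hence any loop of flips $F'^{-1}F$ bounds a 2-disk built from these relations, which produces the desired 2-morphism $\Phi \colon F \Rightarrow F'$.

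The main obstacle is this last step: translating the geometric contractibility input into the purely combinatorial statement that bigons, 4-gons, and pentagons suffice, and verifying that the global topology of $S$ introduces no further 2-cells in the flip complex beyond those visible locally. A hands-on alternative would be induction on the complexity $3g(S)-3+|V(S)|$: cut $S$ along a non-boundary arc to decompose it into simpler pieces, apply the inductive hypothesis together with the polygon case to the resulting local flip loops, and check that any loop of flips on $S$ decomposes into loops supported in proper subsurfaces. Either route is where the real work of the proposition lies; the connectedness in (1) is comparatively routine.
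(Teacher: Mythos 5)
The paper does not prove this proposition; it is cited as a known result from Chekhov--Fock~\cite{CF99} without any argument of its own, so there is no in-paper proof to compare against. Your sketch is a reasonable outline of the standard route in the literature: intersection-number descent for connectedness, and the identification of $2$-cells in the flip complex (or contractibility of the arc complex \`a la Harer--Hatcher--Penner, or the polygon/associahedron case) for simple connectedness. These are indeed the ingredients one would assemble, and you correctly isolate where the real work lies, namely translating contractibility of the arc complex into the combinatorial statement that bigons, $4$-gons, and pentagons generate all relations.

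One gap worth flagging, specific to this paper's setting: the marked surfaces here may carry \emph{tacked circles}, and an ideal triangulation of $S$ is required to include each tacked circle as a (boundary, unflippable) arc together with a special triangle around it. This is strictly more general than the punctured-surface setting of \cite{CF99} and of the classical arc-complex contractibility results, to which you appeal. Your proof would therefore need an extra reduction step: either show directly that $\Pt_2(S)$ is a fiber bundle (or deformation retract) over $\Pt_2(S_\circ)$ with contractible fiber given by the possible positions of special triangles, or adapt the complexity-induction so that it covers the local moves around a tacked circle (which in the paper's terminology are the shifts $\sigma_c$). Without this bridge, citing Harer--Hatcher--Penner alone does not cover the case $\Cc(S)\neq\varnothing$. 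Your alternative hands-on induction on $3g-3+|V(S)|$ would face the same issue of checking the base and inductive cases in the presence of tacked circles.
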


\begin{cor}
The Ptolemy groupoid is connected and thin: there exists a unique morphism between any pair of objects.
\end{cor}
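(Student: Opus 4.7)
The proposed proof is essentially a direct unpacking of the definitions, using the preceding proposition as a black box. The plan is to separately verify connectedness and thinness of $\Pt(S)$, translating each property of the 2-groupoid $\Pt_2(S)$ through the 1-truncation functor.

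First I would establish connectedness. Given any two objects $\tri_1, \tri_2 \in \Pt(S)$, which are by construction the same as objects of $\Pt_2(S)$, part (1) of the proposition produces a 1-morphism $F \colon \tri_1 \to \tri_2$ in $\Pt_2(S)$, i.e. a sequence of flips taking $\tri_1$ to $\tri_2$. Its isomorphism class $[F]$ is then a morphism $\tri_1 \to \tri_2$ in the 1-truncation $\Pt(S)$, establishing the existence part of the claim.

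Next I would prove thinness. Suppose $[F]$ and $[F']$ are two morphisms from $\tri_1$ to $\tri_2$ in $\Pt(S)$, represented by sequences of flips $F, F' \colon \tri_1 \to \tri_2$ in $\Pt_2(S)$. By part (2) of the proposition there exists a 2-morphism $\Phi \colon F \Rightarrow F'$, constructed as a composition of bigon, square, and pentagon relations. By the definition of the 1-truncation, the existence of such a 2-morphism means precisely that $F$ and $F'$ are identified in $\Pt(S)$, so $[F] = [F']$. Combining this with the previous paragraph yields a unique morphism between any pair of objects.

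There is no substantive obstacle here: the argument is purely formal, with all the real content absorbed into the proposition of~\cite{CF99}. The only thing to be careful about is to match the conventions for the 1-truncation of a 2-groupoid with the definition of $\Pt(S)$ given above the corollary, namely that morphisms of $\Pt(S)$ are equivalence classes of 1-morphisms of $\Pt_2(S)$ under the equivalence relation generated by 2-morphisms; this is standard and requires no additional work.
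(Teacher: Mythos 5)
Your argument is correct and is exactly the intended one: the paper leaves the corollary unproved precisely because it follows formally from the proposition by passing to isomorphism classes of $1$-morphisms, which is what you spell out. Nothing further is needed.
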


\begin{defn}
Let $\Gamma$ be a group acting on a groupoid $\mathrm{G}$ via endofunctors $A_\gamma$, $\gamma \in \Gamma$. The \emph{enhancement} of $\Grm$ by $\Gamma$ is the groupoid  $\widehat\Grm$ whose objects are the same as those of $\Grm$, and morphisms $\Mor(O_1,O_2)$ between objects $O_1$ and $O_2$ are the formal compositions $A_\gamma F$, where $F \in \Mor(O_1,A_\gamma^{-1}(O_2))$. Composition in $\widehat\Grm$ is defined by
\begin{align}
\label{eq:enhanced-mult}
A_{\gamma_2} F_2 \circ A_{\gamma_1} F_1 =  A_{\gamma_2 \gamma_1} A_{\gamma_1}^{-1}(F_2) F_1,
\end{align}
where $F_1 \in \Mor(O_1,A_{\gamma_1}^{-1}(O_2))$ and $F_2 \in \Mor(O_2,A_{\gamma_2}^{-1}(O_3))$.
\end{defn}

We apply this construction to the natural action via endofunctors of the mapping class group $\Gamma_S$ on $\Pt(S)$. Namely, an element $\gamma \in \Gamma_S$ acts via an endofunctor $A_\gamma$ defined by $A_\gamma(\tri) = \gamma(\tri)$ and $A_\gamma(F_{\tri_2,\tri_1}) = F_{\gamma(\tri_2), \gamma(\tri_1)}$, where $F_{\tri_2,\tri_1}$ is the unique morphism between $\tri_1$ and $\tri_2$. This action of $\Gamma_S$ evidently descends from one on the Ptolemy complex.
%We also recall that the stabilizer in $\Gamma_S$ of any triangulation $\tri$ is trivial, see~\red{BLAH}.

\begin{defn}
The \emph{enhanced Ptolemy groupoid} $\widehat\Pt(S)$ is the enhancement of the Ptolemy groupoid $\Pt(S)$ by the mapping class group $\Gamma_S$.
\end{defn}

\begin{cor}
\label{cor:MCG-pi1}
For any object $\tri$ of $\widehat\Pt(S)$, we have an injective homomorphism
$$
%\label{eq:MCG-act}
\Gamma_S \hooklongrightarrow \Aut(\tri) \simeq \pi_1\big(\widehat\Pt(S)\big), \qquad \gamma \longmapsto A_\gamma F_{\gamma^{-1}(\tri),\tri}.
$$
\end{cor}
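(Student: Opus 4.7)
The plan is to first verify that the map is well-defined and compatible with composition by unwinding the definition of the enhanced groupoid, then deduce injectivity from the rigid ``labeled'' structure of morphisms in $\widehat\Pt(S)$, and finally invoke the standard groupoid fact that $\pi_1$ at any vertex of a connected groupoid coincides with the automorphism group at that vertex.

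First I would check well-definedness: by thin-ness of $\Pt(S)$, $F_{\gamma^{-1}(\tri),\tri}$ is the unique morphism from $\tri$ to $\gamma^{-1}(\tri)$, and since $A_\gamma$ is an endofunctor with $A_\gamma(\gamma^{-1}(\tri)) = \tri$, the formal composition $A_\gamma F_{\gamma^{-1}(\tri),\tri}$ lies in $\Aut(\tri)$. The group law is then a direct calculation. Applying the composition formula~\eqref{eq:enhanced-mult} to $A_{\gamma_2}F_{\gamma_2^{-1}(\tri),\tri} \circ A_{\gamma_1}F_{\gamma_1^{-1}(\tri),\tri}$ produces $A_{\gamma_2\gamma_1}$ applied to $A_{\gamma_1}^{-1}\bigl(F_{\gamma_2^{-1}(\tri),\tri}\bigr) \circ F_{\gamma_1^{-1}(\tri),\tri}$. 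Because $A_{\gamma_1}^{-1}$ is a functor, it sends the unique morphism $F_{\gamma_2^{-1}(\tri),\tri}$ to the unique morphism from $\gamma_1^{-1}(\tri)$ to $(\gamma_2\gamma_1)^{-1}(\tri)$, which by thin-ness must equal $F_{(\gamma_2\gamma_1)^{-1}(\tri),\gamma_1^{-1}(\tri)}$. Composing with $F_{\gamma_1^{-1}(\tri),\tri}$ and invoking thin-ness once more, we obtain $F_{(\gamma_2\gamma_1)^{-1}(\tri),\tri}$, so the product equals the image of $\gamma_2\gamma_1$.

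Injectivity follows immediately from the observation that a morphism in $\widehat\Pt(S)$ is by definition a formal composition $A_\gamma F$ tagged by the element $\gamma \in \Gamma_S$; since $F$ is pinned down uniquely by its source and target through thin-ness of $\Pt(S)$, distinct elements $\gamma \in \Gamma_S$ yield distinct automorphisms of $\tri$. Thus the assignment $\gamma \mapsto A_\gamma F_{\gamma^{-1}(\tri),\tri}$ is actually a bijection onto $\Aut(\tri)$. For the identification $\Aut(\tri) \simeq \pi_1\bigl(\widehat\Pt(S)\bigr)$, note that $\widehat\Pt(S)$ has the same objects as $\Pt(S)$ and strictly more morphisms, so it inherits connectedness from the connectedness of $\Pt(S)$; the isomorphism is then the general fact that the fundamental group of a connected groupoid is the isotropy group at any chosen object, realized here by picking $\tri$.

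I do not expect any serious obstacle: the entire argument is driven by the thin-ness of $\Pt(S)$ and the explicit composition formula~\eqref{eq:enhanced-mult}. The only subtlety is bookkeeping of source and target objects under repeated application of the functors $A_\gamma$, $A_\gamma^{-1}$, which is easily managed by writing out the chain $\tri \to \gamma_1^{-1}(\tri) \to (\gamma_2\gamma_1)^{-1}(\tri)$ explicitly.
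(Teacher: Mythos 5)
Your proof is correct and follows the only natural argument: verify that the composition formula~\eqref{eq:enhanced-mult} together with thinness of $\Pt(S)$ makes the map a homomorphism, observe injectivity from the fact that the tag $\gamma$ is part of the data of a formal composition, and identify $\Aut(\tri)$ with $\pi_1$ by connectedness. The paper gives no explicit proof, but this is precisely the unwinding it intends; your remark that the map is in fact a bijection onto $\Aut(\tri)$ (again by thinness) is correct and is indeed the content behind the identification with $\pi_1\bigl(\widehat\Pt(S)\bigr)$.
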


%\blue{Gus attempt to spell out definition, feel free delete/comment out if superfluous: let us denote by $[\tri_1,\tri_2]$ the unique morphism in $\Pt(S)$ from $\tri_2$ to $\tri_1$. Then the set of morphisms from $\tri_2$ to $\tri_1$ in $\widehat\Pt(S)$ consists of pairs $(A_\gamma,[\tri',\tri_2])$ where $\gamma\in \Gamma_S$ satisfies $\gamma(\tri')=\tri_1$. Composition is defined by
%$$
%(A_\beta,[\tri'',\tri_2])\circ(A_\gamma,[\tri',\tri_1]) := (A_{\beta\gamma},[\gamma^{-1}(\tri''),\tri_1])
%$$}
%Hence for any object $\tri$ in $\widehat\Pt(S)$ we get a homomorphism $\Gamma_S \to \End(\tri)$ given by
%\beq
%\label{eq:MCG-act}
%\gamma \mapsto A_\gamma \circ [\gamma^{-1}(\tri),\tri],
%\eeq
%where $[\gamma^{-1}(\tri),\tri]$ is the unique morphism from $\tri$ to $\gamma^{-1}(\tri)$ in the thin Ptolemy groupoid.

\subsection{Cut and glue functors}
\label{subsec:cut-n-glue}
In this section, $S$ will be a marked surface without puntures, but possibly with tacked circles.
\begin{defn}
We say that an arc $a$ of a triangulation $\tri$ of $S$ is a \emph{shadow} of a simple closed curve $c \subset S$ if $a$ and $c$ are isotopic as free loops. An ideal triangulation $\tri$ of $S$ is called \emph{$c$-admissible} if it contains a shadow $a$ of $c$ which is not a tacked circle.
\end{defn}
{Note in particular that shadows of a curve $c$ need not be isotopic as arcs of the triangulation, i.e.\ relative to their basepoint, and can indeed  be based at different special points on $S$.}
 In any given triangulation, a  simple closed curve $c$ can have at most 2 shadows. If $c$ has two shadows, they bound a cylinder $C \subset S$ covered by a pair of triangles, with $\pi_1(C) = \ha{c}$. Note that the shadows of $c$ are also shadows of each other.

\begin{defn}
A simple closed curve $c \subset S$ is said to be \emph{isolated} in a triangulation $\tri$ if the latter contains two shadows of $c$. We call the subsurface $C \subset S$ bounded by the shadows of $c$ an \emph{isolating cylinder} for $c$, and refer to $\tri$ as a $c$\emph{-isolating triangulation} of $S$.
\end{defn}

% <<<<<<< Updated upstream
% Consider a collection $\bs c$ of  simple closed curves on $S$. In what follows we will consider the full subgroupoids
% =======
Consider a collection $\bs c$ of non-intersecting, non-homotopic  simple closed curves on $S$. We consider the following full sub-groupoids 
% >>>>>>> Stashed changes
$$
\Pt^o_{\bs c}(S) \subset \Pt_{\bs c}(S) \subset \Pt(S),
$$
where the objects of $\Pt_{\bs c}(S)$ are ideal triangulations admissible with respect to at least one curve in $\bs c$ and those of $\Pt^o_{\bs c}(S)$ are ideal triangulations admissible with respect to every curve in $\bs c$.
%We will also make use of full subgroupoids
%$$
%\Pt_{|\bs c|}(S) \subset \Pt_{\bs c}(S)
%\qquad\text{and}\qquad
%\Pt^o_{|\bs c|}(S) \subset \Pt^o_{\bs c}(S),
%$$
%where the objects of $\Pt_{|\bs c|}(S)$ are such ideal triangulations where at least one curve in $\bs c$ is isolated and those of $\Pt^o_{\bs c}(S)$ are ideal triangulations where every curve in $\bs c$ is isolated.

Let $\tau_c$ be the Dehn twist along an oriented simple closed curve $c \subset S$. Clearly, the cyclic subgroup $\ha{\tau_c} \subset \Gamma_S$ is independent of the orientation of $c$. Given a collection $\bs c$ of  simple closed curves on $S$, we write
$$
\Gamma_{S;\bs c} = C_{\Gamma_S}\ha{\tau_c \,|\, c \in \bs c}
$$
for
the centralizer in $\Gamma_S$ of the subgroup generated by the Dehn twists $\tau_c$, $c \in \bs c$.
Then elements of $\Gamma_{S;\bs c}$ preserve the isotopy classes of all curves in $\bs c$. We define $\widehat\Pt_{\bs c}(S)$ and $\widehat\Pt^o_{\bs c}(S)$ to be the $\Gamma_{S;\bs c}$-enhanced groupoids $\Pt_{\bs c}(S)$ and $\Pt^o_{\bs c}(S)$ respectively.

Now, let $S'$ be a possibly disconnected marked surface, and $c_\pm \in \Cc(S')$ be a pair of oriented tacked circles
% , with orientation induced by that of $S'$, 
with tacks $t_\pm \in c_\pm$. Given an orientation-compatible homeomorphism $\phi \colon c_+ \to c_-$ such that $\phi(t_+) = t_-$, we consider an oriented surface $S$ obtained from $S'$ by identifying $c_+$ and $c_-$ along $\phi$, and forgetting the data of the tacks $t_\pm$. {We endow the curve $c\subset S$ obtained as the common image of the $c_\pm$ with the orientation given by that of the curve $c_+$. }

% \red{[I don't get the stuff about erasing; the image of $c_\pm$ is now just contained in the interior of S, right?] }
% Let $\Pt_\phi(S')$ be the groupoid obtained from $\Pt_{c_\pm}(S')$ by identifying all objects in the same orbit of the cyclic group $\ha{D_{c_+}D_{c_-}}$ generated by the product of two Dehn twists, and all morphisms with the same source and target. 
 Let $\Pt_\phi(S')$ be the thin groupoid whose set of objects is obtained from that of $\Pt_{c_\pm}(S')$ by identifying all objects in the same orbit of the cyclic group $\ha{\tau_{c_+}\tau_{c_-}}$ generated by the product of two Dehn twists.
 Gluing along $\phi$ induces a surjection from $\Gamma_{S'}$ to $\Gamma_{S;c}$ whose kernel is $\ha{\tau_{c_+}\tau_{c_-}}$, and we set
$$
\Gamma_{S';\phi} = \Gamma_{S'}/\ha{\tau_{c_+}\tau_{c_-}}\simeq \Gamma_{S;c}.
$$
This group acts on $\Pt_\phi(S')$, and we define $\widehat\Pt_\phi(S')$ to be the $\Gamma_{S';\phi}$-enhanced groupoid $\Pt_\phi(S')$. The same construction can obviously be carried out for a collection of pairs of tacked circles on $S'$.

\begin{prop}
\label{prop:cut-glue}
Let $\bs c$ be a collection of pairwise non-intersecting, non-homotopic simple closed curves on $S$. Then we have a pair of mutually-inverse \emph{cutting} and \emph{gluing} functors:
\begin{align*}
\Cut_{\bs c} &\colon \Ptoh_{\bs c}(S) \longra \widehat\Pt_{\bs\phi}(S'), \\
\Glue_{\bs\phi} &\colon \widehat\Pt_{\bs\phi}(S') \longra \Ptoh_{\bs c}(S).
\end{align*}
\end{prop}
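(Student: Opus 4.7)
The plan is to construct $\Cut_{\bs c}$ directly on objects, leverage thinness of both groupoids to extend uniquely to morphisms, verify equivariance with respect to the canonical isomorphism $\Gamma_{S;\bs c}\simeq\Gamma_{S';\bs\phi}$ induced by gluing, and take $\Glue_{\bs\phi}$ to be the literal inverse construction. The bulk of the work is to show that the assignment on objects is a well-defined bijection, so that thinness immediately upgrades it to a groupoid equivalence, and then to check that the action intertwines the mapping class group actions.

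To define $\Cut_{\bs c}$ on objects, I would start with a triangulation $\tri\in\Pt^o_{\bs c}(S)$ and use admissibility to obtain, for each $c\in\bs c$, a shadow $a_c\subset\tri$ sitting in a cylindrical neighborhood of $c$ with both endpoints at a common special point $v_c$ off $c$. Since shadows are freely isotopic to $c$, no edge of $\tri$ crosses any curve in $\bs c$, and hence cutting $S$ along $\bs c$ descends $\tri$ to a triangulation $\tri'$ of $S'$. Each $a_c$ becomes a shadow in $\tri'$ of whichever tacked circle $c_\pm$ lies on the same side of $c$ in $S$ as the basepoint $v_c$, so $\tri'\in\Pt_{\bs c_\pm}(S')$. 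When $\tri$ is $c$-isolating it contains two shadows $a_c^\pm$ on opposite sides of $c$, and $\tri'$ is admissible at both $c_+$ and $c_-$; the choice of primary shadow alters $\tri'$ by an element of $\ha{\tau_{c_+}\tau_{c_-}}$, so $\tri'$ descends to a canonically defined object of $\Pt_{\bs\phi}(S')$. The gluing functor $\Glue_{\bs\phi}$ reverses this: pick a representative of an object of $\Pt_{\bs\phi}(S')$, glue along the maps in $\bs\phi$, and observe that shadows of $c_\pm$ in $\tri'$ become shadows of $c$ in the glued triangulation, while the $\tau_{c_+}\tau_{c_-}$-ambiguity collapses upon gluing.

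Both groupoids are thin, namely $\Pt^o_{\bs c}(S)$ as a full subgroupoid of $\Pt(S)$ and $\Pt_{\bs\phi}(S')$ by construction, so the action on morphisms is uniquely determined by that on objects and functoriality is automatic. For the enhancement, the canonical isomorphism $\Gamma_{S;\bs c}\simeq\Gamma_{S';\bs\phi}$ identifies the two enhancing groups, and any $\gamma\in\Gamma_{S;\bs c}$ preserves each $c\in\bs c$ up to isotopy, hence lifts to $\bar\gamma\in\Gamma_{S';\bs\phi}$ preserving the tacked circles. By naturality of the cut under diffeomorphism actions, $\Cut_{\bs c}(A_\gamma F)=A_{\bar\gamma}\Cut_{\bs c}(F)$, and the enhanced composition law~\eqref{eq:enhanced-mult} is then respected automatically. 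Mutual inverseness on objects is immediate from the geometric definitions, as the $\tau_{c_+}\tau_{c_-}$-quotient precisely absorbs the choice of primary shadow, and thinness propagates the identity to morphisms.

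The main obstacle I anticipate lies in the case analysis required when a triangulation is $c$-admissible but not $c$-isolating. In that setting one must verify that cutting along a single shadow yields a coherent well-defined element of $\Pt_{\bs\phi}(S')$, that any two valid cuts differ precisely by an element of $\ha{\tau_{c_+}\tau_{c_-}}$, and that this ambiguity is exactly the one killed in forming $\Pt_{\bs\phi}(S')$. This requires careful tracking of how the orientation and the side of $c$ on which shadows lie transform under flips and diffeomorphisms, and matching bookkeeping in the gluing direction to confirm $\Glue_{\bs\phi}\circ\Cut_{\bs c}=\mathrm{id}$ on the nose rather than merely up to isomorphism.
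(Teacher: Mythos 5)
The structural outline you propose — define one functor on objects, extend uniquely to morphisms using connectedness and thinness, verify equivariance with respect to $\Gamma_{S;\bs c}\simeq\Gamma_{S';\bs\phi}$, and take the other functor to be the inverse — matches the paper's strategy. However, your explicit object-level construction of $\Cut_{\bs c}$ is not correct as written, and the place where it breaks is precisely the non-isolating case you flag in your last paragraph.

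The claim ``cutting $S$ along $\bs c$ descends $\tri$ to a triangulation $\tri'$ of $S'$'' glosses over two genuine issues. First, the target surface $S'$ has tacked circles $c_\pm$ each carrying a distinguished tack $t_\pm$; cutting along the curve $c$ itself produces new boundary circles with no marked points and no arcs incident to them, so one does not obtain a marked surface of type $S'$, let alone an ideal triangulation of it. If you instead mean ``cut along the shadow $a_c$,'' which is what your subsequent appeal to $a_c$ surviving suggests, then the copies of $v_c$ do supply tacks, but now $a_c$ itself becomes the tacked circle and leaves the set of interior arcs. In that case, when $\tri$ is $c$-admissible but not $c$-isolating (one shadow, not two), the cut triangulation contains no arc that is a shadow of either $c_+$ or $c_-$, so it is not an object of $\Pt_{c_\pm}(S')$ and hence not of $\Pt_{\bs\phi}(S')$. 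Either reading fails.

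The paper resolves this by defining $\Glue_\phi$ first: take $\tri'$, WLOG $c_+$-admissible, locate the isolating cylinder $C$ for $c_+$ with its four arcs $(c_+,a_+,e_1,e_2)$, glue $c_\pm$ along $\phi$, and retract $C$ onto $a_+$ with a fixed convention that $e_1$ (the arc preceding $e_2$ around the tack in the positive direction) retracts to the marked point of $a_+$. This is well-defined and visibly invariant under $\tau_{c_+}\tau_{c_-}$, giving a map on objects of $\Pt_\phi(S')$. Then $\Cut_c$ is constructed as its inverse: cut along a shadow $a$ of $c$, declare $a_+=c_+$ directly, but on the other side glue in an auxiliary cylinder $C$ with one marked point per end, call its far boundary $c_-$, and triangulate $C$ so that $\Glue_\phi$ applied to the result returns $\tri$. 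The auxiliary cylinder is exactly what restores $c_-$-admissibility in the non-isolating case, and the orientation convention is what makes the choice of shadow and the $\tau_{c_+}\tau_{c_-}$-ambiguity cohere. Without the cylinder step you cannot land in $\Pt_{\bs\phi}(S')$, and without the explicit retraction convention the map is not well-defined on equivalence classes; your last paragraph anticipates this difficulty but does not supply the construction needed to dispose of it.
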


\begin{proof}
% Note that it suffices to construct a pair of inverse \emph{cut} and \emph{glue} functors:
% \begin{align*}
% \Cut_{\bs c} \colon \Ptoh_{\bs c}(S) \longra \widehat\Pt_{\bs\phi}(S'), \\
% \Glue_{\bs\phi} \colon \widehat\Pt_{\bs\phi}(S') \longra \Ptoh_{\bs c}(S).
% \end{align*}
% <<<<<<< Updated upstream
We treat the case of a single curve $c$, with the general case being obtained by iterating the construction.
To specify functors as in the statement of the Proposition, it suffices to define $\Gamma_{S;c}$-equivariant maps $\Cut_c$ and $\Glue_\phi$ on objects of $\Pt_c(S)$ and $\Pt_\phi(S')$, since both groupoids are connected and thin. Let $\tri'$ be an object in $\widehat\Pt_\phi(S')$. Without loss of generality we may assume that it is $c_+$-admissible. Then $\tri'$ contains the isolating cylinder $C$ of $c_+$, which in turn has 4 arcs: $c_+$, its shadow $a_+$, and a pair of non-boundary arcs, $e_1,e_2$, where $e_1$ precedes $e_2$ as we go around the tack on $c_+$ in a positive direction defined by the orientation of $S'$.  Let us identify tacked circles $c_+$ and $c_-$ along $\phi$, and retract $C$ onto $a_+$ in such a way that $e_1$ retracts to the marked point of $a_+$, arriving at a triangulation $\tri$ in $\widehat\Pt_c(S)$. We illustrate this on Figure~\ref{fig:local-gluing-surfaces}, where the homeomorphism $\phi$ is indicated by horizontal yellow segments connecting tacked circles $c_\pm$. This procedure yields the same ideal triangulation $\tri$ if the initial one $\tri'$ is replaced by its image under the diagonal product of Dehn twists $\tau_{c_+}\tau_{c_-}$. Hence it descends to a map on objects of $\Pt_\phi(S')$ given by $\Glue_\phi(\tri') = \tri$. This map is $\Gamma_{S;c}$-equivariant, and well-defined on ideal triangulations which are both $c_+$- and $c_-$-admissible.

\begin{figure}[h]
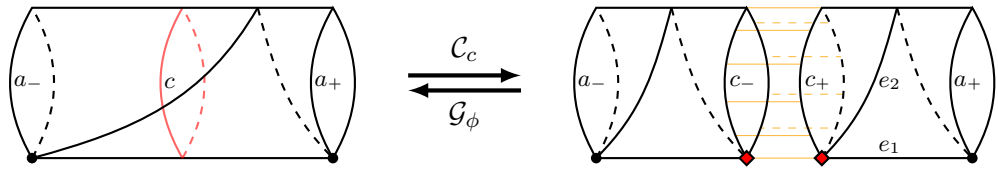

\subfile{fig-gluing-cyl.tex}
\caption{Cutting and gluing functors.}
\label{fig:local-gluing-surfaces}
\end{figure}

On the other hand, let $\tri$ be an object in $\widehat\Pt_c(S)$, and $a$ be a shadow of $c$ {equipped with an orientation.} Cut the surface $S$ along $a$, thus obtaining a pair of new boundary arcs $a_\pm$. We suppose that the orientation of $c$ is chosen such that we may assume that $a_+$ is a shadow of $c$. We then turn it into a tacked circle and rename it into $c_+$. Now, we glue a cylinder $C$ with one marked point on each side along $a_-$, denote by $c_-$ the arc of the cylinder which is a new boundary arc, and turn $c_-$ into a tacked circle, thereby recovering the surface $S'$ along with a partial triangulation. We complete it to a triangulation $\tri'$ by taking the unique triangulation of $C$ such that $\mathcal{G}_\phi(\tri')=\tri$, and define the $\Gamma_{S;c}$-equivariant map $\Cut_c$ by setting $\Cut_c(\tri) = \tri'$. This completes the construction of the pair of inverse functors $\mathcal{G}_\phi$ and $\mathcal{C}_c$.
\end{proof}

We finish this section with the following observation. Let $\bs c \subset \Cc(S)$ and $\bs\phi$ be as in Proposition~\ref{prop:cut-glue}. Given a decomposition $\bs c = \bs c_1 \sqcup \bs c_2$, denote by $S_{\bs c_i}$ the result of cutting $S$ along $\bs c_i$, and let $\bs\phi_i$ be such that $S$ is the result of gluing $S_{\bs c_i}$ by $\bs\phi_i$.
Then the equalities
$$
\Cc_{\bs c} = \Cc_{\bs c_1} \Cc_{\bs c_2} = \Cc_{\bs c_2} \Cc_{\bs c_1}
\qquad\text{and}\qquad
\Gc_{\bs\phi} = \Gc_{\bs\phi_1} \Gc_{\bs\phi_2} = \Gc_{\bs\phi_2} \Gc_{\bs\phi_1}
$$
follow from the construction of functors $\Cc_c$ and $\Gc_\phi$. To aid the reader in parsing these formulas, we write down the source and target groupoids of thefactors in the composition $\Cc_{\bs c_2} \Cc_{\bs c_1}$. We have
\begin{align*}
&\Cc_{\bs c_1} \colon \widehat\Pt_{\bs c}(S) \longra \Ptoh_{\bs c_2;\bs\phi_1}(S_{\bs c_1}), \\
&\Cc_{\bs c_2} \colon \Ptoh_{\bs c_2;\bs\phi_1}(S_{\bs c_1}) \longra \widehat\Pt_{\bs\phi}(S_{\bs c}),
\end{align*}
where $\Pt^o_{\bs c_2;\bs\phi_1}(S_{\bs c_1})$ is the full subgroupoid of $\Pt_{\bs\phi_1}(S_{\bs c_1})$, whose objects are (equivalence classes of) triangulations admissible with respect to every curve in $\bs c_2$, and $\Ptoh_{\bs c_2;\bs\phi_1}(S_{\bs c_1})$ is the $\Gamma_{S;\bs c}$-enhancement of $\Pt^o_{c_2;\bs\phi_1}(S_{\bs c_1})$.

\subsection{Umbral moves}
\label{subsec:umbral}
In this section we investigate the passage from one $c$-isolating triangulation to another in the Ptolemy groupoid. The key to understanding this is the following local transformation on isolating cylinders. 

\begin{defn}
\label{def:umbral}
Let $C$ be an isolating cylinder of $c$, $e_0$ a non-boundary boundary shadow of $c$, and $e_1$, $e_2$ the internal arcs, labelled in such a way that the cycle $e_0\rightarrow e_1\rightarrow e_2\rightarrow e_0$ inside each triangle of $C$ agrees with the orientation of the surface.
%such that $e_2$ follows $e_1$ as we go around the marked point of $e_0$ in a direction consistent with the orientation of $C$. 
%We call the following two sequences of flips
%$$
%F^+_{C;e_0} = F_{e_2}F_{e_1}F_{e_0},
%\qquad
%F^-_{C;e_0} = F_{e_1}F_{e_2}F_{e_0}
%$$
%the \emph{umbral moves.} 
We call the following sequence of flips
$$
U_{C;e_0} = F_{e_2}F_{e_1}F_{e_0}
$$
the \emph{umbral move} at $e_0$.
%\begin{figure}
%\label{fig:umbral}
%\includegraphics[scale=.2]{umbral}
%\caption{The umbral move $U_{C;e_0}$.}
%\end{figure}
\begin{figure}[h]
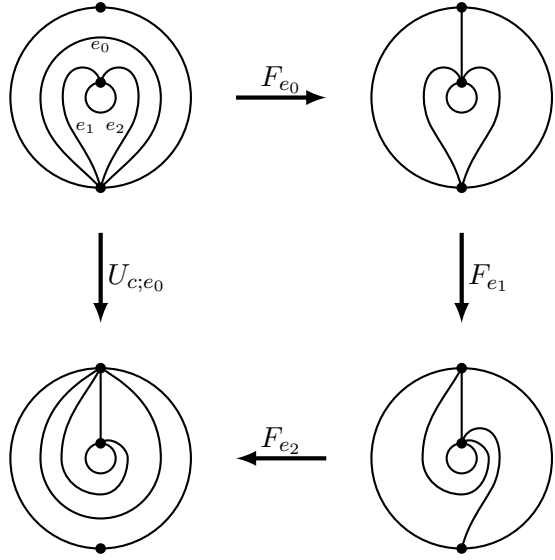

\subfile{fig-umbral.tex}
\caption{The umbral move $U_{C;e_0}$.}
\label{fig:umbral}
\end{figure}

The umbral move is illustrated in Figure~\ref{fig:umbral}.

\begin{remark}
There is also an \emph{opposite umbral move} given by the sequence of flips 
$$U^{op}_{C;e_0} = F_{e_1}F_{e_2}F_{e_0}.
$$ 
The triangulation $U^{op}_{C;e_0}(\tri)$ is simply the image of $U_{C;e_0}(\tri)$ under the Dehn twist along $c$, and for this reason we choose to work only with the umbral moves themselves and not their opposite counterparts.
\end{remark}
Suppose we cut $S$ along $c$ to produce a surface $S'$ with two new tacked circles $c_\pm$. Given a triangulation $\tri$ of $S$ containing an isolating cylinder $C$ for $c$,  write $e_\pm$ for the corresponding edges bounding $C$. It is straightforward to see that under the cutting functor $\mathcal{C}_c$, the umbral move $U_{C;e_\pm}$ is intertwined with the (unique) umbral move $U_{C_\pm}$ on $S'$ for the cylinder $C_\pm$ having $c_\pm\subset\partial S'$ as one of its boundaries.

\end{defn}

%\begin{prop}
%\label{prop:3-3}
%Let $C, C_\pm$ be cylinders isolating a curve $c \subset S$, $\tri$ be an object in $\Pt_C(S)$, and $\tri_\pm = F^\pm_{C;e_0}(\tri)$ objects in $\Pt_{C_\pm}(S)$. Then the following diagram is commutative:
%$$
%\begin{tikzcd}
%\Hc(\tri) \arrow{r}{\eta_{C}(\tri)} \arrow[swap]{d}{\Hc\hr{F^\pm_{C;e_0}}} & \Hc_c(\tri) \arrow{d}{\Hc_c\hr{F^\pm_{C;e_0}}} \\
%\Hc(\tri_\pm) \arrow[swap]{r}{\eta_{C_\pm}(\tri_\pm)} & \Hc_c(\tri_\pm)
%\end{tikzcd}
%$$
%\end{prop}

%\begin{prop}
%\label{prop:3-3}
%Suppose $\tri$ is an ideal triangulation containing an isolating cylinder $C$ for the curve $c$, and $e$ is a non-boundary shadow of $c$. Write $\tri' = F_{C;e}(\tri)$ for the triangulation obtained by applying the umbral move at $e$ to $\tri$. Then the following diagram is commutative:
%$$
%\begin{tikzcd}
%\Hc(\tri) \arrow{r}{\eta_{\tri}} \arrow[swap]{d}{\Hc\hr{F_{C;e}}} & \Hc_c(\tri) \arrow{d}{\Hc_c\hr{F_{C;e}}} \\
%\Hc(\tri') \arrow[swap]{r}{\eta_{\tri'}} & \Hc_c(\tri')
%\end{tikzcd}
%$$
%\end{prop}

%We prove Proposition~\ref{prop:3-3} in Section~\ref{sec:3-to-3}. \red{Not really, we only prove the algebraic version. We still need to construct $\Hc_c$ in Section 10, and use that the only operators that commute with $\Lbb$ are scalars. Btw, is it obvious in the cut rep-n?}

\begin{lemma}
\label{lem:re-iso}
Any morphism in $\Pt_{\hm{c}}(S)$ can be factored into a sequence of umbral moves and flips preserving the isolating cylinder of $c$.
\end{lemma}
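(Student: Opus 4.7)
The plan is to exploit the thinness of the Ptolemy groupoid $\Pt_{\hm{c}}(S)$: since there exists a unique morphism between any two objects, it suffices to exhibit, for each pair of $c$-admissible triangulations $\tri_1,\tri_2$, \emph{some} sequence of umbral moves and isolating-cylinder-preserving flips between them. Any such sequence then automatically represents the unique morphism in question, so one need not track relations explicitly.

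I would first reduce to the case in which both $\tri_1$ and $\tri_2$ are $c$-isolating. If $\tri_i$ contains only a single shadow $a$ of $c$, then there is a flip, applied to an arc of $\tri_i$ adjacent to $a$, which produces a second shadow, yielding a $c$-isolating triangulation $\widetilde\tri_i$ whose isolating cylinder contains $a$. Such a cylinder-creating flip (respectively its inverse) can be prepended to (respectively appended at the end of) the eventual sequence, and in the resulting enlarged sequence all but the outermost flips act between $c$-isolating triangulations.

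For two $c$-isolating triangulations $\tri_1,\tri_2$ with isolating cylinders $C_1,C_2$, the alignment proceeds in two stages. The umbral move $U_{C_1;e_0}$ implements precisely a one-step rotation of the tack of the shadow $e_0$ around $c$, and a sufficient number of iterations (combined with a single cylinder-preserving flip of the internal diagonal, which interchanges the two diagonal configurations of the cylinder) can bring $C_1$ into coincidence with $C_2$ as a triangulated subsurface. Once the isolating cylinders coincide, the triangulations differ only in the external region $S\smallsetminus C$, and by the simple connectedness of the Ptolemy complex of $S\smallsetminus C$ relative to its boundary, they are connected by a sequence of flips occurring outside the cylinder; these are automatically isolating-cylinder-preserving.

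The main obstacle is verifying the alignment step: one must show that umbral moves, together with cylinder-preserving flips, really do suffice to relate any two $c$-isolating cylinders in $S$. This reduces to a purely local question about sequences of flips between triangulations of the annular neighborhood of $c$, and the essential technical input is the pentagon relation in the Ptolemy complex, which allows one to rewrite any flip sequence that begins and ends at $c$-isolating triangulations but passes through intermediate triangulations in which the isolating cylinder is temporarily destroyed as a composition of umbral moves followed (or preceded) by cylinder-preserving flips.
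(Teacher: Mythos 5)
The overall framing is reasonable, and you are right that thinness of the Ptolemy groupoid lets you produce \emph{some} sequence of the required type between any two objects. But the argument you give for the alignment step --- which is essentially all of the content of the lemma --- does not work as stated, for two connected reasons.

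First, the geometric description of the umbral move is incorrect. An umbral move $U_{C;e_0}$ does not merely rotate the tack of the shadow $e_0$ around $c$; it produces a new shadow that may be based at a completely \emph{different} special point of $S$ and may wind around the surface in a new way. The paper stresses this explicitly: ``shadows of a curve $c$ need not be isotopic as arcs $\ldots$ and can indeed be based at different special points.'' If the umbral move only rotated the tack, you could never connect two isolating cylinders whose shadows are based at different special points. So the claim that ``a sufficient number of iterations $\ldots$ can bring $C_1$ into coincidence with $C_2$'' is not justified by any rotation picture, and is exactly what needs to be proved.

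Second, the reduction to ``a purely local question about the annular neighborhood of $c$'' is false for the same reason: the two shadows, and hence the two isolating cylinders, are not confined to a common annular neighborhood. They can intersect arbitrarily many times and visit any part of the surface. This is why the paper's proof passes to the auxiliary surface $S_c^\circ$, replaces the isolating cylinders by self-folded triangles around punctures $p_\pm$, and then runs a careful induction on the number of intersection points between the pair of internal arcs $(d_+,d_-)$ of one triangulation and $(d'_+,d'_-)$ of the other, using the base case to replace arcs by carefully chosen shortcuts. That induction, or something playing its role, is entirely missing from your argument. Invoking the pentagon relation does not fill the gap: simple connectedness of the Ptolemy $2$-complex says any two flip sequences between the same endpoints are related by pentagons and commutations, but it gives you no handle on \emph{factoring} a given flip sequence into a prescribed generating set such as umbral moves and cylinder-preserving flips. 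You would still need a concrete combinatorial argument --- like the paper's intersection-count induction --- to produce the factorization.

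Your initial reduction (flip once to create an isolating cylinder from a single shadow, then work with $c$-isolating triangulations) is a reasonable simplification and parallels the paper's passage to $\Pt^o_{c_+,c_-}(S')$. But after that step, the proof needs the inductive engine, and as written it has none.
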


\begin{proof}
Since the cutting functor intertwines umbral moves on $S$ with those on the cut surface $S'$, it suffices to prove the statement for objects of the groupoid $\Pt^o_{c_+,c_-}(S')$ associated to $S'$.
It is convenient to introduce the surface $S_c^\circ$ obtained from $S'$ by shrinking the tacked circles $c_\pm$ to a pair of punctures $p_\pm$. Denote by $\Pt^o_{\hm{p_\pm}}(S_c^\circ)$ the full subgroupoid of $\Pt(S_c^\circ)$, such that in any object of the former both punctures $p_\pm$ are internal punctures of self-folded triangles. 
 Recall that any $\tri$ in $\Pt^o_{c_+,c_-}(S')$ contains isolating cylinders $C_\pm$ for both tacked circles $c_\pm$. So replacing the $C_\pm$ by self-folded triangles $\Delta_\pm$, we obtain an object $\tri^\circ$ of  $\Pt^o_{\hm{p_\pm}}(S_c^\circ)$. In what follows we say that triangles $\Delta_\pm$ are \emph{isolating} punctures $p_\pm$. Denote by $v_\pm$ the other two vertices of $\Delta_\pm$, and let $b_\pm$ and $d_\pm$ be their boundary and internal arcs respectively, so that $\partial b_\pm = v_\pm$, and $\partial d_\pm = \hc{v_\pm, p_\pm}$, see Figure~\ref{fig:fpm}.
 
Now observe that if $\tri_1,\tri_2$ are related by performing an umbral move $U_{C_\pm}$ on $S'$, then   $\tri^\circ_1,\tri^\circ_2$ are related by the composite $U_\pm = F_{d_\pm} F_{b_\pm}$ of two flips on $S_c^\circ$.
From an object of $\Pt^o_{\hm{p_\pm}}(S_c^\circ)$ one can recover an object of $\Pt^o_{c_+,c_-}(S')$ up to a power of Dehn twist along $c$, and the latter can be realized using flips preserving the isolating cylinder.
% Hence to prove the Lemma it suffices to show that any two objects of $\Pt^o_{\hm{p_\pm}}(S_c^\circ)$ can be connected by a sequence of moves $F_\pm$ as above and flips that preserve both self-folded triangles $t_\pm$. 
Hence the Lemma will follow if we can prove that 
% 
% 
% We define $\Cc_c^\circ(\tri)$ to be the object in $\Pt^o_{\hm{p_\pm}}(S_c^\circ)$ obtained from $\Cc_c(\tri)$ by replacing cylinders $C_\pm$ with self-folded triangles $t_\pm$. Denote by $v_\pm$ the other two vertices of $t_\pm$, and let $b_\pm$ and $d_\pm$ be their boundary and internal arcs respectively, so that $\partial b_\pm = v_\pm$, and $\partial d_\pm = \hc{v_\pm, p_\pm}$. Images under $\Cc_c^\circ$ of ombromanic moves at the cylinders $C_\pm$ in $\tri$ become the compositions $F_\pm = F_{d_\pm} F_{b_\pm}$ of two flips in $\Cc_c^\circ(\tri)$.  Now the desired statement is equivalent to the following: 
any morphism in $\Pt^o_{\hm{p_\pm}}(S_c^\circ)$ can be factored into a sequence of moves $F_\pm$ and flips outside of triangles isolating $p_\pm$. 

Given a pair of objects $\tri, \tri'$ in $\Pt^o_{\hm{p_\pm}}(S_c^\circ)$, let us superimpose arcs $d'_\pm$ with $\tri$. We will assume that the number of intersection points
$$
X_i^j = (d_i \smallsetminus \partial d_i) \cap (d'_j \smallsetminus \partial d'_j), \qquad i,j \in \hc{+,-},
$$
is minimal in the isotopy classes of $d_i$ and $d'_j$, and set
$$
X_i = X_i^+ \sqcup X_i^-,
\qquad
X^j = X_+^j \sqcup X_-^j,
\qquad\text{and}\qquad
X = X_+^+ \sqcup X_+^- \sqcup X_-^+ \sqcup X_-^-.
$$
The proof is by induction on $\hm{X}$. If $\hm{X}=0$, using flips outside of triangles $\Delta_\pm$ we can turn $\tri$ into a triangulation $\tri''$ containing triangles $\Delta''_i$, $i=\pm$, with edges $b_i, e_i^1,e_i^2$, such that $\partial e_i^1 = \partial e_i^2 = \hc{v_i,v'_i}$ and both $e_i^1,e_i^2$ are isotopic to the concatenation of $d_i$ and $d'_i$, see Figure~\ref{fig:fpm}. Note that possibly $v_i=v'_i$, in which case the $e_i^1,e_i^2$ are loops. Then the flip $F_{b_i}$ replaces $b_i$ with $d'_i$, and flip $F_{d_i}$ replaces $d_i$ with $b'_i$. Thus, performing moves $U_\pm$ we change $\tri''$ into a triangulation containing triangles $\Delta'_\pm$. The latter can be turned into $\tri'$ using only flips outside of triangles $\Delta'_\pm$, which proves the base of induction.

\begin{figure}[h]
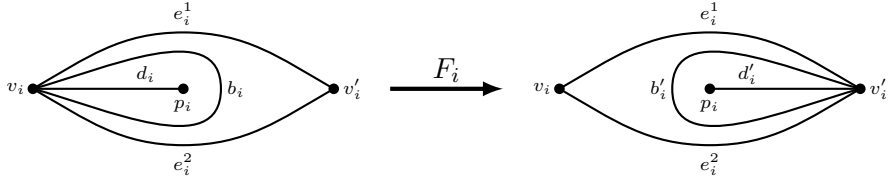

\subfile{fpm.tex}
\caption{Moves $F_i$, $i=\pm$ on $S_c^\circ$ covered by the umbral moves on $S'$.}
\label{fig:fpm}
\end{figure}

Note that a self-folded triangle is determined by its internal edge. In the remainder of the proof we shall discuss transformations of the latter and with the understanding that the corresponding self-folded triangles change accordingly. Denote by $x_i$ the last point in $X^i$ one encounters traversing $d'_i$ from $v'_i$ to $p_i$. First, let $x_i \in X_i^i$ for $i=+$ or $i=-$. Without loss of generality we assume $x_+ \in X_+^+$. Consider (the isotopy class of) the arc $d''_+$ which follows $d_+$ from $v_+$ to $x_+$ and $d'_+$ from $x_+$ to $p_+$. The arc $d''_+$ intersects $d_\pm$ only at the end-points, and by the base of induction we may replace the pair $(d_+,d_-)$ with $(d''_+,d_-)$. Since $\hm{d''_+ \cap (d'_+ \cup d'_-)} \le \hm{X_+ \smallsetminus x_+}$ we may now apply the induction hypothesis. 

It remains to consider the case in which $x_+ \in X_-^+$ and $x_- \in X_+^-$. Without loss of generality we assume that $\hm{X_+} \ge \hm{X_-}$. Consider (the isotopy class of) the arc $d''_+$ which follows $d_-$ from $v_-$ to $x_+$ and $d'_+$ from $x_+$ to $p_+$. As before, $d''_+$ only intersects $d_\pm$ at the end-points, and we can replace $(d_+,d_-)$ with $(d''_+,d_-)$. Observing that $\hm{d''_+ \cap (d'_+ \cup d'_-)} \le \hm{X_- \smallsetminus x_+}$, we can apply the induction hypothesis once again. This completes the proof.
\end{proof}

\section{Quantum cluster varieties and cluster modular groupoids}

\label{sec:qcv}
In this section we review the notion of quantum cluster varieties. Our exposition mostly follows that in~\cite{FG09a, FG09b, GS19}.

\subsection{Seeds, quivers, and quantum tori}
In this paper we only use skew-symmetric quantum cluster algebras, which we incorporate in the definition of a seed.

\begin{defn}
A \emph{seed} is a datum $\Theta=\hr{I, I_*, \La, (\cdot,\cdot)_\La,\hc{e_i}}$ where
\begin{itemize}
\item $I$ is a finite set;
\item $I_* \subset I$ is a subset;
\item $\La$ is a lattice;
\item $(\cdot,\cdot)_\La$ is a skew-symmetric bilinear $\frac{1}{2}\mathbb{Z}$-valued form on $\La$;
\item $\hc{e_i \,|\, i \in I}$ is a basis for the lattice $\La$, with $(e_i,e_j)_\La\in\mathbb{Z}$ whenever $i\in I\setminus I_0$.
\end{itemize}
The sets $I_*$ and $I_m = I\setminus I_0$ are called the \emph{frozen} and \emph{mutable} subsets of $I$ respectively.
\end{defn}

The data of the last bullet point is equivalent to that of an isomorphism $\bs e \colon \Z^{|I|} \simeq \La$. In particular, given a pair of seeds $\Theta,\Theta'$ with the same data $\hr{I,I_*}$, we get a canonical isomorphism of abelian groups (not necessarily isometry of lattices) $\bs{e'} \bs e^{-1} \colon \La \simeq \La'$. In what follows, we will skip the subscript $\La$ in the notation $(\cdot,\cdot)_\La$ when the lattice is clear from the context.

\begin{defn}
 We say that seeds $\Theta$ and $\Theta'$ are \emph{equivalent} if they have the same label sets $(I,I_*)$ and the canonical isomorphism $\La \simeq \La'$ is an isometry, that is $(e_i,e_j)_\La = (e'_i, e'_j)_{\La'}$ for all $i,j \in I$. We define a \emph{quiver} to be an equivalence class of seeds.
\end{defn}

The quiver $Q$ associated to a seed $\Theta$ can be visualized as a directed graph with vertices labelled by the set $I$ and arrows with weights defined by the matrix $\eps = \hr{\eps_{ij}}$, where $\eps_{ij} = (e_i,e_j)$. More precisely, if $\eps_{ij} \in \frac12\Z_{>0}$ we draw $\lfloor\eps_{ij}\rfloor$ solid and $2(\eps_{ij} - \lfloor\eps_{ij}\rfloor)$ dashed arrows from vertex $i$ to vertex $j$. The vertices corresponding to the subset $I_*$ are called \emph{frozen} and are drawn as squares, while those corresponding to $I_m$ are referred to as \emph{mutable} and are drawn as circles.

The pair $\hr{\Lambda,(\cdot,\cdot)_\La}$ determines a \emph{quantum torus algebra} $\Tcx_\Lambda$, which is defined to be the free $\Z[q^{\pm1}]$-module spanned by $\hc{Y_\la \,|\, \la\in\La}$, with the multiplication rule
\begin{align}
\label{eq:Y-mult}
q^{(\la,\mu)} Y_\lambda Y_\mu = Y_{\lambda+\mu}.
\end{align}
Specializing $q=1$ we recover the coordinate ring of the split algebraic torus $\mathbb{T}_\Lambda$ with character lattice $\Lambda$.

A basis $\hc{e_i}$ of the lattice $\La$ gives rise to a distinguished system of generators for $\Tcx_\Lambda$, namely
%$$
%\Tc^q_\La \simeq T^q_Q = \Z[q^{\pm1}]\ha{X_i \,|\, i \in I} / \ha{X_j X_i = q^{2\eps_{ij}} X_i X_j}
%$$
the elements $Y_i=Y_{e_i}$. This way we obtain a \emph{quantum cluster $\Xc$-chart}
\begin{align}
\label{eq:qtor-presentation}
\Tcx_Q = \Z[q^{\pm1}]\ha{Y_i^{\pm1} \,|\, i \in I} / \ha{q^{\eps_{jk}}Y_jY_k = q^{\eps_{kj}}Y_kY_j} \simeq \Tcx_\La.
\end{align}
The generators $Y_i$ are called the \emph{quantum cluster $\Xc$-variables}. We note that this presentation of $\Tcx_Q$ depends only on the quiver and not on the choice of the representative seed.

Given a quantum torus with generators $Y_1,\ldots, Y_d$ one can pass back to the lattice representation by using the \emph{Weyl ordering}. First, define an algebra anti-involution $*$ by
$$
*Y_i = Y_i, \quad *q = q^{-1}.
$$
Then given any exponent vector $(n_1,\ldots, n_d)\in\mathbb{Z}^d$, we define the Weyl ordered product
\begin{align}
\label{eq:weyl-order}
:\!Y_1^{n_1}\cdots Y_d^{n_d}\!:
\end{align}
to be the unique $*$-invariant monomial of the form $q^kY_1^{n_1}\cdots Y_d^{n_d}$.

%The quantum torus $\mathcal{T}_\La^q$ has an ``infinitesimal'' version, which is the Heisenberg-type Lie algebra
%$$
%\Heis_\La=\La_\C \oplus \C c,
%$$
%where we write $\La_\C$ for the vector space $\La\otimes_\Z\C$, with generators $\hc{y_\la \,|\, \la\in\La}$ and $c$, Lie bracket
%$$
%[y_\la,y_\mu]=\frac{(\la,\mu)}{2\pi i}c,
%$$
%and the $*$-structure
%$$
%*y_\la=y_\la, \qquad *c=c.
%$$

\subsection{Compact quantum dilogarithm}
Recall the (compact) quantum dilogarithm function
%\footnote{Various normalizations and reparameterizations of this function are also known under the names of \emph{$q$-exponent}, \emph{$q$-Gamma function}, or \emph{$q$-Pochhammer symbol.}}
\beq
\label{eq:compact-qdl}
\Psi_q(Y) = \prod_{m=0}^\infty \frac{1}{1+q^{2m+1}Y} \in \mathbb{Z}((q))[[Y]].
\eeq
It is related to the infinite $q$-Pochhammer symbol by
$$
\Psi_q(Y)  = (-qY;q^2)^{-1}_\infty.
$$
The Taylor coefficients of $\Psi_q(Y)$ are given by series expansions of rational functions of $q$, so we can regard the compact quantum dilogarithm as an element of the ring $\mathbb{Q}(q)[[Y]]$:
\beq
\label{eq:qpochinv}
\Psi_q(Y) = \sum_{n=0}^\infty \frac{q^nY^n}{(q^2-1)(q^4-1)\cdots (q^{2n}-1)}.%\in \mathbb{Z}[[q]][[Y]]\subset \mathbb{Q}(q)[[Y]].
\eeq
Its reciprocal also has a similar Taylor series:
\beq
\label{eq:qpoch}
\Psi_q(Y)^{-1} = \sum_{n=0}^\infty \frac{q^{n^2}Y^n}{(1-q^2)(1-q^4)\cdots (1-q^{2n})}.%\in \mathbb{Z}[[q]][[Y]]\subset \mathbb{Q}(q)[[Y]].
\eeq
In particular, when thought of as an element of $\mathbb{Q}(q)[[Y]]$, $\Psi_q(Y)$ satisfies the inversion relation
$$
\Psi_{1/q}(Y) = \frac{1}{\Psi_{q}(Y)}.
$$
It is immediate from the infinite product representation~\eqref{eq:compact-qdl} that the compact quantum dilogarithm satisfies the $q$-difference equation
\beq
\label{eq:q-Gamma}
\Psi_q(q^2Y) = (1+qY)\Psi_q(Y).
\eeq
Another important property we will need is the \emph{pentagon identity} for $\Psi_q$, which takes the form of the following identity in the ring of noncommutative formal power series with $\mathbb{Q}(q)$-coefficients:
$$
UV= q^2VU \implies \Psi_q(U)\Psi_q(V) = \Psi_q(V)\Psi_q(qUV)\Psi_q(U).
$$
Note that the middle factor in the triple product can be written using the Weyl ordering~\eqref{eq:weyl-order} as $\Psi_q(:UV:)$.

We also recall the $q$-binomial theorem in the form
$$
\sum_{n\ge0}\frac{(a;q^2)_n}{(q^2;q^2)_n}z^n = \frac{(az;q^2)_{\infty}}{(z;q^2)_{\infty}}\in\mathbb{Q}[[a,z,q]].
$$

\subsection{Quantum cluster varieties}

Let $\Theta,\Theta'$ be seeds representing quivers $Q,Q'$ with the same label sets $(I,I_*)$.
%(possibly with central characters).
We say that the quiver $Q'$ is the \emph{mutation of $Q$ in direction $k\in I_m$,} and write $Q' = \mu_k(Q)$, if the map 
\beq
\label{eq:mon-mut}
\mu_k \colon \Lambda \longra \Lambda', \qquad e_i \longmapsto 
\begin{cases}
-e'_k &\text{if} \; i=k, \\
%e'_i + \max\{(e_i,e_k),0\}e'_k &\text{if} \; i \ne k
e'_i + [\eps_{ik}]_+ e'_k &\text{if} \; i \ne k
\end{cases}
\eeq
is an isometry, where
$$
[a]_+ = \max\hc{a,0}.
$$
%(intertwining the central characters).
%
%Let $\Theta$ be a seed, and $k \in I \setminus I_0$ a mutable vertex of the corresponding quiver $Q$. Then one obtains a new seed, $\mu_k(\Theta)$, called the \emph{mutation of $\Theta$ in direction $k$}, by changing the basis~$\hc{e_i}$ while the rest of the data remains the same. The new basis $\{e_i'\}$ is defined by
%$$
%e'_i = 
%\begin{cases}
%-e_k &\text{if} \; i=k, \\
%e_i + [\eps_{ik}]_+e_k &\text{if} \; i \ne k,
%\end{cases}
%$$
%where $[a]_+=\max(a,0)$. Although the bases $\hc{e_i}$ and $\hc{\mu_k^2(e_i)}$ do not necessarily coincide, the seeds $\Theta$ and $\mu_k^2(\Theta)$ are equivalent. Hence the above formula descends to give a  well-defined notion of the mutation of the quiver in direction $k$. 
The \emph{mutation class} of a quiver $Q$, which we denote by $\bs Q$, is the set of all quivers that can be obtained from $Q$ by some finite sequence of mutations.

\begin{remark}
Consider a seed $\Theta_0$ with lattice $\La_0$. For every other seed $\Theta$ mutation equivalent to $\Theta_0$ we may identify its lattice $\La$ with $\La_0$ via the canonical isomorphism $\bs e_0 \bs e^{-1}$. Then the seed mutation $\mu_k \colon \Theta \to \Theta'$ can be understood as a change of basis in $\Lambda_0$ defined by the rule~\eqref{eq:mon-mut}.
\end{remark}

To each quiver mutation $\mu_k$ we associate an isomorphism of quantum tori
\begin{align}
\label{eq:monpart}
\mu'_k \colon \Tcx_Q \longra \Tcx_{\mu_k(Q)}, \qquad Y_{e_i} \longmapsto Y_{\mu_k(e_i)}.
\end{align}

The \emph{quantum cluster $\Xc$-mutation} is the isomorphism 
\beq
\label{eq:def-Xmutation}
\mu^q_k \colon \Frac(\Tcx_{Q}) \longra  \Frac(\Tcx_{Q'}), \qquad f \longmapsto \Psi_q\hr{Y_k'} \mu'_k(f) \Psi_q\hr{Y_k'}^{-1},
%\mu^q_k =  \Ad_{\Psi_q\hr{Y_k'}}\circ\mu_k'.
\eeq
where $\Frac(\Tcx_Q)$ denotes the skew fraction field of the Ore domain $\Tcx_Q$.
%
%\red{$$
%\mu^q_k = \Ad_{\Psi_q\hr{Y_{-e_k}}},
%$$}
The fact that conjugation by $\Psi_q\hr{Y'_{k}}$ yields a birational automorphism follows form the integrality of the form~$\omega_\La$ and the difference equation~\eqref{eq:q-Gamma}. Equivalently, we can rewrite the factorization above as
\begin{align}
\label{eq:sharp-fact}
\mu^q_k = \mu'_k \circ \mu^\sharp, \quad \mu^\sharp(f) = \Psi(Y_{k}^{-1})f\Psi(Y_{k}^{-1})^{-1}.
\end{align}

The specialization of~\eqref{eq:def-Xmutation} at $q=1$ gives a birational map of tori $\mu_k:{\Tc}_Q\dashrightarrow\Tc_{Q'}$ known as the classical cluster $\Xc$-mutation in direction $k$.

\begin{remark}
As noted above, the requirement that the linear map $\mu_k$ from~\eqref{eq:mon-mut} is an isometry depends only on the quivers $Q,Q'$ and not on the choice of seeds representing them. In particular, we get an equivalent definition if we replace $\mu_k$ with
\beq
\label{eq:neg-mut}
\mu^-_k \colon \Lambda \longra \Lambda', \qquad e_i \longmapsto 
\begin{cases}
-e'_k &\text{if} \; i=k, \\
e'_i + [\eps_{ki}]_+ e'_k &\text{if} \; i \ne k,
%e'_i + \max\{(e_k,e_i),0\}e'_k &\text{if} \; i \ne k,
\end{cases}
\eeq
obtained from~\eqref{eq:mon-mut} by postcomposing it with the isometry of $\Lambda'$ given by $e_i'\mapsto e_i'+(e_k,e_i)e_k'$. In terms of $\mu_k^-$, the quantum cluster transformation $\mu_k^q$ defined by formula~\eqref{eq:def-Xmutation} factors as
\beq
\label{eq:Xmut-negative}
\mu_k^q(f) = \mu_k^-\hr{\Psi_q(Y_k)^{-1}f\Psi_q(Y_k)}.
\eeq
\end{remark}

%The collection of quantum charts $\Tcx_Q$, $Q \in \bs Q$, together with quantum cluster $\Xc$-mutations is often referred to as the \emph{quantum cluster $\Xc$-variety}. We regard the quantum charts as the quantized algebras of functions on the toric charts in the atlas for the classical cluster Poisson variety.
%, isomorphic to $(\C^\times)^{|I|}$, endowed with log-canonical Poisson brackets
%$$
%\hc{Y_j, Y_k} = \eps_{jk} Y_j Y_k.
%$$

%The quantum charts form an $|I_m|$-regular tree, and the cluster mutations quantize the gluing data between adjacent charts. The universally Laurent ring is the quantum analog of the ring of global functions on the cluster Poisson variety. Unless otherwise specified, in what follows we will simply write ``cluster variety'' for quantum cluster $\Xc$-variety --- the same applies to variables, charts, mutations, etc.

%\begin{defn}
%The \emph{universally Laurent ring} $\Lbbx_\Qcl$ is defined as
%$$
%\Lbbx_\Qcl = \bigcap_{Q \in \Qcl} \Tcx_Q \subset \Frac(\Tcx_\La),
%$$
%where for every $Q \in \Qcl$ we identify $\Tcx_Q \simeq \Tcx_\La$ using the isomorphism ~\eqref{eq:qtor-presentation}. Elements of $\Lbbx_\Qcl$ are said to be \emph{universally Laurent.}
%\end{defn}

\begin{defn}
\label{defn:Laurent-ring}
Let $\quiver$ be a quiver and $\Tcx_\quiver$ the corresponding quantum torus. The \emph{universally Laurent ring} $\Lbbx_\quiver$ based at $\quiver$ is the subring in $\Tcx_\quiver$ consisting of elements $X\in\Tcx_\quiver$ such that for any finite-length sequence of mutable directions $(k_1,\ldots, k_l)$, we have
$$
\mu^q_{k_l}\cdots \mu^q_{k_1}(X)\in \Tcx_{Q'}, \qquad\text{where}\qquad Q' = \mu_{k_l}\cdots \mu_{k_1}(\quiver).
$$
Elements of $\Lbbx_\quiver$ are said to be \emph{universally Laurent.}
\end{defn}
%\red{
%\begin{remark}
%We caution the reader that in the presence of \emph{compatible pairs} introduced in Section~\ref{subsec:ensemble}, symbols $\Tc_Q^q$ and $\Lbb^q_\Qcl$ will denote different, but closely related, objects, see Notation~\ref{not:a-tori}.
%\end{remark}
%}

\begin{defn}
\label{defn:perm}
Consider a pair of quivers $Q,Q'$ with possibly different label sets $I,I'$, lattices $\La,\La'$, and bases $\hc{e_i}, \hc{e'_i}$. Let $\sigma \colon I\rightarrow I'$ be a bijection such that $\sigma(I_m)=I'_m$. We say that an isometry 
$$
\varsigma \colon \La \to \La'
$$
is a \emph{quiver permutation} if $\varsigma(e_i) = e'_{\sigma(i)}$ for all $i\in I$. If the equality $\varsigma(e_i) = e'_{\sigma(i)}$ holds only for the mutable directions $i \in I_m$, then $\varsigma$ is called a \emph{quiver quasi-permutation.} An isometry $\mu \colon \La \to \La'$ is a \emph{(quasi-)cluster quiver transformation} if it can be factored into a composition of quiver mutations and quiver (quasi-)permutation.
\end{defn}

\begin{remark}
The notion of a quasi-cluster quiver transformation is independent of the choice of particular seeds representing the two quivers, as is the case with cluster mutations.
\end{remark}

\begin{defn}
Let $Q,Q'$ be a pair of quivers, and $\Tcx_Q, \Tcx_{Q'}$ the corresponding quantum tori. An algebra isomorphism $\Tcx_Q \to \Tcx_{Q'}$ is a \emph{(quasi-)permutation} if it is given by  $Y_{e_j} \to Y_{\varsigma(e_j)}$ for some quiver (quasi-)permutation $\varsigma$. A \emph{(quasi-)cluster transformation}
$$
\mu \colon \Tcx_Q \dashrightarrow \Tcx_{Q'}
$$
is an isomorphism $\Frac(\Tcx_Q) \simeq \Frac(\Tcx_{Q'})$, which can be factored into a composition of quantum cluster mutations and a (quasi-)permutation.
\end{defn}

By Definition~\ref{defn:Laurent-ring}, an element of the non-commutative Laurent polynomial ring $\Tcx_Q$ is universally Laurent if its image under any finite sequence of quantum cluster mutations remains a (non-commutative) Laurent polynomial. Equivalently, it is universal Laurent if it stays a Laurent polynomial under any quasi-cluster transformation. Note that a quasi-cluster transformation $\mu \colon \Tcx_Q \dashrightarrow \Tcx_{Q'}$ defines an isomorphism $\Lbbx_Q \simeq \Lbbx_{Q'}$. In particular, if $Q = Q'$, then $\mu$ is an automorphism of the universally Laurent ring $\Lbbx_Q$.

%For $q \in \C^\times$, we may consider the $\C$-algebra given by the corresponding specialization of $\Lbbx_\quiver \otimes_\Z \C$. We abuse notation and denote this algebra by the same symbol $\Lbbx_\quiver$. Given $b\in\mathbb{R}_{>0}$ consider the specializations
%$$
%q = e^{\pi ib^2}, \qquad \tilde q = e^{\pi ib^{-2}},
%$$
%so that $q,\tilde q$ are complex numbers on the unit circle, and a quantum torus $\Tc_Q^{\tilde\Xc}$ defined by formula~\eqref{eq:qtor-presentation} with $q$ replaced with $\tilde q$ and generators $Y_i$ replaced with $\widetilde Y_i$. Equip the $\C$-vector space
%$$
%\Tc_Q^{\Xc,\tilde\Xc} = \Tcx_Q \otimes_\C \Tc_Q^{\tilde\Xc}
%$$
%with an algebra structure, such that the canonical maps
%\begin{align*}
%&\Tcx_Q \to \Tc_Q^{\Xc,\tilde\Xc}, \qquad f\mapsto f\otimes 1, \\
%&\Tc_Q^{\tilde\Xc} \to \Tc_Q^{\Xc,\tilde\Xc}, \qquad \tilde f \mapsto 1 \otimes \tilde f
%\end{align*}
%are algebra homomorphisms, and the cross relations between generators $Y_\lambda,\widetilde Y_\lambda$ of the two factors are given by
%$$
%\label{eq:annoying-sign}
%e^{2\pi i(\lambda,\mu) } Y_\lambda \widetilde Y_{\mu} = \widetilde Y_{\mu} Y_\lambda.
%$$
%Since $(\la,\mu) \in \frac12\Z$, we see that the two factors commute up to an integer multiple of $-1$.
%
%\begin{defn}
%The \emph{modular double} $\Lbb^{\Xc,\tilde\Xc}_Q$ of the universally Laurent ring is the vector space
%$$
%\Lbb^{\Xc,\tilde\Xc}_Q = \Lbbx_Q \otimes_\C \Lbb^{\tilde\Xc}_Q \subset \Tc_Q^{\Xc,\tilde\Xc}
%$$
%with the algebra structure inherited from that of $\Tc_Q^{\Xc,\tilde\Xc}$.
%\end{defn}
%

\subsection{Tropicalization}
\label{subsec:trop-var}
%It turns out that the equivalence of quasi-cluster transformation can be tested by means of \emph{tropical} cluster variables, which we now briefly review. 
Given a quiver $\quiver$, the \emph{universal semifield} $\Pbb_\mathrm{univ}(\quiver) \subset \Q_+(\bs x)$ consists of all nonzero rational functions of classical cluster variables $\bs x = \hc{x_i, \,|\, i \in I}$ having subtraction-free expressions. We also write $\Pbb_\mathrm{trop}(\quiver)$ for the \emph{tropical semifield} on variables $\bs{\yrm} = \hc{\yrm_i, \,|\, i \in I}$, which is  the abelian multiplicative group freely generated by $\bs\yrm$ endowed with the addition
$$
\prod_{i\in I}{\mathrm y}_i^{a_i}\oplus \prod_{i\in I}\yrm_i^{b_i} = \prod_{i\in I}\yrm_i^{\min(a_i,b_i)}.
$$
We call $\bs\yrm$ the \emph{tropical $\Xc$-variables} associated to quiver $\quiver$. The canonical homomorphism of semifields $
\pi_t \colon \Pbb_{\mathrm{univ}}(\bs x) \to \Pbb_{\mathrm{trop}}(\bs\yrm)$, defined by $\pi_t(x_i) = \yrm_i$ and $\pi_t(a) = 1$ for all $a \in \Q_+$, induces a mutation rule for the tropical $\Xc$-variables, which can be described as follows. Let us say that a Laurent monomial in $\bs\yrm$ is positive (resp.\ negative) if it is not 1 and all its exponents are nonnegative (resp.\ nonpositive). 

\begin{prop}[\cite{DWZ10,Nag10,Pla11}]
Let $\quiver$ be a quiver and $(\bs x,\bs\yrm)$ the corresponding sets of classical and tropical $\Xc$-variables. Then for any quiver $\quiver'$ mutation equivalent to $\quiver$, each Laurent monomial $\yrm_i'=\pi_t(x'_i)$ in $\bs\yrm$ is either positive or negative.
\end{prop}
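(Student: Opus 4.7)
The plan is to recognize this proposition as the \emph{sign-coherence of c-vectors}, a foundational theorem in the theory of cluster algebras. Writing $\yrm_i' = \prod_{j\in I}\yrm_j^{c_{ji}}$ identifies the exponent vectors $c_i = (c_{ji})_{j\in I}$ with the columns of the c-matrix of the seed $\quiver'$ relative to the initial seed $\quiver$. The claim then becomes that every $c_i$ is either coordinate-wise non-negative with at least one strictly positive entry, or coordinate-wise non-positive with at least one strictly negative entry.

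The approach I would take follows Derksen--Weyman--Zelevinsky: categorify the cluster algebra via representations of quivers with potentials. Attach to $\quiver$ a generic non-degenerate potential $W$, form the Jacobian algebra $J(\quiver,W)$, and consider its category of finite-dimensional decorated representations. DWZ construct a mutation operation on decorated representations that lifts seed mutation, and associate to each cluster variable $x_i'$ in a seed $\quiver'$ mutation-equivalent to $\quiver$ a distinguished isomorphism class of indecomposable decorated representation $\Mc(x_i')$. The central technical identity, proved inductively over sequences of mutations via an explicit calculation with g-vectors and F-polynomials, is that the c-vector $c_i$ equals the \emph{signed} dimension vector of $\Mc(x_i')$: either the honest dimension vector of a genuine module summand, or minus the dimension vector of a purely decorated summand. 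Sign-coherence is then automatic, since dimension vectors of honest representations have all non-negative entries, and non-vanishing of $c_i$ follows because $\Mc(x_i')$ is non-zero.

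The main obstacle is constructing the representation-theoretic mutation and proving its compatibility with both seed mutation and the combinatorial g/c-vector transformation rules. This requires the delicate machinery of mutation of quivers with potentials, notably the existence of non-degenerate potentials, the premutation/reduction procedure for decorated representations, and the inductive verification that the assignment $x_i'\mapsto \Mc(x_i')$ intertwines cluster mutation with representation-theoretic mutation. A further subtlety is that one must separately exclude the case where a c-vector becomes identically zero under mutation, which is handled by showing that $\Mc(x_i')$ is never the zero representation at any cluster variable.

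Alternative implementations of the same strategy, corresponding to the other two references, would replace DWZ's decorated representation categories either with Plamondon's generalized cluster category --- a certain triangulated quotient of the derived category of $J(\quiver,W)$, in which cluster-tilting objects categorify seeds and their indecomposable summands categorify cluster variables --- or with Nagao's framework of Donaldson--Thomas invariants of ind-constructible 3-Calabi--Yau categories. In either case the conclusion is the same: c-vectors arise as dimension vectors of indecomposable rigid objects in a categorification, and sign-coherence is inherited from the positivity of dimension vectors.
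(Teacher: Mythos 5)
The paper states this proposition as a known theorem with citations to Derksen--Weyman--Zelevinsky, Nagao, and Plamondon and gives no proof of its own, so there is no independent argument in the paper to compare against. Your identification of the statement as sign-coherence of $c$-vectors is correct, and your sketch of the three categorical proof strategies (decorated representations of quivers with potentials, generalized cluster categories, and Donaldson--Thomas theory of 3-Calabi--Yau categories) accurately summarizes the content of precisely the three references the paper cites, so your proposal agrees with the paper's (implicit) approach.
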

By this sign-coherence, once we have fixed an initial quiver $\quiver_0$ we get a well-defined \emph{tropical sign} $\sigma = \sigma_{\quiver_0}$ of each tropical cluster variable $\yrm_i$, defined by $\sigma(\yrm_i)=1$ if $\yrm_i$ is a positive Laurent monomial in tropical cluster variables of $\quiver_0$, and $-1$ otherwise. Now if $\quiver$ is a quiver mutation equivalent to $\quiver_0$, and $Q' = \mu_k(Q)$, the mutation rule for tropical $\Xc$-variables can be conveniently expressed in terms of the tropical sign via
$$
\yrm'_i =
\begin{cases}
(\yrm_{k})^{-1}, & i=k \\
\yrm_i(\yrm_k)^{[\sigma(\yrm_k)\eps_{ki}]_+}, & i\neq k.
\end{cases}
$$
%$$
%\upsilon''_i =\begin{cases}
%(\upsilon'_{k})^{-1}, \quad i=k \\
%\upsilon'_i(\upsilon'_k)^{[\sigma(\upsilon'_k)\epsilon'_{ki}]_+}, \quad i\neq k.
%\end{cases}
%$$

In fact, the question of whether two composites of quantum mutations coincide can be decided just based on the corresponding tropical cluster transformations.

\begin{theorem}[\cite{FG09a,Kel11,KN11}]
\label{trop-criterion}
%Let $ c = (i_1,\ldots, i_l), d=(j_1,\ldots, j_m)$ be two sequences of mutable directions.
Write $c^q,d^q$ for quantum cluster transformations obtained as composites of cluster mutations~\eqref{eq:def-Xmutation}, $c$ and $d$ for their $q=1$ specializations, and $c_t,d_t$ for their tropical counterparts. Then
$$
c^q = d^q \iff c = d \iff c_t = d_t.
$$
\end{theorem}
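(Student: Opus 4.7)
The first implication, $c^q = d^q \Rightarrow c = d$, is immediate on specializing $q=1$. The implication $c = d \Rightarrow c_t = d_t$ follows from the functoriality of tropicalization: a classical cluster transformation is a composite of subtraction-free birational maps, so its tropicalization is well defined and respects composition, and equal classical maps have equal tropical shadows. The substantive content is the remaining implication $c_t = d_t \Rightarrow c^q = d^q$. Composing with the inverse of $d^q$ reduces this to proving: if $c^q$ is a composite of quantum cluster mutations whose tropical shadow $c_t$ is the identity, then $c^q = \id$.

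To prove this, I would use the sign-coherent factorization of each mutation along the path. Fix the initial quiver $Q_0$ and a sequence of mutations $Q_0 \to Q_1 \to \cdots \to Q_N = Q_0$ realizing $c^q$. Along this path, each mutation direction $k_i$ carries a well-defined tropical sign $\sigma_i \in \hc{\pm1}$ with respect to $Q_0$, supplied by the sign-coherence result recalled above. Using the factorizations~\eqref{eq:sharp-fact} and~\eqref{eq:Xmut-negative}, each $\mu^q_{k_i}$ decomposes as a linear map on the lattice times a conjugation $\Ad\hr{\Psi_q(Y^{\sigma_i}_{k_i})}$, whose argument is a Laurent monomial with sign-coherent exponents (the $c$-vector of the step). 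Pushing all monomial factors to the right, one rewrites $c^q$ as a single monomial automorphism $M_{\mathrm{tot}} \colon \La_0 \to \La_0$ followed by a product $\prod_i \Ad\hr{\Psi_q(M_i)}$, where each $M_i$ is a Laurent monomial in $\Tcx_{Q_0}$. The hypothesis $c_t = \id$ forces $M_{\mathrm{tot}} = \id$ on $\La_0$, so it remains only to show that the product of dilogarithm conjugations is trivial on $\Frac(\Tcx_{Q_0})$.

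For this last step I would invoke the pentagon identity $\Psi_q(U)\Psi_q(V) = \Psi_q(V)\Psi_q(qUV)\Psi_q(U)$ for $UV = q^2 VU$, together with the commutation relations induced by the skew form on $\La_0$. The hypothesis $c_t = \id$ organizes the tropicalizations $\pi_t(M_i)$ into a system of additive identities, which one then lifts, pentagon by pentagon, to an operator identity $\prod_i \Ad\hr{\Psi_q(M_i)} = \id$. The main obstacle is keeping track of the bilinear pairings between the exponent vectors of the $M_i$ uniformly throughout this rewriting: every pentagon move requires that two consecutive arguments $M_i,M_j$ to be merged satisfy $M_iM_j = q^2 M_jM_i$, with the commutation coefficient exactly as dictated by the form on $\La_0$. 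That these coefficients automatically match at every step is the content of the sign-coherence of $c$-vectors together with the compatibility of the monomial mutation rule~\eqref{eq:mon-mut} with $(\cdot,\cdot)_{\La_0}$, and constitutes the technical heart of the arguments in~\cite{FG09a,Kel11,KN11}.
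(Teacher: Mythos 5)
Your logical structure is right: the three implications you prove form a cycle, the first two ($c^q=d^q\Rightarrow c=d$ and $c=d\Rightarrow c_t=d_t$) are immediate as you say, and the substance is entirely in $c_t=d_t\Rightarrow c^q=d^q$. The overall strategy you sketch — normalize to a loop with trivial tropical shadow, use the sign-coherent factorizations~\eqref{eq:sharp-fact} and~\eqref{eq:Xmut-negative} to write $c^q$ as a lattice automorphism $M_{\mathrm{tot}}$ followed by a product of conjugations by quantum dilogarithms whose arguments are sign-coherent monomials, and argue both pieces are trivial — is the correct skeleton and is the approach of~\cite{Kel11,KN11}. However, two steps are glossed over in ways that matter. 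First, the claim that $c_t=\id$ \emph{forces} $M_{\mathrm{tot}}=\id$ is not automatic: the tropical transformation packages both the linear (monomial) part and the piecewise-linear corrections coming from the tropicalized dilogarithm conjugations, so one must use the sign-coherent choice of $\mu_k^{\pm}$ at each step to align $M_{\mathrm{tot}}$ with the linearization of $c_t$ over a fixed tropical sign regime; this is a real argument rather than a bookkeeping observation. Second, the closing claim that one lifts a ``system of additive identities'' to an operator identity ``pentagon by pentagon'' is not how either reference you cite actually concludes. Kashaev--Nakanishi argue by an inductive reduction tied to sign-coherence of $c$-vectors, and Keller argues via categorification (the DT-transformation and a uniqueness property of the DT series in the completed quantum torus); neither performs a literal sequence of pentagon rewritings driven by tropical identities, and it is not clear that such a rewriting scheme terminates or is even well-posed in general. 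You do correctly flag that ``the technical heart'' lives in the references, but as written your final step is a plausible-sounding outline rather than an argument.
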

\begin{remark}
\label{rmk:trop-determination}
Thanks to Theorem~\ref{trop-criterion}, it follows that a cluster in the cluster atlas is completely determined by its set of tropical variables.
\end{remark}

%\begin{theorem}[\cite{FG09a,Kel11,KN11}]
%\label{trop-criterion}
%Let $c^q,d^q$ be a pair of quantum quasi-cluster transformations, $c$ and $d$ be their classical versions, and $c_t,d_t$ their tropical counterparts. Then 
%$$
%c^q = d^q \iff c = d \iff c_t = d_t.
%$$
%\end{theorem}
%
%\begin{remark}
%In \emph{loc.\,cit.\,}the statement is proven for cluster, rather than quasi-cluster, transformations, but the proof of the quasi- case works verbatim.
%\end{remark}

The following lemma is useful for computing the effect of sequences of mutations at the level of tropical cluster variables.

\begin{lemma}
\label{tropical-chain}
Let $\quiver$ be a quiver mutation equivalent to a reference quiver $\quiver_0$, $\yrm$ be the tropical cluster variables of $\quiver$, and $i_0,\ldots,,i_{l+1}$ be a sequence of vertices such that 
\begin{enumerate}
\item $\sigma(\yrm_i) = 1$ for all $r \ge 1$, where $\sigma$ is the tropical sign with respect to the quiver $Q_0$;
\item for each $1\leq r \leq l$, the only outward pointing arrows from vertex~$i_r$ in the quiver $\mu_{i_{r-1}}\circ\cdots\circ \mu_{i_1}(\quiver)$ are to vertices $i_{r-1}$ and $i_{r+1}$.  
\end{enumerate}
Then in the quiver $\quiver' = \mu_{i_{l}}\circ\cdots\circ \mu_{i_1}(\quiver)$ we have
$$
\yrm'_{i_0} = \yrm_{i_0}\yrm_{i_1}, \qquad
\yrm'_{i_s} = \yrm_{i_{s+1}}, \qquad
\yrm'_{i_l} = \prod_{r=1}^l \yrm_{i_r}^{-1}, \qquad
\yrm'_{i_{l+1}} = \prod_{r=1}^{l+1} \yrm_{i_r}
$$
for all $1 \le s < l$, while no other tropical variables are affected.
%\begin{align}
% &\upsilon''_{i_r} = \upsilon'_{i_{r+1}}, \quad 1\leq j\leq l,\\
% & \upsilon''_{i_0}\mapsto \upsilon'_{i_0}\upsilon'_{i_1}, \qquad \upsilon''_{i_{l}}\mapsto \left(\prod_{r=1}^{l} \upsilon'_{i_r}\right)^{-1}, \qquad \upsilon_{i_{l+1}}''\mapsto \prod_{r=1}^{l+1} \upsilon'_{i_r}. 
%\end{align}
\end{lemma}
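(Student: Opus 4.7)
The plan is to proceed by induction on the length $l$ of the sequence, using the tropical mutation rule recalled immediately before the statement.

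In the base case $l=1$ we apply $\mu_{i_1}$ to $\quiver$ directly. Condition (1) gives $\sigma(\yrm_{i_1})=1$, and for any $j\neq i_1$ the tropical mutation rule yields
$$\yrm'_j = \yrm_j(\yrm_{i_1})^{[\eps_{i_1,j}]_+}.$$
By condition (2) at $r=1$, the only $j$ with $\eps_{i_1,j}>0$ are $j\in\{i_0,i_2\}$, each with unit multiplicity. Together with $\yrm'_{i_1}=\yrm_{i_1}^{-1}$, this yields exactly the four formulas in the statement and shows that no other tropical variables are affected.

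For the inductive step, set $\quiver''=\mu_{i_{l-1}}\circ\cdots\circ\mu_{i_1}(\quiver)$ and apply the inductive hypothesis, so that
$$\yrm''_{i_0}=\yrm_{i_0}\yrm_{i_1}, \quad \yrm''_{i_s}=\yrm_{i_{s+1}} \;\text{ for } 1\le s<l-1, \quad \yrm''_{i_{l-1}}=\prod_{r=1}^{l-1}\yrm_{i_r}^{-1}, \quad \yrm''_{i_l}=\prod_{r=1}^{l}\yrm_{i_r},$$
with all other variables unchanged. The crucial observation is the sign compatibility: since each $\yrm_{i_r}$ with $r\ge 1$ is a positive Laurent monomial in the tropical variables of $\quiver_0$ by hypothesis (1), so is their product $\yrm''_{i_l}$, hence $\sigma(\yrm''_{i_l})=1$. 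We then apply $\mu_{i_l}$ to $\quiver''$: by hypothesis (2) at $r=l$, the outward arrows from $i_l$ in $\quiver''$ point only to $i_{l-1}$ and $i_{l+1}$, so the tropical mutation rule gives $\yrm'_{i_l}=(\yrm''_{i_l})^{-1}$, $\yrm'_{i_{l-1}}=\yrm''_{i_{l-1}}\yrm''_{i_l}$, $\yrm'_{i_{l+1}}=\yrm''_{i_{l+1}}\yrm''_{i_l}$, and leaves every other $\yrm''_j$ unchanged. Substituting the inductive formulas, the factors $\prod_{r=1}^{l-1}\yrm_{i_r}^{-1}$ and $\prod_{r=1}^{l}\yrm_{i_r}$ telescope to give $\yrm'_{i_{l-1}}=\yrm_{i_l}$, matching the pattern $\yrm'_{i_s}=\yrm_{i_{s+1}}$ now valid through $s=l-1$, while $\yrm'_{i_{l+1}}=\prod_{r=1}^{l+1}\yrm_{i_r}$ and $\yrm'_{i_l}=\prod_{r=1}^{l}\yrm_{i_r}^{-1}$ are exactly the claimed values.

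The only delicate point is ensuring the tropical sign is positive at each step so that the mutation rule simplifies as used; this is guaranteed by writing the intermediate $\yrm''_{i_l}$ as a monomial in the $\yrm_{i_r}$ and invoking hypothesis (1), which is also what makes the explicit multiplicities collapse to unit weights in the final formulas. No other step requires anything beyond bookkeeping with the tropical mutation rule.
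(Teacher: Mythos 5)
The paper states Lemma~\ref{tropical-chain} without proof, so there is nothing to compare against. Your inductive argument is correct. The base case is an immediate application of the tropical mutation rule with the hypotheses (1) and (2), and the inductive step is sound: the delicate point you single out---that $\yrm''_{i_l}=\prod_{r=1}^l\yrm_{i_r}$ is again a positive Laurent monomial in the tropical variables of $\quiver_0$, so $\sigma(\yrm''_{i_l})=1$ and the mutation at $i_l$ in $\quiver''$ multiplies only the variables at $i_{l-1}$ and $i_{l+1}$, each by $\yrm''_{i_l}$---is exactly what makes the telescoping in $\yrm'_{i_{l-1}}$ come out to $\yrm_{i_l}$. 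One small remark worth making explicit: as you implicitly do in the base case, the formulas in the lemma require the two outward arrows from $i_r$ (to $i_{r-1}$ and $i_{r+1}$) to each have multiplicity one; this is tacit in the lemma's phrasing and in all of its applications, but should be stated to justify the unit exponents appearing in $[\eps_{i_r,i_{r\pm1}}]_+$.
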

%\begin{remark}
%Lemma~\ref{tropical-chain} also has an obvious ``opposite'' analog describing the effect of mutating at a similar chain of vertices with negative tropical signs.
%\end{remark}

\subsection{Compatible pairs, $\Ac$-variables, and central characters.}
\label{subsec:ensemble}

In this section we recall the enhancement cluster seeds to compatible pairs, which will allow us to work with cluster $\Ac$-variables.
%
%, and central characters, and is important for the discussion of cluster modular groupoids and representations of quantum cluster varieties, see Example~\ref{ex:central-characters}.

Given a lattice $\Lambda$ with skew-form $(\cdot,\cdot)$ and a ring $R\supset \mathbb{Z}$, we define the $R$-module
$$
\La_R = \Lambda\otimes_\Z R,
$$
and endow it with an $R$-valued skew-form obtained from $(\cdot,\cdot)_\La$ by extension of scalars. We abuse notation and denote the skew-form on $\La_R$ by the same symbol $(\cdot,\cdot)$.

Given a seed  $\Theta=\hr{I, I_*, \La, (\cdot,\cdot)_\La,\hc{e_i}}$ be a seed, we write
$$
\Lambda^{\mathrm{mut}} = \bigoplus_{i\in I_m}\mathbb{Z}e_i\subseteq\Lambda.
$$

\begin{defn}
Let  $\Theta=\hr{I, I_*, \La, (\cdot,\cdot)_\La,\hc{e_i}}$ be a seed. An extension of $\Theta$ to a \emph{compatible pair} is the data of a lattice $\Xi$ such that 
$$
\Lambda\subseteq\Xi\subset \Lambda_{\mathbb{Q}},
$$
together with a basis $\hc{\eff_i}_{i\in I}$ for $\Xi$ satisfying
%for $\Xi$ we say that $\Theta$ and $\hc{\eff_i}$ form a \emph{compatible pair} if
\beq
\label{eq:partial-dual}
(e_i,\eff_j) = \delta_{ij} \qquad\text{for all}\qquad i \in I_m, \; j \in I.
\eeq
\end{defn}

A compatible pair determines an integer $I\times I$ \emph{ensemble matrix} $b = (b_{ij})$ by expanding the basis $\hc{e_j}$ into the basis $\hc{\eff_i}$:
\beq
\label{eq:ensemble-mat-def}
e_j = \sum_{i \in I} b_{ij}\eff_i, \qquad j \in I.
\eeq
We also obtain a $\Q$-valued skew-symmetric $I \times I$ matrix $\kappa$, whose entries are the values of the form on the basis $\hc{\eff_i}$:
$$
\kappa_{ij} = (\eff_i,\eff_j), \qquad i,j \in I.
$$

\begin{remark}
\label{rmk:ensemble-def}
Here are some remarks on how the definition above fits in with related ones in the literature.
\begin{enumerate}
\item Condition~\eqref{eq:partial-dual} implies that the form $(\cdot,\cdot)$ defines perfect pairing
$$
\bigoplus_{i\in I^m}\mathbb{Z}\xi_i\otimes \Lambda^{\mathrm{mut}} \rightarrow \mathbb{Z},
$$
and moreover that 
$$
\left(\Lambda^{\mathrm{mut}}_{\mathbb{Q}}\right)^\perp\cap \Xi = \bigoplus_{j\in I^*}\mathbb{Z}\xi_j.
$$
In particular, the existence of a compatible pair implies that $\Lambda^{\mathrm{mut}}$ has trivial intersection with the kernel $Z_\La$ of the form $(\cdot,\cdot)$ on $\Lambda_{\mathbb{Q}}$.
%so that~\eqref{eq:xi-ses}
%becomes
%$$
%\begin{tikzcd}
%0\arrow{r}& \bigoplus_{j\in I^*}\mathbb{Z}\xi_j\arrow{r}&  \Xi \arrow{r}& \bigoplus_{i\in I^m}\mathbb{Z}\xi_i\arrow{r}& 0.
%\end{tikzcd}
%$$
%Hence a seed $\Theta$ can be extended to a compatible pair if and only if the kernel $Z_\La$ of the form $(\cdot,\cdot)$ has trivial intersection with $\Lambda^{\mathrm{mut}}_{\mathbb{Q}}$. 
%If $Z_\La = 0$ and $I_* = \varnothing$, there exists a unique basis $\hc{\eff_i}$ which forms a compatible pair with $\Theta$.

 \item Denote by $\tilde b$ be the $I\times I_m$ submatrix of $b$. Evaluating the functional $(\cdot,\eff_k)$ on both sides of~\eqref{eq:ensemble-mat-def}, we see that
$$
\tilde b^t \kappa = \hs{\mathrm{Id}_{I_m}|0}.
$$
In particular, for any $d \in \Z$ such that the matrix $d\kappa$ has integer entries, $(\tilde b,d\kappa)$ is a compatible pair in the sense of~\cite{BZ05} with all multipliers given by $d_j=d$.
\item Now evaluating the functional $(e_i,\cdot)$ on both sides of~\eqref{eq:ensemble-mat-def}, with $i \in I_m$, we get that
$$
b_{ij}=\eps_{ij} \qquad\text{for all}\qquad i \in I_m, \; j \in I.
$$
Hence the rows of any ensemble matrix labelled by non-frozen directions coincide with the corresponding rows of the $\eps$-matrix.
 % \item Let $M=\Hom(\Lambda,\mathbb{Z})$ be the dual lattice to $\Lambda$, and $e_i^*\in M$ the basis dual to $(e_i)$. The skew-form determines maps $p_1:\Lambda_{uf}\rightarrow M$ and $p_2:\Lambda\rightarrow M/\Lambda_{uf}^\perp\simeq \Hom(\Lambda_{uf},\mathbb{Z})$. The data of a compatible pair gives rise to a map
    % $$
    % p:\Lambda\rightarrow M,\quad p(e_i)=\sum_r p_{ir}e_r^*
    % $$
    % whose restriction to $\Lambda_{uf}$ coincides with $p_1$
\end{enumerate}
\end{remark}

\begin{defn}
Compatible pairs $(\Theta, \hc{\eff_i})$ and $(\Theta', \hc{\eff'_i})$ are \emph{equivalent} if the underlying seeds $\Theta, \Theta'$ are  the induced isometry $\Xi\rightarrow\Xi'$ sends $\xi_i$ to $\xi_i'$. A quiver $Q$ \emph{enhanced to a compatible pair} $(Q,\hc{\eff_i})$ is the class of pairs equivalent to $(\Theta, \hc{\eff_i})$.
\end{defn}

%\red{This is a terrible move, but I don't really know what else to do.
%
%\begin{notation}
%\label{not:a-tori}
%If the quiver $Q$ is enhanced to a compatible pair, we will denote the quantum torus and the universally Laurent algebra by $\Tcxx$ and $\Lbbx$ respectively, and reserve notations $\Tc$ and $\Lbb$ for their analogues defined by $\hc{\eff_i}$.
%\end{notation}
%}

Consider a quiver $Q$ enhanced to a compatible pair $(Q,\hc{\eff_i})$. Given an integer $k$, such that $k\kappa_{ij} \in \Z$ for all $i,j \in I$ (unless otherwise specified, we will always choose the minimal such $k$), we define the quantum torus
$$
\Tca_Q = \Z[q^{\pm1/d}] \langle Y_{\pm\eff_i} \,|\, i \in I \rangle / \langle q^{\kappa_{jk}}Y_{\eff_j}Y_{\eff_k} = q^{\kappa_{kj}}Y_{\eff_k}Y_{\eff_j} \rangle.
$$
As before, the algebra $\Tca_Q$ only depends on the enhanced quiver rather than on a particular seed. Note that the quantum torus $\Tcx_Q$ is a $\Z[q^{\pm 1}]$-subalgebra of $\Tca_Q$.

%$$
%\Tc_A^q = \Z[q^{\pm1/d}] \langle Y_{\pm\eff_i} \,|\, i \in I \rangle / \langle q^{\kappa_{jk}}Y_{\eff_j}Y_{\eff_k} = q^{\kappa_{kj}}Y_{\eff_k}Y_{\eff_j} \rangle.
%$$
%As before, the algebra $\Tc_A^q$ only depends on the equivalence class $(Q,A)$ rather than on a particular representative. Note that the quantum torus $\Tc_\La^q$ is a $\Z[q^{\pm 1}]$-subalgebra of $\Tc_A^q$.

\begin{defn}
The elements $Y_{\eff_j}$ of $\Tca_Q$ are called the \emph{quantum cluster $\Ac$-variables}\footnote{Here we follow the terminology of~\cite{FG06b}. Note however that it is common in cluster literature to refer to our $\Ac$- and $\Xc$-variables as the $\mathrm x$- and $\mathrm y$-variables respectively.} associated to the compatible pair $(Q,\hc{\eff_i})$.
\end{defn}

Recall the notion of two seeds being related by mutation defined in \eqref{eq:mon-mut}. In the case that the seeds $\Theta,\Theta'$ are both enhanced to compatible pairs, we impose the additional requirement that under the extension of the map $\mu_k \colon \Eff \to \Eff'$ from~\eqref{eq:mon-mut} we have
\beq
\label{eq:f-mut}
\mu_k(\eff_i) =
\begin{cases}
%-\eff'_k +\sum_{j\neq k} \max\hc{b_{jk},0}\eff_j'&\text{if} \; i=k, \\
-\eff'_k +\sum_{j\neq k} [b_{jk}]_+ \eff_j'&\text{if} \; i=k, \\
\eff'_i  &\text{if} \; i \ne k.
\end{cases}
\eeq
One easily computes that the latter is equivalent to the following standard relation between the ensemble matrices:
\beq
\label{eq:ensemble-mat-change}
b'_{ij} =
\begin{cases}
-b_{ij}, &\text{if} \;\; k=i \;\;\text{or}\;\; k=j, \\
b_{ij}+\frac{|b_{ik}|b_{kj} + b_{ik}|b_{kj}|}{2}, &\text{otherwise.}
\end{cases}
\eeq
Since the pairing between $\Eff$ and $\Z\ha{e_i \,|\, i \in I_m}$ takes integer values by~\eqref{eq:partial-dual}, the quantum mutation maps~\eqref{eq:def-Xmutation} extend to well-defined isomorphisms
\beq
\label{eq:q-A-iso}
\mu^q_k \colon \Frac(\Tca_Q) \longra  \Frac(\Tca_{\mu_k(Q)}).
\eeq
We can use formula~\eqref{eq:def-Xmutation} to compute the evolution of the quantum $\Ac$-variables under mutation:
\beq
\label{eq:def-a-mutation}
\begin{aligned}
\mu_k^q(Y_{\eff_k}) &= \Psi_q(Y_{e_k'}) Y_{\mu_k(\eff_k)} \Psi_q(Y_{e_k'})^{-1}\\
&=Y_{\mu_k(\eff_k)} + Y_{\mu_k(\eff_k)+e_k'}\\
&=Y_{-\eff_k'+\eff^+_k} + Y_{-\eff_k'+\eff^-_k},
\end{aligned}
\eeq
where
$$
\eff_k^{\pm} = \sum_{j \in I}[\pm b_{jk}]_+\eff'_j.
$$
This matches the standard mutation rule for quantum $\mathcal{A}$-variables given in formula (4.23) of ~\cite{BZ05}. 

%{Similarly, we say that two compatible pairs are related by a quasi-permutation $\varsigma \colon \Xi\rightarrow \Xi'$ if $\varsigma(e_i) = e'_i$ for all $i\in I_m$.}

\begin{remark}
\label{rmk:classical-ensemble}
In the classical $q\rightarrow 1$ limit, the $Y_{\eff_k}$ become coordinates $A_k$ on the cluster chart of a cluster $K_2$-variety, also known as the cluster $\Ac$-variety. Similarly, generators $Y_{e_k}$ become coordinates $X_k$ on the cluster chart of a cluster Poisson variety, also called the cluster $\Xc$-variety. The classical limit $\mu_k^{q=1}$ of the mutation map $\mu_k^q$ is expressed in terms of classical $\Ac$-coordinates as the involutive birational transformation
\beq
\label{eq:A-mut-def}
\mu_k^{q=1}(A'_i) = \begin{cases} A_i \quad & i\neq k\\
A_k^{-1}\left(\prod\limits_{b_{jk}>0}A_j^{b_{jk}} +  \prod\limits_{b_{jk}<0}A_j^{-b_{jk}} \right) \quad & i=k,
\end{cases}
\eeq
and the classical ensemble map reads
$$
X_i = \prod_{r \in I}A_r^{\eps_{ri}}.
$$
\end{remark}

Using~\eqref{eq:q-A-iso}, we can now define the $K_2$-version of the universally Laurent ring, the \emph{quantum upper cluster algebra} $\Lbba_Q \subset \Tca_Q$, which consists of all elements $A \in \Tca_Q$ that stay Laurent polynomials under any finite sequence of quantum cluster mutations. As before, for any pair of mutation equivalent quivers $Q,Q'$ we have a canonical isomorphism $\Lbba_Q \simeq \Lbba_{Q'}$ of their quantum upper cluster algebras. Note that the universally Laurent algebra $\Lbbx_Q$ is contained in $\Lbba_Q$ as a $\Z[q^{\pm1}]$-subring, and we have the equality $\Lbbx_Q = \Lbba_Q$ if and only if $\det(b) = \pm1$. By the quantum Laurent phenomenon, see \cite[Corollary 5.2]{BZ05}, all quantum $\Ac$-variables (from all cluster charts) are elements of the upper cluster algebra. The subalgebra of $\Lbba_Q$ generated by them is called the \emph{quantum cluster algebra.}

Berenstein and Zelevinsky prove the quantum Laurent phenomenon by means of the following useful criterion for when an element of $\Tca_Q$ lies in $\Lbba_Q$. Given a quiver $Q$, let $Q^{[k]}$ denote the one obtained from $Q$ by replacing its frozen subset $I_*$ with $I_*^{[k]} = I \smallsetminus k$. In other words, in $Q^{[k]}$ we freeze all basis vectors but the one labelled $k$. Then the \emph{upper bound} $\Uc_Q^\Ac$ is defined as
$$
\Uc_Q^\Ac = \bigcap_{k \in I_m} \Lbba_{Q^{[k]}} \subset \Tca_Q.
%= \bigcap_{k \in I_m} \big(\Tca_Q \cap \Tca_{\mu_k(Q)} \big)
$$

\begin{theorem}[\cite{BZ05}, Theorem 5.1]
\label{thm:1-step}
The upper bound $\Uc_Q^\Ac$ depends only on the mutation class of the pair $(Q,\hc{\eff_i})$: we have
$$
\Lbba_Q = \Uc_Q^\Ac.
$$
Hence an element $f\in \Tca_Q$ is universally Laurent if and only if it remains Laurent under all 1-step mutations.
\end{theorem}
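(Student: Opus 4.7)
The inclusion $\Lbba_Q \subseteq \Uc_Q^\Ac$ is immediate from the definitions: any universally Laurent element is in particular Laurent after a single mutation, and hence lies in $\bigcap_k \Lbba_{Q^{[k]}}$. For the reverse inclusion, together with the mutation-invariance assertion, the plan is to establish the \emph{single-step invariance}
$$
\mu_k^q\hr{\Uc_Q^\Ac} = \Uc_{\mu_k(Q)}^\Ac
$$
as subrings of the common skew fraction field $\Frac(\Tca_Q) = \Frac(\Tca_{\mu_k(Q)})$, for every quiver $Q$ and every mutable index $k$.

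Granted single-step invariance uniformly in $(Q,k)$, an iteration argument concludes. For $f \in \Uc_Q^\Ac$ and a mutation sequence $(k_1,\ldots,k_\ell)$, single-step invariance places $\mu_{k_1}^q(f) \in \Uc_{\mu_{k_1}(Q)}^\Ac \subset \Tca_{\mu_{k_1}(Q)}$; iterating gives $\mu_{k_r}^q \cdots \mu_{k_1}^q(f) \in \Uc_{\mu_{k_r} \cdots \mu_{k_1}(Q)}^\Ac$ at each step, so $f \in \Lbba_Q$.

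To prove single-step invariance, it suffices by the involutivity of $\mu_k^q$ to show $\mu_k^q(\Uc_Q^\Ac) \subseteq \Uc_{\mu_k(Q)}^\Ac$. Unfolding the definition of the upper bound, this reduces to the following \emph{two-step Laurent property}: for every $f \in \Uc_Q^\Ac$ and every pair of mutable indices $(k,j)$, we must have $\mu_j^q \mu_k^q(f) \in \Tca_{\mu_j\mu_k(Q)}$. The case $j=k$ is automatic since $\mu_k^q \mu_k^q = \id$. The case in which $j$ and $k$ are not joined by any arrow in $Q$ factorizes: the generators $Y_j$ and $Y_k$ commute in $\Tca_Q$, the exchange polynomials for $A_j$ and $A_k$ are independent, and via the factorization~\eqref{eq:sharp-fact} the mutations $\mu_j^q$ and $\mu_k^q$ themselves commute. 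Combining these observations with the $1$-step hypotheses applied independently in directions $j$ and $k$ yields the claim.

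The main obstacle is the remaining case, in which $j$ and $k$ are joined by arrows. Here one isolates the rank-two subquiver supported on $\{j,k\}$, together with the commuting quantum subtorus generated by the basis vectors complementary to $\{\eff_j,\eff_k\}$. Using the factorization~\eqref{eq:sharp-fact}, the $q$-difference equation~\eqref{eq:q-Gamma}, and the explicit Weyl-ordered mutation formula~\eqref{eq:def-a-mutation}, one reduces the desired Laurentness of $\mu_j^q\mu_k^q(f)$ to a finite collection of exchange identities inside this rank-two piece. These identities are the quantum analog of the classical \emph{starfish lemma} of Berenstein--Fomin--Zelevinsky; their verification is a direct computation driven by the $q$-binomial theorem and the recursion~\eqref{eq:q-Gamma}, and constitutes the technical core of the argument in~\cite{BZ05}.
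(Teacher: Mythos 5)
The paper does not prove this statement---it is Theorem~5.1 of \cite{BZ05}, cited without an in-paper argument---so there is no proof of the authors' own against which to calibrate your write-up. Your plan is, however, a faithful reconstruction of the \cite{BZ05} strategy: reduce the equality $\Lbba_Q = \Uc_Q^\Ac$ to single-step invariance $\mu_k^q(\Uc_Q^\Ac) = \Uc_{\mu_k(Q)}^\Ac$, unfold that to the two-step Laurent property $\mu_j^q\mu_k^q(f)\in\Tca_{\mu_j\mu_k(Q)}$, and split into the cases $j=k$, $j\perp k$, and $j\sim k$, with the adjacent rank-two case carried by the quantum analog of the starfish lemma. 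Two minor cautions. First, the opening assertion $\Lbba_Q\subseteq\Uc_Q^\Ac$ deserves a word: it uses the observation that $\mu_k^q$ is involutive, so that $\Lbba_{Q^{[k]}}$ really is just the set of $f\in\Tca_Q$ with $\mu_k^q(f)\in\Tca_{\mu_k(Q)}$, which makes $\Uc_Q^\Ac$ precisely the set of 1-step-Laurent elements. Second, the non-adjacent case is glossed too quickly: knowing $\mu_j^q(f)\in\Tca_{\mu_j(Q)}$ and $\mu_k^q(f)\in\Tca_{\mu_k(Q)}$ does not formally yield $\mu_j^q\mu_k^q(f)\in\Tca_{\mu_j\mu_k(Q)}$ by commutativity alone (that would presuppose exactly the single-step invariance you are trying to prove); one must actually run the Laurent-expansion argument in the rank-one subtorus generated by $e_j$ and $e_k$, using that the relevant dilogarithm conjugations act on disjoint commuting blocks. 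So even in that case there is a genuine (if easier) computation, not just a formal factorization; \cite{BZ05} handle it alongside the adjacent case rather than as a free consequence of commutativity.
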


We record the following standard corollary of this Theorem:

\begin{cor}
\label{cor:1-step}
For any quiver $Q$ (which need not admit an extension to a compatible pair), we have
$$
\Lbbx_Q = \bigcap_{k \in I_m} \Lbbx_{Q^{[k]}}.
$$
\end{cor}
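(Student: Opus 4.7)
\textbf{Proof plan for Corollary~\ref{cor:1-step}.} The easy inclusion $\Lbbx_Q \subseteq \bigcap_{k \in I_m} \Lbbx_{Q^{[k]}}$ is immediate: any finite mutation sequence in $Q^{[k]}$ uses only the direction $k$, which is mutable in $Q$, so staying Laurent under every such sequence in $Q$ (the definition of $\Lbbx_Q$) forces the analogous condition in $Q^{[k]}$. The substantive content is the reverse inclusion, and the plan is to deduce it from the $\mathcal{A}$-side statement (Theorem~\ref{thm:1-step}) by reducing to a compatible pair extension.

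Given $Q$ with label set $I = I_m \sqcup I_*$, the plan is to construct an auxiliary quiver $\widetilde Q$ by adjoining a new frozen vertex $k'$ for each $k \in I_m$. Concretely, I would set $\widetilde\La = \La \oplus \bigoplus_{k \in I_m} \Z \delta_{k'}$, extend the skew-form by the rules $(\widetilde e_i, \widetilde e_{k'}) = \delta_{i,k}$ for $i \in I_m$, $(\widetilde e_l, \widetilde e_{k'}) = 0$ for $l \in I_*$, and $(\widetilde e_{j'}, \widetilde e_{k'}) = 0$, and take the mutable subset of $\widetilde Q$ to be the original $I_m$. Then $\widetilde Q$ admits a compatible pair: one can take $\xi_j = \widetilde e_{j'}$ for $j \in I_m$, and $\xi_m = \widetilde e_m - \sum_{i \in I_m}(\widetilde e_i, \widetilde e_m)\widetilde e_{i'}$ for $m \in I \setminus I_m$ together with $\xi_{k'} = \widetilde e_k - \sum_{i \in I_m}(\widetilde e_i, \widetilde e_k)\widetilde e_{i'}$ for $k' \in I_m'$; all the coefficients appearing are integers because the skew-form is $\mathbb{Z}$-valued on pairs one of whose entries is mutable. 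The crucial compatibility is that, since the mutation rule~\eqref{eq:def-Xmutation} for a variable $Y_i$ with $i \in I$ at direction $k \in I_m$ involves only $Y_k$ and the entries $\eps_{ik}$ (which are the same in $Q$ and $\widetilde Q$), the natural inclusion $\Tcx_Q \hookrightarrow \Tcx_{\widetilde Q}$ intertwines the $Q$- and $\widetilde Q$-mutations at every $k \in I_m$, the extra variables $Y_{k'}$ being affected only among themselves. Consequently $\Lbbx_Q = \Lbbx_{\widetilde Q} \cap \Tcx_Q$, and likewise $\Lbbx_{Q^{[k]}} = \Lbbx_{\widetilde Q^{[k]}} \cap \Tcx_Q$, where $\widetilde Q^{[k]}$ denotes the freezing of $\widetilde Q$ that keeps only $k$ mutable.

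With the reduction in place, I would observe that for a quiver carrying a compatible pair, the $\mathcal{A}$-mutation~\eqref{eq:q-A-iso} restricts to the $\Xc$-mutation on the subalgebra $\Tcx \subseteq \Tca$, so an element $f \in \Tcx_{\widetilde Q}$ is universally $\Xc$-Laurent if and only if it is universally $\mathcal{A}$-Laurent; equivalently $\Lbbx_{\widetilde Q} = \Lbba_{\widetilde Q} \cap \Tcx_{\widetilde Q}$, and similarly for each $\widetilde Q^{[k]}$. Now the chain
\[
\Lbbx_Q = \Lbbx_{\widetilde Q} \cap \Tcx_Q = \Lbba_{\widetilde Q} \cap \Tcx_Q = \bigcap_{k \in I_m}\Lbba_{\widetilde Q^{[k]}} \cap \Tcx_Q = \bigcap_{k \in I_m}\Lbbx_{\widetilde Q^{[k]}} \cap \Tcx_Q = \bigcap_{k \in I_m}\Lbbx_{Q^{[k]}}
\]
proves the corollary, the third equality being an application of Theorem~\ref{thm:1-step} to $\widetilde Q$. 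The main technical point to verify carefully is the compatibility of the inclusion $\Tcx_Q \hookrightarrow \Tcx_{\widetilde Q}$ with mutations, together with the construction of the compatible pair on $\widetilde Q$; everything else is bookkeeping. I do not anticipate a serious obstacle, since the added frozens only serve to supply a right inverse to the truncated exchange matrix and play no role in the $\Xc$-dynamics of the original variables.
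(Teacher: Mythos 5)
Your proof is correct and follows essentially the paper's strategy: extend $Q$ to an auxiliary quiver admitting a compatible pair, apply Theorem~\ref{thm:1-step}, and pull back along the mutation-compatible inclusion of quantum tori. The paper uses the principal coefficient extension $\widehat Q$, adding a frozen partner $k_*$ to \emph{every} vertex so that $\det(\hat b)=1$ gives $\Lbbx_{\widehat Q}=\Lbba_{\widehat Q}$ on the nose; your variant adds partners only for mutable directions and reaches the same conclusion via $\Lbbx_{\widetilde Q}=\Lbba_{\widetilde Q}\cap\Tcx_{\widetilde Q}$ (though in fact your explicit $\xi$-vectors already lie in $\widetilde\Lambda$, so $\Xi=\widetilde\Lambda$ and $\det b=\pm1$ in your construction too, making that extra step avoidable).
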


\begin{proof}
Consider the principal coefficient extension $\widehat Q$ of quiver $Q$, obtained by adding a frozen vertex $k_*$ for every vertex $k$ of $Q$ along with an arrow $k_* \to k$. Then the quiver $\widehat Q$ admits a compatible pair such that $(e_i,\xi_j) = \delta_{ij}$ for all $i,j \in I$. In particular it satisfies the assumption of Theorem~\ref{thm:1-step} with $\det(\hat b)=1$, so we have the following chain of equalities:
$$
\Lbbx_{\widehat Q} = \Lbba_{\widehat Q} = \bigcap_{k \in I_m} \Lbba_{\widehat Q^{[k]}} = \bigcap_{k \in I_m} \Lbbx_{\widehat Q^{[k]}}.
$$
Since $\Lbbx_Q$ is a subring of $\Lbbx_{\widehat Q}$ and $\Lbbx_Q \cap \Lbbx_{\widehat Q^{[k]}} = \Lbbx_{Q^{[k]}}$, the Corollary follows.
\end{proof}

\subsection{Coordinate and cluster modular groupoids.}
\label{subsec:cluster-group}

We now recall the \emph{cluster modular groupoid} associated to a cluster variety.

\begin{notation}
Given a quiver $Q$ we denote by $\Qcl$ the class of quivers that may be obtained from $Q$ by quasi-cluster quiver transformations. In particular, quivers from $\Qcl$ are not required to have identical label sets $I$.
\end{notation}

\begin{defn}
The \emph{(enhanced) quasi-cluster modular groupoid} $\Cl_\Qcl$ is the groupoid whose objects are quivers $Q \in \bs Q$ (enhanced to compatible pairs)
%{(possibly with central characters)}
and whose morphisms are quasi-cluster transformations of cluster tori. Composition of morphisms is given by composition of birational transformations. The \emph{quasi-cluster modular group} $\Gamma_\Qcl = \pi_1(\Cl_\Qcl)$, is the automorphism group of an object in $\Cl_\Qcl$.
\end{defn}
In other words, one can think of morphisms in $\Cl_{\Qcl}$ as formal composites of quiver mutations and quiver quasi-permutations, where we identify two formal composites if they induce the same birational isomorphism of tori.

Since all seeds we consider are enhanced with compatible pairs, in what follows omit the word ``enhanced'' (as well as the prefix ``quasi-'') and simply write \emph{cluster modular groupoid} and \emph{cluster modular group.}

%\begin{remark}
%A functor out of the cluster modular groupoid is determined completely by its values on objects and the elementary morphisms given by single cluster mutations and generalized permutations. 
%\end{remark}

\begin{remark}
\begin{enumerate}
\item
Any element of the cluster modular group defines an automorphism of the quantum upper cluster algebra $\Lbba_Q$ and the universally Laurent ring $\Lbbx_Q$.
\item We can also consider the subgroupoid  $\Cl_{\Qcl;I}$ whose objects are quivers with a \emph{fixed} label set $I$, and morphisms are quasi-cluster transformations obtained as composites of mutations and quasi-permutations between quivers with identical label sets. Since the groupoids $\Cl_{\Qcl}$ and $\Cl_{\Qcl;I}$ have the same fundamental groups, the question of which to work with is largely a question of convenience for the application at hand. 
\end{enumerate}
\end{remark}

%\green{
%\begin{notation}
%To lighten notations, we will often drop the superscript $\Ac$ from $\Tca_Q$ and $\Lbba_\Qcl$. We will also replace the class $\Qcl$ with its representative $Q$, or simply omit it whenever it does not cause confusion.
%\end{notation}
%}

The following gives another way to think about the cluster modular groupoid. We formulate it in the context of enhanced quivers, although the same construction works for ordinary ones using the $\Xc$-torus in place of the $\Ac$ one.

\begin{defn}
\label{def:cluster}
Let $\quiver$ be a quiver with label set $I$, and $\Tca_Q$ the corresponding quantum torus with its generators $\hc{A_i \,|\, i\in I}$. A \emph{cluster} in $\Tca_Q$ is a collection of elements
$$
\cluster_\mu = \hc{\mu(A'_i) \,|\, i \in I_{Q'}},
$$
where $\mu \colon \Tca_{Q'} \dashrightarrow \Tca_Q$ is a quasi-cluster transformation. We define the \emph{coordinate groupoid} $\widetilde\Cl_Q$ based at the quiver $Q$ to be the connected, simply connected groupoid whose objects are clusters in $\Tca_Q$. 
\end{defn}

%\begin{defn}
%\label{def:cluster}
%Let $\quiver$ be a quiver with label set $I$, and $\Tc^{\Ac}_\quiver$ the corresponding quantum torus with its generators $\hc{A_i \,|\, i\in I}$. We say that a collection of elements 
%$$
%\cluster = (B_i)_{i\in I},\quad B_i\in \Tc^{\Ac}_\quiver
%$$ 
%forms a \emph{cluster} in $\Tc^{\Ac}_\quiver$ if there exists  a quasi-cluster transformation $\bs\mu \colon \Tc^{\Ac}_{\quiver'}\dashrightarrow \Tc^{\Ac}_\quiver$  such that
%$$
%\cluster = \{\bs\mu(A'_i)\}_{i\in I}.
%$$
%We define the \emph{cluster atlas} $\widetilde{\Cl}_{\quiver}$ based at the quiver $\quiver$ to be the connected, simply connected groupoid with objects given by the set of all clusters in $\Tc^{\Ac}_\quiver$.  
%\end{defn}

As with universally Laurent rings, a quasi-cluster transformation $\mu \colon \Tca_{Q_1} \dashrightarrow \Tca_{Q_2}$ defines an isomorphism of coordinate groupoids $\widetilde\Cl_{Q_1} \simeq \widetilde\Cl_{Q_2}$. We emphasize that this isomorphism is not canonical, and depends on the choice of the transformation $\mu$.

%If the quiver $\quiver_2$ is related to $\quiver_1$ by a cluster transformation $\bs\mu$, then composition with $\bs\mu$ gives a bijection between the cluster atlases for $\quiver_1,\quiver_2$. We emphasize that this bijection is not canonical, and depends on the choice of cluster transformation $\bs\mu$.

For each $Q \in \Qcl$ there is a canonical functor 
$$
\pi \colon \widetilde{\Cl}_Q \longra \Cl_\Qcl
$$
defined as follows. The objects $\mathcal{B}_\mu$ of $\widetilde\Cl_Q$ are labelled by quasi-cluster transformations $\mu \colon \Tca_{Q'} \dashrightarrow \Tca_Q$, and we set $\pi(\cluster_\mu) = Q'$. At the level of morphisms,  $\pi$ sends the unique morphism between objects $\cluster_{\mu_1}$ and $\cluster_{\mu_2}$ to the quasi-cluster transformation $\mu_2\mu_1^{-1} \colon \Tca_{Q_1} \dashrightarrow \Tca_{Q_2}$. This way the composition law in the cluster atlas gets identified with composition of quasi-cluster transformations, making $\pi$ into a functor.

%By definition, objects of $\widetilde\Cl_Q$ are in bijection with cluster transformations with target $Q$. We can also identify the unique morphism between objects $\cluster_{\mu_1}$ and $\cluster_{\mu_2}$ with the quasi-cluster transformation $\mu_2\mu_1^{-1}$. Then the composition law in the cluster atlas gets identified with composition of quasi-cluster transformations. In this way we get a canonical functor
%$$
%\pi \colon \widetilde{\Cl}_Q \longra \Cl_Q
%$$
%defined on objects by sending $\cluster_\mu$ to the source of $\mu$, and on morphisms by setting $\pi([\bs\mu_2;\bs\mu_1])=\bs\mu_2^{-1}\bs\mu_1$. 

\begin{remark}
This approach to the cluster modular groupoid is natural in the case when the upper cluster algebra $\Lbba_Q$ quantizes a ring of functions on some variety $X$. In this case, each object $\cluster$ of the cluster atlas corresponds to a distinct toric coordinate chart $\phi_\cluster \colon U_\cluster \to (\C^\times)^d$ on $X$, and so we can think of the objects of the coordinate groupoid $\widetilde{\Cl}_Q$ as being labelled by these coordinate systems.
\end{remark}

%This point of view is very natural in the case that the classical universally Laurent ring $\Lbb^{\Ac}_\Qcl$ arises geometrically as the algebra $\mathbb{C}[X]$ of functions on some variety $X$. In this case, each object $\cluster$ of the cluster atlas corresponds to a distinct toric coordinate chart $\phi_\cluster :U_\cluster\rightarrow \mathbb{T}$ on $X$, and so we can think of the objects of $\widetilde{\Cl}_\Qcl$ as being labelled by these coordinate systems.
%\begin{remark}
%\begin{enumerate}
%\item

%\item The isomorphism in point (1) is non-canonical: indeed, each pair of clusters in the cluster atlas with the same underlying quiver $\quiver$ determines an algebra \emph{automorphism} of $\Lbbx_{\quiver}$. 
%\end{enumerate}
%\end{remark}

%\red{Another way to think about the cluster modular groupoid is as follows. We can extend the notion of cluster atlas from Definition~\ref{def:cluster} to that of a quasi-cluster atlas by allowing not only composites of quantum mutations of frames $\mathcal{B} = (b_i)_{i\in I}$, but arbitrary quasi-cluster transformations. This way we can construct a larger groupoid $\widetilde{\Cl}_\Qcl$ whose objects are charts in the quasi-cluster atlas for $\Qcl$, and with exactly one morphism between any pair of objects.  The cluster modular groupoid $\Cl_\Qcl$ can then be realized as the groupoid whose objects are quivers in $\Qcl$, and with 
%$$
%\Hom_{\Cl}(Q_1,Q_2) = \{(\cluster_1,\cluster_2) ~\big |~ \cluster_i ~\text{ has underlying quiver } Q_i\}.
%$$ }

%\blue{Discuss construction of functors out of }

%\input{representations}

\section{Cluster coordinates on decorated character varieties}
\label{sec:cluster-coord}

In this section we introduce a non-degenerate compatible pair related to the moduli spaces $\Pc^\diamond_{SL_{n+1},S}$  and $\Pc^\diamond_{PGL_{n+1},S}$.

More precisely, we explain in~\ref{subsec:quivers-from-graphs} how to construct enhanced quivers $Q_\Gamma$ indexed by certain planar bicolored graphs $\Gamma$ on $S$. We show in Proposition~\ref{eq:prop-graph-quiver-mut} that the enhanced quivers associated to any two such graphs are related by a sequence of quasi-cluster transformations, and hence 
our construction gives a well-defined pair of cluster varieties associated to $S$. 
Moreover, in~\ref{subsec:cluster-coord} we associate to each such graph $\Gamma$ a collection of functions $\{A_\ell(\Gamma)\},\{Y_\ell(\Gamma)\}$ on the moduli spaces $\Pc^\diamond_{SL_{n+1},S}$  and $\Pc^\diamond_{PGL_{n+1},S}$ respectively. In Propositions~\ref{prop:A-move} and~\ref{prop:A-shift} we show that the functions associated to graphs related by a quasi-cluster transformation satisfy the same relations as the generators of the classical cluster $\Ac$- and $\Xc$-tori for the corresponding quivers.

Since our class of graphs is preserved by the action of the mapping class group, we  obtain functors
$$
\Qc \colon \widehat\Pt(S) \longra \Cl_{\bs Q}
$$
from the enhanced Ptolemy groupoid $\widehat\Pt(S)$ to the cluster modular groupoid $\Cl_{\bs Q}$ of the compatible pair, where the quiver class $\bs Q = \bs Q(S)$ depends on the marked surface $S$ and the group $G$. The description of these functors, along with some examples, is given in~\ref{subsec:Pt-Cl}.

\subsection{Bicolored graphs}

Let $S$ be a marked surface with a set $\Cc$ of tacked circles, a set $P$ of punctures, and a set $M$ of marked points.

\begin{defn}
A \emph{bicolored graph} $\Gamma$ on $S$ is an embedded graph $\Gamma \subset S$ with black and white vertices in $S \smallsetminus (P \cup \partial S)$ and 1-valent boundary vertices, which avoid all special points of $S$.

We consider such graphs up to ambient isotopies $\gamma$, such that for each time-slice $\gamma_t$ there is a color-preserving isomorphism $\gamma_t(\Gamma) \simeq \Gamma$ of abstract graphs.
\end{defn}

We say that an edge of $\Gamma$ is \emph{white} or \emph{black} if both of its vertices are. We also refer to white and black edges as \emph{monochrome}. Similarly, an edge of $\Gamma$ is \emph{bicolored} if one of its vertices is black while the other one is white, and is \emph{boundary} if it has a boundary vertex. \emph{Faces} of $\Gamma$ are the connected components of $S \smallsetminus \Gamma$. A face $f$ is \emph{proper} if it is homeomorphic to a disk without marked points or punctures, and is incident to a bi-colored edge. A proper face is \emph{boundary} if its intersection with $\partial S$ is non-empty. Any non-boundary proper face is said to be \emph{internal.} We denote the set of proper faces, edges, and vertices of $\Gamma$ by $F(\Gamma)$, $E(\Gamma)$, and $V(\Gamma)$ respectively. In what follows we only work with proper faces, thus we abuse notation and write face instead of proper face unless otherwise specified.

\begin{defn}
Two bicolored graphs are \emph{equivalent} if one can be obtained from the other by applying the following transformations and their inverses:
\begin{itemize}
\item collapse a monochrome edge to a single vertex of the same color;
\item replace a 2-valent vertex and the two edges incident to it by a single edge.
\end{itemize}
\end{defn}

\begin{figure}[h]
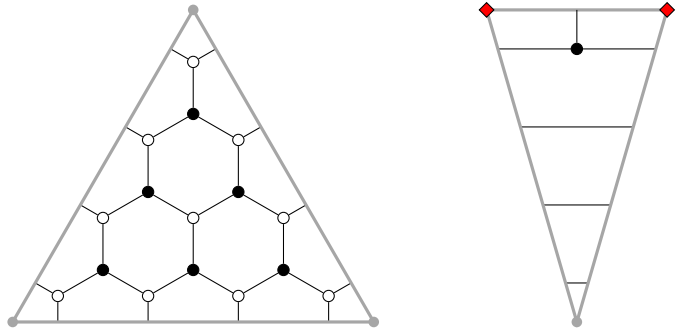

\subfile{A-graph.tex}
\caption{Bi-colored graphs of rank 3 in ideal triangles.}
\label{fig:A-graph}
\end{figure}

A bi-colored graph $\Gamma \subset S$ is \emph{trivalent} if each of its non-boundary vertices is such. Evidently, every equivalence class of bi-colored graphs contains a bipartite one and a trivalent one, and we will often use those in the sequel. We also note that there is a natural bijection between the sets of proper faces of equivalent graphs.

\begin{example}
\label{ex:tri-graph}
Given an ideal triangulation $\tri$ of a marked surface and a positive integer $n$, we define a bicolored graph $\Gamma_{\tri} \subset S$ as follows. We place the graph shown on the left of Figure~\ref{fig:A-graph} inside each non-special triangle,\footnote{In the Figure we have $n=3$. The definition of this graph can be found, for example, in~\cite{Gon17}, however we trust the reader to be able to picture the graph in question for an arbitrary number $n$.} where $n+1$ is the number of boundary vertices on each edge, and the graph on the right inside each special triangle in such a way that the pairs of boundary vertices on every internal edge of $\tri$ match, see Figure~\ref{fig:disk-graph}. The number $n$ is the \emph{rank} of the graph.
\end{example}

\begin{figure}[h]
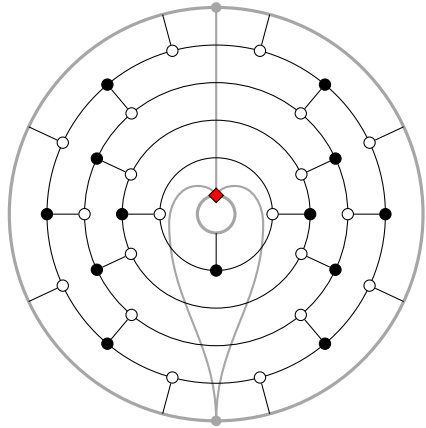

\subfile{disk-graph.tex}
\caption{Graph $\Gamma_{\tri}$ on an annulus, one boundary of which is a tacked circle while the other contains 2 marked points.}
\label{fig:disk-graph}
\end{figure}

Let $\Gamma \subset S$ be a trivalent bicolored graph on a marked surface $S$. Assume further that the graph $\Gamma$ in the vicinity of a tacked circle $c$ can be described as follows. First, there is a cycle $o$ in $\Gamma$ homotopic to $c$, which consists of white edges\footnote{We denote edges of $\Gamma$ by the letter $l$ to distinguish them from basis elements of the lattice we associate to $\Gamma$ in the sequel.} $l_j = (w_j, w_{j+1})$ for $1 \le j < k$, and bicolored edges $l_+ = (b,w_1)$, $l_- = (w_k,b)$, so that $w_{j+1}$ follows $w_j$ as we traverse $o$ in the positive direction. Second, the restriction of $\Gamma$ to the open annulus bounded by $o$ and $c$ consists of the unique boundary edge connecting $b$ to $c$, as shown on Figure~\ref{fig:twist}.

\begin{notation}
\label{not:gamma-not}
Assume that the graph $\Gamma \subset S$ is as described above in the vicinity of every tacked circle $c \in \Cc(S)$. We denote by $\Gamma_\circ$ the bicolored graph obtained from $\Gamma$ by replacing each vertex $b = b(c)$ and the three edges incident to it by a single white edge $l_k = (w_k,w_1)$.
\end{notation}

\begin{defn}
\label{defn:shift}
Assume that the bicolored graph $\Gamma$ satisfies the above conditions. Then the \emph{positive shift} $\sigma_c$ is a transformation of $\Gamma$ consisting of sliding the vertex $b$ past $w_1$ in the positive direction, see Figure~\ref{fig:twist}. The \emph{negative shift} is the transformation $\sigma_c^{-1}$.
\end{defn}

\begin{figure}[h]
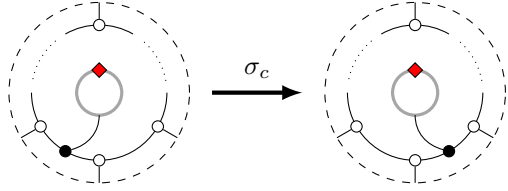

\subfile{twist.tex}
\caption{Positive shift at a tacked circle.}
\label{fig:twist}
\end{figure}

Let $\Gamma = \Gamma_{\tri}$ for some ideal triangulation $\tri$ of $S$, as in Example~\ref{ex:tri-graph}, $c \in \Cc(S)$ be a tacked circle, and $\Delta$ the special triangle of $\tri$ containing $c$. Then the positive shift $\sigma_c$ corresponds to the flip of $\tri$ at the edge of $\Delta$, which follows $c$ as we traverse $\Delta$ in the positive direction. In that case, for the Dehn twist $\tau_c$ we have $\sigma_c^k(\tri) = \tau_c(\tri)$.

Now, let us consider the universal cover $\pi \colon \widetilde S \to S$. Clearly, each marked point or puncture on $S$ lifts to a marked point on $\widetilde S$.
%Each puncture $p \in P(S)$ lifts to a boundary component on $\widetilde S$, which does not contain marked points. We shall always shrink these boundary components to marked points on $\widetilde S$, so that vertices of ideal triangulations of $S$ lift to those on $\widetilde S$.
Each tacked circle $c \in C(S)$ lifts to a \emph{tacked boundary component} of $\widetilde S$. We denote the set of such components by $C(\widetilde S)$, and observe that each of them contains infinitely many tacks. The ideal triangulations and bicolored graphs on $S$ are in bijection with those on $\widetilde S$ which are invariant with respect to the deck group $\pi_1(S)$. From now on we will only consider deck invariant triangulations and graphs on $\widetilde S$. Now, given a bicolored graph $\Gamma$ on $\widetilde S$, and a tacked circle $c \in C(S)$, we define
$$
\sigma_c(\Gamma) = \pi^{-1}\hr{\sigma_c(\pi(\Gamma))}.
$$
%\green{If $\tilde c \in \Cc(\widetilde S)$ is a tacked boundary component, we let $\sigma_{\tilde c}$ denote the same as $\sigma_{\pi(\tilde c)}$.}

\begin{defn}
A non-boundary quadrilateral face $f$ of a bipartite graph $\Gamma \subset S$ is \emph{admissible} if no two faces $\tilde f, \tilde f' \in \pi^{-1}(f)$ share an edge, and none of the vertices of $f$ is connected to a tacked circle by an edge. A face $f$ of a bipartite graph $\Gamma \subset \widetilde S$ is admissible if its projection $\pi(f)$ is.
\end{defn}

\begin{defn}
Let $f$ be an admissible face of a trivalent graph $\Gamma \subset \widetilde S$, and $f = \pi(\tilde f)$ be its projection. A \emph{square move} $\mu_f$ at the face $f$ is the transformation of $\pi(\Gamma)$, which changes the color of every vertex of $f$, see Figure~\ref{fig:face-mut}. As before, we set
$$
\mu_f(\Gamma) = \pi^{-1}\hr{\mu_f(\pi(\Gamma))}.
$$
%\green{and $\mu_{\tilde f} = \mu_f$ for any $\tilde f \in \pi^{-1}(f)$.} \red{?}
\end{defn}

\begin{figure}[h]
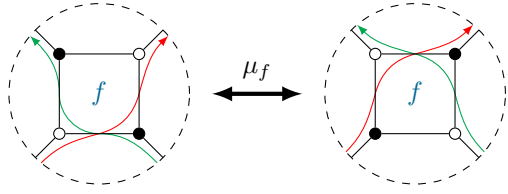

\subfile{face-mut.tex}
\caption{Square move at a face $f \in F(\Gamma)$.}
\label{fig:face-mut}
\end{figure}

\begin{defn}
A bicolored graph $\Gamma \subset S$ is \emph{ideal} if for some ideal triangulation $\tri$ of $S$ it is related to $\Gamma_{\tri}$ by a sequence of square moves and shifts at tacked circles. A graph $\Gamma \subset \widetilde S$ is \emph{ideal} if its projection $\pi(\Gamma) \subset S$ is.
\end{defn}

\begin{remark}
As explained in~\cite{FG06b}, alternatively see Section~\ref{subsec:Pt-Cl} here, for any pair of ideal triangulations $\tri, \tri'$ of $S$, the graphs $\Gamma_{\tri}$ and $\Gamma_{\tri'}$ of the same rank are related to each other by a sequence of square moves and shifts at tacked circles.
\end{remark}

In what follows we will only consider ideal bicolored graphs. Note that any such graph has the local form described before Notation~\ref{not:gamma-not} in the vicinity of each tacked circle. Let $\tri$ be an ideal triangulation of a marked surface $S$. Recall that $S_\circ$ is obtained from $S$ by replacing every tacked circle with a puncture, and let $\tri_\circ$ be the triangulation of $S_\circ$ obtained from $\tri$ by collapsing each special triangle onto one of its non-boundary edges. Then it is easy to see that $\Gamma_\circ = \Gamma_{\tri_\circ}$. For $\Gamma \subset \widetilde S$, we set
$$
\Gamma_\circ = \pi^{-1}\hr{\pi(\Gamma)_\circ}.
$$

\subsection{Zig-zag paths}

\begin{defn}
%Let $\Gamma \subset S$ be a bicolored graph on a marked surface $S$, and $\widetilde\Gamma \subset \widetilde S$ be its lift to the universal cover $\widetilde S$ of $S$.
A \emph{zig-zag path}, or a \emph{zig-zag} for short, on a bicolored graph $\Gamma \subset \widetilde S$ is a path that follows edges of $\Gamma$ turning maximally left at each black vertex, and maximally right at each white one.

\end{defn}

When drawing zig-zags we will always push them away from the vertices of $\Gamma$ so that they turn clock-wise around black vertices, counter-clockwise around white ones, and cross every bi-colored edge they follow. We will often draw projections of zig-zags onto $\pi(\Gamma)$ rather than the zig-zags themselves. Abusing notation we still refer to these projections as \emph{zig-zags,} see Figure~\ref{fig:disk-web}

Denote by $\Wc_\circ$ the union of all zig-zags of the graph $\Gamma_\circ$ on the universal cover $\widetilde S_\circ$ of $\widetilde S$.  A connected component $R$ of $\widetilde S_\circ \smallsetminus \Wc_\circ$ is called a \emph{$k$-gon} if it is a disk which contains at most one marked point, and whose boundary has non-trivial intersection with exactly $k$ zig-zags. We call each such intersection a \emph{side} of $R$ and let it inherit the orientation of the zig-zag it belongs to. Then the following properties hold for any ideal bicolored graph $\Gamma \subset \widetilde S$ of rank $n$, see~\cite{Gon17}.

\begin{enumerate}
\item Each connected component of $\widetilde S_\circ \smallsetminus \Wc_\circ$ is a polygon. No zig-zags on $\widetilde S_\circ$ self-intersect or form non-oriented bigons.
\item For any zig-zag $z$ the surface $\widetilde S_\circ \smallsetminus z$ has two connected components, one of which has exactly one marked point $m$. We will say that $z$ \emph{goes around} $m$.
%, and $z$ \emph{goes around} $m$, where $z = \pi(\tilde z)$, $m = \pi(\tilde m)$, and $\pi \colon \widetilde S_\circ \to S_\circ$ is the natural projection.
\item For each marked point $m$ on $\widetilde S_\circ$ there exist exactly $n+1$ non-intersecting zig-zags $z_0(m), \dots, z_n(m)$ that go around it. We assume the notation that the index of the zig-zags increases as we move away from $m$. Moreover, any zig-zag $z$ on $\widetilde S_\circ$ is of the form $z = z_i(m)$ for some marked point $m$ and some $0 \le i \le n$. %Indeed, if $\Gamma_\circ = \Gamma_{\tri_\circ}$ for some triangulation $\tri_\circ$ of $S_\circ$, it is easy to see that both zig-zags containing an edge of $\widetilde \Gamma_\circ$ are of the form $z_j(\tilde m)$ for some $j$ and $\tilde m$. It remains to check that this property holds under square moves, which is straightforward.
\end{enumerate}

%Denote by $z_j(m)$ the zig-zag on $\Gamma$ obtained as a projection of $z_j(\tilde m)$, where $\tilde m \in \pi^{-1}(m)$. These are well-defined since $\pi(z_j(\tilde m_1)) = \pi(z_j(\tilde m_2))$ if and only if $\pi(\tilde m_1) = \pi(\tilde m_2)$.

\begin{remark}
Let us make the following observations.
\begin{itemize}
\item Projections of zig-zags may intersect and self-intersect. For example, this happens when $S$ is a punctured torus.
\item The zig-zags $z_0(m)$ do not intersect the rest of $\Wc_\circ$. Erasing these zig-zags we recover an \emph{ideal $A_n$-web} in the sense of~\cite{Gon17}, see the right pane of Figure~\ref{fig:disk-web}.
%\item The zig-zags on $\Gamma$ are the same as on $\Gamma_\circ$, except that some of the zig-zags $z_n(m)$ may be replaced by a pair of zig-zags, one of which starts at a marked annulus and the other ends at it.
\end{itemize}
\end{remark}

%Given a reduced bi-colored graph $\Gamma \subset S$, we may consider its dual graph $\Gamma^* \subset S$. For each proper face $f$ of $\Gamma$, we draw a vertex $v_f$ of $\Gamma^*$ inside $f$, such that $v_f \in \partial S$ if $f \in F_*(\Gamma)$. For each pair of distinct faces $f_1,f_2 \in F(\Gamma)$ which share an edge $e$, we draw an edge of $e^* \in E(\Gamma^*)$ connecting the vertices $v_{f_1}, v_{f_2} \in V(\Gamma^*)$, such that $e^*$ is positioned entirely within $f_1 \cup f_2$ and only crosses $e$ once. In case $f_1,f_2 \in F_*(\Gamma)$, the edge $e^*$ goes along the boundary of $S$, and is said to be a \emph{boundary edge.}

\begin{figure}[h]
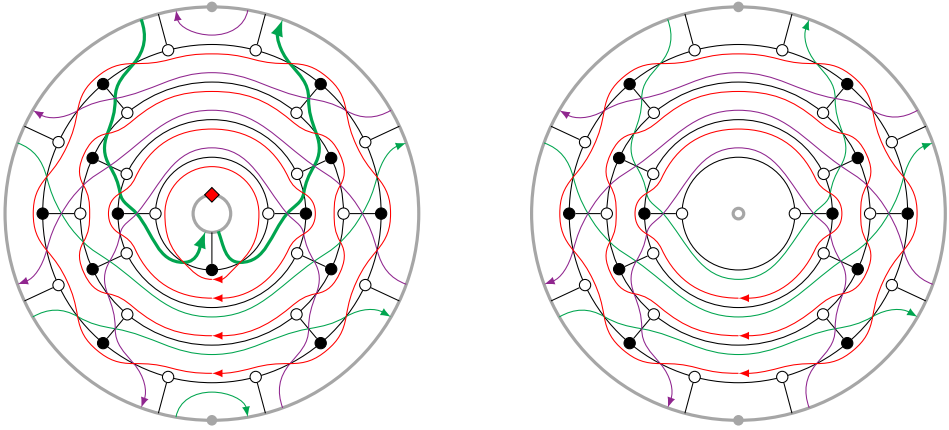

\subfile{disk-zigzag.tex}
\caption{Left pane: zig-zags on an ideal bicolored graph $\Gamma_{\tri}$ of rank 3. Right pane: the corresponding ideal $A_3$-web.}
\label{fig:disk-web}
\end{figure}

The zig-zags on a bicolored graph $\Gamma \subset \widetilde S$ are almost identical to the zig-zags on $\Gamma_\circ \subset \widetilde S_\circ$. Indeed, the only ones that can possibly differ are those that contain edges present in $\Gamma_\circ$, but not in $\Gamma$. If $l = (w_-,w_+)$ is such an edge, it is replaced in $\Gamma$ by a trivalent black vertex~$b$, edges $l_\pm = (b,w_\pm)$, and a boundary edge $l_0 = (b,v)$ with $v$ lying on a tacked boundary component $c \in \Cc(\widetilde S)$. The two zig-zags in $\Gamma_\circ$ containing $l$ are $z_0(m_{c})$ and $z_n(m)$, where $m_{c}$ is the marked point on $\widetilde S_\circ$ corresponding to $c$ on $\widetilde S$. Replacing edges $\gamma(l) \subset z_0(m_{c})$, $\gamma \in \pi_1(S)$, with pairs $\gamma(l_-), \gamma(l_+)$ we obtain a zig-zag on $\Gamma$, which we denote by $z_0(c)$. Similarly, for all $1 \le j < n$ we write $z_j(c)$ for the zig-zags $z_j(m_{c})$ considered as zig-zags on $\Gamma$. Now, let $t$ be the tack preceding $v$ as we traverse $c$ in the positive direction. The zig-zag $z_n(m)$ is split into a family of zig-zags on $\Gamma$ including the two denoted $\check{z}_\pm(t)$, which contain edges $l_\pm, l_0$. Note that $\check z_-(t)$ terminates at the vertex $v = v(t)$, while $\check{z}_+(t)$ originates at it. Let $l_1, \dots, l_k$ be all the edges of $z_n(m)$, which are not present in $\Gamma$, and $t_1, \dots, t_k$ be the corresponding tacks on $\widetilde S$. Assuming that $l_j$ precedes $l_{j+1}$ for all $1 \le j < k$ as one follows $z_n(m)$, we have $\check z_+(t_j) = \check z_-(t_{j+1})$. We will abuse notation and denote by $z_n(m)$ (or $z_n(c')$ if $m = m_{c'}$ for some tacked boundary component $c' \in \Cc(S)$) the union of zig-zags $\check{z}_-(t_1) \cup \check{z}_+(t_1) \cup \dots \cup \check z_+(t_k)$ on $\Gamma$ arising from that zig-zag on $\Gamma_\circ$. We will also abbreviate notation $\check z_+(t)$ to $\check z(t)$. The thick paths in the left pane of Figure~\ref{fig:disk-web} are (the projections of) the two parts of the zig-zag $z_3(m)$, where $m$ is (a preimage of) the bottom marked point.

\begin{lemma}
\label{lem:frame}
Let $t$ be a tack on the boundary component $c$ of $\widetilde S$, and $\Gamma \subset \widetilde S$ be a bipartite graph. Then
\begin{enumerate}
\item the zig-zag $\check z(t)$ shares a single black vertex $b_j(t)$, a single white vertex $w_j(t)$, and a single edge $l_j(t) = (b_j,w_j)$ with each of the zig-zags $z_j(c)$;
\item at the intersection point of $\check z(t)$ and $z_j(c)$ over the edge $l_j$, the respective tangent vectors to $z_j(c),\check z(t)$ form a positively-oriented frame.
\end{enumerate}
\end{lemma}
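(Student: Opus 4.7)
My plan is to analyze the zig-zag paths $\check z(t)$ and $z_j(c)$ combinatorially using the explicit local structure of the bicolored graph $\Gamma$ near the tacked boundary component $c \subset \widetilde S$. First, I would fix the local picture in a neighborhood of $c$: the cycle $o = (w_1, \ldots, w_k)$ of white vertices with a trivalent black vertex $b = b(t)$ inserted near the tack $t$, equipped with a boundary edge $l_0(t) = (b, v(t))$ and two bicolored edges $l_\pm(t)$ to neighboring whites on $o$. Applying the turning rules (max left at black vertices, max right at white ones) at $b(t)$ with respect to the orientation of $\widetilde S$, I would verify that $\check z(t) = \check z_+(t)$ starts at $v(t)$, traverses $l_0(t)$ to $b(t)$, then exits via the appropriate bicolored edge, and continues into the bulk of the graph.

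For part~(1), my strategy is to first verify the claim for a reference graph $\Gamma_\tri$ associated to an ideal triangulation containing $c$ as the boundary of a special triangle, and then establish invariance under the moves (square moves at admissible faces and shifts at tacked circles) that relate all ideal graphs. For the reference graph, the neighborhood of $c$ is determined by the standard $A_n$-template of Figure~\ref{fig:A-graph}, and the zig-zags $z_0(c), \ldots, z_{n-1}(c)$ form a canonical family of arcs running in successive ``layers'' of this template. Tracing $\check z(t)$ from $v(t)$ through the template to the next tack along $c$, I would identify the shared edge $l_j(t) = (b_j(t), w_j(t))$ with $z_j(c)$ as the unique edge where $\check z(t)$ transitions between consecutive layers. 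To rule out additional shared edges, I would use the description of $\check z(t)$ as a segment of the zig-zag $z_n(m_c)$ in $\Gamma_\circ$ modified by the $b(t)$-insertions, together with the known non-intersection of the $n+1$ zig-zags $z_0(m_c), \ldots, z_n(m_c)$ around the marked point $m_c$ in $\Gamma_\circ$ recalled before the lemma.

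For part~(2), I would observe that the pair $(\check z(t), z_j(c))$ forms the canonical pair of strands associated to the bicolored edge $l_j(t) = (b_j, w_j)$: they traverse $l_j(t)$ in opposite directions (as determined by the turning rules at $b_j$ and $w_j$), and when pushed off the vertices to their standard positions on either side of the edge, they cross transversally with tangent vectors forming a positively-oriented frame by virtue of the orientation of $\widetilde S$ and the consistency convention used in drawing zig-zags throughout the paper. The principal difficulty lies in part~(1), specifically the combinatorial bookkeeping of the layered structure near $c$ and the verification that the shared edges are stable under square moves and shifts; a natural way to handle this is to proceed by induction on the rank $n$, together with a case analysis of how each local move transforms zig-zags passing through the affected face or boundary region.
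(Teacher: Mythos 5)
Your strategy matches the paper's proof exactly: verify both statements for the reference graph $\Gamma_{\tri}$, then observe they are preserved under square moves and shifts (the proof in the paper is a single sentence to this effect, citing Figures~\ref{fig:twist} and~\ref{fig:face-mut}). One caveat worth flagging on part~(2): the claim there is \emph{not} purely local as your sketch suggests, since at any bicolored edge the two zig-zags passing over it form a positively or negatively oriented frame depending on which one you designate first; the lemma asserts the specific ordering $(z_j(c),\check z(t))$ is positive, and establishing that does require knowing the global identity and direction of each strand --- precisely what checking it on $\Gamma_{\tri}$ and propagating via moves accomplishes, so you should run both parts through that same reduction rather than treat (2) as automatic.
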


\begin{proof}
%On the surface $\widetilde S_\circ$, the tacked boundary component $c$ is replaced by a single marked point $m$. The trivalent graph $\Gamma_\circ$ contains an infinite chain of white edges, such that the whole of $\Gamma_\circ$ lies on one side of that chain, and $m$ lies on the other. The statement of the Lemma is equivalent to the following: any zig-zag $\Gamma_\circ$, which contains an edge from that chain, shares a unique black vertex, a unique white vertex, and the bicolored edge connecting the two with each of the zig-zag $z_j(m)$. The latter holds for $\Gamma_\circ=\Gamma_{\tri_\circ}$. As can be seen from Figure~\ref{fig:face-mut}, the desired property is invariant under square moves, which completes the proof.
Both statements of the Lemma hold for $\Gamma=\Gamma_{\tri}$ and are invariant under shifts $\sigma_c$ and square moves, as can be seen from Figures~\ref{fig:twist} and~\ref{fig:face-mut}.
\end{proof}

%By the same argument involving the preservation of the property in question under square moves, we get
%\begin{lemma}
%\label{lem:frame}
%Let $t$ be a tack on the boundary component $c$ of $\widetilde S$, and  $\check z(t)$ the corresponding zig-zag. Then at the intersection point of $\check z(t)$ and $z_{j-1}(c)$ over the edge $\eps_j$, the respective tangent vectors to $z_{j-1}(c),\check z(t)$ form a positively-oriented frame. 
%\end{lemma}
Here is a useful consequence of Lemma~\ref{lem:frame}:
\begin{lemma}
\label{lem:nopair}
Suppose that $t,t'$ are two tacks on boundary components $c,c'$ of $\widetilde S$, and let $l$ be an edge of a bicolored graph $\widetilde\Gamma \subset \widetilde S$ such that $l=l_i(t)=l_j(t')$. Then $t=t'$ and $i=j$.
\end{lemma}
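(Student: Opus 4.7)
The plan rests on the general principle that every edge of a bicolored graph lies on exactly two zig-zag paths (one for each choice of orientation of the edge, since at a vertex of either color the outgoing edge of a zig-zag is determined by the incoming edge via the max-turn rule). Combined with part (1) of Lemma~\ref{lem:frame}, this tells us that the two zig-zags passing through the edge $l = l_i(t)$ are precisely $\check z(t)$ and $z_i(c)$, where $c$ is the tacked boundary component of $\widetilde S$ containing $t$; analogously for $l = l_j(t')$. Consequently the hypothesis $l_i(t)=l_j(t')$ yields an equality of unordered pairs
$$
\{\check z(t), z_i(c)\} = \{\check z(t'), z_j(c')\}.
$$

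I would then show that the two types of zig-zag appearing in this pair are topologically distinguishable. By the construction recalled just before Lemma~\ref{lem:frame}, each $\check z(t)$ originates at the boundary vertex $v(t)$ attached to the tack $t$ and therefore has a boundary endpoint, whereas each $z_k(c)$ for $0\le k\le n-1$ is a closed path containing no boundary vertices. For $k\ge 1$ this is immediate since $z_k(c)=z_k(m_c)$ is a zig-zag going around the interior marked point $m_c$ of $\widetilde S_\circ$, while for $k=0$ it holds because the newly inserted edges $l_\pm$ used to define $z_0(c)$ from $z_0(m_c)$ are internal. Hence the cross identifications $\check z(t)=z_j(c')$ or $z_i(c)=\check z(t')$ are impossible, and we must have $\check z(t)=\check z(t')$ together with $z_i(c)=z_j(c')$.

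Finally I would deduce the two conclusions separately. The equality $\check z(t)=\check z(t')$ forces $v(t)=v(t')$, and since the tack $t$ is uniquely recovered as the tack preceding $v(t)$ as we traverse its boundary component in the positive direction, this gives $t=t'$ and in particular $c=c'$. Having established $c=c'$ and $t=t'$, the remaining claim $i=j$ follows immediately from the uniqueness assertion in Lemma~\ref{lem:frame}(1): $\check z(t)$ meets each of the distinct zig-zags $z_0(c),\dots, z_{n-1}(c)$ in a single edge, so the edges $l_0(t),\dots, l_{n-1}(t)$ are pairwise distinct, forcing $i=j$ whenever $l_i(t)=l_j(t)$.

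The main (only) delicate point is the identification of the pair of zig-zags through a given edge: everything after that is a bookkeeping argument using the boundary-versus-closed dichotomy between the $\check z$-type and $z_k(c)$-type paths, together with the uniqueness part of Lemma~\ref{lem:frame}. I would either invoke the two-zig-zags-per-edge principle as a standard fact about bicolored graphs or, if a self-contained treatment is desired, verify it by the short local check at each endpoint of $l$ using the max-turn rule at the two colors of vertex.
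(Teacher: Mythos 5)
Your proof is correct, but it dispatches the set-identity $\hc{\check z(t),z_i(c)}=\hc{\check z(t'),z_j(c')}$ between the two zig-zags through the edge $l$ by a mechanism genuinely different from the paper's. The paper assumes $c\ne c'$, rules out the \emph{straight} identifications $z_i(c)=z_j(c')$ and $\check z(t)=\check z(t')$ on the grounds that these encircle, respectively originate at, distinct boundary components, is thereby forced into the \emph{cross} identification $\check z(t')=z_i(c)$ and $\check z(t)=z_j(c')$, and finally kills the cross case using the orientation condition of Lemma~\ref{lem:frame}(2): that identification would make the ordered pair $(z_i(c),\check z(t))$ simultaneously carry a positively and a negatively oriented frame. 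You instead rule out precisely the cross identifications, unconditionally, by the more elementary observation that $\check z_+(t)$ originates at a boundary vertex of $\widetilde\Gamma$ while the zig-zags $z_k(c)$ for $0\le k\le n-1$ never meet $\partial\widetilde S$; the straight identifications then give $t=t'$, $c=c'$, and $i=j$ directly via the uniqueness clause of Lemma~\ref{lem:frame}(1) together with the two-zig-zags-per-edge fact. The gain is that your argument bypasses the orientation part (2) of Lemma~\ref{lem:frame} altogether and relies only on its incidence part (1). One small point of precision: the $z_k(c)$ are in general non-compact paths on the universal cover rather than closed loops, so the operative distinction is really ``has a boundary endpoint'' versus ``does not,'' but that is exactly what your argument uses.
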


\begin{proof}
The hypothesis of the Lemma implies $l = \check z(t) \cap z_i(c) = \check z(t') \cap z_j(c')$. Since there are exactly two zig-zags over each edge of $\widetilde \Gamma$, we conclude that the set $\{z_i(c),z_j(c'),\check z(t),\check z(t')\}$ contains only two distinct elements. Now suppose that $c\neq c'$. Then we cannot have $z_i(c)=z_j(c')$ or $\check z(t)=\check z(t')$, since elements of the former pair encircle different boundary components, and those of the latter two originate on different boundary components. Hence the only possibility is that $\check z(t')=z_i(c)$ and $\check z(t)=z_j(c')$. But this too is ruled out by Lemma~\ref{lem:frame}, which says that pairs $(z_i(c),\check z(t)) $ and $(\check z(t'),z_j(c'))$ have respectively positively- and negatively-oriented frames. Therefore we must have $c=c'$, which implies that $i=j$ since the zig-zags encircling the same boundary component do not intersect one another, and the Lemma follows.
\end{proof}

We now introduce a few more notations that will be frequently used later on. Given a bicolored graph $\Gamma \subset \widetilde S$ and a tacked boundary component $\tilde c \in \Cc(\widetilde S)$, let $Z_i(\tilde c)$ be the collection of faces of $\Gamma$, which lie between the zig-zags $z_{i-1}(\tilde c)$ and $z_i(\tilde c)$. Whenever it does not cause confusion, we denote the region of $S$ covered by the union of all faces in $Z_i(\tilde c)$ by the same symbol. Choosing a tack $\tilde t \in \tilde c$ we define $\tilde f_i^-(\tilde t) \in Z_i(\tilde c)$ to be the face incident to the edge $l_i(\tilde t)$, and $\tilde f_i^+(\tilde t) \in Z_i(\tilde c)$ to be the face incident to the edge $l_{i-1}(\tilde t)$. We then set $f_i^\pm(c) = \pi(\tilde f_i^\pm(\tilde t))$, where $c = \pi(\tilde c)$, and observe that these faces of $\pi(\Gamma)$ depend only on the tacked circle $c \in \Cc(S)$ and not on a particular lift $\tilde t$ of its tack. Finally, we define $Z_i^\pm(\tilde t)$ to be the subset of those faces of $Z_i(\tilde c)$ which lie to the right of the zig-zag $\check z_\pm(\tilde t)$, so that $\tilde f_i^+(\tilde t) \in F_i^+(\tilde t)$ and $\tilde f_i^\pm(\tilde t) \in F_i^-(\tilde t)$. In Figure~\ref{fig:notch-faces} we shade the region $Z_i^+(\tilde t)$ in green.

%Then it is easy to see that the absence of non-oriented bigons implies the inequality $\hm{Z_i(m)} \le \hm{Z_{i+1}(m)}$ for all possible $i$ and $m$. Consider a zig-zag $\check z(t)$, where $t$ is a tack on the boundary component $\tilde c = \pi^{-1}(c)$ of $\widetilde S$. Denote by $F_i = F_i(\tilde c)$ the union of faces of $\widetilde\Gamma$ contained between $z_{i-1}(\tilde c)$ and $z_i(\tilde c)$, and let $
%F_i^+ = F_i^+(t)$ be the subsets of $F_i$ given by the union of those faces lying respectively to the right of $\check z(t)$. In Figure~\ref{fig:notch-faces} we shade the region $F_i^+$ in green. We then define $\tilde f_i^- \in F_i^-$ to be the face incident to the edge $\epsilon_i(t) = \check z(t) \cap z_i(\tilde c)$, and $\tilde f_i^+ \in F_i^+$ to be the face incident to the edge $\epsilon_{i-1}(t)$. Finally, we set $f_i^\pm(c) = \pi(\tilde f_i^\pm)$. Note that the definition of $f_i^\pm = f_i^\pm(c)$ does not depend on the particular lift $t$ of the tack at $c$.

\begin{figure}[h]
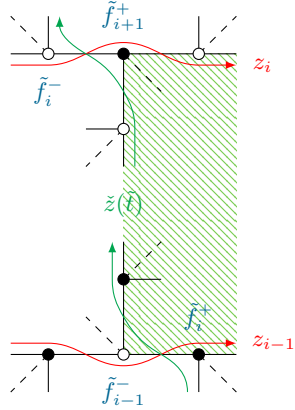

\subfile{fig-notch-faces}
\caption{Faces in $Z_i^+(\tilde t)$ for a bipartite graph. Dashed edges indicate a possibly non-zero number of edges of $\widetilde\Gamma$.}
\label{fig:notch-faces}
\end{figure}

\subsection{Quivers from ideal bicolored graphs}
\label{subsec:quivers-from-graphs}

%We shall now construct a morphism $\phi \colon \Gc_\Gamma \to \Gc_Q$ from the groupoid of bicolored graphs to the \Sasha{another shitty name}quasi-cluster modular groupoid. Let us first describe it on the level of objects.
To each ideal bipartite graph $\Gamma \subset S$ of rank $n$ we now associate a quiver $Q_\Gamma$ with seed
\beq
\label{eq:seed-gamma}
\Theta_\Gamma = \hr{I_\Gamma, I^*_\Gamma, \La_\Gamma, (\cdot,\cdot)_\Gamma, \hc{e_\ell}}.
\eeq
The first two ingredients are
$$
I_\Gamma = I_{\Gamma_\circ} \sqcup I_{\Cc(S)}
\qquad\text{and}\qquad
I^*_\Gamma = I^*_{\Gamma_\circ} \sqcup I_{\Cc(S)},
$$
where
$I_{\Gamma_\circ} = F(\Gamma)$, $I^*_{\Gamma_\circ}$ is the set of proper boundary faces of $\Gamma$, and
$$
I_{\Cc(S)} = \Cc(S) \times \hc{1, \dots, n}.
$$
\begin{remark}
Slightly more formally, we really think of elements of $I_{\Gamma_\circ}$ as pairs $(\Gamma,f)$ consisting of an ideal graph $\Gamma$ on $S$ (considered as usual up to isotopy), together with a 1-cycle $\partial f$ on $\Gamma$. In particular, given a mapping class $\tau:S\rightarrow S$ the label sets $I_\Gamma$ and $I_{\tau(\Gamma)}$ will generally be \emph{different}, in which case the two graphs will correspond to distinct objects in the cluster modular groupoid.
\end{remark}

We will define the form $(\cdot,\cdot)_\Gamma$ by specifying its values of pairs of elements of the basis $\hc{e_\ell}$. To that end let us introduce an auxilliary basis $\hc{\dot e_\ell}$ with the same label set $(I_\Gamma, I_\Gamma^*)$. The graph $\Gamma$ inherits the structure of a fat graph from the orientation of $S$, that is each vertex of $\Gamma$ is endowed with a cyclic orientation of edges incident to it. This in turn allows one to identify the lattice $\La_{\Gamma_\circ}$ with the integral first homology of the conjugate surface for $\Gamma_{\circ}$, and take the restriction $(\cdot,\cdot)_{\Gamma_\circ}$ of the form $(\cdot,\cdot)_\Gamma$ onto $\La_{\Gamma_\circ}$ to be the intersection pairing, see~\cite{GK13}. Choosing $\dot e_f$ to be the cycle $\partial f$ in $\La_{\Gamma_\circ}$ for every $f \in I_{\Gamma_\circ}$, we can depict the corresponding quiver $\dot Q_{\Gamma_\circ}$ as follows. Place a vertex $v_f$ inside each proper face $f$ of $\Gamma_\circ$. Then, for each pair of faces $f_1,f_2$ sharing a bicolored or a boundary edge $l$ draw respectively a solid or a dashed arrow $a$ of $\dot Q_{\Gamma_\circ}$ between $v_{f_1}$ and $v_{f_2}$ so that the white vertex of $l$ stays on the right as we follow $a$, see Figure~\ref{fig:net-quiver}. Thus, the pairing $(\cdot,\cdot)_{\Gamma_\circ}$ admits the following combinatorial description:
\beq
\label{eq:form-0}
(\dot e_f, \dot e_g)_{\Gamma_\circ} = s_{f,g} + \frac12 d_{f,g},
\eeq
where $s_{f,g}$ is the number of solid arrows pointing from $v_f$ to $v_g$ minus the number of those pointing from $v_g$ to $v_f$, and $b_{f,g}$ is defined similarly, but with solid arrows replaced by the dashed ones. We remark that the quiver $\dot Q_{\Gamma_\circ}$ coincides with those of a cluster chart on the moduli spaces $\Pc_{PGL_n,S}$ and $\Ac_{SL_n,S}$, see~\cite{FG06b, GS19}.

\begin{figure}[h]
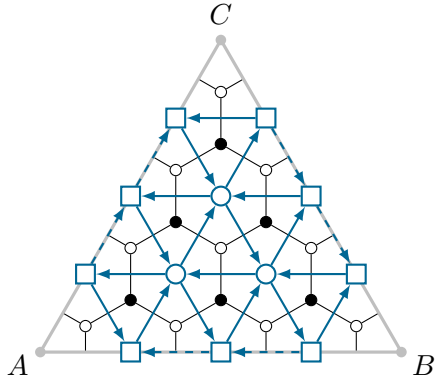

\subfile{fig-net-quiver.tex}
\caption{Quiver $\dot Q_{\Gamma}$ from the bi-colored graph on a triangle.}
\label{fig:net-quiver}
\end{figure}

Let us now describe the form $(\cdot,\cdot)_{\Gamma_\circ}$ in terms of the graph $\widetilde\Gamma = \pi^{-1}(\Gamma)$ on the universal cover $\widetilde S$ of $S$. The graph $\widetilde\Gamma$ gives rise to a pair of abelian groups
$$
\Lambda_{\widetilde\Gamma} = \bigoplus_{\tilde f \in F(\widetilde\Gamma)}\mathbb{Z} \langle e_{\tilde f} \rangle
\qquad\text{and}\qquad
\widehat\Lambda_{\widetilde\Gamma} = \prod_{\tilde f \in F(\widetilde\Gamma)}\mathbb{Z} \langle e_{\tilde f} \rangle
$$
together with a bilinear pairing
$$
(\cdot,\cdot)_{\widetilde\Gamma} \colon \widehat\Lambda_{\widetilde\Gamma} \times \Lambda_{\widetilde\Gamma} \to \frac12\Z,
$$
given by the same formula~\eqref{eq:form-0}. Since each face $\tilde f \in F(\widetilde\Gamma)$ has only finitely many bicolored or boundary edges, the pairing $(\cdot,\cdot)_{\widetilde\Gamma}$ is well-defined. Note that the pairing $(\cdot,\cdot)_{\widetilde\Gamma}$ is $\pi_1(S)$-invariant: for all $\tilde f, \tilde g \in F(\widetilde\Gamma)$ and $\gamma \in \pi_1(S)$ we have
$$
\big(\dot e_{\tilde f},\dot e_{\tilde g}\big)_{\widetilde\Gamma} = \big(\dot e_{\gamma(\tilde f)}, \dot e_{\gamma(\tilde g)}\big)_{\widetilde\Gamma}.
$$
Moreover, for any $f,g \in F(\Gamma)$ and any choice of lifts $\tilde f \in \pi^{-1}(f)$, $\tilde g \in \pi^{-1}(g)$ we have
\beq
\label{form-faces}
(\dot e_f, \dot e_g)_\Gamma = \sum_{\tilde h \in \pi^{-1}(g)} (\dot e_{\tilde f}, \dot e_{\tilde h})_{\widetilde\Gamma} = \sum_{\tilde h \in \pi^{-1}(f)} (\dot e_{\tilde h}, \dot e_{\tilde g})_{\widetilde\Gamma}.
\eeq

% In order to define the values of the from $(\dot e_{c,i}, \dot e_\ell)_\Gamma$, we need to do some preparatory work.
%Before doing so, we need the following preparatory work.
%Consider a zig-zag $\check z(t)$, where $t$ is a tack on the boundary component $\tilde c = \pi^{-1}(c)$ of $\widetilde S$. Denote by $F_i = F_i(\tilde c)$ the union of faces of $\widetilde\Gamma$ contained between $z_{i-1}(\tilde c)$ and $z_i(\tilde c)$, and let $
%F_i^+ = F_i^+(t)$ be the subsets of $F_i$ given by the union of those faces lying respectively to the right of $\check z(t)$. In Figure~\ref{fig:notch-faces} we shade the region $F_i^+$ in green. We then define $\tilde f_i^- \in F_i^-$ to be the face incident to the edge $\epsilon_i(t) = \check z(t) \cap z_i(\tilde c)$, and $\tilde f_i^+ \in F_i^+$ to be the face incident to the edge $\epsilon_{i-1}(t)$. Finally, we set $f_i^\pm(c) = \pi(\tilde f_i^\pm)$. Note that the definition of $f_i^\pm = f_i^\pm(c)$ does not depend on the particular lift $t$ of the tack at $c$.
%
%\begin{figure}[h]
%\subfile{fig-notch-faces}
%\caption{A zig-zag $\check z$. Dashed edges indicate a possibly non-zero number of edges of $\widetilde\Gamma$.}
%\label{fig:notch-faces}
%\end{figure}

%\begin{figure}[h]
%\subfile{frozen.tex}
%\caption{A notch of an ideal bicolored graph: $\epsilon_i=\sgn(s_{2i-1}) = 1$ on the left and $\epsilon_i=-\sgn(s_{2i-1}) = 1$ on the right.}
%\label{fig:frozen}
%\end{figure}

We now specify the value of the form $(\cdot,\cdot)_\Gamma$ on a pair of vectors in $\hc{\dot e_\ell}$ in the situation when one or both of them is a frozen vector $\dot e_{c,i}$ associated to a tacked circle $c\in C(S)$. For each lift $\tilde t$ of the tack on $c$ and $1 \le i \le n$, define the vector
\beq
\label{eq:inf-vector}
\dot e_{\tilde t,i} = \sum_{\tilde f \in Z_i^+(\tilde t)} \tilde f \in \widehat\Lambda_{\widetilde\Gamma}.
\eeq
The element $\dot e_{\tilde t,i}$ defines a linear functional
$$
(\dot e_{\tilde t,i}, \cdot)_{\widetilde\Gamma}\colon \Lambda_{\widetilde\Gamma}\rightarrow \frac{1}{2}\mathbb{Z},
$$
and we have $(\dot e_{\tilde t,i},\dot e_{\tilde f})_{\widetilde\Gamma}=0$ unless $\tilde f \in \hc{\tilde f_{i-1}^-(\tilde t),\tilde f_i^\pm(\tilde t),\tilde f_{i+1}^+(\tilde t)}$. Now for any $f \in F(\Gamma)$ we declare
\beq
\label{form-sea}
\hr{\dot e_{c,i}, \dot e_f}_\Gamma = - \hr{\dot e_f, \dot e_{c,i}}_\Gamma = \sum_{\tilde f \in \pi^{-1}(f)} (\dot e_{\tilde t,i}, \dot e_{\tilde f})_{\widetilde\Gamma}.
\eeq
Formula~\eqref{form-sea} can be expressed more explicitly as:
\beq
\label{form-face-notch}
\hr{\dot e_{c,i}, \dot e_f}_\Gamma = \delta_{f,f_{i+1}^+(c)} - \delta_{f,f_i^+(c)} - \delta_{f,f_-^-(c)} + \delta_{f,f_{i-1}^-(c)},
\eeq
or equivalently
$$
\hr{\dot e_{c,i}, \dot e_f}_\Gamma =
\begin{cases}
1 & \text{if} \;\; f \in \hc{f_{i+1}^+, f_{i-1}^-}, \\
-1 & \text{if} \;\; f \in\hc{f_i^+,f_i^-} \;\;\text{and}\;\; f_i^+ \ne f_i^-,\\
-2 & \text{if} \;\; f = f_i^+ = f_i^-,\\
0 & \text{otherwise},
\end{cases}
$$
where we write $f_j^\pm$ for $f_j^\pm(c)$.
% \blue{Do I get it right that the following red text is an expanded version of the second paragraph in the proof of Prop. 4.10? Personally, I was hoping that what's written there would suffice, but am happy to incorporate more details from the text below.}
%Hence the values of the pairing between $\dot e_{c,i}$ and all proper faces $\dot e_l$ only depends on the set of edges over which a tacking zig-zag $\check z(t)$ at $c$ (shown in green in Figure~\ref{fig:notch-faces}) intersects zig-zags $z_i(c),z_{i-1}(c)$ (shown there in red). 
For a tacked circle $c$ and a pair of integers $1 \le i,j \le n$, we declare
\beq
\label{al-al}
(\dot e_{c,i}, \dot e_{c,j})_\Gamma = \pm\frac12 \delta_{i\pm1,j}.
\eeq
Finally, for distinct tacked circles $c,c'\in C(S)$ and lifts $\tilde t, \tilde t' \in \partial \widetilde S$ of their tacks we define
\beq
\label{al-be}
(\dot e_{c,i}, \dot e_{c',j})_\Gamma = \sum_{\tilde f \sim Z_j^+(\tilde t')} \hr{\dot e_{\tilde t,i}, \dot e_{\tilde f}}_{\widetilde\Gamma},
\eeq
where the sum is taken over all faces in $\pi^{-1}\big(\pi\big(Z_j^+(t')\big)\big)$. In order to show that the pairing $(\cdot,\cdot)_\Gamma$ is skew-symmetric we only need the following result.

\begin{lemma}
Formula~\eqref{al-be} gives rise to a well-defined skew-symmetric pairing on $\La_{\Cc(S)}$.
\end{lemma}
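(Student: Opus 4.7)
The plan is to verify three points in sequence: finiteness of the sum, independence from the choice of tack lifts $\tilde t, \tilde t'$, and skew-symmetry under interchange of $(c,i)$ with $(c',j)$.

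For finiteness, I would use the combinatorial description of the functional $(\dot e_{\tilde t, i}, \cdot)_{\widetilde\Gamma}$ recorded just after~\eqref{eq:inf-vector}: it vanishes on $\dot e_{\tilde f}$ unless $\tilde f$ is one of the four specific faces $\tilde f_{i-1}^-(\tilde t), \tilde f_i^+(\tilde t), \tilde f_i^-(\tilde t), \tilde f_{i+1}^+(\tilde t)$. Hence at most four terms of~\eqref{al-be} are nonzero, so the expression is a well-defined element of $\tfrac12\Z$. For independence of lifts, the set $\pi^{-1}(\pi(Z_j^+(\tilde t')))$ depends only on $\pi(Z_j^+(\tilde t'))$, so the choice of lift $\tilde t'$ is immaterial; replacing $\tilde t$ by $\gamma \tilde t$ sends $\dot e_{\tilde t, i}$ to $\gamma_*\dot e_{\tilde t, i}$, and the substitution $\tilde f \mapsto \gamma \tilde f$, which preserves the summation set, combined with the $\pi_1(S)$-equivariance of $(\cdot,\cdot)_{\widetilde\Gamma}$ already used in~\eqref{form-faces}, leaves the value unchanged.

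The heart of the proof is skew-symmetry. My plan is to expand $\dot e_{\tilde t, i} = \sum_{\tilde g \in Z_i^+(\tilde t)} \dot e_{\tilde g}$ and swap the order of summation, which is legal because only finitely many pairs $(\tilde g, \tilde f)$ contribute. Decomposing $\pi^{-1}(\pi(Z_j^+(\tilde t'))) = \pi_1(S) \cdot Z_j^+(\tilde t')$ and transferring the $\pi_1(S)$-twist from $\tilde f$ to $\tilde g$ via equivariance of the pairing, one can reassemble the result as
$$
(\dot e_{c, i}, \dot e_{c', j})_\Gamma = \sum_{\tilde g \in \pi^{-1}(\pi(Z_i^+(\tilde t)))} \sum_{\tilde f_0 \in Z_j^+(\tilde t')} (\dot e_{\tilde g}, \dot e_{\tilde f_0})_{\widetilde\Gamma}.
$$
Applying the skew-symmetry of $(\cdot,\cdot)_{\widetilde\Gamma}$ on $\La_{\widetilde\Gamma}\times\La_{\widetilde\Gamma}$ to each inner pairing and recognizing $\sum_{\tilde f_0 \in Z_j^+(\tilde t')} \dot e_{\tilde f_0} = \dot e_{\tilde t', j}$ converts this to $-\sum_{\tilde g}(\dot e_{\tilde t', j}, \dot e_{\tilde g})_{\widetilde\Gamma}$, which by the defining formula~\eqref{al-be} equals $-(\dot e_{c',j}, \dot e_{c,i})_\Gamma$.

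The only genuine subtlety is the bookkeeping of the $\pi_1(S)$-reindexing during the swap and decomposition steps; once the finite support of the summand is in hand, the manipulations reduce to termwise identities and present no further obstacle.
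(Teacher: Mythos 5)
The gap is in the claimed finiteness of the set of contributing pairs $(\tilde g,\tilde f)$. The region $Z_i^+(\tilde t)$ on the universal cover $\widetilde S$ contains \emph{infinitely} many faces: it is a half-infinite strip bounded by the two infinite zig-zags $z_{i-1}(\tilde c),\,z_i(\tilde c)$ and the finite path $\check z_+(\tilde t)$, and the same holds for $Z_j^+(\tilde t')$. Whenever a face $g\in\pi(Z_i^+)$ is adjacent in $\Gamma$ to a face $f\in\pi(Z_j^+)$, the face $g$ has infinitely many lifts $\tilde g\in Z_i^+(\tilde t)$, and for each of them the adjacent lift of $f$ lies in the $\pi_1(S)$-invariant set $\pi^{-1}\big(\pi(Z_j^+(\tilde t'))\big)$, producing a nonzero term $(\dot e_{\tilde g},\dot e_{\tilde f})_{\widetilde\Gamma}$. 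So precisely in the situations where $(\dot e_{c,i},\dot e_{c',j})_\Gamma$ could be nonzero, the double sum has infinitely many nonzero terms. What is genuinely finite (and is what your first paragraph correctly records) is the support of the \emph{collapsed} pairing $(\dot e_{\tilde t,i},\dot e_{\tilde f})_{\widetilde\Gamma}$ as $\tilde f$ varies --- at most four faces --- but that is a statement about the already-summed functional, not about absolute convergence of the face-by-face double sum. The cancellations that make the inner sum vanish for all but four $\tilde f$ are exactly the kind of conditional, order-dependent cancellations that break under an interchange of summation order, so "once the finite support of the summand is in hand" assumes the thing that fails.

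The paper sidesteps this by never expanding $\dot e_{\tilde t,i}$ into face contributions at all. It evaluates the partial sum $\sum_{\tilde f\in Z'}(\dot e_{\tilde t,i},\dot e_{\tilde f})_{\widetilde\Gamma}$ over a single deck translate $Z'$ of $Z_j^+(\tilde t')$, and shows by a zig-zag analysis (using Lemmas~\ref{lem:frame} and~\ref{lem:nopair}) that this quantity equals $+1$, $-1$, or $0$ according to which of the edges $l_{i-1},l_i,\,l'_{j-1},l'_j$ lies on a portion of the shared zig-zag boundary. Skew-symmetry is then read off by inspection of the symmetric form of that answer, and well-definedness is established by a separate argument that at most two translates $Z'$ contribute. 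Your observations on finiteness of the single sum and independence of the tack lift are both correct; the repair to your skew-symmetry argument would be to work translate-by-translate rather than face-by-face, which leads back to essentially the paper's case analysis.
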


\begin{proof}
For the duration of the proof we abbreviate $Z = Z_i^+(\tilde t)$ and $Z' = Z_j^+(\tilde t')$. Let $z_{i-1}$, $z_i$, $\check z$ be the zig-zags bounding $Z$, and $z'_{j-1}$, $z'_j$, $\check z'$ be those bounding $Z'$. Write $\check s_i$ for the segment of the zig-zag $\check z$ between the black vertices $b_{i-1}$, $b_i$ in the notations of Lemma~\ref{lem:frame}, and define the segment $\check s'_j$ in a similar way. Formula~\eqref{form-face-notch} implies that the expression
\beq
\label{eq:et-ef}
\sum_{\tilde f \in Z'} (\dot e_{\tilde t,i}, \dot e_{\tilde f})_{\widetilde\Gamma}
\eeq
%in the pair $\big\{\tilde f_{i-1}^-, \tilde f_i^+\big\}$ or in the pair $\big\{\tilde f_i^-, \tilde f_{i+1}^+\big\}$ one of the faces belongs to $F'$ and the other does not, so that
vanishes, unless at least one of the edges $l_i, l_{i-1}$ belongs to the boundary of the region $Z'$.

Assume first that $l_i \subset \check s'_j$. Since each edge belongs to exactly two zig-zags, we have $\check z' \in \{z_i, \check z\}$. On the other hand, $\check z \ne \check z'$ since $c \ne c'$, and we conclude that $z_i = \check z'$. Then $\tilde f_i^- \in Z'$, $\tilde f_{i+1}^+ \notin Z'$, as can be seen from Figure~\ref{fig:notch-faces}, and the pair of faces contributes $-1$ to~\eqref{eq:et-ef} by the formula~\eqref{form-face-notch}. Note that in this case the edge $l_{i-1}$ does not lie on the boundary of $Z'$. Indeed, equality $z_i = \check z'$ ensures that $l_{i-1} \not\subset \check z'$, thus if $l_{i-1}$ belonged to the said boundary we would have had either $\check z \in \{z'_{j-1},z'_j\}$, which is ruled out by the second part of Lemma~\ref{lem:frame}, or $z_{i-1} \in \{z'_{j-1},z'_j\}$, which contradicts Lemma~\ref{lem:nopair}. Therefore, the pair $\tilde f_i^+, \tilde f_{i-1}^-$ does not contribute to~\eqref{eq:et-ef}, which takes value $-1$. By a similar argument, condition $l_{i-1} \subset \check s'_j$ implies that the sum~\eqref{eq:et-ef} takes value $1$.

Now assume that $l_i$ or $l_{i-1}$ is an edge in $z'_j \cup z'_{j-1}$. Recall that neither of the zig-zags $z_i$, $z_{i-1}$ coincides with $z'_j$ or $z'_{j-1}$. Thus we have $\check z\in\{z'_j,z_{j-1}'\}$, and both edges $l_i,l_{i-1}$ belong to exactly one of the zig-zags $z'_j, z'_{j-1}$, which coincides with $\check z$. By Lemma~\ref{lem:frame}, we have $\check z' \notin \hc{z_i, z_{i-1}}$, so unless the zig-zag $\check z'$ intersects $\check z$ over an edge within the segment $\check s_i$, the contribution to~\eqref{eq:et-ef} from the pair $\tilde f_{i-1}^-, \tilde f_i^+$ cancels that from the pair $\tilde f_i^-, \tilde f_{i+1}^+$. Hence $l'_j \subset \check s_i$ or $l'_{j-1} \subset \check s_i$. Arguing as before we arrive at
\beq
\label{eq:pairing-F-gg}
\sum_{\tilde f \in Z'} (\dot e_Z, \dot e_{\tilde f})_{\widetilde\Gamma} =
\begin{cases}
1 &\text{if} \;\; l_{i-1} \subset \check s'_j \;\; \text{or} \;\; l'_j \subset \check s_i, \\
-1 &\text{if} \;\; l_i \subset \check s'_j \;\; \text{or} \;\; l'_{j-1} \subset \check s_i, \\
0 &\text{otherwise.}
\end{cases}
\eeq
Formula~\eqref{eq:pairing-F-gg} shows that the pairing $(\cdot,\cdot)_\Gamma$ on $\La_{\Cc(S)}$ is skew-symmetric if well-defined and it remains to address the latter issue.

As we have just seen, the pairing~\eqref{eq:et-ef} vanishes unless $\check z_i \in \{z'_{j-1}, z'_j\}$ or $\check z'_j \in \{z_{i-1}, z_i\}$. A lift of any of the zig-zags $\pi(\check z')$, $\pi(z'_{j-1})$, $\pi(z'_j)$ determines that of the region $\pi(Z')$, and therefore there exist at most four regions $Z'$ in the same deck orbit for which~\eqref{eq:et-ef} is nonzero. In fact, there are at most two, since the option $\check z_i = z'_j$ ensures that $\pi(\check z_i) \ne \pi(z'_{j-1})$, for otherwise the zig-zags $z'_j$ and $z'_{j-1}$ would have been deck invariant, and similarly for $\check z'_j$. This finishes the proof of the Lemma.
\end{proof}

To finish the construction of the quiver defined by the seed~\eqref{eq:seed-gamma} we now express the basis $\hc{e_\ell}$ in terms of $\hc{\dot e_\ell}$. For any $c \in \Cc(S)$ and $1 \le i \le n$, we set $e_{c,i} = \dot e_{c,i}$. If $f \in F(\Gamma)$ is such that there are no faces $\tilde f_1, \tilde f_2 \in \pi^{-1}(f)$ sharing a bicolored edge, we define $e_f = \dot e_f$. Now assume that $\tilde f_1$ and $\tilde f_2$ share an edge, and consider $\gamma \in \pi_1(S)$ such that $\gamma(\tilde f_1) = \tilde f_2$. Note that the region on $\widetilde S$ covered by the orbit $\gamma^n(\tilde f_1)$, $n \in \Z$ is bounded by a pair of zig-zags $z_{i-1}(b)$, $z_i(b)$, where $\pi(b)$ is either a puncture or a tacked circle on $S$. For $1 \le j \le n$, denote by $Z_j$ the region lying between the zig-zags $z_{j-1}(b), z_j(b)$, and by $n_j$ the number of distinct faces in $\Gamma$ arising as projections of those in $Z_j$. It is easy to see from the definition of a zig-zag that $n_j \le n_{j+1}$ for all $j$. Let $k$ be the maximal integer for which $n_k=1$, and note that $k \ge i$. Then for any $1 \le j \le k$, there is a face $f_j \in F(\Gamma)$, such that $\pi(\tilde f) = f_j$ for all $\tilde f \in Z_j$. Furthermore, there is a unique face $f_{k+1} \in F(\Gamma)$, such that any of its preimages shares bicolored edges with a pair of distinct faces $\tilde f_k, \tilde f'_k \in Z_k$. Then we set $e_{f_j} = \dot e_{f_j} + \dots + \dot e_{f_{k+1}}$ for all $1 \le j \le k$, which determines the value of the form $(\cdot,\cdot)_\Gamma$ on the basis $\hc{e_\ell}$ and completes the construction of the quiver $Q_\Gamma$.

We now enhance the quiver $Q_\Gamma$ to a compatible pair. Recall that to each boundary arc $a$ on $S$, we associated a collection of $n$ frozen vectors. If this boundary arc is a tacked circle $c$, we denoted the vectors by $e_{c,i}$, otherwise we labelled the vectors by the boundary faces $f$ adjacent to $a$. We will denote faces of the latter kind by $e_{a,i}$, where the index $i$ decreases as we traverse $a$ in accordance with the orientation induced by that of $S$, so that in $Q_{\tri}$ we have
$$
(e_{a,i},e_{a,i+1}) = \frac12
$$
for any boundary arc $a$ and any $1 \le i < n$. Now each frozen direction $\ell \in I_\Gamma^*$ has two components: the boundary arc and the index of the frozen variable on it. Given $\ell = (a,i)$ we write $\ell[1] = a$ and $\ell[2]=i$. Let $\widehat C$ be the matrix given by
$$
\widehat C_{\ell_1,\ell_2} =
\begin{cases}
\frac12\delta_{\ell_1[1],\ell_2[1]} \mathfrak{A}_{\ell_1[2],\ell_2[2]}, &\text{if}\;\; \ell_1, \ell_2 \in I_\Gamma^*, \\
0, &\text{otherwise},
\end{cases}
$$
where $\mathfrak{A}$ is the Cartan matrix of the group $G = SL_{n+1}$. Note that the matrix
\beq
\label{eq:ensemble-matrix}
b = \eps + \widehat C,
\eeq
has integer entries.

%\begin{defn}
%\label{def:alphas}
Let $\tilde t$ be a tack on a lift $\tilde c \in \Cc(\widetilde S)$ of the tacked circle $c \in \Cc(S)$, and $\tilde t'$ the next tack we encounter on $\tilde c$ as we traverse it according to its orientation. Denote by $Z_{i}(\tilde c) $ the set of faces lying between the zig-zags $z_{i-1}(\tilde c)$, $z_i(\tilde c)$, $\check z(\tilde t)$, and  $\check z(\tilde t')$. We then set
\beq
\label{eq:alpha-c}
\alpha_{c,i} = \sum_{\tilde f \in Z_{i}(\tilde c)} \dot e_{\pi(f)}% \in \Lambda_\Gamma.
\eeq
%\red{I removed a minus sign in~\eqref{eq:alpha-c}.}
If $\tilde a$ is a boundary arc incident to and oriented towards a marked point $\tilde m$, we denote by $Z_{i}(\tilde m) $ the set of faces lying between the zig-zags $z_{i-1}(\tilde m)$ and $z_i(\tilde m)$. Then we set %(note the sign difference)
\beq
\label{eq:alpha-a}
\alpha_{a,i} = -\sum_{\tilde f \in Z_{i}(\tilde x)} \dot e_{\pi(f)}% \in \Lambda_\Gamma.
\eeq
%\end{defn}

\begin{lemma}
\label{lem:compat-pair-construction}
If $S$ has only tacked circles and no punctures, the matrix~\eqref{eq:ensemble-matrix} defines an ensemble matrix in the sense of Section~\ref{subsec:ensemble}, and thereby enhances the quiver $Q_\Gamma$ to a compatible pair.
\end{lemma}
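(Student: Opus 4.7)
My plan is to explicitly construct a compatible pair $(\Lambda_\Gamma \subseteq \Xi_\Gamma, \{\xi_\ell\}_{\ell \in I_\Gamma})$ whose associated ensemble matrix coincides with $b = \eps + \widehat{C}$. For frozen directions I take $\xi_{c,i} = \alpha_{c,i}$ and $\xi_{a,i} = \alpha_{a,i}$ as given by~\eqref{eq:alpha-c} and~\eqref{eq:alpha-a}. For a mutable direction $f \in I^m_\Gamma$, I define $\xi_f$ to be the unique vector in $(\La_\Gamma)_\Q$ satisfying $(e_g, \xi_f)_\Gamma = \delta_{gf}$ for every mutable $g$, normalized to pair trivially with the frozen basis of $\La_\Gamma$. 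Existence and uniqueness of $\xi_f$ require the restriction of $(\cdot,\cdot)_\Gamma$ to $\La_\Gamma^{\mathrm{mut}}$ to be non-degenerate modulo frozen vectors, which in the no-puncture setting follows from the connectedness of $\Gamma_\circ$ and the local bipartite geometry of $\Gamma$ around each mutable face.

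Once the $\xi$'s are in hand, the verification that this compatible pair yields the claimed matrix $b$ splits into two checks. First, I show $(e_g, \alpha_{c,i})_\Gamma = (e_g, \alpha_{a,i})_\Gamma = 0$ for every mutable $g$: using~\eqref{form-sea} and~\eqref{al-be}, the pairing of a mutable basis vector with either $\alpha$-type vector localizes to edges on the boundary of the supporting strip $Z^\pm_i(\tilde t)$ or $Z_i(\tilde m)$, and this boundary consists of zig-zag segments which project to frozen vertices of $Q_\Gamma$. Second, the frozen–frozen block of $b$ must match $\eps + \widehat{C}$. For $\ell_1 = (a,i)$, $\ell_2 = (a,j)$ on a common boundary arc, $(\alpha_{a,i}, e_{a,j})_\Gamma$ is a sum indexed by the zig-zags around the shared endpoint $\tilde m$ of $\tilde a$; a direct enumeration of the overlaps between the strips $Z_i(\tilde m)$ and $Z_j(\tilde m)$ reproduces $\eps_{(a,i),(a,j)} + \tfrac12 \mathfrak{A}_{ij}$, the Cartan matrix $\mathfrak{A}$ of $SL_{n+1}$ arising combinatorially from the $n+1$ zig-zags $z_0(\tilde m),\ldots, z_n(\tilde m)$ around $\tilde m$. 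An analogous computation on a tacked circle uses~\eqref{al-al}, Lemma~\ref{lem:frame}, and the $\pi_1(S)$-averaging in~\eqref{al-be} to recover the same Cartan pattern.

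The main obstacle I anticipate is the second step, specifically the combinatorial bookkeeping of how consecutive $\alpha$-vectors on the same boundary arc or tacked circle interact through the Cartan matrix pattern, and in particular checking that the half-integer contributions in~\eqref{al-al} combine correctly with the pairings arising from~\eqref{form-sea} to produce integer entries of $b$. The no-puncture hypothesis is essential here: a puncture would contribute a bi-infinite sequence of zig-zag lifts in the universal cover, whose accumulation destroys the finiteness of the local sums computing $(\alpha_{a,i}, e_{a,j})_\Gamma$, and no analog of the boundary Toda combinatorics implicit in the $\alpha$-construction of~\cite{BK23} would be directly available in that setting. Once the Cartan pattern is verified on the frozen block, the equality $b = \eps + \widehat{C}$ is immediate: the mutable rows of $b$ coincide with those of $\eps$ by Remark~\ref{rmk:ensemble-def}(3), and $\widehat{C}$ vanishes outside the frozen–frozen block.
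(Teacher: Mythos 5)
The central difficulty the Lemma has to address is the invertibility of the matrix $b=\eps+\widehat{C}$, and your proposal never engages with it. The paper's argument is: let $b_\circ$ be the submatrix of $b$ obtained by deleting the rows and columns indexed by tacked circles; cite~\cite{GS19} for the fact that the vectors $\alpha_{c,i}$ form a $\mathbb{Q}$-basis of $\ker(b_\circ)$; compute $(\alpha_{c,i},e_{c,j})=\mathfrak{A}_{ij}$ via~\eqref{form-face-notch}; conclude from the invertibility of the Cartan matrix that $\ker(b)=0$. Once $b^{-1}$ is available, $\Xi$ is defined by $\xi_j=\sum_k(b^{-1})_{kj}e_k$, integrality of $b$ gives $\Lambda\subseteq\Xi$, and the compatibility condition $(e_i,\xi_j)=\delta_{ij}$ for mutable $i$ reduces to $b_{ik}=\eps_{ik}$ on mutable rows. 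Your appeal to ``connectedness of $\Gamma_\circ$ and the local bipartite geometry'' as a substitute for invertibility is not an argument: the form $(\cdot,\cdot)_\Gamma$ on $\Lambda_\Gamma$ is genuinely degenerate (this is the whole point of the $\alpha$-vectors spanning $\ker(b_\circ)$), and the fact that the tacked-circle columns kill this kernel is the nontrivial content, resting on the $[\mathrm{GS19}]$ computation and the Cartan pairing.

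Your proposed $\Xi$-basis is also the wrong one. You set $\xi_{c,i}=\alpha_{c,i}$, but Remark~\ref{rmk:root-cas} states $\alpha_{c,i}=-\sum_j\mathfrak{A}_{ij}\xi_{c,i}$: the $\alpha$'s are the images of the genuine $\xi$'s under (minus) the Cartan matrix, not the $\xi$'s themselves. A compatible pair built with the $\alpha$'s as frozen $\xi$-vectors yields an ensemble matrix whose frozen rows differ from those of $b$ by a factor of $-\mathfrak{A}^{-1}$, so even if the construction produced a valid compatible pair it would not prove that $b$ itself is an ensemble matrix, which is the claim. Compounding this, you compute a pairing $(\alpha_{a,i},e_{a,j})_\Gamma$ and equate it with the entry $b_{(a,i),(a,j)}$; but ensemble matrix entries are the change-of-basis coefficients in $e_j=\sum_i b_{ij}\xi_i$, not pairings $(\xi_i,e_j)$ (the latter equals $(\kappa b)_{ij}$, not $b_{ij}$). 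And even the pairing you state is numerically off: the paper computes $(\alpha_{c,i},e_{c,j})=\mathfrak{A}_{ij}$, whereas $\eps_{(c,i),(c,j)}+\tfrac12\mathfrak{A}_{ij}=\delta_{ij}-\delta_{j,i-1}$, which is a different matrix. Finally, the normalization of $\xi_f$ ``to pair trivially with the frozen basis of $\Lambda_\Gamma$'' is incompatible with the actual $\xi$'s attached to $b$: for frozen $\ell$ one has $(e_\ell,\xi_j)=\delta_{\ell j}-(\widehat{C}b^{-1})_{\ell j}$, which is generally nonzero.
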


\begin{proof}
First let us show that $b$ is invertible {over $\mathbb{Q}$}.
% Given a tacked circle $c \in \Cc(S)$ and its lift $\tilde c$ let us write $Z_i(c) = \pi(Z_i(\tilde c))$ for the subset of faces in $F(\Gamma)$ appearing as projections of those in $Z_i(\tilde c)$.\red{Is this definition correct? e.g torus.} Similarly, we set $Z_i(m) = \pi(Z_i(\tilde m))$ for a marked point $m$ and its lift $\tilde m$. For a boundary arc $a$ and a number $1 \le i \le n$ we define a vector
%\beq
%\label{eq:alpha_ai}
%\alpha_{a,i} = \sum_{f \in Z_i(x)} \dot e_f,
%\eeq
%where $x \in \partial a$ is the special point towards which the arc $a$ is oriented. 
Let $b_\circ$ be the submatrix of $b$ obtained by crossing out the rows and columns labelled by vectors $e_{c,i}$, for $c$ a tacked circle. It was shown in~\cite{GS19} that the $\alpha_{c,i}$ form a $\Q$-basis of $\ker(b_\circ)$. Using~\eqref{form-face-notch} one checks that
 $$
 (\alpha_{c,i},e_{c,j})  = \mathfrak{A}_{ij},
 $$
and thus the kernel of $b$ is trivial provided $S$ has only tacked circles and no punctures.

Now we define $\Xi \subset \Lambda_\Gamma\otimes\mathbb{Q}$ to be the lattice spanned by 
\begin{align}
\label{eq:xi-def}
\xi_j = \sum_{k \in I} (b^{-1})_{kj} e_k, \quad j\in I.
\end{align}
Since $b$ itself is an integer matrix, we have by definition $\La \subset \Xi$, so it only remains to verify the condition $(e_i,\xi_j) = \delta_{ij}$ for $i \notin I_\Gamma^*$. But this follows by evaluating the functional $(e_i,\cdot)$ on both sides of~\eqref{eq:xi-def} and using the fact that $\eps_{ik} = b_{ik}$ for $i \notin I_\Gamma^*$.
\end{proof}

\begin{convention}
\label{rmk:tori-roots}
Recall that the quantum torus associated to the $\Xi$-lattice in a compatible pair is not in general defined over $\mathbb{Z}[q^{\pm1}]$, but rather over a ring obtained by adjoining the required fractional powers of $q$. For the quantum tori associated to the $\Xi$-lattice in the compatible pair associated to $(\Pc^{\diamond}_{SL_{n+1},S},\Pc^{\diamond}_{G_{n+1},S})$ in Lemma~\ref{lem:compat-pair-construction}, we will always assume that the base ring contains $(-q)^{\frac{n}{2}}$. Note that this assumption is void exactly when the element $s_{SL_{n+1}}=(-1)^{n}$ in the definition of a twisted local system is the identity.
\end{convention}

\begin{remark}
 Let $\{a_j\}_{j \in \Z/k\Z}$ be the boundary arcs comprising a connected component $B \in \pi_0(\partial S)$ of the boundary of $S$ and listed in agreement with its orientation. As was shown in~\cite{GS19}, elements of the form
$$
\sum_{j \in \Z/k\Z}^{k}(\alpha_{a_{2j},i} + \alpha_{a_{2j+1},n+1-i})
$$
form a $\Q$-basis of $\ker(\eps)$.
\end{remark}

\begin{remark}
\label{rmk:root-cas}
%One easily computes that for a tacked circle $c$ we have 
%$$
%\alpha_{c,i} = -\sum_{j=1}^n \mathfrak{A}_{ij}\xi_{c,j} = \xi_{c,i-1} -2\xi_{c,i} +\xi_{c,i+1},
%$$
One easily computes that for $b$ a tacked circle or a boundary arc
$$
\alpha_{b,i} = -\sum_{j=1}^n \mathfrak{A}_{ij}\xi_{b,j} = \xi_{b,i-1} -2\xi_{b,i} +\xi_{b,i+1},
$$
where $\xi_0 = \xi_{n+1}=0$, and the ensemble map reads
$$
e_\ell =
\begin{cases}
\sum_{i \in I} \eps_{i\ell} \xi_i, & \ell \notin I_\Gamma^*, \\
-\frac12 \alpha_\ell + \sum_{i \in I} \eps_{i\ell} \xi_i, & \ell \in I_\Gamma^*.
\end{cases}
$$
\end{remark}

We finish this section by introducing vectors $\bar e_{c,i}$ which will be convenient for calculations later in the paper. Let $\tilde t$ be a tack on a lift $\tilde c \in \Cc(\widetilde S)$ of the tacked circle $c \in \Cc(S)$. Denote by $U_{c,i} $ the set of faces  lying between the zig-zags $z_{i-1}(\tilde c)$, $z_i(\tilde c)$, $\check z_+(\tilde t)$, and  $\check z_-(\tilde t)$. We then set
\beq
\label{eq:bar-e-ci}
\bar e_{c,i} = \sum_{\tilde f \in U_{c,i}} \dot e_{\pi(\tilde f)} - e_{c,i}.
\eeq
For example, in the notations of Figure~\ref{fig:notch-lift} we have
\begin{align*}
\bar e_{c,1} &= e_{f_{12}} - e_{c,1}, \\
\bar e_{c,2} &= e_{f_{13}} + e_{f_6} + e_{f_{11}} - e_{c,2}, \\
\bar e_{c,3} &= e_{f_{14}} + e_{f_7} + e_{f_2} + e_{f_5} + e_{f_{10}} - e_{c,3}.
\end{align*}
It is easy to see that for all $1 \le i < n$ we have
$$
(\bar e_{c,i}, \bar e_{c,i+1})_\Gamma = -\frac12.
$$

\begin{notation}
From this place onwards unless specified otherwise $c$ in the frozen index $(c,i) \in I_\Gamma^*$ is a tacked circle.
\end{notation}

\subsection{Depicting quivers from triangulations}

Given an ideal triangulation $\tri$ of a marked surface $S$, we now describe the quiver $Q_{\tri} = Q_{\Gamma_{\tri}}$ in a more pictorial way. An arc of $\tri$ is \emph{special} if it precedes a tacked circle as we traverse a special triangle in a positive direction. For example, the right arc connecting the tack with the bottom marked point on Figure~\ref{fig:disk-graph} is special. Note that any special arc is shared by a special and a non-special triangle and is oriented towards the tack in the former, and away from the tack in the latter.

%Let $\Delta_c$ be a special triangle containing a tacked circle $c$ as one of its sides. In the associated triangulation $\tri_\circ$ of $S_\circ$, the tacked circle $c \in \Cc(S)$ becomes a puncture $p \in P(S_\circ)$, and the special triangle $\Delta_c$ becomes a diagonal $d = (p,q)$, possibly with $p=q$. We say that a side $(p,q)$ of a triangle $\Delta$ in $\tri_\circ$ is \emph{special} if it is oriented from $p$ to $q$, where the orientation of $\Delta$ is inherited from that of the surface $S$.

\begin{figure}[h]
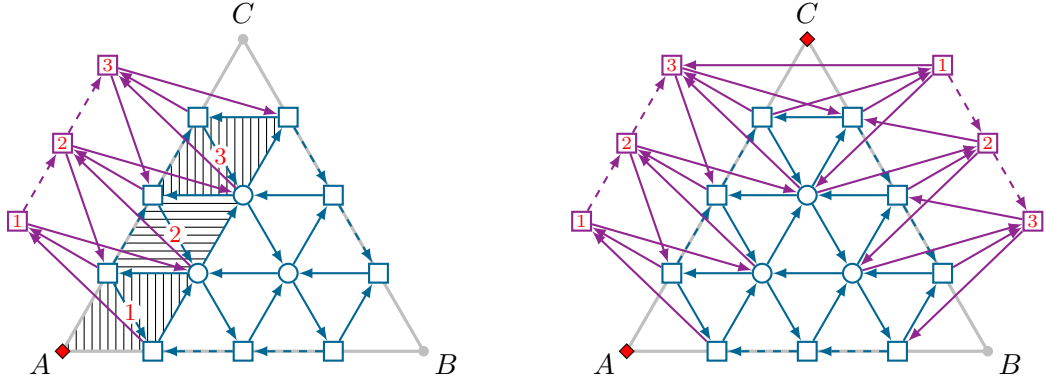

\subfile{fig-notches.tex}
\caption{Quiver for a triangle with notches.}
\label{fig:notches}
\end{figure}

Inside each non-special triangle $\Delta$ we draw the quiver $Q$ described in the beginning of this section, see Figure~\ref{fig:net-quiver}. Now, for each special arc $a$ of $\Delta$ oriented away from the tack $t$ of the tacked circle $c$, we enhance the quiver $Q$ with $n$ additional frozen vertices $v_{c,j}$ and draw dashed arrows $v_{c,j} \dashrightarrow v_{c,j+1}$ for all $1 \le j < n$. Note that every arrow of the quiver $Q$ is parallel to one of the arcs of $\Delta$. Consider the $n$ rhombi formed by the arrows of $Q$, which are adjacent to the arc of $\Delta$ preceding $a$ and whose diagonals are parallel to the arc of $\Delta$ following $a$. Let us number these rhombi from 1 to $n$, starting with the one which contains the tack $t$ as a vertex. In the left pane of Figure~\ref{fig:notches}, the edge $AB$ is special and is oriented away from the tack $A$, the 3 rhombi are adjacent to the edge $CA$ which precedes $AB$, and their diagonals are parallel to the edge $BC$ which follows $AB$. We now draw an arrow from each of the vertices of the diagonal of the $j$-th rhombus to $v_{c,j}$, and draw an edge from $v_{c,j}$ to each of the remaining vertices of the $j$-th rhombus (note, however, that the first rhombus only contains three vertices of $Q$, since the forth one is the tack $t$). Finally, let us assume that $\Delta$ contains special arcs $a,a'$, oriented away from the tacks $t,t'$ of tacked circles $c,c'$, and $a'$ follows $a$ as we travers $\Delta$. In that case we draw an additional solid arrow $v_{c,1} \to v_{c',n}$, see the right pane of Figure~\ref{fig:notches}. In what follows, the resulting quiver is denoted by $Q_\Delta$.

%\begin{figure}[h]
%\subfile{fig-square-notch.tex}
%\caption{Amalgamation in the presence of a special diagonal.}
%\label{fig:square-notch}
%\end{figure}

\begin{figure}[h]
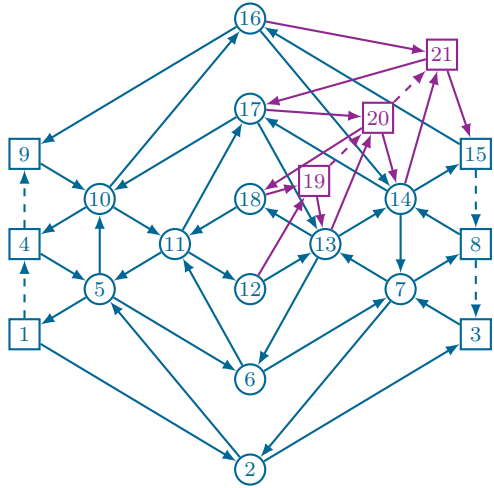

\subfile{disk-quiver.tex}
\caption{Quiver $Q_{\tri}$ from a triangulation on Figure~\ref{fig:disk-graph}.}
\label{fig:disk-quiver}
\end{figure}

Now, we are ready to describe the quiver $\dot Q_{\tri}$. For each pair of non-special triangles $\Delta, \Delta' $ in $\tri$ sharing an arc $a$, we \emph{amalgamate} the quivers $Q_{\Delta}$ and $Q_{\Delta'}$ along $a$, see e.g.~\cite{FG06a}. Namely, we identify the vertices of $\Delta$ and $\Delta'$ which lie on $a$, unfreeze the resulting $n$ vertices, and add up arrows between them, which in the case at hand amounts to simply erasing the dashed arrows. Furthermore, if $\Delta$ and $\Delta'$ are two (not necessarily distinct) non-special triangles containing arcs $a,a'$ of the same special triangle, we amalgamate the quivers $Q_{\Delta}$ and $Q_{\Delta'}$ along the vertices which lie on $a$ and $a'$. For example, the triangulation on Figure~\ref{fig:disk-graph} gives rise to the quiver shown on Figure~\ref{fig:disk-quiver}, which is obtained by amalgamating a pair of non-special triangles by both a joint arc and a self-folded triangle.
{The quivers $\dot Q_{\tri}$ have previously appeared in~\cite{BK23} in a similar context of extending the Poisson moduli space $\Pc_{G,S}$ to a symplectic one.}

\begin{figure}[h]
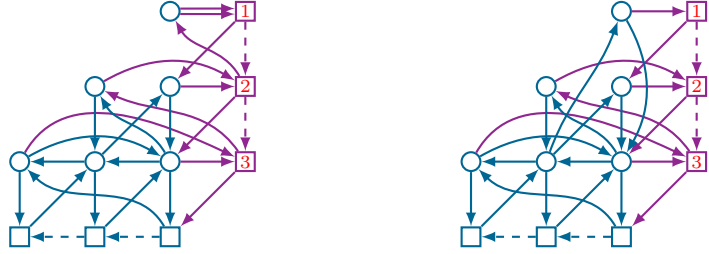

\subfile{fig-self-folded-special.tex}
\caption{Quivers $\dot Q_{\tri}$ (on the left) and $Q_{\tri}$ (on the right) for an annulus with one special and one non-special triangle.}
\label{fig:self-folded-special}
\end{figure}

Note that unless we have a non-special triangle sharing two edges with a special one, the bases $\hc{e_i}$ and $\hc{\dot e_i}$ coincide, and hence $Q_{\tri} = \dot Q_{\tri}$. Thus, it remains to treat the case of an annulus consisting of one special and one non-special triangle. Denote by $c$ the tacked circle of the former, choose a lift $\tilde t$ of its tack, and let $f_1$ be the unique face in the projection of the region $Z_1(\tilde t)$. In that case the basis $\hc{e_i}$ is obtained from $\hc{\dot e_i}$ by replacing the vector $\dot e_{f_1}$ with $e_{f_1} = \dot e_{f_1} + \dot e_{f_2}$. We show quivers $\dot Q_{\tri}$ and $Q_{\tri}$ on Figure~\ref{fig:self-folded-special}, where the vertices $v_{f_1}$, $v_{f_2}$ are the mutable vertices in the first and second top rows respectively, adjacent to the frozen ones.

%We can now define the functor
%$$
%\Qc \colon \widehat\Pt(S) \longra \Cl_{\bs Q}
%$$
%on the level of objects. Namely, we set
%$$
%\Qc(\tri) = Q(\tri).
%$$
%Before we define $\Qc$ on the level of morphisms, we will digress and discuss notched bicolored graphs. The letter can be thought of as a way to incorporate the notion of a notch into that of an ideal web, see~\cite{Gon17}.

We record one more lemma about quivers associated to triangulations.

\begin{lemma}
\label{lem:no-automorphisms}
Suppose that $S$ is a marked surface without punctures, but possibly with tacked circles, and let $\tri$ be a triangulation of $S$. Then the only permutation automorphism of the quiver $Q_{\tri}$ which is the identity on all frozen vectors $e_i$, $i\in I_\Gamma^*$ is the trivial permutation.
\end{lemma}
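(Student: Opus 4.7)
The plan is to argue by propagation over the triangles of $\tri$. Since the mutable vertices of $Q_{\tri}$ correspond to the proper internal faces of $\Gamma_{\tri}$, and $\sigma$ already acts trivially on all frozen vertices by assumption, the goal reduces to showing that $\sigma$ fixes each internal face. I will organize this by which triangle of $\tri$ contains the face, and work across the triangulation.

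First, I would focus on a single triangle $\Delta$ of $\tri$ and establish a local rigidity claim: any permutation automorphism of the subquiver $Q_\Delta$ (described in Section~\ref{subsec:quivers-from-graphs}) which fixes the frozen rim vertices along at least one edge $a$ of $\Delta$ and, if $\Delta$ is special, also fixes the tacked-circle vertices $(c,i)$, must fix every vertex of $Q_\Delta$. The rank-$n$ honeycomb inside $\Delta$ has an obvious $S_3$-symmetry rotating the triangle, but fixing one edge's worth of rim vertices breaks this, and the arrows of $Q_\Delta$, whose directions are induced by the orientation of $S$, forbid any residual reflection symmetry. I expect this claim to follow from direct combinatorial inspection, reconstructing each interior vertex iteratively from its arrow pattern to already-identified neighbors.

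Second, since $S$ has no punctures but is admissible, the boundary $\partial S$ is nonempty, so $\tri$ contains at least one triangle adjacent either to a boundary arc or to a tacked circle. At any such $\Delta_0$, the hypothesis that $\sigma$ is the identity on $I_\Gamma^*$ supplies the boundary data required by the first step, and so $\sigma$ must fix every vertex of $Q_{\Delta_0}$. I would then propagate by induction over the dual graph of $\tri$: once $\sigma$ is known to act trivially on $Q_{\Delta'}$ for some triangle $\Delta'$, then for any neighbor $\Delta$ the $n$ amalgamated rim vertices on the shared arc are already fixed, supplying the input needed to reapply the local rigidity claim to $\Delta$. Connectivity of the dual graph of $\tri$ then completes the argument.

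The main obstacle will be the local rigidity claim of the first step, namely that the combinatorics of the rank-$n$ honeycomb inside a triangle, equipped with the orientation-induced arrows and, in the special case, the tacked-circle frozen vertices, admits no nontrivial permutation automorphism once one edge's rim vertices are fixed. Verifying this amounts to a careful combinatorial inspection of the honeycomb arrows; the key point is that in a planar, oriented graph an interior face can be recovered from the cyclic arrangement of the arrows incident to it together with knowledge of any one of its already-identified face neighbors, so the identifications can be spread layer by layer from the anchored edge $a$ inward.
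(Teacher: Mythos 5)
Your argument is correct in outline and is structurally the same propagation argument as in the paper, but you route it differently in one respect that is worth noting. The paper first restricts $\sigma$ to the subquiver $Q_{\tri_\circ}$ (obtained by deleting the tacked-circle frozen vectors, i.e.\ passing to the triangulation $\tri_\circ$ of $S_\circ$), shows by induction over the triangles that the restriction $\sigma_\circ$ can at worst swap the identified pair of nodes inside each self-folded triangle of $\tri_\circ$, and only then invokes the tacked-circle frozen vertices of $Q_{\tri}$ to kill those residual $\mathbb{Z}/2$-symmetries. You instead work directly with $Q_{\tri}$ and feed the tacked-circle vertices $(c,i)$ into the local rigidity step from the start. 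This buys you something: since $S$ has no punctures, $\tri$ itself has no self-folded triangles (only special triangles, whose tacked-circle vertices you are already assuming to be fixed), so the self-folded case never arises — whereas in $\tri_\circ$ it does, and the paper has to deal with it explicitly. Your version also makes the anchoring transparent when $\partial S$ consists entirely of tacked circles, since the $(c,i)$'s supply the anchor directly, while in $\tri_\circ$ that boundary has been filled in by punctures and the boundary-face frozen vertices disappear.

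Two points to tighten. First, your local rigidity claim is phrased for a standalone automorphism of $Q_\Delta$, but what you actually have is the restriction of a global isometry of $Q_{\tri}$; to apply the local claim you should phrase the layer-by-layer step in terms of uniqueness within $Q_{\tri}$, i.e.\ that a vertex at depth one from the fixed rim is the unique vertex of the \emph{whole} quiver with its given weighted-arrow pattern to the already-fixed vertices. The candidate confusion is with the depth-one vertex in the neighboring triangle across the shared arc, but the orientation of $S$ reverses the arrows there, so the patterns differ (the same mechanism that rules out the reflection you mention). Second, and relatedly, you should state that the $(c,i)$ vertices attach to the honeycomb of the \emph{non-special} triangle across the special arc (as in the construction of Section~\ref{subsec:quivers-from-graphs}), so the base case really anchors in that adjacent non-special triangle rather than in the special one.
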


\begin{proof}
%Any such automorphism $\sigma$ induces a permutation $\sigma_A$ on the set of arcs in the triangulation which is identity on the set of boundary arcs. In order for the quiver permutation to be an isometry $\sigma_A$ must permute the set of arcs incident to any given boundary one. But since $S$ has only tacked circles we have a total order on the set of arcs incident to a boundary arc, and this order must be preserved by any quiver isometry. We conclude the permutation $\sigma_A$, and hence also $\sigma$, must be the identity.
Any such automorphism $\sigma$ restricts to the subquiver $Q_{\tri_\circ}$, obtained from $Q_{\tri}$ by erasing all frozen vertices from tacked circles, and we denote its restriction by $\sigma_\circ$. Let $\Delta$ be a triangle in $\tri_\circ$. Then $\sigma_\circ$ is trivial on the subquiver $Q_\Delta$ of $Q_{\tri_\circ}$ if $\Delta$ is not self-folded, and may only be permuting a pair of nodes in $Q_\Delta$ if it is. Arguing by induction on the number of trinagles in $\tri_\circ$, we conclude that $\sigma_\circ$ is a composition of permutations of pairs of nodes in the self-folded triangles of $\tri$. However, the presence of tacked boundary nodes in $Q_{\tri}$ guarantees that $\sigma$ is isometry if and only if this composition is a trivial permutation. This concludes the proof.
\end{proof}

\subsection{Mutations and shifts}

Consider an ideal bicolored graph $\Gamma$ on $S$ and a proper quadrilateral face $f \in F(\Gamma)$.

\begin{prop}
\label{eq:prop-graph-quiver-mut}
Let $\Gamma'$ be the bicolored graph obtained from $\Gamma$ by performing a square move at the face $f$. Then the following formula defines an isometry of lattices
\beq
\label{eq:e-to-e''}
\hat \mu_f \colon \Lambda_\Gamma \longra \Lambda_{\Gamma'}, \qquad e_\ell \longmapsto 
\begin{cases}
- e'_\ell &\text{if} \;\; \ell = f, \\
e'_\ell  &\text{if} \;\; \ell = (c,i) \text{ and } f=f_{i-1}^-(\Gamma), \\
 e'_\ell + \eps_{\ell f} e'_f&\text{if} \;\; \ell =(c,i) \text{ and } f=f_{i}^+(\Gamma),  \\
e'_\ell + [\eps_{\ell f}]_+ e'_f &\text{otherwise.}
\end{cases}
%\begin{cases}
%- e'_\ell &\text{if} \;\; \ell = f, \\
%e'_\ell + \max\hc{(e_\ell, e_f),0} e'_f &\text{if} \;\; \ell \in F(\Gamma) \smallsetminus \hc{f}, \\
%e'_\ell + (e_\ell, e_f) e'_f &\text{if} \;\; \ell = (c,j) \;\; \text{and} \;\; f = f_i^+(c), \\
%e'_\ell &\text{otherwise.}
%\end{cases}
\eeq
where we use the natural bijection $F(\Gamma) \simeq F(\Gamma')$ to identify the index sets labelling the bases $\{e_\ell\}$ in $\Lambda_\Gamma$ and $\{e'_\ell\}$ in $\Lambda_{\Gamma'}$.
\end{prop}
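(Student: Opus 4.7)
The plan is to verify the isometry property of $\hat\mu_f$ by a direct computation, checking the identity
$$
(\hat\mu_f(e_\ell), \hat\mu_f(e_{\ell'}))_{\Gamma'} = (e_\ell, e_{\ell'})_\Gamma
$$
for every pair of basis elements. By bilinearity and skew-symmetry of the form, we reduce to a case analysis organized by the types of indices: (i) both $\ell, \ell' \in F(\Gamma)$ are face vectors; (ii) exactly one of $\ell, \ell'$ is a tacked-circle vector $e_{c,i}$; and (iii) both $\ell, \ell'$ are tacked-circle vectors (possibly for the same or different circles).

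For case (i), the formula~\eqref{eq:e-to-e''} reduces to the standard mutation rule~\eqref{eq:mon-mut} applied to the auxiliary basis $\{\dot e_\ell\}$. That square moves of bicolored graphs implement mutations of the associated quivers $\dot Q_\Gamma$ is classical (cf.~\cite{FG06b, GK13}), and the face-face pairing described by~\eqref{eq:form-0} is preserved by such mutations as a consequence of its combinatorial definition via incidence of arrows. The passage from $\{\dot e_\ell\}$ to $\{e_\ell\}$ only modifies the basis in the self-folded special configurations described before~\eqref{eq:bar-e-ci}, which are geometrically disjoint from a square-mutable face $f$, so this change of basis is compatible with $\hat\mu_f$.

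For cases (ii) and (iii), one evaluates both sides of the isometry identity using the explicit formulas~\eqref{form-face-notch}, \eqref{al-al}, and~\eqref{al-be}, which express the pairings in terms of incidences between distinguished faces $f_j^\pm(c)$, the regions $Z_j^\pm(\tilde t)$, and the mutating face $f$. The key observation is that the zig-zag combinatorics near a tacked circle $c$ is unaffected by a square move at $f$ whenever $f \notin \{f_i^+(c), f_{i-1}^-(c)\}$: in that regime the distinguished faces and zig-zag regions simply transport to their obvious counterparts in $\Gamma'$, and the standard correction $[\epsilon_{\ell f}]_+ e'_f$ in~\eqref{eq:e-to-e''} suffices to balance the form. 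In the exceptional subcases $f = f_i^+(c)$ or $f = f_{i-1}^-(c)$, the square move geometrically exchanges the distinguished face with the face across the square, shifting the incidences recorded by~\eqref{form-face-notch} by one position; the modified transformation rules $e_{c,i} \mapsto e'_{c,i}$ and $e_{c,i} \mapsto e'_{c,i} + \epsilon_{(c,i),f} e'_f$ are precisely the corrections needed to compensate for this shift and restore the identity.

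The main obstacle is the careful bookkeeping in the exceptional cases when $f$ is simultaneously distinguished with respect to several tacked circles, or when a single zig-zag $\check z(\tilde t)$ passes through more than one side of the mutating quadrilateral. The cleanest way to handle this is to work on the universal cover $\widetilde S$, where distinct tacks yield distinct lifts and Lemma~\ref{lem:frame} together with Lemma~\ref{lem:nopair} rules out the most degenerate zig-zag coincidences, and then descend to $S$ via the $\pi_1(S)$-equivariance recorded in~\eqref{form-faces}. Once each pairing is expressed as a sum of local contributions on $\widetilde \Gamma$, the verification of $\hat\mu_f$-invariance becomes a finite collection of local calculations on the universal cover, each of which is essentially the calculation one would perform for the four-gon built from the square and its neighbors relative to the zig-zag configuration in question.
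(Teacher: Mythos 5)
Your overall case decomposition (face--face, face--circle, circle--circle) and the decision to work on the universal cover using Lemmas~\ref{lem:frame} and~\ref{lem:nopair} match the paper's argument. However, there is a genuine gap in your treatment of case (i).

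You claim that the passage from the auxiliary basis $\{\dot e_\ell\}$ to the basis $\{e_\ell\}$ only alters vectors in ``self-folded special configurations'' that are ``geometrically disjoint from a square-mutable face $f$,'' so that case (i) reduces to the classical statement that square moves implement mutations of $\dot Q_\Gamma$. This is false, and it is precisely here that the proposition has content beyond the classical statement. Admissibility of $f$ requires that no two lifts of $f$ itself share an edge; it does \emph{not} rule out that a face \emph{adjacent} to $f$ has a pair of lifts sharing a bicolored edge (the configuration in which $e_g\neq \dot e_g$). The paper's Example~\ref{eg:dot-A} for $SL_3$ exhibits exactly this: a square move at the face labelled~$3$ in Figure~\ref{fig:adot-example3} is adjacent to the self-folded face~$2$ where $A_2 = \dot A_2/A_1$. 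More to the point, the paper's proof of case (i) explicitly notes that when a pair of faces neighboring $\tilde f$ on the same side of a zig-zag have the same projection in $F(\Gamma)$, the bases $\{\dot e_\ell'\}$ and $\{e_\ell'\}$ differ, and in that situation the isometry identity~\eqref{eq:graph-isometry} \emph{holds for} $\{e_\ell\}$ \emph{but fails for} $\{\dot e_\ell\}$. So the ``classical'' isometry for $\dot Q_\Gamma$ that you invoke does not even hold there, and your argument would conclude something false. This subcase needs an actual verification in the $e$-basis (as the paper does by examining Figure~\ref{fig:face-mutation}), not a reduction to the $\dot e$-basis. Your handling of cases (ii) and (iii) is in the right spirit and aligns with the paper.
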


\begin{proof}
We need to show that
\beq
\label{eq:graph-isometry}
(e_{\ell_1}, e_{\ell_2})_{\Gamma} = (\hat \mu_f(e_{\ell_1}), \hat \mu_f(e_{\ell_2}))_{\Gamma'}
\eeq
for any pair $\ell_1, \ell_2 \in I_\Gamma$. When $\ell_1, \ell_2 \in F(\Gamma)$ the result follows from inspecting Figure~\ref{fig:face-mutation}, while disregarding the shaded regions. The Figure shows a neighborhood of an admissible face $\tilde f \in \pi^{-1}(f)$ on trivalent graphs $\widetilde\Gamma$, $\widetilde\Gamma'$, as well as that of the vertex $v_f$, which is shaded with pink, of the quivers $Q_\Gamma$, $Q_{\Gamma'}$. Note that the colors of the vertices of $\widetilde\Gamma$ which are not incident to the face $\tilde f$ can be arbitrary, and we only illustrate one of several possible options. The bases $\hc{e'_\ell}$ and $\hc{\dot e'_\ell}$ coincide unless a pair of faces neighboring $\tilde f$ and situated on the same side of either of the two zig-zags have the same projection in $F(\Gamma)$. In the latter case one can easily verify that formula~\eqref{eq:graph-isometry} holds for the basis $\hc{e_\ell}$, but not for $\hc{\dot e_\ell}$.

\begin{figure}[h]
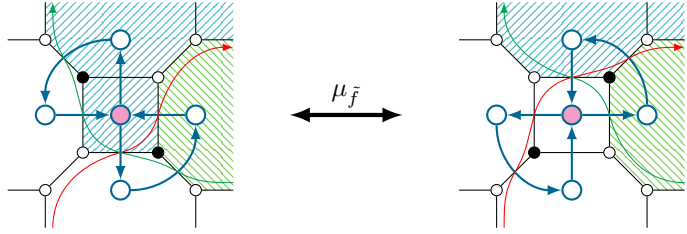

\subfile{face-mutation.tex}
\caption{Square move and quiver mutation.}
\label{fig:face-mutation}
\end{figure}

%%%%%%%%
%%%%% Following is intermediate detail level explanation:
%%%%%% original minimal detail version is preserved in commented section below.

Now we consider the case $\ell_1=e_{c,i} \in I_{\Cc(S)}$ and $\ell_2=\ell \in I_{\Gamma_\circ}$. First, assume that there is no lift $\tilde f \in \pi^{-1}(f)$ which is simultaneously incident to one of the zig-zags $z_{i-1}(\tilde c)$, $z_i(\tilde c)$, and $\check z(\tilde t)$ for a lift $\tilde c$ of $c$ and a tack $\tilde t$ on $\tilde c$. In that case the location of the intersection points of the zig-zags $z_j(c)$ and $\check z(\tilde t)$ remain unchanged and thus $(e''_{c,i},e''_\ell)=(e_{c,i},e_\ell)$ for all $\ell \in F(\Gamma)$. In particular, by~\eqref{form-face-notch} we have $(e''_{c,i},e''_f) = (e_{c,i},e_f)=0$, which in turn implies that the two sides of~\eqref{eq:graph-isometry} are equal.
Now suppose that $\tilde f$ is incident to $\check z(\tilde t)$ and to exactly one of the zig-zags $z_{i-1}(c)$, $z_i(c)$. In this case, thanks to Lemma~\ref{lem:frame} the neighborhood of $\widetilde f$ corresponds to one of the two sides of Figure~\ref{fig:face-mutation}. If the red zig-zag in the Figure is $z_i(c)$, then the square face $f$ on the left of Figure~\ref{fig:face-mutation} is the special face $f_{i+1}^+$, while the square face on the right is $f_i^-$. Similarly, if the red zig-zag is $z_{i-1}(c)$, then the square face on the left is $f_i^+$ and that on the right $f_{i-1}^-$. If the face $\ell$ is not a neighbor of $f$, then we have $\eps_{\ell f}=0$ and $\hat\mu_f(e_\ell)=e''_\ell$. We also have $(e''_{c,i},e''_\ell)=(e_{c,i},e_\ell)$ since the square move does not modify intersections of red and green zig-zags over any edge of $\ell$. Then since $\hat{\mu}_f(e_{i,c})$ differs from $e''_{i,c}$ by a multiple of $e''_f$ and $(e''_f,e''_\ell)=0$, we have $(\hat{\mu}_f(e_{i,c}),\hat{\mu}_f(e_\ell)) = (e''_{i,c},e''_\ell)= (e_{i,c},e_\ell)$ so the equality~\eqref{eq:graph-isometry} holds. It remains to consider the cases in which the face $\ell$ coincides with $f$ or is one of the faces adjacent to it. These follow from a straightforward inspection of Figure~\ref{fig:face-mutation}, which we leave to the reader. Thereby we establish the isometry~\eqref{eq:graph-isometry} when exactly one of the arguments corresponds to a proper face. 

Finally, the case $\ell_1, \ell_2 \in I_{\Cc(S)}$ when $\ell_1[1] = \ell_2[1]$ once again follows from inspecting the Figure~\ref{fig:face-mutation}, and the remaining case $\ell_1[1] \ne \ell_2[1]$ is a consequence of the formula~\eqref{al-be} and the definition of the basis $\hc{e_\ell}$.
\end{proof}

%Observing that under the square move $\mu_f \colon \Gamma \to \Gamma'$ we have $f_i^\pm(\Gamma) = f_{i\mp1}^\mp(\Gamma')$ we arrive at the following decomposition of the isometry $\hat\mu_f$.

In other words, when the graph $\Gamma'$ is obtained  from $\Gamma$ by a square move at a special face of the latter of the form $f=f_i^+(c)$ with $i\leq n$ or $f=f_{i}^-$ with $i\leq n-1$, the quiver $Q_{\Gamma'}$ does not coincide with the mutated quiver $\mu_f(Q_\Gamma)$ but instead with the quiver obtained by applying to the latter a quasi-permutation. Indeed, let's write $\{e_l''\}$ for the basis associated to $\mu_f(Q_\Gamma)$ and consider the quiver quasi-permutation 
$$\varsigma_f \colon \mu_f(Q_{\Gamma}) \to Q_{\Gamma'
}
$$
 defined by
\beq
\label{eq:sigma-f}
\varsigma_f(e''_\ell) =
\begin{cases}
e'_{\varphi(\ell)} - e'_{\varphi(f)}, &\text{if } \ell = (c,i) \text{ and} \;\; f\in\{ f_i^+(\Gamma'),f_{i-1}^-(\Gamma')\}, \\
%e'_\ell - \eps'_{\ell f} e'_f, &\text{if } f=f_{i}^+(\Gamma)\sim f_{i-1}^-(\Gamma') \text{ and} \;\; \ell = (c,i), \\
e'_{\varphi(\ell)}, &\text{otherwise,}
\end{cases}
\eeq
where $\varphi$ denotes the natural bijection $I_{\Gamma}\simeq I_{\Gamma'}$ associated to the square move at $f$.
Then we have
\begin{cor}
\label{cor:hat-mu-e}
The isometry $\hat \mu_f \colon \Lambda_\Gamma \longra \Lambda_{\Gamma'}$ factors as
\beq
\label{eq:mu-hat-factor}
\hat \mu_f = \varsigma_f \circ \mu_f,
\eeq
where $\mu_f \colon Q_\Gamma \to \mu_f(Q_{\Gamma})$ is the quiver mutation isometry defined by~\eqref{eq:mon-mut}.
\end{cor}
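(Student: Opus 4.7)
The plan is to verify the claimed factorization by a direct $\mathbb{Z}$-linear check of the equality $\hat\mu_f(e_\ell) = \varsigma_f(\mu_f(e_\ell))$ on each basis element $e_\ell$ of $\Lambda_\Gamma$. The preliminary observation I would extract from the case analysis in the proof of Proposition~\ref{eq:prop-graph-quiver-mut} is the label-shift identity under a square move at a special face $f$: for every tacked circle $c$, one has $f = f_i^+(\Gamma) \iff f = f_{i-1}^-(\Gamma')$ and $f = f_{i-1}^-(\Gamma)\iff f = f_i^+(\Gamma')$. Consequently, the two ``special'' cases in~\eqref{eq:e-to-e''}, stated in terms of $\Gamma$-labels, correspond bijectively to the two conditions in~\eqref{eq:sigma-f}, stated in terms of $\Gamma'$-labels.

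For $\ell = f$, both sides evaluate at once to $-e'_f$, since $\mu_f(e_f) = -e''_f$ and $\varsigma_f(-e''_f) = -e'_f$. For any $\ell \neq f$, formula~\eqref{eq:mon-mut} yields $\mu_f(e_\ell) = e''_\ell + [\eps_{\ell f}]_+ e''_f$, and $\varsigma_f(e''_f) = e'_f$ in every case, so the whole computation reduces to evaluating $\varsigma_f(e''_\ell)$. When $\ell$ is not of the form $(c,i)$, or it is but $f$ is neither of the two distinguished faces above, $\varsigma_f$ sends $e''_\ell \mapsto e'_\ell$, giving $e'_\ell + [\eps_{\ell f}]_+ e'_f$, which agrees with the generic branch of~\eqref{eq:e-to-e''}.

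The two remaining cases with $\ell = (c,i)$ carry the content of the corollary. When $f = f_{i-1}^-(\Gamma) = f_i^+(\Gamma')$, formula~\eqref{form-face-notch} gives $\eps_{(c,i),f} = +1$, hence $[\eps_{\ell f}]_+ = 1$, and $\varsigma_f(e''_{c,i} + e''_f) = (e'_{c,i} - e'_f) + e'_f = e'_{c,i}$, matching~\eqref{eq:e-to-e''}. When $f = f_i^+(\Gamma) = f_{i-1}^-(\Gamma')$, the same formula yields $\eps_{(c,i),f} = -1$, hence $[\eps_{\ell f}]_+ = 0$, and $\varsigma_f(e''_{c,i}) = e'_{c,i} - e'_f = e'_{c,i} + \eps_{(c,i),f}\, e'_f$, once again matching~\eqref{eq:e-to-e''}.

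The main piece of bookkeeping is thus the identification of the special $\Gamma$-labels appearing in $\hat\mu_f$ with the special $\Gamma'$-labels appearing in $\varsigma_f$, together with the observation that the integer shift $-e'_f$ built into $\varsigma_f$ precisely compensates for the discrepancy between $[\eps_{\ell f}]_+$ and $\eps_{\ell f}$ in the one remaining case $\eps_{\ell f} = -1$. Since both~\eqref{eq:e-to-e''} and~\eqref{eq:sigma-f} are already stated in terms of the corrected basis $\{e_\ell\}$ and the ensemble matrix entries $\eps_{ij} = (e_i,e_j)_\Gamma$, no additional care is required in the exceptional configurations where $\{e_\ell\}$ and $\{\dot e_\ell\}$ differ.
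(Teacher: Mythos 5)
Your proof is correct and, necessarily, takes the same route as the paper: the Corollary is stated in the paper without an explicit proof, so the direct basis-by-basis verification that $\hat\mu_f(e_\ell) = \varsigma_f(\mu_f(e_\ell))$ is exactly the intended (and only) argument. Your case analysis, including the crucial label-shift identity $f = f_i^+(\Gamma) \iff f = f_{i-1}^-(\Gamma')$ extracted from the proof of Proposition~\ref{eq:prop-graph-quiver-mut}, the sign computations $\eps_{(c,i),f} = +1$ when $f = f_{i-1}^-(\Gamma)$ and $\eps_{(c,i),f} = -1$ when $f = f_i^+(\Gamma)$ coming from~\eqref{form-face-notch}, and the cancellation $\varsigma_f(e''_{c,i}+e''_f) = e'_{c,i}$, all check out.
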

\begin{remark}
To avoid confusion let us note that in our setup there need not exist a bicolored graph $\Gamma''$ that $\mu_f(Q_\Gamma) = Q_{\Gamma''}$ in the notations of Corollary~\ref{cor:hat-mu-e}.
\end{remark}
The following Lemma is straightforward to check using the rule~\eqref{eq:ensemble-mat-change} for mutation of ensemble matrices:
\begin{lemma}
\label{prop:xi-mut-permut}
The quiver quasi-permutation $\varsigma_f$ admits the following expression in terms of the bases $\hc{\xi'}, \hc{\xi''}$ for $\Xi_{\Gamma'}$ and $\Xi_{\mu_f(Q)}$:
\beq
\label{eq:xi-atrans}
\varsigma_f(\xi''_\ell) =
\begin{cases}
\xi'_{\varphi(f)} + \xi'_{c,i}, &\text{if} \;\; \ell = f\in\{f_i^+(c,\Gamma'),f_{i-1}^-(c,\Gamma')\} \;\; \text{for some } 1\leq i\leq n , \\
%\xi'_f + \xi'_{c,i}, &\text{if} \;\; \ell = f=f_{i-1}^-(c,\Gamma') \;\; \text{for some } i<n , \\
\xi'_{\varphi(\ell)}, &\text{otherwise.}
\end{cases}
\eeq
\end{lemma}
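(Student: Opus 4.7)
The plan is to translate the claim into a matrix identity and reduce it via the duality between $e$- and $\xi$-bases. Writing $\tilde b$ for the ensemble matrix of $\mu_f(Q_\Gamma)$ and $b'$ for that of $Q_{\Gamma'}$, and using the relation $\xi_\ell=\sum_i (b^{-1})_{i\ell}e_i$ together with formula~\eqref{eq:sigma-f} for the action of $\varsigma_f$ on the $e$-basis, one obtains
$$
\varsigma_f(\xi''_\ell)=\sum_j \bigl(b'\,S\,\tilde b^{-1}\bigr)_{j\ell}\,\xi'_j,
$$
where $S$ is the matrix of $\varsigma_f$ on $e$-vectors. By~\eqref{eq:sigma-f}, $S$ equals the identity except in columns indexed by special frozen directions $(c,i)$ with $f\in\{f_i^+(c,\Gamma'),f_{i-1}^-(c,\Gamma')\}$, where an extra $-1$ appears at row $\varphi(f)$. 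The statement of the Lemma thus becomes a matrix identity $b'S=T\tilde b$ for the explicit $T$ of the form $I+N$, with $N$ supported in column $f$ with a $+1$ at each row $(c,i)$ corresponding to a special pair.

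A useful first step is to verify this identity modulo its mutable rows. Since $\varsigma_f$ is an isometry acting as the identity on all mutable $e$-basis vectors, for mutable $k$ one has
$$
(e'_k,\varsigma_f(\xi''_\ell))_{\La_{\Gamma'}}=(\varsigma_f^{-1}(e'_k),\xi''_\ell)=(e''_k,\xi''_\ell)=\delta_{k\ell}.
$$
This shows that $\varsigma_f(\xi''_\ell)-\xi'_{\varphi(\ell)}$ lies in the span $\bigoplus_{j\in I_\Gamma^*}\Z\,\xi'_j$ of frozen $\xi'$-vectors, reducing the lemma to checking the frozen coefficients of the difference.

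To finish, I would compute these frozen coefficients directly using the mutation rule~\eqref{eq:ensemble-mat-change} to pass from $b$ to $\tilde b$, together with the description of $b'$ supplied by the graph $Q_{\Gamma'}$. The computation is made tractable by Remark~\ref{rmk:root-cas}: the ensemble matrix $b$ differs from the exchange matrix $\eps$ only in frozen rows and columns, via corrections built from the vectors $\alpha_\ell$ of~\eqref{eq:alpha-c}--\eqref{eq:alpha-a}. Combined with the formulas~\eqref{form-face-notch}--\eqref{al-be} for the nonzero pairings involving frozen $e$-vectors, the verification reduces to a local calculation near $f$: in the generic case $\ell\ne f$ the frozen coefficients all vanish, while in the exceptional case $\ell=f\in\{f_i^+(c,\Gamma'),f_{i-1}^-(c,\Gamma')\}$ the coefficient of $\xi'_{c,i}$ picks up exactly $+1$, in agreement with the claim.

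The main obstacle I anticipate is careful bookkeeping of the frozen entries of $\tilde b$ and $b'$ near $f$, particularly when a single face $f$ is simultaneously a special face for several pairs $(c,i)$ (e.g.\ if $f$ is adjacent to more than one tacked circle). Since the corrections enter the identity linearly, however, the verification decouples into independent checks indexed by each special pair, and each reduces to a sign computation governed by the local configuration of arrows and zig-zags around $f$ visible in Figures~\ref{fig:notches} and~\ref{fig:face-mutation}. The correction $+\xi'_{c,i}$ in the $\xi$-basis should then be recognized as the dual of the $-e'_f$ appearing in $\varsigma_f(e''_{c,i})$ under the pairing induced by the ensemble matrix.
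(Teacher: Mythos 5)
Your matrix reformulation and the reduction in the second paragraph are both correct: since $\varsigma_f$ is an isometry fixing all mutable $e$-vectors, for mutable $k$ one has $(e'_{\varphi(k)}, \varsigma_f(\xi''_\ell)) = (\varsigma_f(e''_k), \varsigma_f(\xi''_\ell)) = (e''_k, \xi''_\ell) = \delta_{k\ell} = (e'_{\varphi(k)}, \xi'_{\varphi(\ell)})$, and since $\{\xi'_j\}_j$ is a $\mathbb{Q}$-basis of $\Lambda_\mathbb{Q}$ with $(e'_k, \xi'_j) = \delta_{kj}$ for mutable $k$, the difference $\varsigma_f(\xi''_\ell) - \xi'_{\varphi(\ell)}$ lies in the $\mathbb{Q}$-span of the frozen $\xi'_j$. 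This is a clean observation, and it does match the spirit of what the authors imply — the paper offers no actual proof of this lemma, deferring to a one-line remark that the claim is ``straightforward to check using~\eqref{eq:ensemble-mat-change}'', which is precisely the direct verification your outlined step (2) would perform.

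That said, you have not carried out step (2), and that step \emph{is} the lemma: the reduction shows the correction is a $\mathbb{Q}$-combination of frozen $\xi'_j$, but identifying which frozen pairs $(c,i)$ carry a correction, with integral coefficient exactly $+1$, and only when $\ell = f$, requires actually computing the frozen columns of $b' S \tilde b^{-1}$ using~\eqref{eq:ensemble-mat-change}, \eqref{form-face-notch} and the dotted-vs-undotted basis bookkeeping near the tacked circle. You flag this yourself as the ``main obstacle,'' which is honest, but since nothing before that point distinguishes the claimed correction $\xi'_{c,i}$ from, say, $-\xi'_{c,i}$, $\xi'_{c,i+1}$, or a correction also appearing at $\ell = (c,i)$, the argument as written does not establish the Lemma — it only reduces it to the same unperformed calculation the paper waves at. To close the gap, the cleanest path is probably to compare the two frozen rows $b'_{(c,i),\bullet}$ and $\tilde b_{(c,i),\bullet}$ directly: by~\eqref{eq:ensemble-mat-change} these differ only in the $f$-column and its immediate neighbors, and~\eqref{form-face-notch} pins down the sign in the four cases of Figure~\ref{fig:face-mutation}, after which the claimed $T = I + N$ drops out.
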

%\begin{proof}
%%Proving the lemma amounts to checking that the map on $e$-lattices induced by the map above agrees with~\eqref{eq:sigma-f}. We do this in the case $f=f_i^+(c;\Gamma')$, with $f=f_{i}^-(c;\Gamma')$ being handled similarly. If $l$ is a face of $\Gamma'$ not appearing in the local picture~\eqref{fig:face-mutation}, 
%%
%%and for those 
%%With the help of formulas~\eqref{eq:ensemble-matrix} and~\eqref{eq:e-to-e''} we can express entries of the ensemble matrix $b''$ via those of $b$. Subsequently, using~\eqref{eq:ensemble-mat-change} we arrive at
%%$$
%%b''_{\ell_1,\ell_2} =
%%\begin{cases}
%%b'_{\ell_1,\ell_2} + b'_{\ell_1 f} [b'_{f \ell_2}]_+ + [b'_{f \ell_1}]_+ b'_{f \ell_2}, &\text{if} \;\; \ell_1 = (c_1,i_1), \; \ell_2 = (c_2,i_2), \\
%%b'_{\ell_1,\ell_2} + [b'_{\ell_1 f}]_+ b'_{f \ell_2}, &\text{if} \;\; \ell_1= (c_1,i_1), \; \ell_2 \notin\hc{(c_2,i_2), f}, \\
%%b'_{\ell_1,\ell_2} + b'_{\ell_1 f} [b'_{\ell_2 f}]_+, &\text{if} \;\; \ell_2 = (c_2,i_2), \; \ell_1 \notin \hc{(c_1,i_1),f}, \\
%%b'_{\ell_1,\ell_2}, &\text{otherwise,}
%%\end{cases}
%%$$
%%which together with~\eqref{eq:sigma-f} allows us to verify the desired statement.
%\end{proof}

%
%
%\begin{figure}[h]
%\subfile{fig-square-notch.tex}
%\caption{Amalgamation in the presence of a special diagonal.}
%\label{fig:square-notch}
%\end{figure}
%

Let $\Gamma' = \sigma_c^{\pm1}(\Gamma)$ be the bicolored graph obtained from $\Gamma$ by performing a shift $\sigma_c^{\pm1}$ at a tacked circle $c$. Using the natural identification $F(\Gamma) \simeq F(\Gamma')$ we can consider subsets $S_i^\pm(c) \subset F(\Gamma)$ defined as follows:
\beq
\label{eq:S-regions}
S_i^+(c) = \pi(\sigma_c(Z_i^+) \smallsetminus Z_i^+)
\qquad\text{and}\qquad
S_i^-(c) = \pi(Z_i^+ \smallsetminus \sigma_c^{-1}(Z_i^+)),
\eeq
where $Z_i^+ = Z_i^+(t)$ for some tack $t \in \pi^{-1}(c)$. The next result is an immediate consequence of the definition of vectors $e_{c,i}$ and the form $(\cdot,\cdot)_\Gamma$.
%In Figure~\ref{fig:square-notch}, the faces of $S_3^+$ correspond to the three shaded vertices of the quiver lying inside the triangle $ACD$. The faces of $S_3^-$ are in bijection with the shaded vertices inside the triangle $ABC$, excluding the one positioned on the diagonal $AC$.

\begin{lemma}
\label{lem:sigma-pm}
%There exist unique isometries $\sigma_c^\pm$ such that
The following formula defines a quiver quasi-permutation $\varsigma_c^{\pm1} \colon \Lambda_\Gamma \to \Lambda_{\Gamma'}$:
\beq
\label{eq:sigma-pm}
\varsigma_c^{\pm1}(e_\ell) =
\begin{cases}
e'_{\varphi(\ell)} \mp \sum_{f \in S_i^{\pm}(c)} \dot e'_{\varphi(f)}, &\text{if} \;\; \ell = (c,i), \\
e'_{\varphi(\ell)}, &\text{otherwise,}
\end{cases}
\eeq
where we write $\varphi$ for the natural bijection $I_{\Gamma}\simeq I_{\Gamma'}$ associated to the shift at $c$.
\end{lemma}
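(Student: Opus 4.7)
The assertion has two parts: that $\varsigma_c^{\pm 1}$ is a well-defined lattice map acting as a permutation on the mutable and on the non-$c$ frozen directions, and that it is an isometry of the bilinear forms $(\cdot,\cdot)_\Gamma$ and $(\cdot,\cdot)_{\Gamma'}$. The first is immediate from~\eqref{eq:sigma-pm}: the natural face-bijection $\varphi$ restricts to the identity on labels, only the frozen vectors $e_{(c,i)}$ receive a correction, and these corrections lie in the $\dot e'$-span, so $\varsigma_c^{\pm 1}$ sends a basis to a basis. Hence the substance is the isometry statement.

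For that, I would exploit the fact that the shift $\sigma_c^{\pm 1}$ is strictly local: only the vertex $b=b(c)$ and the three edges incident to it are reshuffled, while the rest of $\Gamma$ is untouched under $\varphi$. Consequently the intersection-pairing formula~\eqref{eq:form-0} and its $\pi_1(S)$-invariant extension~\eqref{form-faces} give $(e_{\ell_1},e_{\ell_2})_\Gamma = (e'_{\varphi(\ell_1)},e'_{\varphi(\ell_2)})_{\Gamma'}$ whenever neither $\ell_1$ nor $\ell_2$ lies in $\{(c,1),\ldots,(c,n)\}$. This reduces the proof to pairings in which at least one argument is some $e_{c,i}$.

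The core step is to track the vector $e_{c,i}$ through its lift $\dot e_{\tilde t,i} = \sum_{\tilde f\in Z_i^+(\tilde t)}\dot e_{\tilde f}\in\widehat{\Lambda}_{\widetilde\Gamma}$ supplied by~\eqref{eq:inf-vector}, through which it acts on $\Lambda_\Gamma$ via the formula~\eqref{form-sea}. Under the positive shift the tack $\tilde t$ moves to $\sigma_c(\tilde t)$, and a direct inspection of Figure~\ref{fig:twist} yields the set-theoretic identity $\sigma_c(Z_i^+(\tilde t)) = Z_i^+(\tilde t) \sqcup S_i^+(\tilde t)$ for an appropriate lift $S_i^+(\tilde t)$ of $S_i^+(c)$, so that
\[
\dot e_{\sigma_c(\tilde t),i} \;=\; \dot e_{\tilde t,i} + \sum_{\tilde f\in S_i^+(\tilde t)}\dot e_{\tilde f}.
\]
Rewriting the right-hand side of~\eqref{eq:sigma-pm} as $\varsigma_c^+(e_{c,i}) = e'_{c,i} - \sum_{f\in S_i^+(c)}\dot e'_{\varphi(f)}$, this geometric identity is precisely what is needed for $\varsigma_c^+(e_{c,i})$ to have the same pairing functional on $\Lambda_{\Gamma'}$ (via $\varphi$) as $e_{c,i}$ does on $\Lambda_\Gamma$. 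Evaluating this against an arbitrary proper face $e_\ell$ is then immediate from~\eqref{form-sea}. Evaluating against $e_{c,j}$ requires us to recover the shift-invariant value $\pm\tfrac12\delta_{i\pm 1,j}$ in~\eqref{al-al}, and evaluating against $e_{c',j}$ with $c\neq c'$ requires us to reproduce~\eqref{al-be}; in both cases the correction terms from $S_i^+$ and $S_j^+$ interact via~\eqref{form-face-notch} and cancel after a short case check using the local picture around $b(c)$. The negative shift case is symmetric upon applying the positive argument to $(\Gamma',\Gamma)$ instead of $(\Gamma,\Gamma')$.

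The principal obstacle is the combinatorial bookkeeping in the local analysis of Figure~\ref{fig:twist}: one must verify the set equality $\sigma_c(Z_i^+(\tilde t))\smallsetminus Z_i^+(\tilde t)=S_i^+(\tilde t)$, ensure that no spurious contributions appear under the projection $\pi$ (this is where Lemmas~\ref{lem:frame} and~\ref{lem:nopair} enter, guaranteeing that distinct tacks and indices produce distinct zig-zag crossings), and carry out the sign-sensitive cancellation in the $(e_{c,i},e_{c,j})$ pairings. Everything else is then mechanical once the lift interpretation of $e_{c,i}$ is in hand.
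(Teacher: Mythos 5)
Your proposal is a correct reconstruction of what the paper leaves as an ``immediate consequence of the definition of vectors $e_{c,i}$ and the form $(\cdot,\cdot)_\Gamma$'' -- the paper gives no proof at all here, so you have fleshed out roughly what the authors intend. The decomposition into (i) the combinatorial quasi-permutation structure, (ii) reduction to pairings involving a frozen $e_{c,i}$, and (iii) the region identity $\sigma_c(Z_i^+)\smallsetminus Z_i^+$ controlling the correction terms is exactly the right skeleton.

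One conceptual slip worth correcting: under a shift $\sigma_c$ the tack $\tilde t$ does \emph{not} move -- only the graph $\Gamma$ in its vicinity does (the vertex $b$ slides past $w_1$, see Definition~\ref{defn:shift}). There is no element ``$\sigma_c(\tilde t)$''. The correct reading of the paper's definition $S_i^+(c)=\pi(\sigma_c(Z_i^+)\smallsetminus Z_i^+)$ is that $\sigma_c(Z_i^+)$ denotes the region $Z_i^+(\tilde t)$ computed in $\Gamma'=\sigma_c(\Gamma)$ and pulled back through the canonical identification $F(\widetilde\Gamma)\simeq F(\widetilde\Gamma')$; since the positive shift enlarges this region you indeed get $\sigma_c(Z_i^+)= Z_i^+\sqcup\bigl(\sigma_c(Z_i^+)\smallsetminus Z_i^+\bigr)$. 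Your displayed $\dot e$ identity should be phrased as comparing $\dot e_{\tilde t,i}$ computed in $\Gamma'$ with $\dot e_{\tilde t,i}$ computed in $\Gamma$, rather than as a statement about a shifted tack. Also, a point worth stating explicitly rather than leaving implicit: the face--face pairings are unchanged by the shift because $\Gamma_\circ = \Gamma'_\circ$ (erasing the $b$-vertex and its edges gives literally the same graph before and after the shift), and it is this that reduces everything to pairings with $e_{c,i}$ -- your appeal to ``the shift is strictly local'' plus~\eqref{eq:form-0} gestures at this but does not pin it down. Once these points are cleaned up the residual $(e_{c,i},e_{c,j})$ and $(e_{c,i},e_{c',j})$ cancellations are, as you say, case checks against~\eqref{al-al},~\eqref{al-be},~\eqref{form-face-notch}, with Lemmas~\ref{lem:frame} and~\ref{lem:nopair} disposing of spurious coincidences.
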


\begin{cor}
\label{cor:xi-shift-permut}
At the level of $\xi$-bases, the quiver quasi-permutation $\varsigma_c^{\pm1}$ reads
\beq
\label{eq:xi-shift-permut}
\varsigma_c^{\pm1}(\xi_\ell) =
\begin{cases}
\xi'_{\varphi(\ell)} \pm \xi'_{c,i}, &\text{if} \;\; \ell \in S_i^\pm(c), \\
\xi'_{\varphi(\ell)}, &\text{otherwise.}
\end{cases}
\eeq
\end{cor}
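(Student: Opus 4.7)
The plan is to verify the claimed formula by combining a duality argument for the mutable part with a direct computation for the frozen part, using the explicit action of $\varsigma_c^{\pm 1}$ on the $e$-basis supplied by Lemma~\ref{lem:sigma-pm}. Throughout, I extend $\varsigma_c^{\pm 1}$ uniquely by $\Q$-linearity to an isomorphism $\Xi_\Gamma \otimes \Q \simeq \Xi_{\Gamma'} \otimes \Q$; this is well-defined since $\Xi_\Gamma \subset \La_\Gamma \otimes \Q$.

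For the mutable part, note that formula~\eqref{eq:sigma-pm} gives $\varsigma_c^{\pm 1}(e_i) = e'_{\varphi(i)}$ for every $i \notin I_{\Cc(S)}$, in particular for every mutable $i$. Combining this with the isometry property of $\varsigma_c^{\pm 1}$ and the defining relation $(e_i, \xi_\ell)_\Gamma = \delta_{i\ell}$ of a compatible pair yields
$$
(e'_{\varphi(i)}, \varsigma_c^{\pm 1}(\xi_\ell))_{\Gamma'} = \delta_{i\ell} \qquad\text{for every mutable } i \in I_\Gamma.
$$
Writing $X_\ell$ for the right-hand side of~\eqref{eq:xi-shift-permut}, the same identity $(e'_{\varphi(i)}, X_\ell)_{\Gamma'} = \delta_{i\ell}$ is satisfied by $X_\ell$, since $(e'_j, \xi'_{c,i})_{\Gamma'} = 0$ whenever $j$ is mutable. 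Hence $D_\ell := \varsigma_c^{\pm 1}(\xi_\ell) - X_\ell$ is orthogonal to $\La_{\Gamma'}^{\mathrm{mut}}$, and Remark~\ref{rmk:ensemble-def}(1) forces $D_\ell \in \bigoplus_{(a,k) \in I^*_{\Gamma'}} \Q\,\xi'_{a,k}$.

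To conclude $D_\ell = 0$, I would expand both sides of the identity $\varsigma_c^{\pm 1}(e_\ell) = \sum_k b_{k\ell}\,\varsigma_c^{\pm 1}(\xi_k)$ for each $\ell$, using~\eqref{eq:sigma-pm} on the left and the proposed formula on the right, and match coefficients of each $\xi'$-generator on $\Gamma'$. Equivalently, I would invoke the ensemble map of Remark~\ref{rmk:root-cas}, expressing $e_\ell$ in terms of $\xi$'s and the simple roots $\alpha_{c,i} = \xi_{c,i-1} - 2\xi_{c,i} + \xi_{c,i+1}$, and compare. The decisive point is that the anomalous term $\mp\sum_{f \in S_i^\pm(c)} \dot e'_{\varphi(f)}$ appearing in $\varsigma_c^{\pm 1}(e_{c,i})$ cancels precisely the shift-induced discrepancy between $\sum_k b_{k\ell}\,\xi'_{\varphi(k)}$ and $e'_{\varphi(\ell)}$, thanks to the geometric interpretation~\eqref{eq:alpha-c} of $\alpha_{c,i}$ as the sum of face basis vectors $\dot e_{\pi(\tilde f)}$ over the region $Z_i(\tilde c)$.

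The main obstacle will be the frozen-part bookkeeping: tracking how the regions $S_i^\pm(c)$ defined in~\eqref{eq:S-regions} interact with entries $b_{k\ell}$ attached to faces adjacent to the tacked circle $c$, and checking that the Cartan-correction contributions $\widehat{C}_{k\ell}$ at frozen-boundary faces behave compatibly under the shift. This reduces to a finite set of local identities near $c$, which follow by inspection of Figure~\ref{fig:twist} and the definition of $S_i^{\pm}(c)$ once one separates the cases $c'=c$ and $c'\neq c$ for the relevant tacked-circle frozen index.
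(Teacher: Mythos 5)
Your handling of the mutable directions is clean and correct: since $\varsigma_c^{\pm1}(e_i)=e'_{\varphi(i)}$ for every $i\notin I_{\Cc(S)}$, the isometry property together with the compatibility condition $(e_i,\xi_\ell)_\Gamma=\delta_{i\ell}$ for $i\in I_m$ and the companion observation $(e'_j,\xi'_{c,i})_{\Gamma'}=0$ for mutable $j$ pins down every mutable $\xi'$-coefficient of $\varsigma_c^{\pm1}(\xi_\ell)$, so the difference $D_\ell$ really does land in $\bigoplus_{k\in I^*_{\Gamma'}}\Q\,\xi'_k$ by Remark~\ref{rmk:ensemble-def}(1). This is a genuinely tidy way to reduce the problem, and it mirrors in spirit the way the paper handles the mutation analogue (Lemma~\ref{prop:xi-mut-permut} is stated as a bare consequence of the ensemble-matrix mutation rule, and this corollary is likewise offered without a written-out proof).

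However, you have only sketched the step that actually distinguishes the correct answer from any other element differing by frozen $\xi'_{a,k}$'s. Expanding $\varsigma_c^{\pm1}(e_\ell)=\sum_k b_{k\ell}\,\varsigma_c^{\pm1}(\xi_k)$ and matching coefficients is the right mechanism, but as written it is a plan rather than a proof: you have not carried out the comparison with $e'_{\varphi(\ell)}=\sum_k b'_{k,\varphi(\ell)}\xi'_k$, not accounted for how the $\eps$-block of the ensemble matrix changes under the shift, and not reconciled the appearance of the dotted vectors $\dot e'_{\varphi(f)}$ in~\eqref{eq:sigma-pm} against the possibly distinct basis $\{e'_\ell\}$ in $\Gamma'$ (the two agree only when no two lifts of a face share a bicoloured edge). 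The key cancellation you describe --- that $\mp\sum_{f\in S_i^\pm(c)}\dot e'_{\varphi(f)}$ absorbs the discrepancy coming from the shifted $\eps$-entries near $c$, via~\eqref{eq:alpha-c} and Remark~\ref{rmk:root-cas} --- is indeed the crux, and your case split $c'=c$ versus $c'\neq c$ is the right organizing principle (since the $\widehat C$-contributions and the $\hat\mu$-corrections are local to $c$). To turn this into a proof you need to actually verify those local identities, e.g.\ by comparing $(b^{-1}_\Gamma)_{\cdot,\ell}$ and $(b^{-1}_{\Gamma'})_{\cdot,\varphi(\ell)}$ near the tacked circle using Figure~\ref{fig:twist} and~\eqref{eq:S-regions}. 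As it stands the gap is not a conceptual error but an unexecuted computation.
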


\subsection{Cluster $K_2$-coordinates}
\label{subsec:cluster-coord}

Let $T'S$ be the unit tangent bundle to a surface $S$. A \emph{twisted $SL_{n+1}$-local system} on $S$ is a local system on $T'S$ with monodromy $(-1)^{n}$ around any loop which rotates a vector $v \in T'S$ by $2\pi$. A choice of a spin structure on $S$ gives rise to an isomorphism between the spaces of local systems and twisted local systems, see~\cite[Section 2.3]{FG06b}. A \emph{twisted decorated local system} on a stratified surface $\decS$ is a twisted $G$-local system on $\decS_G$, a twisted $T$-local system on $\decS_T$, a $B$-reduction of the $G \times T$-local system along the walls, and a trivialization at each marked point. Now we describe cluster $K_2$-coordinates on the (classical, rather than quantum) moduli space of twisted decorated $SL_{n+1}$-local systems along the same lines as the procedure of~\cite{FG06b, Gon17}.

As before, let $\pi \colon \widetilde S \to S$ be the universal cover of a marked surface $S$, and $\Gamma$ an ideal bicolored graph of rank $n$ on $S$. For each marked point $m$ on $\widetilde S$, we define $R_j(m) \subset \widetilde S$ to be the region to the right of the zig-zag $z_j(m)$. Now let $t, t'$ be a pair of tacks on a tacked boundary component $c \subset \partial\widetilde S$, so that $t'$ directly follows $t$ as we traverse $c$ in the positive direction. In this case we define $R_j(t)$ to be the unique region bounded by $\check z(t)$, $\check z(t')$, and $z_j(c)$. Note that $t$ is the unique tack on $\widetilde S$ contained in $R_j(t)$. Given a face $f \in F(\widetilde\Gamma)$ and marked point or tacked circle $x$ on $\widetilde S$, we define the \emph{codistance} $d(f,x)$ to be the cardinality of the set  $\{1 \le j \le n \,|\, f \in R_j(x)\}$.
%\red{In plain language, the codistance from $f$ to $x$ is the number of regions $R_j(x)$ associated to $x$ which contain $f$. }
%$$
%d(f,x) =
%\begin{cases}
%n+1 - \min\limits_{f \in R_j(m)} j, &\text{if} \quad f \in R_n(m); \\
%0, &\text{otherwise.}
%\end{cases}
%$$
% \red{The intuitive meaning of this codistance is as follows: given a graph of rank $n$ on $\widetilde S$ and a special point $x \in M(\widetilde S) \cup T(\widetilde S)$, the region $R_n(x)$ is a disk containing $x$. Then if $f$ is a face whose interior is inside $R_n(x)$, the codistance of $f$ from $x$ measures the minimum number of zig-zags encircling $x$ encountered as one follows any path from a point in the interior of $f$ to a point outside of the disk $R_n(x)$.}
The following result is a trivial modification of a similar statement for $\widetilde S_\circ$, see~\cite{Gon17}.

\begin{prop}
For any $f \in F(\widetilde\Gamma)$, we have
$$
\sum_{x} d(f,x) = n+1,
$$
where the sum is taken over all special points $x$ in $\widetilde S$.
\end{prop}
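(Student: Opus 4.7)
The plan is to deduce the statement from the analogous result for $\widetilde{S}_\circ$ cited from~\cite{Gon17}, by carefully tracking how zig-zags and their associated regions transform when passing from $\widetilde\Gamma$ to $\widetilde\Gamma_\circ$. Recall that $\widetilde\Gamma_\circ$ is obtained from $\widetilde\Gamma$ by replacing, near each tack $\tilde t$ on a tacked boundary component $\tilde c$, the black vertex $b$ together with the three incident edges $l_\pm, l_0$ by a single white edge $l_k$. This induces a natural correspondence $\psi$ between the proper faces of $\widetilde\Gamma$ and those of $\widetilde\Gamma_\circ$: faces away from the tacked boundaries are identified, while the small ``tooth'' face at each tack in $\widetilde\Gamma$ is absorbed into the appropriate face of $\widetilde\Gamma_\circ$.

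First I would observe that for any special point $x$ of $\widetilde S$ that is not a tack, the zig-zags $z_j(x)$ in $\widetilde\Gamma$ literally coincide with those in $\widetilde\Gamma_\circ$, and hence so do the regions $R_j(x)$. Therefore $d_{\widetilde S}(f, x) = d_{\widetilde S_\circ}(\psi(f), x)$ for every face $f$ of $\widetilde\Gamma$.

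The key step is then to prove that for each tacked boundary component $\tilde c$ of $\widetilde S$, with corresponding marked point $m_{\tilde c}$ on $\widetilde S_\circ$, the identity
\[
\sum_{\tilde t \in \tilde c} d_{\widetilde S}(f, \tilde t) \;=\; d_{\widetilde S_\circ}(\psi(f), m_{\tilde c})
\]
holds. I would establish this index by index: for each $1 \le j \le n$, the regions $R_j(\tilde t)$ for varying tacks $\tilde t$ on $\tilde c$ are pairwise disjoint subsets of the cylindrical neighborhood of $\tilde c$ (each being bounded by $\check z(\tilde t), \check z(\tilde t'), z_j(\tilde c)$), and their disjoint union $\bigsqcup_{\tilde t} R_j(\tilde t)$ coincides with the intersection of $R_j(m_{\tilde c})$ with this neighborhood. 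Outside the neighborhood the region $R_j(m_{\tilde c})$ contributes nothing to the codistance. Consequently
\[
\sum_{\tilde t} \mathbf{1}_{f \in R_j(\tilde t)} \;=\; \mathbf{1}_{\psi(f) \in R_j(m_{\tilde c})},
\]
and summing over $j$ gives the displayed identity. Combining the two observations with the cited statement for $\widetilde S_\circ$ then yields
\[
\sum_{x} d_{\widetilde S}(f, x) \;=\; \sum_{m} d_{\widetilde S_\circ}(\psi(f), m) \;=\; n+1,
\]
where in each sum $x$, respectively $m$, ranges over the special points of $\widetilde S$, respectively $\widetilde S_\circ$.

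The hard part will be making the index-wise matching precise; one must check it in all configurations, including the boundary case $j=n$ where $z_n(m_{\tilde c})$ on $\widetilde S_\circ$ corresponds exactly to the broken union $\bigsqcup_{\tilde t} \check z(\tilde t)$ on $\widetilde S$, and in the presence of possibly degenerate configurations such as tacked circles inside self-folded triangles. Once these are verified by direct inspection of the cylindrical neighborhood of $\tilde c$, the proposition follows immediately from the result for $\widetilde S_\circ$.
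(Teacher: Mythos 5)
Your proposal is correct and is essentially what the paper intends: the paper cites the analogous statement from Goncharov for $\widetilde S_\circ$ and declares the modification "trivial," and what you have written is precisely the content of that modification. The decisive step you identify — that for each tacked boundary component $\tilde c$ and each $1 \le j \le n$ the regions $R_j(\tilde t)$ over tacks $\tilde t \in \tilde c$ partition the region $R_j(m_{\tilde c})$, so that $\sum_{\tilde t \in \tilde c} d(f,\tilde t) = d(\psi(f), m_{\tilde c})$ — is exactly the missing work the paper leaves implicit, and your cautions about the $j=n$ case and degenerate configurations are the right ones to check.
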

We first construct the cluster $K_2$-coordinates in the case that the underlying surface $S$ is a disk. We define its `lifted boundary'' to be the section  $\partial S \rightarrow T'S$ given by taking the outward normal vector $v_z$ to each point $z\in \partial S$ on the boundary of $S$. Given a marked point $x\in\partial S$, let $x^+$ be the first point of $\decS_B\cap\partial S$ one sees when traversing $\partial S$ according to its orientation starting from $x$, and write $\hat x= (x^+,v_{x^+})$ for the corresponding point on the lifted boundary. As in the discussion from Example~\ref{ex:basic-disks}, from the decoration we produce an $N$-orbit in the fiber  $\mathcal{L}_{G;\hat x}$ of the $G$-local system  at $\hat x$. Now let $V=\mathbb{C}^{n+1}$ be an $n+1$ dimensional vector space with a volume form $\Omega \in \det V^*$, so that $G \simeq \mathrm{Stab}(\Omega)\subset GL(V)$.
If we choose a trivialization of the fiber  $\mathcal{L}_{G;\hat x}$, the data of an $N$-orbit in that fiber becomes that of a \emph{framed flag}: a pair $\bar F=(F,\nu)$ consisting of a complete flag $F=(F_i\subset V), ~\dim F_i =i$ in $V$, together with a choice of nonzero vectors $\nu_i(\bar F) \in \det F_i$ such that $\langle\Omega, \nu_{n+1}(\bar F)\rangle=1$.

%We write $\hat x$ for the image of a marked point $x\in \partial S$ on the lifted boundary.

%First, let us assume that $S$ is a disk with boundary marked points $x_1, \dots, x_k$, whose cyclic order is induced by the orientation of $S$. Choose a lift $\gamma \subset T'(\partial S)$ of the boundary $\partial S \simeq S^1$, such that the parallel transport along $\gamma$ induces a rotation by $2\pi$ in a fiber $T'_x(\partial S)$, and vectors $v_{x_j} \in \gamma \cap T'_{x_j}(\partial S)$. 

Now given a face $f \in F(\Gamma)$,  choose arbitrarily a marked point $x\in \partial S$ with $d(f,x)>0$. This choice linearizes the cyclic order on the set of all marked points (induced by the orientation of $\partial S$) into a total order on the set of boundary marked points so that $x=x_1$ is the minimal element. We write $\{x=x_1<x_2<\ldots<x_m\}$ for the set of all marked points with nonzero codistance from $f$, with $\hat x_k$ the corresponding points on the lifted boundary. Let $\hat y\in T'S$ be a point in the unit tangent bundle projecting to an interior point $y\in S$ of the face $f$, and whose tangent direction coincides with the outward normal $v_{x^+_1}$ to $\partial S$ at $x_1$. 
Then we can produce a collection of $m$ framed flags $\rho_f(\bar{F}_{\hat x_i})$ at $\hat y$ by parallel transporting the $N$-orbits $\bar{F}_{\hat x_i}$ from the fibers over $\hat x_i$ to  the fiber over $\hat y$ along the path in $T'S$ that rotates the tangent vectors $v_{x^+_i}$ clockwise to $v_{x^+_1}$.

%we pick a vector $v_f \in T'S|_f$ and a collection of paths $\gamma_j \subset T'S$ connecting $v_{x_j}$ to $v_f$. Consider an $(n+1)$-dimensional vector space $V$ with a volume form $\Omega \in \det V^*$. For $G = SL_{n+1}$ we can identify the quotient $G/N$ with the space of \emph{framed flags,} where a framed flag $\bar F$ is a pair $(F,\nu)$ consisting of a flag $F \in G/B$ with flag subspaces $F_i \subset V$ of dimension $i$ and non-zero forms $\nu_i(\bar F) \in \det F_i$. A decorated local system on $T'S$ induces a framed flag at each of the vectors $v_{x_j}$, and we denote their transports to $v_f$ along the paths $\gamma_j$ by $\bar F_j = \bar F_j(f) \in G/N$. 

Then if we choose a trivialization of the fiber over $\hat y$ and set
\beq
\label{eq:dot-A_f}
\dot A_f = \ha{\Omega, \nu_{d_1}(\rho_f(\bar F_{\hat x_1})) \wedge \dots \wedge \nu_{d_k}( \rho_f(\bar F_{\hat x_k}))},\qquad  d_j = d(f,x_j),
\eeq
the resulting function $\dot A_f$ depends on neither the choice of trivialization at $\hat y$ nor the arbitrary choice of point $x_1$, and so gives a well-defined function on the moduli space $\Pc_{G,S}^\diamond$ canonically associated to the face $f$. For brevity, we will often abuse notation by making the following abbreviation for this function:
\begin{align}
\label{eq:shorthand}
\ha{\Omega, \nu_{d_1}(\rho_f(\bar F_{\hat x_1})) \wedge \dots \wedge \nu_{d_k}( \rho_f(\bar F_{\hat x_k}))} \equiv [\nu_{d_1}({x_1}) \wedge \dots \wedge \nu_{d_k}({x_k})].
\end{align}

For general $S$ we reduce to the case above by lifting a twisted decorated local system on $S$ to a $\pi_1(S)$-equivariant twisted decorated local system on $T'\widetilde S$. Given a face $f \in F(\Gamma)$, we choose its lift $\tilde f \in \pi^{-1}(f)$ and any disk $D \subset \widetilde S$ on the universal cover $\widetilde S$, which contains all special points $x$ of $\widetilde S$ such that the codistance $d(\tilde f,x)$ is nonzero. We then set $\dot A_f = \dot A_{\tilde f}$, where the latter is defined by the formula~\eqref{eq:dot-A_f} with $\widetilde S$ replaced by the disk $D$. The function $\dot A_f$ is again well-defined in the sense that it does not depend on the lift $\tilde f$ of $f$ or the choice of the disk $D$; we refer the reader to~\cite{FG06b} for further details.

The functions $A_f$ are related to the $\dot{A}_f$ defined above as follows. As before, if $f \in F(\Gamma)$ is such that there are no faces $\tilde f_1, \tilde f_2 \in \pi^{-1}(f)$ sharing a bicolored edge, we define $A_f = \dot A_f$. Recall that if there exist such $\tilde f_1, \tilde f_2$ sharing an edge, then both faces belong to the region $Z_j(b)$, with $1 \le j \le n$ and $b$ a puncture or a tacked circle, which only contains faces in $\pi^{-1}(f)$. Moreover, there exists a maximal number $k$, such that $Z_i(b)$ only contains preimages of the same face $f_i$ for all $i \le k$, and there is a unique face $f_{k+1} \in F(\Gamma)$ such that each of its preimages shares bicolored edges with a pair of distinct faces $\tilde f_k, \tilde f'_k \in Z_k$. Then for all $1 \le j \le k+1$ we define $A_{f_j}$ via $\dot A_{f_j} = A_{f_j}\dot A_{f_{j-1}}$. In other words, we have
\begin{align}
\label{eq:adot-to-a}
\dot {A}_{f_j} = \prod_{s=1}^{j} A_{f_{s}}, \qquad A_{f_j} = \dot {A}_{f_j}/\dot {A}_{f_{j-1}}.
\end{align}
For more detail, see Example~\ref{eg:dot-A}.
Recall that a tacked circle $c \in \Cc(S)$ represents a boundary component of the decorated surface $\decS$, surrounded by a boundary $T$-annulus with a single boundary $T$-marked point at the tack $t$ of $c$. Given an ideal graph $\Gamma$ on the corresponding marked surface $S$, let us draw the $(G,T)$ domain wall on $S$ in such a way that it lies in a small neighborhood of $c$ and does not intersect (projections of) any zig-zags except those emanating from or terminating at $t$. Consider the point $p_t$ at which the domain wall meets (the projection of) the zig-zag $\check z(t)$. The $T$-local system at this point has a canonical trivialization, given by transporting the one at $t$ along $\check z(t)$. Further choosing a trivialization of the $G$-local system at $p_t$, we can identify its monodromy around the domain wall with an element $g_t$ of $G$, and the $B$-reduction of the $G\times T$-local system is identified with a framed flag $\bar F = (F,\nu)$. Then for each $1 \le i \le n$, we define a coordinate $A_{c,i}$ via
\begin{align}
\label{eq:Acdef}
\nu_i(g_t \bar F) = A_{c,n+1-i}^{-1} \nu_i(\bar F),
\end{align}
and note that $A_{c,i}$ is independent of the choice of $G$-trivialization at $p_t$. Since $g_t(F) = F$, the flag $F$ induces an order on the eigenvalues $\hc{\la_i}$ of the holonomy $g_t$ around the loop encircling $c$ by requiring that 
\begin{align}
\label{eq:A-frozen-eigenvalues}
%A_{c,i}^{-1} = \lambda_i \cdots \lambda_n
A_{c,i} = \lambda_1\cdots\lambda_{i}.
\end{align}

%\red{Proposal: replace preceding by equivalent formula $A_{c,i}=\lambda_0\cdots\lambda_{i-1}$.}

\begin{figure}[h]
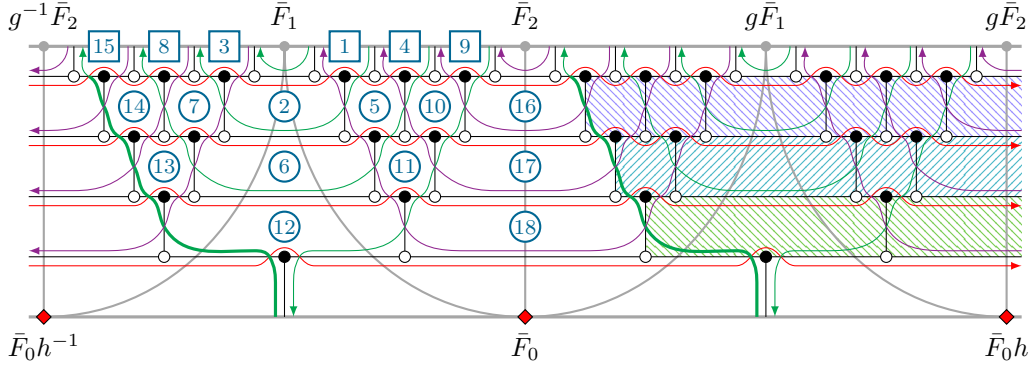

\subfile{notch-lift.tex}
\caption{Zig-zags and on an ideal bicolored graph $\widetilde\Gamma_{\tri}$ of rank 3.}
\label{fig:notch-lift}
\end{figure}

\begin{example}
Figure~\ref{fig:notch-lift} depicts the universal cover $\widetilde\Gamma$ of a bicolored graph shown in the left pane of Figure~\ref{fig:disk-web} with the quiver $Q_\Gamma$ appearing in Figure~\ref{fig:disk-quiver}. Framed flags at the tacks and marked points are labelled by $\bar F_j h^n$ and $g^n\bar F_j$, where $\bar F_j \in G/N$ and $(g,h)$ is the monodromy of the $G\times T$ local system around the tacked circle. Then the cluster $K_2$-coordinates are the functions, for example,
$$
A_{f_{11}} = \langle \Omega,\nu_2(\bar F_0) \wedge \nu_1(\bar F_2) \wedge \nu_1(\bar F_1)\rangle
$$
and
\begin{align*}
A_{f_{14}} &= \langle \Omega,\nu_1(\bar F_0) \wedge \nu_1(\bar F_1) \wedge \nu_2(g^{-1}\bar F_2)\rangle \\
&= \langle \Omega,\nu_1(g\bar F_0) \wedge \nu_1(g\bar F_1) \wedge \nu_2(\bar F_2)\rangle \\
&= \la_4 \langle \Omega,\nu_1(\bar F_0) \wedge \nu_1(g\bar F_1) \wedge \nu_2(\bar F_2)\rangle,
\end{align*}
where $\la_4$ is the `last' eigenvalue of the monodromy $g$.
\end{example}

\begin{example}
\label{eg:dot-A}
The following example illustrates how the relation~\eqref{eq:A-frozen-eigenvalues} between the frames $\{\dot{A}_{\ell}\}$ and $\{A_\ell\}$ reconciles the discrepancy between the Pl\"ucker relations for face functions $\dot{A}_\ell$ and the combinatorial cluster mutation formula for the $\Ac$-variables $A_\ell$.

For $G=SL_2$, consider the cluster coordinate systems associated to the two triangulations shown in Figure~\ref{fig:adot-examples1} which are related by the flip at the edge labelled 2 in the Figure.

\begin{figure}
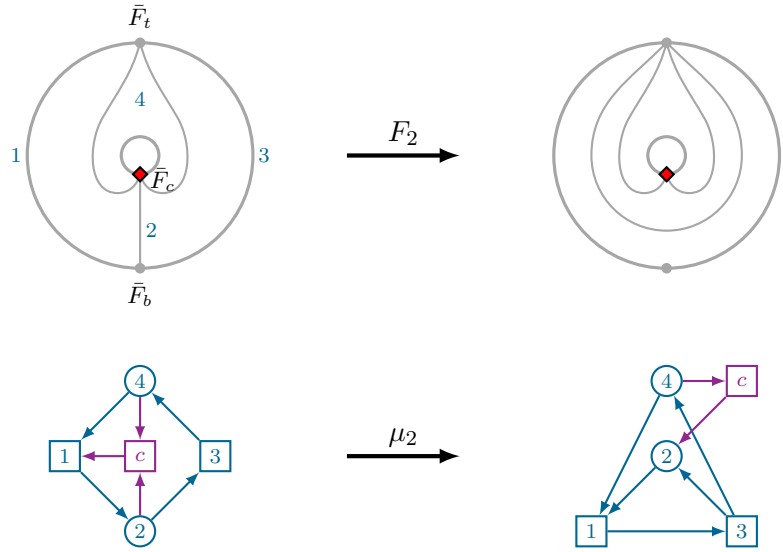

\subfile{fig-adot-examples1}
\caption{Triangulations and quiver for $G=SL_2$.}
\label{fig:adot-examples1}
\end{figure}

\begin{figure}[h]
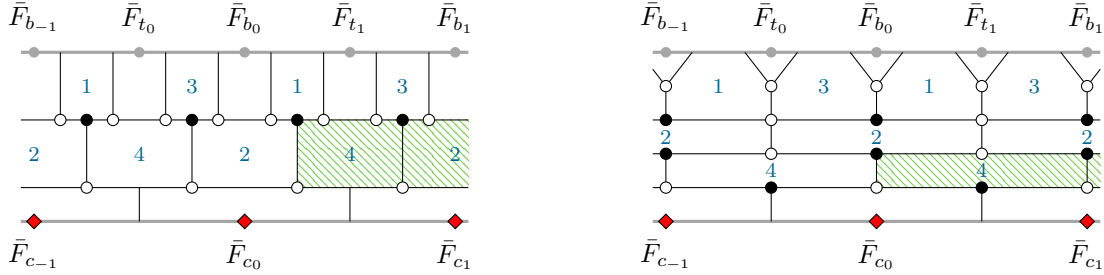

\subfile{fig-adot-examples1-unrolled}
\caption{Unrollings of the triangulations from Figure~\ref{fig:adot-examples1} to the universal cover.}
\label{fig:adot-examples1-unrolled}
\end{figure}

In Figure~\ref{fig:adot-examples1-unrolled}, we show their unrollings to the universal cover. In terms of the shorthand notation~\eqref{eq:shorthand}, the cluster coordinates associated to the edges of the triangulation in the left pane of Figure~\ref{fig:adot-examples1} are
\begin{align*}
&A_1 = [\nu_1(t_{1})\wedge\nu_1(b_1)], \quad A_2 = [\nu_1(b_0)\wedge\nu_1(c_0)],\\
&A_3 = [\nu_1(t_{0})\wedge\nu_1(b_{0})],\quad  A_4 = [\nu_1(t_{0})\wedge\nu_1(c_{0})],
\end{align*}
and following~\eqref{eq:Acdef} the frozen coordinate $A_c$ is defined by 
$$
\nu_1(c_0) = A_c\nu_1(c_1).
$$
For the triangulation in the right pane, there is a nontrivial relation between the $\dot A_\ell$ and $A_\ell$ frames owing to the fact that each face on the universal cover projecting to face $4$ shares a pair of bicolored edges with faces projecting to $2$ (and vice-versa).
Indeed, since 
$$
[\nu_1(t_1)\wedge\nu_1(c_0)] = A_c[\nu_1(t_1)\wedge\nu_1(c_1)] = A_cA_4,
$$
the 3-term Pl\"ucker relation
$$
[\nu_1(t_0)\wedge\nu_1(t_1)][\nu_1(b_0)\wedge\nu_1(c_0)] = [\nu_1(t_0)\wedge\nu_1(b_1)][\nu_1(t_1)\wedge\nu_1(c_0)] + [\nu_1(t_0)\wedge\nu_1(c_0)][\nu_1(b_0)\wedge\nu_1(t_1)]
$$
for face functions reads
$$
A_2\dot{A}'_2 = A_4(A_1 + A_3A_c).
$$
So in accordance with the recipe~\eqref{eq:adot-to-a}, it is not $\dot{A}'_2$ but rather the function 
$$
A_2' =\dot{A}'_2/A_4 = \frac{[\nu_1(t_0)\wedge\nu_1(t_1)]}{[\nu_1(t_{0})\wedge\nu_1(c_{0})]}
$$
that satisfies the combinatorial exchange relation
$$
A_2A_2' = A_1 +A_3A_c
$$
dictated by the quiver in Figure~\ref{fig:adot-examples1}.

%\begin{figure}
%\label{fig:adot-example3-i}
%\includegraphics[scale=.1]{tri-3-ii}
%\caption{Self-folded triangle unrolled to the universal cover for $G=SL_3$. }
%\end{figure}
%
\begin{figure}
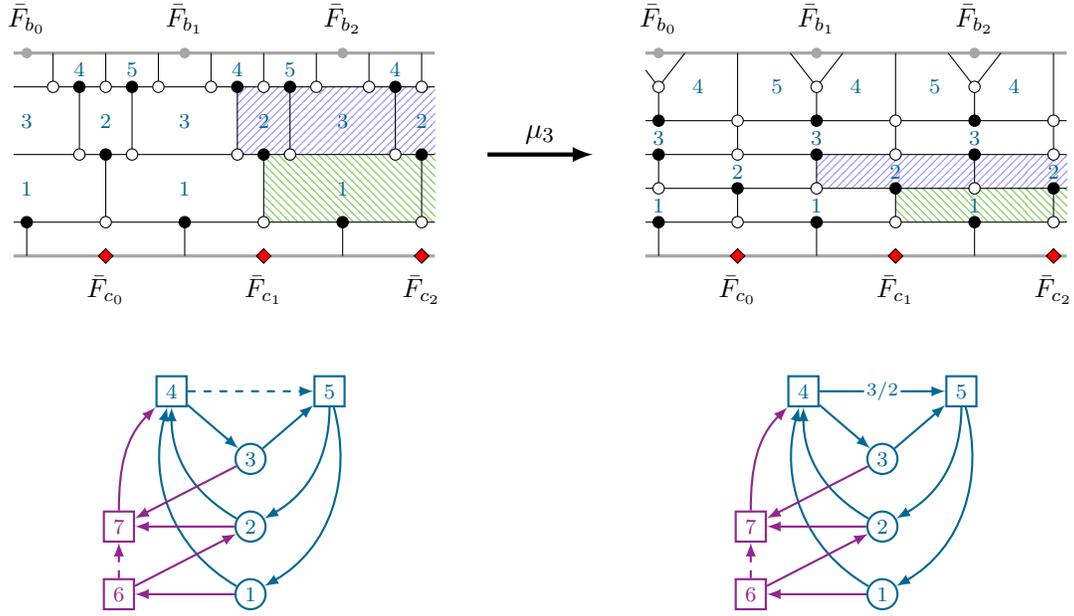

\subfile{fig-adot-example3}
\caption{Mutation at vertex 3 in a self-folded triangle for $G=SL_3$.}
\label{fig:adot-example3}
\end{figure}

Now let us give an example for $G=SL_3$, in which we begin with a single self-folded triangle and perform a square move at the face labelled 3, see Figure~\ref{fig:adot-example3}. In the self-folded triangulation we have
$$
\dot{A}_1 = A_1 = [\nu_1(b_1)\wedge \nu_2(c_1)], \quad \dot{A}_2 = [\nu_1(b_1)\wedge \nu_1(b_0)\wedge \nu_1(c_1)], \quad A_2 = \dot{A}_2/A_1,
$$
and
$$
A_3 = \dot{A}_3 = [\nu_1(c_1)\wedge\nu_2(b_1)].
$$
The square move at face 3 corresponds to a Pl\"ucker relation 
$$
A_3 \dot{A}'_3 = \dot{A}_2(A_{c,2}A_5+A_4),
$$
and so the variable satisfying the cluster exchange relation is
$$
A_3' = \dot{A}'_3/\dot{A}'_2.
$$
In the final cluster, we have
\begin{align}
\label{eq:bw-avar-ex}
\nonumber \dot{A}_1' &= [\nu_1(b_1)\wedge \nu_2(c_1)] \\
\nonumber \dot{A}_2' &= [\nu_1(b_2)\wedge \nu_1(b_1)\wedge \nu_1(c_2)]\\
\dot{A}_3' &= [\nu_1(b_3)\wedge \nu_1(b_2)\wedge \nu_1(b_1)].
\end{align}
\end{example}

The next two results describe the transformation of the coordinates $A_f$ under square moves and shifts at tacked circles. Just as in the case without tacked circles, the new coordinates are related to the old ones by quasi-cluster transformations. Let us illustrate precisely what we mean by this when performing a square move at face $f$ of $\Gamma$ to produce a new graph $\Gamma'$. In this case we have two collections of functions $\{A_\ell(\Gamma)\},~\{A'_\ell=A_\ell(\Gamma')\}$ on the moduli space $\Pc^\diamond_{SL_{n+1},S}$, obtained respectively from the combinatorial data of the graphs $\Gamma,\Gamma'$. If we identify the $\{A_\ell(\Gamma)\}$ with the generators of the coordinate ring $\Tca_{Q_{\Gamma}}$ of the $\Ac$-torus for $Q_\Gamma$ and the $\{A_\ell(\Gamma')\}$ with the corresponding generators of the coordinate ring $\Tca_{Q_{\Gamma'}}$, a quasicluster transformation~\eqref{eq:A-mut-def} from $Q_\Gamma$ to $Q_\Gamma'$ gives a map $\mathbb{C}(\{A_\ell(\Gamma)\})\rightarrow \mathbb{C}(\{A_\ell(\Gamma')\})$, i.e. a birational transformation of the moduli space $\Pc^\diamond_{SL_{n+1},S}$. The assertion of the next Proposition is that there exists a quasicluster transformation such that the corresponding birational map is the identity. In other words, the functions on the moduli space associated to $\{A_\ell(\Gamma)\},~\{A'_\ell=A_\ell(\Gamma')\}$ to the two graphs satisfy the algebraic relations dictated by a certain quasicluster transformation.

% Indeed, we may compare the mutated cluster $K_2$-variable $\mu_f(A_f)$ as defined in formula~\eqref{eq:def-a-mutation} 
%%of Section~\ref{subsec:a-vars}
%with the coordinate function $A_f'$ associated to the face $f$ in the graph $\Gamma$ obtained by applying the square move to $\Gamma$. 
%Given an admissible quadrilateral face $f$ and a tacked circle $c$, define 
%$$
%A_c(f) =
%\begin{cases}
%A_{c,i}, &\text{if} \; f = f^\pm_i(c), \\
%1, &\text{otherwise,}
%\end{cases}
%$$

\begin{prop}
\label{prop:A-move}
Let $\Gamma'$ be the bicolored graph obtained from $\Gamma$ by performing a square move at the face $f$, and let $Q_\Gamma,Q_{\Gamma'}$ be the corresponding quivers. Identify the functions $\{A_\ell(\Gamma)\}$ with the generators $\{A_\ell\}$ of $\Tca_{Q_\Gamma}$, and the $\{A_\ell(\Gamma')\}$ with the generators $\{A_\ell'\}$ of $\Tca_{Q_\Gamma}$. Write $\{A_\ell''\}$ for the cluster $K_2$ coordinate frame associated to the mutated quiver $\mu_f(Q_\Gamma)$. Then as functions on $\Pc^\diamond_{SL_{n+1},S}$, we have
$$
%A_f' =\mu_f(A_f) \prod_{c\in\Cc(S)}A_c(f)
A_f(\Gamma) = (\varsigma_f\circ\mu_f)(A_f(\Gamma)),
$$
where $\mu_f \colon \Tca_{Q_\Gamma}\rightarrow \Tca_{\mu_f(Q_\Gamma)}$ is the classical $K_2$-mutation~\eqref{eq:A-mut-def}, and $\varsigma_f \colon \Tca_{\mu_f(Q_\Gamma)}\rightarrow \Tca_{Q_{\Gamma'}}$ is the quasi-permutation on $\Ac$-tori induced by~\eqref{eq:xi-atrans}:
\beq
\varsigma_f(A''_\ell) =
\begin{cases}
A_f(\Gamma') A_{c,i}(\Gamma'), &\text{if} \;\; \ell = f\in\{f_i^+(c,\Gamma'),f_{i-1}^-(c,\Gamma')\} \;\; \text{for some } 1\leq i\leq n , \\
%\xi'_f + \xi'_{c,i}, &\text{if} \;\; \ell = f=f_{i-1}^-(c,\Gamma') \;\; \text{for some } i<n , \\
A_\ell(\Gamma'), &\text{otherwise}.
\end{cases}
\eeq
% $$
% A'_f =
% \begin{cases}
% \mu_f(A_f) A_{c,i}^{-1} &\text{if} \quad f \in \hc{f_i^-(c), f_{i+1}^+(c)}, \\
% \mu_f(A_f) &\text{otherwise,}
% \end{cases}
% $$

%Then we have $A_g = A'_g$ for any $g \ne f$,
%$$
%A_fA'_f =
%\begin{cases}
%\hr{\prod_{\ell \in I: (e_\ell,e_f)>0} A_\ell^{(e_\ell,e_f)} + \prod_{\ell \in I: (e_f,e_\ell)>0} A_\ell^{(e_f,e_\ell)}}A_{c,i}^{-1}, &\text{if} \quad f \in \hc{f_i^-(c), f_{i+1}^+(c)}, \\
%\prod_{\ell \in I: (e_\ell,e_f)>0} A_\ell^{(e_\ell,e_f)} + \prod_{\ell \in I: (e_f,e_\ell)>0} A_\ell^{(e_f,e_\ell)}, &\text{otherwise.}
%\end{cases}
%$$
\end{prop}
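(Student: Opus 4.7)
My plan is to verify the identity by reducing to a local Plücker computation on the universal cover $\widetilde S$, combined with a careful analysis of how the rescaling $\dot A_{f_j} = \prod_{s\le j} A_{f_s}$ from~\eqref{eq:adot-to-a} between the $\dot A$- and $A$-frames interacts with the square move. Since each function $A_\ell$ is built from wedge products of framed flags supported in a bounded neighborhood of the face $\ell$ in $\widetilde S$, and the square move at $f$ only alters the graph in a small disk around $f$, it suffices to check the equality locally: for $A_f$ itself, and for each face-neighbor or frozen-direction neighbor on which $\varsigma_f\circ\mu_f$ acts non-trivially. For all indices $\ell$ which are neither $f$ nor a frozen index $(c,i)$ of the special form for which $f\in\{f_i^+(c,\Gamma'),f_{i-1}^-(c,\Gamma')\}$, both $\mu_f$ and $\varsigma_f$ act as the identity, and the corresponding $A$-function is manifestly unchanged by the square move.

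Next, for a \emph{generic} face $f$, meaning that neither $f$ nor any of its neighbors lies in a collapsed region $Z_i(c)$ of $\pi^{-1}(\Gamma)$ adjacent to a tacked circle, the $A$-frame agrees with the $\dot A$-frame on all five faces involved. The geometric Plücker identity for framed flags (as in Example~\ref{eg:dot-A}, and as established in the puncture-free setting in~\cite{FG06b, Gon17}) then produces a relation of the form
$$
\dot A_f(\Gamma)\,\dot A_f(\Gamma') = \prod_{b_{\ell f}>0}\dot A_\ell(\Gamma')^{b_{\ell f}} + \prod_{b_{\ell f}<0}\dot A_\ell(\Gamma')^{-b_{\ell f}},
$$
which is precisely the classical $\Ac$-mutation rule~\eqref{eq:A-mut-def} at $f$. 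In this range $\varsigma_f$ is trivial, so the identity reduces to the classical mutation formula already known in this case.

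The heart of the proof, and its main obstacle, is the special case $f\in\{f_i^+(c,\Gamma'),f_{i-1}^-(c,\Gamma')\}$ for some tacked circle $c$ and index $i$. Here exactly one of $\Gamma,\Gamma'$ has $f$ sitting in a collapsed region of the universal cover, so the rescaling $\dot A_f = A_f\cdot A_{c,i}$ (or its inverse) from~\eqref{eq:adot-to-a} applies asymmetrically between the two graphs. Running the same geometric Plücker identity and converting consistently to the $A$-frame on both sides, one finds that the cluster exchange relation~\eqref{eq:A-mut-def} for $\mu_f(A_f)$ is off from the true relation on $\Pc_{SL_{n+1},S}^\diamond$ by an extra factor of $A_{c,i}$, precisely matching the discrepancy prescribed by $\varsigma_f$ via~\eqref{eq:xi-atrans}. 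This extra factor has a direct geometric interpretation: by~\eqref{eq:Acdef} and~\eqref{eq:A-frozen-eigenvalues}, it equals the monodromy eigenvalue product $\lambda_1\cdots\lambda_i$ produced when one of the framed flags entering the Plücker identity is parallel transported once around the tacked circle $c$.

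Verifying that these two appearances of $A_{c,i}$—one combinatorial, from the frame change, and one geometric, from the monodromy—agree is the main technical task. I would handle it by drawing the local configuration of $\Gamma$ and $\Gamma'$ near $c$ (essentially Figure~\ref{fig:face-mutation} with the shaded regions $\pi^{-1}(c)$ included), using Lemma~\ref{lem:frame} to identify which zig-zags cross the relevant edges, and reading off explicit wedge-product formulas for the five $\dot A$-values involved. Comparison with Lemma~\ref{prop:xi-mut-permut}, which records the analogous identity at the level of ensemble lattices, then matches the combinatorial and geometric factors of $A_{c,i}$ and completes the proof.
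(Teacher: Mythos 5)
Your overall strategy—reduce to the $3$-term Pl\"ucker relation and track the conversion between the $\dot A$-frame and the $A$-frame—matches the paper's (very terse) argument, which simply delegates to the corresponding result in \cite{FG06b} and Lemma~3.12 of \cite{Gon17}. Your first two paragraphs (localization and the generic case) are fine. The problem is in the third paragraph, where the description of what makes the case $f\in\{f_i^+(c,\Gamma'),f_{i-1}^-(c,\Gamma')\}$ special is not accurate, and the source of the factor $A_{c,i}$ is misattributed.

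Specifically: the rescaling in~\eqref{eq:adot-to-a} reads $\dot A_{f_j}=\prod_{s\le j}A_{f_s}$, which involves products of \emph{face} variables $A_{f_s}$ in a collapsed region, not the frozen variable $A_{c,i}$; writing ``$\dot A_f = A_f\cdot A_{c,i}$ from~\eqref{eq:adot-to-a}'' is not what that formula says. Moreover, the collapsed-region phenomenon (the $\dot e/e$ and $\dot A/A$ discrepancy, arising when two lifts $\tilde f_1,\tilde f_2\in\pi^{-1}(f)$ share a bicolored edge) is a different condition from the special-face condition $f=f_i^\pm(c)$, which is defined via the incidence of $\tilde f$ to $\check z(\tilde t)$ and to exactly one of $z_{i-1}(c),z_i(c)$ — compare the two separate cases treated in the proof of Proposition~\ref{eq:prop-graph-quiver-mut}. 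In the $SL_3$ example from Example~\ref{eg:dot-A}, for instance, the correction $\dot A_3' = A_3'\dot A_2'$ involves the face variable $\dot A_2'$, while the $A_{c,2}$ factor sits inside the binomial on the right and is part of the ordinary exchange relation; there is no $\varsigma_f$ correction in that example. So the two effects you are conflating (frame rescaling by face variables, and the $A_{c,i}$ twist coming from the shift in the tacked-circle labellings $f_i^\pm(c,\Gamma)\to f_{i\mp1}^\mp(c,\Gamma')$) need to be separated in the write-up. Your final observation that $A_{c,i}=\lambda_1\cdots\lambda_i$ arises from monodromy of a flag around the tacked circle, and that one should read off the explicit wedge products near $c$ using Lemma~\ref{lem:frame} and compare with Lemma~\ref{prop:xi-mut-permut}, is the right way to finish; but as written the derivation of the $A_{c,i}$ factor is not justified, and the case analysis needs to be redone with the correct characterization of the special faces.
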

We have a similar result in the case of shifting a tack:
\begin{prop}
\label{prop:A-shift}
Let $\Gamma' = \sigma_c^{\pm1}(\Gamma)$ be the bicolored graph obtained from $\Gamma$ by performing a shift $\sigma_c^{\pm1}$ at a tacked circle $c$. Then we have
$$
A_\ell(\Gamma) = \varsigma_c^{\pm1}(A_\ell(\Gamma)),
%\begin{cases}
%A_\ell \cdot A_{c,i}^{\mp1} &\text{if} \quad \ell \in S_i^\pm(c), \\
%A_\ell &\text{otherwise,}
%\end{cases}
$$
where $\varsigma_c$ is the quasi-permutation on $\Ac$-tori induced by~\eqref{eq:xi-shift-permut}.
\end{prop}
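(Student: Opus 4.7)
The plan is to verify the identity directly from the definition of the cluster $K_2$-coordinates, using that the shift $\sigma_c^{\pm1}$ is supported in a small annular neighborhood $U$ of the tacked circle $c$. For a face $\ell \in F(\Gamma)$ admitting a lift $\tilde\ell$ outside $\pi^{-1}(U)$, the codistance $d(\tilde\ell,x)$ and all framed flags entering~\eqref{eq:dot-A_f} coincide with their counterparts in $\Gamma'$, so that $A_\ell(\Gamma) = A_{\varphi(\ell)}(\Gamma')$. Since such $\ell$ lies in no $S_i^\pm(c)$, this matches the prediction of $\varsigma_c^{\pm1}$ coming from~\eqref{eq:xi-shift-permut}.

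The core of the argument concerns faces $\ell$ lying in the annular region. I will treat the positive shift $\sigma_c$, as the case of $\sigma_c^{-1}$ follows by symmetric reasoning. Working on the universal cover, fix a lift $\tilde c$ of $c$ with tack $\tilde t$. Sliding $\tilde b(\tilde t)$ past $\tilde w_1(\tilde t)$ moves the zig-zag $\check z(\tilde t)$ by one step, which by definition~\eqref{eq:S-regions} adds exactly the faces of $S_i^+(c)$ to each region $Z_i^+(\tilde t)$. For $\ell \in S_i^+(c)$, I would keep the same lift $\tilde\ell$ and track how the modification of $\check z(\tilde t)$ changes the collection of special points and framed flags to be used in~\eqref{eq:dot-A_f}. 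Concretely, the new zig-zag configuration requires replacing the flag $\bar F_{\tilde t}$ by its image under the deck transformation of $c$, which amounts to acting on $\bar F_{\tilde t}$ by the monodromy $g$ of the $G$-local system around $c$. By the defining relation~\eqref{eq:Acdef}, $g$ rescales $\nu_i(\bar F_{\tilde t})$ by $A_{c,n+1-i}^{-1}$, so this introduces a factor of $A_{c,i}^{\pm1}$ in $A_{\varphi(\ell)}(\Gamma')$, which matches~\eqref{eq:xi-shift-permut} after translating from the $\xi$-basis to the $A$-coordinates via Remark~\ref{rmk:root-cas} and the correction~\eqref{eq:adot-to-a} from the $\dot A$- to the $A$-frame.

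The main obstacle will be the careful combinatorial bookkeeping in the exceptional situation where a face $\ell$ lies in one of the regions $Z_j(\tilde c)$ adjacent to $c$, so that several lifts of $\pi(\tilde\ell)$ can share bicolored edges and the correction~\eqref{eq:adot-to-a} is non-trivial. I expect these corrections to assemble consistently, since they are encoded in the definition of the basis $\{e_\ell\}$ of $\Lambda_\Gamma$ given in Section~\ref{subsec:quivers-from-graphs}. Verifying this will require a case analysis on the position of $\ell$ relative to the zig-zags $z_j(\tilde c)$ and $\check z(\tilde t)$, together with a separate check for the degenerate case in which the annulus near $c$ contains a self-folded triangle as in the right pane of Figure~\ref{fig:self-folded-special}.
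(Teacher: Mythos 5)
The paper disposes of this Proposition with a single sentence ("immediate from the definition of the functions $A_\ell$"), so there is no detailed proof in the paper against which to compare yours. Your plan — split the faces according to whether they lie near $c$, observe that the shift only moves the zig-zag $\check z(\tilde t)$ and so only re-routes which lift of the tack a given face sees, and interpret the resulting change of flag via the monodromy relation~\eqref{eq:Acdef} — is indeed the natural way to fill in what the paper calls "immediate," and the mechanism you identify is the right one.

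That said, two things keep this from being a complete argument. First, the index bookkeeping does not close up as written: \eqref{eq:Acdef} says $g$ scales $\nu_i(\bar F)$ by $A_{c,n+1-i}^{-1}$, while the quasi-permutation~\eqref{eq:xi-shift-permut} predicts that a face $\ell\in S_i^+(c)$ acquires the factor $A_{c,i}$. Asserting that one "introduces a factor of $A_{c,i}^{\pm1}$" without identifying which $\nu_j$ of the transported flag actually enters $\dot A_\ell$ for a face in the region between $z_{i-1}(\tilde c)$ and $z_i(\tilde c)$ leaves exactly the non-trivial part unverified — one needs to see that the index $j$ of the affected wedge factor is tied to $i$ in the way that converts $A_{c,n+1-j}^{-1}$ into $A_{c,i}$. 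Second, you correctly flag but do not carry out the corrections~\eqref{eq:adot-to-a} that arise when several lifts of $\pi(\tilde\ell)$ share bicolored edges near $c$; these are precisely the cases where $e_\ell\neq\dot e_\ell$, and since the quasi-permutation~\eqref{eq:xi-shift-permut} is stated in terms of $\xi$, not $\dot\xi$, the consistency of that correction with the shift is what your argument ultimately has to establish. As a sketch it points in the right direction; as a proof it is incomplete at exactly the two places where the content resides.
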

% \blue{[More blue garbage: 
% Then we have
% $$
% A_f' = \frac{\mu_f(A_f)}{A_C(f)}.
% $$]}

Proposition~\ref{prop:A-move} follows from the classical 3-term Pl\"ucker relation in exactly the same way as the corresponding result in~\cite{FG06b} (see also Lemma 3.12 in~\cite{Gon17}), while Proposition~\ref{prop:A-shift} is immediate from the definition of the functions $A_\ell$.

This completes the construction of the cluster ensemble associated to the moduli spaces $(\Pc^\diamond_{SL_{n+1},S},~\Pc^\diamond_{PGL_{n+1},S})$ and the verification of its compatibility with the coordinate systems described at the beginning of this Section. 
%\begin{cor}
%The functions $\hc{A_\ell \,|\, \ell \in I_\Gamma}$ define the structure of a cluster $K_2$-variety on the moduli space $\Pc^\diamond_{SL_{n+1},S}$.
%\end{cor}
%
%\begin{proof}
%First note that the exchange rules for the functions $\hc{A_\ell}$ given in Propositions~\ref{prop:A-move} and~\ref{prop:A-shift} coincide with those for the classical $\Ac$-variables $\hc{Y_{\xi_\ell}}$ of the quiver $Q_\Gamma$ described by the formula~\eqref{eq:A-mut-def} and Propositions~\ref{prop:xi-mut-permut},~\ref{cor:xi-shift-permut}. It only remains to notice that the functions $\hc{A_\ell \,|\, \ell \in I_\Gamma}$ form a transcendence basis in the ring of functions on the moduli space $\Pc^\diamond_{G,S}$. 
%\end{proof}
%
%\begin{cor}
%The quiver $Q_\Gamma$ defines a non-degenerate cluster ensemble on the moduli spaces $\Pc^\diamond_{SL_{n+1},S}$ and $\Pc^\diamond_{PGL_{n+1},S}$.
%\end{cor}
%
%\begin{proof}
%The statement follows from an easily verifiable fact, that the functions $\hc{X_k \,|\, k \in I_\Gamma}$ defined by the ensemble map
%$$
%X_k = \prod_{i \in I_\Gamma} A_i^{b_{ik}},
%$$
%with matrix $b$ given by~\eqref{eq:ensemble-matrix}, form a transcendence basis in the ring of functions on the moduli space $\Pc^\diamond_{PGL_{n+1},S}$, see~\cite{FG06b}.
%\end{proof}
%
We mention one Corollary that we will use later:
\begin{cor}
\label{cor:transform-equal}
Let $\Gamma$ be an ideal bicolored graph on a marked surface $S$, and $\bs\mu_1, \bs\mu_2$ be compositions of square moves and shifts at tacked circles, such that $\bs\mu_1(\Gamma) =\Gamma'= \bs\mu_2(\Gamma)$ and the composite bijections $\varphi_1,\varphi_2:I_\Gamma\rightarrow I_{\Gamma'}$ are identical. Then we have an equality $\bs\mu_1^q = \bs\mu_2^q$ of the corresponding quantum quasi-cluster transformations.
\end{cor}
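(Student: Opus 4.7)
The plan is to invoke the tropical criterion Theorem~\ref{trop-criterion} to reduce to equality of classical specializations, and then exploit the embedding of the cluster $\Ac$-torus into the moduli space $\Pc^\diamond_{SL_{n+1},S}$ afforded by Propositions~\ref{prop:A-move} and~\ref{prop:A-shift}.

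First I would observe that every square move and tack shift in the sequences $\bs\mu_1,\bs\mu_2$ has an underlying quiver transformation, namely a mutation followed by a quasi-permutation, supplied by Corollary~\ref{cor:hat-mu-e} and Lemma~\ref{lem:sigma-pm}. Via the ensemble map, the resulting quantum quasi-cluster transformations on the $\Ac$-tori are determined by the corresponding $\Xc$-side transformations, so Theorem~\ref{trop-criterion} reduces the equality $\bs\mu_1^q=\bs\mu_2^q$ to the assertion that the classical specializations $\bs\mu_1^{q=1}$ and $\bs\mu_2^{q=1}$ agree as birational transformations between $\Ac$-tori.

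Next I would establish this classical equality by applying Propositions~\ref{prop:A-move} and~\ref{prop:A-shift} iteratively along each of the two sequences. Telescoping, one obtains for every $\ell\in I_\Gamma$ and $i\in\{1,2\}$ an identity
\begin{equation*}
(\bs\mu_i^{q=1})^*A_{\varphi_i(\ell)}(\Gamma')=A_\ell(\Gamma)
\end{equation*}
of rational functions on $\Pc^\diamond_{SL_{n+1},S}$. Since by hypothesis $\varphi_1=\varphi_2=:\varphi$, both pullbacks agree on each generator $A_{\varphi(\ell)}(\Gamma')$ of the fraction field of the classical $\Ac$-torus for $Q_{\Gamma'}$, viewed as functions on the moduli space.

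The main obstacle will be the passage from this equality of pullbacks as functions on $\Pc^\diamond_{SL_{n+1},S}$ to equality as birational transformations of the classical cluster $\Ac$-tori. For this one needs the cluster chart to give a birational embedding into $\Pc^\diamond_{SL_{n+1},S}$, so that the functions $\{A_\ell(\Gamma)\}$ are algebraically independent on the moduli space. This is implicit in the cluster ensemble structure constructed in Section~\ref{subsec:cluster-coord} and ultimately rests on the nondegeneracy of the compatible pair established in Lemma~\ref{lem:compat-pair-construction}. Once that step is in place, $\bs\mu_1^{q=1}=\bs\mu_2^{q=1}$ follows directly, and the desired quantum equality $\bs\mu_1^q=\bs\mu_2^q$ follows from the tropical reduction of the second paragraph.
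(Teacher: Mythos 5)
Your proposal is correct and takes essentially the same route as the paper: deduce classical equality of the two quasi-cluster transformations from the fact that the face functions $A_\ell(\Gamma')$ are intrinsically determined by the bicolored graph and the identified labellings, then upgrade to quantum equality via Theorem~\ref{trop-criterion}. The paper is terser about the point you flag — that the $\{A_\ell\}$ must form a birational coordinate system on $\Pc^{\diamond}_{SL_{n+1},S}$ in order to pass from equality of pullbacks of functions to equality of birational maps — so making this explicit is a genuine improvement; one small wrinkle is that your remark that the $\Ac$-side transformations are ``determined by the $\Xc$-side transformations via the ensemble map'' has the direction of determination backwards (the ensemble map expresses $\Xc$-variables as monomials in $\Ac$-variables), but this does not impinge on the argument since you verify the classical equality directly on the $\Ac$-side.
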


\begin{proof}
Since the functions $A_\ell$ associated to each face are completely determined by the bicolored graph, and moreover the identification between face sets of $\Gamma,\Gamma'$ induced by $\bs\mu_1,\bs\mu_2$  coincide, we have that $A_\ell(\bs\mu_1(\Gamma)) = A_\ell(\bs\mu_2(\Gamma))$ for all $\ell \in I_\Gamma$. Thus the classical quasi-cluster transformations $\bs\mu_1,\bs\mu_2$ are identical, which by Theorem~\ref{trop-criterion} implies that of the quantum quasi-cluster transformations.
\end{proof}

\begin{remark}
It is crucial here that we consider the bicolored graphs $\Gamma\hookrightarrow S$ up to isotopy and not diffeomorphism/combinatorial equivalence. If we replace the hypothesis that $\bs\mu_1(\Gamma)$ and $\bs\mu_2(\Gamma)$ be conjugate under an isotopy by the weaker hypothesis that they are conjugate under a diffeomorphism of $S$, the conclusion of Corollary~\ref{cor:transform-equal} is false.
\end{remark}

\subsection{Birational action of the braid group}
Suppose that $S$ is not a surface of genus $g$ with a single tacked circle and no other special points. Then associated to each tacked circle $c$ on $S$ there is an action of the braid group $B_{n+1}$ by Poisson birational transformations on the moduli space $\Pc^{\diamond}_{SL_{n+1},S}$ defined as follows. Consider the open locus in $\Pc^{\diamond}_{SL_{n+1},S}$ consisting of twisted decorated local systems $(\mathcal{L},\beta)$ with regular semisimple monodromy $g$ around $c$. Let us trivialize the fiber $(V,\Omega)$ of the associated rank $n$ vector bundle with volume form over the tack by choosing a unimodular basis $(u_i)$ of eigenvectors for $g$: 
$$
gu_i = \lambda_{n+2-i}u_i,\quad  1\leq i\leq n+1,
$$ such that the framed flag $\nu$ at the tack is given by
$$
\nu=(\nu_r)_{r=1}^{n+1} \colon \nu_r = u_1\wedge u_2\wedge\cdots\wedge u_{n+1},
$$
with $\langle \nu_{n+1},\Omega\rangle=1$.

Now we define an action of the generator $\sigma_i\in B_{n+1}$ on this open locus in $\Pc^{\diamond}_{SL_{n+1},S}$ which modifies only the framed flag $\nu$, replacing it by
$$
\sigma_i(\nu)=(\nu_i)_{r=1}^{n+1}\colon \nu_r = \sigma_i(u_1)\wedge \sigma_i(u_2)\wedge\cdots\wedge \sigma_i(u_r)
$$
where
\begin{align}
\label{eq:braids}
\sigma_i({u_j}) = \begin{cases}
u_{j+1}(1-\lambda_{i+1}/\lambda_{i}) \quad & i+j=n+1\\
-u_{j-1}(1-\lambda_{i+1}/\lambda_{i})^{-1} \quad & i+j=n+2\\
 u_j \quad & \text{else.}
\end{cases}
\end{align}
\begin{lemma}
\label{lem:braid}
Suppose that $S$ is not a surface of genus $g$ with a single tacked circle. Then formula~\eqref{eq:braids} defines an action of the braid group by Poisson birational automorphisms of $\Pc^{\diamond}_{SL_{n+1},S}$, which factors through the action of the cluster modular group. The projection $\Pc^{\diamond}_{SL_{n+1},S}\rightarrow \Pc_{PGL_{n+1},S}$ is $B_{n+1}$-equivariant, where the $B_{n+1}$ acts on the base factors through the quotient to the Weyl group $S_{n+1}$.
\end{lemma}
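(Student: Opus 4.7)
The plan is to break the statement into four steps: (i) check that~\eqref{eq:braids} gives a well-defined birational map; (ii) verify the braid relations; (iii) realize the action by quasi-cluster transformations; (iv) analyze the induced action on $\Pc_{PGL_{n+1},S}$.

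For (i), the factors $(1-\lambda_{i+1}/\lambda_i)^{\pm 1}$ appearing in~\eqref{eq:braids} are invertible on the open dense locus where the monodromy around $c$ is regular semisimple with distinct eigenvalues, so~\eqref{eq:braids} determines a birational endomorphism on this locus. For (ii), the commutation $\sigma_i\sigma_j = \sigma_j\sigma_i$ for $|i-j|\geq 2$ is immediate from the formula since disjoint triples of basis vectors are involved. For the braid relation $\sigma_i\sigma_{i+1}\sigma_i = \sigma_{i+1}\sigma_i\sigma_{i+1}$, one reduces to a three-dimensional calculation in the span of $u_{n-i},u_{n+1-i},u_{n+2-i}$, keeping careful track of how the eigenvalue labeling is swapped after each generator so that subsequent applications use the updated eigenvalues. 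A direct computation then shows both sides produce the same signed cyclic permutation of the triple with matching scalar prefactors.

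The core step is (iii). Formula~\eqref{eq:A-frozen-eigenvalues} shows that $\sigma_i$ transforms the boundary frozen variables by $A_{c,i}\mapsto A_{c,i-1}A_{c,i+1}/A_{c,i}$ and fixes $A_{c,j}$ for $j\ne i$. Although this is purely monomial, $\sigma_i$ also modifies mutable $\Ac$-variables $A_f$ for faces $f\in Z_i^\pm(\tilde t)$, because the scalar rescalings in~\eqref{eq:braids} propagate through the minor formula~\eqref{eq:dot-A_f}. The plan is to exhibit an explicit finite sequence of square moves at admissible faces in $Z_i^\pm(\tilde t)$, followed by a quiver quasi-permutation whose underlying isometry of $\Lambda_\Gamma$ encodes the monomial transformation $\xi_{c,i}\mapsto \xi_{c,i-1}-\xi_{c,i}+\xi_{c,i+1}$ on frozen directions. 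Its combined effect on the cluster variables is then matched against the direct evaluation of $\sigma_i$ using Propositions~\ref{prop:A-move} and~\ref{prop:A-shift}. The hypothesis that $S$ is not a closed genus-$g$ surface with a single tacked circle is used here to guarantee that the graph near $c$ contains enough admissible faces for this mutation sequence to be constructed; in the excluded case the quiver around $c$ is too rigid to support it. Once (iii) is established, the Poisson property is automatic since classical cluster transformations are Poisson, and the quantum statement is not needed for this Lemma but would follow from Theorem~\ref{trop-criterion} and Corollary~\ref{cor:transform-equal}.

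For (iv), the projection $\Pc^{\diamond}_{SL_{n+1},S}\to \Pc_{PGL_{n+1},S}$ forgets the framing at the tack, so the image depends only on the unordered multiset of eigenvalues of the monodromy. Since $\sigma_i$ merely swaps $u_{n+1-i}\leftrightarrow u_{n+2-i}$ up to rescaling, the resulting framed flag differs from the original only in its eigenvalue ordering by the simple transposition $s_i=(i\,\,i+1)$, so the induced map on the base factors through $B_{n+1}\twoheadrightarrow S_{n+1}$. The main difficulty of the argument is (iii): identifying the explicit mutation sequence realizing each $\sigma_i$ and verifying that the scalar factors $(1-\lambda_{i+1}/\lambda_i)^{\pm 1}$ propagate through the flag-minor formulas for every $f\in Z_i^\pm(\tilde t)$ in exactly the way prescribed by a composition of square moves and a quasi-permutation.
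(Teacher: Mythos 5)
The proposal diverges from the paper in its core step (iii), and the route it sketches does not go through.

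The paper's proof is much shorter: the braid relations and $PGL$-equivariance are dismissed as a direct check, and the substantive claim — that $\sigma_i$ is a quasi-cluster transformation — is deferred to Section~\ref{sec:isolating-coords}. There the argument is \emph{not} to construct a mutation sequence in a generic cluster. Instead, one passes to a specially adapted $c$-isolating cluster $Q_{\tri',c}$, and in that cluster the effect of $\sigma_i$ is a pure \emph{quasi-permutation} with no mutations at all: by~\eqref{eq:braid-xi} it acts monomially on $A_{v_i}$, $A_{v_{i+1}}$, and the frozen $A_{c,i}$, and fixes every other cluster variable. That is the whole point of introducing isolating coordinates at tacked circles. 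The hypothesis that $S$ is not a genus-$g$ surface with a single tacked circle is exactly what guarantees a $c$-isolating triangulation exists — it needs a second special point $b$ to anchor the isolating cylinder. It has nothing to do with ``enough admissible faces for a mutation sequence'' in the way you suggest.

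Your step (iii) has two concrete problems. First, the quasi-permutation you propose — an isometry sending $\xi_{c,i}\mapsto \xi_{c,i-1}-\xi_{c,i}+\xi_{c,i+1}$ while leaving everything else alone — cannot possibly match $\sigma_i$, because as you yourself observe, $\sigma_i$ also rescales the mutable variables $A_{v_i},A_{v_{i+1}}$; the correct transformation~\eqref{eq:braid-e} of the $e$-basis involves $\dot e_{v_i}$, not just the frozen vectors. Second, the plan to find an explicit mutation sequence in $Z_i^\pm(\tilde t)$ that realizes the discrepancy is asserted, not carried out, and would be substantially harder than the paper's strategy; the paper entirely sidesteps it by choosing coordinates in which the discrepancy vanishes. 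Your steps (i), (ii), and (iv) are consistent with the paper's cursory treatment of those points, but (iii) needs to be replaced by the $c$-isolating cluster computation, which is where the actual content of the lemma lives.
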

\begin{proof}
It is straightforward to check that the birational transformations~\eqref{eq:braids} satisfy the braid relations, and clear from the definition that the projection to $\Pc_{PGL_{n+1},S}$ is equivariant. We postpone the proof that the braid group acts by cluster automorphisms (and hence is Poisson) until the introduction of isolating coordinate systems for tacked circles in Definition~\ref{def:cpm-isol-quiver} of Section~\ref{sec:isolating-coords}.
\end{proof}

\subsection{Functor from Ptolemy to cluster groupoid}
\label{subsec:Pt-Cl}

Now we can construct the functor
\beq
\label{eq:Pt-to-Cl}
\Qc \colon \widehat\Pt(S) \longra \Cl_{\bs Q}.
\eeq
On the level of objects, for a triangulation $\tri$ we set
$$
\Qc(\tri) = Q_{\tri},
$$
where $Q_{\tri}$ is the quiver for the bipartite graph $\Gamma_{\tri}$ associated to $\tri$, defined in Section~\ref{subsec:quivers-from-graphs}.

Recall that arrows in $\widehat\Pt(S)$ are formal composites $A_\gamma\circ F_{\tri_1,\tri_2}$, where $\gamma\in\Gamma_S$ and $\tri_1,\tri_2$ are triangulations, while the morphisms in $\Cl_{\bs Q}$ are quasi-cluster transformations. The  quasi-cluster transformation assigned to the arrow $F_{\tri_1,\tri_2}$ labelled by a pair $(\tri_1,\tri_2)$ is defined as follows. 

First suppose that $\tri_1$ is obtained from $\tri_2$ by flipping a single diagonal $d$, and let us abbreviate the the corresponding arrow as
$$
F_d := F_{\tri_1,\tri_2}.
$$Since our triangulations are defined up to isotopy, the diagonal $d$ is uniquely defined. Let us present a quasi-cluster quiver transformation bringing $Q_{\tri_2}$ to $Q_{\tri_1}$ by transforming the bicolored graph $\Gamma_{\tri_2}$ into $\Gamma_{\tri_1}$ via a sequence of square moves and shifts at tacked circles.
%
%In order to define $\Qc$ on morphisms let us first describe quasi-cluster transformations corresponding to the flips $F_d$.

%First, let $d$ be a diagonal, shared by two non-special triangles $\Delta_1, \Delta_2$ in $\tri$, and $\boxslash \subset S$ be the subsurface, covered by them. Draw the triangles $\Delta_1,\Delta_2$ as shown on Figure~\ref{fig:flip}, omitting frozen vertices $v_{c,i}$ with $c \in C(S)$. The sides of $\boxslash$ are parallel to the $x$- and $y$-axes, and the diagonal $d$ is connecting the origin to the point $(n,n)$. The nodes of the quiver $Q_\boxslash$ are labelled by their coordinates and are placed at the integer points inside $\boxslash$ excluding the vertices of the latter. Given a vertex $(i,j)$, we consider a generalized permutation
%$$
%\vartheta_{(i,j)} \colon Y_\ell \longmapsto Y_{\sigma_{(i,j)}(\ell)},
%$$
%where $\sigma_f$ is defined in~\eqref{eq:sigma-f}.
If the diagonal $d$ is shared by two non-special triangles,  number the vertices of the subquiver corresponding to the quadriateral as shown on Figure~\ref{fig:flip}. Consider for each $1 \le k \le n$ the composition
\beq
\label{eq:mu-dk}
\mu_{d;k} = \prod_{(i,j) \in R(k)} \hat\mu_{(i,j)}^q,
\eeq
where $\hat\mu^q_{(i,j)}$ is the quasi-cluster transformation corresponding to the quiver transformation~\eqref{eq:mu-hat-factor}, and the set $R(k)$ consists of pairs $(i,j)$, such that
\begin{align*}
i-j &\in \hc{1-k, 3-k, \dots, k-1}, \\
i+j &\in \hc{k+1, k+3, \dots, 2n+1-k}.
\end{align*}
Note that all $k(n-k)$ transformations $\hat\mu^q_{(i,j)}$ in~\eqref{eq:mu-dk} commute, and therefore $\mu_{d;k}$ is well-defined. Moreover, each corresponds to performing a square move at the corresponding face of the bicolored graph, and the graph  $\mu_{d;n} \dots \mu_{d;1}(\Gamma_{\tri_2})$ coincides with $\Gamma_{\tri_1}$.
Hence we get a quasi-cluster transformation
$$
%\mu_d \colon \Tc_{\tri}^q \longra \Tc_{F_d(\tri)}^q, \qquad 
\mu_d = \mu_{d;n} \dots \mu_{d;1} \colon \Tca_{\tri_2} \dashrightarrow \Tca_{\tri_1},
$$
and we define
$$
\mathcal{Q}(F_{d}) = \mu_d.
$$
The transformation $\mu_d$ is a minor generalization of the one described in~\cite{FG06b}. The latter can be recovered from $\mu_d$ by replacing each factor $\hat \mu^q_{(i,j)}$ with $\mu^q_{(i,j)}$.

\begin{figure}[h]
\subfile{flip.tex}
\caption{A flip.}
\label{fig:flip}
\end{figure}

On the other hand, suppose $c$ is a tacked circle contained in the special triangle $\Delta_c$, and $d_\pm$ be the remaining two sides of $\Delta_c$, so that $d_+$ follows $d_-$ as we go around $c$ in the positive direction. To the flips $F_{d_\pm}$ we 
associate the quasi-permutations
$$
\mu_{d_\pm} = \varsigma_c^{\pm1} \colon \Tc_{\tri}^q \longra \Tc_{F_{d_\pm}(\tri)}^q
$$
defined by the quiver quasi-permutation~\eqref{eq:xi-shift-permut}. 
This completes the definition of $\mathcal{Q}(F_{\tri_1,\tri_2})$ for two triangulations related by a single flip. 

Now given a general pair $\tri_1,\tri_2$, choose any sequence $\bs d = (d_1, \dots d_k)$ of flips such that $\tri_1 = F_{d_k}\cdots F_{d_1}(\tri_2)$. Given any two such sequences $\bs d,\bs d'$, both cluster transformations
$$
\Qc(F_{\bs d}) = \Qc(F_{d_k})\circ\cdots\circ \Qc(F_{d_1}), \quad \Qc(F_{\bs d'}) = \Qc(F_{d'_k})\circ\cdots\circ \Qc(F_{d'_1})
$$
arise from composites of square moves and shifts which transform $\Gamma_{\tri_2}$ into $\Gamma_{\tri_1}$. Moreover, by Lemma~\ref{lem:no-automorphisms} the bijections on quiver label sets $I_{\tri_1}\rightarrow I_{\tri_2}$ induced by both cluster transformations must be identical. Hence we can apply Corollary~\ref{cor:transform-equal} to conclude that the cluster transformations $\Qc(F_{\bs d}),\Qc(F_{\bs d'}) $ are equal. This proves that the assignment
$$
\Qc(F_{\tri_1,\tri_2}) = \Qc(F_{d_k})\circ\cdots\circ \Qc(F_{d_1})
$$
is well-defined and functorial. 

Now given a general morphism $A_\gamma F_{\tri_1,\tri_2}$, we define
%Recall that morphisms in $\widehat\Pt(S)$ are formal compositions $A_\gamma \circ F_{\tri_2,\tri_1}$, where $\gamma$ is an element of the mapping class group $\Gamma_S$, and $F_{\tri_2,\tri_1}$ is the unique morphism in $\Pt(S)$ with source $\tri_1$ and target $\tri_2$. Consider a sequence of flips $F_{\bs d} = F_{d_k} \dots F_{d_1}$ such that $\Delta_2 = F_{\bs d}(\tri_1)$, and set
\beq
\label{eq:Qc-flip}
\Qc(A_\gamma F_{\tri_1,\tri_2}) = \varsigma_\gamma \circ \Qc(F_{\tri_1,\tri_2}),
\eeq
where $\varsigma_\gamma$ is the quasi-permutation morphism corresponding to the identification of graphs
$
\gamma:\Gamma_{\tri_1}\simeq \Gamma_{\gamma(\tri_1)}
$
induced by the diffeomorphism $\gamma$. This definition evidently respects the multiplication rule~\eqref{eq:enhanced-mult}, and thus we obtain a functor as promised.

In particular, if we fix a quiver $Q$ then by~\ref{lem:no-automorphisms} we get a homomorphism
$$
\Gamma_S \rightarrow \Aut_{\Cl_{\bs Q}}(Q)
$$
from the mapping class group of $S$ to the automorphism group of the object $Q$ in the cluster modular groupoid.

\begin{example}
Let $S$ be a torus with a single tacked circle $c$, so that $\Gamma_S\simeq B_3$ is the universal central extension of $SL(2,\mathbb{Z})$.  We compute the corresponding representation associated to the group $G=SL_2$. 

\begin{figure}
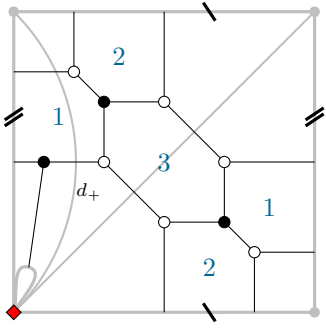

\subfile{fig-init-tri-tor}
\caption{Triangulation $\tri$ of a torus with a tacked circle.}
\label{fig:init-tri-tor}
\end{figure}

In Figure~\ref{fig:init-tri-tor} we illustrate an ideal triangulation $\tri$ of $S$ and corresponding black-white graph $\Gamma_{\tri}$. In the figure, opposite pairs of edges of the rhombus are identified. For the ease of writing formulas, let us label the proper faces of $\Gamma_{\tri}$ by the numbers $1,2,3$. We stress that this labelling is \emph{not} part of the data of the quiver $Q(\Gamma_{\tri})$, and the numbers are used only as typographically convenient shorthands for the elements of the actual label set of $Q(\Gamma_{\tri})$. We graphically represent the skew-pairing between the basis vectors of the quiver $Q(\Gamma_{\tri})$ in Figure~\ref{fig:quiv-tor}. If we perform the flip at the edge $d_+$ of the special triangle in $\tri$, we obtain the triangulation $\tri'$ and graph $\Gamma_{\tri'}$ shown in Figure~\ref{fig:delt-tri}.

\begin{figure}
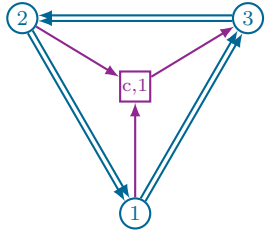

\subfile{quiv-tor.tex}
\caption{Graphical representation of skew-pairing for $Q_{\tri}$.}
\label{fig:quiv-tor}
\end{figure}

\begin{figure}
\subfile{delt-tri.tex}
\caption{Black-white graph $\Gamma_{\tri'}$.}
\label{fig:delt-tri}
\end{figure}

 We indicate the induced bijection $\varphi:I_{\Gamma_{\tri}}\rightarrow I_{\Gamma_{\tri'}}$ by again labelling the proper faces of $\Gamma_{\tri'}$ by the same alphabet $\{1,2,3\}$. Then in accordance with Lemma~\ref{eq:sigma-pm}, the corresponding classical cluster transformation reads
\begin{align*}
\varsigma_c^+\colon Y_{(c,1)}\mapsto Y'_{(c,1)}(Y_2')^{-1},\quad Y_{i}\mapsto Y_i',\quad i=1,2,3.
\end{align*}
But now observe that the element $\rho$ of the mapping class group of $S$ induced by the 120 degree counterclockwise rotation satisfies $\rho(\tri')=\tri$ and thus returns us to the original bicolored graph. Hence it induces a quasi-permutation morphism
$$
\varsigma_\rho\colon Y_1'\mapsto Y_3,\quad Y_3'\mapsto Y_2, \quad Y_2'\mapsto Y_1,\quad Y'_{c,1}\mapsto Y_{c,1}
$$
So the birational automorphism of $\Tc_{Q(\tri)}$ by which the mapping class $\rho$ acts is
$$
\rho_{Q_{\tri}} = \varsigma_\rho \circ \varsigma_c^+,
$$
$$
\rho_{Q_{\tri}}(Y_1)=Y_3,\quad \rho_{Q_{\tri}}(Y_2)=Y_1,\quad \rho_{Q_{\tri}}(Y_3)=Y_2, \quad \rho_{Q_{\tri}}(Y_{(c,1)}) = \frac{Y_{(c,1)}}{Y_1}.
$$
Note that we have
$$
\rho^3_{Q_{\tri}}(Y_{(c,1)}) = \frac{Y_{(c,1)}}{Y_1Y_2Y_3} = \frac{Y_{(c,1)}}{A_{(c,1)}}, \quad \rho^3_{Q_{\tri}}(Y_{i}) =Y_i,\quad i=1,2,3.
$$

On the other hand, consider the triangulation $\tri''$ obtained from the initial one $\tri$ by flipping the diagonal dual to the face of $\Gamma$ numbered 3 in Figure~\ref{fig:init-tri-tor}, which amounts to performing the square move at the face $f=f_2^+(\Gamma_{\tri})$. We illustrate the graph $\Gamma_{\tri''}$ in Figure~\ref{fig:tau-tri} where we again indicate the induced bijection $\varphi:I_{\Gamma_{\tri}}\rightarrow I_{\Gamma_{\tri''}}$ by labelling the proper faces of $\Gamma_{\tri''}$ by the same alphabet $\{1,2,3\}$. 

\begin{figure}
\subfile{fig-tau-tri.tex}
\caption{Black-white graph $\Gamma_{\tri''}$.}
\label{fig:tau-tri}
\end{figure}

The corresponding cluster transformation $\hat{\mu}_{f}$ reads
$$
\hat{\mu}_{f}\colon Y_1\mapsto Y'_1/(1+1/Y_3')^2,\quad Y_2\mapsto Y'_2(1+Y_3')^2,\quad Y_3\mapsto 1/Y'_3,\quad Y_{(c,1)}\mapsto Y_{(c,1)}'/(1+1/Y_3').
$$
This time we observe that the element $\tau$ of the mapping class group given by the Dehn twist along the $(0,1)$-curve on the torus satisfies $\tau(\tri'')=\tri$, and so induces a quasi-permutation morphism
$$
\varsigma_\tau\colon Y_2'\mapsto Y_3,\quad Y_3'\mapsto Y_2,\quad Y_1'\mapsto Y_1,\quad Y'_{(c,1)}\mapsto Y_{(c,1)}.
$$
So the birational automorphism of $\Tc_{Q(\tri)}$ by which the mapping class $\tau$ acts is
$$
\tau_{Q_{\tri}} = \varsigma_\tau \circ \hat{\mu}_f,\qquad 
$$
%$$
%\tau_{Q_{\tri}}(Y_1) = Y_1(1+Y_3)^2, \quad \tau_{Q_{\tri}}(Y_2) = 1/Y_3, \quad \tau_{Q_{\tri}}(Y_3) = Y_2(1+1/Y_3)^2,\quad Y_{(c,1)}\mapsto Y_{(c,1)}(1+Y_3).
%$$
$$
Y_1\mapsto Y_1/(1+1/Y_2)^2,\quad Y_2\mapsto Y_3(1+Y_2)^2,\quad Y_3\mapsto 1/Y_2,\quad Y_{(c,1)}\mapsto Y_{(c,1)}/(1+1/Y_2)
$$
The element 
$$
S = \rho_{Q_{\tri}}\circ\tau_{Q_{\tri}}^{-1}
$$
satisfies
$$
S^2 = \rho_{Q_{\tri}}^3,
$$
so these birational transformations indeed give a representation of the braid group $B_3$.
\end{example}

\section{Special cluster charts for isolating cylinders}

\label{sec:isolating-coords}

%Consider a marked surface $S$ with a closed simple curve $c$. Cutting $S$ along $c$ we obtain a surface $S'$ with a pair of tacked circles $c_\pm$ and a homeomorphism $\phi \colon c_+ \to c_-$, allowing us to recover $S$ by gluing the tacked circles of $S'$ along $\phi$. Figure~\ref{fig:local-gluing-surfaces} illustrates this setup for the case, when $S$ is a cylinder with a single marked point on each of its boundary components.

We will construct the algebra isomorphism $\eta_{c}$ in Theorem~\ref{thm:main-intro} using special cluster coordinates on the moduli spaces $\Pc_{G,S}$ and $\Pc^\diamond_{G,S'}$ (for $G$ of type $A_n$) which are adapted to a $c$-isolating triangulation $\tri$ and its image $\tri'$ under the cutting functor. When $n>1$, these special charts differ from those associated to the triangulations $\tri,\tri'$, but this difference is concentrated only at the cluster coordinates inside the isolating cylinders. The main advantage of the charts introduced here is that they allow for a ``separation of variables'', simplifying to the extent possible the action of the Dehn twist $\tau_c$ by cluster transformations.

\subsection{Cyclic Weyl words and quiver mutations}
\label{subsec:weyl-words}

We start by recalling a dictionary between (cyclic) double Weyl words and their transformations on the one hand and certain classes of quivers and quiver mutations on the other. We refer the reader to~\cite{BFZ05, FG06a} for further details and relation to the cluster coordinate charts on the double Bruhat cells. Our exposition follows that in~\cite{SS17}.

Consider two alphabets $\Agt_\pm = \hc{\pm 1, \dots, \pm n}$ and the union $\Agt = \Agt_+ \sqcup \Agt_-$. We introduce the notation
\beq
\label{eq:bar}
\overline a = a \mp (n + 1) \qquad\text{for}\qquad a \in \Agt_\pm,
\eeq
and note that $\overline{\Agt}_\pm = \Agt_\mp$. A \emph{double Weyl word} is an element of the free monoid generated by $\Agt$ modulo the \emph{shuffle relations}
\beq
\label{eq:ab}
\dots,a,b,\hdots = \dots,b,a,\dots \qquad \text{unless} \qquad a \in \hc{\pm b, b\pm1}.
\eeq
Here and in what follows we separate letters of a double Weyl word by commas. A \emph{cyclic double Weyl word} is a double Weyl word considered up to a cyclic shift of its letters. We will make use of the following transformations on (cyclic) double Weyl words: the \emph{braid move}
\beq
\label{eq:braid}
\dots,a,b,a,\hdots \longleftrightarrow \dots,b,a,b,\dots \qquad\text{for}\qquad a=b\pm1,
\eeq
the \emph{shuffle}
\beq
\label{eq:shuffle}
\dots,a,-a,\hdots \longleftrightarrow \dots,-a,a,\dots
\eeq
and the \emph{merge}
\beq
\label{eq:merge}
\dots,a,a,\hdots \longrightarrow \dots,a,\dots
\eeq
The \emph{length} $l(\i)$ of a (cyclic) double Weyl word $\i$ is the number of its letters. A (cyclic) word is \emph{(cyclically) irreducible} if its length cannot be reduced by applying relations~\eqref{eq:ab} and transformations~\eqref{eq:braid} -- \eqref{eq:merge}.

We now construct a quiver for each double Weyl word. Consider $n$ horizontal lines numbered 1 to $n$. For each letter $a$ in the alphabet $\Agt$ define an elementary quiver $Q_a$ as shown on Figure~\ref{fig-letters} (omitting all vertices on lines labelled 0 and $n+1$ along with all adjacent arrows, so that the quivers $Q_{\pm1}$ and $Q_{\pm n}$ have 3 vertices each). Given a word $\i$, we read it left to right and draw quivers $Q_a$ for each $a \in \Agt$. We then amalgamate adjacent vertices in each of the rows if they belong to different elementary quivers and denote the result by $Q_\bi$. It is easy to see that the quiver $Q_\bi$ is invariant under applying relation~\eqref{eq:ab} to the sequence of letters of $\bi$, and is hence well-defined.

\begin{figure}[h]
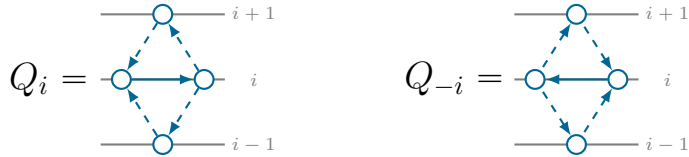

\subfile{fig-letters.tex}
\caption{Quivers corresponding to single letters.}
\label{fig-letters}
\end{figure}

Now, let $\bi$ be a cyclic word such that for each $1 \le i \le n$, there is at least two letters $a,b$ in $\bi$ with $\hm{a} = \hm{b} = i$. We construct a quiver for $\bi$ in a similar way to the one for the non-cyclic words, except now we replace the $n$ lines by $n$ circles on a cylinder. Equivalently, we first construct a quiver for any non-cyclic representative of $\bi$ and then amalgamate the left-most and the right-most vertex on each line. For example, the mutable subquiver in the left pane of Figure~\ref{fig-Qbox} is the quiver $Q_\i$, where $\i = (\i_-, \i_+)$ is a cyclic word and
\begin{align}
\label{iw0}
\i_+ &= (4,3,2,1,4,3,2,4,3,4), \\
\label{iw0bar}
\i_- &= (\overline 4, \overline 3, \overline 2, \overline 1, \overline 4, \overline 3, \overline 2, \overline 4, \overline 3, \overline 4).
\end{align}

The transformations~\eqref{eq:braid}, \eqref{eq:shuffle} now correspond to quiver mutations, see Figures~\ref{fig-braid} and~\ref{fig-pm} where mutation is performed at the pink vertex. Similarly, the transformation~\ref{eq:merge} corresponds to the quiver mutation on Figure~\ref{fig-21} after we have erased the pink vertex in the quiver on the right.

\begin{figure}[h]
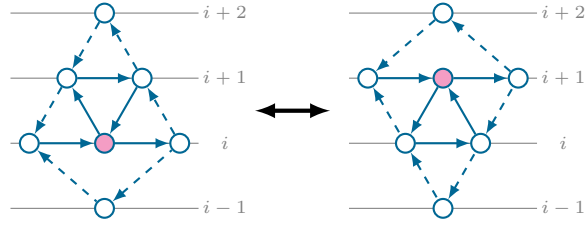

\subfile{fig-braid.tex}
\caption{Braid move $(i,i+1,i) \leftrightarrow (i+1,i,i+1)$.}
\label{fig-braid}
\end{figure}

\begin{figure}[h]
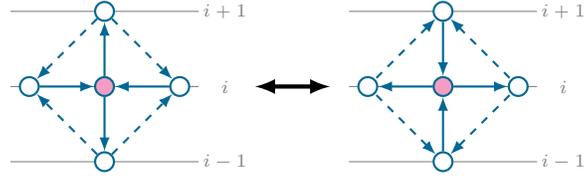

\subfile{fig-shuffle}
\caption{Shuffle $(i,-i) \leftrightarrow (-i,i)$.}
\label{fig-pm}
\end{figure}

\begin{figure}[h]
\subfile{fig-merge.tex}
\caption{Merge $(1,1) \to (1)$.}
\label{fig-21}
\end{figure}

\subsection{Reduction of cyclic Weyl words}
\label{subsec-Weyl-comb}

In this subsection we describe certain reductions of cyclic Weyl words, which were explained to us by M.~Gekhtman. In what follows these reductions yield sequences of quiver mutations which we use over and over again.

Given a letter $a \in \Agt$, let introduce a notation
$$
a^* = -\overline{a},
$$
and let
$$
-\bi = (-i_1, \dots, -i_p), \qquad \overline\bi = (\overline i_1, \dots, \overline i_p), \qquad \bi^* = (i_1^*, \dots, i_p^*), \qquad \bi^t = (i_p, \dots, i_1)
$$
for any word Weyl word $\bi = (i_1, \dots, i_p)$. We also define
$$
\i_{[i,j]} =
\begin{cases}
(i,i+1,\dots,j) &\text{if} \quad 0 < i \le j, \\
(i,i-1,\dots,j) &\text{if} \quad 0 < j \le i, \\
0 &\text{otherwise.}
\end{cases}
$$
so that, for example, $\overline\i_{[i,j]} = \i_{[\overline j, \overline i]}$. Now, let us consider words
$$
\i_c = \i_{[1,n]}, \qquad \i_{w_0} = \hr{\i_{[n,1]},\i_{[n,2]},\dots,\i_{[n,n]}},
$$
where notations $c$ and $w_0$ stand respectively for a Coxeter and the longest word in the Weyl group of $SL_{n+1}$. For $n=4$, the words $\i_{w_0}$ and $\overline \i_{w_0}$ are given by formulas~\eqref{iw0} and~\eqref{iw0bar} respectively. Given a collection $u_1, \dots, u_k$ of symbols in $\hc{c,w}$ we set
\beq
\label{eq:word-notation}
\bi_{u_1,\dots,u_k} = (\bi_{u_1}, \dots, \bi_{u_k}) \qquad\text{and}\qquad \bi_{-u} = -\bi_u, \qquad \bi_{\overline u} = \overline\bi_u, \qquad \bi_{u^*} = \bi_u^*.
\eeq
In particular, in our notations
$$
\isf = \hr{\overline\i_{w_0},\i_c^*,\i_c},
$$
and the following equalities hold:
$$
\bi^t_{w_0} = \bi_{w_0}, \qquad \bi^t_c = \bi_c^*.
$$
We also observe that the word $\icox$ is \emph{Coxeter,} that is it contains every letter from $\Agt$ exactly once. It is easy to see that every Coxeter word is cyclically irreducible, and that every two cyclic Coxeter words are related via shuffles and cyclic shifts. In the remainder of this subsection we describe three cyclic Weyl word reductions
\begin{align}
\label{phi1}
& \Phi_1 \colon \ifull \longmapsto \ihalf, \\
\label{phi2}
& \Phi_2 \colon \ihalf \longmapsto \icox, \\
\label{phi3}
& \Phi_3 \colon \isf \longmapsto \isym.
\end{align}

\subsubsection{Word transformation $\Phi_1$}

First, set
$$
\ifull^{(j,k)} = \hr{\overline \i_{w_0},\bi_{[1,j]},\i_{[n-k,j+1]},\i_{[n,j+2]},\dots,\i_{[n,n]} },
$$
so that
$$
\ifull^{(0,0)} = \ifull, \qquad \ifull^{(j,n-j-1)} = \ifull^{(j+1,0)}, \qquad \ifull^{(n-1,0)} = \ihalf.
$$
We factor the transformation $\Phi_1$ into a composite of $n-1$ ``waves'',
\beq
\label{eq:phi1-factor}
\Phi_1 = \Phi_1^{(n-1)} \dots \Phi_1^{(1)}
\qquad
\Phi_1^{(j)} = \Phi_1^{(j,n-j)} \dots \Phi_1^{(j,1)},
\eeq
so that
$$
\Phi_1^{(j+1,k+1)} \colon \ifull^{(j,k)} \longmapsto \ifull^{(j,k+1)},
$$
and refer to $\Phi_1^{(j)}$ as the $j$-th ``wave'' of $\Phi_1$, and to $\Phi_1^{(j,k)}$ as the $k$-th step of the $j$-th wave.

Let us now describe the transformation $\Phi_1^{(j+1,k+1)}$. Using that
$$
\ifull^{(j,k)} = \hr{\overline \i_{w_0},\bi_{[1,j]},n-k,\i_{[n-k-1,j+1]},\i_{[n,j+2]},\dots,\i_{[n,n]}},
$$
we first commute the letter $n-k$ all the way to the left through the word $\overline \i_{w_0}$, which consists of applying equivalences~\ref{eq:ab} together with exactly $k+1$ shuffles $(k-n,n-k) \mapsto (n-k,k-n)$. Then, we perform the cyclic shift that erases the letter $n-k$ on the very left and writes it on the far right, to obtain
\beq
\label{eq:temp1-words}
\hr{\overline \i_{w_0},\bi_{[1,j]},\i_{[n-k-1,j+1]},\i_{[n,j+2]},\dots,\i_{[n,n]},n-k}.
\eeq
The part of the latter word which reads $\hr{ \i_{[n,j+2]},\dots,\i_{[n,n]},n-k }$ can be rewritten as follows using only~\eqref{eq:ab}:
\begin{multline}
\hr{\i_{[n,j+2]},\dots,\i_{[n,n+1-k]},\i_{[n,n+2-k]},\dots,\i_{[n,n]},n-k} = \\
\label{eq:temp2-words}
\hr{\i_{[n,j+2]},\dots,\i_{[n,n+1-k]},n-k,\i_{[n,n+2-k]},\dots,\i_{[n,n]}}.
\end{multline}
We shall now restrict our attention to the subword
\beq
\label{eq:temp3-words}
\hr{\i_{[n,n-k]},\i_{[n,n+1-k]},n-k}.
\eeq
Using~\eqref{eq:ab} again we rewrite it as
$$
\hr{\i_{[n,n+1-k]},\i_{[n,n+2-k]},n-k,n+1-k,n-k},
$$
and then apply a braid move to get
$$
\hr{\i_{[n,n+1-k]},\i_{[n,n+2-k]},n+1-k,n-k,n+1-k}.
$$
The latter is a particular case of the word
$$
\hr{\i_{[n,s]},\i_{[n,s+1]},s,\i_{[s-1,t-1]},\i_{[s,t]}}
$$
for $s=t=n+1-k$. We can once again rewrite it as
$$
\hr{\i_{[n,s+1]},\i_{[n,s+2]},s,s+1,s,\i_{[s-1,t-1]},\i_{[s,t]}}
$$
and use a braid move to get
$$
\hr{\i_{[n,s+1]},\i_{[n,s+2]},s+1,s,s+1,\i_{[s-1,t-1]},\i_{[s,t]}} = \hr{\i_{[n,s+1]},\i_{[n,s+2]},s+1,\i_{[s,t-1]},\i_{[s+1,t]}}.
$$
Proceeding by induction on $s$ we arrive at
$$
\hr{n,n-1,n,n-1,\i_{[n-2,t-1]},\i_{[n-1,t]}},
$$
use a braid move once again, to get
$$
\hr{n,n,n-1,n,\i_{[n-2,t-1]},\i_{[n-1,t]}} = \hr{n,n,\i_{[n-1,t-1]},\i_{[n,t]}},
$$
and finally, apply the merge $(n,n) \mapsto n$ to obtain
$$
\hr{n,\i_{[n-1,t-1]},\i_{[n,t]}} = \hr{\i_{[n,t-1]},\i_{[n,t]}}.
$$
Recall that $t = n+1-k$ and thus the word~\eqref{eq:temp3-words} has been reduced to $\hr{\i_{[n,n-k]},\i_{[n,n+1-k]}}$. Re-inserting it into the word~\eqref{eq:temp2-words} and then~\eqref{eq:temp1-words} we arrive at
$$
\ifull^{(j,k+1)} = \hr{\overline \i_{w_0},\bi_{[1,j]},\i_{[n-k-1,j+1]},\i_{[n,j+2]},\dots,\i_{[n,n]} }.
$$
To summarize, the transformation $\Phi_1^{(j+1,k+1)}$ consists of
\begin{itemize}
\item $k+1$ shuffles $(k-n,n-k) \mapsto (n-k,k-n)$;
%\item a cyclic shift $(n-k,\i) \mapsto (\i,n-k)$;
\item $k$ braid moves $((s+1)^*,s^*,(s+1)^*) \mapsto (s^*,(s+1)^*,s^*)$ for $s=k,\dots,2,1$;
\item a merge $(n,n) \mapsto n$.
\end{itemize}

\begin{example}
We spell out the transformation $\Phi_1$ in case $n=4$. Below we write $\bi \stackrel{t}\mapsto \bi'$ if, up to equivalences~\eqref{eq:ab}, the transformation from $\bi$ to $\bi'$ is $t$, where $t=b$ denotes a braid move~\eqref{eq:braid}, $t=s^k$ stands for $k$ shuffles~\eqref{eq:shuffle}, and $t=m$ indicates a merge~\eqref{eq:merge}. Then we have
\begin{align*}
\Phi_1^{(1,1)} \colon \quad
\ifull^{(0,0)} &= (\overline \i_{w_0},4,3,2,1,4,3,2,4,3,4) \stackrel{s}\longmapsto (4,\overline \i_{w_0},3,2,1,4,3,2,4,3,4) \\
&= (\overline \i_{w_0},3,2,1,4,3,2,4,3,4,4) \stackrel{m}\longmapsto (\overline \i_{w_0},3,2,1,4,3,2,4,3,4) = \ifull^{(0,1)}, \\
\Phi_1^{(1,2)} \colon \quad
\ifull^{(0,1)} &= (\overline \i_{w_0},3,2,1,4,3,2,4,3,4) \stackrel{s^2}\longmapsto (3,\overline \i_{w_0},2,1,4,3,2,4,3,4) \\
&= (\overline \i_{w_0},2,1,4,3,2,4,3,4,3) \stackrel{b}\longmapsto (\overline \i_{w_0},2,1,4,3,2,4,4,3,4) \\
&\stackrel{m}\longmapsto (\overline \i_{w_0},2,1,4,3,2,4,3,4) = \ifull^{(0,2)}, \\
\Phi_1^{(1,3)} \colon \quad
\ifull^{(0,2)} &=  (\overline \i_{w_0},2,1,4,3,2,4,3,4) \stackrel{s^3}\longmapsto (2,\overline \i_{w_0},1,4,3,2,4,3,4) \\
&= (\overline \i_{w_0},1,4,3,2,4,3,4,2) \stackrel{b}\longmapsto (\overline \i_{w_0},1,4,3,4,3,2,3,4) \\
& \stackrel{b}\longmapsto (\overline \i_{w_0},1,4,4,3,4,2,3,4) \stackrel{m}\longmapsto (\overline \i_{w_0},1,4,3,2,4,3,4) = \ifull^{(0,3)} = \ifull^{(1,0)}.
\end{align*}
The second wave $\Phi_1^{(2)}$ reads
\begin{align*}
\Phi_1^{(2,1)} \colon \quad
\ifull^{(1,0)} &= (\overline \i_{w_0},1,4,3,2,4,3,4) \stackrel{s}\longmapsto (4,\overline \i_{w_0},1,3,2,4,3,4) \\
&= (\overline \i_{w_0},1,3,2,4,3,4,4) \stackrel{m}\longmapsto (\overline \i_{w_0},1,3,2,4,3,4) = \ifull^{(1,1)}, \\
\Phi_1^{(2,2)} \colon \quad
\ifull^{(1,1)} &= (\overline \i_{w_0},1,3,2,4,3,4) \stackrel{s^2}\longmapsto (3,\overline \i_{w_0},1,2,4,3,4) \\
&= (\overline \i_{w_0},1,2,4,3,4,3) \stackrel{b}\longmapsto (\overline \i_{w_0},1,2,4,4,3,4) \\
&\stackrel{m}\longmapsto (\overline \i_{w_0},1,2,4,3,4) = \ifull^{(1,2)} = \ifull^{(2,0)}.
\end{align*}
And finally, the wave $\Phi_1^{(3)}$ becomes
\begin{align*}
\Phi_1^{(3,1)} \colon \quad
\ifull^{(2,0)} &= (\overline \i_{w_0},1,2,4,3,4) \stackrel{s}\longmapsto (4,\overline \i_{w_0},1,2,3,4) \\
&= (\overline \i_{w_0},1,2,3,4,4) \stackrel{m}\longmapsto (\overline \i_{w_0},1,2,3,4) = \ifull^{(2,1)} = \ifull^{(3,0)}.
\end{align*}
\end{example}

\subsubsection{Word transformation $\Phi_2$}
\label{subsec:phi2}

For $0 \le j \le n-1$ and $0 \le k \le n-1-j$ we now set
%$$
%\ihalf^{(j,k)} = \hr{\i_c,\overline\bi_{[1,j]}, \overline \i_{[n-k,j+1]}, \overline \i_{[n,j+2]},\dots,\overline \i_{[n,n]}},
%$$
$$
\ihalf^{(j,k)} = \hr{\overline\bi_{[n,n]}, \ldots, \overline \i_{[j+2,n]}, \overline \i_{[j+1,n-k]}, \overline \i_{[j,1]},\i_c},
$$
so that $\ihalf^{(j,n-1-j)} = \ihalf^{(j+1,0)}$ and $\ihalf^{(n-1,0)} = \icox$. We then factor
\beq
\label{eq:phi2-factor}
\Phi_2 = \Phi_2^{(n-1)} \dots \Phi_2^{(1)},
\qquad
\Phi_2^{(j)} = \Phi_2^{(j,n-j)} \dots \Phi_2^{(j,1)},
\eeq
so that
$$
\Phi_2^{(j+1,k+1)} \colon \ihalf^{(j,k)} \longmapsto \ihalf^{(j,k+1)}.
$$
%The steps of $\Phi_2$ are similar to those of $\Phi_1$ with the following minor alterations: the subwords $\bi_+$ in the alphabet $\Agt_+$ are replaced with $\overline\bi_+$, and the subword $\overline\bi_{w_0}$ is replaced with $\bi_c$. The latter implies that it takes just one shuffle to move a letter $\overline{a} \in \Agt_-$ through $\bi_c$. Thus, the transformation $\Phi_2^{(j+1,k+1)}$ consists of
The steps of $\Phi_2$ are similar to those of $\Phi_1$, namely, the transformation $\Phi_2^{(j+1,k+1)}$ consists of
\begin{itemize}
\item a shuffle $(-k-1,k+1) \mapsto (k+1,-k-1)$;
\item $k$ braid moves $(-s-1,-s,-s-1) \mapsto (-s,-s-1,-s)$ for $s=k,\dots,2,1$;
\item a merge $(-1,-1) \mapsto -1$.
\end{itemize}
%\begin{itemize}
%\item a shuffle $(k,-k) \mapsto (-k,k)$;
%%\item a cyclic shift $(-k,\i) \mapsto (\i,-k)$;
%\item $k$ braid moves $(-s-1,-s,-s-1) \mapsto (-s,-s-1,-s)$ for $s=k,\dots,2,1$;
%\item a merge $(-1,-1) \mapsto -1$.
%\end{itemize}

\subsubsection{Word transformation $\Phi_3$}

Let us set
$$
\isf^{(j,k)} = \hr{\overline\i_{w_0},\i_c^*,\bi_{[1,n-j-1]},n-j+k},
$$
so that $\isf^{(0,0)} = \isf$ and $\isf^{(n,0)} = \bi_{\overline{w}_0,c^*}$, and factor
\beq
\label{eq:phi3-factor}
\Phi_3 = \Phi_3^{(n)} \dots \Phi_3^{(1)}, \qquad \Phi_3^{(j)} = \Phi_3^{(j,j)} \dots \Phi_3^{(j,1)}.
\eeq
The transformation $\Phi_3^{(j+1,k+1)}$ reads as follows. Setting $s = n-j+k$, we first apply $n+1-s$ shuffles to move the letter $s$ all the way to the right through $\overline\bi_{w_0}$. Then, if $k \ne j$ we apply the braid move $(s,s+1,s) \mapsto (s+1,s,s+1)$ and arrive at $\isf^{(j,k+1)}$, whereas if $k = j$ we merge $(n,n) \mapsto n$ and arrive at $\isf^{(j+1,0)}$.

\begin{example}
Let us spell out the transformation $\Phi_3$ for $n=2$. In that case we have
%$$
%\isf = (\overline\i_{w_0},2,1,1,2) \qquad\text{and}\qquad \overline\i_{w_0} = (-1,-2,-1).
%$$
%Then
$$
\Phi_3^{(1,1)} \colon \quad
\isf^{(0,0)} = (\overline\i_{w_0},2,1,1,2) \stackrel{s}\longmapsto (\overline\i_{w_0},2,2,1,1) \stackrel{m}\longmapsto (\overline\i_{w_0},2,1,1) = \isf^{(1,0)},
$$
and the second wave $\Phi_3^{(2)}$ reads
\begin{align*}
\Phi_3^{(2,1)} \colon \quad
\isf^{(1,0)} &= (\overline\i_{w_0},2,1,1) \stackrel{s^2}\longmapsto (\overline\i_{w_0},1,2,1) \stackrel{b}\longmapsto (\overline\i_{w_0},2,1,2) = \isf^{(1,1)}, \\
\Phi_3^{(2,2)} \colon \quad
\isf^{(1,1)} &= (\overline\i_{w_0},2,1,2) \stackrel{s}\longmapsto (\overline\i_{w_0},2,2,1) \stackrel{m}\longmapsto (\overline\i_{w_0},2,1) = \isf^{(2,0)} = \bi_{\overline{w}_0,c^*}.
\end{align*}
\end{example}

\subsection{Quiver transformations}
\label{subsec-quiver-mut}

We now use the dictionary from Section~\ref{subsec:weyl-words} between cyclic double Weyl words and quivers on a cylinder to turn reductions~\eqref{phi1}~--~\eqref{phi3} of the former into transformations of the latter, which we denote by the same symbols.

We write $Q_\bi$ for the quiver defined by the cyclic Weyl word $\bi$. If $\bi = \bi_{u_1,\dots,u_k}$ in the sense of notation~\eqref{eq:word-notation}, we denote the corresponding quiver by $Q_{u_1,\dots,u_k}^n$, where $n$ stands for the number of letters in the alphabet $\Agt_+$ and will be omitted when is clear from the context. For example, the unfrozen part of the quiver in the left pane of Figure~\ref{fig-Qbox} is $Q_{\overline{w}_0, w_0}^4$. Note that the quivers $Q_{-\bi^t}$, $Q_{\bi^*}$, $Q_{\overline{\bi}^t}$ are all isomorphic to $Q_\bi$, as they can be obtained from the latter by reflecting it respectively across the vertical axes, across the horizontal axes, and across the origin. 

\begin{figure}[h]
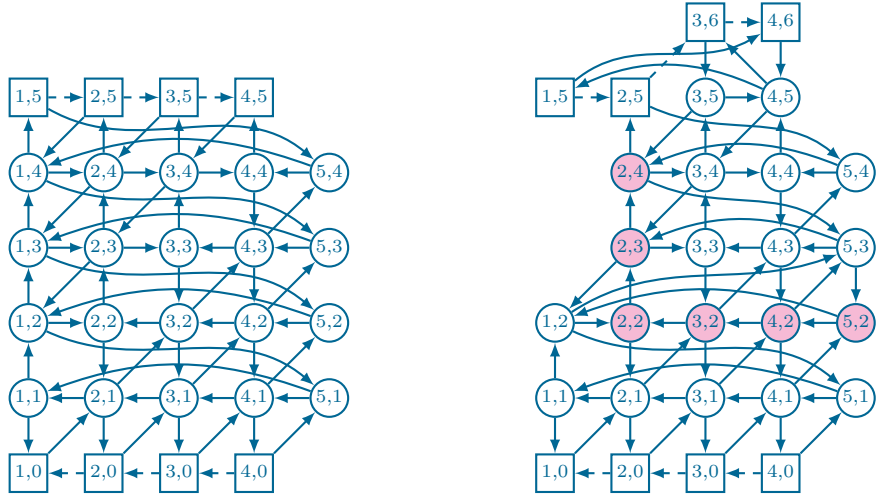

\subfile{fig-Q00.tex}
\caption{Quiver $Q_{\mathrm{cyl}}^4$ (on the left) and its image under $\Phi_1^{(1,2)} \Phi_1^{(1,1)}$ (on the right).}
\label{fig-Qbox}
\end{figure}

\subsubsection{Quiver transformation $\Phi_1$}
\label{subsec-muts-double1}

Consider the quiver $\Qcyl \simeq Q_{\tri}$ for a triangulation $\tri$ of a cylinder with a single marked point on each boundary component, see the left side of Figure~\ref{fig:local-gluing-surfaces}. Let us further fix an oriented simple closed curve $c$ on the cylinder generating its fundamental group.

We arrange the nodes of $ \Qcyl$ into a square grid with $n+2$ rows and $n+1$ columns and label the nodes by their coordinates, so the vector pointing in the first coordinate direction corresponds to the chosen orientation of $c$, and two coordinate directions form a positive frame on $S$. The setup is illustrated in the left pane of Figure~\ref{fig-Qbox} for $Q_{\mathrm{cyl}}^4$. Note that the mutable part of $\Qcyl$ is $Q_{\overline{w}_0, w_0}^n$.
%{We write  $\{\upsilon_c^r| 1\leq c\leq n+1,1\leq r\leq n\}$ for its initial set of tropical variables.}
%We arrange its nodes into a square grid with $n$ rows and $n+1$ columns, and label the nodes by their coordinates.
We define the quiver transformation $\Phi_1$ via factorization~\eqref{eq:phi1-factor} into individual steps. After each step of $\Phi_1$ we will rearrange the nodes and relabel them according to their new position, thereby establishing the bijection between the indexing sets of the two quivers. To indicate the embedding of the quiver as a planar graph on the cylinder, we draw with bent lines arrows going around its back-side. Then the quiver transformation $\Phi_1^{(j,k+1)}$ reads as follows:
\begin{itemize}
\item mutate at nodes $(x,n-k)$ as $x$ runs from $n+1$ to $n+1-k$;
\item mutate at nodes $(n-k,y)$ as $y$ runs from $n-k$ to $n$.
\end{itemize}
The dependence on the number $j$ of the wave appears in the following repositioning and relabelling of the nodes:
\begin{itemize}
%\item shift the vertex $(n-k,n)$ to the position $(n-k,2n-j)$;
\item move the node $(n-k,y)$ to position $(n-k,y+1)$ for all $y \ge n-k$;
\item move the node $(x,n-k)$ to position $(x-1,n-k)$ for all $n+1-k \le x \le n+1$;
\item move the node $(j,n-k)$ to position $(n+1,n-k)$;
%\item if $k=n-j$ move the node $(1,n+j)$ to position $(1,n+j+1)$;
\item rename nodes according to their new position.
\end{itemize}
In particular, in the process of applying the wave $\Phi_1^{(j)}$ we are losing nodes positioned at $(j,y)$ as $y$ runs from $n$ down to $j+1$, and acquiring those positioned at $(x,n+j+1)$ as $x$ runs from $n$ to $j+1$.
%Finally at the end of the wave we shift the vertex $(1,n+j)$ up by 1.

\begin{example}
The step $\Phi_1^{(1,3)}$ consists of consecutive mutations at the shaded nodes on the right pane of Figure~\ref{fig-Qbox}, starting with $(5,2)$ and ending at $(2,4)$. We then move nodes $(2,y)$ upward by 1 for all $2 \le y \le 5$, move nodes $(x,2)$ left by 1 for all $3 \le x \le 5$, move the node $(1,2)$ to position $(5,2)$, and finish by relabeling nodes according to their new position.
\end{example}

We denote by $\Qcone$ the result of applying $\Phi_1$ to (the mutable part of) the quiver $\Qcyl$. Figure~\ref{fig-Qcone} shows $Q_{\mathrm{cone}}^4$, where positions of vertices have been changed without relabelling. Following the quiver through the steps of the transformation $\Phi_1$, one can see that the bottom $n$ mutable rows of $\Qcone$ form a subquiver $Q_{\overline w_0, c}^n$, while the top $n-2$ mutable rows of $\Qcone$ form $Q_{\overline c,w_0}^{n-2}$. 

%{Using Lemma~\ref{tropical-chain}, it is simple to check that the tropical variables sitting at the nodes $X_c^r$ at the end of the $j$-th wave of $\Phi_1$ are given by
%%%%%%%% old formulas below where N=n+1; 
%% \begin{align}
%% \label{eq:trop-formula-phi1}
%% \nonumber    X_N^r &\mapsto \prod_{a=0}^{\min(j,r-1)}\upsilon_a^r \\
%% \nonumber X_{r+1}^{N+j-r} &\mapsto \left(\prod_{a\neq r}^{N}\upsilon_a^{r+1}\prod_{b=r+2}^{N-1}\upsilon_{r+1}^b\right)^{-1} ,\quad 1\leq r\leq j-1 , \\
%% X_{r+k}^{N+j-r} &\mapsto \upsilon_r^{r+k} ,\quad 1\leq r\leq j-1 ,\quad 2\leq k\leq N-r-1, \\
%% \nonumber X_{k}^{N} &\mapsto \left(\prod_{a=k}^{N}\upsilon_a^{k}\prod_{a=1}^{j-1}\upsilon_a^{k}\prod_{b=k+1}^{N-1}\upsilon_{k}^b\right)^{-1} ,\quad j+1\leq k\leq N-1 , \\
%% \nonumber X_c^r&\mapsto \nu_c^r \quad \text{otherwise}
%% \end{align}
%\begin{align}
%\label{eq:trop-formula-phi1}
%\nonumber    X_{n+1}^r &\mapsto \prod_{a=0}^{\min(j,r-1)}\upsilon_a^r \\
%\nonumber X_{r+1}^{n+1+j-r} &\mapsto \left(\prod_{a\neq r}^{n+1}\upsilon_a^{r+1}\prod_{b=r+2}^{n}\upsilon_{r+1}^b\right)^{-1} ,\quad 1\leq r\leq j-1 , \\
%X_{r+k}^{n+1+j-r} &\mapsto \upsilon_r^{r+k} ,\quad 1\leq r\leq j-1 ,\quad 2\leq k\leq n-r, \\
%\nonumber X_{k}^{n+1} &\mapsto \left(\prod_{a=k}^{n+1}\upsilon_a^{k}\prod_{a=1}^{j-1}\upsilon_a^{k}\prod_{b=k+1}^{n}\upsilon_{k}^b\right)^{-1} ,\quad j+1\leq k\leq n , \\
%\nonumber X_c^r&\mapsto \upsilon_c^r \quad \text{otherwise}
%\end{align}
%
%where we understand lower indices modulo $n+1$, so that $X_0^r=X_{n+1}^r$.}

\begin{figure}[h]
\subfile{fig-Qcone.tex}
\caption{Quiver $Q_{\mathrm{cone}}^4$.}
%and its tropical variables evolved from $Q_4^{\mathrm{box}}$ by applying the cluster transformation $\Phi_1$. To avoid cluttering the figure, each node $X^r_c$ without a green label is understood to simply carry tropical variable $\upsilon^r_c$.}
\label{fig-Qcone}
\end{figure}

%After applying all $n-1$ waves, we arrive at the quiver $Q_n^{\mathrm{cone}}$. We show the quiver $Q_4^{\mathrm{cone}}$ in Figure~\ref{fig-Qcone}, in which we have shifted all vertices in the $r$-th row to the left by $\frac{r-1}{2}$ if $1 \le r \le n$ and by $\frac{2n-1-r}{2}$ if $n \le r \le 2n-1$. 

\subsubsection{Quiver transformation $\Phi_2$}
\label{subsec-muts-double2}

Let us reposition the nodes of the subquiver, formed by the bottom $n+1$ rows of~$\Qcone$. Move the node $(n+1,y)$ to position $(y,y)$ for all $1 \le y \le n$, the node $(x,y)$ to position $(x+1,y)$ if $0 \le y \le x \le n$, and relabel all nodes according to their new positions. The resulting subquiver $Q_{\mathrm{frust}}^n$ is shown for $n=4$ on the left pane of Figure~\ref{fig-Q0c}. We define the quiver transformation $\Phi_2$ on $Q_{\mathrm{frust}}^n$ via factorization~\eqref{eq:phi2-factor}, where the step $\Phi_2^{(j,k)}$ reads
\begin{itemize}
\item mutate at vertex $(k,k)$;
\item mutate at vertices $(k+1,y)$ as $y$ runs from $k$ to 1.
\end{itemize}
As before, the dependence on $j$ appears in the relabelling of the nodes, which follows $\Phi_2^{(j,k)}$:
\begin{itemize}
\item shift vertices $(k+1,y)$ to the position $(k+1,y-1)$ for all $y \le k$;
\item shift the vertex $(k,k)$ to the position $(k+1,k)$;
\item shift the vertex $(n+2-j,k)$ to the position $(k,k)$;
\item rename vertices according to their new position.
\end{itemize}
In particular, in the process of applying the wave $\Phi_1^{(j)}$ we are losing nodes positioned at $(n+2-j,y)$ as $y$ runs from $1$ up to $n-j$, and acquiring nodes positioned at $(x,-j)$ as $x$ runs from $2$ to $n+1-j$.

\begin{figure}[h]
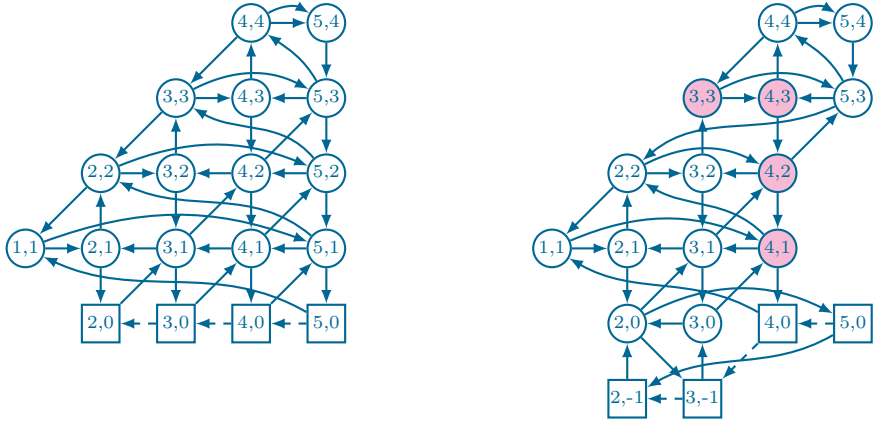

\subfile{fig-Q0c.tex}
\caption{Quiver $Q_{\mathrm{frust}}^4$ (on the left) and its image under $\Phi_2^{(1,2)} \Phi_2^{(1,1)}$ (on the right).}
\label{fig-Q0c}
\end{figure}

\begin{example}
The step $\Phi_2^{(1,3)}$ consists of consecutive mutations at the shaded nodes on the right pane of Figure~\ref{fig-Q0c}, starting with $(3,3)$ and ending at $(4,1)$. We then move the nodes $(4,y)$ down by 1 for all $0 \le y \le 3$, move the node $(3,3)$ to position $(4,3)$, move the node $(5,3)$ to the position $(3,3)$, and finish by relabeling nodes according to their new position.
\end{example}

%{One can again use Lemma~\ref{tropical-chain} to track the effect of each wave in $\Phi_2$ at the level of tropical cluster variables. This time let us simply record the variables attached to each vertex of the final quiver $\mathcal{Q}_{n+1}^{\mathrm{candy}}$ obtained by applying the entire sequence $\Phi_2\circ\Phi_1$ to $\mathcal{Q}_{n+1}^{\mathrm{box}}$. The nodes of $\mathcal{Q}_{n+1}^{\mathrm{candy}}$ of course still carry the variables described in formulas~\eqref{eq:trop-formula-phi1} for the quiver $\mathcal{Q}_{n+1}^{\mathrm{cone}}$ (with the relabelling $(X^r_c)_{\mathrm{candy}}=(X^{n+1+r}_{c+n-r})_{\mathrm{cone}} $), while the $Y$- and $C$-labelled nodes correspond to tropical coordinates
%%%%%%% old formulas with N=n+1:
% \begin{align}
%     \label{eq:trop-formula-phi2}
%     C_1^r&\mapsto \prod_{b\neq r}\upsilon^r_b,\\
%     C_2^r&\mapsto \upsilon_r^r,\\
% Y^r_c&\mapsto \upsilon_{N-r}^c, \qquad 1\leq r\leq N-1,~1\leq c<N-2\\
% Y^r_{N-2}&\mapsto \left(\prod_{a\neq N-r}\upsilon_a^{N-r-1}\prod_{b=1}^{N-r-2}\upsilon^b_{N-r-1}\right)^{-1}
% \end{align}}
%\begin{align}
%    \label{eq:trop-formula-phi2}
%    C_1^r&\mapsto \prod_{b\neq r}\upsilon^r_b,\\
%    C_2^r&\mapsto \upsilon_r^r,\\
%Y^r_c&\mapsto \upsilon_{n+1-r}^c, \qquad 1\leq r\leq n,~1\leq c<n-1\\
%Y^r_{N-2}&\mapsto \left(\prod_{a\neq n+1-r}\upsilon_a^{n-r}\prod_{b=1}^{n-r-1}\upsilon^b_{n-r}\right)^{-1}
%\end{align}}
Now we have all the necessary ingredients to define the isolating cluster subordinate to a $c$-isolating ideal triangulation.
\begin{defn}
\label{def:c-isol-quiver}
Suppose that $\tri$ is an isolating triangulation for an {oriented} simple closed curve $c$ on a surface $S$, so that $\tri$ has the local form shown in the left pane of Figure~\ref{fig:local-gluing-surfaces}. We define the \emph{$c$-isolating quiver} $Q_{\tri;c}=\Phi_c(Q_{\tri})$ subordinate to $\tri$ to be the result of applying to $Q_{\tri}$ the composition of mutations
\beq
\label{eq:Phi-c}
\Phi_c = \Phi_2 \circ \Phi_1
\eeq
within the subquiver $Q_{\mathrm{cyl}}$ of $Q_{\tri}$. 
\end{defn}
The adjacency matrix of the subquiver $\Phi_c(\Qcyl)$ is represented graphically in Figure~\ref{fig-Qcandy} for the $n=4$ case.
%Figure~\ref{fig-Qcandy} shows $Q_{\tri;c}^4$, where the nodes have been repositioned by their labels were preserved.
Following the quiver $Q_{\mathrm{frust}}^n$ through the steps of $\Phi_2$, one can see that the bottom $n-2$ rows of $Q_{\tri;c}^n$ form  the quiver $Q_{\overline{w}_0, c^*}^{n-2}$, while the rows 1 to $n$ form the quiver $Q_{\overline c,c}^n$.

\begin{figure}[h]
\subfile{fig-candy.tex}
\caption{Quiver $Q_{\tri;c}^4$.}
% and the tropical variables evolved from $\Qc_4^{\mathrm{box}}$ after $\Phi_2\circ\Phi_1$.}
\label{fig-Qcandy}
\end{figure}

\begin{notation}
\label{not:LambdaS-split}
Let $Q_{\tri;c}$ be the $c$-isolating quiver subordinate to an ideal triangulation $\tri$. We introduce convenient aliases for certain vertices of $Q_{\tri;c}$ as follows:
$$
e_{s_i} = e_{(i,i)}, \qquad e_{t_i} = e_{(i+1,i)}, \qquad 1\leq i\leq n
$$
$$
e_{h_+} = e_{(n,n+1)},\qquad e_{h_-} = e_{(2,0)}.
$$
Note that we have 
$$
(e_{s_i},e_{t_j}) = \mathfrak{A}_{ij}
$$
where $\mathfrak{A}$ is the Cartan matrix of type $A_n$.
We write $\Lambda_{S_{\geq0}}$ for the rank $2(n+1)$-sublattice of $\Lambda_{\tri;c}$ spanned by the $e_{h_\pm}$ together with the $e_{s_i},e_{t_i}$, and $\Lambda_{S_{<0}}$ for the sublattice spanned by all other $e$-basis vectors for $\Lambda_S$. Hence we have a non-orthogonal direct sum decomposition
\begin{align}
\label{eq:proto-split}
\Lambda_{S}(Q_{\tri;c}) = \Lambda_{S_{\geq0}}(Q_{\tri;c})\bigoplus \Lambda_{S_{<0}}(Q_{\tri;c}).
\end{align}
We also write $\Lambda_{S_{\leq0}}$ for the sublattice spanned by $\Lambda_{S_{<0}}$ together with the $e_{h_{\pm}}$, and $\Lambda_{S_{>0}}$ for the span of the complementary set of $e$-basis vectors for $\Lambda_S$. We set
$$
\mathcal{T}_{\leq0}(\tri;c) = \mathcal{T}(\Lambda_{S_{\geq0}}(Q_{\tri;c})), \quad \mathcal{T}_{<0}(\tri;c) = \mathcal{T}(\Lambda_{S_{<0}}(Q_{\tri;c}))
$$
and define analogously the quantum tori $\mathcal{T}_{\geq0}(\tri;c),\mathcal{T}_{>0}(\tri;c)$.
\end{notation}
The notation above is illustrated in the following picture:
\begin{figure}[h]
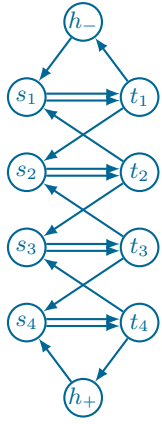

\subfile{fig-Q-Toda}
\caption{Quiver $Q_{S_{\geq0}}^4$.}
\label{fig:Q-Toda}
\end{figure}
\begin{remark}
It is easy to check that in the compatible pair associated to $Q_{\tri;c}$ we have the relations
\begin{align}
\label{eq:xi-sn-tn}
\xi_{s_n} - \xi_{t_n} &= \sum_{j=1}^n \frac{j}{n+1}(e_{s_j} + e_{t_j}), \\
\xi_{s_1} - \xi_{t_1} &= \sum_{j=1}^n \hr{1-\frac{j}{n+1}}(e_{s_j} + e_{t_j}),
\end{align}
and that the vectors
$$
z_+ = e_{v_+} + \xi_{s_n} - \xi_{t_n} \qquad\text{and}\qquad z_- = e_{v_-} + \xi_{s_1} - \xi_{t_1}
$$
are skew orthogonal to $\Lambda_{S_{\geq0}}$.
\end{remark}

\subsubsection{Quiver transformation $\Phi_{c_+}$}
\label{subsec-muts-single}
Now let $S'$ be a surface with a pair of tacked circles $c_\pm$.  Suppose that $\tri'$ is an ideal triangulation of $S'$ containing isolating cylinders for both $c_\pm$, so that $\tri'$ is locally of the form shown in the right pane of
%Consider a cylinder $S_+$, so that one of its boundary components is a tacked circle $c_+$, and the other contains a single marked point, as shown on the right side of
 Figure~\ref{fig:local-gluing-surfaces}. Hence $Q_{\tri'}$ contains subquivers $\Qsf({\pm})$ associated to the isolating cylinders for $c_\pm$, whose vertices we arrange as shown on the left pane of Figure~\ref{fig:Qsf} for $n=4$; see also the right pane of Figure~\ref{fig:self-folded-special} for the $n=3$ case.

\begin{figure}[h]
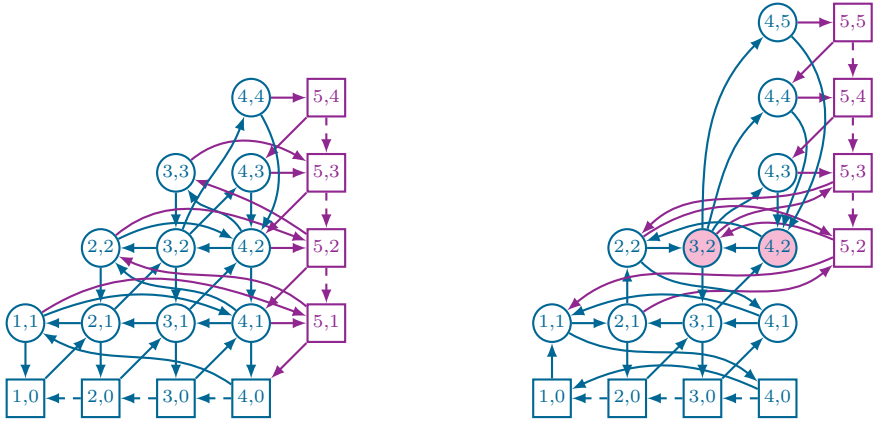

\subfile{fig-Qsf.tex}
\caption{Quiver $Q_{\mathrm{sf}}^4$ (on the left) and its image under $\Phi_{c_+}^{(0)}$ (on the right).}
\label{fig:Qsf}
\end{figure}

We now define a quiver transformation
\beq
\label{eq:Phi-c_+}
\Phi_{c_+} = \Phi_3 \circ \Phi_{c_+}^{(0)}
\eeq
of $Q_{\tri'}$ which modifies only the subquiver $\Qsf(+)$. The preliminary wave $\Phi_{c_+}^{(0)}$ consists of
\begin{itemize}
\item apply transformation~\eqref{eq:e-to-e''} at nodes $(i,i)$ for $1 \le i \le n-1$;
\item move the node $(n+1,y)$ to position $(n+1,y+1)$ for all $1 \le y \le n$;
\item move the node $(n,y)$ to position $(n,y+1)$ for $y=n-1$ and $y=n$;
\item move the node $(n-1,n-1)$ to position $(n,n-1)$;
\item relabel nodes according to their new position.
\end{itemize}
Observing that the bottom $n-2$ mutable rows of $\Phi_{c_+}^{(0)}(\Qsf)$ form the subquiver $Q_{\overline{w}_0,c^*,c}^n$, we now apply the transformation $\Phi_3$, which admits factorization~\eqref{eq:phi3-factor}. The step $\Phi_3^{(j,k)}$ reads as follows:
\begin{itemize}
\item apply~\eqref{eq:e-to-e''} at nodes $(x,n-2-j+k)$ as $x$ runs from $n-1-j+k$ to $n$;
\item if $k \ne j$ move the node $(n,n-2-j+k)$ to position $(n-1-j+k,n-1-j+k)$;
\item if $k = j$ move the nodes $(n+1,y)$ and $(n,y)$ with $y \ge n-2$ up by 1;
\item move the node $(x,n-2-j+k)$ right by 1 for all $n-2-j+k \le x \le n-1$;
\item relabel nodes according to their new position.
\end{itemize}
The same procedure applied to the isolating cylinder for $c_-$ defines a cluster transformation $\Phi_{c_-}$.

\begin{defn}
\label{def:cpm-isol-quiver}
We define the \emph{$c_\pm$-isolating quiver} 
$$
Q_{\tri';c_\pm} = \Phi_{c_+}\circ\Phi_{c_-}(Q_{\tri'})
$$ 
subordinate to a $c_\pm$-isolating ideal triangulation $\tri'$ to be the result of applying the commuting cluster transformations $\Phi_{c_+}$ and $\Phi_{c_-}$ to the quiver $Q_{\tri'}$. If $c_+$ is a single tacked circle on $S$ and $\tri'$ a $c_+$-isolating triangulation, we refer to 
$$
Q_{\tri';c_+} = \Phi_{c_+}(Q_{\tri'})
$$
as a $c_+$-isolating quiver subordinate to $\tri'$.
%We show the quiver $Q_{\tri_+;c_+}^4$ on Figure~\ref{fig-Qmitre}, where the nodes have been repositioned without relabelling.
\end{defn}
The local adjacency matrix of $Q_{\tri';c_\pm}$ near $c_+$ is represented graphically in the left pane of Figure~\ref{fig-Qmitre} for the $n=4$ case.

\begin{remark}
The transformation $\hat\mu_f$ defined by~\eqref{eq:e-to-e''} differs from the mutation $\mu_f$ when $f \in \hc{f_{i-1}^-(c), f_i^+(c)}$ for some tacked circle $c$ and $1 \le i \le n$. Let $Q_\Gamma$ be the quiver defined by the ideal bicolored graph $\Gamma$, and $k$ be the largest such number that $\dot e_{c,i} \ne e_{c,i}$. Then for each $1 \le i \le k$ we have $f_i^+ = f_i^-$ and the node $(c,i)$ (where $c$ is still a tacked circle rather than the coordinate of the node) is 2-valent with arrows $f_i^- \to (c,i) \to f_{i+1}^+$. The node $(c,k+1)$ is 3-valent with an arrow $f_{k+1}^+ \to (c,k+1)$ and an oriented triangle formed by vertices $f_{k+1}^-$, $(c,k+1)$, $f_{k+2}^+$ listed in that order. Finally, the nodes $(c,j)$ with $k+2 \le j \le n$ are 4-valent with an oriented triangle formed by vertices $f_j^-$, $(c,j)$, $f_{j+1}^+$ and a pair of arrows $f_j^+ \to (c,j) \to f_{j-1}^-$. Moreover, in the latter case there is an arrow $f_j^+ \to f_{j-1}^-$. This analysis allows one to identify the nodes $v$ of the quiver $Q_\Gamma$ such that $\mu_v \ne \hat \mu_v$ without having to draw the bicolored graph $\Gamma$. For example, in the quiver $\Phi_3^{(0)}(Q_{\mathrm{sf}}^4)$ on the right pane of Figure~\ref{fig:Qsf} we have $k=2$, for $2 \le y \le 5$ nodes $(5,y) = (c,6-y)$ where $c$ is the unique tacked circle, and
\begin{align*}
(4,5) &= f_1^+ = f_1^-, & (4,3) &= f_3^+, & (3,2) &= f_3^-, & (1,1) &= f_5^+, \\
(4,4) &= f_2^+ = f_2^-, & (2,2) &= f_4^+, & (2,1) &= f_4^-.
\end{align*}
\end{remark}

\begin{example}
In order to perform step $\Phi_3^{(1,1)}$ on the quiver $\Phi_{c_+}^{(0)}(Q_{\mathrm{sf}}^4)$, see the right pane of Figure~\ref{fig:Qsf}, one applies the composition $\hat\mu^q_{(4,2)} \circ \hat\mu^q_{(3,2)}$ remembering that both nodes are $f_3^-$ in the respective quivers, moves the nodes $(4,r)$ and $(5,r)$ with $2 \le r \le 5$ upward by 1, moves the nodes $(2,2)$ and $(3,2)$ right by 1, and finishes by relabelling all nodes according to their new position.
\end{example}

\begin{figure}[h]
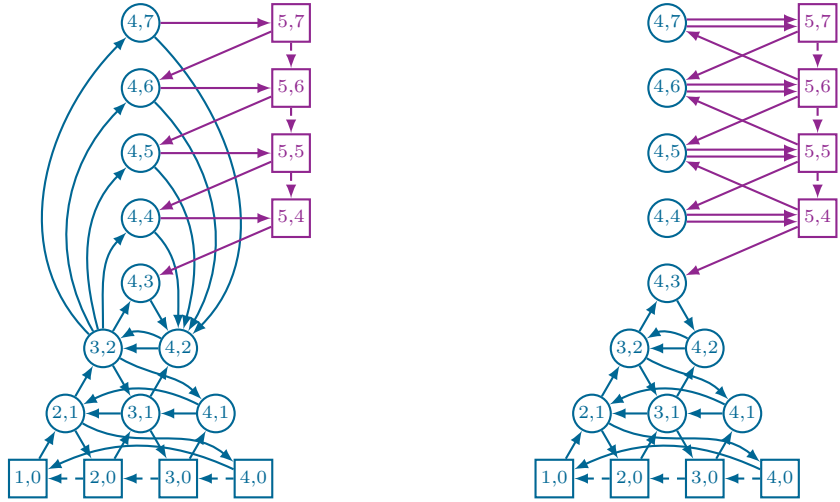

\subfile{fig-mitre.tex}
\caption{Graphical representation of pairings between basis vectors for the basis $\{e_\ell\}$ associated to the subquiver of $Q_{\tri';c_\pm}^4$ in the vicinity of tacked circle $c_+$ (on the left), and for the corresponding dotted basis $\{\dot{e}_\ell\}$  (on the right).}
\label{fig-Qmitre}
\end{figure}

\begin{notation}
\label{not:LambdaSprime-split}
Given a pair of tacked circles $c_\pm$ on $S'$, let $Q_{\tri';c_\pm}$ be the $c_\pm$-isolating quiver subordinate to an ideal triangulation $\tri$. We introduce convenient aliases for certain vertices of $Q_{\tri';c_\pm}$ as follows:
$$
e_{v^{\pm}_i} \equiv e_{(n,2n-i)^\pm}, \qquad 1\leq i \leq n+1 
$$
$$
e_{(c_\pm,i)} \equiv e_{(n+1,2n-i)^\pm}, \qquad  1\leq i \leq n. 
$$
In plain language, in order to match with the notations of Section~\ref{sec:cluster-coord}, the new labeling of the mutable vertices in the top $n+1$ rows of $Q_{\tri';c_\pm}$ by $\hc{v_i^\pm}$ =has the property that the index $i$ increases as we move further away from the boundary circles $c_\pm$.
\end{notation}

\begin{remark}
\label{rmk:dotted-bases}
Recall the bases $\hc{e_\ell}$ and $\hc{\dot e_\ell}$ introduced in Section~\ref{subsec:quivers-from-graphs}. The pairings between the  vectors for each of these bases are represented graphically in Figure~\ref{fig-Qmitre}. Inspecting the bicolored graph $\Gamma_{\tri;c_\pm}$, one observes that $\dot e_{v_i^\pm} =  \alpha_{c_\pm,i}$, where the vectors $\alpha_{c_\pm,i}$ are defined by~\eqref{eq:alpha-c}. Thus we have $(\dot e_{v_i^\pm},e_{c,j}) = \mathfrak{A}_{ij}$, where $\mathfrak{A}$ is the Cartan matrix of type $A_n$. Moreover, for all $1 \le i \le n$ we have $e_{v_i^\pm} = \dot e_{v_i^\pm} + \ldots + \dot e_{v_{n+1}^\pm}$ and $e_{c_\pm,i} = \dot e_{c_\pm,i}$. 
\end{remark}
%
%We finish this subsection with a brief discussion of the quiver $Q_{\tri';c_\pm}$. To match the notations of Section~\ref{sec:cluster-coord}, we relabel its frozen nodes associated to the tacked circles $c_\pm$ by $(c_\pm,i)$, where $i$ runs from 1 to $n$ counting top to bottom. We also relabel the mutable nodes in the top $n+1$ rows of $Q_{\tri';c_\pm}$ by $\hc{v_i^\pm}$, where $1 \le i \le n+1$. For example, in the notations of Figure~\ref{fig-Qmitre} we now have $v_i^+ = (n,2n-i)$ and $(c_+,i) = (n+1,2n-i)$. Recall the bases $\hc{e_\ell}$ and $\hc{\dot e_\ell}$ introduced in Section~\ref{subsec:quivers-from-graphs}, which yield the quivers $Q_{\tri';c_\pm}$ and $\dot Q_{\tri';c_\pm}$ respectively, see Figure~\ref{fig-Qmitre}. Inspecting the bicolored graph $\Gamma_{\tri_+;c_+}$ dual to the quiver $\dot Q_{\tri_+;c_+}$, see Figure~\ref{fig:graph-mitre}, one observes that $\dot e_{v_i^\pm} = - \alpha_{c_\pm,i}$, where the vectors $\alpha_{c_\pm,i}$ are defined by~\eqref{eq:alpha-c}. Thus we have $(\dot e_{v_i^\pm},e_{c,j}) = C_{ij}$, with $C$ being the Cartan matrix of type $A_n$. Moreover, for all $1 \le i \le n$ we have $e_{v_i^\pm} = \dot e_{v_i^\pm} + \ldots + \dot e_{v_{n+1}^\pm}$ and $e_{c_\pm,i} = \dot e_{c_\pm,i}$.

\begin{figure}[h]
\subfile{fig-graph-mitre}
\caption{Bicolored graph $\Gamma_{\tri_+;c_+}$.}
\label{fig:graph-mitre}
\end{figure}

\subsection{Action of the braid group in an isolating cluster.}
Now that we have introduced isolating clusters for tacked circles we can finish the proof of Proposition~\ref{lem:braid} that the braid group acts by cluster Poisson transformations. Indeed, under the hypothesis of the Lemma, given a tacked circle $c$ there exists a $c$-isolating cluster $Q_{\tri',c}$. Let $b$ be the special point which is connected to $c$ in the corresponding triangulation $\tri'$, and let $c_i,b_i$ be their preimages on the universal cover so that the special triangles are $(c_i,c_{i+1},b_i)$.
Then we have (cf. Example~\ref{eg:dot-A})
\begin{align}
\dot{A}_{v_i} = [\nu_{1}(b_i)\wedge\nu_{1}(b_{i-1})\wedge\cdots\wedge\nu_{1}(b_1)\wedge\nu_{n+1-i}(c_{i})],
\end{align}
and so an easy computation shows that
\begin{align}
\label{eq:braid-xi}
\sigma_i(A_{v_j}) = \begin{cases}
\frac{ A_{v_{i+1}}A_{c,i+1}}{ A_{c,i}} \quad & j=i\\
  \frac{A_{v_{i}} A_{c,i}}{A_{c,i+1}} \quad & j=i+1\\
A_{v_j} \quad & \text{else}.
\end{cases}
\end{align}
Hence in a $c$-isolating cluster the braid group acts by a quasicluster automorphism, which proves the action factors through the cluster modular group.

%The action on $\xi$-basis:
%$$
%\sigma_i(\xi_{v_j}) = \begin{cases}
% \xi_{v_{i+1}} - \xi_{c,i}+\xi_{c,i+1} \quad & j=i\\
%  \xi_{v_{i}} + \xi_{c,i}-\xi_{c,i+1} \quad & j=i+1\\
%  \xi_{v_j} \quad & \text{else}
%\end{cases}
%$$
%where we understand $\xi_{c,n+1}=0$. 

In a $c$-isolating cluster, the quasi-permutation implementing the action of the braid $\sigma_i$ acts on the $e$-basis by
\begin{align}
\label{eq:braid-e}
\sigma_i(e_{c,j}) = \begin{cases}
e_{c,i-1}+e_{c,i} - \dot e_{v_i} \quad & j=i-1\\
-e_{c,i}+ \dot e_{v_i} \quad & j=i\\
e_{c,i}+e_{c,i+1} \quad & j=i+1\\
  e_{c,j} \quad & \text{else.}
\end{cases}
\end{align}
Note that the inverse transformation is given by
$$
\sigma^{-1}_i(e_{c,j}) = \begin{cases}
e_{c,i-1}+e_{c,i}  \quad & j=i-1\\
-e_{c,i}- \dot e_{v_i} \quad & j=i\\
e_{c,i}+e_{c,i+1} + \dot e_{v_i} \quad & j=i+1\\
  e_{c,j} \quad & \text{else,}
\end{cases}
$$
and we have
$$
\sigma^{2}_i(e_{c,j}) = e_{c,j} - (\alpha_i,\alpha_j)\dot{e}_{v_i}.
$$
Similarly, for the `oppositely oriented' frozen basis vectors $\{\overline{e}_{c,i}\}$ from~\eqref{eq:bar-e-ci} we have
\begin{align}
\label{eq:braid-ebar}
\sigma_i(\overline{e}_{c,j}) = \begin{cases}
\overline{e}_{c,i-1}+\overline{e}_{c,i}  \quad & j=i-1\\
-\overline{e}_{c,i}- \dot e_{v_i} \quad & j=i\\
\overline{e}_{c,i}+\overline{e}_{c,i+1}+ \dot e_{v_i} \quad & j=i+1\\
  \overline{e}_{c,j} \quad & \text{else.}
\end{cases}
\end{align}

\subsection{Dehn twists.}

Let $c$ be a simple closed curve on a marked surface $S$. As usual we write $S'$ for the result of cutting $S$ along $c$, and $\phi \colon c_+ \to c_-$ for the homeomorphism defining the gluing $S'$ of back into $S$.

Now suppose that there exists a $c$-isolating ideal triangulation of $S$, and let $Q_{\tri;c}$ be the corresponding $c$-isolating cluster as defiend in the previous section. We write $Q_{\tri';c}$ for the the $c_\pm$-isolating cluster subordinate to the ideal triangulation $\tri'$ obtained by applying the cutting functor $\mathcal{C}_c$ to $\tri$ .

%both tacked circles $c_\pm$, see Figure~\ref{fig:local-gluing-surfaces}. Then we have
%$$
%\Phi_c(Q_{\tri}) = Q_{\tri;c} \qquad\text{and}\qquad \Phi_\phi(Q_{\tri'}) = Q_{\tri';c_\pm} = Q_{\tri_+;c_+} \sqcup Q_{\tri_-;c_-},
%$$
%where $\Phi_\phi = \Phi_{c_-} \circ \Phi_{c_+}$ is a composition of commuting quiver transformations $\Phi_{c_\pm}$.
%

%We write $\Lambda_{S_{\geq0}}$ for the sublattice in $\Lambda_{\tri;c}$ spanned by the basis vectors $\hc{e_{s_j}, e_{t_j} \,|\, 1 \le j \le n}$ together with the 
%
%\blue{/////////////}
%
%
%Consider the \emph{Toda quiver} $Q_{\mathrm{Toda}}^n \simeq Q_{\overline{c}, c}^n$ with $2n$ vertices $\hc{s_j, t_j \,|\, 1 \le j \le n}$, and arrows representing the pairing
%$$
%(e_{s_i},e_{t_j}) = C_{ij},
%$$
%where $C$ is the Cartan matrix of type $A_n$. We will also use its augmented version $\widehat Q_{\mathrm{Toda}}^n$, where we add 2 additional vertices $v_\pm$ along with arrows $t_1 \to v_- \to s_1$ and $t_n \to v_+ \to s_n$, see Figure~\ref{fig:Q-Toda} for the $n=4$ case. Note that $\widehat Q_{\mathrm{Toda}}^n$ is isomorphic to the subquiver formed by the middle rows of $Q_{\tri;c}^n$.
%

\begin{prop}
The following diagram in the cluster modular groupoid is commutative:
$$
\begin{tikzcd}
Q_{\tri} \arrow{r}{\Qc(\tau_c)} \arrow[swap]{d}{\Phi_c} & Q_{\tri} \arrow{d}{\Phi_c} \\
Q_{\tri;c} \arrow[swap]{r}{\bs\mu_c} & Q_{\tri;c}
\end{tikzcd}
$$
Here $\Qc(\tau_c)$ denotes the image of the Dehn twist $\tau_c$ under the functor~\eqref{eq:Pt-to-Cl}, $\Phi_c$ is the cluster transformation denoted by the same symbol as that of the corresponding quiver, and the cluster transformation
\beq
\label{eq:short-Dehn}
\bs\mu_c = \prod_{j=1}^{n}(s_j,t_j)\circ\prod_{j=1}^{n}\mu_{s_j},
\eeq
consists of $n$ commuting mutations at the vertices $s_1,\ldots,s_n$ of $Q_{\tri,c}$, followed by $n$ commuting transpositions $(s_j,t_j)$.
\end{prop}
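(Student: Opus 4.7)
The plan is to verify the commutative square by realizing both compositions as explicit sequences of cluster transformations and quiver permutations, and then invoking Corollary~\ref{cor:transform-equal} together with Theorem~\ref{trop-criterion} to reduce the equality to a tropical calculation supported entirely within the isolating cylinder.

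First I would represent the Dehn twist $\tau_c$ as a concrete morphism in the enhanced Ptolemy groupoid. Since $\tri$ is $c$-isolating, $\tau_c$ is supported in the isolating cylinder $C$, and a direct analysis of umbral moves (Definition~\ref{def:umbral}) shows that the full Dehn twist is realized by a composition of two umbral moves at the two shadow edges of $c$ together with the mapping-class permutation bringing the resulting triangulation back to $\tri$. Applying the functor $\Qc$ of~\eqref{eq:Pt-to-Cl} produces an explicit factorization of $\Qc(\tau_c)$ as six flips within $Q_{\mathrm{cyl}}\subset Q_\tri$ composed with a quiver permutation.

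Next I would verify that the stated $\bs\mu_c$ is well-defined as an element of $\Aut(Q_{\tri;c})$. From the local structure of $Q_{\tri;c}$ described in Notation~\ref{not:LambdaS-split} and Figure~\ref{fig:Q-Toda}, the vertices $s_1,\dots,s_n$ are pairwise non-adjacent, so the mutations $\mu_{s_j}$ pairwise commute. After these mutations the type-$A_n$ Cartan-like pairing $(e_{s_i},e_{t_j})=\mathfrak{A}_{ij}$ together with the symmetry of the adjacent frozen data around $h_\pm$ imply that the transposition $\prod_j (s_j,t_j)$ is a quiver automorphism of $\prod_j\mu_{s_j}(Q_{\tri;c})$ which returns us to $Q_{\tri;c}$. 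Hence $\bs\mu_c$ defines a quasi-cluster automorphism of $Q_{\tri;c}$.

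To establish the claimed equality $\bs\mu_c\circ\Phi_c = \Phi_c\circ\Qc(\tau_c)$, I would observe that both sides are compositions of square moves, shifts, and quasi-permutations realizing the common graph transformation $\Gamma_\tri \rightsquigarrow \Gamma_{\tau_c(\tri)}$ between identical objects of the cluster groupoid. By Corollary~\ref{cor:transform-equal} it is thus enough to check that both compositions induce the \emph{same} bijection on label sets and the same classical birational transformation of cluster tori; by Theorem~\ref{trop-criterion} this in turn reduces to the tropical level. The factorizations $\Phi_1=\Phi_1^{(n-1)}\cdots\Phi_1^{(1)}$ and $\Phi_2=\Phi_2^{(n-1)}\cdots\Phi_2^{(1)}$ are organized precisely so that each ``wave'' is a chain of mutations to which Lemma~\ref{tropical-chain} applies, reducing the verification to a finite combinatorial computation concentrated on the vertices $s_i, t_i, h_\pm$ and their immediate neighbors; outside the isolating cylinder both sides act trivially.

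The principal obstacle will be the bookkeeping for this tropical verification, since $\Phi_c$ involves $O(n^2)$ mutations interleaved with the six flips implementing $\tau_c$. I expect the cleanest route is to exploit the wave decomposition of $\Phi_c$ to peel off one column/row at a time, reducing inductively in $n$ to the base case where the identity is transparent, with Lemma~\ref{tropical-chain} controlling the propagation of tropical signs through each wave.
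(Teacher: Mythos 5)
Your plan conflates two distinct proof strategies that the paper offers in parallel, and in doing so slightly misuses the key corollary.

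The paper's short proof works as follows. It first notes that $Q_{\tri;c} = Q_{\Gamma_{\tri;c}}$ for an explicit ideal bicolored graph $\Gamma_{\tri;c}$ (Figure~\ref{fig:graph-candy}): this is the nontrivial observation, and it follows from checking that every mutation in $\Phi_c = \Phi_2\circ\Phi_1$ occurs at a $4$-valent mutable vertex and can be interpreted as an admissible square move on bicolored graphs. With that established, \emph{both} composites in the square are realized as chains of square moves and graph permutations running from $\Gamma_\tri$ to $\Gamma_{\tri;c}$, and one simply checks by hand that the induced bijections on face sets coincide. Corollary~\ref{cor:transform-equal} then immediately gives equality of the cluster transformations; no tropical computation is required, since the corollary internally rests on the fact that the face functions $A_\ell$ are determined by the bicolored graph together with its embedding up to isotopy.

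Your proposal, by contrast, invokes Corollary~\ref{cor:transform-equal} and then says one still needs to ``check that both compositions induce the same\ldots classical birational transformation of cluster tori,'' reducing this to a tropical computation via Theorem~\ref{trop-criterion}. This misreads the corollary: equality of the target graph and of the composite bijection on label sets is the \emph{hypothesis}, while equality of the quantum (hence classical, hence tropical) transformations is its \emph{conclusion}. Stacking a tropical verification on top is redundant; and if you skip the corollary and just do the tropical computation directly (which is the paper's alternate ``straightforward but computational'' route), you then cannot appeal to Lemma~\ref{tropical-chain} without first tracking how the intermediate quivers deviate from the chain hypothesis across the waves of $\Phi_1,\Phi_2$, which is a nontrivial bookkeeping problem you have only sketched. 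Your proposal also does not establish the one fact that makes the clean argument go: that all of $\Phi_c$ is realized by square moves on bicolored graphs, so that $Q_{\tri;c}$ is itself of the form $Q_\Gamma$. Without that, $Q_{\tri;c}$ has no \emph{a priori} attached bicolored graph and Corollary~\ref{cor:transform-equal} is inapplicable. Finally, the claimed factorization of $\tau_c$ as two umbral moves followed by a permutation is asserted without justification and is not actually used in the paper's argument; what is used is the functorial definition $\Qc(\tau_c)=\varsigma_{\tau_c}\circ F_{\tau_c^{-1}(\tri),\tri}$, where any sequence of flips realizing $F_{\tau_c^{-1}(\tri),\tri}$ suffices.
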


\begin{proof}
Using Theorem~\ref{trop-criterion}, the proposition can be proved directly by a straightforward comparison of tropical cluster transformations. Alternatively, we can give a less computational proof using Corollary~\ref{cor:transform-equal}. Indeed, we have $Q_{\tri} = Q_{\Gamma_{\tri}}$, and it is simple to check that $Q_{\tri;c} = Q_{\Gamma_{\tri;c}}$, where for $n=4$ the bicolored graph $\Gamma_{\tri;c}$ is shown on Figure~\ref{fig:graph-candy}. Since all mutations in the transformation $\Phi_c$ are 4-valent and can be interpreted as admissible face mutations, by Corollary~\ref{cor:transform-equal} the Dehn twist $\tau_c$ is realized in $Q_{\tri;c}$ by a sequence of face mutations turning the graph $\Gamma_{\tri;c}$ into $\tau_c^{-1}(\Gamma_{\tri;c})$, followed by a permutation of nodes turning $\tau_c^{-1}(\Gamma_{\tri;c})$ back into $\Gamma_{\tri;c}$. The latter coincides with the cluster transformation $\bs\mu_c$, so the proposition follows.
\end{proof}

\begin{figure}[h]
\subfile{fig-graph-candy}
\caption{Bicolored graph $\Gamma_{\tri;c}$.}
\label{fig:graph-candy}
\end{figure}

Let us now look at the corresponding picture on the cut surface $S'$. As can be easily seen from the bicolored graph $\Gamma_{\tri_+;c_+}$ on Figure~\ref{fig:graph-mitre}, the Dehn twist $\tau_{c_\pm}$ is realized in the quiver $Q_{\tri';c_\pm}$ via a single shift at $c_\pm$ in the sense of Definition~\ref{defn:shift}: we have 
$$
\tau_{c_\pm} = \sigma_{c_\pm}.
$$
The cluster transformation for the shift $\sigma_{c_\pm}$ is expressed via the quasi-permutation $\varsigma_{c_\pm}$ described in Lemma~\ref{eq:sigma-pm}, which in $Q_{\tri';c_\pm}$ simplifies to
\begin{align}
\label{eq:cutdehn}
\tau_{c_\pm}=\varsigma_{c_\pm}(e_\ell) =
\begin{cases}
e'_{\iota(\ell)} - \dot e'_{\iota(v_i^\pm)}, &\text{if} \;\; \ell = (c_\pm,i), \\
e'_{\iota(\ell)}, &\text{otherwise,}
\end{cases}
\end{align}
where $\iota$ is the natural bijection between the set of faces of the bicolored graph
$$
\Gamma_{\tri';c_\pm} = \Gamma_{\tri_+;c_+} \sqcup \Gamma_{\tri_-;c_-}
$$
and that of its image under the shift $\sigma_{c_\pm}$.

\subsection{Fundamental Hamiltonians of an isolatable simple closed curve.}
% Comparing formulas~\eqref{eq:gluedehn} and~\eqref{eq:cutdehn},  we see that the desired gluing isomorphism $\eta:\mathcal{H}_{S;\mu_\pm}\rightarrow\mathcal{H}_{S',c_\pm;\mu_\pm}$ must diagonalize the operator $\tau$ in~\eqref{eq:tau-def}. 
% We are now ready to construct the gluing isomorphism
% $\eta:\mathcal{H}_{S}\rightarrow\mathcal{H}_{S'_c}$ 
% in the case that the simple closed curve $c$ is isolatable.
% The isomorphism $\eta$ will be obtained by {diagonalizing} the Dehn twist $D_c$, which in a special cluster coordinate system $\bi_{S;(\tri;c)}$ is realized by the operator $\tau$ in~\eqref{eq:tau-def}. In \cite{SS18} it was shown that such a diagonalization is achieved by the \emph{Whittaker transform} for the modular double $U_{q,\tilde q}(\mathfrak{sl}_{n+1},\mathbb{R})$, which we will now review. 

We now use the quiver $Q_{\tri;c}$ to describe a commutative subalgebra of the universal Laurent ring algebra  $\Lbb^{\Ac}_{\tri;c}$, invariant under the action of the Dehn twist $\tau_c$. This subalgebra is generated by certain elements $H_1(c), \ldots, H_{n}(c)$, which we call the \emph{fundamental Hamiltonians} associated to an orientation of the simple closed curve $c$.

% , which act trivially on the factor $\mathcal{H}_{\Lambda_{\geq0;\mu_\pm}}$ in~\eqref{eq:S-amalg}. 
% To construct these elements, let us first choose an ideal triangulation $\tri$ containing an isolating cylinder for $c$, and then pass from the cluster associated to this triangulation to the special cluster coordinate system $\bi_{S;(\tri;c)}=\Phi_2\circ\Phi_1(\bi_{S,\tri})$ described in the previous section.

%For the remainder of this section we abbreviate $Q_{\tri;c}^n$ to $Q_n$. 
To construct the fundamental Hamiltonians we consider two augmentations of the quiver $Q_{\tri;c}$. The first of these, which we denote by $\widehat{Q}_{\tri;c}$, is the quiver obtained from $Q_{\tri;c}$ by adding a mutable vertex $a$ together with a frozen vertex $A$, and arrows $t_1 \to a \to s_1$ and $a \to A$.

%, where $s_1,t_1$ are nodes of the subquiver $\widehat Q_{\mathrm{Toda}}^n \subset Q_n$, an arrow $a \to A$, and denote the result by $\widehat Q_n$. 

For the second augmentation, which denote by $\widehat{Q}_{\tri;c}'$, we add a mutable vertex $b$ and a frozen vertex $B$, with arrows $t_n \to b \to s_n$, and an arrow $B \to b$.
%, and denote the result by  

We illustrate the subquivers of $\widehat{Q}_{\tri;c}$ and $\widehat{Q}_{\tri;c}'$ containing the new nodes along with those of $Q_{S_{\geq0}}$ on the left and the right panes of Figure~\ref{fig:aug-quiv} respectively. We have a non-orthogonal direct sum decomposition
\beq
\label{eq:Baxter-lattice-decomp}
\La_{\widehat{Q}_{\tri;c}'} \simeq \La_{Q_{\tri;c}} \oplus \Z\ha{e_b,e_B}.
\eeq
Although we do not extend $\widehat{Q}_{\tri;c}'$ to a compatible pair, the compatible pair $(\Lambda_{\tri;c},\Xi_{\tri;c})$ induces a skew form on the lattice $\Xi_{{Q}_{\tri;c}} \oplus \Z\ha{e_b,e_B}$.
There is a non-isometric isomorphism of lattices
\beq
\label{eq:Baxter-lattice-iso}
\Xi_{{Q}_{\tri;c}} \oplus \Z\ha{e_b,e_B} \simeq \Xi_{{Q}_{\tri;c}} \oplus \Z\ha{e_u,e_B},
\eeq
where
$$
e_u = e_b + \xi_{s_n} - \xi_{t_n}.
$$
Using~\eqref{eq:xi-sn-tn} we see that the direct summands in the right hand side are orthogonal with respect to the form $(\cdot,\cdot)_{\widehat{Q}_{\tri;c}'}$ and that $(e_B,e_u)=1$.

\begin{figure}[h]
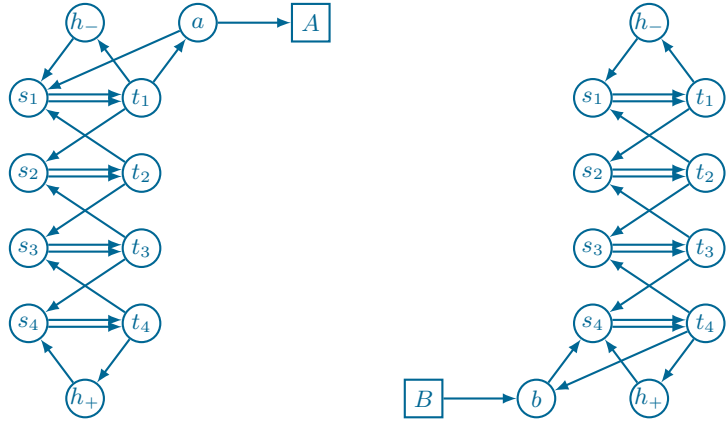

\subfile{fig-Baxter.tex}
\caption{Subquiver of $\widehat{Q}_{\tri;c}$ (on the left) and that of $\widehat{Q}_{\tri;c}'$ (on the right) for $n=4$.}
\label{fig:aug-quiv}
\end{figure}

Now consider the cluster transformation $\mu_{\mathrm{Baxter}}$ defined as the composition of the following $2n+1$ mutations starting from the quiver $\widehat{Q}_{\tri;c}$: 
\beq
\label{eq:baxseq}
\mu_{\mathrm{Baxter}} = \mu_{s_n}\mu_{t_n}\ldots \mu_{s_2}\mu_{t_2}\mu_{s_1}\mu_{t_1}\mu_a.
\eeq
If $I, I'$ are the label sets of the quivers $\widehat{Q}_{\tri;c}, \widehat{Q}_{\tri;c}'$, then there is a permutation morphism  $\iota \colon I \to I'$ between  the  quivers $\mu_{\mathrm{Baxter}}(\widehat{Q}_{\tri;c})$ and $\widehat{Q}_{\tri;c}'$ given by
\begin{align}
\label{eq:longseq}
a \longmapsto t_1 \longmapsto s_1 \longmapsto t_2 \longmapsto s_2 \longmapsto \ldots \longmapsto t_n \longmapsto s_n \longmapsto b,
\qquad
A \longmapsto B,
\end{align}
and identity on all other labels. We set $\widetilde\mu_{\mathrm{Baxter}} = \iota \circ \mu_{\mathrm{Baxter}}$, and denote the corresponding quasi-cluster transformation by the same symbol.

As is clear from inspecting the quiver, we have $(e_A,e_\ell)\leq0 $ for all mutable directions $\ell$ in $\widehat{Q}_{\tri;c}$. It follows that the quantum cluster $\Xc$-variable $Y_A = Y_{e_A}$ remains Laurent under all 1-step mutations and thus lies in the universal Laurent ring $\Lbb_{\widehat{Q}_{\tri;c}}$. Hence $\widetilde\mu_{\mathrm{Baxter}}(Y_A) \in \Lbb_{\widehat{Q}_{\tri;c}'}$. On the other hand, by the decomposition~\eqref{eq:Baxter-lattice-decomp} and the isomorphism~\eqref{eq:Baxter-lattice-iso}, we have an embedding of algebras
$$
\Lbb_{\widehat{Q}_{\tri;c}'} \subset \Lbb_{{Q}_{\tri;c}}^\Ac \otimes \Z[q^{\pm1}]\ha{Y_u^{\pm1},Y_B^{\pm1}},
$$
where $Y_uY_B = q^2Y_BY_u$.
%The tensor product in the right hand side is naturally a $\Lbb_{Q_n}^\Ac$-bimodule under left and right multiplication. Since any sequence of mutations in the cluster algebra associated to $\mathcal{Q}$ is given by conjugation by an element of (a completion of) $\mathcal{T}_{\mathcal{Q}}$, we conclude that the  `coefficients' of $\mu_{\mathrm{Baxter}}(X_A)\in \Lbb_{\widehat{\mathcal{Q}'}}$ are elements of the universal Laurent ring $\Lbb_{\mathcal{Q}}$. 
Since the two tensor factors commute, the coefficients of $\widetilde\mu_{\mathrm{Baxter}}(Y_A)$, considered as a Laurent polynomial in $\hc{Y_u, Y_B}$, are elements of the upper cluster algebra $\Lbb_{\tri;c}$. 
% \red{[State as Proposition, along with fact that they quantize traces of holonomy?]}

The mutation sequence $\mu_{\mathrm{Baxter}}$ and its generalizations are studied in~\cite{SS18,SS19b} under the name of the \emph{Baxter operator}. In particular, it follows from Proposition~\ref{prop:comm-bax} recalled in Section~\ref{sec:alg-whit} that  $\widetilde\mu_{\mathrm{Baxter}}(Y_A)$ can be written as
$$
\widetilde\mu_{\mathrm{Baxter}}(Y_A) = \sum_{k=0}^{n+1} H_k(\tri;c) Y_{e_B+ke_u} \in \Lbb_{Q_{\tri;c}}^\Ac \otimes \Z[q^{\pm1}]\ha{Y_u^{\pm1},Y_B^{\pm1}},
$$
with $H_k(c) = H_k(\tri;c) \in \Lbb_{Q_{\tri;c}}^\Ac$.

\begin{defn}
\label{def:fund-hamiltonians}
The \emph{fundamental quantum Hamiltonians} associated to the simple closed curve $c$ are defined to be the coefficients $H_k(c)\in\Lbb^\Ac_{\tri;c}$ of $Y_{e_B+ke_u}$ of the polynomial $\widetilde\mu_{\mathrm{Baxter}}(Y_A)$.
%\begin{align}
%\label{eq:Bax-toda-def}
%\mu_{\mathrm{Baxter}}(X_A) =: \sum_{k=0}^{n+1} H_k(c) Y_{e_B+ke_u} \in \Lbb^q_{SL_{n+1},S}\otimes \mathbb{Z}[X_u^{\pm1}]\otimes \mathbb{Z}[X^{\pm1}_{A'}].
%\end{align}
\end{defn}

\begin{example}
For $G=SL_{2}$, the only nontrivial fundamental Hamiltonian is given by
\begin{align}
\label{eq:sl2-fund-ham}
\nonumber H_1(c) &= Y_{\xi_{t_1}-\xi_{s_1}} + Y_{\xi_{t_1}-\xi_{s_1}+e_{s_1}} + Y_{\xi_{t_1}-\xi_{s_1}+e_{s_1}+e_{t_1}}\\
&=Y_{\xi_{t_1}-\xi_{s_1}} + Y_{\xi_{h_+}+\xi_{h_-}-\xi_{s_1}-\xi_{t_1}} + Y_{\xi_{s_1}-\xi_{t_1}}.
\end{align}
\end{example}

The next Lemma is a consequence of Proposition~\ref{prop:comm-bax} in Section~\ref{sec:alg-whit}.

\begin{lemma}
The fundamental Hamiltonians $H_k(c)$ generate a commutative subalgebra in $\Lbb^\Ac_{\tri;c}$.
\end{lemma}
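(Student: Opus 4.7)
The plan is to interpret the generating polynomial
$$
Q_c(u) \;\eqdef\; \widetilde\mu_{\mathrm{Baxter}}(Y_A) \;=\; \sum_{k=0}^{n+1} H_k(c)\, Y_{e_B+k e_u}
$$
as a Baxter $Q$-operator depending on a (quantum) spectral parameter encoded by the commuting pair $(Y_u,Y_B)$, and to prove the stronger statement that $[Q_c(u),Q_c(u')]=0$ for two independent such parameters. Since the coefficients $H_k(c)$ live in the tensor factor $\Lbb^\Ac_{\tri;c}$ which commutes with the spectral $Y$-monomials, the commutativity of all Hamiltonians will follow by comparing coefficients of linearly independent monomials in $Y_u, Y_{u'}, Y_B, Y_{B'}$.

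First, I would enlarge $\widehat{Q}_{\tri;c}$ to a doubly augmented quiver $\widehat{Q}^{(2)}_{\tri;c}$ by attaching a second, independent Baxter tail consisting of a mutable vertex $a'$ and a frozen vertex $A'$, with arrows $t_1\to a'\to s_1$ and $a'\to A'$, exactly parallel to the pattern for $(a,A)$. As with $Y_A$, the monomial $Y_{A'}$ lies in $\Lbb^{\Xc}_{\widehat{Q}^{(2)}_{\tri;c}}$ because $A'$ is frozen with only an incoming arrow from a single mutable vertex. The mutations $\mu_a$ and $\mu_{a'}$ commute since $a,a'$ are non-adjacent mutable vertices sharing the same neighbors $s_1,t_1$.

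Second, I would establish the key factorization identity: applying the Baxter sequence~\eqref{eq:baxseq} first with respect to $(a,A)$ and then with respect to $(a',A')$ (where in the second pass one again runs through $\mu_{s_n}\mu_{t_n}\cdots\mu_{s_1}\mu_{t_1}\mu_{a'}$, acting on the already mutated quiver) produces an element that, after suitable quasi-permutation, equals the ordered product $Q_c(u)Q_c(u')$ inside
$\Lbb^\Ac_{\tri;c}\otimes\Z[q^{\pm 1}]\langle Y_u^{\pm1},Y_B^{\pm1},Y_{u'}^{\pm1},Y_{B'}^{\pm1}\rangle$. The crucial step is then a symmetry argument: since the construction is manifestly invariant under the involution swapping $(a,A)\leftrightarrow (a',A')$, the reverse composition yields $Q_c(u')Q_c(u)$, and the two results must agree as elements of the universal Laurent ring provided the two mutation sequences are conjugate in the cluster modular groupoid. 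By Theorem~\ref{trop-criterion}, this last equality can be reduced to a verification at the tropical level, which is straightforward since the Baxter tails interact with the ``body'' $Q_{\tri;c}$ in the same way and commute with each other at the tropical level.

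The main obstacle I anticipate is the bookkeeping required for the factorization in the second step. One must track carefully how the mutations $\mu_{s_i},\mu_{t_i}$, when re-run on the quiver already modified by the first Baxter sequence, interact with the residual vertex $a'$; the expected clean statement is that, up to a quasi-permutation, the second sequence is conjugate to an independent copy of $\mu_{\mathrm{Baxter}}$ acting only on the $(a',A')$-tail. Once that structural statement is in hand, the commutativity of coefficients is formal: expanding
$$
\sum_{k,l} H_k(c)H_l(c)\, Y_{e_B+ke_u}Y_{e_{B'}+le_{u'}} \;=\; \sum_{k,l} H_l(c)H_k(c)\, Y_{e_{B'}+le_{u'}}Y_{e_B+ke_u}
$$
and using that the spectral monomials on the two sides, once reordered, are linearly independent over $\Lbb^\Ac_{\tri;c}$, forces $[H_k(c),H_l(c)]=0$ for all $k,l$, which is the claim.
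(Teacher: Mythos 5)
Your overall strategy—prove that two copies of the Baxter operator with independent spectral parameters commute, then extract $[H_k,H_l]=0$ by comparing coefficients of linearly independent spectral monomials—is conceptually the same as the paper's. The paper, however, does not work at the level of cluster transformations: it invokes Proposition~\ref{prop:comm-bax}, proved by explicit quantum dilogarithm manipulations (pentagon identities) in a completed quantum torus, which gives $\Qf^{\eps_1}(z)\Qf^{\eps_2}(w)=\Qf^{\eps_2}(w)\Qf^{\eps_1}(z)$ directly; from the difference equation~\eqref{eq:alg-bax-qde} or the product formula~\eqref{eq:productformula} the $H_k$ are then recovered as coefficients and the commutativity is formal.

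Your alternative—attaching a second tail $(a',A')$ and realizing both Baxter operators inside a single doubly-augmented quiver—has a genuine gap, and it sits precisely where you flag ``the main obstacle.'' The factorization $\widetilde\mu_{\mathrm{2\text{-}Baxter}}(Y_A Y_{A'})=Q_c(u)Q_c(u')$ is not something you can wave away: after the first Baxter pass the Toda vertices $s_j,t_j$ have been reshuffled (the permutation~\eqref{eq:longseq} moves $a\mapsto t_1\mapsto s_1\mapsto\cdots\mapsto s_n\mapsto b$), and the second pass therefore acts on a modified quiver in which $a'$ has acquired new adjacencies. In particular $\widetilde\mu_{\mathrm{Baxter}}'(Q_c(u))\ne Q_c(u)$, since the second sequence mutates the very Toda variables on which the $H_k$ depend, so the factorization is not ``apply each tail to its own $Y$.'' Establishing it would require essentially the same pentagon-identity bookkeeping as the paper's direct proof of Proposition~\ref{prop:comm-bax} (compare the bi-fundamental Baxter machinery of Section~\ref{subsec:bi-baxter}).

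There is also a logical issue with the final step: you claim to reduce the equality $Q_c(u)Q_c(u')=Q_c(u')Q_c(u)$ to a tropical verification that the two orderings of the double-Baxter define the same cluster transformation. But equality of cluster transformations $\mu_1=\mu_2$ only gives $\mu_1(X)=\mu_2(X)$ for each $X$; it does not by itself produce a commutator identity among elements unless you already know the two sides are images of commuting inputs. In fact, once the factorization identity is in hand, the tropical check becomes superfluous: $Y_A$ and $Y_{A'}$ are two non-adjacent frozen variables, so $Y_AY_{A'}=Y_{A'}Y_A$ trivially, and a single cluster transformation applied to this relation already yields $Q_c(u)Q_c(u')=Q_c(u')Q_c(u)$. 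So the entire weight of your argument falls on the factorization, which is exactly the piece you have not supplied.
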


In Section~\ref{sec:3-to-3} we will prove the following theorem justifying the claim, implicit in the notation, that the $H_k(c)$ depend only on the curve $c$ and not on the choice of ideal triangulation used in the definition above.

\begin{theorem}
\label{thm:monodromy-well-defined}
The elements $H_k(c)$ are independent of the choice of isolating triangulation for $c$: if $\tri,\tri'$ are two triangulations containing an isolating cylinder for $c$ and 
$$
\mu \colon Q_{\tri;c} \longra Q_{\tri';c}
$$
is the cluster transformation between the corresponding special coordinate systems, then we have
$$
\mu(H_k(\tri;c))=H_k(\tri';c).
$$
\end{theorem}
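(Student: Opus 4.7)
The natural first step is to invoke Lemma~\ref{lem:re-iso}, which factors any morphism in the Ptolemy groupoid $\Pt_{|c|}(S)$ as a composition of (i) flips preserving the isolating cylinder $C$ and (ii) umbral moves $U_{C;e_0}$. It therefore suffices to check invariance of each fundamental Hamiltonian $H_k(c)$ under a cluster transformation induced by each of these two elementary moves.

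Case (i) is straightforward. A flip $F_e$ with $e \not\in \partial C$ passes through the reduction $\Phi_c$ as a mutation at a vertex of $Q_{\tri;c}$ located outside the distinguished subquiver on $\{a, A, s_i, t_i, h_\pm\}$ used to construct $\widetilde\mu_{\mathrm{Baxter}}$. Since mutations at vertices not connected by an arrow commute and act trivially on each other's cluster variables, the induced transformation commutes with $\widetilde\mu_{\mathrm{Baxter}}$ and therefore fixes every coefficient in the expansion of $\widetilde\mu_{\mathrm{Baxter}}(Y_A)$ as a Laurent polynomial in $Y_u,Y_B$; these coefficients are precisely the $H_k(c)$.

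Case (ii) is more delicate. Here the strategy I would pursue is to combine two observations. First, the remark that $U^{op}_{C;e_0}(\tri) = \tau_c(U_{C;e_0}(\tri))$ together with the shortened Dehn twist formula~\eqref{eq:short-Dehn} lets us transfer invariance under $U_{C;e_0}$ to invariance under $U^{op}_{C;e_0}$ modulo the already-established invariance under $\tau_c$ (which holds because $\bs\mu_c$ manifestly commutes with $\widetilde\mu_{\mathrm{Baxter}}$: it mutates only at $s_j$ and permutes with $t_j$, both of which are in the Baxter subquiver, and the Baxter output is invariant under this involution). Second, in the isolating chart $Q_{\tri;c}$ the three flips $F_{e_2}F_{e_1}F_{e_0}$ of the umbral move translate, via $\Phi_c$, into a cluster transformation localized in (a controlled extension of) the Baxter subquiver. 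I would then verify the identity $\mu_U(H_k(\tri;c))=H_k(U(\tri);c)$ by applying Theorem~\ref{trop-criterion}, reducing the assertion to the tropical level where the transformation~\eqref{eq:mu-hat-factor} is manageable, and use the Baxter commutativity from Proposition~\ref{prop:comm-bax} to extract coefficients.

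The main obstacle is the explicit combinatorial bookkeeping in step~(ii): the transformation $\Phi_c = \Phi_2 \circ \Phi_1$ is a composition of $O(n^2)$ mutations, and directly tracking $F_{e_2}F_{e_1}F_{e_0}$ through this sequence is unwieldy. The cleanest resolution I would seek is a structural identification of the $H_k(c)$ with the images of curve functions under the quantum trace map: if $H_k(c) = \chi^q_{\Lambda^k V}(c)$ for the $k$-th fundamental representation of $SL_{n+1}$, the independence statement becomes tautological, since such quantum traces are intrinsically attached to the homotopy class of $c$ and the cluster chart plays only an auxiliary role. Absent this, the proof must rely on direct tropical verification, which, while elementary, is what presumably occupies Section~\ref{sec:3-to-3}.
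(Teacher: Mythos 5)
Your reduction via Lemma~\ref{lem:re-iso} (factor into flips preserving the cylinder, plus umbral moves) is exactly what the paper uses, and you correctly identify that the substance lives in the umbral-move case and in Section~\ref{sec:3-to-3}. Where the proposal diverges is in the mechanism for that case: the paper does not verify $\mu_U(H_k(\tri;c))=H_k(U(\tri);c)$ by tracking the umbral move through $\Phi_c$ on $S$, whether tropically or otherwise. Instead it proves Theorem~\ref{thm:alg-MF-coherence}, which asserts that the cutting isomorphism $\eta_{c,\tri}$ intertwines the cluster transformation realizing the umbral move on $S$ with its counterpart on the cut surface $S'$; the independence of $H_k(c)$ is then extracted \emph{after the fact} from the fact that $\eta_{c,\tri}(H_k(c;\tri))=\chi_{\omega_k}(c)$ is a Laurent polynomial in \emph{frozen} $\Ac$-variables on $S'$, hence tautologically invariant under every mutation on $S'$. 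This transfer-to-$S'$ step is the structural idea your proof plan is missing; without it there is no clean way to see that the output of the Baxter construction is canonical.

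Concretely, the paper's proof of the umbral-move intertwining rests on Proposition~\ref{prop:double-bax-inter}: the local gluing isomorphisms intertwine the double Baxter mutation sequence $\bs\mu_{n,m}$ with a product of single Baxter sequences on the cut side, proved by diagonalizing bi-fundamental Baxter operators on the Whittaker kernel (Proposition~\ref{prop:whit-bibax}). This is an algebra-level identity in a faithful representation, not merely a tropical identity, so a ``reduce to tropical'' fallback is not sufficient. Tropicalization \emph{is} used in the paper, but only to establish the factorization of the refined umbral move in Proposition~\ref{prop:umbral-factorization}, which is a prerequisite for, not a substitute for, the Whittaker-transform argument.

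Two smaller points. First, your argument that $\bs\mu_c$ ``manifestly commutes'' with $\widetilde\mu_{\mathrm{Baxter}}$ is imprecise; these are distinct mutation sequences involving overlapping vertices. The correct ingredient is Proposition~\ref{prop:comm-bax}(2), which says the automorphism $\tau$ from~\eqref{eq:tau} fixes the Baxter operator $\Qf^{\pm}(z)$, hence each $H_k$. Second, for case~(i) your claim that the outside flip vertex is ``not connected by an arrow'' to the Toda subquiver is not literally true: $h_\pm$ are connected to $I_S^{<0}$, and the $H_k$ do involve $\xi_{h_\pm}$. The reason case~(i) works in the paper is again locality of $\eta_c$~\eqref{eq:local-is-local}: the flip outside the cylinder realizes as the same sequence of mutations in $I_S^{<0}$ and $I_{S'}^{<0}$ on both sides, commuting with $\eta_c$. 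Your closing speculation that everything trivializes once $H_k(c)=\chi^q_{\Lambda^kV}(c)$ is indeed the authors' view (see the remark after the theorem), but that identification is established only in the companion paper and is not available here.
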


\begin{remark}
In a companion paper~\cite{SS25} we will give an alternative construction of the elements $H_k(c)$ which identifies them with $q$-deformations of the traces of the monodromy around $c$ in the $k$-th fundamental representation of $SL_{n+1}(\mathbb{C})$. This construction gives a separate proof of Theorem~\ref{thm:monodromy-well-defined}, also explained in~\cite{SS25}.
\end{remark}

\section{Algebraic Whittaker transform}
\label{sec:AlgWhit}
% We are now ready to construct the gluing isomorphism
% $\eta:\mathcal{H}_{S}\rightarrow\mathcal{H}_{S'_c}$ 
% in the case that the simple closed curve $c$ is isolatable.
% Recall from~\eqref{eq:S-amalg} that an isolating seed $\bi_{S;(\tri,c)}$ induces a factorization of Hilbert spaces $\mathcal{H}_{S} \simeq \mathcal{H}_{\Lambda_{\geq0}}\otimes \mathcal{H}_{\Lambda_{\leq0};0}$.
The topic of this section is the local version of the gluing isomorphism $\eta_c$ in Theorem~\ref{thm:main-intro}. 
This local picture amounts to an algebraic version of the spectral transform for the type-$A_n$ $q$-difference open Toda chain, which we will use to identify the commutative subalgebra of $\Lbb_{G,S}$ generated by the fundamental Hamiltonians $H_k(c)$ for an isolatable simple closed curve $c$ with the character ring of $G$. For convenience, we will prove the main results for the group $G=GL_{n+1}$, and then deduce from these all the needed statements for the semisimple groups $G=SL_{n+1}$ and $PGL_{n+1}$.
%as well as the natural transformation $\eta$ of Theorem~\ref{thm:long}.
%Both are based on the identification of the fundamental Hamiltonians $H_k(c)$ for an isolatable simple closed curve $c$ with the commuting Hamiltonians of the $U_{q,\tilde q}(\mathfrak{sl}_{n+1},\mathbb{R})$ Toda chain.  In this picture,   $\mathbb{W}_c$ and $\eta$ are respectively algebraic and analytic versions of a spectral transform which simultaneously diagonalizes the Toda Hamiltonanians and the Dehn twist $D_c$.
% The $U_{q,\tilde q}(\mathfrak{sl}_{n+1},\mathbb{R})$-Whittaker functions are joint eigenfunctions for
% The isomorphism $\eta$ will be obtained by simultaneously diagonalizing the Dehn twist $D_c$ and the fundamental Hamiltonians $H_k(c)$.
% % which in a special cluster coordinate system $\bi_{S;(\tri,c)}$ is realized by the operator $\tau$ in~\eqref{eq:tau-def}. 
% In \cite{SS18} it was shown that such a diagonalization is achieved by the \emph{Whittaker transform} for the modular double $U_{q,\tilde q}(\mathfrak{sl}_{n+1},\mathbb{R})$, which we will shortly review. We also discuss the algebraic avatar of this Whittaker transform, which identifies the universal Laurent ring of the Toda chain cluster algebra with the spherical part of the nil-double affine Hecke algebra of type $A_n$.

\subsection{The Toda chain cluster algebra}
\label{subsec:ltoda}
In this section we recall some known results about the $Q$-system cluster algebra associated to the group $GL_{n+1}$, which is also known as the $GL_{n+1}$ Toda chain cluster algebra. Let $\Prm \simeq \mathbb{Z}^{n+1}$ be the weight lattice for $GL_{n+1}$, with standard basis $\{\eps_i\}_{i=0}^{n}$ and basis of fundamental weights
$$
\omega_i = \sum_{r\leq i}\eps_i, \quad 1\leq i\leq n+1.
$$ 
We write $\langle\cdot,\cdot\rangle$ for the standard Euclidean form on $\Prm$ for which $\{\eps_i\}_{i=1}^{n+1}$ is an orthonormal basis, and
consider the double lattice 
$$
%\Lambda = \Lambda_{GL_{n+1}} =  \mathrm{span}\left\{p_\lambda,x_\lambda \right\}_{\lambda\in P} \simeq P\oplus P,
\La = \La_{GL_{n+1}} \simeq \Prm \oplus \Prm
$$
equipped with the skew-form
\begin{align}
\label{eq:double-form}
((\lambda_1,\lambda_2),(\mu_1,\mu_2))_{\Lambda} = \langle\lambda_1,\mu_2\rangle - \langle\lambda_2,\mu_1\rangle.
\end{align}
For $\la \in \Prm$ we set
$$
p_\la = (\la,0), \qquad x_\la = (0,\la)
$$
and write $P_\lambda$, $X_\lambda$ for the corresponding generators of the quantum torus $\mathcal{T}_{GL_{n+1}}$ associated to the double lattice. Then the nontrivial commutation relations in $\mathcal{T}_{GL_{n+1}}$ take the form
$$
P_\lambda X_{\mu} = q^{-2\langle\lambda,\mu\rangle}X_{\mu}P_\lambda, \quad \lambda,\mu\in \Prm.
$$
We will also make the abbreviations
$$
P_i = P_{\eps_i}, \quad X_i = X_{\eps_i} \quad\text{for}\quad i=1,\ldots, n+1.
$$
Consider the following pair of bases in $\Lambda$:
\begin{align}
\label{eq:toda-xi-basis}
% Old polarization for reference:
%\underline\xi = \{\xi_{s_i},\xi_{t_i}\}_{i=1}^{n+1}, \quad \xi_{s_{i}} = p_{\omega_i} + x_{\omega_i}, \quad \xi_{t_{i}} =  
\bs\xi &= \{\xi_{s_i},\xi_{t_i}\}_{i=1}^{n+1}, \qquad \xi_{s_{i}} = p_{\omega_i}+x_{\omega_i}, \qquad \xi_{t_{i}} =  x_{\omega_i}, \\
\label{eq:toda-e-basis}
% Old polarization for reference:
%\underline e = \{e_{s_i},e_{t_i}\}_{i=1}^{n+1}, \quad e_{s_{i}} = x_{\eps_{i}-\eps_{i-1}},\quad e_{t_i}= p_{\eps_{i-1}-\eps_{i}}+x_{\eps_{i-1}-\eps_{i}} \quad (\eps_{i+1}=0).
%%%% second oldest polarization for reference:
\bs e &= \{e_{s_i},e_{t_i}\}_{i=1}^{n+1}, \qquad e_{s_{i}} =  x_{-\alpha_i}, \quad e_{t_i} = p_{\alpha_i} + x_{\alpha_i},
\end{align}
where $\alpha_i = \eps_i-\eps_{i+1}$ is the $i$-th simple root, and $\alpha_{n+1} = x_{n+1}$. Then $(\bs e,\bs \xi)$ defines a compatible pair with $\Xi= \Lambda$, where the frozen directions are $I_0 = \{s_{n+1},t_{n+1}\}$.

\begin{remark}
\label{rmk:toda-embed}
%<<<<<<< Updated upstream
Note that the map sending the basis vectors $\{e_{s_i},e_{t_i}\}_{i=1}^n$ from the quiver $Q_{\tri;c}$ defined in Notation~\ref{not:LambdaS-split} to the correspondingly labelled ones in the lattice $\Lambda_{GL_{n+1}}$ is an isometry. In particular, we can use this embedding to regard $\mathcal{T}_{S_{>0}}$ as a subalgebra in the quantum torus for the $GL_{n+1}$ Toda chain.
%=======
%Note that the map sending the basis vectors $\{e_{s_i},e_{t_i}\}_{i=1}^n$ from the quiver $Q_{\tri;c}$ defined in Notation~\ref{not:LambdaS-split} to the correspondingly labelled ones in the lattice $\Lambda_{GL_{n+1}}$ is an isometry. In particular, we can use this embedding to regard $\mathcal{T}_{S>0}$ as a subalgebra in the quantum torus for the $GL_{n+1}$ Toda chain.
%>>>>>>> Stashed changes
\end{remark}

As in the case of $Q_{\tri;c}$, there is an element $\tau$ of the cluster modular group given by mutating in all directions $\{s_j\}_{j=1}^n$ and then applying the generalized permutation 
$$
t_{n+1}\mapsto s_{n+1}, \quad s_{n+1} \mapsto t_{n+1} +2s_{n+1}, \quad\text{and}\quad s_j \leftrightarrow t_j \quad\text{for all}\quad 1\le j\le n.
$$
By~\eqref{eq:Xmut-negative}, the quantum cluster transformation implementing its action on the universal Laurent ring can be expressed as
\begin{align}
\label{eq:tau}
\tau = \gamma\circ \mathrm{Ad}\left(\prod_{j=1}^n\Psi(X_{-\alpha_j})^{-1}\right), 
\end{align}
where $\gamma$ is the automorphism of $\mathcal{T}_{GL_{n+1}}$ defined on the generators by
$$
\gamma(P_i)=P_i, \quad \gamma(X_i) = qP_iX_i,\quad i=1,\ldots, n+1.
$$
%Later we will use the completed version $\widehat{\mathcal T}_{GL_{n+1}}$ of the quantum torus consisting of formal infinite sums $f = \sum_{\lambda,\mu \in \mathrm{supp}(f)} c_{\lambda,\mu}X_\lambda P_\mu$ where $c_{\lambda,\mu}\in\mathbb{Z}((q))$ which are bounded below in the sense that \red{need to fix this}
%$$
%\min\left\{\langle \rho,\lambda \rangle,-\langle \rho,\mu \rangle \right\}_{\lambda,\mu\in \mathrm{supp}(f)} >-\infty, \quad \rho = \sum_i\omega_i.
%$$

We write $\Lbb_{Toda}(GL_{n+1})$ for the universal Laurent ring corresponding to this compatible pair, and $\mathcal{A}_{Toda}(GL_{n+1})$ for the corresponding quantum cluster algebra. By the results of Goodearl and Yakimov~(\cite{GY21}, Theorem B), it is known that in fact 
$$
\mathcal{A}_{Toda}(GL_{n+1})= \Lbb_{Toda}(GL_{n+1}).
$$
Moreover, by the same Theorem both are isomorphic as $\mathbb{Z}[q^{\pm1}]$-algebras to the localization of the ring $A_{q^2}(\mathfrak{n}\hr{(s_0s_1)^{n+1}}) \otimes_{\mathbb{Z}[q^{\pm2}]} \mathbb{Z}[q^{\pm1}]$ at a pair of generalized minors, where $A_{q^2}(\mathfrak{n}(w))$ is the dual canonical form of the quantum coordinate ring of the unipotent cell associated to the affine Weyl group element $w$ in the loop group $LSL_2$. Under this isomorphism, the two inverted generalized minors correspond to the frozen variable $A_{s+1},A_{t+1}$, while the
$\tau^m(A_{t_1})$ with $-n\leq m\leq n+1$ correspond to the dual PBW generators of the quantum coordinate ring. Hence $\Lbb_{Toda}(GL_{n+1})$ is generated as a $\mathbb{Z}[q^{\pm1}]$-algebra by $\{\tau^m(A_{t_1})\}_{m\in\mathbb{Z}}$ together with the inverses of the two frozen variables.

\subsection{Residue algebra}
\label{sec:nildaha}

In this section we define an explicit subalgebra $\Dres$ of rational $q$-difference operators on the maximal torus $T$ of $GL_{n+1}$ which will serve as the ``spectral'' description of the cluster algebra $\Lbb_{Toda}(GL_{n+1})$.

%The algebra $\Dres$ will be shown in Theorem~\ref{thm:toda-sdiff-iso} to be isomorphic to the universal Laurent ring $\Lbb_{Toda}$, and as a Corollary, to the spherical nil-DAHA $\SH_{q,t=0}$.
%This isomorphism will identify the Hamiltonians $H_k\in\Lbb_{Toda}$ with multiplication operators, and so can be thought of as an algebraic version of the spectral transform for the Toda system.

We define the algebra of $q$-difference operators on $T$ to be the quantum torus
\begin{align}
\label{eq:DqT}
\Dc_q(T) = \Z[q^{\pm1}]\ha{w_i^{\pm1}, D_i^{\pm} \,\big|\, 1 \le i \le n+1}
\end{align}
with defining relations
$$
D_iw_j = q^{2\delta_{ij}}w_jD_i, \qquad w_iw_j=w_jw_i, \qquad D_iD_j = D_jD_i.
$$
The $w_i$ generate a commutative subalgebra which we identify with the group ring $\Oc(T)$ of the weight lattice $\Prm$ of $GL_{n+1}$. The action of the invertible generators $w_i$ by conjugation defines an \emph{internal grading} on $\Dc_q(T)$ by $\Prm$. We write $\Dc^\mu_q(T)$ for the $\mu$-graded piece with respect to this grading, with the sign convention that
$$
A \in \Dc^\mu_q(T) \iff A w_\lambda  = q^{2\langle \la,\mu\rangle}w_\lambda A \quad \text{for all } \la \in P.
$$
In concrete terms, $\Dc^\mu_q(T)$ consists of all elements of the form $f(\bs w) D_1^{\mu_1}\cdots D_{n+1}^{\mu_{n+1}}$ where $f$ is an arbitrary Laurent polynomial in $\bs w$. For a general element $A\in \Dc_q(T)$ we write $A_\mu$ for its degree $\mu$ component, so that
$$
A = \sum_\mu A_\mu, \quad A_\mu \in \Dc^\mu_q(T) .
$$
The algebra $\Dc_q(T)$ has a standard representation on $\Oc(T)$ in which the subalgebra $\Oc(T)$ acts on itself by multiplication operators, and the generators $D_i$ act by 
\begin{align}
\label{eq:Di}
(D_i\circ f)(\bs w) = f(w_1,\ldots, q^2w_i,\ldots, w_{n+1}).
\end{align}
Hence as a left $\Dc_q(T)$ module, this representation is identified with the quotient of $\Dc_q(T)$ by the left ideal $\langle D_i-1\rangle_{i=1}^{n+1}$. We will sometimes abuse notation by writing $Af$ instead of $A \circ f$ whenever it is clear from the context that we mean the action of $\Dc_q(T)$ on $\Oc(T)$ rather than the multiplication in $\Dc_q(T)$.

Now consider the Ore localization
\begin{align}
\label{eq:ore1}
\Dc_q(T)_\loc = \Dc_q(T)\big[(1-q^{2k}w_\alpha)^{-1} \,\big|\, \alpha \in \Delta_+, \; k\in\Z\big],
\end{align}
where for a positive root $\alpha =\eps_i-\eps_j$ we set $w_{\alpha} = w_i/w_j$. Since the elements we invert commute with all of $\Oc(T)$, the localized ring also inherits an internal grading whose pieces we denote by $\Dc^\mu_q(T)_\loc $. Given $\alpha \in \Delta_+$ and $k \in \Z$ we introduce the following notation for the divisors at which we localize in~\eqref{eq:ore1}:
\begin{align}
\label{eq:divisors}
d_{\alpha,k} = \hc{w_{\alpha} = q^{-2k}}.
\end{align}
We write $\Oc(T)_\loc$ for the subring of $\Dc_q(T)_\loc$ given by its internal-degree zero piece. This subring is commutative and we can identify it with the ring of rational functions on $T$ regular away from the union of the $d_{\alpha,k}$. We again have a standard difference operator representation of the algebra $\Dc_q(T)_\loc$ on $\Oc(T)_\loc$.

Associated to each divisor $d_{\alpha,k}$ we have an increasing \emph{order} filtration
\beq
\label{eq:filtration}
\ldots \subset \Dc_q(T)_\loc^{\alpha,k;-1} \subset \Dc_q(T)_\loc^{\alpha,k;0} \subset \Dc_q(T)_\loc^{\alpha,k;1} \subset \ldots
\eeq
of $\Dc_q(T)_\loc$ by left $\Oc(T)_\loc$-modules
$$
\Dc_q(T)_\loc^{\alpha,k;m} = (1-q^{2k}w_\alpha)^{m}\Dc_q(T)\big[(1-q^{2\ell}w_\beta)^{-1} \,\big|\, \beta \ne \alpha, \; \ell \ne k].
$$
So associated to each divisor $d_{\alpha,k}$ is a graded vector space
$$
\gr_{\alpha,k}(\Dc_q(T)_\loc) = \bigoplus_{m \in \Z} \gr_{\alpha,k}^m(\Dc_q(T)_\loc)
$$
where
$$
\gr_{\alpha,k}^m(\Dc_q(T)_\loc) = \Dc_q(T)_\loc^{\alpha,k;m}/\Dc_q(T)_\loc^{\alpha,k;m+1},
$$
along with natural projections
\beq
\label{eq:pi-alpha-k}
\pi_{\alpha,k;m} \colon \Dc_q(T)_\loc^{\alpha,k;m} \longra \gr_{\alpha,k}^m(\Dc_q(T)_\loc).
\eeq
Note that the order filtration associated to any $d_{\alpha,k}$ is compatible with and completely independent from the internal grading defined earlier, so each order-filtered piece can be further decomposed by internal degree:
$$
\Dc^\mu_q(T)_\loc^{\alpha,k;m} = \Dc_q(T)_\loc^{\alpha,k;m} \cap \Dc_q^\mu(T)_\loc,
$$
%The projections~\eqref{eq:pi-alpha-k} respect the internal grading, that is
$$
\pi_{\alpha,k;m} \colon \Dc_q^\mu(T)_\loc^{\alpha,k;m} \longra \gr_{\alpha,k}^m(\Dc_q^\mu(T)_\loc).
$$
\begin{defn}
\label{def:eval-res}
We say that an element $A \in \Dc_q(T)_\loc$ has a \emph{pole of order $m$} at the divisor $d_{\alpha,k}$ if $A \in \Dc_q(T)_\loc^{\alpha,k;-m}$ and $\pi_{\alpha,k;-m}(A) \ne 0$. For $A \in \Dc_q(T)_\loc^{\alpha,k;-1}$ we use the shorthand
$$
\pi_{\alpha,k;-1}(A) = \res_{\alpha,k}(A),
$$
and refer to $\res_{\alpha,k}(A)$ as the \emph{residue} of $A$ at $d_{\alpha,k}$. Similarly, for $A \in \Dc_q(T)_\loc^{\alpha,k;0}$ we abbreviate
$$
\pi_{\alpha,k;0}(A) = A|_{d_{\alpha,k}},
$$
and say that $A|_{d_{\alpha,k}}$ is the \emph{restriction} of $A$ to $d_{\alpha,k}$.
\end{defn}

 Given a weight $\mu \in \Prm$ consider the automorphism $\Ad_{D^\mu}$ of $\Dc_q(T)_\loc$ given by
\begin{align}
\label{eq:shifts}
\Ad_{D^\mu}\colon A \longmapsto A^{[\mu]} = D^\mu A D^{-\mu}, \quad D^\mu = D_1^{\mu_1}\cdots D_{n+1}^{\mu_{n+1}}.
\end{align}
Note that unlike $\Ad_{w_\mu}$, the automorphism $\Ad_{D^\mu}$ does not respect the order filtrations on $\Dc_q(T)_\loc$ and is not even diagonalizable. Instead, the isomorphisms~\eqref{eq:shifts} satisfy
\beq
\label{eq:pi-ad}
\Ad_{D^\mu} \circ \pi_{\alpha,k;n} = \pi_{\alpha,k+\ha{\alpha,\mu};n} \circ \Ad_{D^\mu}.
\eeq
and therefore induce isomorphisms of $\mathbb{Z}[q^{\pm1}]$-modules between associated graded factors for order filtrations associated to different divisors:
\beq
\label{eq:ad-D-iso}
\Ad_{D^\mu} \colon \gr_{\alpha,k}^m(\Dc_q(T)_\loc) \longra \gr_{\alpha,k+\ha{\alpha,\mu}}^m(\Dc_q(T)_\loc),
\eeq
which are compatible with the internal grading.
%$$
%\Ad_{D^\mu} \colon \gr_{\alpha,k}^m(\Dc_q^\mu(T)_\loc) \longra \gr_{\alpha,k+\ha{\alpha,\mu}}^m(\Dc_q^\mu(T)_\loc).
%$$

There is a natural action of the Weyl group $W=S_{n+1}$ by automorphisms of the algebra $\Dc_q(T)_\loc$ induced by the reflection representation on the weight lattice, or equivalently by the permutation action on the coordinates $\bs w = (w_1, \ldots, w_{n+1})$ and $\bs D = (D_1, \ldots, D_{n+1})$. We write $\Dc_q(T)^W_\loc$ for the subalgebra of $W$-invariants with respect to this action. Note that $A \in \Dc_q(T)_\loc^W$ if and only if $s_\alpha(A_\mu) = A_{s_\alpha(\mu)}$ for every $\mu \in \Prm$ and $\alpha \in \Delta_+$. The Weyl group action induces isomorphisms of $\mathbb{Z}[q^{\pm1}]$-modules
$$
s_\alpha \colon \gr_{\alpha,k}^m(\Dc_q^\mu(T)_\loc) \longra \gr_{\alpha,-k}^m(\Dc_q^{s_\alpha(\mu)}(T)_\loc),
$$
which satisfy
\beq
\label{eq:pi-s}
s_\alpha \circ \pi_{\alpha,k;m} = \pi_{\alpha,-k;m} \circ s_\alpha.
\eeq

\begin{lemma}
\label{lem:symmetry-condition}
For any Weyl-invariant element $A \in \Dc_q(T)_\loc^W$, any divisor $d_{\alpha,k}$, any  choice of internal degree $\mu \in \Prm$, and any choice of $d_{\alpha,k}$-order $m\in \Z$, we have
\beq
\label{eq:symmetry-condition}
\pi_{\alpha,k;m}\hr{A_\mu + (-1)^{m+1}A_{s_{\alpha(\mu)}}^{[k\alpha]} D^{\ha{\alpha,\mu}\alpha}}=0.
\eeq
\end{lemma}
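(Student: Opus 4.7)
The plan is to use the Weyl invariance of $A$ to recognize that the two terms inside the projection are locally equal up to a sign at $d_{\alpha,k}$, after which the lemma reduces to a single local computation showing that a certain involution acts as $-1$ on the conormal bundle to $d_{\alpha,k}$.

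First, I would fix an internal degree $\mu\in\Prm$ and write every element of $\Dc^\mu_q(T)_\loc$ uniquely as $f(\bs w)D^\mu$ with $f\in\Oc(T)_\loc$, so that $A_\mu=f_\mu(\bs w)D^\mu$. Since $s_\alpha$ sends the internal degree $\mu$ to $s_\alpha(\mu)$ and satisfies $s_\alpha(D^\mu)=D^{s_\alpha(\mu)}$, the Weyl invariance $s_\alpha(A)=A$ translates into the identity $s_\alpha(f_\mu)=f_{s_\alpha(\mu)}$ in $\Oc(T)_\loc$. Letting $\sigma^{k\alpha}$ denote the automorphism of $\Oc(T)_\loc$ induced by conjugation by $D^{k\alpha}$ and setting $\tau:=\sigma^{k\alpha}\circ s_\alpha$, I would use the identity $s_\alpha(\mu)+\ha{\alpha,\mu}\alpha=\mu$ to rewrite
\[
A_{s_\alpha(\mu)}^{[k\alpha]}D^{\ha{\alpha,\mu}\alpha}=D^{k\alpha}\,s_\alpha(f_\mu)(\bs w)\,D^{\mu-k\alpha}=\tau(f_\mu)(\bs w)\,D^\mu,
\]
so that the element appearing inside $\pi_{\alpha,k;m}$ in the lemma becomes $\bigl(f_\mu+(-1)^{m+1}\tau(f_\mu)\bigr)D^\mu$. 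Since right multiplication by the unit $D^\mu$ preserves the order filtration at $d_{\alpha,k}$ and induces an isomorphism on graded pieces, this reduces the lemma to showing that the function $h:=f_\mu+(-1)^{m+1}\tau(f_\mu)$ lies in $\Oc(T)_\loc^{\alpha,k;m+1}$, that is, its leading Laurent coefficient at $d_{\alpha,k}$ vanishes.

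The crux is the local behavior of $\tau$ at the divisor $d_{\alpha,k}$. Using $s_\alpha(w_\alpha)=w_\alpha^{-1}$ and $\sigma^{k\alpha}(w_\alpha)=q^{4k}w_\alpha$, one computes $\tau(w_\alpha)=q^{-4k}w_\alpha^{-1}$, which equals $w_\alpha$ on the locus $w_\alpha=q^{-2k}$, so $\tau$ preserves $d_{\alpha,k}$; a similar verification shows that $\tau$ restricts to the identity on the residue ring $\Oc(T)_\loc/(u)$, where $u:=1-q^{2k}w_\alpha$ is the local defining equation. The essential computation is then
\[
\tau(u)=1-q^{-2k}w_\alpha^{-1}=-u/(1-u)=-u+O(u^2),
\]
which exhibits $\tau$ as acting by $-1$ on the conormal bundle of $d_{\alpha,k}$. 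Putting everything together, if $f_\mu=c(\bs w)u^m+O(u^{m+1})$ is the leading Laurent expansion at $d_{\alpha,k}$ with $c$ regular on the divisor, then $\tau(f_\mu)=\tau(c)\tau(u)^m+O(u^{m+1})=(-1)^m c(\bs w)u^m+O(u^{m+1})$, so the two leading contributions to $h$ cancel and $h$ lies in the next deeper filtration piece, as required.

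The only delicate step I anticipate is pinning down the sign and normalization in the local formula $\tau(u)=-u+O(u^2)$ together with the fact that $\tau$ acts trivially on the residue ring (which together constitute the geometric content that $\tau$ is an involution fixing $d_{\alpha,k}$ pointwise but reversing its normal direction); everything else is routine bookkeeping with the internal $\Prm$-grading, the reflection formula $s_\alpha(\mu)+\ha{\alpha,\mu}\alpha=\mu$, and the standard commutation rules in $\Dc_q(T)_\loc$.
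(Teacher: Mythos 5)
Your proposal is correct and takes essentially the same approach as the paper. Both arguments reduce the claim to the observation that the composite $\Ad_{D^{k\alpha}}\circ s_\alpha$ (your $\tau$) fixes the divisor $d_{\alpha,k}$ pointwise and acts by $-1$ on its conormal direction; you phrase this via the coefficient function $f_\mu$ and the local identity $\tau(u)=-u/(1-u)$, while the paper tracks the same fact at the level of the projections $\pi_{\alpha,k;m}$ and the factor $(1-q^{-2k}w_\alpha^{-1})^m=(-1)^m(1-q^{2k}w_\alpha)^m\cdot(\text{unit})$.
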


\begin{proof}
Write
$$
\pi_{\alpha,k;m}(A_\mu) = (1-q^{2k}w_\alpha)^m a_{\mu;m}(\bs w) D^\mu + \Dc_q^\mu(T)_\loc^{\alpha,k;m+1}
$$
where $a_{\mu;m} \in \Dc_q^0(T)_\loc^{\alpha,k;0}$ is a rational function on $T$ regular at $d_{\alpha,k}$. Then we have
\begin{align*}
\pi_{\alpha,k;m}\hr{(-1)^{m+1}A_{s_{\alpha(\mu)}}^{[k\alpha]} D^{\ha{\alpha,\mu}\alpha}}
&= (-1)^{m+1}\pi_{\alpha,k;n}\hr{\Ad_{D^{k\alpha}}(s_\alpha(A_\mu))} D^{\ha{\alpha,\mu}\alpha} \\
&= (-1)^{m+1}\Ad_{D^{k\alpha}}(s_\alpha(\pi_{\alpha,k;m}\hr{A_\mu})) D^{\ha{\alpha,\mu}\alpha} \\
&= (1-q^{-2k}w_\alpha^{-1})^m \Ad_{D^{k\alpha}}(s_\alpha(a_{\mu;m}(\bs w))) D^\mu + \Dc_q^\mu(T)_\loc^{\alpha,k;m+1} \\
&= (-1)^m \pi_{\alpha,k;m}(A_\mu),
\end{align*}
The second equality follows from~\eqref{eq:pi-ad} and~\eqref{eq:pi-s}, and the last one from the relation
$$
\Ad_{D^{k\alpha}}(s_\alpha(f(\bs w))) \big|_{d_{\alpha,k}} = f(\bs w) \big|_{d_{\alpha,k}},
$$
which holds for any $f \in \Dc_q^0(T)_\loc^{\alpha,k;0}$.
\end{proof}

% and we form the semidirect/smash product algebra
% $
% \mathcal{D}_q[T]^{loc}\rtimes \mathbb{Z}S_{n+1}.
% $

%The algebra $\Dc_q(T)_\loc^W$ acts on the field of symmetric rational functions $\Q(q)(w_0,\ldots, w_n)^W$, and preserves the subring of the latter consisting of functions regular outside of the union of divisors
%\beq
%\label{eq:divisors-def}
%\bs d = \bigcup_{\substack{\alpha \in \Delta_+ \\ k\in\Z}} d_{i,j,k}
%\qquad\text{where}\qquad
%d_{\alpha,k} = \hc{w_\alpha = q^{-2k}}.
%\eeq

The following definition and lemma are modelled on the constructions of~\cite{GKV97}, in particular their Definition 1.3 and Theorem 1.4.
%where each $A_\mu(\bs w)$ is regular outside of the union of divisors
%$$
%\bs d = \bigcup_{\substack{\alpha \in \Delta_+ \\ k\in\Z}} d_{\alpha,k}
%\qquad\text{where}\qquad
%d_{\alpha,k} = \hc{w_\alpha = q^{-2k}}.
%$$

\begin{defn}
\label{def:Dres}
We define $\Dres$ to be the subspace of $\Dc_q(T)_\loc^W$ consisting of elements $A$ such that 
\begin{enumerate}
\item for each $\alpha \in \Delta_+$ and $k \in \Z$ the element $A$ has at most a simple pole at $d_{\alpha,k}$;
\item for all $\mu \in \Prm$, $\alpha \in \Delta_+$, and $k \in \Z$ the corresponding residues satisfy
\beq
\label{eq:residue-condition}
\res_{\alpha,k}\hr{A_{\mu}+A_{s_\alpha(\mu)+k\alpha}D^{(\ha{\alpha,\mu}-k)\alpha}}=0.
\eeq

\end{enumerate}
\end{defn}
% \begin{lemma}
% \label{lem:residues}
%     The action of an element $A = \sum_{\mu\in\mathbb{Z}^{n+1}} A_\mu(\bs w)D^\mu$ of $\mathcal{D}_q(T)^W_{loc}$ preserves the subspace $\mathbb{Q}[w^\pm_0,\ldots, w^\pm_n]^{W}\subset \mathbb{Q}(w_0,\ldots, w_n)^W$ of symmetric Laurent polynomials if and only if
%     \begin{enumerate}
%         \item each coefficient $a_\mu(\bs w)$ has at worst a simple pole at each of the divisors $\delta_{i,j,k}$, and
%         \item the corresponding residues
%         $$
%         \Res_{\delta_{i,j,k}}(A_\mu) := \left((1-q^{2k}w_i/w_j)A_\mu(\bs w)\right)\bigg|_{\delta_{i,j,k}}
%         $$
%         satisfy
%         \begin{align}
% \label{eq:residue-condition}
%     \mathrm{Res}_{\delta_{i,j,k}}(A_{\mu}) + \mathrm{Res}_{\delta_{i,j,k}}(A_{s(\mu)+k(\epsilon_i-\epsilon_j)})=0,
% \end{align}
%     \end{enumerate}
% where $(\epsilon_i)$ is the standard basis in $\mathbb{Z}^{n+1}$ and $s = (ij)$ is the reflection in the root hyperplane $\{w_i=w_j\}$.
% \end{lemma}

\begin{remark}
Because $A$ is required to be invariant under the action of the Weyl group, it would suffice to impose the condition~\eqref{eq:residue-condition} for a single positive root $\alpha \in \Delta_+$.
\end{remark}

Note that for $k=0$ the condition~\eqref{eq:residue-condition} is a particular case of~\eqref{eq:symmetry-condition}, and hence is automatically satisfied. Moreover, combining~\eqref{eq:symmetry-condition} and~\eqref{eq:residue-condition} we obtain
\beq
\label{eq:sym-res}
\res_{\alpha,k}\hr{D^{k\alpha}A_\nu-A_{\nu+k\alpha}}=0
\eeq
for any $A \in \Dres$, $\nu \in \Prm$, $\alpha \in \Delta_+$, and $k \in \Z$.

%Note that the reflection $s_{ij}$ sends the divisor $\delta_{i,j,k}$ to $\delta_{i,j,-k}$, and for any function $f$ we have 
%\begin{align}
%\label{eq:jacobian-res}
%s_{ij}\hr{    \mathrm{Res}_{\delta_{i,j,k}}f } = -\mathrm{Res}_{\delta_{i,j,-k}}s_{ij}(f) .
%\end{align}
%Combining this with the $W$-invariance it follows that for any element $A$ of $\mathcal{D}_{res}$ we have
%\begin{align}
%\label{eq:sym-res}
%s_{ij}\hr{    \mathrm{Res}_{\delta_{i,j,k}}A_\mu } = -\mathrm{Res}_{\delta_{i,j,-k}}A_{s_{ij}(\mu)} .
%\end{align}

\begin{lemma}
The subspace $\mathcal{D}_{res}$ is closed under multiplication, and hence forms a subalgebra in  $\mathcal{D}_q(T)_{loc}^W$.
\end{lemma}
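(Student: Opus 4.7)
My plan is to verify the two defining conditions of $\Dres$ for a product $AB$ when $A, B \in \Dres$. The Weyl invariance of $AB$ is immediate since $W$ acts by algebra automorphisms on $\Dc_q(T)_\loc$. The two remaining conditions — simple pole and residue condition — will be checked at each divisor $d_{\alpha,k}$ independently, using the internal grading to organize the computation.

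To establish condition (1), I will analyze the potential double-pole contributions at a fixed divisor $d_{\alpha, k}$. Writing $A_\sigma = f_\sigma(\bs w)D^\sigma$ and $B_\tau = g_\tau(\bs w)D^\tau$, the commutation relations give $A_\sigma B_\tau = f_\sigma(\bs w) g_\tau(q^{2\sigma_\bullet}\bs w) D^{\sigma+\tau}$, so a double pole at $d_{\alpha,k}$ in $(AB)_\mu = \sum_\sigma A_\sigma B_{\mu-\sigma}$ requires $A_\sigma$ to have a simple pole at $d_{\alpha,k}$ and simultaneously $B_{\mu-\sigma}$ to have a simple pole at $d_{\alpha,k-\ha{\alpha,\sigma}}$. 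I will then use the residue conditions~\eqref{eq:residue-condition} to pair such terms: the condition for $A$ at $d_{\alpha,k}$ gives an involution $\sigma \leftrightarrow s_\alpha(\sigma) + k\alpha$ on the singular grading components of $A$, and the analogous condition for $B$ at $d_{\alpha, k-\ha{\alpha,\sigma}}$ provides a compatible pairing on the $B$ side. Re-indexing $\sum_\sigma A_\sigma B_{\mu-\sigma}$ via the substitution $\sigma \mapsto s_\alpha(\sigma) + k\alpha$ and substituting the residue identities, I expect the double-pole part to cancel pairwise.

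For condition (2), I plan a similar direct computation of $\res_{\alpha,k}\bigl((AB)_\mu + (AB)_{s_\alpha(\mu)+k\alpha}D^{(\ha{\alpha,\mu}-k)\alpha}\bigr)$. The key inputs are the Leibniz-like identity $\res_{\alpha,k}(FG) = \res_{\alpha,k}(F)\cdot G|_{d_{\alpha,k}} + F|_{d_{\alpha,k}}\cdot\res_{\alpha,k}(G)$ on the $\Oc(T)_\loc$-coefficients, appropriately twisted by the $D$-shifts via~\eqref{eq:pi-ad}, together with the $W$-covariance~\eqref{eq:pi-s}. Expanding this sum over $\sigma$ and applying the residue conditions~\eqref{eq:residue-condition} (and their consequence~\eqref{eq:sym-res}) for $A$ and $B$ separately, one obtains a bijective cancellation of terms after the same change-of-index substitution used in step two.

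The main obstacle will be the combinatorial bookkeeping in these pairings: the natural involutions coming from the residue conditions for $A$ and $B$ involve shifts by different multiples of $\alpha$ depending on the respective internal degrees $\sigma$ and $\mu-\sigma$, and the $D$-factors appearing on the right-hand side of the residue identity for $A$ translate the divisor of interest for $B$. Organizing the sum so that these shifts combine coherently is where the technical heart of the argument lies. Conceptually, this closure reflects the fact that $\Dres$ should be identified with the image of the spherical part of a suitable nil-DAHA under its polynomial representation, in the spirit of the residue characterization of Hecke algebras of~\cite{GKV97}; the direct combinatorial verification I have sketched is the concrete manifestation of that structural statement.
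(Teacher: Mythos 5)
Your proposal follows essentially the same route as the paper: organize the product $AB$ by internal degree, observe that double poles at $d_{\alpha,k}$ arise precisely when $A_\sigma$ has a simple pole at $d_{\alpha,k}$ and $B_{\mu-\sigma}$ has one at the shifted divisor $d_{\alpha,k-\ha{\alpha,\sigma}}$, and then cancel these pairwise via the involution $\sigma\leftrightarrow s_\alpha(\sigma)+k\alpha$ together with conditions~\eqref{eq:residue-condition} and~\eqref{eq:sym-res}, and likewise for the residue condition on $AB$. The ``combinatorial bookkeeping'' you flag as the main obstacle is exactly where the paper's add-and-subtract manipulation of the symmetrized summand $K_{\alpha,k;\mu}$ (and its analog $L_{\alpha,k;\mu}$) does the work, so your plan is correct and would, when carried out, reproduce the paper's argument.
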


\begin{proof}
First we show that the product of two elements of $\Dres$ has at most simple poles at each divisor $d_{\alpha,k}$.
For this we need to show that for any pair of elements $A,B \in \Dres$, any $\alpha \in \Delta_+$, and any $k \in \Z$ we have
\beq
\label{eq:pi-2-vanishes}
\pi_{\alpha,k;-2}(AB)=0.
\eeq
%Since the projection maps $\pi_{\alpha,k;n}$ preserve the internal grading of $\Dc_q(T)_\loc$ by $P$, the condition~\eqref{eq:pi-2} is equivalent to
%$$
%\pi_{\alpha,k;-2}\hr{(AB)_\nu}=0 \qquad\text{for all}\qquad \nu \in P.
%$$
%for all $\nu \in P$.
%
We can express the component of weight $\nu \in \Prm$ of the product $AB$ as
$$
(AB)_\nu = \frac12 \sum_{\mu\in P} K_{\alpha,k;\mu}
\qquad\text{where}\qquad
K_{\alpha,k;\mu} = A_\mu B_{\nu-\mu} + A_{s_\alpha(\mu)+k\alpha} B_{\nu-s_\alpha(\mu)+k\alpha}.
$$
Observing that
$$
\pi_{\alpha,k;-2}\hr{A_\mu B_{\nu-\mu}} = \res_{\alpha,k}(A_\mu) \res_{\alpha,k-\ha{\alpha,\mu}}(B_{\nu-\mu})
$$
we obtain 
\begin{multline*}
\pi_{\alpha,k;-2}\hr{K_{\alpha,k;\mu}}
= \res_{\alpha,k}(A_\mu) \res_{\alpha,k-\ha{\alpha,\mu}}(B_{\nu-\mu}) \\
+ \res_{\alpha,k}(A_{s_\alpha(\mu)+k\alpha}) \res_{\alpha,\ha{\alpha,\mu}-k}(B_{\nu-s_\alpha(\mu)-k\alpha}).
\end{multline*}
Adding and subtracting the product
\begin{multline*}
\res_{\alpha,k}(A_{s_\alpha(\mu)+k\alpha}) D^{(\ha{\alpha,\mu}-k)\alpha} \res_{\alpha,k-\ha{\alpha,\mu}}(B_{\nu-\mu}) \\
= \res_{\alpha,k}(A_{s_\alpha(\mu)+k\alpha}) \res_{\alpha,\ha{\alpha,\mu}-k}\hr{D^{(\ha{\alpha,\mu}-k)\alpha} B_{\nu-\mu}}
\end{multline*}
we arrive at the sum
\begin{multline*}
\res_{\alpha,k}\hr{A_\mu+A_{s_\alpha(\mu)+k\alpha} D^{(\ha{\alpha,\mu}-k)\alpha}} \res_{\alpha,k-\ha{\alpha,\mu}}\hr{B_{\nu-\mu}} \\ + \res_{\alpha,k}\Big(A_{s_\alpha(\mu)+k\alpha}\Big) \res_{\alpha,\ha{\alpha,\mu}-k} \hr{B_{\nu-s_\alpha(\mu)-k\alpha} - D^{(\ha{\alpha,\mu}-k)\alpha} B_{\nu-\mu}}.
\end{multline*}
But the first summand vanishes by~\eqref{eq:residue-condition} and the second one by~\eqref{eq:sym-res}, and therefore~\eqref{eq:pi-2-vanishes} holds.

It now remains to show that the condition~\eqref{eq:residue-condition} holds for the product $AB$, that is
$$
\res_{\alpha,k}\hr{(AB)_\nu + (AB)_{s_\alpha(\nu)+k\alpha} D^{(\ha{\alpha,\nu}-k)\alpha}}=0
$$
for all $\nu \in \Prm$, $\alpha\in\Delta_+$, and $k \in \Z$. As before, we write
$$
(AB)_\nu + (AB)_{s_\alpha(\nu)+k\alpha} D^{(\ha{\alpha,\nu}-k)\alpha} = \frac12 \sum_{\mu \in P} L_{\alpha,k;\mu},
$$
where
\begin{multline*}
L_{\alpha,k;\mu} = A_\mu\hr{B_{\nu-\mu}+B_{s_\alpha(\nu)-\mu+k\alpha}D^{(\ha{\alpha,\nu}-k)\alpha}} \\
+ A_{s_\alpha(\mu)+k\alpha}\hr{B_{\nu-s_\alpha(\mu)-k\alpha}+B_{s_\alpha(\nu)-s_\alpha(\mu)}D^{(\ha{\alpha,\nu}-k)\alpha}}
\end{multline*}
By~\eqref{eq:residue-condition} we have
\begin{align*}
&\res_{\alpha,k-\ha{\alpha,\mu}}\hr{B_{\nu-\mu}+B_{s_\alpha(\nu)-\mu+k\alpha}D^{(\ha{\alpha,\nu}-k)\alpha}} = 0, \\
&\res_{\alpha,\ha{\alpha,\mu}-k}\hr{B_{\nu-s_\alpha(\mu)-k\alpha}+B_{s_\alpha(\nu)-s_\alpha(\mu)}D^{(\ha{\alpha,\nu}-k)\alpha}} = 0,
\end{align*}
and therefore
\begin{multline*}
\res_{\alpha,k}(L_{\alpha,k;\mu}) = \res_{\alpha,k}\hr{A_\mu} \hr{B_{\nu-\mu}+B_{s_\alpha(\nu)-\mu+k\alpha}D^{(\ha{\alpha,\nu}-k)\alpha}} \Big|_{d_{\alpha,k-\ha{\alpha,\mu}}} \\
+ \res_{\alpha,k}\hr{A_{s_\alpha(\mu)+k\alpha}} \hr{B_{\nu-s_\alpha(\mu)-k\alpha}+B_{s_\alpha(\nu-\mu)}D^{(\ha{\alpha,\nu}-k)\alpha}} \Big|_{d_{\alpha,\ha{\alpha,\mu}-k}}.
\end{multline*}
It follows from~\eqref{eq:symmetry-condition} that
\begin{multline*}
D^{(\ha{\alpha,\mu}-k)\alpha} \hr{B_{\nu-\mu}+B_{s_\alpha(\nu)-\mu+k\alpha}D^{(\ha{\alpha,\nu}-k)\alpha}} \Big|_{d_{\alpha,k-\ha{\alpha,\mu}}} \\
= \hr{B_{\nu-s_\alpha(\mu)-k\alpha}+B_{s_\alpha(\nu-\mu)}D^{(\ha{\alpha,\nu}-k)\alpha}} \Big|_{d_{\alpha,\ha{\alpha,\mu}-k}},
\end{multline*}
and thus $\res_{\alpha,k}(L_{\alpha,k;\mu})$ is equal to the product
$$
\res_{\alpha,k} \hr{A_\mu + A_{s_\alpha(\mu)+k\alpha} D^{(\ha{\alpha,\mu}-k)\alpha}} \hr{B_{\nu-\mu}+B_{s_\alpha(\nu)-\mu+k\alpha}D^{(\ha{\alpha,\nu}-k)\alpha}} \Big|_{d_{\alpha,k-\ha{\alpha,\mu}}},
$$
whose first factor vanishes by~\eqref{eq:residue-condition}. This concludes the proof.
\end{proof}
\begin{remark}
Because of the Weyl invariance condition in Definition~\ref{def:Dres}, the algebra $\Dres$ no longer carries an internal grading. Nonetheless, the internal grading on $\Dc_q(T)_{loc}$ induces a filtration
%$$
%\Dc_q^{\eta,\mu}(T)_{loc} = \bigcup_{\eta \le \nu \le \mu} \Dc_q^\nu(T)_{loc},
%$$
%where $\eta \le \nu$ if $\nu-\eta \in P_+$. Then we also have an 
$$
\Dres=\bigcup_{\mu \in P^+}\Dres^{\leq\mu}, \qquad \Dres^{\leq\mu} = \Dres \cap \Dc_q^{\leq\mu}(T)_{loc},
$$
where 
$$
\Dc_q^{\leq\mu}(T)_{loc} = \bigoplus_{\nu\leq \mu}\Dc_q^{\nu}(T)_{loc}
$$
and the dominance order relation $\nu\leq \mu$ means that $\mu-\nu$ is a linear combination of simple roots with non-negative coefficients.
\end{remark}

%The following lemma explains the point of the conditions in Definition~\ref{def:Dres}.

%Let $I_D$ be the left ideal in $\Dc_q(T)_\loc$ generated by $\ha{D_i-1}_{i=0}^n$. The natural isomorphism of $\Dc_q^0(T)_\loc$-modules
%$$
%\Dc_q^0(T)_\loc \simeq \Dc_q(T)_\loc/I_D
%$$
%makes $\Dc_q^0(T)_\loc$ a $\Dc_q(T)_\loc$-module via
%$$
%A \circ f = Af+I_D.
%$$
The main point of the conditions in Definition~\ref{def:Dres} transpires from the following lemma:
%Note that the subspace $\Dc_q^0(T)^W \subset \Dc^0_q(T)_\loc$ coincides with the space $\Z[q^{\pm1}][\bs w^{\pm1}]^W$ of symmetric Laurent polynomials in $\bs w$.
\begin{lemma}
\label{lem:res-preserved}
The action of the $\Dres$ on $\Oc(T)_\loc$ preserves the subspace $\Oc(T)^W \subset \Oc(T)_\loc$ of symmetric Laurent polynomials.
\end{lemma}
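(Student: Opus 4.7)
The plan is to establish two things: first, that $A \circ f$ is Weyl-invariant, and second, that it is in fact regular on all of $T$ (not merely on the complement of the divisors $d_{\alpha,k}$). Weyl-invariance is essentially immediate: since the $W$-action on $\Dc_q(T)_\loc$ is by algebra automorphisms compatible with the module action on $\Oc(T)_\loc$, for any $w \in W$ we have $w(A \circ f) = w(A) \circ w(f) = A \circ f$, so $A \circ f$ lies in $\Oc(T)_\loc^W$.

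For regularity, since $A$ has at most simple poles at each divisor $d_{\alpha,k}$ by condition (1) of Definition~\ref{def:Dres} and $f \in \Oc(T)$ is regular, $A \circ f$ has at most a simple pole at each $d_{\alpha,k}$, so it suffices to prove that the residue $\res_{d_{\alpha,k}}(A \circ f)$ vanishes as a rational function on the divisor for every pair $(\alpha,k)$. Writing $A = \sum_\mu A_\mu$ with $A_\mu = a_\mu(\bs w) D^\mu$, one obtains
\[
\res_{d_{\alpha,k}}(A \circ f) = \sum_{\mu} \res_{d_{\alpha,k}}(a_\mu)\cdot (D^\mu f)\big|_{d_{\alpha,k}},
\]
a finite sum since $A$ has only finitely many nonzero internal-degree components.

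The crux is to pair the terms using the fixed-point-free involution $\mu \leftrightarrow \mu' := s_\alpha(\mu) + k\alpha$ on $\Prm$. An explicit calculation shows that $A_\mu + A_{\mu'}D^{(\langle\alpha,\mu\rangle - k)\alpha}$ equals $(a_\mu + a_{\mu'})D^\mu$, so the residue condition~\eqref{eq:residue-condition} in Definition~\ref{def:Dres} is precisely the statement that $\res_{d_{\alpha,k}}(a_\mu) + \res_{d_{\alpha,k}}(a_{\mu'}) = 0$. Simultaneously, the Weyl invariance of $f$, together with the intertwining identity $w(D^\nu) = D^{w(\nu)}$ and the compatibility of the $W$-action with the module structure, yields the key matching identity
\[
(D^\mu f)\big|_{d_{\alpha,k}} = (D^{\mu'} f)\big|_{d_{\alpha,k}},
\]
because $\mu' = s_\alpha(\mu - k\alpha)$, and unpacking the shift shows the corresponding evaluation of $f$ differs only by swapping the two arguments indexed by $\alpha$. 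Grouping the sum above in pairs $\{\mu, \mu'\}$ and invoking these two identities, each pair contributes zero, so $\res_{d_{\alpha,k}}(A\circ f) = 0$.

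The main technical point to handle carefully will be the matching identity $(D^\mu f)|_{d_{\alpha,k}} = (D^{\mu'} f)|_{d_{\alpha,k}}$, where one must track how the shift operator $D^{k\alpha}$ interacts with the reflection $s_\alpha$ on the divisor $\{w_\alpha = q^{-2k}\}$; a short direct computation in coordinates suffices, but it is the step where the $k$-dependence of the involution and the noncommutativity of $D^\mu$ with $w_\alpha$ must be reconciled. A minor bookkeeping remark is needed for the fixed points of the involution (those $\mu$ with $\langle\alpha,\mu\rangle = k$), where the residue condition forces $a_\mu$ to be regular at $d_{\alpha,k}$ already, so these terms contribute nothing.
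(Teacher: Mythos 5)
Your proof is correct and takes essentially the same route as the paper's: reduce to vanishing of $\res_{\alpha,k}(A\circ f)$, pair internal degrees $\mu \leftrightarrow \mu' = s_\alpha(\mu)+k\alpha$, and cancel each pair using the residue condition~\eqref{eq:residue-condition} together with the Weyl-symmetry identity $(D^\mu f)|_{d_{\alpha,k}} = (D^{\mu'} f)|_{d_{\alpha,k}}$ (which the paper derives from~\eqref{eq:symmetry-condition}; your direct coordinate verification is equivalent). One small wording slip: the involution $\mu\mapsto s_\alpha(\mu)+k\alpha$ is not fixed-point-free — it fixes $\mu$ exactly when $\langle\alpha,\mu\rangle=k$ — but you correctly handle those terms in your final remark, so this does not affect the argument.
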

\begin{proof}
For any $A \in \Dres$ and $f \in \Oc(T)^W $ it is clear that $A \circ f $ can have at worst simple poles at each divisor $d_{\alpha,k}$, so it suffices to check that $\res_{\alpha,k}(A \circ f)=0$ for all $\alpha \in\Delta_+$ and $k\in\Z$. Writing
$$
A \circ f = \frac12 \sum_{\mu \in P} \hr{A_{\mu}+A_{s_\alpha(\mu)+k\alpha}} \circ f,
$$
and observing by~\eqref{eq:symmetry-condition} that
$$
\big(D^\mu \circ f\big) \big|_{d_{\alpha,k}} = \big(D^{s_\alpha(\mu)+k\alpha} \circ f \big) \big|_{d_{\alpha,k}},
$$
we arrive at
$$
\res_{\alpha,k}(A \circ f) = \frac12 \sum_{\mu \in P} \res_{\alpha,k} \big(A_{\mu}D^{-\mu}+A_{s_\alpha(\mu)+k\alpha} D^{(k-\ha{\alpha,\mu})\alpha}\big) \big(D^\mu \circ f\big) \big|_{d_{\alpha,k}}.
$$
The latter expression vanishes by~\eqref{eq:residue-condition}, which concludes the proof.
\end{proof}
%\red{I changed this to get rid of inverse in $\Psi$, any objection?}
%\blue{Not at all, but why the $q^{-1}$?}
%Consider a filtration of $\Dc_q(T)_{loc}$ by subspaces
%$$
%\Dc_q^{\eta,\mu}(T)_{loc} = \bigcup_{\eta \le \nu \le \mu} \Dc_q^\nu(T)_{loc},
%$$
%where $\eta \le \nu$ if $\nu-\eta \in P_+$. Then we also have an induced filtration
%$$
%\Dres=\bigcup_{\eta, \mu \in P}\Dres^{\eta,\mu}, \qquad \Dres^{\eta,\mu} = \Dres \cap \Dc_q^{\eta,\mu}(T)_{loc}.
%$$

Let us introduce 
$$
\Delta(\bs w) = \prod_{\alpha\in\Delta_+}\Psi_q(-q^{-1}w_\alpha).
$$
% Like the usual Vandermonde, it has simple zeros when $w_i=w_j$, although in addition to these it has zeros and poles at infinitely many other divisors.
Given a dominant weight $\mu \in \Prm^+$, we define an element $\widetilde{D}^\mu \in \Dc_q(T)_{loc}$ by
%$$
%\widetilde{D}^\mu = \Delta_{alg}(\bs w)^{-1}D^\mu \Delta_{alg}(\bs w) = \tilde{A}_\mu D^\mu,
%$$
$$
%\label{eq:def-tildeD}
\widetilde{D}^\mu = \Delta(\bs w)D^\mu \Delta(\bs w)^{-1} = d_\mu(\bs w) D^\mu,
$$
where
$$
d_\mu(\bs w) =\prod_{\alpha \in \Delta_+} \prod_{k=1}^{\ha{\alpha,\mu}} \hr{1-q^{2(k-1)}w_\alpha}^{-1}.
$$
Recall that a weight $\mu \in \Prm^+$ is \emph{minuscule} if $\ha{\alpha,\mu} \le 1$ for any $\alpha \in \Delta_+$, so that there is no dominant weight strictly less than $\mu$ in the dominance order. For $GL_{n+1}$, minuscule weights are those of the form
$$
\omega = \omega_k + l\omega_{n+1}, \quad 0\leq k\leq n, ~l\in\mathbb{Z}
$$
where $\omega_{j}=\sum_{i=1}^{j}\eps_i$ are the fundamental weights (and so $\omega_0=0$ and $\omega_{n+1}$ is the determinant character).
%
%For $GL_{n+1}$, the minuscule weights are those of the form $\mu = \omega_{l,m}$ for some $1 \le l \le n+1$ and $m \in \Z$, where
%$$
%\omega_{l,m} =
%\begin{cases}
%m\omega_{n+1} + \omega_l &\text{if} \quad 1 \le l \le n, \\
%m\omega_{n+1} &\text{if} \quad l=0.
%\end{cases}
%$$
In particular, for a minuscule weight $\mu$ we have
$$
d_{\mu}(\bs w) =\prod_{\ha{\alpha,\mu}=1} \hr{1-w_\alpha}^{-1}.
$$
%\beq
%\label{eq:Dtilde-residues}
%\res_{\alpha,k}(\widetilde{D}_\mu) =
%\begin{cases}
%1 \quad & 0\leq k< \ha{\alpha,\mu}, \\
%0 \quad & \text{otherwise,}
%\end{cases}
%\eeq
%and
%$$
%\widetilde{D}^{\omega_r} = \prod_{\substack{0\le i<r \\ r \le j\le n}} \frac{1}{1-w_i/w_j} \cdot D^{\omega_r}.
%$$
%, and its leading coefficient with respect to the dominance order on $P_+$ is $f(\bs w) \widetilde{D}^{\mu}$.

% It follows from Lemma~\ref{lem:residues} that the linear subspace of $\mathcal{D}_q(T)_{loc}^W$ satisfying conditions (1-2) forms an algebra, which we denote by $\mathcal{D}_{res}$. 

\begin{defn}
An element  of $\Dres$ is said to be \emph{minuscule} if it is contained in $\Dres^{\leq \omega}$ for some minuscule weight $\omega$.
\end{defn}
Any minuscule element has the form
$$
\Rc_{\omega}[f] = \sum_{[\sigma] \in W/\mathrm{Stab}(\omega)} \sigma\hr{f(\bs w) \widetilde D^{\omega}}, \quad f \in \Oc(T)^{\mathrm{Stab}(\omega)},
$$
where we sum over coset representatives for the stabilizer $\mathrm{Stab}(\omega)\simeq S_k\times S_{n+1-k}$ in $W\simeq S_{n+1}$ of the minuscule weight $\omega=\omega_k + l\omega_{n+1}$. We abbreviate
$$
\Rc_{\omega}= \Rc_{\omega}[1],
$$
and say that the minuscule element $\Rc_{\omega}[f]$ is \emph{dressed} by the function $f$.

\begin{example}
Let us identify
\beq
\label{eq:T-Stab}
\Oc(T)^{\mathrm{Stab}(\omega)} \simeq \Z\hs{w_1^{\pm1}, \ldots, w_k^{\pm1}}^{S_k} \otimes \Z\hs{w_{k+1}^{\pm1}, \ldots, w_{n+1}^{\pm1}}^{S_{n+1-k}}.
\eeq
Then for $f = e_k \otimes 1$, where $e_j$ denotes the $j$-th elementary symmetric functions, we have
$$
\Rc_{\omega_k}[e_k \otimes 1] = \sum_{|I|=k}\prod_{\substack{i \in I \\ j \notin I}}\frac{1}{1-w_j/w_i} \prod_{i\in I}w_iD_i.
$$
\end{example}

\begin{remark}
It is not hard to check directly that the elements $\Rc_{\omega}$ generate a commutative subalgebra in $\Dres$ as $\omega$ runs over the set of all minuscule weights. In Section~\ref{sec:alg-whit} we will construct a basis in $\mathbb{Z}[q^{\pm1}]$ consisting of joint eigenvectors for this subalgebra.
\end{remark}
%\begin{remark}
%\label{rem:partial-symmetry}
%Note that in the formula~\eqref{eq:sym-elements} we may assume that the function $f$ is invariant under the action of the subgroup $S_l \times S_{n+1-l} \subset S_{n+1}$, where $0 \le l \le n$ is such that $\mu = \omega_l + m\omega_{n+1}$ for some $m \in \Z$.
%\end{remark}

    %     We extend the definition above to general dominant weights $\mu = (\mu_0\geq \mu_1\geq\ldots \geq \mu_n),~ \mu = \sum_{a=1}^{n+1}k_a\omega_a$ of $GL_n$ by writing
    %     \begin{align*}
    %         \widetilde{D}_\mu = \prod_{a=1}^{n+1}\widetilde{D}_{\omega_a}^{k_a},
    %     \end{align*}
    %     and then to arbitrary elements of the weight lattice using the Weyl group action:
    %     $$
    %     \widetilde{D}_\mu = \sigma \cdot \widetilde{D}_{\mu^+}, \quad \sigma(\mu)=\mu^+\in P^+.
    %     $$

% Consider the $\mathbb{Z}^{n+1}$-grading on the ring $\mathcal{D}_q(T)_{loc}$ given by $\deg(D^\mu)=\mu,~ \deg(w_i)=0$. It induces a filtration on the subring $\mathcal{D}_q(T)^W_{loc}$

\begin{lemma}
\label{lem:minuscule-generation}
The algebra $\Dres$ is generated over $\Z[q^{\pm1}]$ by the minuscule elements.
\end{lemma}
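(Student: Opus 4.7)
I will prove the statement by induction on the dominant weight $\mu \in \Prm^+$ using the internal-grading filtration $\Dres = \bigcup_{\mu\in\Prm^+}\Dres^{\le\mu}$. The base case $\mu = 0$ is immediate because $\Dres^{\le 0}$ consists of $W$-invariant Laurent polynomials on $T$ (the residue conditions~\eqref{eq:residue-condition} at $k=0$ are empty and there is nothing else to impose on the degree-zero piece), and every such element is already minuscule (take $\omega = 0$). For the inductive step, given $A\in\Dres^{\le\mu}$ with $\mu > 0$, the strategy is to build a product $B$ of minuscule elements whose top-weight component agrees with that of $A$; once this is done, $A - B \in \Dres^{<\mu}$, and the induction hypothesis finishes the argument.

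The construction of $B$ goes as follows. For $GL_{n+1}$ every dominant weight decomposes (non-uniquely) as a sum $\mu = \omega_{k_1}+\cdots+\omega_{k_s}$ of fundamental weights, and each $\omega_{k_j}$ is minuscule. The weight-$\mu$ component $A_\mu = a_\mu(\bs w)D^\mu$ is a rational function times $D^\mu$, and by the Weyl invariance of $A$ together with Lemma~\ref{lem:symmetry-condition} and the residue conditions~\eqref{eq:residue-condition}, the function $a_\mu$ lies in a specific subspace of $\Oc(T)_{\mathrm{loc}}$ determined by its allowed poles (at most simple, only along those $d_{\alpha,k}$ consistent with the shift structure) and its compatibility with the Weyl-conjugate components. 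The plan is to use a product of the form
\[
B = \Rc_{\omega_{k_1}}[f_1]\,\Rc_{\omega_{k_2}}[f_2]\cdots \Rc_{\omega_{k_s}}[f_s]
\]
with $f_j \in \Oc(T)^{\mathrm{Stab}(\omega_{k_j})}$. Its weight-$\mu$ component is obtained by selecting, from each factor $\Rc_{\omega_{k_j}}[f_j] = \sum_{[\sigma]} \sigma(f_j\widetilde D^{\omega_{k_j}})$, one Weyl translate $\sigma_j(\omega_{k_j})$ of its top weight, subject to $\sum_j \sigma_j(\omega_{k_j}) = \mu$. I will order the factors so that, among these contributions, there is a \emph{canonical leading term} coming from $\sigma_j = 1$ for all $j$, whose coefficient after commuting the $f_j(\bs w)$ past the $\widetilde D^{\omega_{k_j}}$'s is $F(\bs w) D^\mu$ with $F$ a prescribed expression in $f_1,\dots,f_s$ and the shift operators, invertible as a linear map from tuples $(f_j)$ to functions $F$ in a suitable sense. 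Solving the resulting triangular system for the $f_j$ given the target $a_\mu$ (and rescaling by the explicit factors $d_{\omega_{k_j}}(\bs w)$ encoding the difference between $D^{\omega_{k_j}}$ and $\widetilde D^{\omega_{k_j}}$) produces the desired minuscule dressings.

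The main obstacles are twofold. First, the weight-$\mu$ component of $B$ receives contributions not just from the canonical choice $\sigma_j = 1$, but from every tuple $(\sigma_j)$ of coset representatives with $\sum_j \sigma_j(\omega_{k_j}) = \mu$; these extra contributions live in $\Dres^{<\mu'}$ for Weyl-smaller dominant weights $\mu' \le \mu$ and must be absorbed into the inductive remainder. Handling this carefully requires a secondary induction within the $W$-orbit structure of weights $\le\mu$, organizing the $\sigma_j$ by the dominance order on the partial sums, so that after subtracting $B$ the residual element's top weight genuinely drops. Second, one must verify that $B$ actually lies in $\Dres$ for the solved values of $f_j$: the residue conditions on $B$ at each $d_{\alpha,k}$ must be checked using the analogous conditions on the leading component $a_\mu$ and Lemma~\ref{lem:res-preserved}-type compatibility; closure of $\Dres$ under multiplication, already established, guarantees that at least $B$ lies in $\Dc_q(T)^W_{\mathrm{loc}}$ with at most simple poles, but one still needs the chosen $f_j$ to be regular enough (polynomial on the relevant stabilizer-quotient of $T$).

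Once $A - B \in \Dres^{<\mu}$ is in hand, the induction hypothesis expresses $A - B$ as a sum of products of minuscule elements, and hence so does $A$. This completes the proof. I expect Step 2, verifying that the leading-component system admits a minuscule solution whose residue data matches that of $A$, to be the main technical obstacle; it relies on a nontrivial compatibility between the algebraic structure of the $\Rc_\omega[f]$ and the residue relations~\eqref{eq:residue-condition}, and mirrors the analogous Hecke-algebraic argument from~\cite{GKV97} invoked just before Definition~\ref{def:Dres}.
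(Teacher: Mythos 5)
Your proposal follows the same high-level strategy as the paper: identify the top-weight component $A_\mu = f(\bs w)\widetilde D^\mu$, subtract a product of dressed minuscule elements whose leading term matches it, and induct. However there are two issues worth flagging, one a bookkeeping problem and one a genuine gap.

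\textbf{The induction scheme.} You propose to induct on the dominant weight $\mu$ in the filtration $\Dres=\bigcup_\mu\Dres^{\le\mu}$, but dominance is only a partial order on $\Prm^+$, so ``induction on $\mu$'' is not literally well-defined; moreover, a general $A$ can have \emph{several} maximal (incomparable) top weights simultaneously, and subtracting one leading term leaves the others untouched. The paper avoids both problems by inducting on a scalar invariant: the \emph{total height} $\sum_i\langle\rho,\mu_i\rangle$ summed over all maximal weights. Subtracting the appropriate products strictly decreases this natural number. Your secondary-induction device for controlling lower-order contamination is an unnecessary complication once you induct on total height: the extra terms from $\sigma_j\ne 1$ all live in degree strictly below $\mu$ and are absorbed automatically.

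\textbf{The genuine gap.} You flag as your ``main technical obstacle'' the problem of solving for the dressings $f_j$ so that the product $B=\prod_j\Rc_{\omega_{k_j}}[f_j]$ matches the leading coefficient $f(\bs w)$, and you anticipate this to require some subtle compatibility with the residue relations. But the residue compatibility concern is vacuous: each $\Rc_{\omega}[f_j]$ is in $\Dres$ by definition (it lies in $\Dres^{\le\omega}$), and you have already established that $\Dres$ is closed under multiplication, so $B\in\Dres$ automatically. The actual crux is purely about the commutative ring $\Oc(T)$: you must show that the top coefficient $f(\bs w)\in\Oc(T)$ of $A_\mu$ (whose pole structure you have correctly constrained) can be written as a $\Z[q^{\pm1}]$-linear combination of products of functions $f_j\in\Oc(T)^{S_{k_j}\times S_{n+1-k_j}}$. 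This comes down to the algebraic fact — invoked but not proved in the paper either — that $\Oc(T)=\Z[w_1^{\pm 1},\dots,w_{n+1}^{\pm 1}]$ is generated as a ring by the union of the $S_j\times S_{n+1-j}$-invariant subrings for $1\le j\le n+1$. Without identifying and invoking this generation statement (which is where the triangular-system heuristic would have to land), your ``solve for the $f_j$'' step has no justification; this is the missing ingredient you would need to supply to close the argument.
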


\begin{proof}
Given a general element $A$ of $\Dres$, let us consider the finite set of summands $A_\mu$ of $A$ having maximal (in dominance order) internal degree as elements of $\Dc_q(T)_\loc$. Since $A$ is Weyl-invariant, this gives us a finite set of dominant weights $\mu_i\in \Prm^+$. Define the total \emph{height} of $A$ to be the sum of the heights $\langle \rho,\mu_i\rangle$ of each of these weights. %Consider a filtration of $\Dc_q(T)_{loc}$ by subspaces
%$$
%\Dc_q^{\eta,\mu}(T)_{loc} = \bigcup_{\eta \le \nu \le \mu} \Dc_q^\nu(T)_{loc},
%$$
%where $\eta \le \nu$ if $\nu-\eta \in P_+$. Then we also have an induced filtration
%$$
%\Dres=\bigcup_{\eta, \mu \in P}\Dres^{\eta,\mu}, \qquad \Dres^{\eta,\mu} = \Dres \cap \Dc_q^{\eta,\mu}(T)_{loc}.
%$$
%We shall argue by induction on $\mu-\eta$. For the base of induction, note that $\Dres^{\mu,\mu}=0$ unless $\mu_0=\mu_n$, in which case $\Dres^{\mu,\mu} = \Dc_q^0(T)^W D^\mu = \Dc_q^0(T)^W \widetilde D^\mu$. 
Now fix one such weight $\mu$.
%, and let $A_\mu$ Consider an element
%$$
%A = A_\mu + \sum_{\eta \le \nu<\mu}A_\nu \in \Dres^{\eta,\mu}.
%$$ 
The residue conditions~\eqref{eq:residue-condition} imply that the summand $A_\mu$ is regular except possibly at the divisors $d_{\alpha,k}$ with $0 \le k < \ha{\alpha,\mu}$, at which $A$ may have simple poles. Indeed, let us first assume that $k\ge\ha{\alpha,\mu}$, which implies $s_\alpha(\mu)+k\alpha \ge \mu$. Then the condition~\eqref{eq:residue-condition} together with the dominance-maximality of $A_\mu$ among the summands of $A$ implies that $\res_{\alpha,k}(A_\mu)=0$. On the other hand, if $k<0$ then $s_\alpha(\mu)+k\alpha=s_\alpha(\mu-k\alpha)$ belongs to the $W$-orbit of the element $\mu-k\alpha > \mu$. By the $W$-invariance of $A$ we have $A_{s_\alpha(\mu)+k\alpha}=s_\alpha(A_{\mu-k\alpha})=0$, and hence once again $\res_{\alpha,k}(A_\mu)=0$ by~\eqref{eq:residue-condition}. This shows the summand $A_\mu$ can indeed only have (simple) poles at the divisors $d_{\alpha,k}$ with $0 \le k<\ha{\alpha,\mu}$.

It follows that $A_\mu$ has the form $f(\bs w) \widetilde D^\mu$ for some $f \in \Oc(T)$. But since $\Oc(T)$ is generated as an algebra by the subrings of $S_j\times S_{n+1-j}$-symmetric functions, there exists a linear combination of products of minuscule elements of the form
%We then factor
%$$
%f(\bs w) \widetilde D^\mu = f(\bs w) \widetilde D^{\mu_n \omega_{n+1}} \prod_{j=1}^{n} \widetilde D^{\ha{\alpha_j,\mu}\omega_j},
%$$
%where $\omega_j = \epsilon_0 + \ldots + \epsilon_{j-1}$, and
$$
{\Rc_{\omega_{n+1}}^{|\mu|-\sum_j\ha{j\alpha_j,\mu}}[f_{n+1}] \prod_{j=1}^n \Rc_{\omega_j}^{\ha{\alpha_j,\mu}}[f_j] }
%\Sym\hr{f(\bs w) \widetilde D^{\mu_n \omega_{n+1}}} \prod_{j=1}^n \Sym\hr{\widetilde D^{\ha{\alpha_j,\mu}\omega_j}} \in \Dres^{\eta,\mu}
$$
whose leading term coincides with $A_\mu$, and with all other summands having degree strictly less than $\mu$ in dominance order. So subtracting this sum of products of minuscules from $A$ we obtain a new element $A'\in \Dres$ with strictly smaller total height. Hence by induction it follows that $\Dres$ is generated by the minuscule elements. 
\end{proof}

\subsection{The algebra $\Dres$ as a Coulomb branch}
\label{sec:coulomb}
Using Lemma~\ref{lem:minuscule-generation} we can identify the algebra $\mathcal{D}_{\mathrm{res}}$ with the quantized BFN Coulomb branch ring $K_{G[\![z]\!]\rtimes{\mathbb{C}}^\times_{2:1}}(\mathrm{Gr}_{GL_{n+1}})$ associated to the pair $(G,N)=(GL_{n+1},0)$, following the setup described in~\cite[Appendix A]{BFN16b}. Here $\mathrm{Gr}_{GL_{n+1}}=G(\!(z)\!)/G[\![z]\!]$ refers to the affine Grassmannian of $GL_{n+1}$, and the ring structure is defined by convolution, see~\cite{BFN16a,CW18} for details.

As in~\cite{FT17}, we work equivariantly with respect to the double cover $\mathbb{C}^\times_{2:1}$ of the loop rotation torus, and identify the corresponding equivariant parameter with the parameter $q$ from the previous section.
\begin{remark}
\label{rmk:scalars}
In this paper, we use the Fock--Goncharov ``integral'' convention for quantization parameters, so that the quantum torus associated to the mutable sub-lattice $\Lambda_{\mathrm{mut}}$ is an algebra over $\mathbb{Z}[q^{\pm1}]$, rather than $\mathbb{Z}[q^{\pm\frac{1}{2}}]$ as would be the case in the convention of Berenstein--Zelevinsky~\cite{BZ05}. This means that our $q$ in the definition of $\mathcal{D}_{\mathrm{res}}$ (and later in the definition of $\Lbb_{\mathrm{Toda}}(GL_{n+1})$) corresponds to the parameter $\nu$  of~\cite{FT17} and~\cite{GY21}, and to the parameter $q^{-1}$ in~\cite{CW18}. The authors of the latter paper work equivariantly with respect to a 4-fold cover $\mathbb{C}^\times_{4:1}$ of the loop rotation group, and denote by $q^{\frac{1}{2}}$ the equivariant parameter for that cover. Recall from Convention~\ref{rmk:tori-roots} that we lift to a when we define the quantum tori for the moduli space $\Pc^{\diamond}_{SL_{n+1},S}$.
\end{remark} 
Recall the $k$-th fundamental coweight  $\omega_k=\sum_{r=1}^k\epsilon_r$ for $G=GL_{n+1}$, and $\mathrm{Gr}^{\omega_k}\subset\mathrm{Gr}_{GL_{n+1}}$ the corresponding closed $G[\![z]\!]$-orbit. These orbits are isomorphic to finite dimensional Grassmannians of $k$-dimensional quotients of $\mathbb{C}^{n+1}$, and each has an equivariant line bundle $\mathcal{L}_{\omega_k}$ given by the top exterior power of the tautological vector bundle $\mathcal{V}$ of quotients $\mathbb{C}^{n+1}/V$.  There is a similar tautological vector bundle $\widetilde{\mathcal{V}}$ whose fibers the subspaces of $\mathbb{C}^{n+1}$ given by the kernel of the quotient map. For any function of $n+1$ variables $f$ symmetric under $S_k\times S_{n+1-k}$, there is a  class $f(\mathcal{V},\widetilde{\mathcal{V}})$ defined in the usual way, with the $i$-th elementary symmetric polynomial corresponding to  $\Lambda^i\mathcal{V}$.

As shown in ~\cite{BFN16a,BFN16b,FT17}, the localization theorem in equivariant $K$-theory provides an embedding of the convolution algebra $K_{G[\![z]\!] \rtimes \mathbb{C}^\times_{2:1}}(\mathrm{Gr}_{GL_{n+1}})$ into a localization of the Coulomb branch ring associated to the pair $(T,0)$, where $T$ is  the maximal torus $T$ of $GL_{n+1}$. This latter ring coincides  with the algebra $\mathcal{D}_q(T)_{\mathrm{loc}}$ defined in the previous section.

\begin{lemma}
\label{lem:small-genset}
\begin{enumerate}
\item Localization in equivariant $K$-theory defines an isomorphism of algebras $K_{G[\![z]\!] \rtimes \mathbb{C}^\times_{2:1}}(\mathrm{Gr}_{GL_{n+1}})\simeq\mathcal{D}_{res}$. 
\item The algebra $\mathcal{D}_{res}$ is generated over $\mathbb{Z}[q^{\pm1}]$ by the minuscule elements $D_{\omega_{n+1}}^{\pm1}$ and $\{\mathcal{R}_{1}[w_1^m] \,\big|\, m\in\mathbb{Z}\}$.
\end{enumerate}
\end{lemma}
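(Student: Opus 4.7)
My plan for part (1) is to identify $\Dres$ explicitly with the image of equivariant $K$-theory localization, matching dressed minuscule elements with $K$-theory classes on the closed orbits $\mathrm{Gr}^{\omega_k}\subset\mathrm{Gr}_{GL_{n+1}}$. Each such orbit is a smooth projective Grassmannian whose $T$-fixed points are indexed by $W/\mathrm{Stab}(\omega_k)$, and whose tangent weights at each fixed point are a collection of single linear factors $(1-q^{2\ell}w_\alpha)$ thanks to the minuscularity of $\omega_k$. Atiyah--Bott fixed-point localization applied to a $T$-equivariant class whose Chern character is a $\mathrm{Stab}(\omega_k)$-symmetric polynomial $f(\mathcal{V},\widetilde{\mathcal{V}})$ then yields exactly the minuscule element $\mathcal{R}_{\omega_k}[f]$: the simple-pole condition follows from smoothness of $\mathrm{Gr}^{\omega_k}$, and the residue identities~\eqref{eq:residue-condition} encode the standard Atiyah--Bott compatibility between neighboring fixed points joined by $T$-invariant $\mathbb{P}^1$'s. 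By Lemma~\ref{lem:minuscule-generation} these dressed minuscule elements generate $\Dres$, and they simultaneously generate the convolution algebra $K_{G[\![z]\!]\rtimes{\mathbb{C}}^\times_{2:1}}(\mathrm{Gr}_{GL_{n+1}})$ by the quantum $K$-theoretic Satake isomorphism, yielding the claimed identification.

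For part (2), first observe that $D_{\omega_{n+1}}=D_1\cdots D_{n+1}$ implements the uniform shift $\bs w\mapsto q^2\bs w$, which preserves every root ratio $w_\alpha$ and hence commutes with $\Delta(\bs w)$ and with each $\widetilde{D}^{\omega_k}$. This gives the factorization $\mathcal{R}_{\omega_k+l\omega_{n+1}}[f]=\mathcal{R}_{\omega_k}[f]\cdot D_{\omega_{n+1}}^l$, so that $D_{\omega_{n+1}}^{\pm1}$ absorbs all $\omega_{n+1}$-shifts and reduces us to producing $\mathcal{R}_{\omega_k}[f]$ for $1\le k\le n$ and $f\in\Oc(T)^{S_k\times S_{n+1-k}}$. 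Proceed by induction on $k$, examining the product $\mathcal{R}_{\omega_k}[g]\cdot\mathcal{R}_1[w_1^m]$: the contributions in which the new $D$-index is disjoint from the index set supporting $g$ produce a dressed minuscule element $\mathcal{R}_{\omega_{k+1}}[h_{g,m}]$ at leading dominance weight $\omega_{k+1}$, while the ``collision'' contributions (repeated indices) land in strictly lower dominance filtration pieces already available by the inductive hypothesis. The base case $k=1$ requires upgrading from the generators $\mathcal{R}_1[w_1^m]$ to all $\mathcal{R}_{\omega_1}[w_1^m\cdot g(w_2,\ldots,w_{n+1})]$; this is handled by examining the $\omega_1$-component of $q$-commutators $[\mathcal{R}_1[w_1^{m_1}],\mathcal{R}_1[w_1^{m_2}]]$, where the leading $2\omega_1$-part cancels by symmetry and the subleading $\omega_1$-piece supplies the new dressings.

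The main obstacle is a Pieri-type surjectivity claim: as $(g,m)$ varies, the dressings $h_{g,m}$ produced by the inductive step must span $\Oc(T)^{S_{k+1}\times S_{n-k}}$ over $\Z[q^{\pm1}]$. I would prove this by a filtration argument: grade $h_{g,m}$ by total $\bs w$-degree, match the leading piece of the map $(g,m)\mapsto h_{g,m}$ against the classical (commutative) Pieri rule for multiplying symmetric functions, whose surjectivity is standard, then extend to the full quantum setting by tracking $q$-corrections order by order. Integrality over $\Z[q^{\pm1}]$ follows from the fact that classical Pieri coefficients are integers of unit denominator together with a rigidity argument that prohibits denominators from appearing in $q$-deformed structure constants of the Coulomb branch.
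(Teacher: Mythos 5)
Your proposal diverges from the paper's actual argument in both halves, and each divergence opens a genuine gap.

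For part (1), the paper's proof is a short two-sided inclusion citing concrete external results. It cites Section~8(i) of~\cite{FT17} for the explicit localization formula on each minuscule orbit $\mathrm{Gr}^{\omega_k}$, matching $[\mathcal{L}_{\omega_k}^{\otimes m}]$ with $\mathcal{R}_k[w_1^m]$ and then varying the dressing class $f(\mathcal{V},\widetilde{\mathcal{V}})$ to hit every minuscule element of $\Dres$; combined with Lemma~\ref{lem:minuscule-generation} this gives $\Dres\subseteq\mathrm{image}$. For the opposite inclusion $\mathrm{image}\subseteq\Dres$ one needs a generating set for the Coulomb branch ring, and here the paper invokes~\cite{CW18} Lemma~2.18 and Corollary~2.21. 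Your appeal to a ``quantum $K$-theoretic Satake isomorphism'' to show the minuscule classes generate the convolution algebra is where your argument has a hole: no Satake-type equivalence directly produces a ring generating set for $K_{G[\![z]\!]\rtimes\mathbb{C}^\times_{2:1}}(\mathrm{Gr})$ over $\mathbb{Z}[q^{\pm1}]$. The generation by $\{[\mathcal{L}_{\omega_1}^{\otimes m}]\}$ together with $[\mathcal{O}_{\mathrm{Gr}_{\omega_{n+1}}}]^{-1}$ is a specific (and not trivial) integral statement proved by hands-on quantum determinant calculations in Cautis--Williams, not extracted from any big categorical theorem. Your Atiyah--Bott/simple-pole reasoning for why individual minuscule classes land in $\Dres$ and hit all of it is in the right spirit (and is effectively what the FT17 citation supplies), but without the CW18 input the isomorphism is only established modulo the generation claim.

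For part (2), you missed the point that it is an immediate corollary of part (1). Once localization is an isomorphism onto $\Dres$ and the preimages of $D_{\omega_{n+1}}^{\pm1}$ and $\mathcal{R}_1[w_1^m]$ are the CW18 generators $[\mathcal{O}_{\mathrm{Gr}_{\omega_{n+1}}}]^{\pm1}$ and $[\mathcal{L}_{\omega_1}^{\otimes m}]$, generation on the $\Dres$ side follows with no further work. Instead you attempt a direct inductive Pieri argument inside $\Dres$, and you yourself identify its core step --- a surjectivity claim for the dressings $h_{g,m}$ over $\mathbb{Z}[q^{\pm1}]$ --- as ``the main obstacle,'' which you then address only with a sketchy filtration-and-rigidity heuristic. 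That integral Pieri-surjectivity is precisely the hard combinatorics that the CW18 citation replaces; tracking $q$-corrections order by order does not control denominators, and the ``rigidity argument that prohibits denominators'' is unspecified, so as written part (2) is not proved. A self-contained Pieri argument of the sort you sketch would essentially amount to reproving Cautis--Williams, which is a serious undertaking that your proposal does not carry out.
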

\begin{proof}
In Lemma 2.18 and Corollary 2.21 of~\cite{CW18}, it is proven that the classes $\{[\mathcal{L}_{\omega_1}^{\otimes m}]\}_{m \in \mathbb{Z}}$, together with the inverse of the class $[\mathcal{O}_{\mathrm{Gr}_{\omega_{n+1}}}]$, generate $K_{G[\![z]\!] \rtimes\mathbb{C}^\times_{2:1}}(\mathrm{Gr}_{GL_{n+1}})$ as an algebra over $\mathbb{Z}[q^{\pm1}]$. Next, the image under the localization map of any class supported on a closed orbit $\mathrm{Gr}_{\omega_k}$ can be computed explicitly as in Section 8(i) of~\cite{FT17}. In particular, the image of the class $[\mathcal{L}_{\omega_k}^{\otimes m}]$ coincides with the minuscule element $\mathcal{R}_{k}[w_1^m]$, while the class $[\mathcal{O}_{\mathrm{Gr}_{\omega_{n+1}}}]$ is mapped to $D_{\omega_{n+1}}\in \mathcal{D}_{res}$. Thus it follows that the image of the Coulomb branch ring under the localization map is contained in $\mathcal{D}_{res}$. On the other hand, letting $f$ range over all $S_k\times S_{n+1-k}$-symmetric polynomials in the bundles $\mathcal{V},\widetilde{\mathcal{V}}$ it follows that the image of $K_{G[\![z]\!] \rtimes \mathbb{C}^\times_{2:1}}(\mathrm{Gr}_{GL_{n+1}})$ contains all minuscule elements of $\mathcal{D}_{res}$, which generate the latter as an algebra by Lemma~\ref{lem:minuscule-generation}. This shows that the image of the Coulomb branch ring under localization is precisely $\mathcal{D}_{res}$, and hence that the latter is generated by the elements listed in point (2).
\end{proof}
A more conceptual proof of part (1) of the previous lemma for any reductive group $G$ will be given in a forthcoming paper by D. Klyuev.

\subsection{Algebraic Whittaker transform}
\label{sec:alg-whit}
In this section we formulate an algebraic version of the Whittaker transform for the $GL_{n+1}$ q-difference open Toda chain, which will identify the algebras $\mathbb{L}_{Toda}$ and $\mathcal{D}_{res}$. It can be read as an algebraic counterpart to the analytic construction from~\cite{SS18}.

Let $\Dcr^{n+1}$ and $\Dcr^{n+1;\mathrm{int}}$ be respectively the spaces of all $\Q(q)$- and $\Z[q^{\pm1}]$-valued functions on the $GL_{n+1}$ weight lattice $\Prm$. When the underlying weight lattice is fixed, we shall omit the lower index and write $\Dcr = \Dcr^{n+1}$. Denote by $\Fcr$ and $\Fcr^{\mathrm{int}}$ their subspaces of finitely-supported functions. The following formulas define faithful representation of the quantum torus $\Tc_{GL_{n+1}}$ of the initial seed of the Toda chain cluster algebra on $\Fcr$:
\beq
\label{eq:toda-torus-F-action}
(P_{\epsilon_i} \phi)(\mu) = -q^{-1}\phi(\mu-\eps_i), \qquad (X_{\epsilon_i} \phi)(\mu) = (-q)^{2\mu_i+\rho_i} \phi(\mu),
\eeq
where
$$
\rho = (0,-1,\ldots,-n).
$$
%and
%$$
% (X_{\eps_1}\phi)(\mu) = q^{2\mu_1}\phi(\mu).
%$$
In particular, for a simple root $\alpha_i = \eps_i - \eps_{i+1}$ we have
\beq
\label{eq:Xalpha-action}
(X_{\alpha_i} \phi)(\mu) = - q^{2(\mu_i-\mu_{i+1})+1} \phi(\mu).
\eeq
%In terms of the translation $T_\lambda(f)(\mu) = f(\mu+\la)$ and multiplication operators $\{q^{2\lambda_i}\}_{i=1}^{n+1}$ on $M$, the action of $X_{\alpha_i}$ reads
%\begin{align}
%\label{eq:tta2}
%P_{\eps_i} \mapsto -q^{-1}T_{-\eps_i},\qquad X_{\alpha_i} \mapsto -q^{2(\lambda_{i}-\lambda_{i+1})+1}, \quad i=1,\ldots, n.
%\end{align}
Let 
\begin{align}
\label{eq:Vdef}
\Vcr^{\mathrm{int}}=\left\{ \phi\in \Fcr^{\mathrm{int}} \,\big|\, \mu\notin \Prm^+ \implies \phi(\mu)=0\right\}
\end{align}
be the $\mathbb{Z}[q^{\pm1}]$-submodule of functions vanishing outside the dominant cone $\Prm^+\subset \Prm$. We identify elements of the cone $\Prm^+=\{\lambda_1\geq\lambda_2\geq\cdots\geq \lambda_{n+1}\}$ with integer partitions in the usual way. Clearly the space $\Vcr^{\mathrm{int}}$ is not preserved by the $\mathcal{T}_{GL_{n+1}}$-action on $\Fcr$, since the action involves all translation operators $P_\lambda$, not just those corresponding to dominant $\lambda$.

\begin{lemma}
\label{lem:pres}
The $\mathbb{Z}[q^{\pm1}]$-submodule $\Vcr^{\mathrm{int}}\subset\Fcr^{\mathrm{int}}$ is preserved by the action of the Toda universal Laurent ring $\mathbb{L}_{Toda}(GL_{n+1})$, and defines a faithful representation of $\mathbb{L}_{Toda}(GL_{n+1})$.
\end{lemma}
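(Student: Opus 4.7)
The lemma has two parts --- preservation of $\Vcr^{\mathrm{int}}$ under the $\Lbb_{\mathrm{Toda}}(GL_{n+1})$-action, and faithfulness of the resulting action --- which I would address separately. Starting with faithfulness, suppose $A\in\Lbb_{\mathrm{Toda}}(GL_{n+1})$ annihilates $\Vcr^{\mathrm{int}}$. Expanding $A=\sum_{\nu,\mu}c_{\nu,\mu}\,:\!P_\nu X_\mu\!:$ in the initial cluster and applying the action formulas~\eqref{eq:toda-torus-F-action}, one computes that $(A\delta_\lambda)(\lambda+\nu)$ equals, up to a nonzero $\nu$-dependent constant, $\sum_\mu c_{\nu,\mu}(-q)^{2\langle\mu,\lambda\rangle+\langle\mu,\rho\rangle}$. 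For each fixed $\nu$ this is a Laurent polynomial in the variables $z_i=(-q)^{2\lambda_i}$ which, by hypothesis, vanishes at every $\lambda\in\Prm^+$. Applying iteratively --- one coordinate $\lambda_i$ at a time, varied through an infinite range compatible with dominance while fixing the others --- the standard fact that a one-variable Laurent polynomial over $\mathbb{Q}(q)$ vanishing at infinitely many values of $(-q)^{2k}$ must be identically zero, we conclude that all coefficients $c_{\nu,\mu}$ are zero, hence $A=0$.

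For preservation I would use the Goodearl--Yakimov generating set recalled just above the lemma: $\Lbb_{\mathrm{Toda}}(GL_{n+1})$ is generated over $\Z[q^{\pm 1}]$ by $\{\tau^m(A_{t_1})\}_{m\in\Z}$ together with the inverses of the frozen variables $A_{s_{n+1}}$, $A_{t_{n+1}}$. The frozen generators and their inverses are handled directly: $A_{t_{n+1}}=X_{\omega_{n+1}}$ is diagonal, and $A_{s_{n+1}}^{\pm1}$ is a scalar multiple of translation by $\pm\omega_{n+1}=\pm(1,\ldots,1)$, which preserves $\Prm^+$. The base case $\tau^0(A_{t_1})=X_1$ is also diagonal. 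For $m\ne 0$ I would compute $\tau^m(A_{t_1})$ explicitly in the initial cluster by iterating~\eqref{eq:tau}, using the commutativity of $X_{\eps_1}$ with each $X_{-\alpha_j}$ together with the $q$-difference identity $\Psi_q(q^{-2}X)^{-1}=(1+q^{-1}X)\Psi_q(X)^{-1}$ to reduce every conjugation by $\Psi_q(X_{-\alpha_j})^{\pm 1}$ to multiplication by finitely many $q$-binomial factors. The key structural property to verify is that in the resulting finite Laurent polynomial, whenever a monomial would send $\delta_\lambda$ with $\lambda\in\Prm^+$ on a wall $\langle\alpha_i,\lambda\rangle=0$ to a $\delta_\mu$ with $\mu\notin\Prm^+$, the corresponding coefficient carries a vanishing $q$-integer factor of the form $1-q^{2\langle\alpha_i,\lambda\rangle}$, producing the required cancellation.

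The main obstacle is making this cancellation argument precise and uniform in $m\in\Z$ and in the rank $n$; the brute-force bookkeeping of explicit Laurent expansions of $\tau^m(A_{t_1})$ becomes combinatorially heavy in this generality. A cleaner route, which I would pursue, is to show that $\tau$ can be implemented on $\Vcr^{\mathrm{int}}$ by an invertible ladder operator $T$ --- not itself an element of $\Lbb_{\mathrm{Toda}}$ --- such that $\tau(A)\phi=TAT^{-1}\phi$ for every $A\in\Lbb_{\mathrm{Toda}}$ and $\phi\in\Vcr^{\mathrm{int}}$; preservation of $\tau^m(A_{t_1})$ then follows from that of $A_{t_1}=X_1$ by induction on $|m|$. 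This reformulation would serve as the natural algebraic counterpart of the spectral analysis of the $q$-difference Toda chain that motivates the section as a whole, and it dovetails with the identification $\Lbb_{\mathrm{Toda}}(GL_{n+1})\simeq\Dres$ that the subsequent Whittaker transform is intended to produce.
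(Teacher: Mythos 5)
Your faithfulness argument is essentially the one the paper invokes in its last sentence (a one-variable Laurent polynomial in $(-q)^{2\lambda_i}$ vanishing at infinitely many points must vanish identically), so that part is fine.

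The preservation argument, however, takes a route genuinely different from the paper's and is not completed in either of its two proposed forms. The paper does not work with any generating set of $\Lbb_{\Toda}(GL_{n+1})$: instead it expands an arbitrary $A\in\Lbb_{\Toda}$ in the initial chart as $A=\sum_\nu f_\nu(X)P_\nu$ and applies the one-step Laurentness criterion (Theorem~\ref{thm:1-step}/Corollary~\ref{cor:1-step}) at the single mutable direction $s_i$. Since the automorphism part of mutation at $s_i$ is conjugation by $\Psi(X_{\alpha_i})$, and $\Psi(X_{\alpha_i})P_\nu\Psi(X_{\alpha_i})^{-1}$ produces the factor $\prod_{k=0}^{\nu_{i+1}-\nu_i-1}(1+q^{2k+1}X_{\alpha_i})^{-1}$ when $\nu_i<\nu_{i+1}$, universal Laurentness forces the corresponding product to divide $f_\nu(X)$ in $\mathbb{Z}[q^{\pm1}][X^{\pm1}]$. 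Evaluating at $(-q)^{2\mu+\rho}$ when $\mu\notin P^+$ but $\mu-\nu\in P^+$ then makes one of those factors vanish, which gives preservation uniformly for all $A\in\Lbb_{\Toda}$ in a single short computation. By contrast, your first route via the Goodearl--Yakimov generators $\{\tau^m(A_{t_1})\}$ requires controlling the Laurent expansion of every $\tau^m(A_{t_1})$, which you yourself flag as combinatorially heavy and do not carry out; your second route presupposes an invertible ladder operator $T$ on $\Vcr^{\mathrm{int}}$ realizing $\tau$ by conjugation, but you produce no candidate for $T$ and no reason why $T^{\pm1}$ should preserve $\Vcr^{\mathrm{int}}$ (the natural candidates, e.g.\ infinite products of quantum dilogarithms as in the Baxter operator, live in a completion of the quantum torus and it is not clear they act on $\Vcr^{\mathrm{int}}$ without further argument). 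As it stands, the preservation claim is not established; I would recommend replacing the generator-based plan with the paper's direct argument via one-step mutation, which both fills the gap and is substantially shorter.
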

\begin{proof}
The condition for a function $\phi\in \Fcr$ to lie in $\Vcr^{\mathrm{int}}$ has the form
$$
\mu_{i}-\mu_{i+1} <0 \implies \phi(\mu)=0.
$$
Given $A\in\Lbb_{Toda}$, we expand it as \begin{align}
\label{eq:a-decomp}
A = \sum_{\nu\in \Prm}f_\nu(X)P_\nu,
\end{align}
so that 
$$
(A\cdot \phi)(\mu) = \sum_{\nu\in \Prm}(-q)^{|\nu|}f_\nu\hr{(-q)^{2\mu+\rho}} \phi(\mu-\nu).
$$
The preservation of $\Vcr^{\mathrm{int}}$ by $\Lbb_{Toda}$ will follow if we can show that for all $\mu,\nu\in \Prm$ such that $\mu\notin \Prm^+$ and $\mu-\nu\in \Prm^+$ we have $f_\nu\hr{(-q)^{2\mu+\rho}}=0$.
The automorphism part of mutation in direction $s_i$ consists of conjugation by $\Psi(Y_{-s_i})=\Psi(X_{\alpha_i})$. Hence if $\langle \alpha_i,\nu\rangle=\nu_{i}-\nu_{i+1}\geq0$ we see that $\Ad_{\Psi(X_{\alpha_i})}(P_\nu)$ remains a Laurent polynomial, while 
\beq
%\label{eq:divisable}
\nu_{i}<\nu_{i+1}\implies \Psi(X_{\alpha_i}) P_\nu \Psi(X_{\alpha_i})^{-1} = \left(\prod_{k=0}^{\nu_{i+1}-\nu_{i}-1}\frac{1}{1+q^{2k+1}X_{\alpha_i}}\right) P_\nu.
\eeq
 Hence $A$ remains Laurent under mutation at $s_i$ if and only if for each $\nu$ with $\nu_i<\nu_{i+1}$, the coefficient $f_\nu$ in~\eqref{eq:a-decomp} has the form
\beq
\label{eq:fnu-factor}
f_\nu(X) = g_\nu(X)\prod_{k=0}^{\nu_{i+1}-\nu_{i}-1}\left(1+q^{2k+1}X_{\alpha_i}\right) \qquad\text{with}\qquad g_\nu(X)\in \mathbb{Z}[q^{\pm1}][X^{\pm1}].
\eeq
Now let $\mu,\nu\in \Prm$ be such that $\mu\notin \Prm^+$ and $\mu-\nu\in \Prm^+$. Then there is some $i$ such that $\mu_{i}-\mu_{i+1}<0$, and the condition $\mu-\nu\in \Prm^+$ says that $\mu_{i}-\mu_{i+1}+\nu_{i+1}-\nu_{i}\geq0$, so we have
$$
\nu_{i}-\nu_{i+1}\leq \mu_{i}-\mu_{i+1}<0
$$
and by~\eqref{eq:fnu-factor} we see that $\prod_{k=0}^{\mu_{i+1}-\mu_{i}-1}\left({1+q^{2k+1}X_{\alpha_i}}\right)$ divides $f_\nu(X)$. So  $f_\nu(X)$ corresponds to a multiplication operator by a Laurent polynomial in the functions $(-q)^{2\mu_i+\rho_i}$ divisible by
$$
\prod_{k=1}^{\mu_{i+1}-\mu_{i}}\left({1-q^{2(\mu_{i}-\mu_{i+1}+k)}}\right)
$$
which implies $f_\nu\hr{(-q)^{2\mu+\rho}}=0$ as claimed. The faithfulness follows by observing that no nonzero element of $\mathcal{T}_{GL_{n+1}}$ can annihilate all of $V$, which is proved by a standard argument used, for example, in~\cite[Lemma 4.1]{AC+24}.
\end{proof}
The module $\Vcr^{\mathrm{int}}$ has a basis consisting of delta-functions $\{\delta_\lambda\}_{\lambda\in\Prm^+}$ supported at a single partition $\lambda\in \Prm^+$:
$$
\delta_\lambda(\mu) =
\begin{cases} 1 &\text{if} \;\; \lambda=\mu, \\
0 & \text{otherwise.}
\end{cases}
$$
We are going to identify $\Vcr^{\mathrm{int}}$ with the $\mathbb{Z}[q^{\pm1}]$-module of symmetric Laurent polynomials in $w_1,\ldots, w_{n+1}$ using the basis $\{W_\lambda\}_{\lambda\in\Prm^+}$ of \emph{$q$-Whittaker polynomials}, which can be constructed recursively as follows. 

First we set up some notations for various rings of iterated formal Taylor and Laurent series. Given an ordered list of commuting indeterminates $\bs w = (w_1,\ldots, w_l)$, we consider the iterated formal Laurent series rings defined inductively by
$$
\Kc_{w_1} = \Q(q)((w_1)),\quad \Kc_{w_1,\ldots, w_l} = \Kc_{w_1,\ldots, w_{l-1}}((w_l)).
$$
Equivalently, elements of $\Kc_{\bs w}$ are formal infinite $\Q(q)$-linear combinations 
$$
\psi = \sum_{I=(i_1,\ldots,i_l)\in\mathbb{Z}^l}c_Iw_1^{i_1}\cdots w_l^{i_l}, \quad c_I\in \mathbb{Q}(q).
$$
where the set $I$ is well-ordered with respect to the reverse-lexicographic order on $\mathbb{Z}^l$. Examples of elements of $\mathcal{K}_{\bs w}$ are the series expansions of rational functions in the regime $|w_l|<\ldots<|w_2| < |w_1|<1$. We also consider $\mathcal{K}^{\mathrm{int}}_{\bs w}$ where we replace the initial choice of coefficient field $\Q(q)$ by the ring $\Z[q^{\pm1}]$. Finally, we write 
$$
\Kc_{\bs w;z} = \Kc_{\bs w}[[z]].
$$

Consider the space
$$
\Dcr_{\bs w} = \Dcr \otimes_{\Q(q)} \Kc_{\bs w}
$$
of all, not necessarily finitely supported, $\Kc_{\bs w}$-valued functions on $\Prm$ along with its $\Z[q^{\pm1}]$-submodule of all $\Kc_{\bs w}^{\mathrm{int}}$-valued functions
$$
\Dcr_{\bs w}^{\mathrm{int}} \subset \Dcr_{\bs w}.
$$
%If $\bs v = \bs w = (w_1,\ldots, w_{n+1})$, an example of such a function is
%$$
%\Omega(\lambda) = \prod_{k=1}^{n+1} w_k^{\lambda_k}.
%$$
Note that the action~\eqref{eq:toda-torus-F-action} of the quantum torus $\Tc_{GL_{n+1}}$ on $\Fcr$ extends naturally to its action on $\Dcr_{\bs w}$. 

If $z$ is a formal indeterminate, write $\Tc_z$ for the ring of power series in $z$ with coefficients in $\Tc_{GL_{n+1}}\otimes_{\Z[q^{\pm1}]}\Q(q)$.  When $z$ is a distinct indeterminate from the $\bs w$, elements of $\Tc_z$ give well-defined operators on $\Dcr_{\bs w;z}$. 
%The same is true of elements of ${\mathcal T}^{\mathbb{Q}(q)}_{GL_{n+1}}[[zw^I]]$ for any Laurent monomial $w^I$.
 On the other hand, because elements of $\Dcr_{\bs w}$ do not have finite support, there is no well-defined action of $\Tc_{w_k}$ on $\Dcr_{\bs w}$. For example, one cannot make sense of the action of $\sum_{m\ge0} w_k^m P_j^m$ on the distribution $\psi(\la) = w_k^{\la_j}$. Let us set $\bs w = (w_1, \ldots, w_{n+1})$ and consider the subspace $\Dcr^{(n,1)}_{\bs w} \subset \Dcr^{n+1}_{\bs w}$ consisting of distributions $\psi$ for which there exists a function $M_\psi \colon \Z \rightarrow \Z$ such that
$$
\psi(\la) = \sum_{m>M_\psi(\la_{n+1})} w_{n+1}^m g_{\la,m}(w_1,\ldots,w_n).
$$
In other words, we require that for each integer $d$, the set of all exponents of $w_{n+1}$ in $\psi(\la)$ has a lower bound, as $\la$ ranges over all partitions with $\la_{n+1}=d$. In particular, $\Dcr^{(n,1)}_{\bs w}$ contains functions of the form
$$
\psi(\la_1, \ldots, \la_{n+1}) = f(\la_{n+1})g(\la_1, \ldots, \la_n) \quad\text{where}\quad f\in \Dcr^1_{w_{n+1}} \quad\text{where}\quad g\in\Dcr^n_{w_1, \ldots, w_n}.
$$
Note that the subalgebra $\Tc^{(n,1)}_{w_{n+1}} \subset \Tc_{w_{n+1}}$ of elements commuting with $X_{n+1}$ has a well-defined action on $\Dcr^{(n,1)}_{\bs w}$. Similarly, we consider the subspace $\Dcr^{(n,1)}_{\bs w;z} \subset \Dcr_{\bs w;z}$ consisting of functions $\psi$ such that 
$$
\psi(\la) = \sum_{k\ge0}\sum_{m>M_\psi(k,\la_{n+1})} z^k w_{n+1}^m g_{\la,m,k}(w_1,\ldots,w_n).
$$
%where the lower bound $M(k,\la_{n+1})$ depends only on $k$ and $\la_{n+1}$.
Then $\Dcr^{(n,1)}_{\bs w;z}$ has a well-defined action of the algebra $\Tc^{(n,1)}_{w_{n+1};z}$, which consists of expressions
$$
A(w_{n+1},z)=\sum_{k\ge0} z^k A_k(w_{n+1}) F_k(P_{n+1})
$$
where $A_k(w_{n+1})\in \Tc^{(n,1)}_{w_{n+1}}$ and $F_k(P_{n+1}) \in \Q(q)[P_{n+1}^{\pm1}]$.

\begin{defn}
%Let $z$ be a formal indeterminate and write ${\mathcal T}^{\mathbb{Q}(q)}_{GL_{n+1}}[[z]]$ for the ring of power series in $z$ with coefficients in ${\mathcal T}_{GL_{n+1}}\otimes_{\mathbb{Z}[q^{\pm1}]}\mathbb{Q}(q)$. We write ${\mathcal T}^{\mathbb{Z}((q))}_{GL_{n+1}}[[z]]$ for the corresponding ring with $\mathbb{Z}((q))$ rather than $\mathbb{Q}(q)$-coefficients. 
Define the element $R_n(z) \in \Tc^{(n,1)}_z$ to be 
\begin{align}
\label{def:recursion-op}
R_n(z) &= \prod_{j=1}^{\substack{n \\ \longrightarrow}}\Psi(zP_{j})\Psi(zq^{-1}X_{\alpha_j}P_{j})\\
&=\Psi(zP_{1})\Psi(zq^{-1}X_{\alpha_1}P_{1})\cdots \Psi(zP_{n})\Psi(zq^{-1}X_{\alpha_n}P_{n}).
\end{align}
The $(GL_n,GL_{n+1})$-\emph{recursion operator} is defined by
\beq
\begin{aligned}
&\mathscr{R}_{n+1\leftarrow n}\colon \Dcr^n_{w_1,\ldots, w_n}\longrightarrow \Dcr^{n+1}_{w_1,\ldots, w_{n+1}},\\
%& \mathscr{R}_{n+1\leftarrow n}(\psi)(\la_1,\ldots, \la_{n+1})= w_{n+1}^{\lambda_{n+1}} (R_n(w_{n+1})\cdot \psi)(\la_1,\ldots, \la_n).
&\mathscr{R}_{n+1\leftarrow n}(\psi) =  R_n(w_{n+1})\cdot( w_{n+1}^{\lambda_{n+1}}\psi).
\end{aligned}
\eeq
\end{defn}
We can compute the Taylor coefficients of $R_n(z)$ using the following lemma:

\begin{lemma}
\label{lem:pentasum1}
Let $U,V$ generate a quantum torus $UV=q^2VU$, and let  $z_1,z_2$ be formal invertible indeterminates commuting with $U$, $V$, and each other. Then
\begin{align}
\label{eq:pentasum1}
(z_1V;q^2)^{-1}_\infty(-z_2UV;q^2)^{-1}_\infty 
&=\sum_{m\geq0}(-1)^m{q^{-m(1+m)}}\frac{(z_1^{-1}z_2U;q^{-2})_m}{(q^{-2};q^{-2})_m}z_1^mV^m. 
\end{align}
\end{lemma}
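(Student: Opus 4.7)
The plan is to verify this identity by expanding both sides into explicit double series in $V^m U^l$ (or equivalently $U^l V^m$ after commutation) and checking that the coefficients agree. Since $z_1$ and $z_2$ commute with everything, they can be treated as book-keeping parameters, and the whole computation is a variant of the noncommutative $q$-binomial theorem.

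First, on the left-hand side I would invoke the series representation $(x;q^2)_\infty^{-1}=\sum_{k\ge 0} x^k/(q^2;q^2)_k$ from the $q$-binomial theorem recalled just before the Lemma. Applied to $x=z_1V$ and $x=-z_2UV$, this yields
\[
(z_1V;q^2)^{-1}_\infty(-z_2UV;q^2)^{-1}_\infty
=\sum_{k,l\ge 0}\frac{(-1)^l z_1^k z_2^l}{(q^2;q^2)_k(q^2;q^2)_l}\,V^k(UV)^l.
\]
Using the relation $UV=q^2 VU$ repeatedly, one finds $(UV)^l=q^{-l(l-1)}U^lV^l$ and $V^kU^l=q^{-2kl}U^lV^k$, so that $V^k(UV)^l=q^{-l(l-1)-2kl}U^lV^{k+l}$. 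Re-indexing by $m=k+l$ then rewrites the left-hand side as
\[
\sum_{0\le l\le m}\frac{(-1)^l z_1^{m-l}z_2^l\,q^{l(l+1-2m)}}{(q^2;q^2)_l(q^2;q^2)_{m-l}}\,U^lV^m.
\]

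Next, I would expand the right-hand side by applying the $q$-binomial theorem at parameter $q^{-2}$: $(y;q^{-2})_m=\sum_{l=0}^{m}\binom{m}{l}_{q^{-2}}(-y)^l q^{-l(l-1)}$ with $y=z_1^{-1}z_2U$. To match denominators I would then rewrite $1/(q^{-2};q^{-2})_j$ using the elementary identity $(q^{-2};q^{-2})_j=(-1)^j q^{-j(j+1)}(q^2;q^2)_j$. Combining the resulting products of signs and $q$-powers with the explicit prefactor $(-1)^m q^{-m(1+m)}z_1^m$ in front of $V^m$, a routine but slightly tedious simplification of the exponent $-m(m+1)+2l+(m-l)(m-l+1)=l(l+1-2m)$ produces exactly the same coefficient $\frac{(-1)^l z_1^{m-l}z_2^l\,q^{l(l+1-2m)}}{(q^2;q^2)_l(q^2;q^2)_{m-l}}$ in front of $U^lV^m$ as obtained above for the left-hand side.

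Both sides live in $\mathcal{K}^{\mathrm{int}}_{z_1,z_2}\otimes\Tc(U,V)$ and are unambiguously determined by their expansions in $U^lV^m$, so the term-by-term match of coefficients concludes the proof. The only substantive difficulty is bookkeeping: keeping track of the signs coming from $(-z_2UV)^l$, from $(-y)^l$ in the finite $q$-binomial expansion, and from the two factors of $(-1)^j q^{-j(j+1)}$ produced when converting $(q^{-2};q^{-2})_j$ to $(q^2;q^2)_j$, and similarly collapsing the $q$-exponents $-l(l-1)-2kl$ on one side and $-m(m+1)+2l+(m-l)(m-l+1)$ on the other to the common expression $l(l+1-2m)$. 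No deeper identity — in particular, no pentagon relation for $\Psi_q$ — is required.
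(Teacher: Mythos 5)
Your proof is correct, and it takes essentially the same approach as the paper: expand the two Pochhammer inverses via the $q$-binomial theorem, commute $(UV)^l=q^{-l(l-1)}U^lV^l$, and re-index by $m=k+l$; the only cosmetic difference is that the paper resums the resulting coefficient of $V^m$ into the closed finite Pochhammer form via the $q$-binomial theorem, whereas you expand the right-hand side and match coefficients of $U^lV^m$ directly. The sign and $q$-exponent bookkeeping you record checks out.
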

\begin{proof}
 Taylor expanding both $q$-Pochhammer symbols and using that $(UV)^k = q^{k(1-k)}U^kV^k$, the finite sum computing the coefficient of $V^m$ in their product is evaluated using the $q$-binomial theorem to yield the formula in the Lemma.
\end{proof}
%\begin{lemma}
%\label{lem:pentasum1}
%Let $U,V$ generate a quantum torus $VU=q^2UV$, and let  $z_1,z_2$ be formal invertible indeterminates commuting with $U,V$ and each other. Then
%\begin{align}
%\label{eq:pentasum1}
%(-z_1V;q^2)_\infty(z_2UV;q^2)_\infty 
%&=\sum_{m\geq0}{q^{m(m-1)}}\frac{(z_1^{-1}z_2U;q^2)_m}{(q^2;q^2)_m}z_1^mV^m. 
%\end{align}
%\end{lemma}
%
%In particular, if $U=q^{2l}$ for $l\geq0$ then the sum~\eqref{eq:pentasum1} truncates at $m=l$, becoming
%$$
%(z_1V;q^2)^{-1}_\infty(z_2UV;q^2)^{-1}_\infty 
%&=\sum_{m\geq0}{q^{m(1-m)}}\frac{(z_1^{-1}z_2U;q^{-2})_m}{(q^{-2};q^{-2})_m}z_1^mV^m. 
%$$
%
%$$
%\frac{(q^{2p};q^2)_m}{(q^2;q^2)_m} = \binom{m+p-1}{m}_{q^2}, %= (-1)^mq^{m(m-1-p)}\binom{p}{m}_{q^2}.
%$$
Recalling the $q$-binomial coefficients
$$
\binom{m}{k}_{q^2} = \frac{(q^2;q^2)_{m}}{(q^2;q^2)_{k}(q^2;q^2)_{m-k}},
$$
it follows from the Lemma that for $\psi\in\Dcr_{n;(w_1,\ldots,w_n)}$ we have
%\begin{align}
%%\label{eq:tcoeffs}
%\nonumber(R_n(z)\cdot \psi)(\la) = \sum_{r_1,\ldots, r_n\geq0}(-z)^{\sum_jr_j}&q^{\sum_jr_j(r_j+1)}\prod_{j=1}^n\binom{\lambda_{j+1}-\la_j+r_j-1}{r_j}_{q^2}\\
%&\times  \psi(\la_1-r_1,\ldots, \la_n-r_n) \in\mathscr{D}^{w_1,\ldots,w_n,z}.
%\end{align}
%Using the reciprocity 
%$$
%\binom{m-k-1}{m}_{q^2} = (-1)^mq^{m(m-1)-2mk}\binom{k}{m}_{q^2}
%$$
%we can further rewrite its action as
\beq
\label{eq:tcoeffs}
(R_n(z)\cdot \psi)(\la) = \sum_{r_1,\ldots, r_n\geq0}z^{\sum_jr_j}\prod_{j=1}^n\binom{\lambda_{j}-\la_{j+1}}{r_j}_{q^2}\psi(\la_1-r_1,\ldots, \la_n-r_n).
\eeq

\begin{remark}
%\begin{enumerate}
Since the $q$-binomial coefficients lie in $\Z[q^{\pm1}]$, it follows from the formula~\eqref{eq:tcoeffs} above that $\mathscr{R}_{n+1\leftarrow n}$ maps $\Dcr_{w_1,\ldots, w_n}^{n;\mathrm{int}}$ to $\Dcr_{w_1,\ldots, w_{n+1}}^{n+1;\mathrm{int}}$.
%
%\item Because of the multiplication by $w_{n+1}^{\la_{n+1}}$, the image of $\mathscr{D}^{(w_1,\ldots, w_n)}$ under $\mathscr{R}_{n+1\leftarrow n}$ does not lie in $\mathscr{D}^{(w_1,\ldots, w_n;w_{n+1})}$.
%\end{enumerate}
\end{remark}

\begin{defn}
The $GL_{n+1}$ Whittaker kernel is the element of $\Dcr_{w_1,\ldots,w_{n+1}}^{(n,1);\mathrm{int}}$ given by
$$
W^{(n+1)} = \mathscr{R}_{n+1\leftarrow n}\circ\mathscr{R}_{n\leftarrow n-1}\circ\cdots \circ\mathscr{R}_{2\leftarrow 1} (w_1^{\lambda_1}).
$$
\end{defn}

\begin{lemma}
If $\la\in\Prm^+$ the series $W^{(n+1)}(\la)$ is computed by the following subtraction-free finite sum
$$
W^{(n+1)}(\la) = \sum_{\nabla \in GZ_{n+1}} \prod_{\substack{1\leq i\leq n  \\ 1\leq j\leq i}}\binom{\la^{(i+1)}_{j}-\la^{(i+1)}_{j+1}}{\la^{(i+1)}_{j}-\la^{(i)}_{j}}_{q^2} \prod_{k=1}^{n+1}w_k^{|\la^{(k)}|- |\la^{(k-1)}| }
$$
over the set 
%$$
%GZ(\la) = \left\{(\la_{j}^{(a)})_{1\leq a\leq n+1}^{1\leq j\leq a}~\big|~ \la_{j}^{(a)}\geq \la_{j+1}^{(a)}, ~ \la_{j}^{(a)}\geq \la_{j}^{(a-1)}, \la_j^{(n+1)}=\la_j\right\}
%$$
$$
GZ(\la) = \left\{(\la_{j}^{(a)})_{1\leq a\leq n+1}^{1\leq j\leq a} \,\Big|\, \la_{j}^{(a)}\geq \la_{j}^{(a-1)}\geq \la_{j+1}^{(a)}, \; \la_j^{(n+1)}=\la_j\right\}
$$
of all Gelfand-Zeitlin arrays with top row given by the partition $\la$. In particular, for dominant $\lambda$ the element $W(\la)$ lies in the semiring $\Z_{\ge0}[q^{\pm1}][\bs w]$.

\end{lemma}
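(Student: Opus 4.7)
The proof will proceed by induction on $n\geq 0$, with base case $n=0$: here $W^{(1)}(\lambda_1) = w_1^{\lambda_1}$ by definition, and this matches the claim since the only array in $GZ((\lambda_1))$ consists of the single row $(\lambda_1)$, the empty binomial product yields $1$, and the monomial product reduces to $w_1^{\lambda_1}$.

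For the inductive step, I will combine the recursive definition $W^{(n+1)} = \mathscr{R}_{n+1\leftarrow n}(W^{(n)})$ with the explicit Taylor expansion~\eqref{eq:tcoeffs} to obtain
\begin{equation*}
W^{(n+1)}(\lambda) = \sum_{r_1,\ldots,r_n\geq 0}w_{n+1}^{\lambda_{n+1}+\sum_j r_j}\prod_{j=1}^n\binom{\lambda_j-\lambda_{j+1}}{r_j}_{q^2}W^{(n)}(\lambda_1-r_1,\ldots,\lambda_n-r_n).
\end{equation*}
Setting $\lambda_j^{(n+1)}=\lambda_j$ and $\lambda_j^{(n)}=\lambda_j-r_j$, the support condition $0\leq r_j\leq \lambda_j-\lambda_{j+1}$ for nonvanishing of the $q$-binomial is exactly the interlacing inequality $\lambda_j^{(n+1)}\geq \lambda_j^{(n)}\geq \lambda_{j+1}^{(n+1)}$ that adjoins the new row $\lambda^{(n)}$ to the top row $\lambda^{(n+1)}$ of a Gelfand--Zeitlin pattern.

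The only substantive point is that this interlacing automatically implies $\lambda^{(n)}\in \Prm^+$: the chain $\lambda_j^{(n)}\geq \lambda_{j+1}^{(n+1)}\geq \lambda_{j+1}^{(n)}$ holds, where the last inequality uses $\lambda_{j+1}^{(n)}\leq \lambda_{j+1}^{(n+1)}$. Without this observation one would need to extend the induction hypothesis to non-dominant arguments, so this is the one step I will single out for care. Granting it, the inductive hypothesis applies to $W^{(n)}(\lambda^{(n)})$, and substituting yields the claimed subtraction-free sum indexed by $GZ(\lambda)$: the binomial factor from~\eqref{eq:tcoeffs} supplies precisely the $i=n$ layer of the full binomial product over $1\leq i\leq n,\ 1\leq j\leq i$, while the exponent of $w_{n+1}$ becomes $\lambda_{n+1}^{(n+1)}+\sum_{j=1}^n(\lambda_j^{(n+1)}-\lambda_j^{(n)})=|\lambda^{(n+1)}|-|\lambda^{(n)}|$, as required. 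Finiteness is immediate from the bounded range of each $r_j$, and the assertion $W^{(n+1)}(\lambda)\in\mathbb{Z}_{\geq 0}[q^{\pm 1}][\bs w]$ follows from the standard fact that $\binom{m}{r}_{q^2}$ is a polynomial in $q^2$ with nonnegative integer coefficients. Beyond the automatic-dominance observation, the argument is pure index bookkeeping and no deeper difficulty arises.
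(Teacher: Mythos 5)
Your proof is correct and follows essentially the same route as the paper: induct on $n$ using the recursion $W^{(n+1)} = \mathscr{R}_{n+1\leftarrow n}(W^{(n)})$ together with the explicit Taylor expansion~\eqref{eq:tcoeffs}, observe that for dominant $\lambda$ the $q$-binomials vanish outside $0\leq r_j\leq\lambda_j-\lambda_{j+1}$, and identify the summation over $r$ with the choice of the second-to-top row of the Gelfand--Zeitlin pattern. The remark you single out — that the interlacing condition automatically keeps $\lambda^{(n)}$ dominant, so the inductive hypothesis applies — is correct and is left implicit in the paper's proof.
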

\begin{proof}
The second-to-top row in any array $\nabla\in GZ(\la)$ gives a partition $\mu$ which interlaces with $\lambda$, so that
$$
GZ(\la) = \bigcup_{\mu ~\text{interlacing }\la} GZ(\mu) .
$$
We can parametrize the set of such interlacing partitions as  $\mu = (\la_1-r_1,\la_2-r_2,\ldots, \la_n-r_n)$ where $0\leq r_j\leq \lambda_j-\lambda_{j+1}$, and these parts are exactly the arguments of $\psi$ in~\eqref{eq:tcoeffs}. Since $\lambda$ is dominant each $\lambda_{j}-\lambda_{j+1}\geq0$ and so the Gaussian binomial coefficients in~\eqref{eq:tcoeffs} vanish whenever $r_j>\la_j-\la_{j+1}$, and hence the Lemma follows by induction on $n$.
\end{proof}
\begin{remark}
When $\la\notin \Prm^+$ is not dominant, the series computing $W^{(n+1)}(\la)$ does not truncate and hence is a genuine formal Laurent series in $w_2/w_1,\ldots, w_{n+1}/w_n$. 
\end{remark}
%We define the $q$-Whittaker polynomial to be
%\begin{align}
%W_\la = \overline{W(\la)}\in \mathbb{Z}[q^{\pm1}][\bs w].
%\end{align}

Consider the involution on $\Kc_{\bs w}$ defined by
\beq
\label{eq:invv}
\overline{q} = q^{-1}, \quad \overline{w}_j = w_j, \qquad 1\leq j\leq n+1.
\eeq
The natural pairing between $\Fcr$ and $\Dcr$ extends to a $\Kc_{\bs w}$-valued pairing
\beq
\label{eq:Mpair}
(\cdot, \cdot)_{\Fcr} \colon \Fcr \otimes_{\Q(q)} \Dcr_{\bs w}\longrightarrow \Kc_{\bs w}, \qquad (f,\psi)_{\Fcr} = \sum_{\la\in\Prm}f(\la)\overline{\psi(\la)},
\eeq
which is $\mathbb{Q}(q)$-linear in the first factor and anti-linear with respect to~\eqref{eq:invv} in the second. Note that the sum in~\eqref{eq:Mpair} is finite since $f$ is compactly supported, and that we have
\beq
\label{eq:adjts}
(T_i f,\psi) = (f,T_i^{-1}\psi), \qquad (q^{2\lambda_i}f,\psi) = (f,q^{-2\lambda_i}\psi),
\eeq
where $T_if(\la) = f(\la+\epsilon_i)$ is the translation operator. 

Given a new formal indeterminate $z$, we extend the involution~\eqref{eq:invv} to $\Kc_{\bs w;z}$ by declaring $\overline{z}=z$. Write $\Fcr_z$ for the space of $\Q(q)[[z]]$-valued functions on $\Prm$ such that the the coefficient of each power of $z$ is a $\Q(q)$-valued function with finite support.
%\red{Here, $\Fcr_z$ is in fact larger than $\Fcr \otimes_{\Q(q)} \Q(q)[[z]]$. Also, I replaced all of $\Q(q)((\bs w ^{\pm1}))$ with $\Kc_{\bs w;z}$, was that right?}
Then~\eqref{eq:Mpair} extends naturally to a well-defined $\Kc_{\bs w;z}$-valued pairing $(\cdot,\cdot)_{\Fcr_z}$ between $\Fcr_z$ and $\Dcr_{\bs w;z}$.
%\begin{remark}
%\label{rmk:q-adjts}
Using~\eqref{eq:adjts} and the relation
$$
\Psi_q(z)^{-1} = \Psi_{q^{-1}}(z) \in \mathbb{Q}(q)[[z]]
$$
it is easy to check that for any $g\in\mathscr{D}_{\bs w;z}$ and $f\in \Fcr_z$ we have
\begin{align*}
\hr{f,\Psi_q(zP_j)^{-1}g}_{\Fcr_z} &= \hr{\Psi_q(zP^{-1}_j)f,g}_{\Fcr_z}, \\
\hr{f,\Psi_q(zq^{-1}X_{\alpha_j}P_{j+1}^{-1})g}_{\Fcr_z} &= \hr{\Psi_q(zq^{-1}X_{-\alpha_j}P_{j+1})^{-1}f,g}_{\Fcr_z}.
\end{align*}

%\end{remark}

Consider the lattice $\Lambda_{GL_{n+1};\zeta}$ obtained by extending $\Lambda_{GL_{n+1}}$ by a rank 1 lattice $\mathbb{Z}\zeta$. We equip it with a basis~\eqref{eq:toda-e-basis} of $\Lambda_{GL_{n+1}}$ to which we adjoin the vector $e_{h_-}=-\zeta -p_{\eps_{1}}$, and identify the quantum torus element $Y_{\zeta}$ with the indeterminate $z$. Now recall from~\eqref{eq:baxseq} the mutation sequence $\mu_{\mathrm{Baxter}}$ used to define the fundamental Hamiltonians. Then the inverse of the automorphism part of the cluster transformation obtained by applying $\mu_{\mathrm{Baxter}}$ to the extended quiver, with $e_h$ playing the role of $a$, yields the \emph{Baxter operator} $\Qf_{n+1}(z) = \Qf^+_{n+1}(z)$ given by
%\red{we've got the $\pm$-mismatch}
\begin{align}
\label{eq:bax-qdl}
\Qf^+_{n+1}(z) =  \Psi(zP_{1})^{-1}\prod_{j=1}^{\substack{n \\ \longrightarrow}}\Psi(zq^{-1}X_{-\alpha_j}P_{j+1})^{-1}\Psi(zP_{j+1})^{-1}\in \Tc_z.
\end{align}
%\red{Explain how we extend $\tau$}
The analogous construction where we instead adjoin $e_{h_+}=-\zeta +p_{\eps_{n+1}}$ yields the \emph{opposite} Baxter operator
\begin{align}
\label{eq:opbax-qdl}
\Qf^-_{n+1}(z) =  \Psi(zP^{-1}_{n+1})^{-1}\prod_{j=1}^{\substack{n \\ \longleftarrow}}\Psi(q^{-1}zX_{-\alpha_j}P^{-1}_{j})^{-1}\Psi(zP^{-1}_{j})^{-1}\in \Tc_z.
\end{align}
The inductive proof of the following Proposition is the same as that of the corresponding result in~\cite{SS18}, except that the computations with compact quantum dilogarithms are performed in the completed quantum torus for the extended quiver.
\begin{prop}
\label{prop:comm-bax}
Abbreviating $\Qf^\pm(z) = \Qf^\pm_{n+1}(z)$, the Baxter operators have the following properties:
\begin{enumerate}
\item For commuting indeterminates $z,w$ and any $\epsilon_1, \epsilon_2 \in \hc{\pm}$ we have
$$
\Qf^{\epsilon_1}(z)\Qf^{\epsilon_2}(w) = \Qf^{\epsilon_2}(w)\Qf^{\epsilon_1}(z).
$$
\item The series $\Qf^\pm(z)$ are preserved by the automorphism $\tau$ from~\eqref{eq:tau}:
$$
\tau\left(\Qf(z)^\pm\right) = \Qf^\pm(z).
$$
\item The series $\Qf(z)$ satisfies the difference equation
\begin{align}
\label{eq:alg-bax-qde}
\Qf(q^{-1}z)  = \Hc(z)\Qf(qz), \qquad {\mathcal{H}}(z) = \sum_{k=0}^{n+1}z^k {H}_k
\end{align}
where the formal adjoints of the fundamental Hamiltonians $H_{k}$ with respect to~\eqref{eq:Mpair} act by the Hamiltonians of the $q$-difference open Toda chain:
\begin{align}
\label{eq:explicit-ham}
 H^\star_k = (-q)^{k}\sum_{|I|=k} \prod_{m\in G(I)}^k(1-q^{2(\la_{m}-\la_{m+1})})T_{\sum_{i\in I}\eps_i}, \qquad 
\end{align}
where for $k$-subset $I\subseteq\{1,\ldots n+1\}$ we write 
$$
G(I) = \left\{ 1\leq m\leq n~|~ m\notin I~\text{and } m+1 \in I \right\}.
$$

\item In $\Tc_{GL_{n+1}} \otimes_{\Z[q^{\pm1}]} \Z((q))[[z]]$ we have the infinite product formula
\begin{align}
\label{eq:productformula}
\Qf(z) = \prod_{m\geq0}{\mathcal{H}}(q^{2m+1}z),
\end{align}
where the coefficient of $z^k$ is a polynomial in  $H_1,\ldots, H_{n+1}$ times the series expansion of a rational function in $q$.
\end{enumerate}
\end{prop}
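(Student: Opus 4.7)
The plan is to prove the four assertions in the order (2), (3), (4), (1). Items (2)--(4) are direct consequences of the cluster-theoretic definition of $\Qf^\pm(z)$ together with the $q$-difference equation~\eqref{eq:q-Gamma}, while (1) is the combinatorial heart of the statement and requires pentagon identity manipulations.

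For (2), I would argue structurally. The Baxter operator $\Qf^\pm(z)$ arises as the image of $Y_\zeta = z$ under the automorphism part of the mutation sequence $\mu_{\mathrm{Baxter}}$ applied in the extended quiver with indeterminate $z$. The automorphism $\tau$ from~\eqref{eq:tau} is itself a cluster automorphism whose underlying quiver mutation sequence commutes with $\mu_{\mathrm{Baxter}}$ in the cluster modular groupoid, up to the permutation~\eqref{eq:longseq} that fixes $\zeta$. Checking at the level of tropical $\Xc$-variables that $\tau\circ\mu_{\mathrm{Baxter}}$ and $\mu_{\mathrm{Baxter}}$ produce the same image of $e_\zeta$, then applying Theorem~\ref{trop-criterion}, gives the $\tau$-invariance.

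For (3), the approach is a direct computation of $\Qf(q^{-1}z)\Qf(qz)^{-1}$ using the $q$-difference relation $\Psi_q(q^2Y)\Psi_q(Y)^{-1}=1+qY$. Each of the $2n+1$ quantum dilogarithm factors in~\eqref{eq:bax-qdl} contributes a single binomial factor when one passes from argument $q^{-1}z$ to $qz$; moving these binomials to the left through the remaining dilogarithms using the pentagon identity, they combine into an operator which, after reindexing the resulting sum by the subset $I\subseteq\{1,\dots,n+1\}$ recording which binomial was extracted from which factor, yields $\mathcal{H}(z)$ with coefficients $H_k$. The identification of the adjoint action on $\Fcr_z$ via~\eqref{eq:adjts} with~\eqref{eq:explicit-ham} is then a normal-ordering calculation using~\eqref{eq:toda-torus-F-action}: the factors $(1-q^{2(\la_m-\la_{m+1})})$ arise exactly from the ``gap'' indices $m\in G(I)$ where two consecutive dilogarithm corrections fail to collapse. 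Once (3) is proven, (4) follows by iterating it: substituting $z\mapsto q^{2m+1}z$ and multiplying gives $\Qf(q^{-1}z) = \prod_{m=0}^{N}\mathcal{H}(q^{2m+1}z)\Qf(q^{2N+1}z)$, and the rightmost factor tends to $1$ in the $z$-adic topology on $\Tc_{GL_{n+1}}\otimes_{\Z[q^{\pm1}]}\Z((q))[[z]]$ as $N\to\infty$ because every dilogarithm factor in $\Qf(q^{2N+1}z)$ then acquires a high power of $z$.

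The main obstacle is (1). I would prove it by induction on $n$, using the factorization that realizes $\Qf^+_{n+1}(z)$ as a ``dressing'' of $\Qf^+_n(z)$ by the extra factors involving $P_{n+1}$ and $X_{-\alpha_n}$. Concretely, one commutes all factors involving $P_{n+1}$ or $X_{-\alpha_n}$ past those depending only on the rank-$n$ variables, using $O(n^2)$ applications of the pentagon identity $\Psi_q(U)\Psi_q(V) = \Psi_q(V)\Psi_q(qUV)\Psi_q(U)$, in order to rewrite $\Qf^+_{n+1}(z)\Qf^+_{n+1}(w)$ as a product of a rank-$n$ piece, which commutes in $(z,w)$ by the induction hypothesis, and a residual piece whose $(z,w)$-commutativity is reduced to a finite pentagon calculation. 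The cases $(\epsilon_1,\epsilon_2)=(+,-)$ and $(-,-)$ follow by an analogous induction using~\eqref{eq:opbax-qdl}, or alternatively by a symmetry argument exchanging $P_i\leftrightarrow P_i^{-1}$. The technical difficulty is the bookkeeping: unlike the analytic argument in~\cite{SS18}, each manipulation must be justified as an identity of formal series in $z,w$ in the completed quantum torus of the extended quiver, so one must verify convergence of every intermediate rearrangement in the $(z,w)$-adic topology and check that the pentagon identity applies to each pair of factors with the correct argument matching.
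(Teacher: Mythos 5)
The paper itself does not give a self-contained proof of this Proposition: it simply notes that ``the inductive proof \ldots is the same as that of the corresponding result in~\cite{SS18}, except that the computations with compact quantum dilogarithms are performed in the completed quantum torus for the extended quiver.'' So your proposal should be assessed as a reconstruction of the argument implicitly invoked. For items (1), (3), and (4) your outline matches that argument in spirit: (3) by extracting a binomial from each dilogarithm factor via~\eqref{eq:q-Gamma} and collecting the resulting $2^{n+1}$ terms into $\mathcal{H}(z)$ indexed by subsets $I$, then (4) by iterating the $q$-difference equation and taking a $z$-adic limit, and (1) by a rank induction driven by pentagon identities. These are the right ingredients; the bookkeeping in (1) and (3) is genuinely heavy, but your overall plan is sound.

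Your treatment of (2) is where there is a real gap, and it is also where you depart from the route the paper points to. You propose to verify $\tau(\Qf^\pm(z)) = \Qf^\pm(z)$ by checking, at the level of tropical $\Xc$-variables, that $\tau\circ\mu_{\mathrm{Baxter}}$ and $\mu_{\mathrm{Baxter}}$ ``produce the same image of $e_\zeta$,'' and then applying Theorem~\ref{trop-criterion}. This does not establish what is claimed. Theorem~\ref{trop-criterion} is a criterion for equality of two \emph{quantum cluster transformations}; it does not certify equality of two elements of (a completion of) a quantum torus, and $\Qf^\pm(z)$ is such an element, not a cluster transformation. Moreover, equality of the images of a single lattice vector $e_\zeta$ under two cluster transformations is strictly weaker than equality of the transformations themselves, so even the cluster-level statement you envisage is not a consequence of the tropical criterion as you invoke it. What one actually needs is (i) the commutativity $\tau\,\mu_{\mathrm{Baxter}} = \mu_{\mathrm{Baxter}}\,\tau$ as equality of cluster transformations on the extended quiver — which \emph{can} be checked tropically — followed by (ii) a separate argument extracting the identity for the automorphism parts. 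Step (ii) is nontrivial because the factorization of a composite quantum mutation into automorphism and monomial parts is not canonical: one may slide monomial factors through dilogarithms, so $\tau = \gamma\circ\mathrm{Ad}(\prod_j\Psi(X_{-\alpha_j})^{-1})$ and $\mu_{\mathrm{Baxter}} = \mu'_{\mathrm{Baxter}}\circ\mathrm{Ad}(\Qf(z)^{-1})$ must be normalized compatibly before one can cancel the monomial parts on both sides and conclude $\tau(\Qf(z)) = \Qf(z)$. Filling this in essentially reduces (2) to the same pentagon-identity manipulations used in~\cite{SS18}, so the ``structural'' shortcut you were hoping for does not avoid them.
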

It follows from Proposition~\ref{prop:comm-bax} that the action of $\Qf^\pm(z)$ on $\Fcr_z$ preserves the subspace $\Vcr_z$ of functions vanishing outside the dominant cone of partitions. Moreover, since the automorphism $\tau$ preserves $\Qf(z)$, it follows that it also fixes each fundamental Hamiltonian. Using this observation it is straightforward to derive the following lemma, which is a special case of a more general result by Di Francesco and Kedem (see~\cite{DFK16}, Theorem 3.8):
%Moreover, by Theorem 3.8 of~\cite{DFK16} the elements $\tau^m(A_{t_1})$ and the first fundamental Hamiltonian $H_1$ satisfy \red{(check sign!)}
\begin{lemma}
\label{lem:time-translation}
For all $m\in\mathbb{Z}$, we have
$$
\hs{\tau^m(A_{t_1}),H_{\pm1}} = \pm(q-q^{-1}) \tau^{m\pm1}(A_{t_1}), \\
$$
where
$$
H_{-k} = H_{n+1-k}H_{n+1}^{-1}.
$$
\end{lemma}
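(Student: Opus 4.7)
The plan is to reduce the statement to the case $m = 0$ using $\tau$-invariance of the Hamiltonians, and then handle the base case by direct computation in the quantum torus $\Tc_{GL_{n+1}}$.

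For the reduction, Proposition~\ref{prop:comm-bax}(2) gives $\tau(H_k) = H_k$ for $1 \leq k \leq n + 1$, from which $\tau(H_{-1}) = \tau(H_n)\tau(H_{n+1})^{-1} = H_{-1}$ follows. Since $\tau$ is an algebra automorphism,
\begin{align*}
[\tau^m(A_{t_1}), H_{\pm 1}] \;=\; [\tau^m(A_{t_1}), \tau^m(H_{\pm 1})] \;=\; \tau^m\bigl([A_{t_1}, H_{\pm 1}]\bigr),
\end{align*}
reducing the claim to the base case $[A_{t_1}, H_{\pm 1}] = \pm(q - q^{-1})\tau^{\pm 1}(A_{t_1})$.

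For the base case, by Remark~\ref{rmk:toda-embed} the element $A_{t_1}$ is identified with $X_1 \in \Tc_{GL_{n+1}}$, since $\xi_{t_1} = x_{\omega_1}$. I would extract $H_1$ from the identity $\mathcal{H}(z) = \Qf(q^{-1}z)\Qf(qz)^{-1}$ (a rearrangement of~\eqref{eq:alg-bax-qde}) by expanding the Baxter operator~\eqref{eq:bax-qdl} to first order in $z$ using $\Psi_q(Y)^{-1} = 1 + \tfrac{qY}{1 - q^2} + O(Y^2)$, which yields
\begin{align*}
H_1 \;=\; \sum_{j = 1}^{n + 1} P_j \;+\; q^{-1}\sum_{j = 1}^n X_{-\alpha_j}P_{j+1}.
\end{align*}
Since all $X$'s commute and $X_1$ commutes with $P_j$ for $j \geq 2$, the bracket collapses to $[X_1, H_1] = [X_1, P_1] = (q^2 - 1)P_1 X_1$. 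On the other hand the conjugation $\Ad\bigl(\prod_j \Psi(X_{-\alpha_j})^{-1}\bigr)$ appearing in~\eqref{eq:tau} acts trivially on $X_1$, so $\tau(X_1) = \gamma(X_1) = qP_1 X_1$, giving $(q - q^{-1})\tau(X_1) = (q^2 - 1)P_1 X_1$ as required.

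The case of $H_{-1}$ is handled by an analogous but more involved computation, and this is the main obstacle: the element $H_n$, extracted from higher-order terms in $\mathcal{H}(z)$, has a multi-summand structure reflecting~\eqref{eq:explicit-ham}, so the collapse of $[X_1, H_n]$ is less immediate than for $H_1$. A cleaner strategy, handling both signs uniformly, is to compute $[X_1, \Qf(z)]$ in closed form. Since $X_1$ commutes with every factor of $\Qf(z)$ in~\eqref{eq:bax-qdl} except $\Psi(zP_1)^{-1}$, and since $X_1 P_1 X_1^{-1} = q^2 P_1$ together with~\eqref{eq:q-Gamma} gives $X_1 \Psi(zP_1)^{-1} = \Psi(zP_1)^{-1}(1 + qzP_1)^{-1}X_1$, one obtains
\begin{align*}
[X_1, \Qf(z)] \;=\; -qzP_1\,\Psi(zP_1)^{-1}(1 + qzP_1)^{-1}R(z)\,X_1,
\end{align*}
where $R(z)$ denotes the subproduct of~\eqref{eq:bax-qdl} standing to the right of $\Psi(zP_1)^{-1}$. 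Extracting the coefficients of $z^1$ and $z^n$ on both sides of the generating-series identity $[X_1, \mathcal{H}(z)]\Qf(qz) = [X_1, \Qf(q^{-1}z)] - \mathcal{H}(z)[X_1, \Qf(qz)]$ then yields both commutator identities simultaneously. In any event, as noted in the statement, the lemma is a special case of~\cite[Theorem 3.8]{DFK16}.
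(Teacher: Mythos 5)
Your reduction to $m=0$ via Proposition~\ref{prop:comm-bax}(2) is exactly the ``observation'' the paper invokes just before the lemma; the paper gives no further argument, simply labelling the base case ``straightforward'' and citing~\cite[Theorem 3.8]{DFK16}. So the overall structure matches what the paper intends. Your explicit check of the $H_{1}$ case is correct: indeed $A_{t_1}=Y_{\xi_{t_1}}=X_1$, comparing the $z$-coefficients of $\Qf(q^{-1}z)=\Hc(z)\Qf(qz)$ gives $H_1=\sum_{j}P_j+q^{-1}\sum_{j=1}^{n}X_{-\alpha_j}P_{j+1}$, and $[X_1,H_1]=[X_1,P_1]=(q^2-1)P_1X_1=(q-q^{-1})\tau(X_1)$. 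This is more detail than the paper prints.

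The $H_{-1}$ case, however, is not handled by your sketch as written. Extracting the coefficient of $z^n$ from $[X_1,\Hc(z)]\Qf(qz)=[X_1,\Qf(q^{-1}z)]-\Hc(z)[X_1,\Qf(qz)]$ does not isolate $[X_1,H_n]$: writing $\Qf(z)=\sum_{k\ge0}z^k\Qf_k$, the $z^n$-coefficient of the left-hand side is the full convolution $\sum_{k=0}^{n}q^{n-k}[X_1,H_k]\Qf_{n-k}$, so all lower-degree brackets reappear. And even with $[X_1,H_n]$ in hand one still needs the bracket with $H_{n+1}^{-1}$ in $H_{-1}=H_nH_{n+1}^{-1}$, which is nontrivial since $H_{n+1}=P_{\omega_{n+1}}$ fails to commute with $X_1$. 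A cleaner identity to aim at is $X_1\Qf(z)X_1^{-1}=(1+qzP_1)^{-1}\Qf(z)$, which yields $(1+zP_1)[X_1,\Hc(z)]=z\bigl(q^2\Hc(z)P_1-P_1\Hc(z)\bigr)X_1$; comparing top coefficients after right-multiplying by $H_{n+1}^{-1}$ would treat both signs on the same footing, but either way the negative case is noticeably more work than the positive one. As it stands your fallback to~\cite{DFK16} is the same move as the paper's, so the proposal is no less complete than the printed text, with the gap in the $H_{-1}$ verification acknowledged rather than filled.
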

Using the formulas~\eqref{eq:adjts} we see that
$$
(\Qf_{n+1}(z)f,g)_{\Fcr_z} = (f,\Qf^\star_{n+1}(z)g)_{\Fcr_z},
$$
where the {conjugate} $GL_{n+1}$ Baxter operator $\Qf^\star_{n+1}(z)$ is given by
%related to the recursion operator via
%\begin{align}
%\label{eq:bax}
%\widetilde{Q}_{n+1}(z) = R_{n}(z)\Psi(zP_{n+1}).
%\end{align}
\begin{align}
\label{eq:bax}
\Qf^\star_{n+1}(z) =\Psi(zP_{n+1}^{-1})\prod_{j=1}^{\substack{n \\ \longleftarrow}}\Psi(zq^{-1}X_{\alpha_j}P_{j+1}^{-1})\Psi(zP_{j}^{-1}). 
\end{align}
Note that we have a recursive description
$$
\Qf^\star_{n+1}(z) = T_{n+1}(z)\Qf^\star_n(z)
$$
where
$$
T_{n+1}(z)=\Psi(zP_{n+1}^{-1})\Psi(zq^{-1}X_{\alpha_n}P_{n+1}^{-1}),
$$
and the recursion operator $R_n(z)$ can be expressed as
$$
R_n(z) = \left(\Qf_n^-(z)\right)^\star\Psi(qzX_{\alpha_n}P_n).
$$
%For a monomial $M$ in the quantum torus we abbreviate
%$$
%[M]  = \Psi(:M:),
%$$
%where $:M:$ denotes the Weyl ordering~\eqref{eq:weyl-order}.

The following Lemma is straightforward to derive using the pentagon identity and the recursive descriptions of $\Qf^\star(z)$ and $R(w)$:
\begin{lemma}
%In $\mathcal{A}^{w_{n+1};z}_{n,n+1}$ we have
In $\Tc^{(n,1)}_{w_{n+1};z}$ we have
\label{lem:tlem1}
$$
\Qf_{n+1}^\star(z)R_n(w_{n+1}) = R_n(w_{n+1})T_{n+1}(z)\Psi(zw_{n+1}X_{\alpha_n})\Qf_n^\star(z).
$$
\end{lemma}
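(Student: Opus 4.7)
The identity is an intertwining relation expressing the idea that conjugating the recursion operator $R_n(w_{n+1})$ past the conjugate Baxter operator $\Qf_{n+1}^\star(z)$ lowers the rank of the latter from $n+1$ to $n$, leaving behind a single correction factor $\Psi(zw_{n+1}X_{\alpha_n})$ inserted between $T_{n+1}(z)$ and $\Qf_n^\star(z)$. I would prove it by induction on $n$, using the recursive descriptions
\[
\Qf_{n+1}^\star(z) = T_{n+1}(z)\Qf_n^\star(z), \qquad R_n(w) = R_{n-1}(w)\,\Psi(wP_n)\Psi(wq^{-1}X_{\alpha_n}P_n)
\]
together with the pentagon identity for the compact quantum dilogarithm.

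For the base case $n=1$, each side is an explicit product of at most five compact quantum dilogarithms. The match reduces to commuting $\Psi(zP_1^{-1})$ past $\Psi(w_2q^{-1}X_{\alpha_1}P_1)$; observing that these satisfy the $q^{-2}$-commutation $(zP_1^{-1})(w_2q^{-1}X_{\alpha_1}P_1) = q^{-2}(w_2q^{-1}X_{\alpha_1}P_1)(zP_1^{-1})$, a single pentagon application produces the factor $\Psi(zw_2X_{\alpha_1})$ in precisely the required location, while all other rearrangements involve commuting pairs of $\Psi$'s whose arguments commute.

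For the inductive step, I would substitute the recursive descriptions into the LHS and note that $T_{n+1}(z)$ involves only $P_{n+1}^{\pm1}$ and $X_{\alpha_n}$ and thus commutes with $R_{n-1}(w_{n+1})$ (which only involves $P_j,X_{\alpha_j}$ for $j\le n-1$). Applying the inductive hypothesis with $w_{n+1}$ in place of $w_n$ to rewrite $\Qf_n^\star(z)R_{n-1}(w_{n+1})$, the identity reduces to a local verification in a subalgebra involving only $T_{n+1}(z)$, $T_n(z)$, $\Qf_{n-1}^\star(z)$, the correction $\Psi(zw_{n+1}X_{\alpha_{n-1}})$ produced by the inductive hypothesis, and the two factors $\Psi(w_{n+1}P_n)\Psi(w_{n+1}q^{-1}X_{\alpha_n}P_n)$ forming the ``tail'' of $R_n$. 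The verification proceeds by repeated application of the pentagon identity and elementary $q$-commutations in the quantum torus.

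The main obstacle will be the bookkeeping in the inductive step: several intermediate pentagon corrections $\Psi(\ldots\!X_{\alpha_{n-1}})$, $\Psi(\ldots\!X_{\alpha_n})$, $\Psi(\ldots\!X_{\alpha_{n-1}}X_{\alpha_n})$ arise when commuting the individual $\Psi$-factors past one another, and one has to verify that all of these cancel pairwise except for a single surviving $\Psi(zw_{n+1}X_{\alpha_n})$ sitting in exactly the position $T_{n+1}(z)\,\bullet\,\Qf_n^\star(z)$ demanded by the RHS. The fact that $\Qf_{n-1}^\star(z)$ is supported on strictly lower-index generators is crucial, as it restricts where non-trivial commutations can occur and forces the expected telescoping.
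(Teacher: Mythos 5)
The high-level strategy — induction on $n$ using the recursions $\Qf^\star_{n+1}(z) = T_{n+1}(z)\Qf^\star_n(z)$, $R_n(w) = R_{n-1}(w)\,\Psi(wP_n)\Psi(wq^{-1}X_{\alpha_n}P_n)$, the key commutation $[T_{n+1}(z),R_{n-1}(w_{n+1})]=0$, and the observation that $\Qf^\star_{n-1}(z)$ commutes with the tail $\Psi(w_{n+1}P_n)\Psi(w_{n+1}q^{-1}X_{\alpha_n}P_n)$ — is sound and is precisely the ``recursive descriptions plus pentagon'' route the paper alludes to. However, the concrete verification of the base case in your write-up is incorrect in two ways, and the error propagates to the claimed simplicity of the inductive step.

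First, the commutation you assert has the wrong sign. With $P_\lambda X_\mu = q^{-2\langle\lambda,\mu\rangle}X_\mu P_\lambda$ one gets $P_1^{-1}X_{\alpha_1} = q^{2}X_{\alpha_1}P_1^{-1}$, hence $(zP_1^{-1})(wq^{-1}X_{\alpha_1}P_1) = q^{2}(wq^{-1}X_{\alpha_1}P_1)(zP_1^{-1})$, not $q^{-2}$. This is essential: the pentagon applies as $\Psi(zP_1^{-1})\Psi(wq^{-1}X_{\alpha_1}P_1) = \Psi(wq^{-1}X_{\alpha_1}P_1)\Psi(zwX_{\alpha_1})\Psi(zP_1^{-1})$ precisely because of the $q^{+2}$-commutation; with the sign you wrote, the pentagon would have the reversed ordering and the correction factor would end up on the wrong side.

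Second, and more seriously, the claim that ``all other rearrangements involve commuting pairs of $\Psi$'s whose arguments commute'' is false. After using the $CE$-pentagon to extract $\Psi(zw_2X_{\alpha_1})$, what remains to be shown is $T_2(z)\,R_1(w_2) = R_1(w_2)\,T_2(z)$, i.e.\ that $\Psi(zP_2^{-1})\Psi(zq^{-1}X_{\alpha_1}P_2^{-1})$ and $\Psi(wP_1)\Psi(wq^{-1}X_{\alpha_1}P_1)$ commute as blocks. But the individual factors do not commute: $(zP_2^{-1})$ and $(wq^{-1}X_{\alpha_1}P_1)$ $q^{-2}$-commute, and $(zq^{-1}X_{\alpha_1}P_2^{-1})$ and $(wP_1)$ $q^{2}$-commute. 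Proving the block commutation requires two further pentagon applications, and the reason they do not spoil the identity is that both produce the same Weyl-ordered correction $\Psi(zwq^{-2}X_{\alpha_1}P_1P_2^{-1})$, which therefore appears in the same position on either side. Your argument does not notice that these pentagons are needed, let alone verify the cancellation, so as stated the base case does not go through. The inductive step has the same issue amplified: the local identity $T_{n+1}(z)T_n(z)\Psi(zwX_{\alpha_{n-1}})\Psi(wP_n)\Psi(wq^{-1}X_{\alpha_n}P_n) = \Psi(wP_n)\Psi(wq^{-1}X_{\alpha_n}P_n)T_{n+1}(z)\Psi(zwX_{\alpha_n})T_n(z)$ involves several non-commuting pairs (e.g.\ $zP_n^{-1}$ vs.\ $zwX_{\alpha_{n-1}}$, $zq^{-1}X_{\alpha_{n-1}}P_n^{-1}$ vs.\ $wP_n$), and the ``telescoping'' you anticipate needs to actually be carried out, which is the substance of the lemma.
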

%\begin{proof}
%By the recursive formulas for $\widetilde{Q}$ and $R$ and Proposition~\ref{prop:comm-bax}, we have
%\begin{align*}
%{Q}^\star_{n+1}(z)R_n(w_{n+1}) &= T_{n+1}(z){Q}_{n}^\star(z)({Q^o_n})^\star(w_{n+1})[w_{n+1}X_{n}/X_{n+1} P_n]\\
%&=T_{n+1}(z)({Q^o_n})^\star(w_{n+1}){Q}_{n}^\star(z)[w_{n+1}X_{n}/X_{n+1} P_n].
%\end{align*}
%Using the pentagon identity once, we get that
%\begin{align*}
%{Q}_{n}^\star(z)[w_{n+1}X_{n}/X_{n+1} P_n] &= T_n(z)[w_{n+1}X_{n}/X_{n+1} P_n]{Q}_{n-1}^\star(z)\\
%&=[w_{n+1}X_{n}/X_{n+1} P_n][zw_{n+1}X_n/X_{n+1}]T_n(z){Q}_{n-1}^\star(z)\\
%&=[w_{n+1}X_{n}/X_{n+1} P_n][zw_{n+1}X_n/X_{n+1}]{Q}_{n}^\star(z).
%\end{align*}
%On the other hand. another application of the pentagon identity gives
%\begin{align*}
%T_{n+1}(z) [w_{n+1}X_{n-1}/X_nP_{n-1}][w_{n+1}P_n]&=[w_{n+1}X_{n-1}X_n^{-1}P_{n-1}][w_{n+1}P_n][zP_{n+1}^{-1}]\\
%&\times[w_{n+1}zX_nX_{n+1}^{-1}P_nP_{n+1}^{-1}][zX_nX_{n+1}^{-1}P_{n+1}^{-1}].
%\end{align*}
%Since $[zX_nX_{n+1}^{-1}P_{n+1}^{-1}]$ and $[w_{n+1}X_{n}/X_{n+1} P_n]$ commute, we can apply the pentagon identity in the form
%$$
%[zP_{n+1}^{-1}][zw_{n+1}X_n/X_{n+1}P_{n}/P_{n+1}][w_{n+1}X_n/X_{n+1}^{-1}P_{n}] = [w_{n+1}X_n/X_{n+1}P_{n}][zP_{n+1}^{-1}]
%$$
%to give
%\begin{align*}
%&T_{n+1}(z) [w_{n+1}X_{n-1}/X_nP_{n-1}][w_{n+1}P_n] [w_{n+1}X_{n}/X_{n+1} P_n]=\\
%&[w_{n+1}X_{n-1}X_n^{-1}P_{n-1}][w_{n+1}P_n][w_{n+1}X_n/X_{n+1}P_{n}]T_{n+1}(z),
%\end{align*}
%and recombining the factors we get the Lemma.
%\end{proof}
Another important lemma is the following:
\begin{lemma}
\label{lem:tlem2}
For any $g^{(n)}\in\Dcr^n_{w_1,\ldots, w_n;z}$, we have %the following identity in $\mathscr{D}_{n+1,n}^{w_1,\ldots, w_n,z}$:
$$
\Psi(zq^{-1}X_{\alpha_n}P_{n+1}^{-1})^{-1}\cdot w_{n+1}^{\la_{n+1}}g^{(n)} =  \Psi(zw_{n+1}X_{\alpha_n}) w_{n+1}^{\la_{n+1}}g^{(n)}.
$$
\end{lemma}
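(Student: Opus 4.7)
The proof proceeds by expanding both compact quantum dilogarithms as formal power series in $z$ and comparing coefficients. The two key ingredients are an explicit description of the $k$-th power of the argument of $\Psi$ on the left-hand side, and the observation that $P_{n+1}^{-1}$ acts by a central multiplication operator on functions of the given form.

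The plan is as follows. First I will invoke the commutation relation $P_{n+1}X_{\alpha_n} = q^{2}X_{\alpha_n}P_{n+1}$, which is immediate from $P_\lambda X_\mu = q^{-2\langle\lambda,\mu\rangle}X_\mu P_\lambda$ together with $\langle\eps_{n+1},\alpha_n\rangle=-1$. A straightforward induction then yields
$$
\bigl(zq^{-1}X_{\alpha_n}P_{n+1}^{-1}\bigr)^k = z^k q^{-k^2}\,X_{\alpha_n}^k P_{n+1}^{-k}.
$$
Substituting into the Taylor expansion~\eqref{eq:qpoch}, I obtain the explicit series
$$
\Psi\bigl(zq^{-1}X_{\alpha_n}P_{n+1}^{-1}\bigr)^{-1} = \sum_{k\ge 0}\frac{z^k\,X_{\alpha_n}^k P_{n+1}^{-k}}{(q^2;q^2)_k}
$$
as an element of $\Tc^{(n,1)}_{w_{n+1};z}$.

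The main observation is that on functions of the form $w_{n+1}^{\lambda_{n+1}} g^{(n)}$, with $g^{(n)}$ independent of $\lambda_{n+1}$, the operator $P_{n+1}^{-1}$ acts as multiplication by the central scalar $-qw_{n+1}$, and hence $P_{n+1}^{-k}$ acts as multiplication by $(-qw_{n+1})^k$. Since $w_{n+1}$ commutes with $X_{\alpha_n}$, acting by the above series on $w_{n+1}^{\lambda_{n+1}}g^{(n)}$ produces
$$
\sum_{k\ge 0}\frac{(-qzw_{n+1})^k\,X_{\alpha_n}^k}{(q^2;q^2)_k}\cdot w_{n+1}^{\lambda_{n+1}}g^{(n)}.
$$
To conclude, I will expand the right-hand side of the lemma using~\eqref{eq:qpochinv}; noting that $\prod_{j=1}^{k}(q^{2j}-1) = (-1)^k(q^2;q^2)_k$, the expansion of $\Psi(zw_{n+1}X_{\alpha_n})$ yields exactly the same series, and the identity follows.

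There is no serious obstacle here: the argument is a short, direct calculation in the completed quantum torus, relying only on the Weyl commutation relation, the explicit Taylor series for $\Psi_q$ and $\Psi_q^{-1}$, and the centrality of $w_{n+1}$. The only care required is in verifying that $P_{n+1}^{-1}$ genuinely acts as $-qw_{n+1}$ on distributions of the prescribed form, which is a direct check from~\eqref{eq:toda-torus-F-action}.
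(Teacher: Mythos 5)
Your proposal is correct and follows essentially the same route as the paper: Taylor-expand $\Psi_q^{-1}$, commute the $P_{n+1}^{-1}$ factors to the right using the Weyl relation so they become multiplication operators on $w_{n+1}^{\lambda_{n+1}}g^{(n)}$, and match against the series for $\Psi_q$ via $\prod_{j=1}^k(q^{2j}-1)=(-1)^k(q^2;q^2)_k$. You are actually somewhat more careful than the paper's terse proof sketch, which says that $P_{n+1}^{-1}$ acts ``by $w_{n+1}$'' and invokes $(UV)^k=q^{k(k-1)}U^kV^k$; taken literally, the relevant relation $X_{\alpha_n}P_{n+1}^{-1}=q^2P_{n+1}^{-1}X_{\alpha_n}$ gives $(UV)^k=q^{-k(k-1)}U^kV^k$, and $P_{n+1}^{-1}$ acts as $-qw_{n+1}$ by~\eqref{eq:toda-torus-F-action} — the two imprecisions in the paper compensate each other, and your version tracks the $(-q)^k$ factor explicitly, which is exactly what is needed to reconcile the expansions~\eqref{eq:qpoch} and~\eqref{eq:qpochinv}.
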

\begin{proof}
Before starting the proof, note that both sides are well-defined since each operator factor is clearly in $\Tc^{(n,1)}_{w_{n+1};z}$ and $w_{n+1}^{\la_{n+1}}g^{(n)}$ is in $\Dcr^{(n,1)}_{w_1,\ldots,w_{n+1};z}$ as we can take bounding function $M(k,\la_{n+1})=\la_{n+1}$. Now as in the proof of Lemma~\ref{lem:pentasum1}, we use
$$
UV=q^{2}VU \implies(UV)^k = q^{k(k-1)}U^kV^k
$$
to bring all $P_{n+1}^{-1}$ factors to the right, where they act on $w_{n+1}^{\la_{n+1}}g^{(n)}$ by $w_{n+1}$. So comparing the formulas~\eqref{eq:qpochinv} and~\eqref{eq:qpoch} for the series expansions of the $q$-Pochhammer and its reciprocal, the Lemma follows.
\end{proof}
Now recall that the Whittaker kernel $W^{(n+1)}$ is an element of $\Dcr_{w_1,\ldots,w_{n+1}}^{(n,1)}$ and hence of $\Dcr_{w_1,\ldots,w_{n+1};z}^{(n,1)}$.

\begin{prop}
\label{lem:whit-bax}
The Whittaker kernel diagonalizes the conjugate Baxter operator: we have
\begin{align}
\label{eq:whit-bax}
\Qf^\star_{n+1}(z) W^{(n+1)} = W^{(n+1)}\prod_{j=1}^{n+1}\Psi(-qzw_j).
\end{align}
\end{prop}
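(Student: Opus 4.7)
The proof proceeds by induction on $n$, with the recursive decompositions
$$\Qf^\star_{n+1}(z) = T_{n+1}(z)\Qf^\star_n(z), \qquad W^{(n+1)} = R_n(w_{n+1})\cdot(w_{n+1}^{\la_{n+1}} W^{(n)})$$
providing the framework. For the base case $n=0$, the recursive construction gives $W^{(1)} = w_1^{\la_1}$ and $\Qf^\star_1(z) = \Psi(zP_1^{-1})$; since $P_1^{-1}$ acts on $w_1^{\la_1}$ by multiplication by $-qw_1$, we immediately get $\Qf^\star_1(z)W^{(1)} = W^{(1)}\Psi(-qzw_1)$ as required.

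For the inductive step, I would first apply Lemma~\ref{lem:tlem1} to intertwine $\Qf^\star_{n+1}(z)$ past the recursion operator, yielding
$$\Qf^\star_{n+1}(z)W^{(n+1)} = R_n(w_{n+1})\,T_{n+1}(z)\Psi(zw_{n+1}X_{\alpha_n})\Qf_n^\star(z)\bigl(w_{n+1}^{\la_{n+1}} W^{(n)}\bigr).$$
Next I observe that $\Qf^\star_n(z)$ involves only the generators $P_1,\ldots,P_n$ and $X_{\alpha_1},\ldots,X_{\alpha_{n-1}}$, which all commute with the multiplication operator $w_{n+1}^{\la_{n+1}}$ (since the operator $P_j$ shifts only the $j$-th coordinate of $\la$). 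This lets me pull $w_{n+1}^{\la_{n+1}}$ to the left of $\Qf^\star_n(z)$ and then apply the inductive hypothesis to reduce the inner factor to $w_{n+1}^{\la_{n+1}} W^{(n)}\prod_{j=1}^{n}\Psi(-qzw_j)$.

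The key remaining computation is the simplification of $T_{n+1}(z)\Psi(zw_{n+1}X_{\alpha_n})$ acting on $w_{n+1}^{\la_{n+1}}G$, where $G = W^{(n)}\prod_{j=1}^n \Psi(-qzw_j)$. Here I invoke Lemma~\ref{lem:tlem2}, which replaces the action of $\Psi(zw_{n+1}X_{\alpha_n})$ on $w_{n+1}^{\la_{n+1}}G$ by that of $\Psi(zq^{-1}X_{\alpha_n}P_{n+1}^{-1})^{-1}$. Since $T_{n+1}(z) = \Psi(zP_{n+1}^{-1})\Psi(zq^{-1}X_{\alpha_n}P_{n+1}^{-1})$, the middle factor cancels against its inverse, leaving only $\Psi(zP_{n+1}^{-1})$. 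Finally, because $G$ is independent of $\la_{n+1}$, the operator $P_{n+1}^{-1}$ acts on $w_{n+1}^{\la_{n+1}}G$ as multiplication by $-qw_{n+1}$, and thus $\Psi(zP_{n+1}^{-1})$ acts as multiplication by $\Psi(-qzw_{n+1})$.

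Combining these steps and using that $R_n(w_{n+1})$ commutes with multiplication by the $W$-symmetric function $\prod_{j=1}^{n+1}\Psi(-qzw_j)$ (since $R_n(w_{n+1})$ involves only operators acting on $\la$-functions), the desired identity falls out. The main bookkeeping obstacle is ensuring that all manipulations take place in a ring where the infinite series converge formally: because $G$ involves factors with leading term in $w_{n+1}$ having a definite lower bound depending on $\la_{n+1}$, one must verify that at each stage the expressions remain in the completion $\Dcr^{(n,1)}_{\bs w;z}$ on which $\Tc^{(n,1)}_{w_{n+1};z}$ acts, which is where Lemma~\ref{lem:tlem2} is formulated precisely for this purpose.
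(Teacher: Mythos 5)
Your proposal is correct and follows essentially the same plan as the paper's proof: induction on rank, intertwining $\Qf^\star_{n+1}(z)$ past $R_n(w_{n+1})$ via Lemma~\ref{lem:tlem1}, then invoking Lemma~\ref{lem:tlem2} to cancel the middle dilogarithm factor in $T_{n+1}(z)$. You spell out a few steps the paper leaves implicit (the base case, pulling $w_{n+1}^{\lambda_{n+1}}$ past $\Qf_n^\star(z)$, and the explicit cancellation), but the argument is the same.
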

\begin{proof}
We prove the proposition by induction over the rank, combining Lemmas~\ref{lem:tlem1} and~\ref{lem:tlem2}. We have
\begin{align*}
\Qf^\star_{n+1}(z)W^{(n+1)} &= \Qf^\star_{n+1}(z)R_n(w_{n+1})\cdot w_{n+1}^{\la_{n+1}}W^{(n)}\\
&=R_n(w_{n+1})T_{n+1}(z)\Psi(zw_{n+1}X_{\alpha_n})\Qf_n^\star(z)\cdot w_{n+1}^{\la_{n+1}}W^{(n)}.
\end{align*}
By the induction hypothesis, $\Qf_n^\star(z)$ acts on $W^{(n)}$ with eigenvalue $\prod_{j=1}^n\Psi(-qzw_j)$. On the other hand, using Lemma~\ref{lem:tlem2} we have
\begin{align*}
R_n(w_{n+1})T_{n+1}(z)\Psi(zw_{n+1}X_{\alpha_n+})\cdot w_{n+1}^{\la_{n+1}}W^{(n)}
&= R_n(w_{n+1})\Psi(zP_{n+1}^{-1})\cdot w_{n+1}^{\la_{n+1}}W^{(n)}\\
&=\Psi(-qzw_{n+1}) R_n(w_{n+1})\cdot w_{n+1}^{\la_{n+1}}W^{(n)}\\
&=\Psi(-qzw_{n+1}) W^{(n+1)},
\end{align*}
and the Proposition follows.
\end{proof}

Recall the $z$-expansion~\eqref{eq:alg-bax-qde} of the Baxter operator in the ${H}_k$. From it we deduce that their adjoints act on the Whittaker kernel by the multiplication operators
\begin{align}
\label{eq:pieri}
H^\star_k \cdot W= (-q)^{k}e_k(\bs w) W, \qquad 1\leq k\leq n+1,
\end{align}
where $e_k(\bs w)$ is the $k$-th elementary symmetric function in the alphabet $\bs w$. In other words, we see that $H_k^\star$ is the operator defining the $k$-th Pieri rule for the Whittaker kernel. 
%\begin{remark}
%% obsolete remark since we changed the conventions:
%In our conventions, the $H^*_k$ act to lower total $\bs w$ degree. But since the top Hamiltonian $H^*_{n+1}$ acts as multiplication by $(-q)^{-n-1}w_1^{-1}\cdots w_{n+1}^{-1}$, we can simply multiply each $H_k^*$ by the inverse to reformulate the Pieri rules in terms of degree-raising operators.
%\end{remark}

\begin{example}
Writing $W_{\bs\la} = W(\bs\la)$, the two Pieri rules for $GL_2$ read
\begin{align*}
(w_1+w_2)W_{\la_1,\la_2} &=\left(T_{\la_1} + \big(1-q^{2(\la_1-\la_2)}\big)T_{\la_2}\right)W_{\la_1,\la_2}\\
&= W_{\la_1+1,\la_2} + \big(1-q^{2(\la_1-\la_2)}\big) W_{\la_1,\la_2+1},
\end{align*}
and
$$
w_1w_2W_{\la_1,\la_2} = W_{\la_1+1,\la_2+1}.
$$
\end{example}
%\begin{cor}
%\label{cor:pieri}
%The $W_\la$ are eigenfunctions for the fundamental Toda Hamiltonians: we have the Pieri rules
%\begin{align}
%\label{eq:pieri}
%{H_k}W_\la &= e_k(\bs w^{-1}) W_\la,\quad 1\leq k\leq n+1.
%\end{align}
%\end{cor}
\begin{cor}
\label{cor:basis}
For any dominant weight $\lambda\in \Prm^+$, the Laurent polynomial $W^{(n+1)}(\la)$
%\begin{align}
%W^{std}_\la = \overline{}
%\end{align} 
coincides with the standard definition of the $q$-Whittaker polynomial in $(n+1)$-variables, i.e. 
$$
W^{(n+1)}(\la) = P^{(n+1)}_\la(q,0,\bs w)%=\widetilde{H}_\la(q,t,\bs w)|_{t^{\eta(\la)}}.
$$
where $P_\la$ is Macdonald's $P$-polynomial.
%\red{$P_\la$ is a disaster...}
In particular, for $\la$ dominant $W(\la)$ is symmetric in $\bs w$ and the set $\{{W(\la)}\}_{\la\in\Prm^+}$ forms a $\mathbb{Z}[q^{\pm1}]$-basis for the ring of symmetric Laurent polynomials in $(w_1,\ldots, w_{n+1})$ with coefficients in $\mathbb{Z}[q^{\pm1}]$.
\end{cor}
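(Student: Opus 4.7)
The plan is to characterize both $W^{(n+1)}(\la)$ and $P^{(n+1)}_\la(q,0;\bs w)$ as the unique solution of a common system of Pieri-type recurrences with initial condition $f(0)=1$, and then to deduce the basis statement from the known basis property of the $q$-Whittaker polynomials.

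First I would verify the base case $W^{(n+1)}(0)=1$ directly from the Gelfand--Zetlin sum: the only array with top row $0$ is the zero array, which contributes the empty product $1$. Next, combining Proposition~\ref{lem:whit-bax} with the $z$-expansion~\eqref{eq:alg-bax-qde} of the Baxter operator and the explicit formula~\eqref{eq:explicit-ham} for $H_k^\star$, the relation~\eqref{eq:pieri} translates into the coordinate identity
$$
e_k(\bs w)\, W(\la) \;=\; \sum_{|I|=k}\; \prod_{m\in G(I)} \bigl(1-q^{2(\la_m-\la_{m+1})}\bigr)\, W\!\Bigl(\la+\textstyle\sum_{i\in I}\eps_i\Bigr).
$$
For dominant $\la$, a factor $1-q^{2(\la_m-\la_{m+1})}$ vanishes precisely when $\la_m=\la_{m+1}$ and $m\in G(I)$, which is exactly the condition for $\mu:=\la+\sum_{i\in I}\eps_i$ to fail to be dominant. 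The surviving terms therefore run over partitions $\mu$ with $\mu/\la$ a horizontal $k$-strip, and the resulting identity coincides with the classical Pieri rule satisfied by Macdonald's $P$-polynomials specialized at $t=0$.

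The principal step is then uniqueness: any family $\{U(\la)\}_{\la\in\Prm^+}$ of Laurent polynomials in $\bs w$ obeying the Pieri identity above for every $k=1,\ldots,n+1$, together with $U(0)=1$, must coincide with $\{P_\la(q,0;\bs w)\}_{\la\in\Prm^+}$. I would argue by induction on $|\la|$. At $\la=0$ the Pieri rule for $e_k$ collapses to the single surviving term $I=\{1,\ldots,k\}$, forcing $U(\omega_k)=e_k(\bs w)$. For the inductive step, applying the Pieri rules $(e_k)_{k=1}^{n+1}$ to various partitions $\nu$ of size $N-k$ produces a linear system in the unknowns $\{U(\mu):|\mu|=N\}$; because the operators $H_k^\star$ form a commuting family (Proposition~\ref{prop:comm-bax}) whose joint spectrum on symmetric functions is simple --- equivalently, because $\{P_\mu(q,0)\}_{|\mu|=N}$ is already known to provide a basis of the relevant eigenspace --- this system has a unique solution. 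Applying the argument to both $W(\la)$ and $\sigma(W(\la))$ for $\sigma\in S_{n+1}$ acting by permutation of $\bs w$, each of which satisfies the same recursion and the same initial condition, simultaneously yields the identification $W(\la)=P_\la(q,0;\bs w)$ and the $\bs w$-symmetry of $W(\la)$.

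Finally, the basis statement follows from the classical fact that $\{P_\la(q,0;\bs w)\}_{\la\in\Prm^+}$ is a $\mathbb{Z}[q^{\pm1}]$-basis of the ring of symmetric Laurent polynomials with coefficients in $\mathbb{Z}[q^{\pm1}]$: the Pieri coefficients $\psi_{\mu/\la}(q,0)$ lie in $\mathbb{Z}[q^{\pm2}]$, and unitriangularity in the monomial basis (with respect to dominance order) is inherited from the triangularity of the Pieri rule. The main obstacle I anticipate is executing the uniqueness argument precisely: the Pieri system is not triangular with respect to $|\la|$ alone, so one must combine it with the dominance order on partitions of a fixed total degree --- or equivalently exploit the commuting structure of the $q$-Toda Hamiltonians --- to isolate each $U(\la)$ among the values with the same total size.
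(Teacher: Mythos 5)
Your argument is the paper's: characterize both $W^{(n+1)}(\la)$ and $P_\la(q,0;\bs w)$ as the unique family satisfying the Pieri recursions~\eqref{eq:pieri} with initial condition $W_\emptyset=1$, then invoke the known basis property of $q$-Whittaker polynomials; the paper dismisses the uniqueness step as ``standard'' while you spell out the dominance-order induction (for a fixed $|\la|$, applying the $e_\ell$-Pieri rule to $\nu=\la-\omega_{\ell(\la)}$ isolates $U(\la)$ as the unique dominance-maximal term among the vertical $\ell$-strips over $\nu$). One clarification: the commuting family $H_k^\star$ acts on lattice functions as the $q$-Toda Hamiltonians, not on symmetric polynomials in $\bs w$ --- the operators with simple joint spectrum on symmetric polynomials are the $\mathcal{R}_{\omega_k}$ of~\eqref{eq:qwhit-dual-eigen} --- so your phrase ``whose joint spectrum on symmetric functions is simple'' misattributes the spectral input, and the uniqueness should indeed rest on the dominance-order triangularity you also describe rather than on a spectral argument for $H_k^\star$.
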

\begin{proof}
The identification follows since both collections of polynomials are uniquely characterized by the Pieri rules~\eqref{eq:pieri} with initial condition $W_\emptyset=1$; the remaining claims are standard.
\end{proof}

\begin{defn}
We define the \emph{algebraic Whittaker transform} by
\begin{align}
\label{eq:alg-whit}
\Wc \colon \Vcr^{\mathrm{int}} \longra \Z[q^{\pm1}][\bs w^{\pm1}]^{sym}, \qquad f \longmapsto \sum_{\lambda \in \Prm} f(\la)\overline{W(\la)}.
\end{align}
\end{defn}

Using the embeddings $\Vcr^{\mathrm{int}} \hookrightarrow \Fcr^{\mathrm{int}} \hookrightarrow \Fcr$ and $\mathbb{Z}[q^{\pm1}][\bs w^{\pm1}]^{sym}\hookrightarrow \Dcr_{w_1,\ldots, w_{n+1}}$, we can write it equivalently as
\begin{align}
\label{eq:sc-w}
\mathcal{W}(f) = (f,W)_\Fcr.
\end{align}
By Corollary~\ref{cor:basis}, $\mathcal{W}$ is an isomorphism of $\mathbb{Z}[q^{\pm1}]$-modules as it takes the basis $\{\delta_\la\}_{\la\in P^+}$ in $\Vcr^{\mathrm{int}}$ to the basis $\{\overline{W(\la)}\}_{\la\in\Prm^+}$ in $\mathbb{Z}[q^{\pm1}][\bs w^{\pm1}]^{sym}$. The Pieri rules~\eqref{eq:pieri} become the following intertwining relation for the Toda Hamiltonians under algebraic Whittaker transform:
\begin{align}
\label{eq:toda-intertwining}
\mathcal{W}\circ H_k = (-q)^{-k}e_k(\bs w)\circ \mathcal{W}, \quad \quad 1\leq k\leq n+1.
\end{align}
Moreover, by Lemma~\ref{lem:whit-bax} it follows that the natural extension of $\mathcal{W}$ to an isomorphism of $\mathbb{Q}(q)[[z]]$-modules 
$$
\Wc_z \colon \Vcr_z \longrightarrow \Q(q)[\bs w^{\pm1}]^{sym}[[z]]
$$
satisfies the intertwining relation
\beq
\label{eq:Q-W-inter}
\Wc_z \circ \Qf(z) = \prod_{j=1}^{n+1}\Psi(-qzw^{-1}_j)^{-1}\circ \Wc_z.
\eeq

%\begin{align*}
%(R_n(w_{n+1})\cdot \psi)(\la) = \sum_{\mu_1\geq\la_2,\ldots, \mu_{n}\geq\la_{n+1}}&(-w_{n+1})^{\sum_jr_j}q^{\sum_jr_j(r_j+1)}\prod_{j=1}^n\binom{\mu_{j}-\la_j-1}{\mu_{j}-\la_{j+1}}_{q^2}\\
%& \times \psi(\mu_1-\lambda_2,\mu_1-\lambda_3,\ldots,\mu_n-\lambda_{n+1}).
%\end{align*}

Since the $q$-Whittaker polynomials are $t=0$ specializations of Macdonald polynomials, they also satisfy a dual set of difference equations with respect to the variables $\bs w$: we have
\begin{align}
\label{eq:qwhit-dual-eigen}
\mathcal{R}_{\omega_k}\cdot \overline{W}_\la = q^{2\sum_{r=1}^k\la_r}\overline{W_\la},\quad 1\leq k\leq n+1,
\end{align}
where we recall $\mathcal{R}_{\omega_k}=\mathcal{R}_{\omega_k}[1]$ is the following minuscule element of $\mathcal{D}_{res}$:
$$
\mathcal{R}_{\omega_k} = \sum_{|I|=k}\prod_{\substack{i \in I \\ j \notin I}}\frac{1}{1-w_j/w_i} \prod_{i\in I}D_i.
$$
They also satisfy the ``raising operator'' identities describing the action of the excited operators
\begin{align*}
\mathcal{R}_{\omega_k}[e_k \otimes 1]  = \sum_{|I|=k}\prod_{\substack{i \in I \\ j \notin I}}\frac{1}{1-w_j/w_i} \prod_{i\in I}w_iD_i,
\end{align*}
which read
$$
\mathcal{R}_{\omega_k}[e_k \otimes 1] \cdot \overline{W}_\la= q^{2\sum_{r=1}^k\la_r}\overline{W}_{\la+\omega_k},\quad 1\leq k\leq n+1.
$$
Hence we have intertwining relations
\begin{align}
\label{eq:A-inter}
\mathcal{W}\circ A_{t_k} &= (-q)^{k(1-k)/2}\mathcal{R}_{\omega_k}\circ\mathcal{W},\\
\mathcal{W}\circ A_{s_k} &= (-1)^k(-q)^{k(1-k)/2}\mathcal{R}_{\omega_k}[e_k \otimes 1]\circ\mathcal{W}.
\end{align}

%\red{This is exactly image of the class $[\mathcal{O}_{\mathrm{Gr}_{\omega_k}}]$ under localization, see formula (8.1) of Finkelberg-Tsymbaliuk~\cite{FT17}.}
Consider the algebra automorphism $\widetilde\gamma $ of $D_q(T)$ defined on generators by
$$
\widetilde\gamma(w_i)=w_i,\quad \widetilde\gamma(D_i) = {-}w_iD_i, \quad i=1,\ldots, n+1.
$$
%\red{Can you check the sign and $q$-powers here?}
It is easy to see that $\widetilde\gamma$ preserves $\Dres$.

\begin{prop}
\label{prop:alg-whit}
There is a unique algebra isomorphism 
\begin{align}
\label{eq:alg-whit-def}
\mathbb{W}\colon \mathbb{L}_{Toda}\longrightarrow \mathcal{D}_{res}
\end{align}
such that
\begin{align}
\label{eq:el-inter}
\mathbb{W}(H_k) &= (-q)^{-k}e_k(\bs w) \\
\mathbb{W}(Y_{\xi_{t_k}}) &= (-q)^{k(1-k)/2}\mathcal{R}_{\omega_k},\\
\mathbb{W}(Y_{\xi_{s_k}}) &= (-1)^k(-q)^{k(1-k)/2}\mathcal{R}_{\omega_k}[e_k \otimes 1]
%\mathbb{W}\circ \tau &= \gamma\\
%\mathbb{W}\circ Q(z) &= \prod_{j=1}^{n+1}\Psi(zw_j)\circ \mathbb{W},
\end{align}
%\red{Does this literally match Cautis-Williams though? Issues with $q^{1/2}$? Probably have to make $X_1$ act by $(-q)^{n/2}$ to get correct Grassmannian dimensions for perversity?}
It satisfies the intertwining relations
\begin{align}
\label{eq:aut-inter}
\mathbb{W}\circ \tau &= \widetilde\gamma \circ \mathbb{W}\\
\mathbb{W}\circ \mathrm{Ad}(\Qf^+(z)) &= \prod_{j=1}^{n+1}\mathrm{Ad}(\Psi(-q^{-1}zw_j)^{-1})\circ \mathbb{W},\\
\mathbb{W}\circ \mathrm{Ad}(\Qf^-(z)) &= \prod_{j=1}^{n+1}\mathrm{Ad}(\Psi(-qzw^{-1}_j)^{-1})\circ \mathbb{W}.
\end{align}
%where $\widetilde{\gamma}$ is the restriction to $\mathcal{D}_{res}$ of the automorphism of the localized quantum torus 
%$$
%\gamma(
%$$
where in the last two identities we abuse notation and use the same symbol for $\mathbb{W}$ and its extension to the $z$-completions.
\end{prop}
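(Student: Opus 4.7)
The plan is to construct $\mathbb{W}$ by transport of structure through the algebraic Whittaker transform. By Corollary~\ref{cor:basis}, $\mathcal{W}\colon\Vcr^{\mathrm{int}}\to\mathbb{Z}[q^{\pm1}][\bs w^{\pm1}]^{sym}$ is an isomorphism of $\mathbb{Z}[q^{\pm1}]$-modules, and by Lemma~\ref{lem:pres} the subspace $\Vcr^{\mathrm{int}}$ carries a faithful action of $\Lbb_{Toda}$. For each $A\in\Lbb_{Toda}$ I define $\mathbb{W}(A)$ as the unique $\mathbb{Z}[q^{\pm1}]$-linear operator on $\mathbb{Z}[q^{\pm1}][\bs w^{\pm1}]^{sym}$ satisfying $\mathcal{W}\circ A=\mathbb{W}(A)\circ\mathcal{W}$; this yields tautologically an injective $\mathbb{Z}[q^{\pm1}]$-algebra homomorphism, and uniqueness of $\mathbb{W}$ with the prescribed values on $H_k$ is equivalent to the Pieri rule characterizing the $q$-Whittaker basis. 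The explicit formulas $\mathbb{W}(H_k)=(-q)^{-k}e_k(\bs w)$, $\mathbb{W}(Y_{\xi_{t_k}})=(-q)^{k(1-k)/2}\Rc_{\omega_k}$, and $\mathbb{W}(Y_{\xi_{s_k}})=(-1)^k(-q)^{k(1-k)/2}\Rc_{\omega_k}[e_k\otimes 1]$ are then just the identities~\eqref{eq:toda-intertwining} and~\eqref{eq:A-inter} translated through the defining relation for $\mathbb{W}$.

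To show the image of $\mathbb{W}$ actually lies in $\Dres$, I will use the Goodearl--Yakimov description of $\Lbb_{Toda}(GL_{n+1})$ recalled at the end of Section~\ref{subsec:ltoda}, by which this algebra is generated over $\mathbb{Z}[q^{\pm1}]$ by $\{\tau^m(A_{t_1})\}_{m\in\mathbb{Z}}$ together with inverses of the two frozens $A_{s_{n+1}},A_{t_{n+1}}$. The images of the frozens are minuscule elements of $\Dres$ by the previous paragraph. For the translates, Lemma~\ref{lem:time-translation} rewrites as
$$
\tau^{m\pm1}(A_{t_1})=\pm(q-q^{-1})^{-1}\bigl[\tau^m(A_{t_1}),H_{\pm1}\bigr],
$$
and a direct computation of the commutator $[\Rc_{\omega_1}[w_1^m],e_1(\bs w)]=(q^2-1)\Rc_{\omega_1}[w_1^{m+1}]$ in $\Dc_q(T)_{\mathrm{loc}}$ produces the inductive identity $\mathbb{W}(\tau^m(A_{t_1}))=(-1)^m\Rc_{\omega_1}[w_1^m]$, which lies in $\Dres$ for every $m\in\mathbb{Z}$. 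Since every algebra generator of $\Lbb_{Toda}$ maps into $\Dres$ and $\Dres$ is a subring, the whole image is contained in $\Dres$.

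Surjectivity onto $\Dres$ then follows from Lemma~\ref{lem:small-genset}, since the image already contains the inverses of $D_{\omega_{n+1}}=\Rc_{\omega_{n+1}}$ and every $\Rc_{\omega_1}[w_1^m]$, which together generate $\Dres$. For the $\tau$-intertwining, I verify that $\mathbb{W}\circ\tau$ and $\widetilde\gamma\circ\mathbb{W}$ coincide on each algebra generator: on $\tau^m(A_{t_1})$ this is the inductive formula above combined with $\widetilde\gamma^m(\Rc_{\omega_1})=(-1)^m\Rc_{\omega_1}[w_1^m]$, and on the frozens it reduces to the identity $\widetilde\gamma(\Rc_{\omega_{n+1}})=(-1)^{n+1}\Rc_{\omega_{n+1}}[e_{n+1}\otimes1]$ paired with the $\tau$-action on the frozen lattice vectors described in Section~\ref{subsec:ltoda}. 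Finally, the Baxter intertwinings are read off from the eigenvalue identities for $\Qf^{\pm}(z)^\star$ on the Whittaker kernel: Proposition~\ref{lem:whit-bax} together with the adjoint relations~\eqref{eq:adjts} gives~\eqref{eq:Q-W-inter}, which, after extending $\mathbb{W}$ $z$-adically to completions of both algebras, becomes the statement $\mathbb{W}(\Qf^+(z))=\prod_j\Psi(-q^{-1}zw_j)^{-1}$; conjugation by this element on either side then yields the $\Qf^+$ intertwining, and an entirely parallel derivation with $\Qf^-(z)^\star$ in place of $\Qf^+(z)^\star$ furnishes the $\Qf^-$ version. The main technical obstacle is the integrality check in the second paragraph: a priori the recursion for $\tau^m(A_{t_1})$ involves the scalar $(q-q^{-1})^{-1}\notin\mathbb{Z}[q^{\pm1}]$, and what keeps the induction inside $\Dres$ rather than only inside its extension over $\mathbb{Q}(q)$ is the precise cancellation of this denominator against the factor $(q^2-1)$ produced by the commutator with $e_1(\bs w)$, reflecting the fact that $\Lbb_{Toda}$ is an integral form of the Toda chain algebra.
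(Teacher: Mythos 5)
Your proposal is correct and follows essentially the same route as the paper's proof: both define $\mathbb{W}$ by transporting through the module isomorphism $\mathcal{W}$, both invoke the Goodearl--Yakimov generating set for $\Lbb_{Toda}$ together with Lemma~\ref{lem:small-genset} to match the two subrings of $\End_{\mathbb{Z}[q^{\pm1}]}(\mathbb{Z}[q^{\pm1}][\bs w^{\pm1}]^{sym})$, and both derive the $\tau$-intertwining from Lemma~\ref{lem:time-translation} and the Baxter intertwinings from the eigenvalue identity~\eqref{eq:Q-W-inter}. You spell out a few things the paper leaves implicit -- chiefly the explicit inductive formula $\mathbb{W}(\tau^m(A_{t_1}))=(-1)^m\Rc_{\omega_1}[w_1^m]$ via the commutator calculation $[\Rc_{\omega_1}[w_1^m],e_1(\bs w)]=(q^2-1)\Rc_{\omega_1}[w_1^{m+1}]$, and the observation that the $(q-q^{-1})^{-1}$ appearing in the time-translation relation is cancelled by the $(q^2-1)$ from the commutator so that the recursion stays inside the integral form -- which is useful added detail rather than a different argument.
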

\begin{proof}
Using the isomorphism $\mathcal{W} \colon V\simeq \mathbb{Z}[q^{\pm1}][\bs w^{\pm1}]^{sym}$ we can identify both $\mathcal{D}_{res}$ and $\Lbb_{Toda}$ as subrings of the endomorphism ring $\mathrm{End}_{\mathbb{Z}[q^{\pm1}]}(\mathbb{Z}[q^{\pm1}][\bs w^{\pm1}]^{sym})$. We claim that these subrings are identical. Indeed, comparing the relations from Lemma~\ref{lem:time-translation} and the evident commutation relations between $e_1(\bs w^{\pm1})$ and $\mathcal{R}_1$, it follows that the action of $\tau^m(A_{t_1})$ coincides up to multiplication by a power of $q$ with that of the minuscule operator $\mathcal{L}_{\omega_k}^{\otimes m}$. Similarly, the frozen $\Ac$-variables $X_{\omega_{n+1}}$ and $P_{\omega_{n+1}}$ are intertwined respectively with powers of $q$ times $\mathcal{R}_{{n+1}}=D_1\cdots D_{n+1}$ and $e_{n+1}(\bs w)$. Since the $\tau^m(A_{t_1}),X_{\pm\omega_{n+1}},P_{\pm\omega_{n+1}}$ generate $\Lbb_{Toda}$ by the results of Goodearl and Yakimov from Section~\ref{subsec:ltoda}, while the $\mathcal{L}_{\omega_k}^{\otimes m}$ together with $\mathcal{R}_{{n+1}}$ and its inverse generate $\mathcal{D}_{res}$ by Lemma~\ref{lem:small-genset}, the claim follows and the construction above defines an isomorphism $\mathbb{W}\colon \mathbb{L}_{Toda}\simeq \mathcal{D}_{res}$. With~\eqref{eq:el-inter} already having been established, it remains to prove that the relations~\eqref{eq:aut-inter} hold. Since 
the $\tau^m(A_{s_1})$ generate $\Lbb_{Toda}$ (which is a torsion-free $\mathbb{Z}[q^{\pm1}]$-module), the relation involving $\tau$ follows from the commutation relations in Lemma~\ref{lem:time-translation}. Finally, the relations involving the Baxter operators follow from the intertwining relation~\eqref{eq:Q-W-inter} and its analog for $\Qf^-(z)$.
\end{proof}

\section{Changing an isolating cylinder}
\label{sec:3-to-3}

\subsection{Bi-fundamental Baxter operators}
\label{subsec:bi-baxter}

%\red{Here I write $\bs\mu_{n,0}$ for what was $Q_{n+1}$ above. Need to reconcile that. Also, we use notation $Q_{n,k}$ for quivers, and can't go on denoting Baxter operators the same way. At the very least, we need to change the font, e.g. $\Qf_{n,k}$}

In this section we define \emph{bi-fundamental Baxter operators} via sequences of mutations, presented as transformations of cyclic Weyl words, as in Section~\ref{sec:isolating-coords}. For $0 \le k \le n$ and $2 \le j \le 2(n+k)$, let us set
$$
\bi_{n,k;j} = 
\begin{cases}
\bi_{[\max(1,\lfloor j/2 \rfloor-k), \min(\lfloor j/2 \rfloor,n)]} &\text{if $j$ is even,} \\
\bi_{[\min(\lfloor j/2 \rfloor,n),\max(1,\lfloor j/2 \rfloor+1-k)]} &\text{if $j$ is odd.}
\end{cases}
$$
We also set $\bi_{n,k;j} = \varnothing$ for $j<2$ or $j>2(n+k)$.
%For $0 \le k \le n$ and $j \in \Z$, let us set
%\begin{align*}
%\bi_{n,k;j} &=
%\begin{cases}
%\bi_{[\max(1,j-k), \min(j,n)]} &\text{if} \;\, 1 \le j \le n+k, \\
%\varnothing &\text{otherwise,}
%\end{cases} \\
%\hat \bi_{n,k;j} &=
%\begin{cases}
%\bi_{n,k;\lfloor j/2 \rfloor} &\text{if $j$ is even,} \\
%\bi_{n,k-1;\lfloor j/2 \rfloor}^t &\text{if $j$ is odd.}
%\end{cases}
%\end{align*}
Now, for $0 \le j \le 2(n+k)+1$ consider cyclic words
$$
\bi_{n,k}^{(j)} = \bar \bi_c \bi_{n,k;j} \bi_c^t \bi_{n,k;j+1},
$$
where $\bi_c = (1, \ldots, n)$. For example, for $n=3$, $k=2$ we obtain the following sequence of cyclic Weyl words
\begin{align*}
&\bi_{n,k}^{(0)} = \bi_{\bar c,c^t}, &
&\bi_{n,k}^{(1)} = \bi_{\bar c,c^t,1}, &
&\bi_{n,k}^{(2)} = \bi_{\bar c,1,c^t,1}, &
&\bi_{n,k}^{(3)} = \bi_{\bar c,1,c^t,1,2}, \\
&\bi_{n,k}^{(4)} = \bi_{\bar c,1,2,c^t,2,1}, &
&\bi_{n,k}^{(5)} = \bi_{\bar c,2,1,c^t,1,2,3}, &
&\bi_{n,k}^{(6)} = \bi_{\bar c,1,2,3,c^t,3,2}, &
&\bi_{n,k}^{(7)} = \bi_{\bar c,3,2,c^t,2,3}, \\
&\bi_{n,k}^{(8)} = \bi_{\bar c,2,3,c^t,3}, &
&\bi_{n,k}^{(9)} = \bi_{\bar c,3,c^t,3}, &
&\bi_{n,k}^{(10)} = \bi_{\bar c,3,c^t}, &
&\bi_{n,k}^{(11)} = \bi_{\bar c,c^t}.
\end{align*}
Note that the length of words $\bi_{n,k}^{(j)}$ increases as $j$ runs from 1 to $2k+1$, stays constant for $2k+1 \le j \le 2n$, and decreases as $j$ runs from $2n+1$ to $2(n+k)+1$. The maximal length is $2(n+k)+1$ if $k<n$ and is $2(n+k)$ for $k=n$.

We now define word transformations $\Phi_{n,k}^{(j+1)} \colon \bi_{n,k}^{(j)} \to \bi_{n,k}^{(j+1)}$ for all $0 \le j \le 2(n+k)$. For $2k+1 \le j \le 2n-1$ the transformation $\Phi_{n,k}^{(j+1)}$ consists of two commuting steps. The first step transforms the word $\bi_{n,k;j+1}\bi_{\bar c}$ into $\bi_{\bar c} \bi_{n,k;j+1}$, and consists of $\len(\bi_{n,k;j+1})$ shuffles. The second one transforms $\bi_{n,k;j} \bi_c^t$ into $\bi_c^t \bi_{n,k;j+2}$, and consists of $\len(\bi_{n,k;j})$ braid moves. It is easy to see that when $j$ is odd, the mutations within each step commute as well. Now, for $j \ge 2n$ we define $\Phi_{n,k}^{(j+1)}$ is a similar way, except for one of the braid moves is replaced with a $(n,n) \to n$ merge. For $j \le 2k$ the definition is similar again, but one of the braid moves is replaced with an \emph{insertion} $1 \to (1,1)$, inverse to the merge $(1,1) \to 1$. Finally, in the case $n=k$, the transformation $\Phi_{n,k}^{(2n+1)}$ contains both an insertion $1 \to (1,1)$ and a merge $(n,n) \to n$. Finally, we set
$$
\Phi_{n,k} = \Phi_{n,k}^{2(n+k)+1} \circ \ldots \circ \Phi_{n,k}^{(1)}.
$$

\begin{figure}[h]
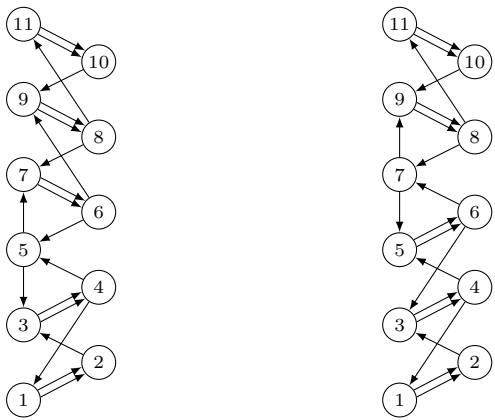

\subfile{fig-Qmn.tex}
\caption{Quiver $Q_{3,2}$ (on the left) and its image under $\bs\mu_{3,2}$ (on the right).}
\label{fig:Qmn}
\end{figure}

Using the dictionary between Weyl words and quivers described in Section~\ref{sec:isolating-coords},we can now present the transformation $\Phi_{n,k}$ as a sequence of mutations $\bs\mu_{n,k}$. Let $Q_{n,k}$ be the quiver with $2n+2k+1$ vertices, which contains $Q^n_{\mathrm{Toda}}$ and $Q^k_{\mathrm{Toda}}$ as subquivers, and such that its remaining vertex is connected to the bottom two vertices of $Q^n_{\mathrm{Toda}}$ and the top two vertices of $Q^k_{\mathrm{Toda}}$ as shown on Figure~\ref{fig:Qmn}. Then $\bs\mu_{n,k}$ mutates the subquiver $Q^k_{\mathrm{Toda}}$ from the bottom of the $Q^n_{\mathrm{Toda}}$ to its top, producing a quiver isomorphic to $Q_{n,k}$.

\begin{defn}
We call $\bs\mu_{n,k}$ the \emph{bi-fundamental Baxter mutation sequence.}
\end{defn}

\begin{remark}
\label{rem:bi-Baxter}
The mutation sequence~\eqref{eq:baxseq} is recovered as $\bs\mu_{n,0}$.
\end{remark}

To help us visualize the bi-fundamental Baxter operator, consider Figure~\ref{fig:Baxter-move}. We start by positioning the nodes of $Q_{n,k}$, as shown on the left pane of the Figure. After we have applied the bi-fundamental Baxter operator $\bs\mu_{n,k}$ the nodes will reach their final positions indicated on the right pane. After applying the subsequence $\bs\mu_{n,k}^{(j)}$, corresponding to the word transformation $\Phi_{n,k}^{(j)}$, we move the node with the smallest label, which has not yet reached its final position, one step along the path formed by yellow arrows. If this position its occupied, we move the node occupying it by one step as well, and so on. For example, after the very first mutation at node $5$ we shall be moving nodes 1 up to 6. This way, after we have applied $\bs\mu_{n,k}^{(j)} \circ \ldots \circ \bs\mu_{n,k}^{(1)}$, the Weyl word spelled by the arrows in the $n$ horizontal lines containing 4 nodes each is $\bi_{n,k}^{(j)}$.

\begin{figure}[h]
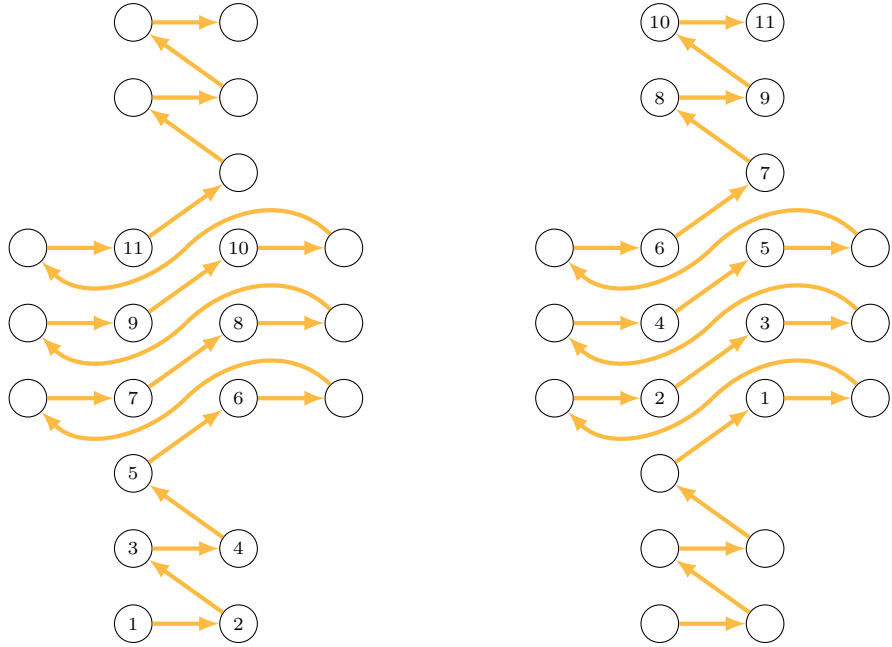

\subfile{fig-Baxter-shifts.tex}
\caption{Shifts of vertices during $\bs\mu_{3,2}$.}
\label{fig:Baxter-move}
\end{figure}

\begin{remark}
\label{rem:Baxter-arrows}
Tracking the mutation sequence $\bs\mu_{n,k}$, one notices that when we are about to mutate at a node labelled $i$, the only two nodes with arrows pointing to it are those with labels $i\pm1$.
\end{remark}

The sequence $\mu_{n,k}$ can also be encoded in the following neat combinatorial way. Consider the parallelogram on Figure~\ref{fig:Baxter-rhombus} made of copies of vertices of $Q_{n,k}$. Then the sequences of mutations in $\mu_{n,k}$ can be obtained by reading the columns of Figure~\ref{fig:Baxter-rhombus} left to right. We number the columns, so that the left most one has humber 1. Mutations within odd columns commute. Within each even column, highlighted by dashed rectangles, we read circled vertices bottom to top, and the non-circled ones top to bottom. Sequences of mutations at circled and non-circled vertices at a given even column commute as well. Mutations at circled vertices of column $j$ correspond to insertions, braid moves, and merges in the word transformation $\Phi_{n,k}^{(j)}$, while those at the non-circled ones correspond to shuffles. For example, we can factor the bi-fundamental Baxter operator $\bs\mu_{3,2}$ into 11 \emph{columns}
$$
\bs\mu_{3,2} = \bs\mu_{3,2}^{(11)} \circ \ldots \circ \bs\mu_{3,2}^{(1)},
$$
each itself factored as
$$
\mu_{3,2}^{(j)} = \mu_{3,2}^{(j), \bullet} \circ \mu_{3,2}^{(j), \circ}
$$
where
\begin{align*}
\bs\mu_{3,2}^{(1)} &= \mu_5, &
\bs\mu_{3,2}^{(2)} &= \mu_6 \circ \mu_4, &
\bs\mu_{3,2}^{(3)} &= \mu_5 \circ \mu_7\mu_3, \\
\bs\mu_{3,2}^{(4)} &= \mu_4\mu_8 \circ \mu_6\mu_2, &
\bs\mu_{3,2}^{(5)} &= \mu_3\mu_7 \circ \mu_9\mu_5\mu_1, &
\bs\mu_{3,2}^{(6)} &= \mu_2\mu_6\mu_{10} \circ \mu_8\mu_4, \\
\bs\mu_{3,2}^{(7)} &= \mu_5\mu_9 \circ \mu_{11}\mu_7\mu_3, &
\bs\mu_{3,2}^{(8)} &= \mu_4\mu_8 \circ \mu_{10}\mu_6, &
\bs\mu_{3,2}^{(9)} &= \mu_7 \circ \mu_9\mu_5, \\
\bs\mu_{3,2}^{(10)} &= \mu_6 \circ \mu_8, &
\bs\mu_{3,2}^{(11)} &= \mu_7.
\end{align*}

\begin{figure}[h]
\subfile{fig-Baxter-rhombus.tex}
\caption{Mutation sequence $\bs\mu_{3,2}$.}
\label{fig:Baxter-rhombus}
\end{figure}

\begin{remark}
\label{rem:Baxter-refactor}
Using above observations on the commutativity of certain mutations in the sequence $\bs\mu_{n,k}$, it is easy to see that we can refactor $\bs\mu_{n,k}$ as
$$
\bs\mu_{n,k} = \tilde{\bs\mu}_{n,k}^{2n+2k+1} \circ \ldots \circ \tilde{\bs\mu}_{n,k}^{(1)},
$$
where
$$
\tilde{\bs\mu}_{n,k}^{(j)} = \bs\mu_{n,k}^{(j+1),\bullet} \bs\mu_{n,k}^{(j),\circ},
$$
and $\bs\mu_{n,k}^{(2n+2k+2),\bullet}=1$. Let us also notice that by reflecting the parallelogram on Figure~\ref{fig:Baxter-rhombus} across the horizontal axis, see Figure~\ref{fig:Baxter-rhombus-2}, and applying the same recipe, we obtain equivalent factorization of the same cluster transformation $\bs\mu_{n,k}$.
\end{remark}

\begin{figure}[h]
\subfile{fig-Baxter-rhombus-2.tex}
\caption{Mutation sequence $\bs\mu_{3,2}$.}
\label{fig:Baxter-rhombus-2}
\end{figure}

%Finally, let us describe what is going on with the tropical variables as we apply the sequence of mutations $\mu_{n,k}^{-1}$. Here we always assume that $n \ge k$. We write $\mathrm y_i^{(j)}$ for the $i$-th tropical variable after we applied the $j$-th column of $\mu_{n,k}^{-1}$. For example, $\mathrm y_i^{(0)} = \mathrm y_i$ for all $1 \le i \le 2(n+k)+1$. In the notations of Figure~\ref{fig:Baxter-move} we have $\mathrm y_i^{(1)} = \mathrm y_i$ for $i \notin \hc{6,7,8}$, $\mathrm y_6^{(1)} = \mathrm y_6 \mathrm y_7$, $\mathrm y_7^{(1)} = \mathrm y_7^{-1}$, $\mathrm y_8^{(1)} = \mathrm y_7 \mathrm y_8$.

In the remainder of this section, we make extensive use of tropical cluster transformations. We start with the following useful result.

\begin{lemma}
\label{lem:Baxter-trop-var}
Let $\yrm_i$ denote the tropical cluster variables of the quiver $Q_{n,k}$, and $\yrm'_i$ be the image of $\yrm_i$  under the tropicalization of $\bs\mu_{n,k}^{-1}$. Then we have
$$
\mathrm y'_i =
\begin{cases}
\mathrm y_{i+2n+1} &\text{if} \;\; i < 2k+1, \\
\yrm_{[1,2(n+k)+1]}^{-1} &\text{if} \;\; i = 2k+1, \\
\mathrm y_{i-2k-1} &\text{if} \;\; i > 2k+1,
\end{cases}
$$
where $\yrm_{[j,k]} = \prod_{r=j}^k \yrm_r$.
\end{lemma}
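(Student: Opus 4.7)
The plan is to establish the three formulas by iterating Lemma~\ref{tropical-chain} along the factorization of $\bs\mu_{n,k}$ recorded in Remark~\ref{rem:Baxter-refactor}. Switching from $\bs\mu_{n,k}^{-1}$ to $\bs\mu_{n,k}$ only swaps source and target labels, so I will work with the forward sequence and read off the inverse substitution at the end.

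First I would verify that the hypotheses of Lemma~\ref{tropical-chain} are satisfied at every stage. The chain condition on arrows is supplied by Remark~\ref{rem:Baxter-arrows}: whenever a vertex labelled $i$ is about to be mutated in $\bs\mu_{n,k}$, its only in-neighbors in the current quiver are the vertices labelled $i\pm1$, which is exactly the arrow condition (ii) in Lemma~\ref{tropical-chain}. The tropical sign-coherence condition (i) I would check by induction on the column index $j$ in Figure~\ref{fig:Baxter-rhombus}: within each column the mutations commute and the mutated vertices correspond to tropical variables that are products $\yrm_{[a,b]}=\prod_{r=a}^{b}\yrm_r$, hence positive as Laurent monomials in the initial $\yrm$'s.

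Next I would interpret each ``column'' $\tilde{\bs\mu}_{n,k}^{(j)}$ of the refactorization of Remark~\ref{rem:Baxter-refactor} as performing a single application of Lemma~\ref{tropical-chain} on each of the maximal chains of commuting mutations visible in the dashed rectangles of Figure~\ref{fig:Baxter-rhombus}. The net effect of a column is to shift each chain-variable one position along its chain, so that after the column the tropical variable sitting at position $i$ of the new quiver is recorded in the next column of the parallelogram. Proceeding by induction on $j$, I would prove that after the $j$-th column the tropical variable at position $i$ is precisely the $\yrm_{[a(i,j),b(i,j)]}$ read off from the corresponding cell of the parallelogram, with $a(i,j),b(i,j)$ determined by the combinatorics of the Baxter-rhombus and the initial row assignments.

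Finally, reading off the values after column $2(n+k)+1$, the sliding behaviour shifts the upper Toda block downward by $2k+1$ and the lower block upward by $2n+1$, yielding the formulas $\yrm'_i=\yrm_{i-2k-1}$ for $i>2k+1$ and $\yrm'_i=\yrm_{i+2n+1}$ for $i<2k+1$. The central index $2k+1$ is the unique position lying on the longest chain of mutations in the parallelogram, and for this chain the endpoint formula $\yrm'_{i_l}=\prod_{r=1}^{l}\yrm_{i_r}^{-1}$ from Lemma~\ref{tropical-chain} forces $\yrm'_{2k+1}=\yrm_{[1,2(n+k)+1]}^{-1}$; the dual endpoint $\yrm'_{i_{l+1}}=\prod_{r=1}^{l+1}\yrm_{i_r}$ appearing in Lemma~\ref{tropical-chain} contributes no new equations because at that position the chain exits the quiver and is absorbed into the relabelling.

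The main obstacle I anticipate is bookkeeping: one has to track simultaneously many chains per column together with the endpoint corrections provided by Lemma~\ref{tropical-chain}, and verify that the inverse-product contribution is deposited only at the single middle position $2k+1$ rather than at the ends of each chain. I would manage this by setting up coordinates $(i,j)$ on the parallelogram of Figure~\ref{fig:Baxter-rhombus} and giving a single closed-form induction on $j$ that carries the claims for all rows simultaneously, with the two corner cases ``chain starts at the bottom edge'' and ``chain ends at the top edge'' handled once by appealing to Lemma~\ref{tropical-chain} at the boundary. Once that induction is in place, the statement of the lemma is just the read-off at $j=2(n+k)+1$.
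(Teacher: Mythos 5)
Your plan is essentially the same as the paper's: both proceed by tracking tropical cluster variables column-by-column through the parallelogram factorization of the bi-fundamental Baxter sequence, establishing a closed-form intermediate expression at each column and reading off the final answer at the last column; the paper even records the explicit intermediate formula $\yrm_i^{(j)}$ that your ``single closed-form induction on $j$'' is aiming at. The one place you should be more careful is your claim that Remark~\ref{rem:Baxter-arrows} ``is exactly the arrow condition (ii) in Lemma~\ref{tropical-chain}'': the remark describes arrows \emph{pointing to} the vertex about to be mutated in $\bs\mu_{n,k}$, whereas Lemma~\ref{tropical-chain}(2) requires control of arrows \emph{out of} the mutated vertex. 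These are not literally the same condition. Since you are really chasing variables through $\bs\mu_{n,k}^{-1}$, whose mutations occur in the reverse order (so that each mutation is performed on a quiver in which the arrows incident to that vertex have been flipped relative to the forward pass), the two statements do become compatible, but you need to say this rather than conflate them; otherwise a reader trying to execute your plan will hit an apparent contradiction with the sign-positivity hypothesis of Lemma~\ref{tropical-chain}. Once that orientation bookkeeping is made explicit, the remainder of your sketch (setting parallelogram coordinates, distinguishing the unique long chain that deposits the product $\yrm_{[1,2(n+k)+1]}^{-1}$ at position $2k+1$, and treating the chain endpoints that exit the quiver) matches the paper's computation.
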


\begin{proof}
Denote by $\yrm_i^{(j)}$ the image of $\yrm_i$ under the tropicalization of the first $j$ columns of $\bs\mu_{n,k}^{-1}$, so that $\yrm^{(0)} = \yrm$ and $\yrm^{(2n+2k+1)} = \yrm'$.
%that is $\hr{\bs\mu_{n,k}^{(2n+2k+2-j)}}^{-1} \ldots \hr{\bs\mu_{n,k}^{(2n+2k+1)}}^{-1}$.
Chasing the tropical variables through the above description of the bi-fundamntal Baxter operator we find that
$$
\yrm_i^{(j)} =
\begin{cases}
\yrm_i &\text{if $j < |2n+1-i|$,} \\
\yrm_{\hs{n+\frac{i-j+1}{2},n+\frac{i+j+1}{2}}} &\text{if $i-j$ is odd, $|2n+1-i| \le j \le 2(n+k)-|2k+1-i|$,} \\
\yrm_{\hs{n+1+\frac{i-j}{2},n+\frac{i+j}{2}}}^{-1} &\text{if $i-j$ is even, $|2n+1-i| \le j-1 \le 2(n+k)-|2k+1-i|$,} \\
\yrm_{2n+1+i} &\text{if $i \le 2k$, $j \ge 2n+1+i$,} \\
\yrm_{i-2k-1} &\text{if $i \ge 2k+2$, $j \ge 2n+4k+3-i$,}
\end{cases}
$$
which yields the desired result.
\end{proof}

%\begin{figure}[h]
%\subfile{fig-trop-evolution.tex}
%\caption{Tropical variables along $\mu_{3,2}^{-1}$ (reading right to left).}
%\label{fig:Baxter-trop}
%\end{figure}
%

\begin{defn}
Let $\sigma_{n,k}$ be the unique, non-trivial for $n=k$, permutation identifying quivers $Q_{n,k}$ and $\bs\mu_{n,k}(Q_{n,k})$. We define the \emph{Baxter twist} to be the following cluster transformation
$$
\beta_{n,k} = \sigma_{n,k} \circ \bs\mu_{n,k}.
$$
\end{defn}

\begin{cor}
The Baxter twist $\beta_{n,k}$ commutes with the Dehn twists $\tau_n$, $\tau_k$ from formula~\eqref{eq:short-Dehn}. Moreover, we have
$$
\beta_{n,k}^2 = \tau_n^{-k-1} \tau_k^{-n-1}.
$$
\end{cor}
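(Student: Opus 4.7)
The plan is to invoke Theorem~\ref{trop-criterion}, which reduces the equality of quantum cluster transformations to the equality of their tropical counterparts, and then verify both claims by direct computation on tropical variables using Lemma~\ref{lem:Baxter-trop-var} together with the explicit description~\eqref{eq:short-Dehn} of each Dehn twist $\tau_n$, $\tau_k$ as $n$ (resp.\ $k$) commuting mutations at the $s_j$ vertices followed by the swaps $(s_j,t_j)$ within the corresponding Toda subquiver of $Q_{n,k}$.

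First I would determine the tropical action of $\beta_{n,k}$. Lemma~\ref{lem:Baxter-trop-var} describes the tropical action of $\bs\mu_{n,k}^{-1}$ as a cyclic shift of the $2n+2k+1$ tropical variables, with the sole exception of the central vertex $i=2k+1$, whose image is the inverse of the product of all tropical variables. Composing the inverse of this map with the relabelling induced by $\sigma_{n,k}$ gives an explicit formula for the tropical action of $\beta_{n,k}$ on the cluster of $Q_{n,k}$. In particular, it interchanges the tropical variables of the two Toda subquivers $Q_{\mathrm{Toda}}^n$ and $Q_{\mathrm{Toda}}^k$ as rigid blocks, up to a controllable correction at the central vertex.

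Since $\tau_n$ is local to the subquiver $Q_{\mathrm{Toda}}^n$ and $\tau_k$ to $Q_{\mathrm{Toda}}^k$, their tropical actions preserve the block structure that $\beta_{n,k}$ permutes. The commutativity $\beta_{n,k}\tau_j = \tau_j\beta_{n,k}$ for $j\in\{n,k\}$ then reduces, via Theorem~\ref{trop-criterion}, to the statement that the cyclic shift from Lemma~\ref{lem:Baxter-trop-var} intertwines the (block-local) tropical action of each $\tau_j$ with itself. A column-by-column verification following the factorization of $\bs\mu_{n,k}$ encoded in Remark~\ref{rem:Baxter-refactor} and illustrated in Figure~\ref{fig:Baxter-rhombus} makes this tractable: at each column the tropical action of $\tau_j$ only affects variables in the relevant Toda block, which is shifted coherently by the subsequent mutations.

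For the identity $\beta_{n,k}^2=\tau_n^{-k-1}\tau_k^{-n-1}$, I would iterate the tropical formula for $\beta_{n,k}$ and match the result with the tropical action of $\tau_n^{-k-1}\tau_k^{-n-1}$, which by~\eqref{eq:short-Dehn} is a local transformation on each Toda subquiver whose effect on tropical variables can be computed directly. After two applications of $\beta_{n,k}$, the cyclic-shift component returns the blocks to their original positions, while the accumulated contributions from the central vertex and the permutations $\sigma_{n,k}$ produce precisely the exponents $-k-1$ and $-n-1$ on the respective Toda Dehn twists. The main obstacle will be the bookkeeping involved in tracking the interaction between $\sigma_{n,k}$ (especially in the exceptional case $n=k$, where $\sigma_{n,k}$ is the nontrivial swap of the two Toda blocks) and the central-vertex correction $\yrm_{[1,2(n+k)+1]}^{-1}$ from Lemma~\ref{lem:Baxter-trop-var}; once these pieces are carefully assembled, both identities follow as a routine tropical computation and the Corollary is proved.
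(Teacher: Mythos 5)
Your proposal follows exactly the paper's route: the paper's (one-line) proof also invokes Theorem~\ref{trop-criterion} to reduce to a tropical-variable computation carried out via Lemma~\ref{lem:Baxter-trop-var}, and your more detailed unpacking of that computation is consistent with it. No issues.
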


\begin{proof}
The statements follow from a computation with tropical variables as in the proof of Lemma~\ref{lem:Baxter-trop-var} and Theorem~\ref{trop-criterion}.
\end{proof}

Recall that a cluster transformation $\bs\mu$ is called a \emph{DT-transformation} if its tropicalization $\bs\mu^t$ satisfies $\bs\mu^t(\yrm_i) = \yrm_1^{-1}$ for all tropical variables $\yrm_i$. The following corollary is proved in the same as the one above.

\begin{cor}
The cluster transformation
$$
\tau_n^{-n-1} \tau_k^{-k-1} \circ \beta_{n,k}^{-1} = \tau_n^{k-n} \tau_k^{n-k} \circ \beta_{n,k}
$$
is the DT-transformation for the quiver $Q^{n,k}_{\mathrm{Toda}}$.
\end{cor}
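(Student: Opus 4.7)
The plan is to deduce the Corollary by combining the previous Corollary (giving $\beta_{n,k}^2 = \tau_n^{-k-1}\tau_k^{-n-1}$) with a tropical computation of the transformation on the left-hand side, invoking Theorem~\ref{trop-criterion}.

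First, I would verify the equality of the two expressions for the candidate cluster transformation. Using $\beta_{n,k}^2 = \tau_n^{-k-1}\tau_k^{-n-1}$ and the fact that the Dehn twists $\tau_n,\tau_k$ commute with $\beta_{n,k}$, we can write
\begin{align*}
\tau_n^{k-n}\tau_k^{n-k}\circ\beta_{n,k} &= \tau_n^{k-n}\tau_k^{n-k}\cdot \beta_{n,k}^{2}\circ\beta_{n,k}^{-1} \\
&= \tau_n^{k-n}\tau_k^{n-k}\cdot \tau_n^{-k-1}\tau_k^{-n-1}\circ\beta_{n,k}^{-1} \\
&= \tau_n^{-n-1}\tau_k^{-k-1}\circ\beta_{n,k}^{-1},
\end{align*}
which identifies the two sides. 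This is a short computation but it motivates why both expressions should be considered simultaneously.

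Next, by Theorem~\ref{trop-criterion} it suffices to verify that the tropicalization of (either form of) this cluster transformation sends every tropical cluster variable $\yrm_i$ of $Q^{n,k}_{\mathrm{Toda}}$ to its inverse $\yrm_i^{-1}$. This is where Lemma~\ref{lem:Baxter-trop-var} is the main input: it gives an explicit formula for the tropical action of $\bs\mu_{n,k}^{-1}$, from which one reads off the tropical action of $\beta_{n,k}^{-1} = \bs\mu_{n,k}^{-1}\circ\sigma_{n,k}^{-1}$ by also accounting for the relabelling permutation $\sigma_{n,k}$. The tropical action of the Dehn twists $\tau_n^{-n-1}\tau_k^{-k-1}$ inside the two $Q_{\mathrm{Toda}}$-blocks can be computed analogously using the description of $\tau_c$ via formula~\eqref{eq:short-Dehn}, combined with the tropical mutation rule in Section~\ref{subsec:trop-var} (and using Lemma~\ref{tropical-chain} to telescope sequences of mutations along the chain of vertices in each $Q_{\mathrm{Toda}}$-block). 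Assembling these contributions, one checks case-by-case (according to whether $i<2k+1$, $i=2k+1$, or $i>2k+1$ in the notation of Lemma~\ref{lem:Baxter-trop-var}, and further within each $Q_{\mathrm{Toda}}$-subquiver) that the net effect is inversion $\yrm_i\mapsto \yrm_i^{-1}$.

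The main obstacle I anticipate is purely bookkeeping: keeping careful track of the interaction between the ``mixing'' Baxter twist $\beta_{n,k}$, which intertwines the two Toda blocks via the central vertex $2k+1$, and the Dehn twists $\tau_n,\tau_k$ whose tropical actions are concentrated within each block. In particular one must verify that the product of tropical monomials appearing at the central vertex $i=2k+1$ after applying $\beta_{n,k}^{-1}$, namely $\yrm_{[1,2(n+k)+1]}^{-1}$, is correctly converted to $\yrm_{2k+1}^{-1}$ after the subsequent action of $\tau_n^{-n-1}\tau_k^{-k-1}$. Once that central-vertex computation is done, the off-central vertices follow more straightforwardly from Lemmas~\ref{lem:Baxter-trop-var} and~\ref{tropical-chain}. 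Finally, the DT property is local in the sense that verifying it in any one cluster suffices (by standard results on DT-transformations), so no further coherence check is required, and Theorem~\ref{trop-criterion} upgrades the tropical equality to the desired equality of quantum cluster transformations.
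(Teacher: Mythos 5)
Your proposal follows essentially the same route the paper indicates: the paper explicitly states this corollary "is proved in the same [way] as the one above," i.e., by a tropical computation using Lemma~\ref{lem:Baxter-trop-var} together with Theorem~\ref{trop-criterion}. Your initial check that the two expressions for the candidate DT-transformation coincide (via $\beta_{n,k}^2=\tau_n^{-k-1}\tau_k^{-n-1}$ and the commutativity of $\beta_{n,k}$ with $\tau_n,\tau_k$), and the subsequent tropical verification, match the paper's intended argument.
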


Now consider the lattice
$$
\La_{n,k} = \La_{GL_{n+1}} \oplus \La_{GL_{k+1}} \oplus \Z[\zeta],
$$
together with a skew-symmetric bilinear form, such that the three direct summands are orthogonal to each other and the embeddings $\La_{GL_{n+1}} \hookrightarrow \La_{n,k}$ and $\La_{GL_{k+1}} \hookrightarrow \La_{n,k}$ are isometries. Let
$$
\Prm_{n,k} \simeq \Prm_{GL_{n+1}} \oplus \Prm_{GL_{k+1}}
$$
be the weight lattice for the group $GL_{n+1} \times GL_{k+1}$. For a pair $(\la,\mu) \in P_{n,k}$ we define the following elements of $\La_{n,k}$:
\beq
\label{eq:pnk-xnk}
p_{(\la,\mu)} = (p_\la,p_\mu,0) \qquad x_{(\la,\mu)} = (x_\la,x_\mu,0).
\eeq
We equip $\La_{n,k}$ with the basis, obtained as the union of bases~\eqref{eq:toda-e-basis} for $\La_{GL_{n+1}}$ and $\La_{GL_{k+1}}$ and the vector $e_h = p_{(-\epsilon_1,\epsilon_{k+1})}-\zeta$. The resulting quiver is isomorphic to $Q_{n,k}$.

As in the previous section, let $\Dcr^{n,k}$ be the space of all $\Q(q)$-valued functions on $\Prm_{n,k}$, $\Fcr^{n,k} \subset \Dcr^{n,k}$ be its subspace of compactly supported functions, and $\Vcr^{n,k} \subset \Fcr^{n,k}$ be the subspace of those vanishing outside of the positive cone, that is
$$
\Vcr^{n,k} = \big\{\phi \in \Fcr^{n,k} \,\big|\, \phi(\la,\mu)=0 \;\;\text{for any}\;\; (\la,\mu) \notin \Prm^+_{n,k}\big\}.
$$
As before, the integral versions of the above spaces are obtained by replacing the $\Q(q)$-valued functions with the $\Z[q^{\pm1}]$-valued ones. We then set
$$
\mathscr A^{n,k}_{\bs w, \bs u} = \mathscr A^{n,k} \otimes_{\Q(q)} \Kc_{\bs w, \bs u}
\qquad\text{and}\qquad
\mathscr A^{n,k}_{\bs w, \bs u;z} = \mathscr A^{n,k} \otimes_{\Q(q)} \Kc_{\bs w, \bs u;z}
$$
where $\mathscr A = \Dcr, \Fcr, \Vcr$.

Write $\Tc^{n,k}$ for the quantum torus defined by $\La_{n,k}$. We denote by $P_{(\la,\mu)}$ and $X_{(\la,\mu)}$ its elements corresponding to the vectors~\eqref{eq:pnk-xnk}. We also write $\Tc^{n,k}_z$ for the space of Taylor series in $z$ with coefficients in $\Tc^{n,k}$. Evidently, the action of $\Tc^{n,k}$ on $\Dcr^{n,k}$ preserves the subspace $\Fcr^{n,k}$, and the same is true for the action of $\Tc^{n,k}_z$ on $\Dcr^{n,k}_z$.

\begin{defn}
The \emph{bi-fundamental Baxter operator} $\Qf_{n,k}\in \Tc^{n,k}_z$ is the inverse of the product of quantum dilogarithms that defines the automorphism part $\mu_{n,k}^\sharp$ of the cluster transformation $\mu_{n,k}$.
\end{defn}

Identifying $z$ with $Y_\zeta$ we have $\Qf_{n,k} = \Qf_{n,k}(z) \in \Tc^{n,k}_z$. We now write an explicit formula for its conjugate $\Qf^\star_{n,k}(z)$. First, we define
\begin{align*}
u_{(0,s)} &= p_{(0,\epsilon_s)} + x_{(0,\alpha_s)} & &\hspace{-2.6cm}\text{for}\qquad 1 \le s \le k, \\
u_{(s,0)} &= p_{(\epsilon_s,0)} - x_{(\alpha_{s-1},0)} & &\hspace{-2.6cm}\text{for}\qquad 2 \le s \le n+1,
\end{align*}
and set
$$
A^+(a,b \,|\, k) = \prod_j^{\longrightarrow} \Psi\hr{z Y_{a_{(0,j)} - b_{(j-k,0)}}},
\qquad
A^-(a,b \,|\, k) = \prod_j^{\longleftarrow} \Psi\hr{z Y_{a_{(0,j)} - b_{(j-k,0)}}},
$$
where $a,b \in \hc{u,p}$, we abbreviate $p_k = p_{\epsilon_k}$, and both products are taken over all $j$ such that both indices $j$ and $j-k$ are within the range. For example, we have
$$
A^+(u,u \,|\, k-1) = 1 \qquad\text{and}\qquad A^-(p,p \,|\, k) = \Psi\hr{zP_{(-\epsilon_1,\epsilon_{k+1})}}.
$$
%\red{Need Lemma for commuting bi-fundamental Baxter through Whittaker transform. What follows is a draft.}
%\blue{If you're going to work on this section tomorrow, can you add a discussion of the representation $\mathbb{V}$ we consider below? I vote for functions taking values in $\mathbb{Q}(q)((z))$ vanishing outside the dominant cone in the weight lattice of $GL_{n+1}\times GL_{k+1}$, and letting the connecting vertex act by $z^{-1}P_{k+1;k+1}P_{1;n+1}^{-1}$ so that the product of dilogarithms with our usual sign convention will act on $\mathbb{V}$ (right?).   }
%
%Similar to the previous section, let us polarize the quiver $Q_{n,k}$ using canonical pairs
%\begin{align*}
%\bs x^{(1)} &= \hr{x_1^{(1)}, \ldots, x_{k+1}^{(1)}}, &
%\bs p^{(1)} &= \hr{p_1^{(1)}, \ldots, p_{k+1}^{(1)}}, \\
%\bs x^{(2)} &= \hr{x_1^{(2)}, \ldots, x_{n+1}^{(2)}}, &
%\bs p^{(2)} &= \hr{p_1^{(2)}, \ldots, p_{n+1}^{(2)}}.
%\end{align*}
%Let us also introduce notation
%\begin{align*}
%u_s^{(1)} &= p_s^{(1)} + x_s^{(1)} - x_{s+1}^{(1)} \qquad\text{for}\qquad 1 \le s \le k, \\
%u_s^{(2)} &= p_s^{(2)} + x_s^{(2)} - x_{s-1}^{(2)} \qquad\text{for}\qquad 2 \le s \le n+1,
%\end{align*}
%and set
%$$
%A^+(\bs a, \bs b | k) = \prod_j^{\longrightarrow} \psi(a_j - b_{j-k}),
%\qquad
%A^-(\bs a, \bs b | k) = \prod_j^{\longleftarrow} \psi(a_j - b_{j-k}),
%$$
%where $\psi(a_j-b_{j+k})=1$ if any of the indices is out of range. \red{Here we write $\psi(x+y)$ for $\Psi^q(:\!\!XY\!\!:)$}.
Then using Remark~\ref{rem:Baxter-arrows}, we can write the adjoint to the bi-fundamental Baxter operator as
$$
\Qf^\star_{n,k}(z) = \Qf_{n,k}^{\star, (2n+2k+1)}(z) \circ \ldots \circ \Qf_{n,k}^{\star, (1)}(z),
$$
where
$$
\Qf_{n,k}^{\star, (j)}(z) = 
\begin{cases}
A^+\hr{u,u \,|\, k-(j+1)/2} A^-\hr{p,p \,|\, k - (j-1)/2} &\text{if $j$ is odd,} \\
A^+\hr{p,u \,|\, k-j/2} A^-\hr{u,p \,|\, k - j/2} &\text{if $j$ is even.}
\end{cases}
$$
Using Remark~\ref{rem:Baxter-refactor} we can also rewrite the operator $\Qf^\star_{n,k}(z)$ as
\beq
\label{eq:Baxter-tilde-factor}
\Qf^\star_{n,k}(z) = \widetilde\Qf_{n,k}^{\star, (2n+2k+1)}(z) \circ \ldots \circ \widetilde\Qf_{n,k}^{\star, (1)}(z),
\eeq
where
$$
\widetilde\Qf_{n,k}^{\star, (j)}(z) = 
\begin{cases}
A^+\hr{p,u \,|\, k-(j+1)/2} A^-\hr{p,p \,|\, k - (j-1)/2} &\text{if $j$ is odd,} \\
A^+\hr{u,u \,|\, k-(j+2)/2} A^-\hr{u,p \,|\, k - j/2} &\text{if $j$ is even.}
\end{cases}
$$
%In particular, we have
%\begin{align*}
%\Qf_{n+1}^+(z) &= \Qf_{n,0}\hr{zP_{(0,\epsilon_1)}}, \\
%\Qf_{k+1}^+(z) &= \Qf_{0,k}\hr{zP_{(-\epsilon_1,0)}}.
%\end{align*}

Consider the product
$$
W^{(n+1,k+1)}(\bs w, \bs u) = W^{(n+1)}(\bs w) W^{(k+1)}(\bs u) \in \Dcr^{n,k}_{\bs w,\bs u}
$$
of the two Whittaker kernels. We are now ready to prove the following generalization of Proposition~\ref{lem:whit-bax}. 

\begin{prop}
\label{prop:whit-bibax}
The Whittaker kernel $W^{(n,k)}(\bs w, \bs u)$ diagonalizes the conjugate bi-fundamental Baxter operator:
\begin{align}
\label{eq:whit-bibax}
\Qf^\star_{n,k}(z) W^{(n+1,k+1)}(\bs w, \bs u) = W^{(n+1,k+1)}(\bs w, \bs u) \prod_{r=1}^{n+1} \prod_{s=1}^{k+1} \Psi(zw_r/u_s).
\end{align}
\end{prop}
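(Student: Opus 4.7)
I would prove the proposition by induction on $k$, paralleling the rank-recursive proof of Proposition~\ref{lem:whit-bax}. The base case $k=0$ reduces essentially to Lemma~\ref{lem:whit-bax}: here $W^{(1)}(u_1)=u_1^{\lambda_1}$ is just a monomial factor, and by Remark~\ref{rem:bi-Baxter} the mutation sequence $\bs\mu_{n,0}$ coincides with the ordinary Baxter sequence~\eqref{eq:baxseq} for the $GL_{n+1}$ chain (amalgamated with a trivial rank-one lattice carrying $u_1$ as a frozen variable). After the resulting rescaling of the spectral parameter by $u_1$, Lemma~\ref{lem:whit-bax} delivers exactly the eigenvalue $\prod_{r=1}^{n+1}\Psi(zw_r/u_1)$.

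For the inductive step, I would use the recursive definition
$$W^{(k+1)}(\bs u) \;=\; R_k(u_{k+1}) \cdot \bigl(u_{k+1}^{\lambda_{k+1}}\, W^{(k)}(\bs u')\bigr),\qquad \bs u'=(u_1,\dots,u_k),$$
and establish two intertwining identities in $\Tc^{(n,k);\mathrm{int}}_{\bs w,\bs u;z}$:
\begin{itemize}
\item (pentagon identity) $\Qf^\star_{n,k}(z)\, R_k(u_{k+1}) \;=\; R_k(u_{k+1}) \cdot S_{k+1}(z,u_{k+1}) \cdot \Qf^\star_{n,k-1}(z),$ where $S_{k+1}(z,u_{k+1})$ is an explicit product of quantum dilogarithms built from those factors of the columns $\widetilde\Qf^{\star,(j)}_{n,k}(z)$ involving the index $s=k+1$;
\item (normal-ordering) $S_{k+1}(z,u_{k+1})\cdot u_{k+1}^{\lambda_{k+1}} W^{(n+1)}(\bs w)\, W^{(k)}(\bs u') \;=\; \Bigl(\prod_{r=1}^{n+1}\Psi(zw_r/u_{k+1})\Bigr)\cdot u_{k+1}^{\lambda_{k+1}}\,W^{(n+1)}(\bs w)\,W^{(k)}(\bs u').$
\end{itemize}
Once both are in hand, chaining them and applying the inductive hypothesis $\Qf^\star_{n,k-1}(z) W^{(n+1,k)}(\bs w,\bs u') = W^{(n+1,k)}(\bs w,\bs u')\prod_{r,s\leq k}\Psi(zw_r/u_s)$ produces the desired eigenvalue.

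The first identity should follow by the pentagon identity and standard commutations, applied to adjacent dilogarithm factors in the factorization~\eqref{eq:Baxter-tilde-factor} of $\Qf^\star_{n,k}(z)$, with trivial commutations for factors whose arguments involve orthogonal direct summands of $\La_{n,k}$. The second should be derived in the same spirit as Lemma~\ref{lem:tlem2}: after normal-ordering the translation operators $P^{-1}_{u_{k+1}}$ to the right of $u_{k+1}^{\lambda_{k+1}}$, the Taylor series of each relevant $\Psi$-factor telescopes, with the shift-by-$P^{-1}_{u_{k+1}}$ factors collapsing to multiplication by $u_{k+1}$.

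\textbf{The main obstacle} I expect is purely combinatorial: correctly identifying the subproduct $S_{k+1}(z,u_{k+1})$ of dilogarithm factors inside the nested product~\eqref{eq:Baxter-tilde-factor} that must be transported past $R_k(u_{k+1})$, and then arranging those factors to be carried through by a sequence of pentagon moves without producing extraneous terms. The right bookkeeping should be dictated by the parallelogram description of $\bs\mu_{n,k}$ in Figures~\ref{fig:Baxter-rhombus} and~\ref{fig:Baxter-rhombus-2}: geometrically, moving the top row of the $(k+1)$-Toda rhombus through the $(n+1)$-Toda exactly corresponds to the bi-fundamental Baxter operator $\Qf_{n,k}(z)$ with the remaining parallelogram encoding $\Qf_{n,k-1}(z)$, so this rearrangement should match the pentagon-chain underlying the first identity above.
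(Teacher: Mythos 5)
Your strategy is essentially the paper's, but transposed: the paper runs induction on $n$ rather than $k$, peeling off the top row of the $GL_{n+1}$ side via
$$
W^{(n+1)}(\bs w) = R_n(w_{n+1})\,w_{n+1}^{\la_{n+1}}W^{(n)}(\bs w'),
$$
factoring $\Qf^\star_{n,k}(z)$ as in~\eqref{eq:Baxter-tilde-factor}, and commuting it past $R_n(w_{n+1})$ by a chain of pentagon moves to obtain
$$
\Qf^\star_{n,k}(z)\, R_n(w_{n+1}) = R_n(w_{n+1})\,\Qf^\star_{0,k}\bigl(zP_{(-\epsilon_{n+1},0)}\bigr)\, F_{n,k}\, \Qf^\star_{n-1,k}(z),
$$
with $F_{n,k}$ absorbed by Lemma~\ref{lem:tlem2}, and the base case $n=0$ coming from Proposition~\ref{lem:whit-bax}. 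Your version peels off the $GL_{k+1}$ side via $R_k(u_{k+1})$, runs induction on $k$, and has $k=0$ as the base case. The two routes are related by the reflection symmetry of the parallelogram (Remark~\ref{rem:Baxter-refactor}, Figure~\ref{fig:Baxter-rhombus-2}); carrying yours out would require the reflected regrouping of columns in place of~\eqref{eq:Baxter-tilde-factor}, and a mirror-image version of Lemma~\ref{lem:tlem2} for $P_{(0,\epsilon_{k+1})}^{-1}$ and $u_{k+1}^{\mu_{k+1}}$. Both are sound; the paper's choice is merely the convention that makes the explicit column factorization it has already written down directly applicable, so the bookkeeping problem you flag as the main obstacle is precisely the content of that factorization.
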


\begin{proof}
We shall prove the equality~\eqref{eq:whit-bibax} by induction on $n$. For $n=0$ the desired result is equivalent to that of Proposition~\ref{lem:whit-bax}. Now recall the equality
$$
W^{(n+1)}(\bs w) = R_n(w_{n+1}) w_{n+1}^{\la_{n+1}}W^{(n)}(\bs w'),
$$
where $\bs w' = (w_1, \ldots, w_n)$, as well as factorization~\eqref{eq:Baxter-tilde-factor}. Applying pentagon relation over and over again one commutes each factor of $\Qf^\star_{n,k}$ to the right past $R_n(w_{n+1})$ arriving at the relation
$$
\Qf^\star_{n,k}(z) R_n(w_{n+1}) = R_n(w_{n+1}) \Qf^\star_{0,k}(zP_{(-\epsilon_{n+1},0)}) F_{n,k} \Qf^\star_{n-1,k}(z),
$$
where the factor $F_{n,k}$ preserves the product $w_{n+1}^{\la_{n+1}}W^{(n,k+1)}(\bs w',\bs u)$ thanks to Lemma~\ref{lem:tlem2}. Thus, we have
\begin{align*}
\Qf^\star_{n,k}(z) W^{(n+1,k+1)}(\bs w,\bs u) &= \Qf^\star_{n,k}(z) R_n(w_{n+1}) w_{n+1}^{\la_{n+1}}W^{(n,k+1)}(\bs w',\bs u) \\
&= R_n(w_{n+1}) \Qf^\star_{0,k}(zP_{(-\epsilon_{n+1},0)}) F_{n,k} \Qf^\star_{n-1,k}(z) w_{n+1}^{\la_{n+1}}W^{(n,k+1)}(\bs w',\bs u).
\end{align*}
Applying the induction hypothesis, we arrive at
\begin{align*}
\Qf^\star_{n,k}(z) W^{(n+1,k+1)}(\bs w,\bs u)
&= R_n(w_{n+1}) w_{n+1}^{\la_{n+1}}W^{(n,k+1)}(\bs w',\bs u) \prod_{r=1}^{n+1} \prod_{s=1}^{k+1} \Psi(zw_r/u_s) \\
&= W^{(n+1,k+1)}(\bs w,\bs u) \prod_{r=1}^{n+1} \prod_{s=1}^{k+1} \Psi(zw_r/u_s).
\end{align*}
\end{proof}

\begin{cor}
We have the following equalitites:
\begin{align*}
\Qf^\star_{n,k}(z) W^{(n+1,k+1)}(\bs w,\bs u)
&= \prod_{j=1}^{n+1} \Qf^\star_{0,k}(zP_{(-\epsilon_j,0)}) W^{(n+1,k+1)}(\bs w,\bs u) \\
&= \prod_{j=1}^{k+1} \Qf^\star_{n,0}(zP_{(0,\epsilon_j)}) W^{(n+1,k+1)}(\bs w,\bs u).
\end{align*}
\end{cor}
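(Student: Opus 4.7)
The strategy is to iterate the recursive identity
$$\Qf^\star_{n,k}(z)R_n(w_{n+1}) = R_n(w_{n+1})\Qf^\star_{0,k}(zP_{(-\eps_{n+1},0)})F_{n,k}\Qf^\star_{n-1,k}(z)$$
established in the proof of Proposition~\ref{prop:whit-bibax}, together with the recursive structure $W^{(n+1,k+1)}(\bs w,\bs u) = R_n(w_{n+1})w_{n+1}^{\la_{n+1}}W^{(n,k+1)}(\bs w',\bs u)$ of the Whittaker kernel, to obtain the first equality by induction on $n$. The base case $n=0$ reduces to verifying that $\Qf^\star_{0,k}(zP_{(-\eps_1,0)})$ and $\Qf^\star_{0,k}(z)$ act identically on $W^{(1,k+1)}(w_1,\bs u)$, which follows from the explicit form of $\Qf^\star_{0,k}$ combined with the commutation of the spectral parameter $P_{(-\eps_1,0)}$ with the operator coefficients of $\Qf^\star_{0,k}(z)$ and a direct calculation.

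For the inductive step, applying the recursive identity to $\Qf^\star_{n,k}(z)W^{(n+1,k+1)}$ produces
$$R_n(w_{n+1})\,\Qf^\star_{0,k}(zP_{(-\eps_{n+1},0)})\,F_{n,k}\,\Qf^\star_{n-1,k}(z)\,w_{n+1}^{\la_{n+1}}W^{(n,k+1)}(\bs w',\bs u).$$
We then: (i) commute $\Qf^\star_{n-1,k}(z)$ past $w_{n+1}^{\la_{n+1}}$, which is allowed because the former involves no operators acting on $\la_{n+1}$; (ii) invoke the induction hypothesis to replace $\Qf^\star_{n-1,k}(z)W^{(n,k+1)}(\bs w',\bs u)$ by $\prod_{j=1}^{n}\Qf^\star_{0,k}(zP_{(-\eps_j,0)})W^{(n,k+1)}(\bs w',\bs u)$; (iii) use Lemma~\ref{lem:tlem2} to absorb $F_{n,k}$ as a factor preserving $w_{n+1}^{\la_{n+1}}W^{(n,k+1)}$; (iv) rearrange the resulting product using pairwise commutativity of the $\Qf^\star_{0,k}(zP_{(-\eps_j,0)})$ (which acts on disjoint $\la_j$-factors and the shared $\mu$-coordinates through commuting operators) and their appropriate commutations with $R_n(w_{n+1})$, both consequences of the tensor-product structure of the lattice $\La_{n,k}$.

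The second equality is proved by a symmetric induction on $k$, exploiting the alternative factorization of $\bs\mu_{n,k}$ obtained by reflection across the horizontal axis of Figure~\ref{fig:Baxter-rhombus}, as indicated in Figure~\ref{fig:Baxter-rhombus-2} and Remark~\ref{rem:Baxter-refactor}. This alternate factorization yields a dual recursive identity
$$\Qf^\star_{n,k}(z)R_k^{\bs u}(u_{k+1}) = R_k^{\bs u}(u_{k+1})\Qf^\star_{n,0}(zP_{(0,\eps_{k+1})})F'_{n,k}\Qf^\star_{n,k-1}(z),$$
where $R_k^{\bs u}$ denotes the recursion operator acting on the $\bs u$ variables. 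Combined with the companion decomposition $W^{(n+1,k+1)} = W^{(n+1)}(\bs w)\,R_k(u_{k+1})\,u_{k+1}^{\mu_{k+1}}W^{(k)}(\bs u')$ of the Whittaker kernel, the induction on $k$ proceeds in direct parallel to the first case.

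The main obstacle is the careful bookkeeping in steps (iii)--(iv): one must verify that $F_{n,k}$ can indeed be moved past the operators produced by the induction hypothesis (using that these operators act only on the $\bs u$-block and on translations of $\la_j$ for $j \le n$, disjoint from the $(\la_{n+1},\bs w')$-factors that $F_{n,k}$ manipulates) and that the resulting $\Qf^\star_{0,k}(zP_{(-\eps_j,0)})$ commute with the outer $R_n(w_{n+1})$ up to factors absorbed in the Whittaker kernel. A secondary but essential point is establishing the dual recursive identity needed for the second equality, which requires tracing through the symmetric pentagon-identity computation underlying the proof of Lemma~\ref{lem:tlem1} with the roles of the two Toda blocks interchanged; the consistency of this dual identity with the cluster-mutation presentation $\bs\mu_{n,k}$ follows from Remark~\ref{rem:Baxter-refactor}.
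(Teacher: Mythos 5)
The key gap is in steps (iii)--(iv), both of which require commuting $\Qf^\star_{0,k}(zP_{(-\epsilon_j,0)})$ for $j\le n$ past $F_{n,k}$ and then $R_n(w_{n+1})$, and both of which you justify by ``the tensor-product structure of the lattice $\La_{n,k}$.'' This does not hold: $P_{(-\epsilon_j,0)}=P_{\epsilon_j}^{-1}$ (for $j\le n$) and the operators $X_{\alpha_j}P_j, X_{\alpha_{j-1}}P_{j-1}$ appearing inside $R_n(w_{n+1})$ (and $X_{\alpha_n}$ inside $F_{n,k}$) live in the \emph{same} $GL_{n+1}$ tensor factor of $\La_{n,k}$, and $[P_{\epsilon_j},X_{\alpha_j}]\neq 0$, $[P_{\epsilon_j},X_{\alpha_{j-1}}]\neq 0$. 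So after you invoke the induction hypothesis in step (ii), the inner factor is no longer of the form $w_{n+1}^{\la_{n+1}}W^{(n,k+1)}$ that Lemma~\ref{lem:tlem2} controls; you would need a separate argument that the commutator terms vanish when applied to the specific kernel, which you do not supply. Likewise the ``pairwise commutativity'' you cite for the $\Qf^\star_{0,k}(zP_{(-\epsilon_j,0)})$ is not a tensor-factorization fact either --- they all act on the \emph{same} $GL_{k+1}$ alphabet --- though here the conclusion is salvageable through the Baxter commutativity of Proposition~\ref{prop:comm-bax}(1) extended to the bifundamental case (which you should cite explicitly). Finally, the ``dual recursive identity'' invoked for the second equality is nowhere established in the paper; asserting that it ``follows from Remark~\ref{rem:Baxter-refactor}'' leaves a nontrivial pentagon-identity computation unverified.

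The intended route is almost certainly much shorter and avoids these commutations entirely: by Proposition~\ref{prop:whit-bibax} the left-hand side equals $W^{(n+1,k+1)}\prod_{r=1}^{n+1}\prod_{s=1}^{k+1}\Psi(zw_r/u_s)$, and the other two expressions are shown to have the same eigenvalue by iterated application of the \emph{same} Proposition with parameters $(0,k)$ and $(n,0)$ respectively, using that the eigenvalue factors as $\prod_r\left[\prod_s\Psi(zw_r/u_s)\right] = \prod_s\left[\prod_r\Psi(zw_r/u_s)\right]$ with each bracketed factor being the eigenvalue of a single $\Qf^\star_{0,k}$ (resp.\ $\Qf^\star_{n,0}$) whose spectral parameter $P_{(\mp\epsilon_\bullet,0)}$ (resp.\ $P_{(0,\pm\epsilon_\bullet)}$) picks up the corresponding $w_r$ (resp.\ $u_s$) against the Whittaker kernel. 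If you want to keep your inductive approach, you need to either push the eigenvalue of $\Qf^\star_{n-1,k}(z)$ (not the operator product) through the recursion, or else make the commutator analysis in (iii)--(iv) rigorous rather than attributing it to the tensor product structure.
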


\begin{cor}
\label{cor:biToda-coeff}
The coefficient of $z^k$ in the power series $\Qf^\star_{n,k}(z)$ can be expressed as a polynomial in the $GL_{n+1}\times GL_{k+1}$ Toda Hamiltonians $H_{j}^{(n)}H_l^{(k)}$ with coefficients in $\mathbb{Q}(q)$.
\end{cor}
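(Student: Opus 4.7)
The plan is to extract the Corollary from Proposition~\ref{prop:whit-bibax} by combining the bi-symmetry of the right-hand side of~\eqref{eq:whit-bibax} with the Pieri rules~\eqref{eq:pieri}. Expanding the power series
\[
F(z,\bs w,\bs u) = \prod_{r=1}^{n+1}\prod_{s=1}^{k+1}\Psi_q(zw_r/u_s) = \sum_{m \ge 0} z^m F_m(\bs w,\bs u),
\]
one sees that each coefficient $F_m(\bs w,\bs u)$ is a Laurent polynomial, invariant under the action of $S_{n+1}\times S_{k+1}$ permuting $\bs w$ and $\bs u$ separately. By the fundamental theorem of symmetric functions applied in each factor, every $F_m$ admits a unique expression
\[
F_m(\bs w,\bs u) = R_m\bigl(\hc{e_j(\bs w)}_{j=1}^{n+1}, \hc{e_l(\bs u)}_{l=1}^{k+1}\bigr)
\]
as a polynomial with coefficients in $\Q(q)$ in the elementary symmetric functions of $\bs w$ and $\bs u$.

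Next I would invoke the Pieri rules~\eqref{eq:pieri} in each factor, which give $H_j^{(n),\star} \cdot W^{(n+1)}(\bs w) = (-q)^j e_j(\bs w) W^{(n+1)}(\bs w)$ and analogously for $H_l^{(k),\star}$, to rewrite the eigenvalue $F_m \cdot W^{(n+1,k+1)}$ as the action of the polynomial
\[
\tilde P_m := R_m\bigl(\hc{(-q)^{-j}H_j^{(n),\star}}_j, \hc{(-q)^{-l}H_l^{(k),\star}}_l\bigr).
\]
Combined with Proposition~\ref{prop:whit-bibax}, this identifies the action of the $z^m$-coefficient $B_m$ of $\Qf^\star_{n,k}(z)$ on each Whittaker polynomial $W^{(n+1,k+1)}(\la,\mu)$ with that of $\tilde P_m$. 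Dualizing via the pairing~\eqref{eq:Mpair} and using that the family $\hc{W^{(n+1,k+1)}(\la,\mu)}_{(\la,\mu)\in \Prm^+_{n,k}}$ is a $\Z[q^{\pm1}]$-basis of the bi-symmetric Laurent polynomials (Corollary~\ref{cor:basis} applied factorwise), I would conclude that $B_m$ coincides on $\Vcr^{n,k}$ with the polynomial
\[
P_m := R_m\bigl(\hc{(-q)^{-j}H_j^{(n)}}_j, \hc{(-q)^{-l}H_l^{(k)}}_l\bigr) \in \Lbb_{Toda}(GL_{n+1})\otimes \Lbb_{Toda}(GL_{k+1}).
\]

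The main obstacle is then to promote this identity of actions on $\Vcr^{n,k}$ to an equality in the ambient quantum torus $\Tc^{n,k}$. For this it suffices to show that $B_m$ itself lies in $\Lbb_{Toda}(GL_{n+1})\otimes \Lbb_{Toda}(GL_{k+1})$, since the latter subalgebra acts faithfully on $\Vcr^{n,k}$ by Lemma~\ref{lem:pres} applied factorwise. I would establish this containment by an inductive analysis of the factorization~\eqref{eq:Baxter-tilde-factor}: each factor $\Psi(zY)$ appearing in $\Qf^\star_{n,k}(z)$ is a $\Z[q^{\pm1}][[z]]$-valued series in Laurent monomials indexed by vertices of $Q_{n,k}$ that are mutated along the bi-fundamental Baxter sequence $\bs\mu_{n,k}$, and the criterion~\eqref{eq:fnu-factor} from the proof of Lemma~\ref{lem:pres} propagates through the whole sequence $\bs\mu_{n,k}$, using its internal commutation of mutations together with the cluster-theoretic fact that conjugation by $\Qf_{n,k}(z)$ implements the cluster automorphism associated to $\mu_{n,k}$ and hence preserves the Toda upper cluster algebra in each factor.
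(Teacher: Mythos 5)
Your overall strategy---read off the $z$-expansion of the bi-symmetric eigenvalue in Proposition~\ref{prop:whit-bibax}, rewrite via elementary symmetric functions, and transport through the Pieri rules~\eqref{eq:pieri}---is essentially the argument the paper intends, and the first part of your write-up carries it out correctly (modulo the bookkeeping that the coefficients of $F(z,\bs w,\bs u)$ are symmetric in $\bs u^{-1}$, so one picks up negative powers of $e_{k+1}(\bs u)$, i.e.\ the inverse top Hamiltonian $H_{k+1}^{(k),-1}$, which the paper's convention $H_{-l} = H_{n+1-l}H_{n+1}^{-1}$ accommodates).

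The place where the proposal goes wrong is the resolution of what you call the ``main obstacle.'' First, the obstacle is stated too strongly: you do not in fact need to show $B_m\in\Lbb_{Toda}(GL_{n+1})\otimes\Lbb_{Toda}(GL_{k+1})$ \emph{before} comparing with $P_m$. Re-read the last sentence of the proof of Lemma~\ref{lem:pres}: what is actually established there is that \emph{no nonzero element of the full quantum torus} $\mathcal{T}_{GL_{n+1}}$ annihilates $\Vcr^{\mathrm{int}}$, not merely that $\Lbb_{Toda}$ acts faithfully. Applied factorwise, $\mathcal{T}_{GL_{n+1}}\otimes\mathcal{T}_{GL_{k+1}}$ maps injectively into $\Hom(\Vcr^{n,k},\Fcr^{n,k})$. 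Each Taylor coefficient $B_m$ of $\Qf^\star_{n,k}(z)$ visibly lies in $\mathcal{T}_{GL_{n+1}}\otimes\mathcal{T}_{GL_{k+1}}\otimes\Q(q)$ (the $z$ in each dilogarithm factor is $Y_\zeta$, and once the coefficient is extracted only the $\Lambda_{GL_{n+1}}\oplus\Lambda_{GL_{k+1}}$ part remains), as does $P_m$. Since they agree on the faithful module $\Vcr^{n,k}$, they are equal. That is the whole argument.

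Second, your proposed way of establishing $B_m\in\Lbb_{Toda}\otimes\Lbb_{Toda}$ directly---``propagating~\eqref{eq:fnu-factor} through the Baxter sequence'' plus ``conjugation by $\Qf_{n,k}(z)$ implements a cluster automorphism and hence preserves the Toda upper cluster algebras''---does not work as stated. An operator normalizing a subalgebra need not belong to that subalgebra, so the fact that $\Ad(\Qf_{n,k}(z))$ is a cluster automorphism tells you nothing a priori about where the coefficients of $\Qf^\star_{n,k}(z)$ itself live. The membership $B_m\in\Lbb_{Toda}\otimes\Lbb_{Toda}$ is essentially the conclusion of the Corollary, so deriving it by a separate inductive cluster argument would be harder, not easier, than the comparison with $P_m$. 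Dropping that paragraph in favour of the faithfulness observation above closes the gap.
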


\subsection{Refined umbral moves}
\label{subsec:refined-umbral}
Let $c$ be a closed simple curve on a marked surface $S$, $\tri$ a $c$-isolating triangulation of $S$, $e \in \tri$ a non-boundary shadow of $c$, and $C \subset \tri$ the $c$-isolating cylinder. Denote by $S'$ the result of cutting $S$ along $c$. As usual, we let $c_\pm \subset S'$ be the new pair of tacked circles, consider the triangulation $\Delta' = \Cc_c(\Delta)$ of $S'$, and denote by $C_\pm$ the isolating cylinders of $c_\pm$. Furthermore, we assume that the arc $e_+ \in \tri'$ corresponding to $e \in \tri$ is a shadow of $c_+$, and thus belongs to $C_+$. The goal of this section is to prepare ground for the proof of Theorem~\ref{thm:alg-MF-coherence}, which states that Whittaker transforms $\Wbb_c$ intertwine cluster transformations, which relate different $c$- or $c_\pm$-isolating clusters on $S$ and $S'$. To that end we introduce \emph{refined umbral moves} and factor them in a way, which will allow us to commute them through the Whittaker transform.

\begin{defn}
The \emph{refined umbral moves} are the compositions
%\begin{align*}
%F_{c;e} &= \Phi_{F_{C;e}(\tri);c} \circ F_{C;e} \circ \Phi_{\tri;c}^{-1}, & 
%F_{c_+;e_+} &= \Phi_{F_{C_+;e_+}(\tri');c_+} \circ F_{C_+;e_+} \circ \Phi_{\tri';c_+}^{-1}, \\
%F_{c;e}^{op} &= \Phi_{F_{C;e}(\tri);c} \circ F^{op}_{C;e} \circ \Phi_{\tri;c}^{-1}, & 
%F_{c_+;e_+}^{op} &= \Phi_{F_{C_+;e_+}(\tri');c_+} \circ F^{op}_{C_+;e_+} \circ \Phi_{\tri';c_+}^{-1}.
%\end{align*}
$$
F_{c;e} = \Phi_{F_{C;e}(\tri);c} \circ F_{C;e} \circ \Phi_{\tri;c}^{-1},
\qquad\qquad
F_{c_+;e_+} = \Phi_{F_{C_+;e_+}(\tri');c_+} \circ F_{C_+;e_+} \circ \Phi_{\tri';c_+}^{-1},
$$
where $F_{C;e}$, $F_{C_+;e_+}$ are the umbral moves, see Definition~\ref{def:umbral}, $\Phi_{\tri;c}$ is the transformation defined in~\eqref{eq:Phi-c}, and $\Phi_{\tri';c_+}$ is the one defined in~\eqref{eq:Phi-c_+}. \emph{Opposite refined umbral moves} are defined in a similar fashion.
\end{defn}

First, let us recall that each umbral move is a composition of 3 flips. Consider edges $e,e^1,e^2$ of $\tri$, and $e_+, e_+^1, e_+^2$ of $\tri'$ such that
$$
F_{C;e} = F_{e^2} F_{e^1} F_e
\qquad\text{and}\qquad
F_{C;e_+} = F_{e_+^2} F_{e_+^1} F_{e_+}.
$$
For $1 \le k \le 6$ define quivers
\begin{align*}
Q_{c;e;1} &= Q_{F_{C;e}(\tri);c}, &
Q_{c;e;2} &= Q_{F_{C;e}(\tri)}, &
Q_{c;e;3} &= Q_{F_{e^1} F_e(\tri)}, \\
Q_{c;e;4} &= Q_{F_e(\tri)}, &
Q_{c;e;5} &= Q_{\tri}, &
Q_{c;e;6} &= Q_{\tri;c}.
\end{align*}
We denote the cluster transformations sending $Q_{c;e;k}$ to $Q_{c;e;j}$ by $\bs\mu_{c;e;j\,\circlearrowleft\,k}$ if $j<k$ and by $\bs\mu_{c;e;j\,\circlearrowright\,k}$ otherwise, so that $
\bs\mu_{c;e;j\,\circlearrowleft\,k} = \bs\mu_{c;e;k\,\circlearrowright\,j}^{-1}$. For example we have $\bs\mu_{c;e;2\,\circlearrowleft\,5} = F_{C;e}$ and $\bs\mu_{c;e;1\,\circlearrowleft\,6} = F_{c;e}$. We define quivers $Q_{c_+;e_+;k}$ and transformations $\bs\mu_{c_+;e_+;j\,\circlearrowleft\,k}$ in a similar fashion. For the case when $S'_+$ is a cylinder with a tack on one boundary component and 2 marked points on the other, we show quivers $Q^5_{c_+;e_+;k}$, $1 \le k \le 6$, on Figures~\ref{fig:Q-1}--\ref{fig:Q-6}. Here and in what follows the upper index $n$ in the notation for the quiver is the rank of the group $G$, defining the character variety.

We now define quivers $Q_{c;e;k}$ and $Q_{c_+;e_+;k}$ for $7 \le k \le 13$, and place the 13 quivers on the 13-hour clock face. Quivers $Q^5_{c_+;e_+;k}$ are shown on Figures~\ref{fig:Q-7}--\ref{fig:Q-13}. Quivers $Q^n_{c;e;k}$ may be obtained in the following way. First, define quivers $Q^n_{c;k}$ by setting $Q^n_{c;k} \simeq \widehat Q^n_{\mathrm{Toda}}$ for $k = 7,8,9$ or $k=13$ and $Q^n_{c;k} \simeq \mu_{n,0}(\widehat Q^n_{\mathrm{Toda}})$ for $k=10,11,12$, see Figure~\ref{fig:Qk-aux}. Next, recall quiver $Q^n_{\tri;c_-}$, where $\tri$ consists of a single self-folded triangle, see the left pane of Figure~\ref{fig-Qmitre}. Finally, amalgamate vertices $c_n$ of $Q^n_{c_+;e_+;k}$ and $(n,n-1)$ of $Q^n_{\tri;c_-}$ with vertices $h_\pm$ of $Q^n_{c;k}$, and erase vertices labelled $h_k$ and $d_k$ in $Q^n_{c_+;e_+;k}$ and the corresponding vertices in $Q^n_{\tri;c_-}$ labelled respectively $(n,n-1+k)$ and $(n+1,n-1+k)$.

\begin{figure}[h]
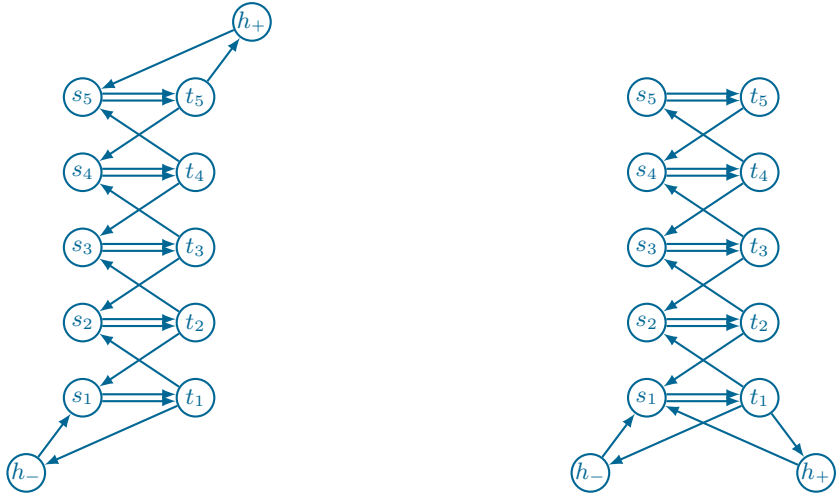

\subfile{fig-Qk-aux.tex}
\caption{Quivers $Q^5_{c;k}$ for $k = 7,8,9,13$ (on the left) and $k=10,11,12$ (on the right).}
\label{fig:Qk-aux}
\end{figure}

For the remainder of this section we abbreviate $Q_{c_+;e_+;k}$ to $Q_k$. As above we let $\bs\mu_{j\,\circlearrowleft\,k}$ and $\bs\mu_{j\,\circlearrowright\,k}$ denote the cluster transformations relating the $Q_k$ and $Q_j$, depending on whether $Q_k$ precedes or follows $Q_j$ as we traverse the face of the clock in the clock-wise fashion. Each time, having defined $\bs\mu_{k+1\,\circlearrowright\,k}$ we shall also explain how to modify it to obtain $\bs\mu_{c;e;k+1\,\circlearrowright\,k}$.

Recall the quiver $Q_\Delta$ corresponding to the standard bicolored graph on a single triangle $\Delta$, see Figure~\ref{fig:net-quiver}. Let us label vertices of $Q_\Delta$ by a triple of numbers $(i,j,k)$ reflecting the co-distance of the corresponding face of the bicolored graph to the three vertices of $\Delta$. In particular, we have $i+j+k=n$ for each vertex. Let $A$ be the vertex of $\Delta$, co-distance to which is measured by the first number in the triple $(i,j,k)$. Then we define the transformation $\Phi_{A;*}$ as follows:
$$
\Phi_{\Delta;A} = \prod_{r=1}^{\substack{n-1 \\ \longrightarrow}} \prod_{s=1}^{\substack{r \\ \longrightarrow}} \prod_{j=1}^{r-s+1} \mu_{(n-r+s,r-s-j,j)}.
$$
We remark that mutations under the third product commute. Applying $\Phi_{\Delta;A}$ to the quiver $Q_\Delta$ and swapping positions of the nodes $(j,n-j,0)$ and $(n-j,0,j)$ for all $1 \le j \le n$ we recover the quiver $Q_\Delta$ with all arrows inverted. The quiver $Q_6$ contains a subquiver obtained by amalgamating $Q_\Delta$ to $\Phi_c(Q_\mathrm{sf})$ along $e$. We define the quiver $Q_7 = \Phi_{\Delta;A}(Q_6)$, where $A$ is the vertex of $\Delta$ opposite the side $e$, that is $\Phi_{\Delta;A}$ starts and ends with a mutation at the node $c_1$ in the notations of Figure~\ref{fig:Q-7}. We set $\bs\mu_{7\,\circlearrowright\,6} = \Phi_{\Delta;A}$, and define $\bs\mu_{c;e;7\,\circlearrowright\,6}$ in an identical way, replacing $\Phi_c(Q_\mathrm{sf})$ with $\Phi_c(Q_\mathrm{cyl})$, see Figure~\ref{fig-Qcandy}.

We now recall the transformation $\Phi_2$ from Section~\ref{subsec:phi2} and consider its close cousin $\overline \Phi_2 \colon \bi_{\overline w_0,c^*} \to \bi_{-c,c}$. The $(j+1,k+1)$-st step of the latter only differs from that of $\Phi_2$ in starting with the shuffle $(k+1,-k-1) \mapsto (-k-1,k+1)$ instead of its inverse. Note that $Q_7$ contains a subquiver $Q^{n-1}_{\overline w_0,c^*}$, where the upper index shows the number of the letters in the alphabets $\Agt_\pm$. On Figure~\ref{fig-Qcandy} this is the subquiver formed by vertices in rows $n+1$ to $2n-1$ counting from the bottom. We now set $Q_{7;0} = Q_7$ and define $Q_{7;1} = \overline\Phi_2(Q_{7;0})$. Then the mutable vertices of $Q_{7;1}$ positioned in the rows $n$ to $2n-3$ form the subquiver $Q^{n-2}_{\overline w_0,c}$, which allows us to apply $\Phi_2$. The resulting quiver $Q_{7;2} = \Phi_2(Q_{7;1})$ again contains a subquiver $Q^{n-3}_{\overline w_0,c^*}$ in the rows $n-1$ to $2n-5$ and we may proceed by induction. We then define
$$
Q^n_8 = Q^n_{7;n-2} = \bs\mu_{8\,\circlearrowright\,7}(Q^n_7),
$$
where $\bs\mu_{8\,\circlearrowright\,7}$ is an alternating composition of transformations $\overline\Phi_2$ and $\Phi_2$, starting with $\overline\Phi_2$ on $n-1$ letters and dropping the number of letters by 1 with each factor. The quiver $Q^5_8$ is shown on Figure~\ref{fig:Q-8}. The transformation $\bs\mu_{c;e;8\,\circlearrowright\,7}$ is identical to $\bs\mu_{8\,\circlearrowright\,7}$.

Quiver $Q^n_8$ contains subquivers $Q^k_{\mathrm{Toda}}$ for all $1 \le k \le n-1$. We define $\bs\mu_{9\,\circlearrowright\,8}$ as the following composition of commuting Dehn twists
$$
\bs\mu_{c;e;9\,\circlearrowright\,8} = \bs\mu_{9\,\circlearrowright\,8} = \prod_{k=1}^{n-1} \tau_k^{-k-1}.
$$
Let us clarify the shape of quiver $Q^n_9$: in the notations of Figure~\ref{fig:Q-9} we have
\begin{align*}
(e_{t_{k,1}},e_{r_k}) &= a_n(k)+1, & (e_{t_{k,k-1}},e_{l_k}) &= a_n(k), \\
(e_{s_{k,1}},e_{r_k}) &= -a_n(k), & (e_{s_{k,k-1}},e_{l_k}) &= 1-a_n(k),
\end{align*}
where
$$
a_n(k) =
\begin{cases}
k-1 &\text{if $n-k$ is even,} \\
1 &\text{if $n-k$ is odd.} \\
\end{cases}
$$

Note that $Q^n_9$ still contains subquivers $Q^k_{\mathrm{Toda}}$ for $1 \le k \le n-1$ and recall that bi-fundamental Baxter operators $\bs\mu_{n,k}$. We then set
$$
%\bs\mu_{10\,\circlearrowright\,9} = \bs\mu_{n-1,0}^{-n-1} \circ \prod_{k=1}^{n-1} \bs\mu_{k,k-1}^{-1}
\bs\mu_{10\,\circlearrowright\,9} = \bs m_{10\,\circlearrowright\,9} \circ \prod_{j=1}^{n+1}\bs\mu^{-1}_{n-1,0}(h_j) \circ \prod_{k=1}^{n-1} \bs\mu_{k,k-1}^{-1}
$$
where the Baxter mutation sequence $\bs\mu^{-1}_{n-1,0}(h_j)$ is applied to the subquiver formed by $Q^{n-1}_{\mathrm{Toda}}$ the node $h_j$, and we understand $h_{n+1} = c_n$. The quasi-permutation $\bs m_{10\,\circlearrowright\,9}$ is characterized by being identity on all cluster variables but the $Y_{d_j}$ and the condition
$$
\bs m_{10\,\circlearrowright\,9}(Y_{d_j}^{(10)}) = Y_{d_j}^{(9)},
$$
where $Y_\ell^{(k)}$ denotes the cluster variable labelled by the node $\ell$ in $Q_k$. In turn, the quiver $Q^n_{c;e;9}$ contains $Q^k_{\mathrm{Toda}}$ for $1 \le k \le n$ and we set
$$
\bs\mu_{c;e;10\,\circlearrowright\,9} = \prod_{k=1}^n \bs\mu_{k,k-1}^{-1}.
$$

As is demonstrated on Figures~\ref{fig:Q-8} and~\ref{fig:Q-10}, the quivers $Q_8$ and $Q_{10}$ contain isomorphic mutable subquivers, same applies to $Q_{c;e;8}$ and $Q_{c;e;10}$. Thus, we may set $\bs\mu_{11\,\circlearrowright\,10} = \bs\mu_{7\,\circlearrowleft\,8} = \bs\mu_{8\,\circlearrowright\,7}^{-1}$ and $\bs\mu_{c;e;11\,\circlearrowright\,10} = \bs\mu_{c;e;7\,\circlearrowleft\,8}$.

The transformation $\bs\mu_{12\,\circlearrowright\,11}$ is in fact very similarly to $\bs\mu_{8\,\circlearrowright\,7}$. Namely, we have already noticed that $Q^n_{11}$ contains a subquiver, isomorphic to $Q^{n-1}_{\overline w_0,c^*}$. Skipping the bottom row of the latter, that is considering only rows $n+2$ to $2n-1$ as shown on Figure~\ref{fig:Q-11}, we obtain the subquiver $Q^{n-2}_{\overline w_0,c^*}$. Thus, we may set $Q^n_{11;0} = Q^n_{11}$ and define $Q^n_{11;1} = \overline\Phi_2^{(n-2)}(Q^n_{11;0})$. The latter quiver in turn contains a subquiver $Q^{n-4}_{\overline w_0,c}$ and we proceed as in the definition of $\bs\mu_{8\,\circlearrowright\,7}$. We then define
%$$
%Q^n_{11;k+1} = 
%\begin{cases}
%\overline\Phi_2^{(n-2k-2)}(Q^n_{11;k}) &\text{if $k$ is even,} \\
%\Phi_2^{(n-2k-2)}(Q^n_{11;k}) &\text{if $k$ is odd,}
%\end{cases}
%$$
$$
Q^n_{12} = Q^n_{11;\left\lfloor \frac{n-2}{2} \right\rfloor} = \bs\mu_{12\,\circlearrowright\,11}(Q^n_{11}),
$$
where $\bs\mu_{12\,\circlearrowright\,11}$ is an alternating composition of transformations $\overline\Phi_2$ and $\Phi_2$, starting with $\overline\Phi_2$ on $n-1$ letters and dropping the number of letters by 1 with each factor. The quiver $Q^5_{12}$ is shown on Figure~\ref{fig:Q-12}.

Observe that the quiver $Q^n_{12}$ contains subquivers $Q^{n-2k}_{\mathrm{Toda}}$ for $1 \le k \le \left\lfloor \frac{n}{2} \right\rfloor$. We define
$$
\bs\mu_{13\,\circlearrowright\,12} = \bs m_{13\,\circlearrowright\,12} \circ  \prod_{j=1}^{n+1}\bs\mu^{-1}_{n-2,0}(h_j) \circ \prod_{k=1}^{\left\lfloor \frac{n-2}{2} \right\rfloor} \bs\mu_{n-2k,n-2k-2}^{-1} \circ \prod_{k=1}^{\left\lfloor \frac{n}{2} \right\rfloor} \tau_{n-2k}^{2k-n-1},
$$
where the Baxter mutation sequence $\bs\mu^{-1}_{n-1,0}(h_j)$ is applied to the subquiver formed by $Q^{n-1}_{\mathrm{Toda}}$ the node $h_j$, and the quasi-permutation $\bs m_{13\,\circlearrowright\,12}$ is defined similarly to $\bs m_{10\,\circlearrowright\,9}$.
The transformation $\bs\mu_{c;e;13\,\circlearrowright\,12}$ is defined by a simpler formula:
$$
\bs\mu_{c;e;13\,\circlearrowright\,12} = \prod_{k=1}^{\left\lfloor \frac{n}{2} \right\rfloor} \hr{\bs\mu_{n+2-2k,n-2k}^{-1} \circ \tau_{n-2k}^{2k-n-1}}.
$$

Finally, we observe that quivers $Q_1$ and $Q_{11}$ have isomorphic mutable subquivers, see Figures~\ref{fig:Q-1} and~\ref{fig:Q-11}, which allows us to complete the clock face by setting
$$
\bs\mu_{13\,\circlearrowleft\,1} = \bs\mu_{c;e;12\,\circlearrowright\,11}.
$$
This way we have
$$
\bs\mu_{13\,\circlearrowleft\,1}(Q_{1}) = Q_{13}.
$$
Using similar considerations, we have
$$
\bs\mu_{c;e;13\,\circlearrowleft\,1}(Q_{c;e;1}) = Q_{c;e;13}
\qquad\text{with}\qquad
\bs\mu_{c;e;13\,\circlearrowleft\,1} = \bs\mu_{c;e;12\,\circlearrowright\,11}.
$$

\begin{prop}
\label{prop:umbral-factorization}
We have equalities of cluster transformations
$$
F_{c;e} = \bs\mu_{c;e;1\,\circlearrowright\,6}
\qquad\text{and}\qquad
F_{c_+;e_+} = \bs\mu_{c_+;e_+;1\,\circlearrowright\,6}.
$$
\end{prop}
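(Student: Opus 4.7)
The plan is to verify the equality directly at the tropical level and then invoke Theorem~\ref{trop-criterion} (or equivalently Corollary~\ref{cor:transform-equal}). Both $F_{c;e}$ and $\bs\mu_{c;e;1\,\circlearrowright\,6}$ are cluster transformations between the same pair of objects $Q_6=Q_{\tri;c}$ and $Q_1=Q_{F_{C;e}(\tri);c}$ in the cluster modular groupoid. The former is the counterclockwise route through $Q_5,Q_4,Q_3,Q_2$, while the latter is the clockwise route through $Q_7,\dots,Q_{13}$. Thus the proposition asserts commutativity of the ``clock face'' diagram. By Theorem~\ref{trop-criterion}, this amounts to checking that the two compositions induce the same map on tropical $\Xc$-variables of $Q_6$.

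The tropical effect of the counterclockwise side is straightforward to record from the definition $F_{c;e}=\Phi_{F_{C;e}(\tri);c}\circ F_{C;e}\circ\Phi_{\tri;c}^{-1}$: the preparation transformations $\Phi_{\tri;c}^{\pm1}$ come from explicit sequences of mutations supported on the isolating cylinder, and the umbral move $F_{C;e}=F_{e^2}F_{e^1}F_e$ is a triple flip whose tropical action is given by three applications of the elementary mutation formula. Tropical variables associated to faces away from the cylinder are unaffected, so the whole computation is local.

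For the clockwise side, each arc $\bs\mu_{k+1\,\circlearrowright\,k}$ was built in Section~\ref{subsec:refined-umbral} from a short menu of building blocks whose tropical action is known: the triangle move $\Phi_{\Delta;A}$ (arc $6{\to}7$) is a sequence of commuting mutations handled by Lemma~\ref{tropical-chain}; the alternating stacks $\overline\Phi_2/\Phi_2$ (arcs $7{\to}8$ and $11{\to}12$) act on tropical variables by shifts along the rows of the intermediate quivers, again governed by Lemma~\ref{tropical-chain}; the Dehn twist compositions (arcs $8{\to}9$ and the twist factors in arcs $9{\to}10,12{\to}13$) act by the explicit shift formulas of Section~\ref{sec:isolating-coords}; and finally the bi-fundamental Baxter operators $\bs\mu_{k,k-1}$ act by the formulas of Lemma~\ref{lem:Baxter-trop-var}. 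The arc $10{\to}11$ is just $\bs\mu_{8\,\circlearrowright\,7}^{-1}$, so it inverts the $7{\to}8$ step up to the intervening twists and Baxter mutations.

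The main obstacle is the sheer length of the clockwise composite and the non-uniform shape of the quivers involved, which makes the cancellation invisible step by step. The key organizing observation is that the pieces pair up into ``DT-like'' blocks: the combination $\prod_k\bs\mu_{k,k-1}^{-1}\circ\prod_k\tau_k^{?}$ appearing in arcs $9{\to}10$ and $12{\to}13$ produces, after composition with the matching bi-fundamental Baxter operators on the other side, an identity on tropical variables outside the isolating cylinder. Once this collapse is verified the remaining contribution inside the cylinder matches the three elementary flips of $F_{C;e}$ dressed by $\Phi_c^{\pm1}$. The identical argument, with $\Phi_c(Q_\mathrm{cyl})$ replaced by $\Phi_c(Q_\mathrm{sf})$, establishes the equality $F_{c_+;e_+}=\bs\mu_{c_+;e_+;1\,\circlearrowright\,6}$; as both routes modify only the region of the quiver that enters identically in the two cases, the second claim reduces to the first.
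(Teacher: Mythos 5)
Your overall strategy matches the paper's: read the statement as commutativity of the ``clock face'' diagram and reduce to a comparison of tropical cluster transformations via Theorem~\ref{trop-criterion}. You also correctly identify the constituent building blocks of the clockwise composite and where Lemma~\ref{tropical-chain} and Lemma~\ref{lem:Baxter-trop-var} would be used. But the substance of the proof is missing.

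First, a tactical point: rather than comparing the two long composites from $Q_6$ to $Q_1$, the paper picks an interior node $Q_9$ and shows $\bs\mu_{13\,\circlearrowleft\,9}=\bs\mu_{13\,\circlearrowright\,9}$. This choice is deliberate --- the tropical variables of $Q_9$ (the aliases $v_{k,j}^{(a,b)}$, $h_k$, $d_k$, etc.) are the ones in which all the intermediate data can be expressed uniformly along both arcs, and Lemmas~\ref{lem:8-left-9}--\ref{lem:13-left-9} and~\ref{lem:10-right-9}--\ref{lem:13-right-9} record the resulting expressions step by step, terminating in Lemma~\ref{lem:13-right-9} which asserts the match. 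Your plan to chase tropical variables directly from $Q_6$ is not wrong in principle, but you have not set up a coordinate frame in which the comparison could be carried out, and you produce no formulas for either side.

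Second, and more seriously, the load-bearing step in your argument is the claim that ``the pieces pair up into `DT-like' blocks'' and that the combination $\prod_k\bs\mu_{k,k-1}^{-1}\circ\prod_k\tau_k^{?}$ ``produces, after composition with the matching bi-fundamental Baxter operators on the other side, an identity on tropical variables outside the isolating cylinder.'' This does not parse as a proof: the counterclockwise side $\bs\mu_{1\,\circlearrowleft\,6}$ contains no bi-fundamental Baxter operators for anything to cancel against (it is $\Phi_{F_{C;e}(\tri);c}\circ F_{e^2}F_{e^1}F_e\circ\Phi_{\tri;c}^{-1}$, a triple flip sandwiched between the preparation transformations), so it is unclear what ``the other side'' means. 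If you intend a cancellation entirely within the clockwise composite, you have neither identified which arcs are supposed to cancel against which nor verified it; and the identification of $Q_{13}$ with $Q_1$ (via $\bs\mu_{13\,\circlearrowleft\,1}=\bs\mu_{c;e;12\,\circlearrowright\,11}$) is a nontrivial part of the construction that your cancellation argument would need to track but doesn't. In the paper the only substitute for such a collapse is the explicit comparison recorded in the lemmas, whose proofs are admittedly omitted but whose statements constitute the verification you are asserting. As written, your proposal identifies the correct reduction but replaces the actual tropical bookkeeping --- which is the entire content of the proposition --- with an unverified heuristic.
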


\begin{proof}
We prove only the second equality, as the proof of the first one is similar. By the definition of the refined umbral move and that of transformations $\bs\mu_{k\,\circlearrowleft\,j}$, we have $F_{c_+;e_+} = \bs\mu_{1\,\circlearrowleft\,6}$. Thus it suffices to show that
$$
\bs\mu_{k\,\circlearrowleft\,j} = \bs\mu_{k\,\circlearrowright\,j}
$$
for all, equivalently any, $1 \le j \ne k \le 13$. Let us chose $j=9$ and $k=13$. By Theorem~\ref{trop-criterion}, it suffices to chase tropical variables through transformations $\bs\mu_{13\,\circlearrowleft\,9}$ and $\bs\mu_{13\,\circlearrowright\,9}$, which is done in Lemmas~\ref{lem:8-left-9}--\ref{lem:13-left-9} and Lemmas~\ref{lem:10-right-9}--\ref{lem:13-right-9} respectively. Proofs of these Lemmas are combinatorial exercises, similar in spirit to that of Lemma~\ref{lem:Baxter-trop-var}, and we omit them for brevity. Comparing the tropical variables of quivers $\bs\mu_{13\,\circlearrowright\,9}(Q_9)$ and $\bs\mu_{13\,\circlearrowleft\,9}(Q_9)$ completes the proof.
\end{proof}

%\begin{theorem}
%\label{thm:refined-umbral}
%Let $\tri_1, \tri_2$ be the $c$-isolating triangulations on $S$, and $\tri'_1, \tri'_2$ be their images on $S'$ under the cutting functor $\Cc_c$. Denote by $\mu_S \colon \Theta_{\tri_1;c} \to \Theta_{\tri_2;c}$ and $\mu_{S'} \colon \Theta_{\tri'_1;c_\pm} \to \Theta_{\tri'_2;c_\pm}$ the cluster transformations, relating the corresponding refined $c$- or $c_\pm$-isolating seeds. Then we have the following commutative diagram: \red{(not sure about notations for algebras... Also, why don't we use $\Lbb'$ instead of $\Lbb^{res}$?}
%$$
%\begin{tikzcd}
%\Lbb_{\tri_1;c} \arrow{r}{\Wbb_{\tri_1;c}} \arrow{d}{\mu_S} & \Lbb^{res}_{\tri'_1;c_\pm} \arrow{d}{\mu_{S'}} \\
%\Lbb_{\tri_2;c} \arrow{r}{\Wbb_{\tri_2;c}} & \Lbb^{res}_{\tri'_2;c_\pm}
%\end{tikzcd}
%$$
%\end{theorem}
%
%\begin{proof}
%By Lemma~\ref{lem:re-iso}, it suffices to treat the case when $\mu_S$ is a refined umbral move. Now the desired statement follows from Proposition~\ref{prop:umbral-factorization} and the intertwining relation~\ref{eq:alg-q-inter}.
%\red{Shall we be a bit more careful and discuss the opposite umbral moves, as well as the case when we have a Dehn twist, but don't have space for an umbral move?}
%\end{proof}
%
%
%
%

\clearpage

\null
\vfill
\begin{figure}[h]
\subfile{fig-Q-3.tex}
\caption{Quiver $Q_1^5$.}
\label{fig:Q-1}
\end{figure}
\null
\vfill

\clearpage

\null
\vfill
\begin{figure}[h]
\subfile{fig-Q-2-sf.tex}
\caption{Quiver $Q_2^5$.}
\label{fig:Q-2}
\end{figure}
\null
\vfill

\clearpage

\null
\vfill
\begin{figure}[h]
\subfile{fig-Q-14-back-qg-2.tex}
\caption{Quiver $Q_3^5$.}
\label{fig:Q-3}
\end{figure}
\null
\vfill

\clearpage

\null
\vfill
\begin{figure}[h]
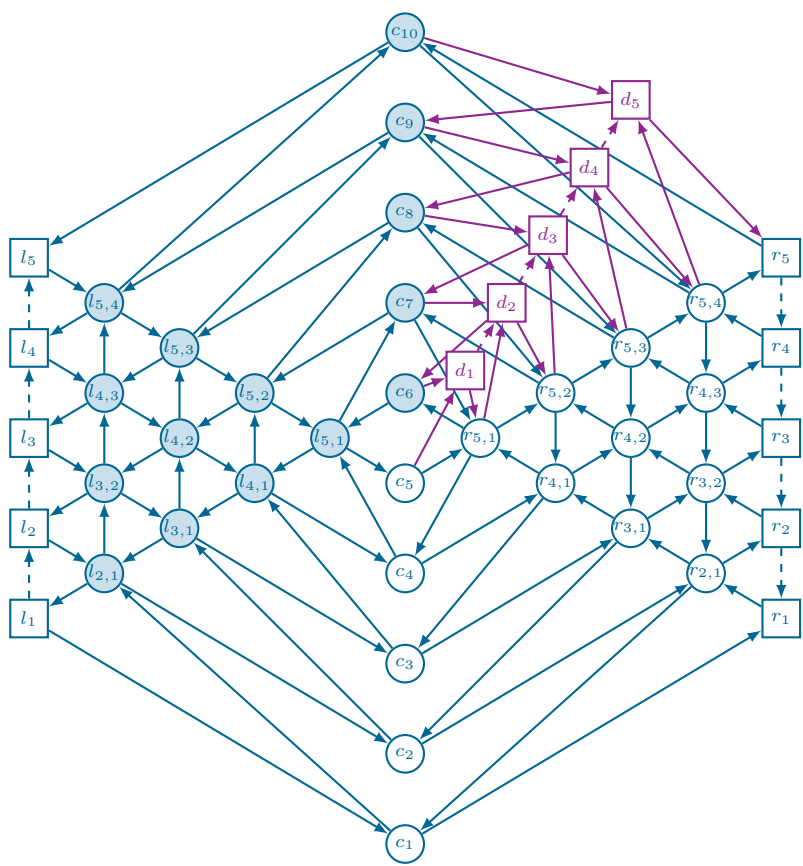

\subfile{fig-Q-14-back-qg.tex}
\caption{Quiver $Q_4^5$.}
\label{fig:Q-4}
\end{figure}
\null
\vfill

\clearpage

\null
\vfill
\begin{figure}[h]
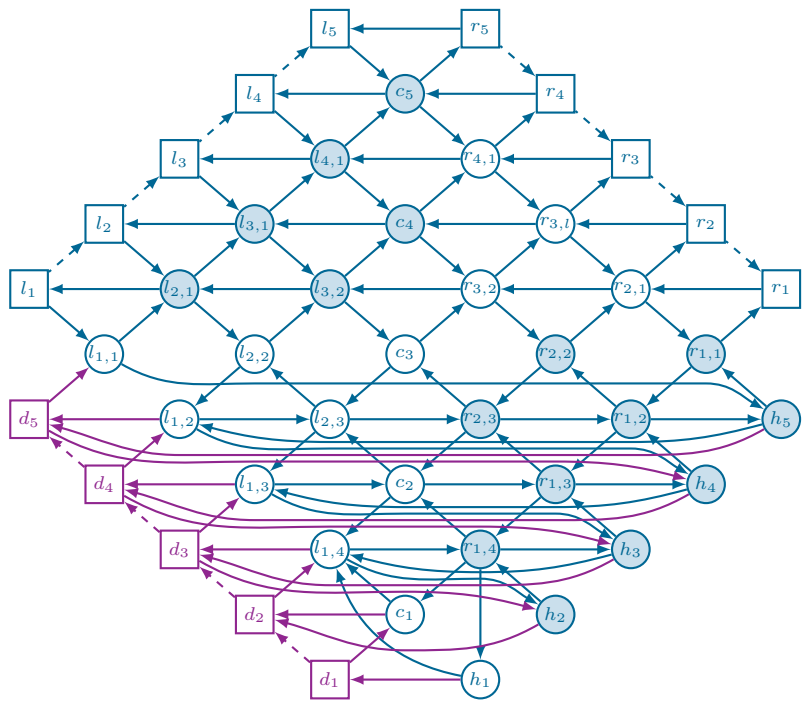

\subfile{fig-Q-13-back-sf.tex}
\caption{Quiver $Q_5^5$.}
\label{fig:Q-5}
\end{figure}
\null
\vfill

\clearpage

\null
\vfill
\begin{figure}[h]
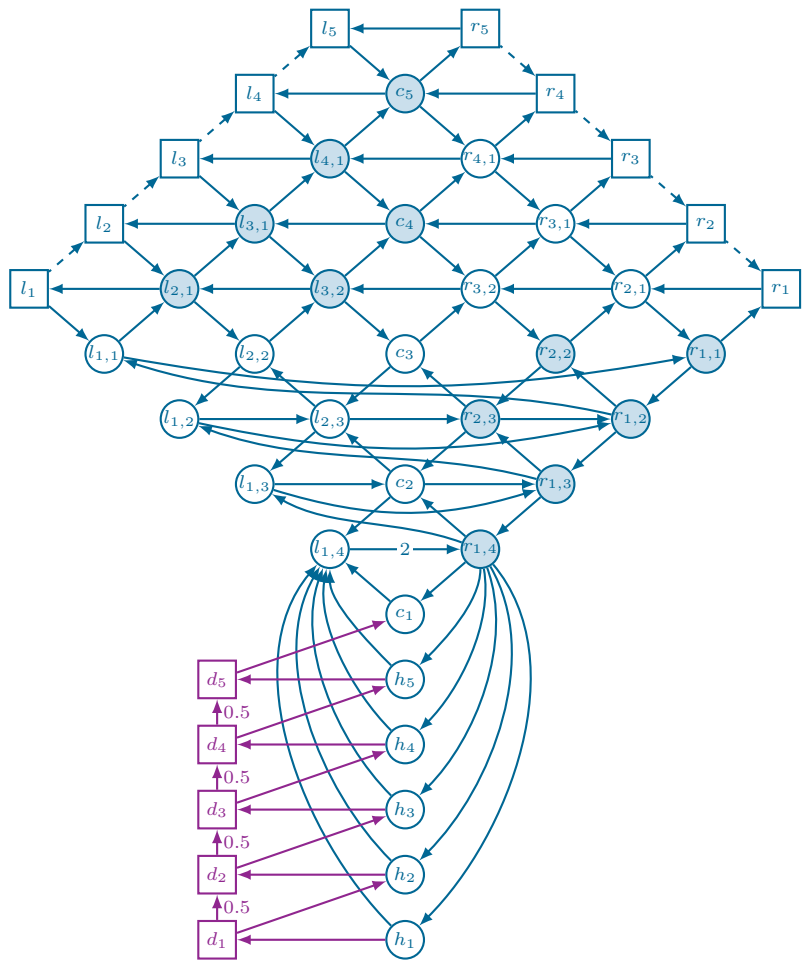

\subfile{fig-Q-12-dual.tex}
\caption{Quiver $Q_6^5$.}
\label{fig:Q-6}
\end{figure}
\null
\vfill

\clearpage

\null
\vfill
\begin{figure}[h]
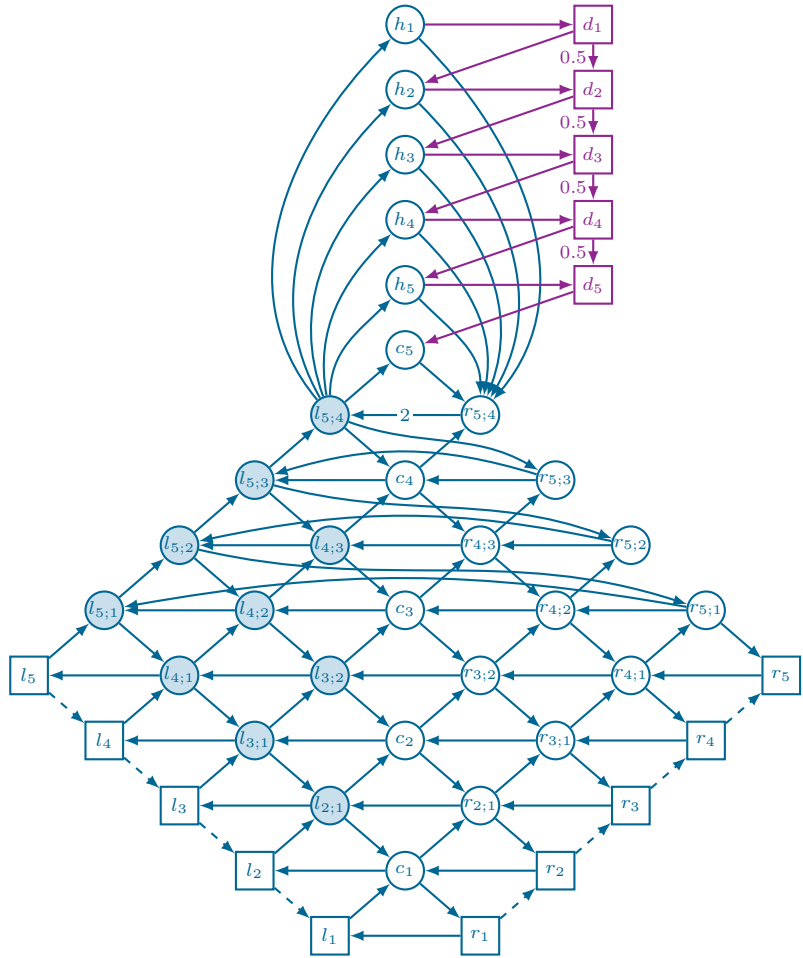

\subfile{fig-Q-11-back.tex}
\caption{Quiver $Q_7^5$.}
\label{fig:Q-7}
\end{figure}
\null
\vfill

\clearpage

\null
\vfill
\begin{figure}[h]
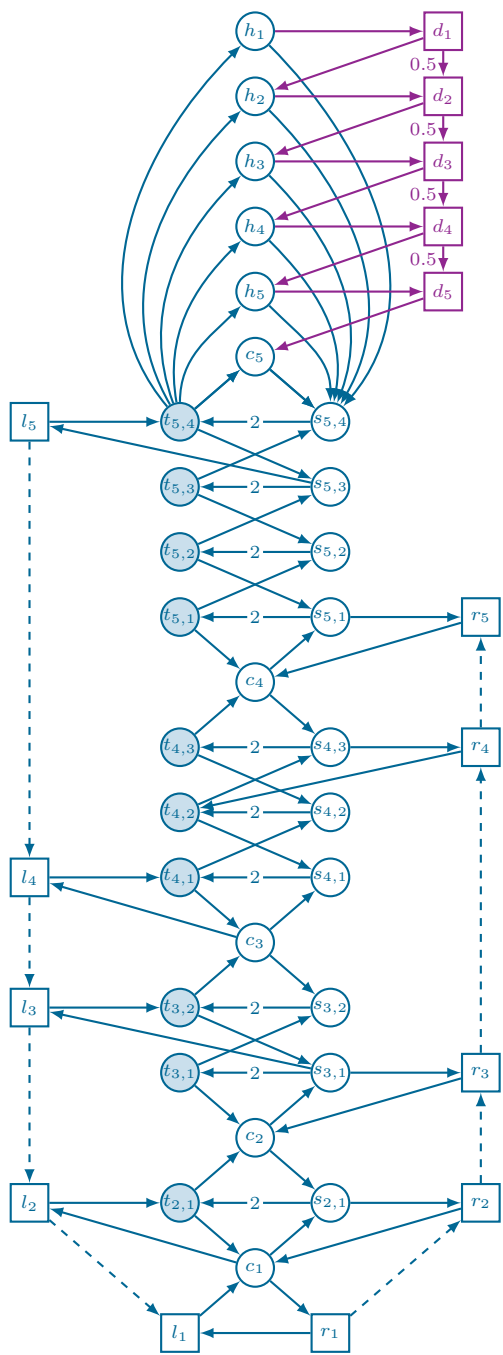

\subfile{fig-Q-10-dehn.tex}
\caption{Quiver $Q_8^5$.}
\label{fig:Q-8}
\end{figure}
\null
\vfill

\clearpage

\null
\vfill
\begin{figure}[h]
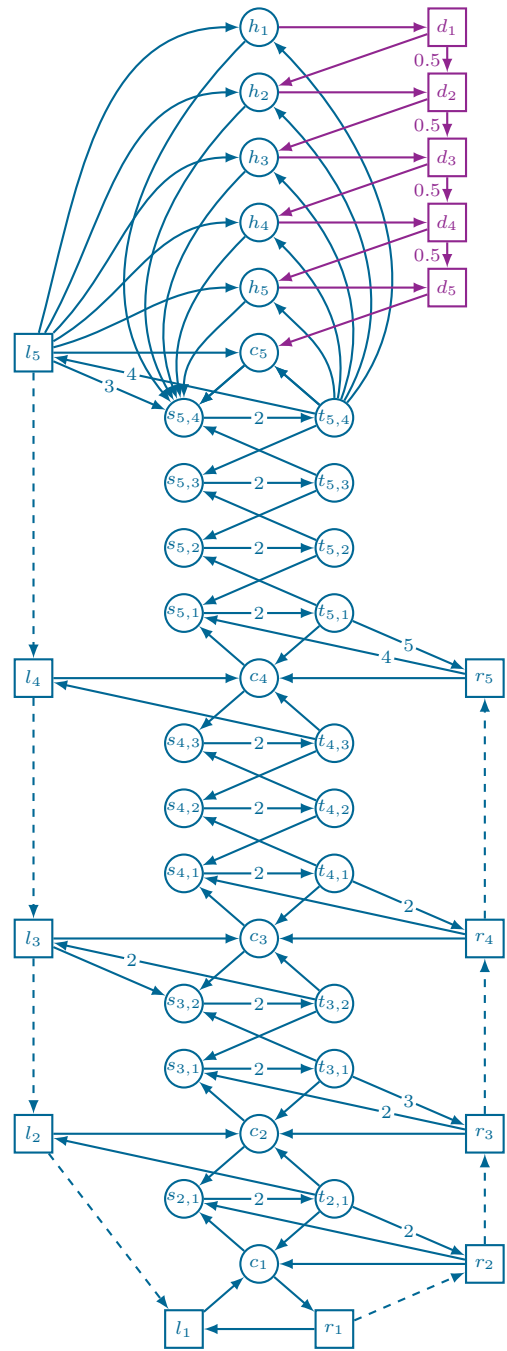

\subfile{fig-Q-9-baxter.tex}
\caption{Quiver $Q_9^5$.}
\label{fig:Q-9}
\end{figure}
\null
\vfill

\clearpage

\null
\vfill
\begin{figure}[h]
\subfile{fig-Q-8-chain.tex}
\caption{Quiver $Q_{10}^5$.}
\label{fig:Q-10}
\end{figure}
\null
\vfill

\clearpage

\null
\vfill
\begin{figure}[h]
\subfile{fig-Q-7-half-back.tex}
\caption{Quiver $Q_{11}^5$.}
\label{fig:Q-11}
\end{figure}
\null
\vfill

\clearpage

\null
\vfill
\begin{figure}[h]
\subfile{fig-Q-6-half-dehn.tex}
\caption{Quiver $Q_{12}^5$.}
\label{fig:Q-12}
\end{figure}
\null
\vfill

\clearpage

%\null
%\vfill
%\begin{figure}[h]
%\subfile{fig-Q-4-half-chain-extra.tex}
%\caption{Extra}
%\label{fig:Q-extra}
%\end{figure}
%\null
%\vfill
%
%\clearpage

\null
\vfill
\begin{figure}[h]
\subfile{fig-Q-4-half-chain.tex}
\caption{Quiver $Q_{13}^5$.}
\label{fig:Q-13}
\end{figure}
\null
\vfill

\clearpage

%And this, my friends, is one of those situations where we will suffer no mater what course of action we choose. Our goal is to commute an umbral move through the (algebraic) Whittaker transform. To that end we factor the umbral move into a very long sequence of transformations, which we can commute through. Let us start by inspecting quivers $Q^{n}_1$ to $Q^{n}_{15}$.
%Note that an umbral move can be realized via the quasi-cluster transformation \red{(numbering is wrong here)}
%$\Phi_{6 \nwarrow 1} \colon Q^{n}_1 \longra Q^{n}_6$.
%First, we will show that
%$$
%\Phi_{6 \nwarrow 1}= \Phi_{6 \swarrow 1}.
%$$
%To that end we show the following equality of tropical quasi-cluster transformation:
%$$
%\Phi_{15 \swarrow 9} = \Phi_{15 \nwarrow 9}.
%$$

%Let
%$$
%a_n(k) =
%\begin{cases}
%k-1 &\text{if $n-k$ is even,} \\
%1 &\text{if $n-k$ is odd.} \\
%\end{cases}
%$$
%Then the quiver $Q_9^n$ can be described as follows. It contains nodes $l_k, r_k, h_k, c_k, t_{k;j}, s_{k;j}$, where $1 \le k \le n$ and $1 \le j < k$. Nodes $s_{k;j},t_{k;j}$ form a $\sl_k$-Toda quiver, with $s_{k;j}$ being the sources of double arrows and $t_{k,j}$ being their targets. \blue{Blah-blah and the important bit.} We have
%\begin{align*}
%(e_{t_{k;1}},e_{r_k}) &= a_n(k)+1, & (e_{t_{k;k-1}},e_{l_k}) &= a_n(k), \\
%(e_{s_{k;1}},e_{r_k}) &= -a_n(k), & (e_{s_{k;k-1}},e_{l_k}) &= 1-a_n(k).
%\end{align*}

For the remainder of this section we write $\yrm_f$ for the tropical variable, corresponding to the node labelled $f$ in $Q_9$. Let us also introduce the following notations: for $j < k$ we set
$$
v_{k,j}^{(a,b)} = \yrm_{s_{k;j}}^a \yrm_{t_{k;j}}^b.
\qquad\text{and}\qquad
v_{k,j}^a = v_{k,j}^{(a,a)}
$$
We also declare
$$
v_{k+1,0} = v_{k,k} = \yrm_{c_k},
\qquad\text{and}\qquad
v_{0,0} = v_{1,0} =  t_{1;1} = s_{1;1} = 1.
$$
Then for $k_1 \le k_2$, $1 \le j_1<k_1$, $1 \le j_2<k_2$ we set
$$
v_{[k_1,j_1|k_2,j_2]} = \prod_{r=j_1}^{k_1} v_{k_1,r} \cdot \prod_{s=k_1+1}^{k_2-1} \prod_{r=1}^s v_{s,r} \cdot \prod_{r=1}^{j_2} v_{k_2,r}.
$$
Furthermore, we fix
$$
h_{n+1} = c_n \qquad\text{and}\qquad v_{n+1,j} = h_{n+1-j}/h_{n+2-j},
$$
so that
$$
h_{n+1-j} = v_{[n,n|n+1,j]}.
$$

\begin{lemma}
\label{lem:8-left-9}
Images $\yrm_\ell^{(8)} = \bs\mu^t_{8\,\circlearrowleft\,9}(\yrm_\ell)$ of the tropical variables $\yrm_\ell$ of $Q^n_9$ under the tropical cluster transformation $\bs\mu^t_{8\,\circlearrowleft\,9}$ may be written as follows. For all $k$ we have
$$
\yrm^{(8)}_{c_k} = \yrm_{c_k},
\qquad \yrm^{(8)}_{h_k} = \yrm_{h_k},
\qquad \yrm^{(8)}_{d_k} = \yrm_{d_k},
\qquad \yrm^{(8)}_{t_{k,j}} = v_{k,j}^{(-k,1-k)}, \\
\qquad \yrm^{(8)}_{s_{k,j}} = v_{k,j}^{(k+1,k)}.
$$
Then
$$
\yrm^{(8)}_{r_k} = \yrm_{r_k}
\qquad\text{and}\qquad
\yrm^{(8)}_{l_k} = \yrm_{l_k} \cdot
\begin{cases}
v_{k,k-1}^{(k,k-1)} &\text{if $n-k$ is even,} \\
\prod_{j=1}^{k-1} v_{k,j}^{(k+1-j,k-j)} &\text{if $n-k$ is odd.}
\end{cases}
$$
\end{lemma}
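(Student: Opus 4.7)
The plan is to exploit the factorization $\bs\mu_{8\,\circlearrowleft\,9} = \bs\mu_{9\,\circlearrowright\,8}^{-1} = \prod_{k=1}^{n-1}\tau_k^{k+1}$ as a product of commuting Dehn twists, each supported on a distinct subquiver $Q^k_{\mathrm{Toda}}\subset Q_9$. Since these Dehn twists act on disjoint sets of mutable vertices and their supporting mutations do not overlap, the tropical image of each variable is computed by analyzing a single $\tau_k^{k+1}$, with contributions from different $k$ only appearing through the frozen ``spine'' vertices $r_k, l_k, c_k, h_k, d_k$ that border the subquivers.

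First I would pin down the tropical action of a single Dehn twist $\tau_k$ on the mutable part of $Q^k_{\mathrm{Toda}}$. Using~\eqref{eq:short-Dehn}, $\tau_k = \prod_{j=1}^{k}(s_{k,j},t_{k,j})\circ\prod_{j=1}^{k}\mu_{s_{k,j}}$, and the $k$ mutations commute because the $s_{k,j}$ are pairwise non-adjacent in the Toda quiver. Applying Lemma~\ref{tropical-chain} gives explicit one-step formulas expressing $\tau_k(\yrm_{s_{k,j}}), \tau_k(\yrm_{t_{k,j}})$ as subtraction-free monomials in the $\yrm_{s_{k,j}}, \yrm_{t_{k,j}}$ and the boundary $\yrm_{v_{k,j}}$. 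Then I would establish the claimed formulas for $\yrm^{(8)}_{t_{k,j}}$ and $\yrm^{(8)}_{s_{k,j}}$ by induction on the number of applied copies of $\tau_k$: at each step the sign-coherence of the intermediate tropical variables can be verified directly, so the tropical mutation rule applies uniformly and the linear recursion on the exponents of the $v_{k,j}$ solves to give the arithmetic progressions $(-k,1-k)$ and $(k+1,k)$ after $k+1$ iterations.

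Next I would analyze the spine vertices. The vertices $c_k, h_k, d_k$ are frozen inside $Q^k_{\mathrm{Toda}}$ and positioned so that their only outgoing/incoming arrows to mutable vertices of $Q^k_{\mathrm{Toda}}$ cancel across the composite $\prod_{j=1}^{k}\mu_{s_{k,j}}$ followed by the transposition of $s$- and $t$-rows; inspection of Figure~\ref{fig:Q-9} shows that each $\tau_k$-iterate fixes their tropical variables, so $\yrm^{(8)}_{c_k}=\yrm_{c_k}$, $\yrm^{(8)}_{h_k}=\yrm_{h_k}$, $\yrm^{(8)}_{d_k}=\yrm_{d_k}$. The same argument applies to $r_k$ once one checks from the adjacency data $(e_{t_{k,1}},e_{r_k}) = a_n(k)+1$ and $(e_{s_{k,1}},e_{r_k}) = -a_n(k)$ that the net contribution of one full $\tau_k$ to $\yrm_{r_k}$ is trivial.

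The asymmetry recorded in the pairings $(e_{t_{k,k-1}},e_{l_k}) = a_n(k)$ and $(e_{s_{k,k-1}},e_{l_k}) = 1-a_n(k)$ is what forces $\yrm_{l_k}$ to acquire a nontrivial correction. The hard part will be carrying out this last computation cleanly: the contribution picked up by $\yrm_{l_k}$ after each application of $\mu_{s_{k,k-1}}$ is a monomial in the tropical variables of the whole $s$-$t$ row, and iterating $k+1$ times produces the telescoping product $\prod_{j=1}^{k-1} v_{k,j}^{(k+1-j,k-j)}$ when $n-k$ is odd and collapses to the single factor $v_{k,k-1}^{(k,k-1)}$ when $n-k$ is even, because in the even case only the end of the chain ``reaches'' $l_k$ through the arrow pattern $a_n(k)=k-1$. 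I would confirm the two parity cases by induction on $k$, checking the base cases $k=1,2$ directly against the quiver Figure~\ref{fig:Q-9} and propagating via the recursion induced by $\tau_k$. This will be the main bookkeeping obstacle, and I expect the cleanest presentation is to introduce a provisional monomial $L_k(m) = \bs\mu_{9\,\circlearrowright\,8}^{t,-m}(\yrm_{l_k})/\yrm_{l_k}$, derive $L_k(m+1)/L_k(m)$ from the one-step tropical rule, and sum the resulting geometric-type expression from $m=0$ to $m=k$.
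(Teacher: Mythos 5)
The paper does not supply a proof of this lemma — the authors state in the proof of Proposition~\ref{prop:umbral-factorization} that "Proofs of these Lemmas are combinatorial exercises\ldots and we omit them for brevity." — so your proposal has to stand on its own. Your top-level decomposition $\bs\mu_{8\,\circlearrowleft\,9}=\prod_{k=1}^{n-1}\tau_k^{k+1}$ is correct and is the natural starting point, and the idea of iterating a single Dehn twist while tracking tropical signs is sound for the interior variables: one finds for a single $Q^m_{\mathrm{Toda}}$ subquiver that $\tau(\yrm_{s_j})=\yrm_{t_j}\yrm_{s_j}^2$ and $\tau(\yrm_{t_j})=\yrm_{s_j}^{-1}$, the intermediate variables stay tropically positive, and iterating the resulting unipotent Jordan block on exponents gives exactly $v_{k,j}^{(m+1,m)}$ and $v_{k,j}^{(-m,1-m)}$ after $m$ twists. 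That part of your plan is correct.

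However, there are two significant gaps. First, Lemma~\ref{tropical-chain} is the wrong tool here: the mutations inside a Dehn twist are commuting mutations at pairwise non-adjacent vertices, not a chain $i_0\to i_1\to\cdots$, and condition (2) of that lemma (arrows out of $i_r$ only to $i_{r\pm1}$) does not hold for the $s_{k,j}$ vertices of a Toda row, which carry arrows to both $t_{k,j}$ and $t_{k,j\pm1}$. You would need a direct computation of the commuting mutations rather than an appeal to that lemma. Second, and more seriously, the spine-vertex analysis is essentially asserted rather than proved. The phrase "the net contribution of one full $\tau_k$ to $\yrm_{r_k}$ is trivial" glosses over the fact that $\tau_k$ is \emph{not} an automorphism of the larger quiver: it changes the pairings $(e_{s_{k,1}},e_{r_k})$ and $(e_{s_{k,k-1}},e_{l_k})$ after each iteration (one finds the parameter $a$ in the pattern $(\epsilon_{s,r},\epsilon_{t,r})=(-a,a+1)$ decreases by $1$ per twist). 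Whether $\yrm_{r_k}$ stays fixed therefore depends on a running inequality over all $k$ iterations, not on a single-twist argument, and the two parity cases of $a_n(k)$ behave quite differently. Moreover, the reason $\yrm_{l_k}$ accumulates factors $v_{k,j}$ for \emph{all} $j<k$ in the odd case is not that the contribution of $\mu_{s_{k,k-1}}$ telescopes on its own — rather, after a twist the mutated exchange matrix acquires a new nonzero entry $\epsilon_{s_{k,k-2},l_k}$ (and then $\epsilon_{s_{k,k-3},l_k}$, etc.), so that subsequent twists pick up contributions from entirely new vertices. This cascading creation of adjacencies is the key mechanism and is absent from your description; a correct proof would need to track the full evolving $\epsilon$-block of the spine vertex, not just the tropical variables.

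So: right strategy, right answer for the interior $s,t$ variables, but the $r_k$ and $l_k$ claims need a genuinely new argument that carefully propagates the exchange-matrix pairings through all $k+1$ iterations and explains why the asymmetric initial pairings for $r_k$ versus $l_k$ lead to cancellation for one and accumulation for the other.
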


\begin{lemma}
\label{lem:7-left-9}
Images $\yrm_\ell^{(7)} = \bs\mu^t_{7\,\circlearrowleft\,9}(\yrm_\ell)$ of the tropical variables $\yrm_\ell$ of $Q^n_9$ under the tropical cluster transformation $\bs\mu^t_{7\,\circlearrowleft\,9}$ may be written as follows. For $ \le k \le n+1$ we have
$$
\yrm^{(7)}_{h_k} = \yrm_{h_k},
\qquad \yrm^{(7)}_{d_k} = \yrm_{d_k}.
$$
For $1 \le k \le n$ we have
$$
\yrm^{(7)}_{r_k} = \yrm_{r_k}
\qquad\text{and}\qquad
\yrm^{(7)}_{l_k} = \yrm_{l_k} \cdot \prod_{j=1}^{k-1} \hr{v_{k-1,j}^{j-1} v_{k,j}^{(k+a_n(k)-j,k+a_n(k)-1-j)}}.
$$
For $1 \le k \le n-1$ we have
$$
\yrm^{(7)}_{c_k} = v_{[k,k|k+1,k-1]},
\qquad \yrm^{(7)}_{l_{k+1,k}} = v_{k+1,k}^{(-a_n(k+1)-1,-a_n(k+1))},
\qquad \yrm^{(7)}_{r_{n,k}} = v_{n,k}^{(n+1,n)}.
$$
For $2 \le k \le n-1$, $1 \le j \le k-1$ we have
\begin{align*}
\yrm^{(7)}_{l_{k+1,j}} &= v_{[k,j+1|k+1,j-1]}^{-1} v_{k+1,j}^{(-a_n(k)-2,-a_n(k)-1)}, \\
\yrm^{(7)}_{r_{k,j}} &= v_{k,j}^{(a_n(k)+2,a_n(k)+1)} v_{[k,j+1|k+1,j-1]}^{-1}.
\end{align*}
\end{lemma}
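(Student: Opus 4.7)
The approach is to factor $\bs\mu_{7\,\circlearrowleft\,9}$ through $Q_8$ and combine the already-established formulas of Lemma~\ref{lem:8-left-9} with an independent tropical computation of $\bs\mu^t_{7\,\circlearrowleft\,8}$. Explicitly, we use
$$
\bs\mu_{7\,\circlearrowleft\,9} \;=\; \bs\mu_{7\,\circlearrowleft\,8}\circ\bs\mu_{8\,\circlearrowleft\,9},
$$
so it suffices to start from the tropical variables $\yrm^{(8)}_\ell$ listed in Lemma~\ref{lem:8-left-9} and push them through $\bs\mu^t_{7\,\circlearrowleft\,8} = (\bs\mu^t_{8\,\circlearrowright\,7})^{-1}$. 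Because the construction of $\bs\mu_{8\,\circlearrowright\,7}$ as an alternating composition of the word transformations $\overline\Phi_2$ and $\Phi_2$ respects the nested subquivers $Q^{n-1}_{\overline w_0,c^*}\supset Q^{n-2}_{\overline w_0,c}\supset\cdots$, the inverse can be computed one wave at a time from innermost to outermost. Each such wave is a chain of shuffles and braid moves whose underlying mutation sequence, by Figures~\ref{fig-pm} and~\ref{fig-braid}, satisfies precisely the hypotheses of Lemma~\ref{tropical-chain}: the sign‑coherence condition holds because the tropical variables $\yrm^{(8)}_{t_{k,j}},\yrm^{(8)}_{s_{k,j}},\yrm^{(8)}_{r_k},\yrm^{(8)}_{l_k}$ of Lemma~\ref{lem:8-left-9} are monomials of uniform tropical sign in the reference cluster variables of $Q_9$.

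The plan is then to proceed in two stages. First I would verify, by a direct application of Lemma~\ref{tropical-chain} to each shuffle/braid chain, that the mutable variables inside the Toda subquivers are fixed by $\bs\mu^t_{7\,\circlearrowleft\,8}$ except for the outermost rim, since interior chains cancel after successive waves. This yields
$$
\yrm^{(7)}_{c_k}=v_{[k,k|k+1,k-1]},\qquad \yrm^{(7)}_{r_{n,k}}=v_{n,k}^{(n+1,n)},\qquad \yrm^{(7)}_{l_{k+1,k}}=v_{k+1,k}^{(-a_n(k+1)-1,-a_n(k+1))},
$$
and likewise the product formulas for $\yrm^{(7)}_{l_{k+1,j}},\yrm^{(7)}_{r_{k,j}}$ by reading off the telescoping product of $v$'s accumulated along the chain. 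Second, I would verify that on the rows containing $r_k,l_k$, only the final braid move of the last wave contributes, producing the correction factor $\prod_{j=1}^{k-1}(v_{k-1,j}^{j-1}v_{k,j}^{(k+a_n(k)-j,k+a_n(k)-1-j)})$ multiplying $\yrm_{l_k}$ and leaving $\yrm_{r_k}$ unchanged. The distinction between the two cases in the parity of $n-k$ enters only through the function $a_n(k)$, which records the position of the braid move inside the corresponding factor of $\overline\Phi_2$ or $\Phi_2$.

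The invariance statements $\yrm^{(7)}_{h_k}=\yrm_{h_k}$ and $\yrm^{(7)}_{d_k}=\yrm_{d_k}$ are immediate, since the subquiver supporting $\bs\mu_{8\,\circlearrowright\,7}$ does not involve the nodes $h_k,d_k$, and Lemma~\ref{lem:8-left-9} already shows these tropical variables are untouched by $\bs\mu^t_{8\,\circlearrowleft\,9}$.

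The main obstacle is bookkeeping: although each individual wave is handled uniformly by Lemma~\ref{tropical-chain}, the alternation between $\overline\Phi_2$ and $\Phi_2$ on successively smaller Toda subquivers produces a telescoping sum whose surviving terms must be matched against the compact $v_{[k_1,j_1|k_2,j_2]}$ notation of the statement. As in the proof sketch of Lemma~\ref{lem:Baxter-trop-var}, the cleanest way is to define intermediate tropical variables $\yrm^{(7;s)}_\ell$ after applying the first $s$ waves of $\bs\mu^t_{7\,\circlearrowleft\,8}$, establish a closed-form expression for $\yrm^{(7;s)}_\ell$ by induction on $s$, and specialize to $s$ equal to the total number of waves. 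The parity function $a_n(k)$ then emerges naturally from the orientation of the chain active in the wave that finally updates the row of $r_k,l_k$.
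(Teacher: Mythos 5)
The paper does not actually prove this lemma: in the proof of Proposition~\ref{prop:umbral-factorization} it states that Lemmas~\ref{lem:8-left-9}--\ref{lem:13-left-9} are ``combinatorial exercises, similar in spirit to that of Lemma~\ref{lem:Baxter-trop-var}, and we omit them for brevity.'' Your proposal --- factoring $\bs\mu_{7\,\circlearrowleft\,9}=\bs\mu_{7\,\circlearrowleft\,8}\circ\bs\mu_{8\,\circlearrowleft\,9}$, inputting Lemma~\ref{lem:8-left-9}, and then pushing through the inverse of the alternating $\overline\Phi_2/\Phi_2$ waves using Lemma~\ref{tropical-chain} --- is precisely the ``combinatorial exercise'' the authors have in mind, so this is essentially the same approach as the paper's, just carried out explicitly instead of omitted.

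One caution on a detail of your sketch: the hypothesis $\sigma(\yrm_{i_r})=1$ in Lemma~\ref{tropical-chain} must be verified for the tropical variable at each vertex $i_r$ \emph{at the moment it is mutated}, not for the already-known $\yrm^{(8)}_\ell$ appearing in Lemma~\ref{lem:8-left-9}. Your phrasing reads as if uniformity of sign of the input variables alone justifies the application; what you actually need is to check, inductively along each shuffle/braid chain inside a wave, that the vertex about to be mutated carries a positive tropical variable at that stage. (This is not circular --- it follows from sign-coherence together with the explicit form of the chain --- but it is a step that should be stated rather than assumed.) Similarly, the invariance of $\yrm_{h_k},\yrm_{d_k}$ under $\bs\mu^t_{7\,\circlearrowleft\,8}$ requires noting not only that these vertices are not mutated, but that the arrow pattern and tropical signs of their neighbors are such that they receive no correction factor --- ``the supporting subquiver does not involve the nodes'' is not by itself sufficient, since in tropical $\Xc$-mutation a frozen neighbor of a mutated vertex can still be multiplied by a positive-sign tropical variable.
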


\begin{lemma}
\label{lem:6-left-9}
Images $\yrm_\ell^{(6)} = \bs\mu^t_{6\,\circlearrowleft\,9}(\yrm_\ell)$ of the tropical variables $\yrm_\ell$ of $Q^n_9$ under the tropical cluster transformation $\bs\mu^t_{6\,\circlearrowleft\,9}$ may be written as follows. For $1 \le k \le n$ we have
$$
\yrm^{(6)}_{h_k} = \yrm_{h_k},
\qquad \yrm^{(6)}_{d_k} = \yrm_{d_k}.
$$
For $1 \le k \le n$ we have
$$
\yrm^{(6)}_{r_k} = \yrm_{r_k} \cdot v_{[k,1|k,k]}
\qquad\text{and}\qquad
\yrm^{(6)}_{l_k} = \yrm_{l_k} \cdot \prod_{j=1}^{k-1} \hr{v_{k-1,j}^j v_{k,j}^{(k+a_n(k)-j,k+a_n(k)-1-j)}}.
$$
Now,
$$
\yrm^{(6)}_{c_k} = 
\begin{cases}
\yrm_{c_n} &\text{if $k=1$,} \\
v_{[n+2-k,0|n+2-k,n-k]} &\text{if $1 < k \le (n+1)/2$,} \\
v_{[k-1,n-k|k,n-k-1]}^{-1} &\text{if $k > (n+1)/2$.}
\end{cases}
$$
Then
$$
\yrm^{(6)}_{l_{k,j}} = 
\begin{cases}
v_{[k-1,j|k,j-1]}^{-1} v_{k,j}^{(-a_n(k)-1,-a_n(k))} &\text{if $1 \le j < k$,} \\
v_{[1,1|n,1]} \prod_{r=2}^n v_{r,1}^{(a_n(r)+2,a_n(r)+1)} &\text{if $j=k=1$,} \\
v_{[k,k|n+2-k,k-1]} \prod_{r=k+1}^{n+1-k} v_{r,k}^{(a_n(r)+2,a_n(r)+1)} &\text{if $j=k > 1$,} \\
v_{n,k}^{(n+1,n)} &\text{if $j > k = 1$,} \\
v_{n+1-k,j}^{(a_n(n+1-k)+2,a_n(n+1-k)+1)} v_{[n+1-k,j+1|n+2-k,j-1]} &\text{if $2 \le k < j$.}
\end{cases}
$$
Finally,
$$
\yrm^{(6)}_{r_{k,j}} = 
\begin{cases}
v_{k,j}^{(a_n(k)+1,a_n(k))}v_{[k,j+1|k+1,j]} &\text{if $n-k < j < k$,} \\
v_{k,j}^{(a_n(k)+1,a_n(k))} &\text{if $j = n-k < k$,} \\
v_{[k+1,k+1|n+1-k,k-1]}^{-1} \prod_{r=k+1}^{n+1-k} v_{r,k}^{(-a_n(k)-2,a_n(k)-1)} &\text{if $j=k$,} \\
v_{n+1-k,j}^{(-a_n(n+1-k)-1,-a_n(n+1-k))} &\text{if $j = n-k > k$,} \\
v_{[n-k,j+1|n+1-k,j-1]}^{-1} v_{n+1-k,j}^{(-a_n(n+1-k)-2,-a_n(n+1-k)-1)} &\text{if $k < j < n-k$.} \\
\end{cases}
$$
\end{lemma}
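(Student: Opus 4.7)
The strategy is to exploit the factorization
$$
\bs\mu^t_{6\,\circlearrowleft\,9} = \bs\mu^t_{6\,\circlearrowleft\,7} \circ \bs\mu^t_{7\,\circlearrowleft\,9}
$$
and combine the already-established Lemma~\ref{lem:7-left-9} with an explicit tropical computation of $\bs\mu^t_{6\,\circlearrowleft\,7}$. Recall that $\bs\mu_{7\,\circlearrowright\,6}$ was defined as the triangle transformation $\Phi_{\Delta;A}$ applied to the subquiver obtained by amalgamating the standard bicolored graph quiver $Q_\Delta$ onto $\Phi_c(Q_{\mathrm{sf}})$ along the edge $e$. Hence $\bs\mu_{6\,\circlearrowleft\,7}=\Phi_{\Delta;A}^{-1}$, and it acts nontrivially only on the cluster variables labelled by vertices of $Q_\Delta$, i.e.\ on the variables $\yrm_{c_k}, \yrm_{l_{k,j}}, \yrm_{r_{k,j}}$ (in the notation of Figure~\ref{fig:Q-6}); the remaining variables $\yrm^{(6)}_{h_k}, \yrm^{(6)}_{d_k}, \yrm^{(6)}_{l_k}, \yrm^{(6)}_{r_k}$ either coincide with their $\bs\mu^t_{7\,\circlearrowleft\,9}$-images or differ only via the exchange relations triggered by the single last row of mutations in $\Phi_{\Delta;A}^{-1}$ that touches the ``outer'' nodes.

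First I will collect the tropical output of Lemma~\ref{lem:7-left-9} as the input cluster for $\bs\mu^t_{6\,\circlearrowleft\,7}$, paying particular attention to the tropical signs of each $\yrm^{(7)}_\ell$, which are needed in order to apply Lemma~\ref{tropical-chain}. A direct inspection of the formulas in Lemma~\ref{lem:7-left-9} shows that the variables attached to the vertices mutated during $\Phi_{\Delta;A}^{-1}$ all have positive tropical sign with respect to the reference cluster $Q_9$, so at each stage of the mutation sequence the chain condition (2) of Lemma~\ref{tropical-chain} is satisfied by the subquiver being mutated. Second, I will carry out the mutations in $\Phi_{\Delta;A}^{-1}$ row by row. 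The key observation is that the third, innermost product in the definition
$$
\Phi_{\Delta;A}=\prod_{r=1}^{\substack{n-1\\ \longrightarrow}}\prod_{s=1}^{\substack{r\\ \longrightarrow}}\prod_{j=1}^{r-s+1}\mu_{(n-r+s,\,r-s-j,\,j)}
$$
consists of mutually commuting mutations forming a chain in the sense of Lemma~\ref{tropical-chain}. Inverting and iterating, each chain step simply ``promotes'' a block of factors $v_{k,j}^{(*,*)}$ through the triangle, producing telescoping products of the form $v_{[k_1,j_1|k_2,j_2]}$ that appear in the statement.

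The proof then reduces to a case-by-case verification of the five cases for $\yrm^{(6)}_{l_{k,j}}$ and the five cases for $\yrm^{(6)}_{r_{k,j}}$. The cases correspond to the possible positions of the vertex $(k,j)$ inside $Q_\Delta$: on the edge opposite $A$ ($j=n-k$), on one of the two edges incident to $A$ ($j=k$ or the symmetric case), at the vertex $A$ itself, or in the strict interior. Each case corresponds to a specific stopping point in one of the mutation chains, and the resulting tropical monomial is obtained by applying Lemma~\ref{tropical-chain} to read off the ``boundary'' contributions. The formulas for $\yrm^{(6)}_{c_k}$, $\yrm^{(6)}_{l_k}$, and $\yrm^{(6)}_{r_k}$ come out as the accumulated products along the two diagonals of $\Delta$ and are obtained similarly (and, in the case of $\yrm^{(6)}_{h_k}, \yrm^{(6)}_{d_k}$, amount to the observation that these frozen variables are untouched by $\Phi_{\Delta;A}^{-1}$).

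The main obstacle is purely bookkeeping: keeping track of the exponents $a_n(k)$ and of the precise telescoping indices $v_{[k_1,j_1|k_2,j_2]}$ across the many case boundaries. I expect the computation to be entirely mechanical once the chain structure of $\Phi_{\Delta;A}^{-1}$ is made explicit, in exact analogy with the chain computation performed in the proof of Lemma~\ref{lem:Baxter-trop-var}. No new ideas beyond Lemma~\ref{tropical-chain} and the sign-coherence from Section~\ref{subsec:trop-var} are needed, and as the authors note, the argument is a combinatorial exercise of the same flavor as the other lemmas in this section and can safely be omitted.
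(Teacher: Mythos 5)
The paper does not actually supply a proof of this lemma: it merely states that Lemmas~\ref{lem:8-left-9}--\ref{lem:13-left-9} and \ref{lem:10-right-9}--\ref{lem:13-right-9} ``are combinatorial exercises, similar in spirit to that of Lemma~\ref{lem:Baxter-trop-var}, and we omit them for brevity.'' Your plan --- factoring $\bs\mu^t_{6\,\circlearrowleft\,9}=\bs\mu^t_{6\,\circlearrowleft\,7}\circ\bs\mu^t_{7\,\circlearrowleft\,9}$, feeding in the output of Lemma~\ref{lem:7-left-9}, and tracking tropical $\yrm$-variables through the extra block of mutations $\Phi_{\Delta;A}^{-1}$ --- is precisely the natural inductive scheme suggested by the sequence of lemmas, and it mirrors the chase performed in the proof of Lemma~\ref{lem:Baxter-trop-var}. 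So the approach matches what the authors intended.

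One imprecision worth flagging. You assert that the variables at all vertices mutated during $\Phi_{\Delta;A}^{-1}$ have positive tropical sign relative to $Q_9$, so that Lemma~\ref{tropical-chain} applies directly at every stage. That is not literally true: several of the $\yrm^{(7)}_{l_{k,j}}$ and $\yrm^{(7)}_{r_{k,j}}$ recorded in Lemma~\ref{lem:7-left-9} are negative tropical monomials (for instance $\yrm^{(7)}_{l_{k+1,j}} = v_{[k,j+1|k+1,j-1]}^{-1} v_{k+1,j}^{(-a_n(k)-2,-a_n(k)-1)}$). Lemma~\ref{tropical-chain} is stated under a positivity hypothesis, so you cannot invoke it blindly at those vertices; instead you must fall back on the general signed mutation rule
$\yrm'_i = \yrm_i\,(\yrm_k)^{[\sigma(\yrm_k)\eps_{ki}]_+}$
(or equivalently split the composite into sub-chains where the sign is constant). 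This is still entirely mechanical, but the sign bookkeeping is exactly where the case distinctions in the statement of the lemma (e.g.\ the five-way case split for $\yrm^{(6)}_{l_{k,j}}$ and $\yrm^{(6)}_{r_{k,j}}$) arise, so it is the one place where ``I expect the computation to be entirely mechanical'' risks glossing over the content. Otherwise the plan is sound.
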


\begin{lemma}
\label{lem:5-left-9}
Images $\yrm_\ell^{(5)} = \bs\mu^t_{5\,\circlearrowleft\,9}(\yrm_\ell)$ of the tropical variables $\yrm_\ell$ of $Q^n_9$ under the tropical cluster transformation $\bs\mu^t_{5\,\circlearrowleft\,9}$ may be written as follows. 
%For all $1 \le k \le n$ we have $\yrm'_{l_k}$, $\yrm'_{r_k}$ as in theprevious Lemma. Same applies to $\yrm'_{l_{k,j}}$, $\yrm'_{r_{k,j}}$ for $j<k$ and to $\yrm'_{c_k}$ for $k > (n+1)/2$.
We have
$$
\yrm^{(5)}_{l_{k,k}} = v_{[k,k|n+2-k,k-1]} \prod_{r=k+1}^{n+1-k} v_{r,k}^{(a_n(r)+2,a_n(r)+1)},
$$
$$
\yrm^{(5)}_{c_1} = v_{[n+1,0|n+1,n-1]},
$$
and
$$
\yrm^{(5)}_{h_k} = 
\begin{cases}
\yrm_{h_n} &\text{if $k=n$,} \\
v_{[n,n+2-k|n+1,n-k]}^{-1} &\text{if $k<n$.} \\
\end{cases}
$$
For all other nodes $f$ of $Q^n_5$ the tropical variables are expressed by the same exact formulas as in the previous Lemma: $\yrm^{(5)}_f = \yrm^{(6)}_f$.
\end{lemma}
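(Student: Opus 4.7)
The strategy is to bootstrap from Lemma~\ref{lem:6-left-9}, which computes the tropical variables after $\bs\mu^t_{6\,\circlearrowleft\,9}$, by applying the additional factor $\bs\mu^t_{5\,\circlearrowleft\,6}$. By construction $\bs\mu_{c_+;e_+;6\,\circlearrowleft\,5}$ is the transformation $\Phi_{c_+}$ of Section~\ref{subsec-muts-single}, and so $\bs\mu_{5\,\circlearrowleft\,6}$ is $\Phi_{c_+}^{-1}$: an explicit sequence of mutations, $\hat\mu$-transformations and re-labellings concentrated in the subquiver supported on the self-folded triangle that isolates $c_+$. Crucially, the supporting vertices of this subquiver are exactly those that carry the labels $c_k$, $h_k$, and $l_{k,k}$ in $Q_9$, together with auxiliary vertices that get permuted out; every other node of $Q_9$ sits outside the mutation locus. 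By Theorem~\ref{trop-criterion} it therefore suffices to show that (i)~the tropical evaluations at vertices not supported inside the isolating cylinder are unchanged, and (ii)~the three families $\yrm^{(5)}_{l_{k,k}}$, $\yrm^{(5)}_{c_1}$, $\yrm^{(5)}_{h_k}$ evaluate to the claimed expressions.

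Step (i) is immediate from the fact that quasi-permutations affect only the basis vectors tied to tacked circles (Lemma~\ref{lem:sigma-pm}, Corollary~\ref{cor:xi-shift-permut}) and that every proper mutation in $\Phi_{c_+}$ is at a vertex of the isolating subquiver; since none of the surrounding vertices $r_k$, $l_k$ (for $k<n$), $d_k$, or $s_{k,j}$/$t_{k,j}$ (for $j<k$) carry outgoing arrows to mutated vertices in the ambient quiver, the tropical rule leaves those variables unchanged. This also explains why the formulas of Lemma~\ref{lem:6-left-9} persist verbatim in those directions.

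For step (ii), the plan is to write $\Phi_{c_+}^{-1}$ as a composition of the inverse of the preliminary wave $\Phi_{c_+}^{(0)}$ followed by $\Phi_3^{-1}$, and apply Lemma~\ref{tropical-chain} iteratively. Each wave of $\Phi_3$ is a short chain of mutations $(n-1-j+k, n-2-j+k) \to \cdots \to (n, n-2-j+k)$ whose participating tropical variables are positive (in $\yrm^{(6)}$ they are products of the positive quantities $v_{k,j}^{(a,b)}$, as read off from Lemma~\ref{lem:6-left-9}); hence Lemma~\ref{tropical-chain} applies and each chain collapses its end-variables to a single ratio of products of the $v_{k,j}$. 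Telescoping these contributions across the $n-1$ waves of $\Phi_3$ produces exactly the factor $\prod_{r=k+1}^{n+1-k} v_{r,k}^{(a_n(r)+2,a_n(r)+1)}$ that distinguishes $\yrm^{(5)}_{l_{k,k}}$ from $\yrm^{(6)}_{l_{k,k}}$; the preliminary wave $\Phi_{c_+}^{(0)}$ cleans up the contributions along the main diagonal and produces the values for $\yrm^{(5)}_{c_1}$ and $\yrm^{(5)}_{h_k}$.

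The main technical point, and the only part that is not bookkeeping, is verifying that the sign hypothesis of Lemma~\ref{tropical-chain} is satisfied at every intermediate step: one must check that after each mutation in $\Phi_3^{(j,k)}$, the tropical variable at the next vertex to be mutated remains a positive Laurent monomial in the initial variables of $Q_9$. I would do this inductively along the wave by recording the exponents of each $v_{s,r}$ explicitly and checking that the two outgoing arrows from the pivot vertex in the mutated quiver go to exactly the two neighbours prescribed by Lemma~\ref{tropical-chain}; this second condition follows from the definition of $\Phi_3$ and our standing description of the quiver $\Phi_{c_+}^{(0)}(\Qsf)$. Once these sign checks are in place the formulas in the statement drop out by telescoping, completing the proof.
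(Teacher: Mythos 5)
The paper does not actually carry out this computation: the proof of Proposition~\ref{prop:umbral-factorization} simply declares that Lemmas~\ref{lem:8-left-9}--\ref{lem:13-right-9} are ``combinatorial exercises, similar in spirit to that of Lemma~\ref{lem:Baxter-trop-var}'' and omits them. Your overall strategy---factor $\bs\mu_{5\,\circlearrowleft\,9}=\bs\mu_{5\,\circlearrowleft\,6}\circ\bs\mu_{6\,\circlearrowleft\,9}$, recall that $\bs\mu_{5\,\circlearrowleft\,6}=\Phi_{c_+}^{-1}$, import $\yrm^{(6)}$ from Lemma~\ref{lem:6-left-9}, and chase tropical variables through the additional mutation sequence---is therefore exactly the kind of argument the authors have in mind, and your step~(i), that vertices disjoint from the self-folded-triangle subquiver supporting $\Phi_{c_+}$ have unchanged tropical evaluations, is sound.

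However, step~(ii) of your plan is built on an incorrect reading of what actually changes between $\yrm^{(6)}$ and $\yrm^{(5)}$. You claim that telescoping the waves of $\Phi_3^{-1}$ produces ``the factor $\prod_{r=k+1}^{n+1-k} v_{r,k}^{(a_n(r)+2,a_n(r)+1)}$ that distinguishes $\yrm^{(5)}_{l_{k,k}}$ from $\yrm^{(6)}_{l_{k,k}}$.'' But comparing the two Lemmas shows that $\yrm^{(5)}_{l_{k,k}}=\yrm^{(6)}_{l_{k,k}}$ for every $k>1$: the displayed formula in Lemma~\ref{lem:5-left-9} is identical to the $j=k>1$ case of $\yrm^{(6)}_{l_{k,j}}$ in Lemma~\ref{lem:6-left-9}. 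The only $l_{k,k}$ that actually changes is $l_{1,1}$, where the correction factor is $v_{[1,1|n+1,0]}/v_{[1,1|n,1]}$ (namely $\prod_{r=2}^{n} v_{n,r}$), not the product you name. Likewise $\yrm^{(5)}_{h_n}=\yrm^{(6)}_{h_n}$, and only the $h_k$ with $k<n$, together with $c_1$, acquire new expressions. Since your telescoping calculation is meant to reproduce a factor that in fact is absent for $k>1$, the plan as written would not land on the stated formulas; the delta you need to account for is concentrated at $l_{1,1}$, $c_1$, and $h_{k<n}$ only.

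Two further points. First, you have the decomposition $\Phi_{c_+}^{-1}=(\Phi_{c_+}^{(0)})^{-1}\circ\Phi_3^{-1}$, so $\Phi_3^{-1}$ must be applied \emph{first}, yet your plan has ``the inverse of the preliminary wave $\Phi_{c_+}^{(0)}$ followed by $\Phi_3^{-1}$''---the opposite order, which matters because the intermediate tropical evaluations differ. Second, the applicability of Lemma~\ref{tropical-chain} rests on the tropical-sign hypothesis $\sigma(\yrm_{i_r})=1$ at every intermediate step; you rightly identify this as the technical crux, but the justification given---that the participating variables ``are products of the positive quantities $v_{k,j}^{(a,b)}$''---is not established, and indeed several of the $\yrm^{(6)}$ formulas in Lemma~\ref{lem:6-left-9} involve negative exponents. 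So the sign verification, and the identification of which vertices are actually hit by $\Phi_{c_+}^{-1}$ relative to $Q_9$ (you list all $c_k$ and $h_k$, but only $c_1$ and $h_{k<n}$ in fact move), both need to be done explicitly before the plan becomes a proof.
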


\begin{lemma}
\label{lem:4-left-9}
Images $\yrm_\ell^{(4)} = \bs\mu^t_{4\,\circlearrowleft\,9}(\yrm_\ell)$ of the tropical variables $\yrm_\ell$ of $Q^n_9$ under the tropical cluster transformation $\bs\mu^t_{4\,\circlearrowleft\,9}$ may be written as follows. For $1 \le k \le n$ we have
$$
\yrm^{(4)}_{r_k} = \yrm_{r_k} \cdot v_{[k,1|k,k]},
\qquad
\yrm^{(4)}_{l_k} = \yrm_{l_k} \cdot \prod_{j=1}^{k-1}\hr{v_{k-1,j}^j v_{k,j}^{(n+a_n(k)-j,n+a_n(k)-1-j)}}.
$$
Then
$$
\yrm^{(4)}_{c_k} =
\begin{cases}
v_{[k,k|k+1,k]} &\text{if $k \le n$,} \\
v_{[n,2n+1-k|n+1,2n-k]}^{-1} &\text{if $k > n$}
\end{cases}
$$
and
$$
\yrm^{(4)}_{d_k} = \yrm_{d_k}.
$$
Finally for all $1 \le j <k \le n$ we have
\begin{align*}
\yrm^{(4)}_{l_{k,j}} &= v_{[k-1,j|k,j-1]}^{-1} v_{k,j}^{(-a_n(k)-1,-a_n(k))}, \\
\yrm^{(4)}_{r_{k,j}} &= v_{k,j}^{(a_n(k)+1,a_n(k))} v_{[k,j+1|k+1,j]}^{-1}.
\end{align*}
\end{lemma}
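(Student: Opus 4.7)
The plan is to derive this lemma from Lemma~\ref{lem:5-left-9} by composing the tropical transformations, writing
$$
\bs\mu^t_{4\,\circlearrowleft\,9} = \bs\mu^t_{4\,\circlearrowleft\,5} \circ \bs\mu^t_{5\,\circlearrowleft\,9},
$$
so that it suffices to apply the single step $\bs\mu^t_{4\,\circlearrowleft\,5}$ to the output of Lemma~\ref{lem:5-left-9}. By the construction of the quivers $Q^n_k$ in Section~\ref{subsec:refined-umbral}, the transformation $\bs\mu_{5\,\circlearrowright\,4}$ is precisely the Fock--Goncharov flip $F_{e_+}$ inside the triangulation $\tri'$. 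Hence it factors as the commuting composition
$$
\bs\mu_{5\,\circlearrowright\,4} \;=\; \mu_{d;n}\circ \mu_{d;n-1}\circ\cdots\circ \mu_{d;1},
$$
where each wave $\mu_{d;k}$ is the composite of $k(n-k)$ commuting mutations at the nodes labelled $R(k)$ in formula~\eqref{eq:mu-dk}. All of these mutations take place away from the frozen vertices $h_k, d_k$ and away from the isolating-cylinder vertices $s_{k,j}, t_{k,j}, r_k, l_k$ that appear in $Q_9$. Thus the only tropical variables that are affected are those labelled by $c_k$, $l_{k,j}$, $r_{k,j}$, together with $l_{k,k}$, which explains why $\yrm^{(4)}_\ell=\yrm^{(5)}_\ell$ for all remaining generators.

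For the nodes internal to the flipped quadrilateral, the plan is to apply Lemma~\ref{tropical-chain} wave by wave. At the start we feed in the expressions from Lemma~\ref{lem:5-left-9}, and at each wave the relevant chain condition on outward arrows is verified by direct inspection of the two-triangle quiver; the tropical signs at each stage coincide with those of the initial seed, so the linear mutation rule applies and the monomials evolve by Lemma~\ref{tropical-chain} exactly as in a standard Fock--Goncharov flip. The expected outcome is that the prefactor $\prod_{r=k+1}^{n+1-k}v_{r,k}^{(a_n(r)+2,a_n(r)+1)}$ appearing in $\yrm^{(5)}_{l_{k,k}}$ gets cancelled and redistributed across the $\yrm^{(4)}_{l_{k,j}}$, $\yrm^{(4)}_{r_{k,j}}$, and that the long product formula for $\yrm^{(5)}_{c_1}$ telescopes into the short expressions $\yrm^{(4)}_{c_k}=v_{[k,k|k+1,k]}$, $\yrm^{(4)}_{c_k}=v_{[n,2n+1-k|n+1,2n-k]}^{-1}$ stated in the lemma. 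A parallel telescoping is expected for the $\yrm^{(4)}_{h_k}$.

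Once the formulas have been derived in a candidate form, the final step is a verification: one checks that the resulting expressions, together with the unchanged ones for $r_k,l_k,d_k$, coincide with those in the statement. The calculation is a direct combinatorial exercise of the same nature as Lemma~\ref{lem:Baxter-trop-var} and Lemmas~\ref{lem:8-left-9}--\ref{lem:5-left-9}; the main obstacle is purely bookkeeping, namely organizing the cascade of monomials produced by the $n$ waves of mutations so that the known telescoping identities for the products $v_{[k_1,j_1|k_2,j_2]}$ can be invoked cleanly at every stage. No conceptual new input is required, which is why the proof is omitted in the body of the paper in the same spirit as the preceding lemmas.
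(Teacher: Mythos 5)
The high-level strategy — factor $\bs\mu^t_{4\,\circlearrowleft\,9}$ as $\bs\mu^t_{4\,\circlearrowleft\,5}\circ\bs\mu^t_{5\,\circlearrowleft\,9}$, identify $\bs\mu_{4\,\circlearrowleft\,5}$ with the Fock--Goncharov flip $\Qc(F_{e_+})$, and push the output of Lemma~\ref{lem:5-left-9} through the tropicalization of that single flip — is the natural one, and it is what any careful proof of this Lemma would do. (The paper itself omits these proofs, declaring them "combinatorial exercises", so there is no paper argument to compare against; the student's plan is the sensible reconstruction.)

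However, the proposal contains a concrete error in identifying which tropical variables survive the flip unchanged. You assert that "the only tropical variables that are affected are those labelled by $c_k$, $l_{k,j}$, $r_{k,j}$, together with $l_{k,k}$, which explains why $\yrm^{(4)}_\ell=\yrm^{(5)}_\ell$ for all remaining generators." This is false for the $l_k$ variables. Reading off Lemma~\ref{lem:5-left-9} (which inherits the $l_k$ formula from Lemma~\ref{lem:6-left-9}) one has $\yrm^{(5)}_{l_k} = \yrm_{l_k}\cdot\prod_{j=1}^{k-1}\bigl(v_{k-1,j}^j\,v_{k,j}^{(k+a_n(k)-j,\,k+a_n(k)-1-j)}\bigr)$, whereas the target Lemma states $\yrm^{(4)}_{l_k} = \yrm_{l_k}\cdot\prod_{j=1}^{k-1}\bigl(v_{k-1,j}^j\,v_{k,j}^{(n+a_n(k)-j,\,n+a_n(k)-1-j)}\bigr)$; the exponents on $v_{k,j}$ differ whenever $k<n$. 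The point is that $l_k$ (and in general boundary nodes of the flipped quadrilateral) are adjacent to the mutated vertices, and under the tropical mutation rule the tropical variable at a node changes whenever a neighbouring node is mutated with the right sign, not merely when the node itself is mutated. The fact that $r_k$ happens to be untouched while $l_k$ is not is exactly the kind of asymmetry that makes the bookkeeping delicate. Relatedly, you both assert $\yrm^{(4)}_{h_k}=\yrm^{(5)}_{h_k}$ (via the "all remaining generators" claim) and expect "a parallel telescoping" for $\yrm^{(4)}_{h_k}$; these contradict each other.

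A second, lesser issue: you invoke Lemma~\ref{tropical-chain} to control the cascade, but the hypothesis of that Lemma — that each mutated vertex $i_r$ has outward arrows only to $i_{r-1}$ and $i_{r+1}$ — describes a one-dimensional chain, whereas the flip factorization $\mu_d=\mu_{d;n}\cdots\mu_{d;1}$ proceeds in two-dimensional "waves" of $k(n-k)$ commuting mutations, and the internal vertices generically have more than two outward arrows at the time they are mutated. So Lemma~\ref{tropical-chain} does not apply directly; one must either verify an appropriate wave-adapted analogue or simply perform the tropical chase directly (checking sign-coherence wave by wave), which is what the paper intends by "combinatorial exercise... similar in spirit to Lemma~\ref{lem:Baxter-trop-var}". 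Fixing these two points would turn your proposal into a valid sketch, but as written it is not one.
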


\begin{lemma}
\label{lem:3-left-9}
Images $\yrm_\ell^{(3)} = \bs\mu^t_{3\,\circlearrowleft\,9}(\yrm_\ell)$ of the tropical variables $\yrm_\ell$ of $Q^n_9$ under the tropical cluster transformation $\bs\mu^t_{3\,\circlearrowleft\,9}$ may be written as follows. For $1 \le k \le n$ we have
$$
\yrm^{(3)}_{d_{n+1-k}} = \yrm_{d_{n+1-k}} \cdot v_{[k,k|n+1,k]} \prod_{j=k+1}^{n} v_{j,k}^{(-a_n(j)-1,-a_n(j))}.
$$
The remainder of the formulas coincide with those in the previous Lemma.
\end{lemma}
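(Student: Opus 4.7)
The plan is to derive this lemma by composing a single elementary flip on top of the result of Lemma~\ref{lem:4-left-9}. Recall that $Q_{c_+;e_+;4} = Q_{F_{e_+}(\tri')}$ and $Q_{c_+;e_+;3} = Q_{F_{e_+^1}F_{e_+}(\tri')}$, so the passage from index $4$ to index $3$ is realized by the cluster transformation $\Qc(F_{e_+^1})$ attached to a single flip at the edge $e_+^1$ in the isolating cylinder. Using the composition law
$$
\bs\mu^t_{3\,\circlearrowleft\,9} = \bs\mu^t_{3\,\circlearrowleft\,4}\circ\bs\mu^t_{4\,\circlearrowleft\,9},
$$
the task reduces to applying $\bs\mu^t_{3\,\circlearrowleft\,4}$ to the tropical variables already computed in Lemma~\ref{lem:4-left-9}.

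First I would unwind $\bs\mu_{3\,\circlearrowleft\,4}$ into the actual sequence of mutations. Comparing Figures~\ref{fig:Q-3} and~\ref{fig:Q-4} as quivers attached to the triangulations on the cut surface, the flip at $e_+^1$ localises (via the functor $\Qc$ of Section~\ref{subsec:Pt-Cl}) to the $n$ commuting mutations $\mu_{d;1},\ldots,\mu_{d;n}$ of Section~\ref{subsec:Pt-Cl}, occurring at the column of vertices labelled $d_{n+1-k}$, $1\le k\le n$, in the notation of Figures~\ref{fig:Q-3}--\ref{fig:Q-4}. All other vertices of $Q_4$ are untouched by this flip. Hence the only tropical variables that can change under $\bs\mu^t_{3\,\circlearrowleft\,4}$ are the $\yrm^{(4)}_{d_{n+1-k}}$, which agrees with the statement of the lemma.

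Second, I would compute the tropical mutation at each $d_{n+1-k}$ explicitly. By Lemma~\ref{lem:4-left-9}, every tropical variable $\yrm^{(4)}_f$ other than the $\yrm^{(4)}_{d_{n+1-k}}$ is a Laurent monomial whose positivity or negativity as a monomial in the $\yrm_\ell$ of $Q_9$ is immediate from the explicit product formulas; combined with the tropical sign-coherence~\cite{DWZ10,Nag10,Pla11}, this fixes the signs $\sigma(\yrm^{(4)}_f)$ appearing in the tropical exchange relation. Reading off the arrows out of $d_{n+1-k}$ in $Q_4$ (these go into the adjacent vertices $s_{k,k}$, $t_{k,k}$, $c_k$ and into the next $d$-vertex, exactly as in the standard staircase pattern visible in Figure~\ref{fig:Q-4}), the mutation rule gives
$$
\yrm^{(3)}_{d_{n+1-k}} = \yrm^{(4)}_{d_{n+1-k}}\cdot M_k,
$$
where $M_k$ is the positive monomial in the $\yrm^{(4)}_f$ dictated by the incoming arrows with positive tropical sign. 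Rewriting the factors $\yrm^{(4)}_{s_{k,k}}$, $\yrm^{(4)}_{t_{k,k}}$, $\yrm^{(4)}_{c_k}$ in terms of the building blocks $v_{k,j}^{(a,b)}$ and telescoping, $M_k$ collapses to
$$
M_k = v_{[k,k|n+1,k]}\prod_{j=k+1}^{n} v_{j,k}^{(-a_n(j)-1,-a_n(j))},
$$
which is precisely the factor in the statement.

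The main obstacle I anticipate is bookkeeping rather than conceptual: one must check carefully that at each $d_{n+1-k}$ every potentially contributing variable from Lemma~\ref{lem:4-left-9} indeed has the predicted tropical sign, and that the parity-dependent factor $a_n(j)$ appears with the correct exponent. To control this I would treat the cases $n-j$ even and $n-j$ odd separately, checking by induction on $k$ (starting from $k=n$ where $M_n=v_{n,n}$ only) that the telescoping of adjacent $v_{j,k}$ and $v_{j+1,k}$ contributions produces exactly the stated formula. Once the case analysis for the $d$-variables closes, the remaining identities
$$
\yrm^{(3)}_f = \yrm^{(4)}_f \qquad\text{for } f \notin \{d_{n+1-k}\}_{k=1}^n
$$
are automatic from the observation that $d_{n+1-k}$ has no outgoing arrows to these vertices in $Q_4$, completing the proof.
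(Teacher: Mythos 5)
Your decomposition $\bs\mu^t_{3\circlearrowleft 9} = \bs\mu^t_{3\circlearrowleft 4}\circ\bs\mu^t_{4\circlearrowleft 9}$ is the right starting point, and your observation that only the $\yrm_{d_{n+1-k}}$ can change in that last step is correct. But you misidentify the transformation $\bs\mu_{3\circlearrowleft 4}$, and the error feeds directly into the formula you claim to derive. The passage $Q_4 = Q_{F_{e_+}(\tri')}\to Q_3 = Q_{F_{e_+^1}F_{e_+}(\tri')}$ is the flip $F_{e_+^1}$ performed on $F_{e_+}(\tri')$. In that triangulation, $e_+^1$ is one of the two non-tacked sides of the special triangle $\Delta_{c_+}=(c_+,e_+^1,e_+^2)$, which is untouched by the preliminary flip $F_{e_+}$. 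By the construction of the functor $\Qc$ in Section~\ref{subsec:Pt-Cl}, a flip at a side of a special triangle is \emph{not} realised by quiver mutations: it is realised by the quasi-permutation $\varsigma_{c_+}^{\pm1}$ of Lemma~\ref{lem:sigma-pm} and Corollary~\ref{cor:xi-shift-permut}, i.e.\ a tack shift. Moreover the $d$-vertices label frozen cluster directions attached to the tacked circle $c_+$, so one cannot mutate at them at all; and the $\mu_{d;k}$ you cite are each themselves products of many commuting mutations at \emph{mutable} vertices $(i,j)$ inside a quadrilateral, used only for a flip between two non-special triangles — they are not ``single commuting mutations at a column of $d$-vertices.''

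This is not a cosmetic mislabelling: the two recipes give different answers. Tropical mutation at a vertex $f$ sends $\yrm_f\mapsto\yrm_f^{-1}$, so your proposed computation would yield $\yrm^{(3)}_{d_{n+1-k}} = \yrm_{d_{n+1-k}}^{-1}\cdot(\text{monomial})$, since $\yrm^{(4)}_{d_{n+1-k}} = \yrm_{d_{n+1-k}}$ by Lemma~\ref{lem:4-left-9}. The lemma instead asserts $\yrm^{(3)}_{d_{n+1-k}} = \yrm_{d_{n+1-k}}\cdot(\text{monomial})$, with $\yrm_{d_{n+1-k}}$ to the \emph{first} power — the signature of a quasi-permutation, not of a mutation. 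The correct derivation tropicalizes $\varsigma_{c_+}^{\pm1}(e_{(c_+,i)}) = e'_{\varphi((c_+,i))} \mp \sum_{f \in S_i^{\pm}(c_+)} \dot e'_{\varphi(f)}$ from~\eqref{eq:sigma-pm}, reading the swept regions $S_i^{\pm}(c_+)$ off the bicolored graph of $F_{e_+}(\tri')$; the resulting list of faces is what telescopes to the factor $v_{[k,k|n+1,k]}\prod_{j=k+1}^n v_{j,k}^{(-a_n(j)-1,-a_n(j))}$. Your later concerns about sign-coherence and the parity behaviour of $a_n(j)$ are legitimate, but they cannot be addressed until the transformation itself is applied correctly.
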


\begin{lemma}
\label{lem:2-left-9}
Images $\yrm_\ell^{(2)} = \bs\mu^t_{2\,\circlearrowleft\,9}(\yrm_\ell)$ of the tropical variables $\yrm_\ell$ of $Q^n_9$ under the tropical cluster transformation $\bs\mu^t_{2\,\circlearrowleft\,9}$ may be written as follows. For $1 \le k \le n$ we have
$$
\yrm^{(2)}_{d_k} = \yrm_{d_k},
\qquad
\yrm^{(2)}_{l_k} = \yrm_{l_k} \cdot v_{[k,0|k,k-1]},
\qquad
\yrm^{(2)}_{r_k} = \yrm_{r_k} \cdot \prod_{j=1}^{k-1} v_{k,j}^{(a_n(k)+2,a_n(k)+1)} \prod_{j=0}^{k-1} v_{k+1,j}^{k+1-j}.
$$
Then
$$
\yrm^{(2)}_{c_k} =
\begin{cases}
v_{[k,1|k+1,1]}^{-1} &\text{if $k \le (n+1)/2$,} \\
v_{[k,2k-n-1|k+1,2k-n-1]} &\text{if $k > (n+1)/2$.}
\end{cases}
$$
and
$$
\yrm^{(2)}_{h_k} =
\begin{cases}
v_{[n,k|n+1,k-1]}^{-1} &\text{if $k<n$,} \\
v_{[n,n-1|n+1,n]}^{-1} &\text{if $k=n$.}
\end{cases}
$$
Finally, we have
$$
\yrm^{(2)}_{l_{k,j}} =
\begin{cases}
v_{[k-1,k-j-1|k,k-j-1]} \cdot v_{k,k-j}^{(a_n(k),a_n(k)-1)} &\text{if} \;\; j<\min(k-1,n+1-k), \\
v_{k,1}^{(a_n(k),a_n(k)-1)} &\text{if} \;\; j = k-1 \le n+1-k, \\
\prod_{r=1}^{k-j-1} v_{[k-1,r|k,r]} \cdot v_{[k,1|k,k-j]}^{(a_n(k),a_n(k)-1)} &\text{if} \;\; j = n+1-k < k-1, \\
v_{k,j+k-n-1}^{(-a_n(k),1-a_n(k))} &\text{if} \;\; j = k-1 > n+1-k, \\
v_{[k-1,j+k-n-1|k,j+k-n-2]}^{-1} \cdot v_{k,k+j-n-1}^{(-a_n(k)-1,-a_n(k))} &\text{if} \;\; n+1-k < j < k-1,
\end{cases}
$$
and
$$
\yrm^{(2)}_{r_{k,j}} =
\begin{cases}
v_{k,k-j}^{(-a_n(k),1-a_n(k))} \cdot v_{[k,k-j+1|k+1,k-j+1]}^{-1} &\text{if} \;\; j < n+1-k, \\
v_{[k,1|k,k-j]}^{(-a_n(k),1-a_n(k))} \cdot \prod_{r=1}^{k-j+1} v_{[k,r|k+1,r]}^{-1} &\text{if} \;\; j = n+1-k, \\
v_{k,k+j-n-1}^{(a_n(k)+1,a_n(k))} \cdot v_{[k,k+j-n|k+1,k+j-n-1]} &\text{if} \;\; j > n+1-k.
\end{cases}
$$
\end{lemma}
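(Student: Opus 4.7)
The plan is to deduce Lemma~\ref{lem:2-left-9} from Lemma~\ref{lem:3-left-9} by applying a single tropical flip. Concretely, unwinding the definitions in Section~\ref{subsec:refined-umbral}, the cluster transformations obey
$$
\bs\mu^t_{2\,\circlearrowleft\,9} = \bs\mu^t_{2\,\circlearrowleft\,3}\circ\bs\mu^t_{3\,\circlearrowleft\,9},
$$
where $\bs\mu_{2\,\circlearrowleft\,3}$ is the tropical flip at the unique edge $e_+^2$ of the $c_+$-isolating cylinder that distinguishes $Q_2$ from $Q_3$. So I would first import the formulas of Lemma~\ref{lem:3-left-9} verbatim, then apply the tropical $\Xc$-mutation rule from Section~\ref{subsec:trop-var}:
$$
\yrm'_i=\begin{cases} \yrm_k^{-1} & i=k,\\ \yrm_i\yrm_k^{[\sigma(\yrm_k)\eps_{ki}]_+} & i\neq k,\end{cases}
$$
at the vertex $k$ corresponding to $e_+^2$, against the reference quiver $Q_9$ used to define the tropical signs $\sigma$.

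The first concrete step is to identify the mutation vertex and its neighbors in $Q_3$, read off from Figure~\ref{fig:Q-3}. From Lemma~\ref{lem:3-left-9} one then reads off the monomials $\yrm_k^{(3)}$ and $\eps_{ki}^{(3)}$ required to execute the update, and the sign $\sigma(\yrm_k^{(3)})$ can be computed by checking whether the Laurent monomial in the $\yrm_\ell$ given by the formula in Lemma~\ref{lem:3-left-9} is positive or negative. The remaining step is bookkeeping: multiply each formula in Lemma~\ref{lem:3-left-9} by the appropriate power of $\yrm_k^{(3)}$, collect the result using the definition of $v_{[k_1,j_1|k_2,j_2]}$, and check that the outcome matches one of the five branches listed for $\yrm^{(2)}_{l_{k,j}}$, the three branches for $\yrm^{(2)}_{r_{k,j}}$, and the corresponding formulas for $\yrm^{(2)}_{c_k},\yrm^{(2)}_{h_k},\yrm^{(2)}_{l_k},\yrm^{(2)}_{r_k},\yrm^{(2)}_{d_k}$. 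All variables not adjacent to $k$ in $Q_3$ are unchanged, which accounts for the unchanged $\yrm^{(2)}_{d_k}$.

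The main obstacle will be the combinatorial bookkeeping: the answer splits into cases depending on the parities of $n-k$ (through $a_n(k)$), on the comparison $j$ versus $n+1-k$ and $k-1$, and on whether $k\le (n+1)/2$. Each branch corresponds to a different region of the $(k,j)$ lattice in which the incidence pattern of $k$ in $Q_3$ has a different shape, and in which the collapsing and redistribution of the blocks $v_{[\,\cdot\,|\,\cdot\,]}$ occurs differently. I would organize the verification by first handling the extreme vertices ($c_k$, $h_k$, the top/bottom Toda rows), which involve only one or two neighbors of $k$, and then treating the bulk cases for $l_{k,j}$ and $r_{k,j}$ by an induction on $|j-(n+1-k)|$, using the identity $v_{[k_1,j_1|k_2,j_2]} = v_{[k_1,j_1|k_1,k_1]}\cdot v_{[k_1+1,0|k_2,j_2]}$ repeatedly to convert the intermediate expressions into the normal forms appearing in the statement. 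The resulting proof is long but mechanical, and, as the authors note, fits the pattern of Lemma~\ref{lem:Baxter-trop-var}: it is a repeated application of the tropical mutation rule together with Theorem~\ref{trop-criterion}.
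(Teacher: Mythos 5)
Your factorization $\bs\mu^t_{2\,\circlearrowleft\,9} = \bs\mu^t_{2\,\circlearrowleft\,3}\circ\bs\mu^t_{3\,\circlearrowleft\,9}$ is valid, and importing Lemma~\ref{lem:3-left-9} as the intermediate data is the right starting point; the paper omits these proofs entirely, remarking only that they are combinatorial exercises in the spirit of Lemma~\ref{lem:Baxter-trop-var}, which is consistent with your method. However, the description you give of the step $\bs\mu_{2\,\circlearrowleft\,3}$ contains a concrete error that breaks the calculation for $n>1$. You treat $F_{e_+^2}$ as a single tropical mutation ``at the vertex $k$ corresponding to $e_+^2$''. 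But $Q_2 = Q_{F_{C_+;e_+}(\tri')}$ and $Q_3 = Q_{F_{e_+^1}F_{e_+}(\tri')}$ differ by the \emph{flip} $F_{e_+^2}$ of an ideal triangulation, and in the rank-$n$ Fock--Goncharov framework the functor $\Qc$ sends a generic flip to the composite $\mu_{d;n}\cdots\mu_{d;1}$ of formula~\eqref{eq:mu-dk}, a sequence of $\Theta(n^3)$ square moves followed by a permutation, not a single mutation. Applying the tropical mutation rule once at one vertex therefore does not compute $\bs\mu^t_{2\,\circlearrowleft\,3}$.

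This error propagates to your final sentence, where you assert that $\yrm^{(2)}_{d_k}=\yrm_{d_k}$ follows because $d_k$ is ``not adjacent to $k$'' and hence untouched. Comparing the two statements directly: Lemma~\ref{lem:3-left-9} gives $\yrm^{(3)}_{d_{n+1-k}} = \yrm_{d_{n+1-k}}\cdot v_{[k,k|n+1,k]}\prod_{j=k+1}^{n}v_{j,k}^{(-a_n(j)-1,-a_n(j))}$, which is generically not equal to $\yrm_{d_{n+1-k}}$, whereas $\yrm^{(2)}_{d_k}=\yrm_{d_k}$. So the $\yrm_{d_\bullet}$ do change under $\bs\mu^t_{2\,\circlearrowleft\,3}$ (they revert to their initial values), which is only consistent with the flip touching vertices adjacent to the $d_\bullet$ nodes. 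Your high-level plan is fine, but the step $Q_3\to Q_2$ must be carried out as the full mutation sequence of the flip, tracking the $\eps$-matrix and the sign $\sigma$ relative to $Q_9$ after each constituent square move, not as a one-shot update.
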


\begin{lemma}
\label{lem:1-left-9}
Images $\yrm_\ell^{(1)} = \bs\mu^t_{1\,\circlearrowleft\,9}(\yrm_\ell)$ of the tropical variables $\yrm_\ell$ of $Q^n_9$ under the tropical cluster transformation $\bs\mu^t_{1\,\circlearrowleft\,9}$ may be written as follows. For $1 \le k \le n$ we have
$$
\yrm^{(1)}_{h_k} = v_{n,n-1} \cdot \yrm_{h_k}.
$$
We also have
$$
\yrm^{(1)}_{c_n} = v_{[n,n-1|n,n]}
\qquad\text{and}\qquad
\yrm^{(1)}_{r_{n,1}} = v_{[n,1|n,n-1]}^{(-n,1-n)} v_{n,n}^{-1} \cdot \prod_{r=1}^{n} v_{[n,r|n+1,r]}^{-1}.
$$
Finally, for $2 \le k \le n-1$ we have
$$
\yrm^{(1)}_{r_{n,k}} = v_{n,k-1}^{(n,n-1)}.
$$
The remainder of the formulas coincide with those in the previous Lemma.
\end{lemma}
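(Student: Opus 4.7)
The plan is to derive the formulas in Lemma~\ref{lem:1-left-9} from those in Lemma~\ref{lem:2-left-9} by applying the cluster transformation $\bs\mu_{1\,\circlearrowleft\,2}$. Since $Q_{c_+;e_+;2} = Q_{F_{C_+;e_+}(\tri')}$ and $Q_{c_+;e_+;1} = Q_{F_{C_+;e_+}(\tri');c_+}$, this transformation coincides by definition with $\Phi_{c_+} = \Phi_3 \circ \Phi_{c_+}^{(0)}$ from Section~\ref{sec:isolating-coords}, applied to the isolating cylinder for $c_+$ in $F_{C_+;e_+}(\tri')$. Accordingly, I would factor $\Phi_{c_+}$ into its explicit sequence of quiver mutations $\hat\mu_f$ (with the accompanying repositioning/relabeling of vertices spelled out there), and push the tropical variables of Lemma~\ref{lem:2-left-9} through this sequence one factor at a time.

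First I would pin down the local structure: the mutations of $\Phi_{c_+}^{(0)}$ act at the nodes in the diagonal $(i,i)$ for $1 \le i \le n-1$ of the subquiver $\Qsf(+)$, and the waves $\Phi_3^{(j,k)}$ mutate finite increasing chains of nodes that, after the preceding wave, constitute the rows of the $c^*$-part of the braid-word quiver. Using Lemma~\ref{lem:2-left-9}, I would read off the initial tropical signs (with respect to the reference $Q_9$) of all variables involved: in particular the variables $\yrm^{(2)}_{c_k}$, $\yrm^{(2)}_{h_k}$, $\yrm^{(2)}_{l_{k,j}}$, $\yrm^{(2)}_{r_{k,j}}$ are pure (positive or negative) Laurent monomials in the $\yrm_{s_{k,j}},\yrm_{t_{k,j}},\yrm_{c_k},\yrm_{h_k},\yrm_{d_k}$, so sign-coherence in the sense of Section~\ref{subsec:trop-var} is transparent at each step. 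This allows me to invoke Lemma~\ref{tropical-chain} on each of the chains of mutations comprising the waves $\Phi_3^{(j)}$, turning the propagation of tropical variables along an entire wave into a single, clean substitution rule rather than having to iterate the mutation formula by hand.

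The execution then proceeds by two interlocking inductions: an outer induction on the index $j$ of the wave $\Phi_3^{(j)}$, and an inner induction on the step $k$ within that wave. At each step I only need to verify three things: that the mutated variable has the predicted sign (so Lemma~\ref{tropical-chain} applies), that the chain-end variable update exactly cancels the $v_{k,j}^{(\ast,\ast)}$-monomial contributions accumulated in the previous lemma, and that all variables outside the mutated chain are genuinely unchanged. The ``chain-end'' variables at the boundary with the rest of the quiver are precisely the $\yrm_{h_k}$, $\yrm_{c_n}$, and $\yrm_{r_{n,k}}$ of the statement, and it is these that inherit the nontrivial correction factors $v_{n,n-1}$, $v_{[n,n-1|n,n]}$, etc.

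The main obstacle is purely bookkeeping: the quivers $Q_{c_+;e_+;k}$ are large and contain several interacting subquivers ($Q_{\overline{w}_0,c^*}^{n-1}$, $Q_{\mathrm{Toda}}^{n-1}$, plus the frozen boundary columns $r_k$, $l_k$, $d_k$, $h_k$), and at each mutation one must correctly account for the arrows between the $(c,i)$-vertices and the $f_{i}^{\pm}$-faces using~\eqref{eq:e-to-e''}, which is \emph{not} the standard mutation rule. In particular, the distinction between $\hat\mu_f$ and $\mu_f$ is what makes the frozen variables $\yrm_{c_k}$ and the $h_k$'s interact non-trivially through the quasi-permutation $\varsigma_f$ of~\eqref{eq:sigma-f}; keeping track of this contribution through the full wave is the most delicate part of the argument. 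Once this is handled correctly, the announced formulas follow by direct comparison with the output of Lemma~\ref{lem:2-left-9} specialized at the terminal step of $\Phi_3$.
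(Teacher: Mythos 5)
The paper does not actually supply a written proof of Lemma~\ref{lem:1-left-9}: in the proof of Proposition~\ref{prop:umbral-factorization} the authors state that Lemmas~\ref{lem:8-left-9}--\ref{lem:13-left-9} (of which this is one) ``are combinatorial exercises, similar in spirit to that of Lemma~\ref{lem:Baxter-trop-var}, and we omit them for brevity.'' Your proposal --- compose $\bs\mu_{1\,\circlearrowleft\,9} = \bs\mu_{1\,\circlearrowleft\,2}\circ \bs\mu_{2\,\circlearrowleft\,9}$, identify $\bs\mu_{1\,\circlearrowleft\,2}$ with $\Phi_{c_+} = \Phi_3\circ\Phi_{c_+}^{(0)}$ acting inside the isolating cylinder, and then push the tropical monomials of Lemma~\ref{lem:2-left-9} through the factorized mutation sequence while tracking sign--coherence and applying Lemma~\ref{tropical-chain} to the chains --- is exactly the computation the authors had in mind when they called it a ``combinatorial exercise similar in spirit to that of Lemma~\ref{lem:Baxter-trop-var}.'' You also correctly flag the subtlety most likely to trip up the bookkeeping, namely that $\Phi_{c_+}^{(0)}$ and $\Phi_3$ are built from $\hat\mu_f$ rather than $\mu_f$, so the quasi-permutation correction $\varsigma_f$ of~\eqref{eq:sigma-f} has to be carried along and is what produces the non-trivial factors on the frozen/boundary variables $\yrm_{c_n}$, $\yrm_{h_k}$, $\yrm_{r_{n,k}}$. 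One small caveat: Lemma~\ref{tropical-chain} has a hypothesis on the outward arrows at the intermediate vertices of the chain, which is automatic for Baxter-type sequences (cf.\ Remark~\ref{rem:Baxter-arrows}) but should be explicitly checked for the columns mutated in $\Phi_3^{(j,k)}$; where it fails you would fall back on iterating the one-step tropical rule by hand, which does not change the overall plan. Since you only sketch the computation rather than complete it, your argument is not a finished proof, but as a blueprint it matches the intended approach and would yield the stated formulas.
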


\begin{lemma}
\label{lem:13-left-9}
Images $\yrm_\ell^{(13)} = \bs\mu^t_{13\,\circlearrowleft\,9}(\yrm_\ell)$ of the tropical variables $\yrm_\ell$ of $Q^n_9$ under the tropical cluster transformation $\bs\mu^t_{13\,\circlearrowleft\,9}$ may be written as follows. Let
$$
u_{n,k} = v_{[k,2k-n|k,k]}^{n-2k-1} \cdot \prod_{r=1}^{2k-n} v_{k+1,r}^{r+n-2k-1}.
$$
Then for $k > (n+2)/2$ we have
\begin{align*}
\yrm^{(13)}_{l_{k,n+1-k}} &= 
\begin{cases}
v_{[k-1,2k-n-2|k,2k-n-2]} \cdot v_{[k,2k-n-2|k,2k-n-1]}^{(k-1,k-2)}, &\text{if $n-k$ is even,} \\
v_{[k-1,2k-n-2|k,0]} \cdot \prod_{r=1}^{k-2} v_{k,r}^{(k+a_n(k)-r-2,k+a_n(k)-r-3)}, &\text{if $n-k$ is odd.}
\end{cases} \\
\yrm^{(13)}_{r_{k,n+1-k}} &= 
\begin{cases}
\prod_{r=1}^{2k-n-1} v_{k,r}^{(-r-k,-r-k+1)} \cdot u_{n,k}, &\text{if $n-k$ is even,} \\
\prod_{r=2k-n-2}^{2k-n-1} v_{k,r}^{(-r-2,-r-1)} \cdot u_{n,k}, &\text{if $n-k$ is odd,}
\end{cases}
\end{align*}
while for $k = (n+2)/2$
\begin{align*}
\yrm^{(13)}_{r_{k,n+1-k}} &= v_{k,1}^{(-a_n(k)-2,-a_n(k)-1)} \cdot u_{n,k}, \\
\yrm^{(13)}_{l_{k,n+1-k}} &=  v_{k,1}^{(a_n(k),a_n(k)-1)}.
\end{align*}
Then for $k > (n+1)/2$ we have
$$
\yrm^{(13)}_{c_k} = v_{[k,2k-n-1|k,k]}.
$$
Finally, for $1 \le k < n/2$ we have
\begin{align*}
\yrm^{(13)}_{s_{n+1-2k,j}} &=
\begin{cases}
v_{n+1-k,j}^{(n+1-k,n-k)} &\text{if $k$ is odd,} \\
v_{n+1-k,j}^{(n-k,n-k-1)} &\text{if $k$ is even,}
\end{cases} \\
\yrm^{(13)}_{t_{n+1-2k,j}} &=
\begin{cases}
v_{n+1-k,j}^{(k-n,k-n+1)} &\text{if $k$ is odd,} \\
v_{n+1-k,j}^{(k-n+1,k-n+2)} &\text{if $k$ is even.}
\end{cases} \\
\end{align*}
The remainder of the formulas coincide with those in the previous Lemma.
\end{lemma}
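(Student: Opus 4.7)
The plan is to build on Lemma~\ref{lem:1-left-9} and close the clock by composing with the remaining arc. More precisely, since the clock construction identifies $\bs\mu_{13\,\circlearrowleft\,1}$ with $\bs\mu_{c;e;12\,\circlearrowright\,11}$, I would write
\[
\bs\mu^t_{13\,\circlearrowleft\,9} = \bs\mu^t_{c;e;12\,\circlearrowright\,11}\circ \bs\mu^t_{1\,\circlearrowleft\,9}
\]
and apply the right-hand factor to the formulas already obtained for $\yrm^{(1)}_\ell$.  By the definition of $\bs\mu_{c;e;12\,\circlearrowright\,11}$ in Section~\ref{subsec:refined-umbral}, this factor is an alternating composite of $\overline\Phi_2$ and $\Phi_2$ transformations applied to a descending chain of subquivers $Q^{n-1}_{\overline w_0, c^*},\ Q^{n-2}_{\overline w_0, c},\ Q^{n-3}_{\overline w_0, c^*},\dots$, and it acts trivially on all nodes outside of the isolating cylinder.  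Hence only the $\yrm_{l_{k,j}}$, $\yrm_{r_{k,j}}$, $\yrm_{c_k}$, $\yrm_{s_{k,j}}$, $\yrm_{t_{k,j}}$ tropical variables can change relative to their $\yrm^{(1)}$-expressions, which confines the entire calculation to the vertices appearing in the statement of the Lemma.

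The computation then proceeds as a direct, albeit bookkeeping-heavy, application of the tropical mutation rule
\[
\yrm'_i = \begin{cases} \yrm_k^{-1} & i=k \\ \yrm_i \yrm_k^{[\sigma(\yrm_k)\eps_{ki}]_+} & i\ne k \end{cases}
\]
step by step through the nested shuffle/braid/merge factorization of each $\overline\Phi_2$ and $\Phi_2$ wave.  To control the expressions I would work throughout with the abbreviations $v_{k,j}^{(a,b)}$, $v_{[k_1,j_1|k_2,j_2]}$ and $u_{n,k}$ from the statement, and verify at each intermediate stage that all tropical signs $\sigma(\yrm_k)$ are positive --- this is the hypothesis allowing the mutation rule to simplify --- so that Lemma~\ref{tropical-chain} can be invoked to propagate factors through entire chains of successive mutations in a single step.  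The mutation sequences in $\overline\Phi_2^{(j)}$ and $\Phi_2^{(j)}$ are precisely of the chain form demanded by that Lemma, which is the essential technical engine.

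Because the exponent function $a_n(k)$ depends on the parity of $n-k$, the formulas for $\yrm^{(13)}_{l_{k,n+1-k}}$ and $\yrm^{(13)}_{r_{k,n+1-k}}$ have to be verified separately in the even and odd cases, and similarly the formulas for $\yrm^{(13)}_{s_{n+1-2k,j}}$ and $\yrm^{(13)}_{t_{n+1-2k,j}}$ must be checked according to the parity of the index $k$.  The cases distinguished by the boundary $k = (n+2)/2$ come from the step at which the alternating chain of nested $\overline\Phi_2$/$\Phi_2$ moves terminates, at which point one factor is applied to a subquiver whose top row coincides with the relevant isolating row, producing a different leading contribution.  I would treat the generic case $k > (n+2)/2$ first by induction on the number of waves performed, extract a stable closed form in terms of $u_{n,k}$, and then verify the boundary formulas by inspection.

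The main obstacle is the combinatorial complexity of the composition rather than any new idea: even a single wave $\Phi_2^{(j)}$ produces cascading corrections, and tracking these through $\left\lfloor\frac{n-2}{2}\right\rfloor$ nested waves while simultaneously maintaining the $\yrm^{(1)}$-values as prefactors requires careful bookkeeping.  I expect the payoff to come from the observation that, modulo the parity bookkeeping, all intermediate corrections collapse into compact expressions built from the $v_{[\cdot|\cdot]}$ symbols thanks to the chain structure highlighted above.  Once the computation is organized this way, verifying that the endpoint matches the stated formulas becomes a termwise comparison and the proof concludes just as in Lemma~\ref{lem:Baxter-trop-var}.
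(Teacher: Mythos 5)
Your plan is correct and matches the strategy the paper itself intends: the result is obtained by extending Lemma~\ref{lem:1-left-9} through one further arc of the clock using the factorization $\bs\mu^t_{13\,\circlearrowleft\,9} = \bs\mu^t_{13\,\circlearrowleft\,1}\circ\bs\mu^t_{1\,\circlearrowleft\,9}$ together with the identification $\bs\mu_{13\,\circlearrowleft\,1} = \bs\mu_{c;e;12\,\circlearrowright\,11}$ and then chasing tropical variables through the nested $\overline\Phi_2/\Phi_2$ waves via Lemma~\ref{tropical-chain}. The paper omits exactly this verification as one of the combinatorial exercises referenced in the proof of Proposition~\ref{prop:umbral-factorization}, so your proposal is not merely consistent with the paper but is essentially the argument it leaves to the reader.
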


\begin{lemma}
\label{lem:10-right-9}
Images $\yrm_\ell^{(10)} = \bs\mu^t_{10\,\circlearrowright\,9}(\yrm_\ell)$ of the tropical variables $\yrm_\ell$ of $Q^n_9$ under the tropical cluster transformation $\bs\mu^t_{10\,\circlearrowleft\,9}$ may be written as follows. For $1 \le k \le n$ we have
$$
\yrm^{(10)}_{s_{k;j}} = \yrm_{s_{k;k-j}},
\qquad \yrm^{(10)}_{t_{k;j}} = \yrm_{t_{k;k-j}},
\qquad \yrm^{(10)}_{c_k} = v_{[k,1|k+1,k]}^{-1},
\qquad \yrm^{(10)}_{h_k} = v_{[n,1|n+1,k-1]}^{-1},
$$
and
$$
\yrm^{(10)}_{d_k} = \yrm_{d_k}.
$$
Then
\begin{align*}
\yrm^{(10)}_{l_k} &= 
\yrm_{l_k} \cdot \prod_{j=1}^{k-1} v_{[k-1,j|k,1]} \cdot
\begin{cases}
v_{[k,2|k,k-1]}^{k-1} &\text{if $n-k$ is even,} \\
\prod_{j=2}^{k-1} v_{k,j}^{(k+1-j,k-j)} &\text{if $n-k$ is odd,} \\
\end{cases} \\
\yrm^{(10)}_{r_k} &= 
\yrm_{r_k} \cdot \prod_{j=0}^{k} v_{k+1,j}^{k+1-j} \cdot
\begin{cases}
\yrm_{t_{k;1}}^{-1} \cdot v_{[k,1|k,k-1]}^{k+1} &\text{if $n-k$ is even,} \\
\prod_{j=1}^{k-1} v_{k,j}^{(j+2,j+1)} &\text{if $n-k$ is odd.} \\
\end{cases} \\
\end{align*}
\end{lemma}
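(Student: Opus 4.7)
The proof will proceed by a direct computation in the tropical semifield, in the same spirit as Lemmas~\ref{lem:Baxter-trop-var} and~\ref{lem:8-left-9}. I will exploit the factorization
$$
\bs\mu_{10\,\circlearrowright\,9} = \bs m_{10\,\circlearrowright\,9} \circ \prod_{j=1}^{n+1}\bs\mu^{-1}_{n-1,0}(h_j) \circ \prod_{k=1}^{n-1} \bs\mu_{k,k-1}^{-1},
$$
computing the effect of each factor independently and using that distinct Baxter blocks act on (almost) disjoint subquivers and hence commute. By Theorem~\ref{trop-criterion} it suffices to verify the final formulas on tropical variables.

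First, I will chase the tropical variables through $\prod_{k=1}^{n-1} \bs\mu_{k,k-1}^{-1}$. Each factor is the inverse of a bi-fundamental Baxter sequence on the subquiver spanned by the Toda blocks of ranks $k$ and $k-1$, so Lemma~\ref{lem:Baxter-trop-var} (specialised from $\bs\mu_{n,0}$ to $\bs\mu_{k,k-1}$ in the manner established in Section~\ref{subsec:bi-baxter}) produces exactly the reflection $\yrm_{s_{k;j}} \leftrightarrow \yrm_{s_{k;k-j}}$, $\yrm_{t_{k;j}} \leftrightarrow \yrm_{t_{k;k-j}}$ of the $s,t$-variables and replaces the central variable $\yrm_{c_k}$ by the inverse product of everything the Baxter sweep crossed, which after iteration over $k$ combines into the claimed expression $v_{[k,1|k+1,k]}^{-1}$. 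At this intermediate stage the $l_k,r_k,h_k,d_k$-variables are unchanged.

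Next I will apply $\prod_{j=1}^{n+1}\bs\mu^{-1}_{n-1,0}(h_j)$. These $n+1$ Baxter mutations pairwise commute since they act on disjoint rank-one extensions of $Q^{n-1}_{\mathrm{Toda}}$ by the vertex $h_j$, and each one acts by a single Baxter sweep of the type analysed in Remark~\ref{rem:bi-Baxter}. Applying Lemma~\ref{tropical-chain} along each such sweep produces the chain monomials $v_{[n,1|n+1,k-1]}^{-1}$ in the $h_k$-variables, and simultaneously absorbs into each $l_k$ and $r_k$ the product of Baxter-path variables whose final tally, after substituting the Step~1 values for the $s,t$-variables, gives exactly the displayed formulas for $\yrm^{(10)}_{l_k}$ and $\yrm^{(10)}_{r_k}$; the case distinction on the parity of $n-k$ arises from the two formulas for $a_n(k)$ and is already visible at $n=2,3$. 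The $d_k$ variables are untouched at this stage.

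Finally, $\bs m_{10\,\circlearrowright\,9}$ acts by definition as the identity on every cluster variable except the $Y_{d_j}$, and the relabelling condition $\bs m(Y_{d_j}^{(10)})=Y_{d_j}^{(9)}$ forces $\yrm^{(10)}_{d_k}=\yrm_{d_k}$. The main obstacle will be the bookkeeping in Step~2: the monomial contributions to $\yrm^{(10)}_{l_k}$ and $\yrm^{(10)}_{r_k}$ arise from summing, over the Baxter paths associated to each $h_j$, the tropical signs contributed when the path crosses boundary vertices identified with the outputs of Step~1. I will organise this by induction on the number of Baxter sweeps performed, taking as base case $n=2$ (which can be verified by direct tropical calculation, as in Lemmas~\ref{lem:8-left-9}--\ref{lem:6-left-9} for the other spokes of the clock) and using the compatibility between $\bs\mu_{k,k-1}$ and $\bs\mu_{n-1,0}$ for the inductive step.
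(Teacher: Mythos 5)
The paper does not actually spell out a proof of this lemma: Proposition~\ref{prop:umbral-factorization} says these tropical chases are ``combinatorial exercises, similar in spirit to that of Lemma~\ref{lem:Baxter-trop-var},'' and omits them. Your overall strategy---factor $\bs\mu_{10\,\circlearrowright\,9}$ according to its definition into the inverse bi-fundamental Baxter sequences, then the $n+1$ Baxter sweeps over $h_1,\dots,h_{n+1}$, then the quasi-permutation $\bs m_{10\circlearrowright9}$, tracking tropical variables through each---is exactly the sort of computation the paper intends, so the high-level approach is sound and consistent with the paper.

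That said, two of your justifying claims are not correct as stated and would need to be removed or repaired. First, you assert that the factors $\bs\mu_{k,k-1}^{-1}$ act on ``(almost) disjoint subquivers and hence commute.'' Consecutive factors $\bs\mu_{k,k-1}^{-1}$ and $\bs\mu_{k+1,k}^{-1}$ both involve the $Q^k_{\mathrm{Toda}}$ block, so they share mutable vertices; what matters for the computation is the specific \emph{order} in which they appear in the displayed product, not commutativity. Second, you claim the $n+1$ factors $\bs\mu^{-1}_{n-1,0}(h_j)$ ``pairwise commute since they act on disjoint rank-one extensions of $Q^{n-1}_{\mathrm{Toda}}$ by the vertex $h_j$.'' The extensions are not disjoint: they all share the full $Q^{n-1}_{\mathrm{Toda}}$ block, and each Baxter sweep mutates through that block and permutes its labels before the next one acts. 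The computation still goes through in the prescribed order, but neither commutativity argument is available, and any appeal to Lemma~\ref{tropical-chain} along a Baxter path would require explicit verification of its local hypotheses on outgoing arrows at each step, which you do not do. Finally, the proposal defers the essential bookkeeping---how the tropical contributions combine across sweeps to give the displayed parity-dependent monomials in $\yrm^{(10)}_{l_k},\yrm^{(10)}_{r_k}$---to an unspecified induction ``on the number of Baxter sweeps.'' Since that bookkeeping is precisely the content of the lemma, the proposal is closer to a plan than a proof: the plan is viable and the appeal to $\bs m_{10\circlearrowright9}$ for the $d_k$ is correct by definition, but the core computation still has to be carried out.
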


\begin{lemma}
\label{lem:11-right-9}
Images $\yrm_\ell^{(11)} = \bs\mu^t_{11\,\circlearrowright\,9}(\yrm_\ell)$ of the tropical variables $\yrm_\ell$ of $Q^n_9$ under the tropical cluster transformation $\bs\mu^t_{11\,\circlearrowleft\,9}$ may be written as follows. First, we have
$$
\yrm^{(11)}_{l_k} = \yrm_{l_k} \cdot \yrm_{[k,0|k,k-1]}, \qquad
\yrm^{(11)}_{r_k} = \yrm_{r_k} \cdot \prod_{j=1}^{k-1} \hr{\yrm_{s_{k;j}}^{a_n(k)+j+1} \yrm_{t_{k;j}}^{a_n(k)+j}} \cdot \prod_{j=0}^k v_{k+1,j}^{k+1-j}.
$$
Then,
$$
\yrm^{(11)}_{c_k} = v_{[k,1|k+1,1]}^{-1} \qquad\text{for}\qquad 1 \le k \le n-1
$$
and
$$
\yrm^{(11)}_{h_k} = v_{[n,1|n+1,k-1]}^{-1} \qquad\text{for}\qquad 1 \le k \le n+1.
$$
For $2 \le k \le n$ and $1 \le j \le k-1$ let us set
\begin{align*}
a_{k,j} &= 
\begin{cases}
v_{[k-1,k-j-1|k,k-j-1]} &\text{if} \quad j<k-1, \\
1 &\text{if} \quad j = k-1.
\end{cases} \\
b_{k,j} &= 
\begin{cases}
\yrm_{s_{k,k-j}}^{a_n(k)} \yrm_{t_{k,k-j}}^{a_n(k)-1} &\text{if} \quad k<n, \\
\yrm_{s_{n,n-j}}^{n-j} \yrm_{t_{n,n-j}}^{n-j-1} &\text{if} \quad k = n.
\end{cases}
\end{align*}
Then
$$
\yrm^{(11)}_{l_{k,j}} = a_{k,j} \cdot b_{k,j}, \qquad
\yrm^{(11)}_{r_{k,j}} = b^{-1}_{k,j} \cdot
\begin{cases}
v_{[k,k-j+1|k+1,k-j+1]}^{-1} &\text{if} \quad k < n, \\
v_{n,n-j}, &\text{if} \quad k = n.
\end{cases}
$$
Finally. for $1 \le k \le n$ we have
$$
\yrm^{(11)}_{d_k} = \yrm_{d_k}.
$$
\end{lemma}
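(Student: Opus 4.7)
The plan is to deduce this lemma from Lemma~\ref{lem:10-right-9} by applying the cluster transformation
$$\bs\mu_{11\,\circlearrowright\,10}=\bs\mu_{8\,\circlearrowright\,7}^{-1},$$
which by definition (see Section~\ref{subsec:refined-umbral}) is an alternating composition of transformations of type $\overline{\Phi}_2$ and $\Phi_2$ applied to nested subquivers $Q^{n-1}_{\overline{w}_0,c^*}, Q^{n-2}_{\overline{w}_0,c},\ldots$ sitting inside $Q_{10}$. By Theorem~\ref{trop-criterion} it suffices to verify the identity at the tropical level, so the computation reduces to chasing how each step of $\bs\mu_{8\,\circlearrowright\,7}^{-1}$ transforms the tropical variables $\yrm^{(10)}_\ell$ produced by Lemma~\ref{lem:10-right-9}.

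First I would analyze the signs of the relevant tropical variables in the intermediate quivers so that Lemma~\ref{tropical-chain} becomes applicable: indeed, each sub-step of $\overline{\Phi}_2$ and $\Phi_2$ (described in Section~\ref{subsec:phi2}) is a short ``line'' of mutations, braid moves, and a merge, which after tropicalization corresponds exactly to the chain-mutation pattern of Lemma~\ref{tropical-chain}. The key observation is that the tropical variables of $Q_{10}$ provided by Lemma~\ref{lem:10-right-9} are sign-coherent in the directions in which the mutations of $\bs\mu_{11\,\circlearrowright\,10}$ are performed, so at every step we can directly read off the multiplicative factors using the explicit formulas at the end of Section~\ref{subsec:trop-var}.

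Next I would organize the variables of $Q_9$ into the six families $(s_{k,j},t_{k,j})$, $c_k$, $h_k$, $d_k$, $(l_k,r_k)$, and $(l_{k,j},r_{k,j})$, and track each family separately. The frozen $d_k$ and the products $s_{k,j}^{\bullet}t_{k,j}^{\bullet}$ are easily seen to be preserved (their tropical images already appear in Lemma~\ref{lem:10-right-9} and no further mutation touches them). For the families $c_k$ and $h_k$ one obtains the claimed products $v_{[k,1|k+1,1]}^{-1}$ and $v_{[n,1|n+1,k-1]}^{-1}$ by iterating the single-step mutation rule on the output of Lemma~\ref{lem:10-right-9}; the resulting telescoping is identical to the one that appears in the proof of Lemma~\ref{lem:Baxter-trop-var}. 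For the mixed families $l_{k,j}$, $r_{k,j}$, $l_k$, $r_k$ one must distribute the contributions coming from the braid moves and separate the even/odd cases, which is the origin of the parity-dependent exponents $a_n(k)$ in the statement.

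The main obstacle will be the combinatorial bookkeeping of the exponents $a_n(k)$, $a_n(k)+1$, and the boundary cases $k=n$ and $j=k-1$: these arise from the ``shift'' factors in $\bs\mu_{8\,\circlearrowright\,7}^{-1}$ that intertwine the $\overline{\Phi}_2$ waves acting on alternating sub-alphabets. I expect to handle this by induction on the wave index, with the induction hypothesis being that after the first $r$ waves the tropical variables in the rows $\geq n-r+2$ of $Q^n_{11}$ already have their final form, while those below are still ``decorated'' by explicit correction factors which telescope against the next wave. Once this induction is set up, the remainder reduces to comparing two explicit subtraction-free monomials term by term, and the stated formulas match the output.
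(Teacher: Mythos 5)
Your proposal is correct and follows essentially the same route as the paper: the paper omits the proofs of Lemmas~\ref{lem:8-left-9}–\ref{lem:13-right-9} entirely, declaring them ``combinatorial exercises, similar in spirit to that of Lemma~\ref{lem:Baxter-trop-var},'' and your plan — compose the output of Lemma~\ref{lem:10-right-9} with the tropicalized $\bs\mu_{11\,\circlearrowright\,10}=\bs\mu_{8\,\circlearrowright\,7}^{-1}$, verify tropical sign-coherence at each column so Lemma~\ref{tropical-chain} applies, and induct along the alternating $\overline\Phi_2$/$\Phi_2$ waves — is precisely the kind of chase the paper has in mind. The only thing to be careful about, which you flag, is that the sign-coherence of the $\yrm^{(10)}_\ell$ in the directions mutated by $\bs\mu_{11\,\circlearrowright\,10}$ must actually be checked before invoking the chain lemma rather than asserted; this is where the boundary cases $j=k-1$, $k=n$ and the parity exponents $a_n(k)$ originate, so the check is genuinely necessary and not a formality.
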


\begin{lemma}
\label{lem:12-right-9}
Images $\yrm_\ell^{(12)} = \bs\mu^t_{12\,\circlearrowright\,9}(\yrm_\ell)$ of the tropical variables $\yrm_\ell$ of $Q^n_9$ under the tropical cluster transformation $\bs\mu^t_{12\,\circlearrowleft\,9}$ may be written as follows. Setting $l_{k;0} = l_k$, $r_{k;0} = r_k$ we have that variables $\yrm^{(12)}_{l_{k,j}}$, $\yrm^{(12)}_{r_{k,j}}$ are the same as in the previous Lemma if $k+j \le n$. Same applies to $\yrm^{(12)}_{c_k}$ if $1 \le k \le (n+1)/2$ and to $\yrm^{(12)}_{h_k}$ for all $k$. Then for $k$ such that $k > (n+1)/2$ we have
$$
\yrm^{(12)}_{c_k} = v_{[k,1|k+1,2k-n]}^{-1}.
$$
Furthermore, for $n/2+1<k \le n$ we have
$$
\yrm^{(12)}_{l_{k,n+1-k}} = 
\begin{cases}
\prod_{j=1}^{2k-2-n} v_{[k-1,j|k,2k-2-n]} \cdot v_{k,2k-1-n}^{(k-1,k-2)} &\text{if $n-k$ is even,} \\
\prod_{j=1}^{2k-2-n} v_{[k-1,j|k,j]} \cdot v_{[k,2|k,2k-1-n]}^{(1,0)} &\text{if $n-k$ is odd.} \\
\end{cases}
$$
and for $k=n/2+1$ we have
$$
\yrm^{(12)}_{l_{k,n+1-k}} = v_{k,1}^{(a_n(k),a_n(k)-1)}.
$$
Then
$$
\yrm^{(12)}_{r_{k,n+1-k}} = 
\begin{cases}
\prod_{j=1}^{2k-2-n} v_{k,j+1}^{(k-n-j,k+1-n-j)} &\text{if $n-k$ is even,} \\
v_{k,2k-n-1}^{(0,1)} &\text{if $n-k$ is odd.} \\
\end{cases}
$$
%Finally, for $(n+3)/2 \le k \le n$ and $1 \le j \le 2k-n-2$ we have
%\begin{align*}
%\yrm'_{s_{k,j}} &=
%\begin{cases}
%v_{k,2k-n-1-j}^{(n-k+1,n-k)} &\text{if $n-k$ is even,} \\
%v_{k,2k-n-1-j}^{(1,0)} &\text{if $n-k$ is odd.}
%\end{cases} \\
%\yrm'_{t_{k,j}} &=
%\begin{cases}
%v_{k,2k-n-1-j}^{(k-n,k-n+1)} &\text{if $n-k$ is even,} \\
%v_{k,2k-n-1-j}^{(0,1)} &\text{if $n-k$ is odd.}
%\end{cases}
%\end{align*}
For $1 \le k \le (n-1)/2$ and $1 \le j \le n-2k$ we have
\begin{align*}
\yrm^{(12)}_{s_{n+1-2k,j}} &=
\begin{cases}
v_{n+1-k,n+1-2k-j}^{(k,k-1)} &\text{if $k$ is odd,} \\
v_{n+1-k,n+1-2k-j}^{(1,0)} &\text{if $k$ is even.}
\end{cases} \\
\yrm^{(12)}_{t_{n+1-2k,j}} &=
\begin{cases}
v_{n+1-k,n+1-2k-j}^{(1-k,2-k)} &\text{if $k$ is odd,} \\
v_{n+1-k,n+1-2k-j}^{(0,1)} &\text{if $k$ is even.}
\end{cases} \\
\end{align*}
Finally. for $1 \le k \le n$ we have
$$
\yrm^{(12)}_{d_k} = \yrm_{d_k}.
$$
\end{lemma}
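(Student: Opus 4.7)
The plan is to obtain the formulas by composing the tropical cluster transformation computed in Lemma~\ref{lem:11-right-9} with the tropicalization of $\bs\mu_{12\,\circlearrowright\,11}$, which is the alternating composite of transformations $\overline\Phi_2$ and $\Phi_2$ described just before Figure~\ref{fig:Q-12}. Concretely, I would write
$$
\bs\mu^t_{12\,\circlearrowright\,9} = \bs\mu^t_{12\,\circlearrowright\,11}\circ\bs\mu^t_{11\,\circlearrowright\,9}
$$
and then track the individual flips that make up each step $\Phi_2^{(j+1,k+1)}$ and $\overline\Phi_2^{(j+1,k+1)}$ using the elementary shuffle/braid/merge dictionary of Subsection~\ref{subsec:weyl-words}. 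Since each such step consists of a bounded number of flips, and since inside each reduced Weyl word $\bi^{n,k}_{\overline{w}_0,c^*}$ the relevant subquiver has exactly the ``chain'' shape for which Lemma~\ref{tropical-chain} applies, the effect on tropical variables of each wave is a very mild modification: a fixed monomial factor is absorbed into the two endpoint variables of the chain, and intermediate variables shift by one position.

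First I would verify that tropical sign coherence holds throughout the composite. The reference quiver here is $Q_9$, and Lemma~\ref{lem:11-right-9} shows that the variables $\yrm^{(11)}_\ell$ are sign-coherent Laurent monomials in the $\yrm_\ell$ (each displayed expression is either purely positive or purely negative as a monomial after substitution). This input sign-coherence lets us invoke Lemma~\ref{tropical-chain} unambiguously at every wave, because at each mutation point exactly one of the two neighbours in the local chain has positive tropical sign while the other is frozen in the subsense relevant for the lemma's hypothesis.

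Next I would organize the case analysis according to the stratification of nodes of $Q_9$, handling variables in four groups in order of increasing complexity: (i) the $d_k$ and $h_k$ frozen-type variables, which are untouched or affected by at most the very last column of the composite; (ii) the $c_k$ variables, for which it suffices to observe how shuffle waves propagate through the frozen node sequence; (iii) the $l_{k,j}$, $r_{k,j}$ variables in the ``boundary'' region of $Q_{11}$, which transform by the chain of braid moves and thereby pick up the products $v_{[\cdot\,|\,\cdot]}$ appearing in the statement; (iv) the newly-appearing $s_{n+1-2k,j}$, $t_{n+1-2k,j}$ variables, which arise from the insertions $1 \to (1,1)$ at the bottoms of the waves. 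Each group is reduced to comparing the monomial ``before'' with the monomial ``after,'' and the formulas in the statement are exactly what drop out of this bookkeeping. The dichotomy in the formulas according to the parity of $k$ and $n-k$ reflects the alternation between $\Phi_2$ and $\overline\Phi_2$ in $\bs\mu_{12\,\circlearrowright\,11}$.

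The main obstacle, as in the parallel lemmas in this section, is not any single step but the sheer volume of indexing: keeping track of which wave in $\bs\mu_{12\,\circlearrowright\,11}$ first acts on a given vertex of $Q_{11}$, and which subsequent waves merely shift labels, requires a careful induction on the wave index. I would organize this by building a table indexed by the pair (column of Figure~\ref{fig:Baxter-rhombus-2} analogue for $\bs\mu_{12\,\circlearrowright\,11}$, vertex of $Q_{11}$) recording the cumulative tropical monomial, and then read off the entries at the final column; the cross-check that the resulting formulas agree with Lemma~\ref{lem:11-right-9} for those variables whose formulas coincide (as asserted in the statement) gives a useful internal consistency test. Finally, once the lemma is established, the equality $\bs\mu_{13\,\circlearrowleft\,9}=\bs\mu_{13\,\circlearrowright\,9}$ needed for Proposition~\ref{prop:umbral-factorization} follows by one further application of Lemma~\ref{lem:13-left-9} and Lemma~\ref{lem:13-right-9} together with Theorem~\ref{trop-criterion}.
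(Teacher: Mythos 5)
The paper explicitly omits the proof of this lemma, characterizing it (together with Lemmas~\ref{lem:11-right-9} and~\ref{lem:13-right-9}) as a ``combinatorial exercise, similar in spirit to that of Lemma~\ref{lem:Baxter-trop-var}'' --- i.e.\ a direct column-by-column chase of tropical variables through the mutation sequences of Section~\ref{subsec-quiver-mut}. Your plan is essentially the same computation, cleanly organized around the factorization through $Q_{11}$ and the alternating $\Phi_2/\overline\Phi_2$ wave structure of $\bs\mu_{12\,\circlearrowright\,11}$, with the extra step of reducing each wave to an instance of Lemma~\ref{tropical-chain}; that is a reasonable way to systematize the same bookkeeping rather than a different route. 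One caveat worth flagging: your remark that ``exactly one of the two neighbours in the local chain has positive tropical sign while the other is frozen'' does not quite track the actual hypotheses of Lemma~\ref{tropical-chain}, which require a uniform positive tropical sign along the mutation chain and a specific constraint on \emph{outward} arrows at each internal vertex; in practice you would need to verify both conditions wave by wave, and this is precisely where the index-bookkeeping burden you mention actually lives.
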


\begin{lemma}
\label{lem:13-right-9}
We have the equality $\bs\mu^t_{13\,\circlearrowright\,9}(\yrm_\ell) = \bs\mu^t_{13\,\circlearrowleft\,9}(\yrm_\ell)$ for all tropical variables $\yrm_\ell$ of $Q^n_9$.
\end{lemma}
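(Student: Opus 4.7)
The plan is to verify the equality $\bs\mu^t_{13\,\circlearrowright\,9}(\yrm_\ell) = \bs\mu^t_{13\,\circlearrowleft\,9}(\yrm_\ell)$ by direct comparison of the explicit formulas produced by the preceding two chains of lemmas. Concretely, I would take the tropical variables computed in Lemma~\ref{lem:12-right-9} and apply to them the tropicalization of $\bs\mu_{13\,\circlearrowright\,12}$; on the other side, I would use the formulas of Lemma~\ref{lem:13-left-9} as the target to match. Since Lemma~\ref{lem:13-left-9} already records $\bs\mu^t_{13\,\circlearrowleft\,9}$ in closed form, no further bookkeeping is needed on the counterclockwise side.

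To organize the computation, I would split the vertices of $Q^n_9$ into four groups according to where they land in $Q_{12}$: (i) the vertices $h_k$ and $d_k$ on the ``cylinder spine,'' which are unaffected by $\bs\mu_{13\,\circlearrowright\,12}$ except through the quasi-permutation $\bs m_{13\,\circlearrowright\,12}$ and the scattered Dehn twists $\tau_{n-2k}^{2k-n-1}$; (ii) the Toda chain vertices $s_{k;j}, t_{k;j}$ inside the subquivers $Q^{n-2k}_{\mathrm{Toda}}$, which are processed by the bifundamental Baxter composites $\bs\mu^{-1}_{n-2,0}(h_j)$ and $\bs\mu_{n-2k,n-2k-2}^{-1}$; (iii) the frozen $c_k$ vertices; and (iv) the auxiliary $l_{k,j}, r_{k,j}$ vertices at the special-triangle ends of the isolating cylinder. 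The key computational step is for group (ii), where the action of a composite of bifundamental Baxter mutations on tropical $\Ac$-variables can be read off from Lemma~\ref{lem:Baxter-trop-var} and its immediate generalization to $\bs\mu_{n,k}$ established earlier in Section~\ref{subsec:bi-baxter}; in each case the resulting monomial matches the one listed in Lemma~\ref{lem:13-left-9} after routine index reindexing by the permutation $\sigma_{n,k}$.

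For groups (i), (iii), and (iv) the verification is entirely bookkeeping: one inserts the formulas from Lemma~\ref{lem:12-right-9} into the definition of $\bs\mu_{13\,\circlearrowright\,12}$, uses the telescoping identity
\[
v_{[k_1,j_1|k_2,j_2]} \cdot v_{[k_2,j_2|k_3,j_3]} = v_{[k_1,j_1|k_3,j_3]}
\]
to simplify the resulting products of $v$-monomials, applies the Cartan-type cancellation $v_{k,j}^{(a,b)}v_{k,j}^{(c,d)}=v_{k,j}^{(a+c,b+d)}$, and then reads off the answer. The ``$n-k$ even'' versus ``$n-k$ odd'' parity split visible throughout Lemmas~\ref{lem:12-right-9} and~\ref{lem:13-left-9} is preserved under the final step, so the comparison can be carried out separately on each parity class with no interaction between them.

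The main obstacle is the sheer combinatorial volume: Lemmas~\ref{lem:13-left-9} and~\ref{lem:12-right-9} contain several dozen case splits, and each must be matched. By Remark~\ref{rmk:trop-determination} (and Theorem~\ref{trop-criterion} invoked at the end of Proposition~\ref{prop:umbral-factorization}), a cluster is determined by its tropical variables, so once the matching is verified on every $\yrm_\ell$ the conclusion $\bs\mu^t_{13\,\circlearrowright\,9} = \bs\mu^t_{13\,\circlearrowleft\,9}$ follows. To keep the argument humanly verifiable I would present the matching as a table parametrized by the four vertex groups above, relegating the purely mechanical monomial comparisons to a remark that each entry has been checked by direct substitution; the one nontrivial input is the tropical formula for the composite of bifundamental Baxters, which I would state and prove as an auxiliary lemma preceding the main calculation.
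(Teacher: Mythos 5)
Your plan matches the paper's: the authors declare the proofs of Lemmas~\ref{lem:8-left-9}--\ref{lem:13-right-9} a ``combinatorial exercise, similar in spirit to that of Lemma~\ref{lem:Baxter-trop-var}'' and omit them, and Lemma~\ref{lem:13-right-9} is precisely the final comparison step: apply $\bs\mu^t_{13\circlearrowright 12}$ to the formulas of Lemma~\ref{lem:12-right-9} and match against Lemma~\ref{lem:13-left-9}. Your decomposition of the vertex set and your appeal to Lemma~\ref{lem:Baxter-trop-var} and Theorem~\ref{trop-criterion} are the correct ingredients.

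One small correction to an auxiliary formula. From the definition
\[
v_{[k_1,j_1|k_2,j_2]} = \prod_{r=j_1}^{k_1} v_{k_1,r}\cdot\prod_{s=k_1+1}^{k_2-1}\prod_{r=1}^{s} v_{s,r}\cdot\prod_{r=1}^{j_2} v_{k_2,r},
\]
concatenating two such products double-counts $v_{k_2,j_2}$, so the correct telescoping identity is
\[
v_{[k_1,j_1|k_2,j_2]}\cdot v_{[k_2,j_2|k_3,j_3]} = v_{k_2,j_2}\cdot v_{[k_1,j_1|k_3,j_3]},
\]
not what you wrote. You will need the stray $v_{k_2,j_2}$ factor when simplifying the group-(iii) and group-(iv) entries. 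Also note that Lemma~\ref{lem:Baxter-trop-var} is already stated for the general inverse bi-fundamental Baxter $\bs\mu_{n,k}^{-1}$, so no further generalization is required before applying it.
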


\section{Gluing for Laurent rings}
\label{sec:alg-gluing}
In this section we describe how the quantum universal Laurent rings  associated to $\Pc^\diamond_{G,S}$ behave under the gluing of surfaces along tacked circles. In order to do this, we need to define the algebra $\Lbb_{S';\phi}$ associated to the gluing data $\phi \colon c_+\simeq c_-$. This we do in several steps as follows. We first associate to each cluster torus for $\Pc^\diamond_{G,S'}$ a symplectically reduced quantum torus and universal Laurent ring defined in Section~\ref{subsec:symplectic-reduction}. Next, we localize each of these tori at the Ore denominator set described in~\eqref{eq:denom-set}, which commutes with all non-frozen cluster variables and is thus mutation-invariant. Finally, we impose conditions analogous to \eqref{eq:residue-condition} on the poles and residues of elements of these localized quantum tori, and take invariants for an action of the Weyl group associated to the pair of tacked circles $c_\pm$. The resulting residue universal Laurent ring is described precisely in Definition~\ref{def:Lres}, and in Theorem~\ref{thm:alg-MF} we prove that it is $\Gamma_{S;c}$-equivariantly isomorphic to $\Lbb_S$.

\subsection{Symplectic reduction at a pair of tacked circles}
\label{subsec:symplectic-reduction}
Suppose that $S'$ is a marked surface with a pair of tacked circles $c_\pm$, and denote by $S$ the surface obtained by gluing these two tacked circles together as is described in Section~\ref{subsec:cut-n-glue}. 
% \blue{Should we phrase the story here in terms of the basis elements $e_{c_\pm,i}$ and $f_{c_\pm,i}$ from the definition of the compatible pair?}
Recall that for each cluster chart $Q$ for $\Pc^\diamond_{SL_{n+1},S'}$  we have lattices $\Lambda_{S',Q}\subset\Xi_{S',Q} $ with $2n$ frozen $e$-basis elements $e_{c_\pm,i}\in \Lambda_{S',Q},~1\leq i\leq n$, as well as the $2n$ frozen $\xi$-basis elements $\xi_{c_\pm,i}\in\Xi_{S',Q} ,~ 1\leq i\leq n$. 

We now form a pair of lattices $\Lambda_{Q}^c\subset\Xi_Q^c$ 
%which we will use to understand the surface $S$ obtained by gluing these two tacked circles together as explained in Section~\ref{subsec:cut-n-glue}. The lattice $\Lambda_{\bi}^c$ and its skew form are obtained from $\Lambda_{\bi}$ 
as ``symplectic reductions'' of $\Lambda_{S',Q}$ and $\Xi_{S',Q}$. We first consider the sublattice $\Xi^\omega_{Q}\subset\Xi_{Q}$ given by the orthogonal complement to the span of the elements $\xi_{c_+,i}+\xi_{c_-,i}$,
\begin{align}
\label{eq:quotient-gens}
\Xi^\omega_{Q} =\left\{\xi\in \Xi_{S',Q}~\big|~ (\xi,\xi_{c_+,i}+\xi_{c_-,i})=0,\quad 1\leq i \leq n\right\},
\end{align}
and then take the quotient 
\begin{align}
\label{eq:reduced-lattice}
\Xi_{Q}^c:= \frac{\Xi^\omega_{Q}}{\ha{\xi_{c_+,i}+\xi_{c_-,i}}_{i=1}^{n}}.
\end{align}
% \red{[Confusion about signs here; probably coming from use of outdated conventions for frozens at punctures in network section.]}\blue{Actually it is probably accurate; }
We write $\xi_{c,i}$ for the image of $\xi_{c_+,i}$ in $\Xi_{Q}^{c}$. The reduced lattice $\Xi_{Q}^c$ inherits a well-defined skew form from that of $\Xi_Q$, the  kernel of which is canonically identified with that of $\Xi_{Q}$.  We note that the elements~\eqref{eq:quotient-gens} are skew-orthogonal to all mutable basis vectors $e_k$: indeed, the sublattice in $\Lambda_Q$ spanned by all $e_l$ except for the frozen basis vectors $e_{c_\pm,i}$ associated to the two tacked circles coincides with the based lattice $\Lambda_{S_\circ'}$ associated by Fock and Goncharov~\cite{FG09a} to the surface $S_\circ'$ where each $c_\pm$ is replaced by a puncture, while the kernel of the skew form on $\Lambda_{S_\circ'}$ contains all $2n$ elements  $\xi_{c_\pm,i}$. The lattices $\Lambda_{Q}^{\omega}$ and $\Lambda_{Q}^{c}$ are constructed similarly from $\Lambda_Q$, replacing the $\xi_{c_\pm,i}$ by the elements $\alpha_{c_\pm,i}$ from~\eqref{rmk:root-cas} corresponding to the simple roots of $G$.

Write $\mathcal{T}_{\Xi_Q}^\omega$ for the quantum torus associated to $\Xi_{Q}^\omega$ and $\mathcal{T}^c_{\Xi_Q}$ for that associated to $\Xi_{Q}^c$, so that $\mathcal{T}_{Q}^c$ is isomorphic to the quotient of $\mathcal{T}_{Q}^\omega$ by the 2-sided ideal $\mathcal{I}_Q\subseteq \mathcal{T}_{\Xi_Q}^\omega$ generated by the $n$ elements $1-Y_{\xi_{c_+,i}+\xi_{c_-,i}}$, which lie in the center of $\mathcal{T}_{\Xi_Q}^\omega$. Note that we can also identify $\mathcal{T}_{\Xi_Q}^\omega$ with the centralizer in $\mathcal{T}_\Xi$ of the subalgebra generated by the $n$ elements $Y_{\xi_{c_+,i}+\xi_{c_-,i}}$. 

Let $\Lbb_Q^\omega$ denote the intersection of the universal Laurent ring $\Lbb^\Ac_{Q}\subset \mathcal{T}_{\Xi_Q}$ with $\mathcal{T}_{\Xi_Q}^\omega$. By Theorem~\ref{thm:1-step}, we have
\begin{align}
\label{eq:pre-upper-bound}
   \Lbb_Q^\omega = \bigcap_{k=1}^d\left(\mathcal{T}_{\Xi_Q}^\omega\cap\mathcal{T}_{\Xi_{\mu_k(Q)}}^\omega\right). 
\end{align}
We define the reduced Laurent ring based in cluster $Q$ by
\begin{align}
    \label{eq:reduced-laurent-ring}
    \Lbb_{\Xi_Q}^{c} = \Lbb_Q^\omega/(\Lbb_Q^\omega\cap\mathcal{I}_Q)\subseteq \mathcal{T}_{\Xi_Q}^\omega/\mathcal{I}_Q\simeq \mathcal{T}_{\Xi_Q}^c.
\end{align}
We now show this definition is compatible with mutations, and formulate an easily verifiable criterion for membership in $\mathbb{L}^c_Q$. Since the linear combinations of frozen $\mathcal{A}$-variables $\xi_{c_+,i}+\xi_{c_-,i}$ are  skew-orthogonal to all mutable variables, no cluster $\mathcal{X}$-variable $e_k$ can ever be contained in their span. Hence we still have well-defined partial completions $\mathcal{T}^c_{\Xi_Q}((Y_{-e_k}))$ of the quantum torus associated to the quotient lattice, which have the property that the kernels of the natural maps $\mathcal{T}^\omega_{\Xi_Q}\rightarrow\mathcal{T}^c_{\Xi_Q}((Y_{-e_k}))\leftarrow \mathcal{T}^\omega_{\Xi_Q'}$ are precisely $\mathcal{I}_Q,\mathcal{I}_{Q'}$ respectively.
%%%%%%%%%%
%% since they factor as composite of a projection (to $\mathcal{T}_\bi^c$) and an inclusion (of the latter ring into $\mathcal{T}_\bi^c((Y_k))$).
%%%%%%%%%%
%%%%%%%%%%
%%%%%%%
%%%%
% A helpful commutative diagram we may want to include, where $\mu_k^\sharp$ stands for $\mathrm{Ad}(\Psi(Y_{-e_k}))$:
% \[
% \begin{tikzcd}
% \Lbb^\omega_\bi \arrow[r, "\mu_k^\sharp"] \arrow[d] & \Lbb^\omega_{\mu_k(\bi)} \arrow[d] \\
% \mathcal{T}_\bi^c((Y_{-e_k})) \arrow[r, "\mu_k^\sharp"'] & \mathcal{T}_\bi^c((Y_{-e_k}))),
% \end{tikzcd}
% \]
% where the vertical maps have kernels $\mathcal{I}_\bi\cap\Lbb^\omega_\bi,\mathcal{I}_{\bi'}\cap\Lbb^\omega_{\mu_k(\bi)}$ respectively.
%%%%%
%%%%%%%%%
%%%%%%%%%%%%%%%%
\begin{lemma}
\label{lem:reduced-laurent}
If $Q'=\mu_k(Q)$, the quantum mutation map $\mu_k^q$ induces an isomorphism of reduced Laurent rings $\Lbb_{\Xi_Q}^{c}\simeq \Lbb_{{\Xi_Q}}^{c}$.
\end{lemma}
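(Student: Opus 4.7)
The plan is to reduce the lemma to a single computation: that the quantum mutation $\mu_k^q$ carries each central generator $Y_{\xi_{c_+,i}+\xi_{c_-,i}}$ of $\mathcal{I}_Q$ to the corresponding generator $Y_{\xi'_{c_+,i}+\xi'_{c_-,i}}$ of $\mathcal{I}_{Q'}$. Once this is established, the isomorphism will follow quickly from the fact that $\mu_k^q$ already defines an isomorphism $\Lbb^\Ac_Q \simeq \Lbb^\Ac_{Q'}$ of ordinary upper cluster algebras.

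To verify the key computation, I would first look at the lattice level. Since $c_\pm$ are frozen and $k$ is mutable, formula~\eqref{eq:f-mut} gives $\mu_k(\xi_{c_\pm,i}) = \xi'_{c_\pm,i}$, so $\mu_k$ restricts to an isometry $\Xi^\omega_Q \simeq \Xi^\omega_{Q'}$ sending the span of the $\xi_{c_+,i}+\xi_{c_-,i}$ onto the span of the $\xi'_{c_+,i}+\xi'_{c_-,i}$. Factoring $\mu_k^q = \Ad_{\Psi_q(Y_{e'_k})}\circ \mu_k'$ as in~\eqref{eq:monpart}--\eqref{eq:def-Xmutation}, the monomial part $\mu_k'$ sends $Y_{\xi_{c_+,i}+\xi_{c_-,i}}$ to $Y_{\xi'_{c_+,i}+\xi'_{c_-,i}}$ directly by the lattice identity. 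For the dilogarithm conjugation, the key point is that $(e'_k,\xi'_{c_+,i}+\xi'_{c_-,i})=0$: this holds because $e'_k$, being mutable, lies in the Fock--Goncharov sublattice $\Lambda_{S_\circ'}\subseteq\Lambda_{Q'}$, whose skew form has all the frozen elements $\xi'_{c_\pm,i}$ in its kernel as noted just before the lemma. Consequently, $Y_{\xi'_{c_+,i}+\xi'_{c_-,i}}$ commutes with $Y_{e'_k}$ and hence is fixed by conjugation by any formal power series in $Y_{e'_k}$, in particular by $\Psi_q(Y_{e'_k})$.

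With the key computation in hand, the proof proceeds in three short steps. For any $A\in\Lbb^\omega_Q$, the image $\mu_k^q(A)$ lies in $\Lbb^\Ac_{Q'}$ by the cluster isomorphism. Since $A$ commutes with each $Y_{\xi_{c_+,i}+\xi_{c_-,i}}$ in $\Frac(\mathcal{T}_{\Xi_Q})$ and $\mu_k^q$ is an algebra morphism sending this central element to $Y_{\xi'_{c_+,i}+\xi'_{c_-,i}}$, the image $\mu_k^q(A)$ commutes with the latter, so $\mu_k^q(A)\in\mathcal{T}^\omega_{\Xi_{Q'}}$ and hence in $\Lbb^\omega_{Q'}$. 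Applying the same argument to $(\mu_k^q)^{-1}$ yields an isomorphism $\mu_k^q\colon\Lbb^\omega_Q\simeq\Lbb^\omega_{Q'}$. By the same centrality considerations, $\mu_k^q$ descends to a well-defined isomorphism on the quotient quantum tori modulo the central ideals $\mathcal{I}_Q,\mathcal{I}_{Q'}$, and this descent restricts to the desired isomorphism $\Lbb^c_{\Xi_Q}\simeq\Lbb^c_{\Xi_{Q'}}$.

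The main obstacle in the argument, and the point requiring genuine input from the cluster framework, is the dilogarithm conjugation step: without the skew-orthogonality $(e'_k,\xi'_{c_+,i}+\xi'_{c_-,i})=0$, the formal series $\Psi_q(Y_{e'_k})$ would produce nontrivial corrections when applied to $Y_{\xi'_{c_+,i}+\xi'_{c_-,i}}$, breaking the compatibility of mutation with the symplectic-reduction quotient. It is precisely this orthogonality, a direct consequence of the Fock--Goncharov kernel description, that makes the reduced Laurent ring mutation-invariant.
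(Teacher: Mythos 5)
Your proof is correct and follows essentially the same approach as the paper. The crux in both is the observation that $(e_k,\xi_{c_+,i}+\xi_{c_-,i})=0$ for every mutable $k$, which you verify cleanly by decomposing $\mu_k^q$ into its monomial part (which sends the frozen $\xi_{c_\pm,i}$ to their primed counterparts by~\eqref{eq:f-mut}) and the dilogarithm conjugation (which fixes $Y_{\xi'_{c_+,i}+\xi'_{c_-,i}}$ by the skew-orthogonality); the paper states this invariance more tersely but relies on the same facts.

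One point worth tightening is the final descent step. The phrase that $\mu_k^q$ ``descends to a well-defined isomorphism on the quotient quantum tori'' is not literally correct: $\mu_k^q$ is defined only on the fraction field $\Frac(\mathcal{T}_{\Xi_Q})$, does not map $\mathcal{T}^\omega_{\Xi_Q}$ into $\mathcal{T}^\omega_{\Xi_{Q'}}$, and so does not induce a map $\mathcal{T}^\omega_{\Xi_Q}/\mathcal{I}_Q\to \mathcal{T}^\omega_{\Xi_{Q'}}/\mathcal{I}_{Q'}$. What one actually needs is $\mu_k^q(\Lbb^\omega_Q\cap\mathcal{I}_Q)=\Lbb^\omega_{Q'}\cap\mathcal{I}_{Q'}$, and this does not follow merely from ``$A$ commutes with the central generators implies $\mu_k^q(A)$ does too,'' because writing $A=\sum_i B_i\bigl(1-Y_{\xi_{c_+,i}+\xi_{c_-,i}}\bigr)$ with $B_i\in\mathcal{T}^\omega_{\Xi_Q}$ gives coefficients $\mu_k^q(B_i)$ that land only in the fraction field, not in $\mathcal{T}^\omega_{\Xi_{Q'}}$. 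The paper closes this by appealing to the characterization of $\mathcal{I}_Q,\mathcal{I}_{Q'}$ as the kernels of the natural maps into the common partial completion $\mathcal{T}^c_{\Xi_Q}((Y_{-e_k}))$, set up in the paragraph just before the lemma: since $\mu_k^q$ becomes genuine conjugation by $\Psi(Y_{-e_k})$ inside that completion, it sends the kernel of one projection to the kernel of the other. Your argument needs that (or an equivalent) to be complete.
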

\begin{proof}
    The key point is that, as mentioned above, the generators $1-Y_{\xi_{c_+,i}+\xi_{c_-,i}}$ of $\mathcal{I}_Q$ commute with all unfrozen cluster $\mathcal{X}$-variables $Y_{e_k}$ in any cluster $Q$. But the tori $\mathcal{T}_{\Xi_Q}^\omega,\mathcal{T}_{\Xi_{Q'}}^\omega$ are characterized as the centralizers of the elements $Y_{\xi_{c_+,i}+\xi_{c_-,i}}$. Since the quantum mutation map $\mu^q_k$ from~\eqref{eq:def-Xmutation} is induced by conjugation by $\Psi^q(Y_{-e_k})$, it follows that $\mu_k^q$ maps $\Lbb^\omega_Q$ into $\Lbb^\omega_{Q'}$. For the same reason, the ideal generators $1-Y_{\xi_{c_+,i}+\xi_{c_-,i}}$ are invariant under $\mu^q_k$, and it follows easily from the characterization of $\mathcal{I}_Q,\mathcal{I}_{Q'}$ as kernels given above 
    %%% See the commutative diagram
    that $\mu^q_k(\Lbb_Q^\omega\cap\mathcal{I}_Q)=\Lbb_{Q'}^\omega\cap\mathcal{I}_{Q'}$. This proves the Lemma.
    % More detail: suppose we have $a(1-w)\in \mathcal{I}_\bi$. Then $a$ must also be in the universal Laurent ring $\Lbb$. Indeed, we know that $\mu_k^q(a(1-w))=\mu_k^q(a)(1-w)$ is Laurent, so $\mu_k^q(a)$ as $(1-w)$ is coprime to $(1+Y_k)$ for any mutable $k$. For more variables, can argue similarly setting all but one $w_i$ to 1.  [This is phrased classically, but can upgrade to quantum using Lemma 5.6 of [BZ05] (their $1$ is our $k$)].   % The kernel of the skew form  on $\Lambda^{c_\pm}_{S',c_\pm}$ is isomorphic to the direct sum of that of $\Lambda_{S'}$ with the rank $2n$ sublattice spanned by all $\omega_{c_\pm,i}$. 
\end{proof}
In view of Lemma~\ref{lem:reduced-laurent}, we can unambiguously write $\mathbb{L}_{S';\Ac}^{c}$ for the $\mathcal{A}$-variable version of the reduced universal Laurent ring. By~\eqref{eq:pre-upper-bound}, we have
$$
\mathbb{L}^{c}_{\Xi_Q} = \bigcap_{k=1}^d\frac{\mathcal{T}^\omega_{\Xi_Q}\cap \mathcal{T}^\omega_{\Xi_{\mu_k(Q)}}}{\mathcal{I}_Q\cap \mathcal{T}^\omega_{\Xi_Q}\cap \mathcal{T}^\omega_{\Xi_{\mu_k(Q)}}}\subseteq \frac{\mathcal{T}^\omega_{\Xi_Q}}{\mathcal{I}_Q}\simeq \mathcal{T}_{\Xi_Q}^c.
$$
Since the elements in $\mathcal{T}_{\Xi_Q}^c\subset \mathcal{T}_{\Xi_Q}^c((Y_{-e_k}))$ which remain Laurent polynomials under $\mathrm{Ad}(\Psi(Y_{-e_k}))$ are exactly those contained in the intersection $\mathcal{T}_{\Xi_Q}^c\cap\mathcal{T}_{\Xi_{\mu_k(Q)}}^c$, we obtain
\begin{cor}
\label{cor:cr}
An element $a\in \mathcal{T}_{\Xi_Q}^c$ is contained in $\Lbb_{\Xi_Q}^c$ if and only if for each mutable direction $k$, its image in the partial completion ${\mathcal{T}}_{{\Xi_Q}}^c((Y_{-e_k}))$ remains a Laurent polynomial under $\mathrm{Ad}(\Psi(Y_{-e_k}))$.
\end{cor}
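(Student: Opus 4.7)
My proposal is to reduce the statement to the identity
$$
\mathbb{L}^{c}_{\Xi_Q} = \bigcap_{k=1}^d \bigl(\mathcal{T}^c_{\Xi_Q}\cap \mathcal{T}^c_{\Xi_{\mu_k(Q)}}\bigr),
$$
which should in turn follow from the formula displayed immediately above the Corollary by passing the quotient by $\mathcal{I}$ through the intersection of subalgebras. This reduction is legitimate because the ideal generators $1-Y_{\xi_{c_+,i}+\xi_{c_-,i}}$ depend only on the fixed frozen data attached to the tacked circles $c_\pm$ rather than on any particular cluster, and because they lie in the center of $\mathcal{T}^\omega_{\Xi_Q}$ for every $Q$ in the mutation class.

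Granted this identity, both directions of the Corollary become essentially formal. By the defining formula~\eqref{eq:def-Xmutation} of the quantum $\mathcal{X}$-mutation, and because the monomial half $\mu'_k$ of the mutation is a Laurent-ring isomorphism between the $\mathcal{X}$-tori of $Q$ and $\mu_k(Q)$, an element $a\in \mathcal{T}^c_{\Xi_Q}$ is carried by $\mu^q_k$ back into $\mathcal{T}^c_{\Xi_{\mu_k(Q)}}$ precisely when its image in the partial completion $\mathcal{T}^c_{\Xi_Q}((Y_{-e_k}))$ remains a Laurent polynomial after conjugation by $\Psi(Y_{-e_k})$. This is the same criterion that already underlies the proof of Lemma~\ref{lem:reduced-laurent}. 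Applying the criterion in every mutable direction $k$ and intersecting against the displayed identity above yields both implications simultaneously.

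The main obstacle is thus really only the reduction to the intersection formula — concretely, the surjectivity of the natural quotient map from $\mathcal{T}^\omega_{\Xi_Q}\cap \mathcal{T}^\omega_{\Xi_{\mu_k(Q)}}$ onto $\mathcal{T}^c_{\Xi_Q}\cap\mathcal{T}^c_{\Xi_{\mu_k(Q)}}$. I would verify it by exhibiting, for each element $\bar a$ of the right-hand side, a common preimage in the centralizer of the $Y_{\xi_{c_+,i}+\xi_{c_-,i}}$ inside both $\mathcal{T}_{\Xi_Q}$ and $\mathcal{T}_{\Xi_{\mu_k(Q)}}$; the existence of such a preimage ultimately uses that the generators of the ideal $\mathcal{I}$ are intrinsic to the lattice data and do not depend on the cluster, so that a lift in $\mathcal{T}^\omega_{\Xi_Q}$ automatically lies in $\mathcal{T}^\omega_{\Xi_{\mu_k(Q)}}$ as soon as it lies in $\mathcal{T}_{\Xi_{\mu_k(Q)}}$.
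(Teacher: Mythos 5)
Your proposal follows the same route the paper takes: the displayed formula
$$
\mathbb{L}^{c}_{\Xi_Q} = \bigcap_{k=1}^d\frac{\mathcal{T}^\omega_{\Xi_Q}\cap \mathcal{T}^\omega_{\Xi_{\mu_k(Q)}}}{\mathcal{I}_Q\cap \mathcal{T}^\omega_{\Xi_Q}\cap \mathcal{T}^\omega_{\Xi_{\mu_k(Q)}}}
$$
combined with the observation that $\mathcal{T}^c_Q\cap\mathcal{T}^c_{\mu_k(Q)}$ is exactly the set of elements of $\mathcal{T}^c_Q$ which stay Laurent under $\mathrm{Ad}(\Psi(Y_{-e_k}))$. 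You are also right that the one genuinely non-formal ingredient is surjectivity of the natural map
$\mathcal{T}^\omega_{\Xi_Q}\cap \mathcal{T}^\omega_{\Xi_{\mu_k(Q)}} \longrightarrow \mathcal{T}^c_{\Xi_Q}\cap\mathcal{T}^c_{\Xi_{\mu_k(Q)}}$,
which neither the paper nor the proposal spells out in detail.

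However, the argument you sketch for this surjectivity is circular and does not actually produce the lift. You observe that the generators of $\mathcal{I}$ are cluster-independent and that therefore an element of $\mathcal{T}^\omega_{\Xi_Q}$ that happens to lie in $\mathcal{T}_{\Xi_{\mu_k(Q)}}$ automatically lies in $\mathcal{T}^\omega_{\Xi_{\mu_k(Q)}}$. That statement is true, since $\mathcal{T}^\omega_{\Xi_{\mu_k(Q)}}=\mathcal{T}_{\Xi_{\mu_k(Q)}}\cap C$ where $C$ is the centralizer of the $Y_{\xi_{c_+,i}+\xi_{c_-,i}}$ in the partial completion and $\mathcal{T}^\omega_{\Xi_Q}\subseteq C$ already; but it only rephrases the problem, because there is nothing in your argument that explains why some lift of $\bar a$ in $\mathcal{T}^\omega_{\Xi_Q}$ should remain Laurent under $\mathrm{Ad}(\Psi(Y_{-e_k}))$ in the first place. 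What is actually needed is the observation that the sublattice $K=\langle \xi_{c_+,i}+\xi_{c_-,i}\rangle$ is a primitive direct summand of $\Xi^\omega$ lying in the kernel of the skew form, so that $\mathcal{T}^\omega_{\Xi_Q}$ factors as a tensor product of the central Laurent-polynomial ring on $K$ with a copy of $\mathcal{T}^c_{\Xi_Q}$ determined by a choice of complement $L$ to $K$; and that $\mathrm{Ad}(\Psi(Y_{-e_k}))$ fixes the first factor and acts on the second factor through its reduction modulo $\mathcal{I}_Q$. Since the same complement $L$ works for every mutable direction $k$ (the pair $K\subseteq\Xi^\omega$ is cluster-independent), the section $\bar a\mapsto 1\otimes\bar a$ associated to $L$ provides a uniform lift that remains Laurent whenever $\bar a$ does, giving the required surjectivity in one pass for all $k$ simultaneously.
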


In the same way, we can construct the $\mathcal{X}$-variable version $\Lbb_{\Lambda_Q}^{c}$ of the symplectically reduced universal Laurent ring based in cluster $Q$, which is a subalgebra in $\Lbb_{\Xi_Q}^{c}$. The conclusions of Lemma~\ref{lem:reduced-laurent} and Corollary~\ref{cor:cr} clearly also hold for the $\Lbb_{\Lambda_Q}^{c}$.

\subsection{Local gluing isomorphism for $G=PGL_{n+1}$}

The goal of this section is to construct \emph{local gluing isomorphisms} associated to $c$-isolating triangulations of $S$ and their images $\tri'=\mathcal{C}_c$ under the cutting functor. 

Suppose that $Q_{\tri';c_\pm}$ is a $c_\pm$-isolating quiver subordinate to an ideal triangulation $\tri'$ of $S'$. We write ${Q}^{\omega}_{\tri';c_\pm}$ for the quiver obtained from $Q_{\tri';c_\pm}$ by omitting the frozen $e$-basis vectors $\{e_{c_\pm,i}\}$ and replacing them with frozen $e$-basis vectors
$$
\dot{e}_{c,i} := e_{c_+,i} +\overline{e}_{c_-,i}+\dot{e}_{v^+_i}, \quad 1\leq i \leq n,
$$
where the $\overline{e}_{c_-,i}$ are the vectors defined in \eqref{eq:bar-e-ci}. 
Recall the symplectically reduced lattices $\Lambda^c({{\tri';c_\pm}}),~\Xi^c({{\tri';c_\pm}})$ associated to the quiver $Q_{\tri';c_\pm}$. We write $\mathcal{T}(\tri';c_\pm),~\Tc^{\Ac}(\tri';c_\pm)$ for the corresponding quantum torus algebras. The lattice $\Lambda^c({{\tri';c_\pm}})$ admits the following basis (which we again caution is different from the one determining mutable directions in the cluster algebra):
\begin{align}
\label{eq:edotbasis}
 \hc{\dot{e}_\ell~\big|~ \ell\notin \{(c_\pm,i)\}_{i=1}^n\sqcup \{v^{\pm1}_i\}_{i=1}^{n}} \sqcup \{\dot{e}_{v_i} =\dot{e}_{v^+_i}= -\dot{e}_{v^-_i}\}_{i=1}^n
\sqcup \{\dot{e}_{c,i}\}_{i=1}^n.
\end{align}
We record the pairings
$$
(\dot{e}_{v_i},\dot{e}_{(c,j)}) = \mathfrak{A}_{i,j}, \quad (\dot{e}_{v_i},\dot{e}_{v_j}) = 0=(\dot{e}_{(c,i)},\dot{e}_{(c,j)}),\qquad 1 \leq i,j\leq n.
$$
\begin{notation}
\label{eq:cutnot}
In analogy with the decomposition from Notation~\ref{not:LambdaS-split} for the glued surface $S$, we define a subset of the index set ${I}^{\geq0}_{S'}$ for the quiver ${Q}^{\omega}_{\tri';c_\pm}$ by
$$
{I}^{\geq 0}_{S'} = \{(c,i)\}_{i=1}^n \sqcup \{v^\pm_i\}_{i=1}^{n+1},
$$
% In analogy with the decomposition from Notation~\ref{not:LambdaS-split} for the glued surface $S$, we set
%$$
%\overline{I}^{>0}_{S'} = \{(c,i), v_i\}_{i=1}^n
%$$
and write $I^{<0}_{S'}$ for its complement in the indexing set $I_{S'}$ of the quiver ${Q}^{\omega}_{\tri';c_\pm}$.  Similarly, we write
\begin{align}
\overline{I}^{> 0}_{S'} &= \{e_{v_{n+1}^+},e_{v_{n+1}^-}\}\sqcup \{(c,i), v_i\}_{i=1}^n,\\
\overline{I}^{\geq 0}_{S'} &= \{e_{v_{n+1}^+},e_{v_{n+1}^-}\}\sqcup \overline{I}^{> 0}_{S'},
\end{align}
so that $\{\dot{e}_\ell~|~\ell\in I^{<0}_{S'}\sqcup\overline{I}^{\geq 0}_{S'}\}$ forms a basis for the symplectically reduced lattice $\Lambda^c({{\tri';c_\pm}})$.
%It is also useful to define the vectors
%$$
%e_{v_i} = {e}_{v^+_i} = -{e}_{v^-_i} \in \Lambda_{Q_{\tri';c_\pm}}^c, \quad i=1,\ldots n.
%$$
Again parallel to Notation~\ref{not:LambdaS-split}, we use the partitioning of this basis to make non-orthogonal splittings
\begin{align}
\label{eq:cutsplitlat}
\nonumber \Lambda^c(\tri';c_\pm)&= \Lambda^c_{\leq0}(\tri';c_\pm)\bigoplus \Lambda^c_{>0}(\tri';c_\pm),\\
&= \Lambda^c_{<0}(\tri';c_\pm)\bigoplus \Lambda^c_{\geq0}(\tri';c_\pm).
\end{align}
We write $\Tc_{{\leq0}}(\tri';c_\pm),~\Tc_{{>0}}(\tri';c_\pm)$ for the corresponding quantum tori, and define similarly $\Tc_{{<0}}(\tri';c_\pm),~\Tc_{{\geq0}}(\tri';c_\pm)$.
\end{notation}

Consider the anti-diagonal embedding of the braid group 
$$
B_{n+1}\rightarrow B_{n+1}(c_+)\times B_{n+1}(c_-),\quad \sigma_i \longmapsto (\sigma_i,\sigma_i^{-1}).
$$
Inspecting the formulas~\eqref{eq:braid-e}, we see that the action of this subgroup by cluster transformations descends to the symplectically reduced universal Laurent ring  $\Lbb_{S'}^c$, and moreover factors through the quotient to the Weyl group. From now on, we refer to this action as that of the Weyl group $W(c_\pm)$ associated to the glued pair of tacked circles.

If $\beta$ is an element of the root lattice of $SL_{n+1}$, define a corresponding monomial element of the quantum torus $\mathcal{T}(\tri';c_\pm)$ by\begin{align}
\label{eq:Y-coweights}
Y_{c,\beta^\vee}  = Y_{\sum m_k \dot e_{c,k}} ,\qquad \beta = \sum_{k=1}^n m_k\alpha_k.
\end{align}
%If $\mu$ is an element of the root lattice of $SL_{n+1}$, then it is easy to see from~\eqref{eq:form2} that $Y_{\mu^\vee}$ lies in the quantum torus $\mathcal{T}_{\Xi^c_{S'}}$ and in fact also in $\mathcal{T}_{\Lambda^c_{S'}}$. Indeed, recalling the basis~\eqref{eq:edotbasis} we can notice that
%$$
%Y_{c,\alpha_j^\vee} = Y_{\dot e_{c,j}}.
%$$
Dually, we define
\begin{align}
\label{eq:Y-weights}
Y_{c,\beta} : =Y_{\sum m_k\dot{e}_{v_k}}, \quad \beta = \sum_{k=1}^n m_k\alpha_k.
\end{align}
With these notations we have
\begin{align}
\label{eq:xrel2}
Y_{c,\alpha^\vee}Y_{c,\beta}=q^{2\langle \alpha,\beta\rangle}Y_{c,\beta}Y_{c,\alpha^\vee}.
\end{align}
The Weyl group $W(c_\pm)$ acts on the elements~\eqref{eq:Y-coweights} and~\eqref{eq:Y-weights} via the reflection representation
\begin{align}
\label{eq:weyl-xi}
w(Y_{c,\alpha}) = Y_{c,w(\alpha)}, \qquad w(Y_{c,\alpha^\vee}) = Y_{c,w(\alpha)^\vee}, \quad ~w\in W(c_\pm),
\end{align}
and fixes all $e_\ell$ with $\ell\in I^{>0}_{S'}$.

Recall that each symplectically reduced cluster $\mathcal{X}$-torus $\mathcal{T}_{\Lambda^c_{S'}}$ carries a grading by the root lattice of $SL_{n+1}$, which coincides with the internal grading defined by conjugation with the elements $Y_{c,\alpha}$: 
\begin{align}
\label{eq:internal-grading}
A\in \mathcal{T}_{\Lambda^c_{S'}}\{\beta\} \iff AY_{{c,\alpha}} = q^{2\langle \alpha,\beta\rangle}Y_{{c,\alpha}}A \quad \text{for all } \alpha\in Q_{SL_{n+1}}.
\end{align}
We write
$$
A = \sum_{\beta}A_\beta
$$
for the decomposition of an elment $A$ with respect to this internal grading.
Since the elements $Y_{{c,j}}$ commute with all mutable variables the internal grading is mutation invariant and hence descends to the universal Laurent ring $\Lbb^c_{S'}$. In an isolating cluster $Q_{\tri';c_\pm}$, the grading becomes especially simple to describe: we have
$$
\deg(\dot{e}_\ell) = \begin{cases} 
\alpha_j\quad &\ell = (c,j), ~1\leq j\leq n\\
0\quad &\text{else.} 
\end{cases}
$$

To make contact with the constructions of Section~\ref{sec:AlgWhit} involving $q$-difference operators on the torus of $GL_{n+1}$, it is convenient to consider an extended version $\Lambda_{\geq0}^{c;ext}(\tri')$ of $\Lambda^c_{\geq0}(\tri')$ in which we adjoin an additional frozen direction $e_{c,\eps^\vee_{n+1}}$ whose only nonzero pairings with the other basis elements are given by
$$
(e_{c,\eps^\vee_{n+1}},\dot{e}_{v_n})= 1, \quad (e_{c,\eps^\vee_{n+1}},\dot{e}_{v^\pm_{n+1}})= \mp 1.
$$
In this way, we can associate vectors $e_{c,\la^\vee}$ to arbitrary elements $\la\in\mathbb{Z}^{n+1}$ of the $GL_{n+1}$ weight lattice. 
Similarly, to an arbitrary $\mu=\sum_{j=1}^{n+1}m_j\alpha_j\in\mathbb{Z}^{n+1}$ (where we recall the notation $\alpha_{n+1} = \eps_{n+1}$) we can associate the pair of vectors $e^+_{c,\mu},~e^-_{c,\mu}$ given by
$$
e^\pm_{c,\mu} = m_{n+1}\dot{e}_{v_{n+1}^\pm} \pm \sum_{j=1}^nm_j\dot{e}_{v_{j}}.
$$
In particular, we have
$
e^\pm_{c,\eps_i} = e_{v^\pm_i}.
$
With these definitions, we have
$$
(e^\pm_{c,\mu},e_{c,\la^\vee}) = \pm \langle \la,\mu\rangle.
$$
The lattice  $\Lambda_{\geq0}^{c;ext}(\tri')$ has a basis
\begin{align}
\label{eq:ext-basis}
\{e^+_{c,\eps_i},e_{c,\eps_i^\vee}\}_{i=1}^{n+1} \sqcup \{\zeta' = {e}_{v^+_{n+1}}+{e}_{v^-_{n+1}}\}.
\end{align}
We extend the action of the Weyl group $W(c_\pm)$ in the natural way from $\Lambda_{\geq0}^{c}(\tri')$ to $\Lambda_{\geq0}^{c;ext}(\tri')$, declaring the vector $\zeta'$ to be a fixed point\footnote{Recall that by the gluing quotient relations~\eqref{eq:reduced-lattice} we have ${e}_{v^+_{i}}+{e}_{v^-_{i}}={e}_{v^+_{j}}+{e}_{v^-_{j}}$ for all $i,j$. } while letting the Weyl group act via the reflection representation permuting the basis vectors $\{e^+_{c,\eps_i},e_{c,\eps_i^\vee}\}_{i=1}^{n+1} $.  
Note that the quantum torus $\Tc_{{\geq0}}(\tri';c_\pm)$ is recovered as the centralizer of the element $Y_{e^+_{c,\omega_{n+1}}}$ in $\Tc(\Lambda^{ext}_{S'_{\geq0}})$. Then evidently we have
\begin{lemma}
\label{lem:dqt}
The quantum torus $\Tc(\Lambda^{ext}_{S'_{\geq0}})$ associated to the lattice $\Lambda^{ext}_{S'_{\geq0}}(\tri')$ is $W$-equivariantly isomorphic to $\mathcal{D}_q(T)[Z^{\pm1}]$, where the central subalgebra in the latter generated by $Z$ is identified the corresponding one in generated by $Y_{\zeta'}$. 
\end{lemma}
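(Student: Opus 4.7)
The strategy is to construct the isomorphism explicitly on the basis~\eqref{eq:ext-basis} of $\Lambda^{ext}_{S'_{\geq0}}(\tri')$, then verify the requisite commutation relations, $W$-equivariance, and identification of central subalgebras by direct computation of skew-pairings.

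The plan is to define a candidate isomorphism $\Phi$ on generators by
$$
\Phi(Y_{e_{c,\eps_i^\vee}}) = w_i, \qquad \Phi(Y_{e^+_{c,\eps_i}}) = D_i, \qquad \Phi(Y_{\zeta'}) = Z,
$$
for $1 \le i \le n+1$, and then check three things: (i) the skew pairings among the basis~\eqref{eq:ext-basis} agree with the defining pairings of $\mathcal{D}_q(T)[Z^{\pm1}]$; (ii) $\zeta'$ lies in the kernel of the skew form on the symplectically reduced lattice, so that its image generates a central subalgebra matching the central subalgebra generated by $Z$; and (iii) the $W$-action on $\Lambda^{ext}_{S'_{\geq0}}(\tri')$ intertwines with the natural $S_{n+1}$-action on $\mathcal{D}_q(T)$ by permutation of the indices of $w_i$ and $D_i$.

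For (i), the key pairing $(e^+_{c,\eps_i}, e_{c,\eps_j^\vee}) = \langle \eps_j,\eps_i\rangle = \delta_{ij}$ is immediate from the formula $(e^\pm_{c,\mu}, e_{c,\la^\vee}) = \pm\langle \la,\mu\rangle$ recorded just before~\eqref{eq:ext-basis}; this gives the relation $D_i w_j = q^{2\delta_{ij}}w_j D_i$. The pairings $(e_{c,\eps_i^\vee}, e_{c,\eps_j^\vee})=0$ and $(e^+_{c,\eps_i}, e^+_{c,\eps_j})=0$ reduce by linearity to the facts $(\dot{e}_{(c,i)}, \dot{e}_{(c,j)}) = 0$ and $(\dot{e}_{v_i},\dot{e}_{v_j}) = 0$ recorded after~\eqref{eq:edotbasis}, with the index $n+1$ case handled using that the only nontrivial pairings of $e_{c,\eps_{n+1}^\vee}$ are with $\dot{e}_{v_n}$ and $\dot{e}_{v^\pm_{n+1}}$, which are skew-orthogonal to all other basis vectors in the range $\{e_{c,\eps_j^\vee}\}$.

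For (ii), the key point is that the symplectic reduction~\eqref{eq:reduced-lattice} quotients by the elements $\xi_{c_+,i}+\xi_{c_-,i}$; via Remarks~\ref{rmk:root-cas} and~\ref{rmk:dotted-bases}, the span of these elements coincides (up to the invertible Cartan matrix) with the span of $\dot{e}_{v_i^+}+\dot{e}_{v_i^-}$ for $1\leq i \leq n$. Since the symplectic reduction imposes $e_{v_i^+}+e_{v_i^-} = e_{v_j^+}+e_{v_j^-}$ for all $i,j$ (as noted in the footnote preceding Lemma~\ref{lem:dqt}), the element $\zeta' = e_{v_{n+1}^+}+e_{v_{n+1}^-}$ coincides, after passing to the reduced lattice, with a central element of the form claimed. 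For (iii), the action of $W(c_\pm)$ on the $e_{c,\alpha^\vee}$ and $e^\pm_{c,\beta}$ via the reflection representation given by~\eqref{eq:weyl-xi}, extended to fix $\zeta'$, is manifestly intertwined by $\Phi$ with the permutation action of $S_{n+1}$ on the alphabets $(w_1,\ldots,w_{n+1})$ and $(D_1,\ldots,D_{n+1})$.

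The main obstacle is bookkeeping: one must distinguish carefully between the $e$-basis and the $\dot{e}$-basis in the isolating cluster (cf.~Remark~\ref{rmk:dotted-bases}), check that the central element $\zeta'$, which is defined using the $e$-basis, has the same image in the symplectic reduction as the natural generator coming from the $\xi$-basis (which is what controls the quotient~\eqref{eq:reduced-lattice}), and also verify that the centralizer description in Lemma~\ref{lem:dqt} is consistent with the description of $\mathcal{T}_{\geq 0}(\tri';c_\pm)$ as the kernel of the appropriate grading on $\mathcal{T}(\Lambda^{ext}_{S'_{\geq 0}})$. Once these compatibilities are traced through, the rest of the argument is a direct comparison of two free $\mathbb{Z}[q^{\pm1}]$-modules of the same rank with matching commutation relations and Weyl group actions.
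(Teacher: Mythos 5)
Your basic strategy -- build the isomorphism on the basis~\eqref{eq:ext-basis}, match skew-pairings, check $W$-equivariance, and identify the central element -- is the natural unpacking of what the paper leaves as ``evident,'' and in broad outline it is correct. However, the explicit map you write down is an algebra \emph{anti}-homomorphism, not an isomorphism: you have the roles of $w_i$ and $D_i$ reversed relative to the conventions of the source and target.

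Concretely: from~\eqref{eq:Y-mult} one has $Y_\lambda Y_\mu = q^{-2(\lambda,\mu)}Y_\mu Y_\lambda$. With the recorded pairing $(e^+_{c,\eps_i},e_{c,\eps_j^\vee})=\langle\eps_j,\eps_i\rangle=\delta_{ij}$, this gives
$$
Y_{e^+_{c,\eps_i}}\,Y_{e_{c,\eps_j^\vee}} \;=\; q^{-2\delta_{ij}}\,Y_{e_{c,\eps_j^\vee}}\,Y_{e^+_{c,\eps_i}},
$$
whereas the target relation in $\mathcal{D}_q(T)$ from~\eqref{eq:DqT} is $D_iw_j = q^{2\delta_{ij}}w_jD_i$. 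Your assignment $\Phi(Y_{e_{c,\eps_i^\vee}})=w_i$, $\Phi(Y_{e^+_{c,\eps_i}})=D_i$ therefore produces the wrong sign in the exponent; the correct choice is $Y_{e_{c,\eps_i^\vee}}\mapsto D_i$ and $Y_{e^+_{c,\eps_i}}\mapsto w_i$ (up to the scalar factors appearing in the explicit module action). This is confirmed independently by the paper's difference-operator action~\eqref{eq:action-def-Sprime}, where $Y_{c,\eps_i^\vee}$ acts as the $q^2$-dilation in $w_i$ (i.e.\ as $D_i$) and $Y_{c,\eps_i^+}$ acts as multiplication by $w_i$ up to scalars. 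Since $\mathcal{D}_q(T)$ admits no automorphism interchanging $w_i\leftrightarrow D_i$ without also inverting $q$, this is not merely a labeling convention but a genuine sign error. Once the assignment is reversed, the remainder of your argument goes through; one further point you gloss over -- verifying that $\zeta'$ is actually skew-orthogonal to the $e^+_{c,\eps_i}$ (and not just to the $e_{c,\eps_j^\vee}$, for which your formula $(\zeta',e_{c,\eps_j^\vee})=\delta_{n+1,j}-\delta_{n+1,j}=0$ works cleanly) -- requires chasing the $\dot{e}$-pairings in the quiver $Q_{\tri';c_\pm}$ through the $e=\dot{e}_{v_i^\pm}+\cdots+\dot{e}_{v_{n+1}^\pm}$ change of basis from Remark~\ref{rmk:dotted-bases}, which is a nontrivial if routine check rather than an immediate consequence of the footnote's gluing relation.
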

%Let us fix this isomorphism as \blue{[Did I put the right q-powers here?]}
%$$
%Y_{e_{c,\eps^\vee_{n+1}}}\mapsto D_{n+1}, \quad Y_{e_{c,\alpha^\vee_i}}\mapsto -q^{-1}D_iD_{i+1}^{-1}, \quad i=1,\ldots, n,
%$$
%\begin{align}
%\label{eq:cut-loc-iso}
%Y_{e^+_{c,\eps_i}}\mapsto -q^{-1}Y_{h_+}w_i,\qquad Y_{\zeta'}\mapsto Z.
%\end{align}

%spanned by 
%$\Lambda_{S_{>0}}$ and $e_{c,\eps_{n+1}},e_{c,\eps^\vee_{n+1}}$ with the algebra $\Dc_q(T)$ from~\eqref{eq:DqT} via the isomorphism
%$$
%Y_{c,\la} \mapsto \prod w_j^{\la_j}, \qquad Y_{c,\la^\vee} \mapsto \prod D_j^{\la_j}, \quad \la = (\la_1,\ldots, \la_{n+1})\in \mathbb{Z}^{n+1}.
%$$
%The action~\eqref{eq:weyl-xi} of Weyl group $W(c_\pm)$ via the reflection representation extends naturally to the extended quantum torus associated to $\Lambda_{Q_{\tri';c_\pm}}^{c;ext}$ and the corresponding universal Laurent ring. 

Consider the localization $\Tc_{\Lambda_{S'}^c;\loc}$ of the quantum torus $\Tc_{\Lambda_{S'}^c}$ at the multiplicative set
\begin{align}
    \label{eq:denom-set}
    Ø(c_\pm) = \left\{\prod_j (1-q^{2k_j}Y_{{c,\beta_j}}) \right\}_{\vec k,\vec\beta} \subset \mathcal{T}_{\Lambda^c_{S'}}
\end{align}
where $\vec k$ ranges over all finite length sequences of integers, and $\vec\beta$ over all sequences of positive roots (possibly with repetitions). Note that the denominator set $ Ø(c_\pm)$ is mutation invariant since the $Y_{{c,\beta_j}}$ commute with the subalgebra in $\Tc_{\Lambda_{S'}^c}$ corresponding to the unfrozen directions in $\Lambda_{S'}^c$, and so the definition of $\Tc_{\Lambda_{S'}^c;\loc}$ takes the same form in any cluster. 

Let $\mathfrak{S}_{\alpha,k}\subset \Tc_{\Lambda_{S'}^c}\{0\}$ be the multiplicative set consisting of elements of internal degree zero which are not multiples of the element  $1-q^{2k}Y_{c,\alpha}$ (which we recall is central in $\Tc_{\Lambda_{S'}^c}\{0\}$). Then both $\mathfrak{S}_{\alpha,k}$ and the set of all nonzero elements in $\Tc_{\Lambda_{S'}^c}\{0\}$ form left Ore denominator sets in $\Tc_{\Lambda_{S'}^c}$.  If we write $ \Tc_{\Lambda_{S'}^c;\rat}$ for the Ore localization of $ \Tc_{\Lambda_{S'}^c}$ at the latter denominator set, then just as in formula~\eqref{eq:filtration} of Section~\ref{sec:nildaha}, each divisor 
\begin{align}
\label{eq:def-deltadivisors}
d_{\alpha,k}(c) = \{q^{2k}Y_{c,\alpha}=1\}
\end{align}
defines a filtration on $\Tc_{\Lambda_{S'}^c;\rat}$ with filtered pieces 
$$
\Tc_{\Lambda_{S'}^c;\rat}^{\alpha,k;m} = (1-q^{2k}w_\alpha)^{m}\mathfrak{S}_{\alpha,k}^{-1}\Tc_{\Lambda_{S'}^c}
$$
and projectors $\pi_{\alpha,k;m}$.
We define evaluations and residues for elements of $\Tc_{\Lambda_{S'}^c;\rat}^{\alpha,k;0}$ and $\Tc_{\Lambda_{S'}^c;\rat}^{\alpha,k;-1}$ respectively in the same way as Definition~\ref{def:eval-res}. Once again, since the denominators $ Ø(c_\pm) $ commute with the $Y_{c,\alpha_j}$,  the internal grading~\eqref{eq:internal-grading} extends to $\Tc_{\Lambda_{S'}^c;\loc}$ and moreover preserves each of its filtered pieces relative to the divisors $d_{\alpha,k}$. 

\begin{remark}
\label{rmk:res-hom}
Since $1-q^{2k}Y_{c,\alpha}$ is central in $\Tc_{\Lambda_{S'}^c}\{0\}$, the quotient $\Tc_{\Lambda_{S'}^c;\rat}^{\alpha,k;0}\{0\}/\Tc_{\Lambda_{S'}^c;\rat}^{\alpha,k;1}\{0\}$ is a ring and the restriction of the evaluation map to the internal degree zero piece $\Tc_{\Lambda_{S'}^c;\rat}^{\alpha,k;0}\{0\}$ is a ring homomorphism. 
\end{remark}

Now we return to the context of a $c_\pm$-isolating cluster $Q_{\tri';c_\pm}$. For each element $\beta$ of the root lattice of $G$ we use the element $Y_{c,\beta^\vee}=Y_{c,\beta^\vee}({\tri';c_\pm})$ from~\eqref{eq:Y-coweights} to define an analog of the automorphism from~\eqref{eq:shifts}:
\begin{align}
\label{eq:Yshifts}
\Ad_{Y_{c,\beta^\vee}}\colon A \longmapsto A^{[\beta]} = Y_{c,\beta^\vee} A Y^{-1}_{c,\beta^\vee}.
\end{align}
Then for all $s_\alpha\in W(c_\pm)$ and $A\in\Tc_{\Lambda_{S'}^c;\rat}^{\alpha,k;0}$,  we have (cf Lemma~\ref{lem:symmetry-condition})
\begin{align}
\label{lem:trade}
(s_\alpha A)\big|_{d_{\alpha,k}}=A^{[k\alpha]}\big|_{d_{\alpha,k}}.
\end{align}

Finally, we observe that the constructions above admit analogs for the extended lattice $\Lambda^{c;ext}({\tri';c_\pm})$ in which we replace $\Lambda_{\geq0}^c(\tri')$ by $\Lambda^{c;ext}_{\geq0}(\tri')$ in the decomposition~\eqref{eq:cutsplitlat}. We write $\Tc_{\loc}({\tri';c_\pm})$ for the localized quantum torus in the isolating cluster $Q_{\tri';c_\pm}$,  and $\Tc^{ext}_{\loc}({\tri';c_\pm})$ for its extended version. Similarly, we write $\Tc_{\geq0;\loc}({\tri';c_\pm})$ for the localization at $Ø(c_\pm)$ of the quantum torus $\Tc_{\geq0}({\tri';c_\pm})$.

Now we can state the main definition of this section.
\begin{defn}
\label{def:Tres}
    The (extended) $PGL_{n+1}$ \emph{residue quantum torus} $\Tc_{\res}({\tri';c_\pm})$ (respectively $\Tc^{ext}_{\res}({\tri';c_\pm})$) for the $c_\pm$-isolating cluster $Q_{\tri';c_\pm}$ consists of all elements $A$ of $\Tc_{\loc}({\tri';c_\pm})$ (respectively $\Tc^{ext}_{\loc}({\tri';c_\pm})$ ) such that: 
    \begin{enumerate}
            \item The element $A$ is invariant under the action of the diagonally embedded Weyl group $W(c_\pm)$;
            %Each graded component $A^\mu\in\mathcal{T}_{\Xi^c}^{loc}$
        \item The element $A$ has at worst simple poles at the divisors $d_{\alpha,k}$, and for all elements $\nu$ of the weight lattice of $PGL_{n+1}$ and integers $k$ its residues satisfy
\begin{align}
\label{eq:Lresidue-condition}
\mathrm{res}_{d_{\alpha,k}}\left(A_\nu + A_{s_\alpha(\nu)+k\alpha}Y_{c,(\langle \nu,\alpha\rangle-k)\alpha^\vee}({\tri;c_\pm})\right)=0.
%\left( A(Y_{2\overline\xi_{h_+}+\overline\xi_{h_-}}(Q_{\tri;c_\pm})^{|\langle\mu,\alpha_1^\vee\rangle-k|} + Y_{\overline\xi_{v_{\alpha_1}}}(Q_{\tri;c_\pm})^{|\langle\mu,\alpha_1^\vee\rangle-k|})\right)^{(\mu)} \quad \text{is regular on} \quad \delta_{\alpha_1,k}=\{ q^{2k}Y_{e_{c,\alpha_1}}=1\}.
\end{align}
            \end{enumerate}
\end{defn}
The algebra $\Tc_{\res}({\tri';c_\pm})$ is generated by its subalgebras $\Tc_{<0}(\tri';c_\pm)$ and $\Tc_{\geq0;\res}(\tri';c_\pm)$, where the latter consists of all elements of the localized quantum torus $\Tc_{\geq0;\loc}({\tri';c_\pm})$ satisfying the conditions in Definition~\ref{def:Tres}. 
The following Lemma summarizes the relation of the algebra $\Tc_{\res}({\tri';c_\pm})$ to the residue algebra $\Dres$ from Section~\ref{sec:AlgWhit}.

\begin{lemma}
\label{lem:cutloc}
The identification of quantum tori from Lemma~\ref{lem:dqt} induces an isomorphism of algebras
$$
\Tc^{ext}_{\geq0;\res}(\tri';c_\pm) \simeq \Dres[Z^{\pm1}].
$$
The subalgebra $\Tc_{\geq0;\res}(\tri';c_\pm)$ in in $\Tc^{ext}_{\geq0;\res}(\tri';c_\pm)$ coincides with the centralizer of the element $Y_{e^+_{c,\omega_{n+1}}}$.
\end{lemma}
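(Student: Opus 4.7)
The plan is to trace the definitions through the $W$-equivariant isomorphism of quantum tori supplied by Lemma~\ref{lem:dqt}. First, under the extended basis~\eqref{eq:ext-basis}, the pairing computation $(e^+_{c,\eps_i}, e_{c,\eps_j^\vee})=\delta_{ij}$, $(e^+_{c,\eps_i},e^+_{c,\eps_j})=(e_{c,\eps_i^\vee},e_{c,\eps_j^\vee})=0$, together with the centrality of $\zeta'$, matches the defining relations of $\Dc_q(T)[Z^{\pm1}]$ from~\eqref{eq:DqT}. So Lemma~\ref{lem:dqt} produces a $W$-equivariant isomorphism sending $Y_{e^+_{c,\eps_i}}\mapsto w_i$, $Y_{e_{c,\eps_i^\vee}}\mapsto D_i$, and $Y_{\zeta'}\mapsto Z$ (up to the specified normalisations). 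Under this identification, the composite element $Y_{c,\alpha}$ from~\eqref{eq:Y-weights} corresponds to $w^\alpha$ and $Y_{c,\alpha^\vee}$ from~\eqref{eq:Y-coweights} corresponds to $D^\alpha$, for every root $\alpha$, and the internal grading~\eqref{eq:internal-grading} matches the internal grading on $\Dc_q(T)$ defined by conjugation with the $w_i$.

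Second, I would observe that the Ore denominator set $\mathrm{\O}(c_\pm)$ from~\eqref{eq:denom-set} maps to the multiplicative set generated by $\{1-q^{2k}w_\beta\}$ for $\beta\in\Delta_+$, $k\in\Z$, which is exactly the set used in the localisation~\eqref{eq:ore1} defining $\Dc_q(T)_\loc$. Hence the isomorphism of Lemma~\ref{lem:dqt} extends to a $W$-equivariant isomorphism $\Tc^{ext}_{\geq0;\loc}(\tri';c_\pm)\simeq \Dc_q(T)_\loc[Z^{\pm1}]$. The divisors $d_{\alpha,k}(c)$ from~\eqref{eq:def-deltadivisors} are sent to the divisors $d_{\alpha,k}$ of~\eqref{eq:divisors}; the resulting order filtrations agree direction-by-direction, and therefore so do the associated projectors $\pi_{\alpha,k;m}$, the evaluations $A|_{d_{\alpha,k}}$, and the residues. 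The diagonal action of $W(c_\pm)$ corresponds to the tautological permutation action of $W$ on $\Dc_q(T)_\loc$, and the automorphism $\mathrm{Ad}_{Y_{c,\beta^\vee}}$ from~\eqref{eq:Yshifts} corresponds to $\mathrm{Ad}_{D^\beta}$ from~\eqref{eq:shifts}. Comparing term by term, the residue condition~\eqref{eq:Lresidue-condition} in Definition~\ref{def:Tres} translates into the residue condition~\eqref{eq:residue-condition} in Definition~\ref{def:Dres}, so the subalgebras singled out on the two sides match. This gives the isomorphism $\Tc^{ext}_{\geq0;\res}(\tri';c_\pm)\simeq \Dres[Z^{\pm1}]$.

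Finally, for the second statement, I would compute the pairing of $e^+_{c,\omega_{n+1}}=\sum_{i=1}^{n+1}e^+_{c,\eps_i}$ with the basis~\eqref{eq:ext-basis}: it vanishes on all of $\Lambda^{c;ext}_{\geq0}$ except on the $e_{c,\eps_i^\vee}$, where it takes the value $1$. Hence an element of $\Lambda^{c;ext}_{\geq0}$ lies in the kernel of $(e^+_{c,\omega_{n+1}},\cdot)$ precisely when its coefficients on the $e_{c,\eps_i^\vee}$ sum to zero, i.e.\ when its $\eps^\vee$-component lies in the root lattice. Under the isomorphism above this means $Y_{e^+_{c,\omega_{n+1}}}\leftrightarrow w_1w_2\cdots w_{n+1}$ (up to a central power of $q$), and the centralizer in $\Dc_q(T)_\loc[Z^{\pm1}]$ of this element is exactly the $W$-stable subalgebra generated by $\Oc(T)_\loc$, $Z^{\pm1}$, and the $D^{\alpha}$ for $\alpha$ in the root lattice. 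This subalgebra is the image of the non-extended quantum torus $\Tc_{\geq0;\res}(\tri';c_\pm)$, exactly because the basis of $\Lambda^c_{\geq0}(\tri';c_\pm)$ includes the $\dot e_{c,i}=e_{c,\alpha_i^\vee}$ for $1\le i\le n$ and the $e^+_{c,\eps_i}=e_{v_i^+}$, but not a preferred lift of $e_{c,\eps_i^\vee}$ itself. The main (essentially the only) obstacle in making this rigorous is bookkeeping: verifying, via the direct decomposition~\eqref{eq:cutsplitlat} and the basis~\eqref{eq:ext-basis}, that the images of the various generators of $\Tc^{ext}_{\geq0}(\tri';c_\pm)$ satisfy precisely the $q$-commutation rules of $\Dc_q(T)[Z^{\pm1}]$ with the prescribed sign and $q$-factor conventions, so that the dictionary $Y_{c,\alpha^\vee}\leftrightarrow D^\alpha$, $Y_{c,\alpha}\leftrightarrow w^\alpha$ intertwines the residue conditions without any hidden scalar correction.
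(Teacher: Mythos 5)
The paper states Lemma~\ref{lem:cutloc} without proof, treating it as a routine translation of Definition~\ref{def:Tres} through the $W$-equivariant torus identification of Lemma~\ref{lem:dqt}; your proof carries out exactly this definitional chase and does so correctly — matching the denominator sets $\mathrm{\O}(c_\pm)$ with the set from~\eqref{eq:ore1}, the divisors $d_{\alpha,k}(c)$ with the $d_{\alpha,k}$ of~\eqref{eq:divisors}, the residue conditions~\eqref{eq:Lresidue-condition} with~\eqref{eq:residue-condition}, and identifying the kernel of $(e^+_{c,\omega_{n+1}},\,\cdot\,)$ inside $\Lambda^{c;\mathrm{ext}}_{\geq0}$ as the sublattice generated by the $e^+_{c,\eps_i}$, the root-lattice combinations $e_{c,\alpha_i^\vee}=\dot e_{c,i}$, and $\zeta'$. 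This is the argument the paper intends, and your write-up is a correct filling-in of it.
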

In particular, the algebra $\Tc^{ext}_{\geq0;\res}(\tri';c_\pm)$ is generated by $Y_{\zeta'}^{\pm1}$ and the elements from point (2) of Lemma~\ref{lem:small-genset}.

\begin{remark}
\label{rmk:A-res-torus}
The construction in Definition~\ref{def:Tres} can also be carried out at the level of $\Xi$-lattices to define the $SL_{n+1}$ residue quantum torus $\Tc^{\Ac}_{\res}(\tri';c_\pm)$ associated to a $c_\pm$-isolating cluster $Q_{\tri';c_\pm}$. The algebra $\Tc^{\Ac}_{\res}(\tri';c_\pm)$ consists of elements of the $Ø(c_\pm)$-localized $\Ac$-quantum torus $\Tc^{\Ac}_{\loc}(\tri';c_\pm)$ satisfying conditions (1) and (2), where $\nu$ in (2) now ranges over the weight lattice of $SL_{n+1}$.

\end{remark}

%\begin{remark}
%\label{rmk:other-res-tori}
%\blue{replace this remark}
%Suppose that $\mu$ is a cluster transformation based at isolating cluster $Q_{\tri',c_\pm}$ and $\mu_\ell$ is a mutation in some direction $\ell\in I_{S'}^{<0}$. Then since such a mutation commutes with the $W(c_\pm)$ action and preserves the elements $Y_{m\alpha^\vee}$ used to formulate the residue condition~\eqref{eq:Lresidue-condition}, we can define in the same way a residue quantum torus $\Tc_{\res}(\mu_\ell(Q_{\tri';c_\pm}))$ for the cluster $\mu_\ell(Q_{\tri',c_\pm})$. The quantum cluster transformation $\mu_\ell$ extends to an isomorphism between the further localized residue quantum tori
%$$
%\mu_\ell \colon S_{Y_\ell}^{-1}\Tc_{\res}(Q_{\tri';c_\pm}) \rightarrow S_{Y'_\ell}^{-1}\Tc_{\res}(\mu(Q_{\tri';c_\pm}))
%$$
%where $S_Y$ is the multiplicative set $\{(1+q^{2k+1}Y)\}_{k\in\mathbb{Z}}$. Since such a mutation $\mu_\ell$ also preserves $\Lambda_{S_{<0}}$, given any cluster transformation $\mu$ consisting only of mutations in directions $I_{S}^{<0}$ we can iterate the process above to define a residue quantum torus $\Tc_{\res}(\mu(Q_{\tri';c_\pm}))$ in the cluster $\mu(Q_{\tri';c_\pm})$.
%\end{remark}

Now suppose that $Q_{\tri;c}$ is an isolating cluster for a simple closed curve $c$ on a marked surface $S$,  $\tri'=\mathcal{C}_c(\tri)$ is the image of $\tri$ under the cutting functor, and $Q_{\tri';c_\pm}$ the corresponding $c_\pm$-isolating quiver on the surface $S'$. 
\begin{defn}
\label{eq:local-S-torus}
We write $\Lbb(\Lambda^{\mathrm{fr}}_{\tri;c})$ for the universal Laurent ring associated to the quiver $(\Lambda_{S}(Q_{\tri;c}),\{e_\ell\}_{\ell\in I_{S}})$, where only the directions $\ell\in I_{S_{>0}}$ are regarded as mutable, and all others are frozen. 
\end{defn}
As an algebra $\Lbb(\Lambda^{\mathrm{fr}}_{\tri;c})$ is generated by the quantum torus $\mathcal{T}_{<0}(\tri;c)$ together with the universal Laurent ring  $\Lbb_{S_{\geq0}}(\tri;c)$ associated 
to the based lattice $(\Lambda_{S_{\geq0}},\{e_\ell\}_{\ell\in I^{S_{\geq0}}})$ where the directions $e_{h_\pm}$ are regarded as frozen and all others mutable. 
Following Proposition~\ref{prop:alg-whit}, we now construct a local gluing isomorphism 
$$
\eta \colon \Tc_{\res}(Q_{\tri';c_\pm}) \longrightarrow \Lbb(\Lambda^{\mathrm{fr}}_{\tri;c}).
$$
Recall from Section~\ref{sec:alg-whit} the space $\Fcr$ of compactly supported $\mathbb{Z}[q^{\pm1}]$-valued functions on $\mathbb{Z}^{n+1}$, along with its subspace $\Vcr$ of functions vanishing outside the dominant cone as defined in~\eqref{eq:Vdef}. We define a faithful representation of the quantum torus $\Tc(Q_{\tri;c})$ on the $\mathbb{Z}[q^{\pm1}]$-module
$$
M[S_{\leq0}] = \mathcal{T}_{\leq0}(\tri;c)\otimes_{\mathbb{Z}[q^{\pm1}]} \Fcr
$$
of $\mathcal{T}_{\leq0}(\tri;c)$-valued functions on $\mathbb{Z}^{n+1}$ as follows. For $\ell\in I^{S_{<0}}$, the generator $Y_{e_\ell}$ multiplies the value of such a function from the left by $Y_{e_\ell}$. For $\ell\in I^{S_{>0}}= \{(s_j,t_j)\}_{j=1}^n$, the generator $Y_{e_\ell}$ acts via the embedding of $\mathcal{T}_{S_{>0}}$ into $\mathcal{T}(\Lambda_{GL_{n+1}})$ described in Remark~\ref{rmk:toda-embed}, where we define the action of the generators $P_i,X_i$ of the latter quantum torus on $\Fcr[S_{\leq0}]$ by
 \begin{align}
\label{eq:toda-torus-action}
(P_{\epsilon_i}\circ \phi)(\la) = -q^{-1}Y_{{h_+}}\phi(\la-\eps_i), \quad i=1,\ldots n+1
\end{align}
$$
 (X_{\alpha_i} \circ\phi)(\la) = -q^{2\langle \la,\alpha_i\rangle+1}\phi(\la), \quad i=1,\ldots n
$$
and
$$
 (X_{\eps_1}\circ\phi)(\la) = q^{2\la_1}\phi(\la).
$$
%$$
%Y_{h_+}\mapsto Y_{h_+}P_{n+1}, \quad Y_{h_-}\mapsto Y_{h_-}P_{1}^{-1},
%$$
 We define the action of the remaining generators $Y_{h_{\pm}}$ of $\Tc(Q_{\tri;c})$ by
\begin{align}
\label{eq:action-def-S}
\nonumber (Y_{h_+}\circ \phi)(\la) &= -q^{-1}Y_{h_+} \phi(\la_1,\ldots,\la_{n+1}-1),\\
 (Y_{h_-}\circ \phi)(\la) &= -qY_{h_-} \phi(\la_1+1,\ldots,\la_{n+1}).
\end{align}

%We do this using the extended version $\Tc^{ext}_{G,S';\res}(Q_{\tri';c_\pm})$. To define its analog on the side of the glued surface $S$,
%%This requires the following preparation on the side of the glued surface $S$.
%recall the non-orthogonal direct sum decomposition
%$$
%\Lambda_{S}(Q_{\tri;c}) = \Lambda_{S_{<0}}(Q_{\tri;c})\bigoplus \Lambda_{S_{\geq0}}(Q_{\tri;c}).
%$$
%introduced in Notation~\ref{not:LambdaS-split}. 

To relate more precisely with the constructions of Section~\ref{sec:AlgWhit}, consider  the based lattice $\Lambda_{GL_{n+1}}$ from Section~\ref{subsec:ltoda} along with its extension $\Lambda_{GL_{n+1};\zeta}$ by the rank 1 lattice $\mathbb{Z}\zeta$, which is equipped with the basis obtained by adjoining the vector $e_h=-\zeta -p_{\eps_1}$. Then if we write $\Lambda_{GL_{n+1};\zeta_*}$ for the quiver where the vector $e_h$ is regarded as a frozen direction, we have an isometric embedding
\begin{align}
\label{eq:pgl-embed}
\Lambda_{S_{\geq0}}(\tri;c) \longrightarrow \Lambda_{GL_{n+1};\zeta_*}
\end{align}
given by
$$
e_{s_i}(\Lambda_{S_{\geq0}})\mapsto e_{s_i}(\Lambda_{GL_{n+1};\zeta_*}),\quad e_{t_i}(\Lambda_{S_{\geq0}})\mapsto e_{t_i}(\Lambda_{GL_{n+1};\zeta_*}), \qquad 1\leq i\leq n
$$
$$
e_{h_+}\mapsto e_{s_{n+1}}+e_{t_{n+1}} ,\quad e_{h_-}\mapsto e_h.
$$
%Hence we can regard  $\Lambda_S$ as being isometrically embedded into the extended lattice 
%$$
%\Lambda_{S}^{ext}=\Lambda_{S_{<0}}\oplus \Lambda_{GL_{n+1};\zeta_*},
%$$
%with skew pairing between the two summands determined by 
%$$
%(e_{h},e_{\mu})_{\Lambda_S^{ext}} = (e_{h_-},e_{\mu})_{\Lambda_S}, \qquad (e_{s_{n+1}},e_{\mu})_{\Lambda_S^{ext}} = (e_{h_+},e_{\mu})_{\Lambda_S}, \quad \in \Lambda_{S_{<0}},
%$$  
%and $(e_{\ell},e_\mu)
%=0$ for all other basis vectors $e_{\ell}$ for $\Lambda_{GL_{n+1};\zeta_*}$ and $\mu\in\Lambda_{S_{<0}}$. 

%We write $\Lbb(Q^{ext}_{\tri;c})$ for the corresponding universal Laurent ring where we regard all basis vectors except $\{e_{s_j},e_{t_j}\}_{j=1}^n$ as being frozen. Hence $\Lbb(Q^{ext}_{\tri;c})$ is generated by its subalgebras $\Lbb(\Lambda_{GL_{n+1};\zeta_*})$ and $\mathcal{T}_{<0}(\tri;c)$.
Then the action $\mathcal{T}(\Lambda_{GL_{n+1};\zeta})$ on $\Fcr[S_{\leq0}]$ where $P_i,X_i$ act via~\eqref{eq:toda-torus-action} and the central generator $Y_\zeta$ acts by
%\begin{align}
%\label{eq:toda-torus-action}
%(P_{\epsilon_i} \phi)(\mu) = -q^{-1}Y_{{h_+}}\phi(\mu-\eps_i), \qquad (X_{\alpha_i} \phi)(\mu) = -q^{2\langle \mu,\alpha_i\rangle+1}\phi(\mu),
%\end{align}
%and
$$
 (Y_\zeta\circ \phi)(\la) =Y_{h_+}Y_{h_-}^{-1}\phi(\la)
$$
restricts to the action of $\mathcal{T}_{{\geq0}}(\tri;c)$ defined using~\eqref{eq:action-def-S}.

%We extend the action of $\Tc_S$ on $M[S_{\leq0}]$ to one of $\mathcal{T}(\Lambda^{ext}_{S})$ in accordance with~\eqref{eq:toda-torus-action} by letting
%\begin{align*}
%(Y_{e_{s_{n+1}}}f)(\la) &= (-1)^nq^{-n}q^{2\lambda_{n+1}}Y_{h_+}\cdot f(\lambda).
%\end{align*}

\begin{lemma}
\label{lem:glue-ext}
The universal Laurent ring $\Lbb(\Lambda_{GL_{n+1};\zeta_*})$ of the extended quiver is given by
\begin{align}
\label{eq:ext-laurent}
\Lbb(\Lambda_{GL_{n+1};\zeta_*})\simeq \Lbb(\Lambda_{GL_{n+1}})[Y_{\zeta}^{\pm1}],
\end{align}
and the embedding~\eqref{eq:pgl-embed} induces an isomorphism
$$
\Lbb_{S_{\geq0}}(\tri;c) \simeq \Lbb(\Lambda_{GL_{n+1};\zeta_*})^{\langle P_{\omega_{n+1}}\rangle},%\quad \bs{\mathrm P} = P_{1}\cdots P_{n+1},
$$
where the right hand side denotes the centralizer of $P_{\omega_{n+1}}$ in $\Lbb(\Lambda_{GL_{n+1};\zeta_*})$.
\end{lemma}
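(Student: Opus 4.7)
The plan for (1) is to exploit the centrality of $Y_\zeta$. By construction $e_h=-\zeta-p_{\eps_1}$ with $p_{\eps_1}=\xi_{s_1}-\xi_{t_1}\in\Lambda_{GL_{n+1}}$, so the underlying abelian group $\Lambda_{GL_{n+1};\zeta_*}$ splits as a direct sum $\Lambda_{GL_{n+1}}\oplus\mathbb{Z}\zeta$. A direct application of~\eqref{eq:double-form} shows that $\zeta$ pairs trivially with every element of $\Lambda_{GL_{n+1}}$, so the decomposition is skew-orthogonal and hence
$$
\Tc(\Lambda_{GL_{n+1};\zeta_*})\simeq \Tc(\Lambda_{GL_{n+1}})\otimes_{\mathbb{Z}[q^{\pm1}]}\mathbb{Z}[q^{\pm1}][Y_\zeta^{\pm1}],
$$
with $Y_\zeta$ central. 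Since $\zeta$ is in particular skew-orthogonal to every mutable basis vector $e_k$, the quantum mutation $\mu^q_k$ of the extended quiver fixes $Y_\zeta$ and acts as the mutation $\mu^q_k$ for the quiver $\Lambda_{GL_{n+1}}$ on the $\Tc(\Lambda_{GL_{n+1}})$ tensor factor. A finite sum $f=\sum_k Y_\zeta^k T_k$ therefore lies in $\Lbb(\Lambda_{GL_{n+1};\zeta_*})$ if and only if every $T_k$ lies in $\Lbb(\Lambda_{GL_{n+1}})$, which gives~\eqref{eq:ext-laurent}.

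For (2), the embedding is readily checked to be isometric by computing the pairings of its generators via~\eqref{eq:double-form}; the key identity is $e_{s_{n+1}}+e_{t_{n+1}}=p_{\alpha_{n+1}}$, so the image of $e_{h_+}$ is a pure $p$-vector. I next verify that the image of the embedding coincides with the skew-orthogonal complement of $p_{\omega_{n+1}}=\xi_{s_{n+1}}-\xi_{t_{n+1}}$ in $\Lambda_{GL_{n+1};\zeta_*}$. Each generator $e_{s_i},e_{t_i}$ with $1\le i\le n$ lies in this complement because $\langle\alpha_i,\omega_{n+1}\rangle=0$; the image of $e_{h_+}$ lies in it because $(p_\lambda,p_\mu)=0$ under~\eqref{eq:double-form}; and the image of $e_{h_-}$ lies in it because both $\zeta$ and $p_{\eps_1}$ are skew-orthogonal to $p_{\omega_{n+1}}$. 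A rank count then shows that the image and the complement are both free abelian of rank $2n+2$; and the quotient $\Lambda_{GL_{n+1};\zeta_*}/\mathrm{image}$ is the free rank-one group generated by the class of $e_{s_{n+1}}$ (equivalently of $-e_{t_{n+1}}$), so the image is saturated and therefore coincides with the complement. This identifies $\Tc(\Lambda_{S_{\geq0}}(\tri;c))$ with the centralizer of $P_{\omega_{n+1}}$ in $\Tc(\Lambda_{GL_{n+1};\zeta_*})$.

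It remains to check that this lattice-level identification restricts to universal Laurent rings. The frozen $\xi$-basis vectors $\xi_{s_{n+1}},\xi_{t_{n+1}}$ are preserved by every mutation at a mutable direction thanks to~\eqref{eq:f-mut}, so $P_{\omega_{n+1}}$ is $\mu^q_k$-invariant and its centralizer subring is stable under quantum mutation. On the other hand, the mutable direction sets $\{s_i,t_i\}_{i=1}^n$ agree on the two sides of the embedding, as do the skew pairings of these directions with the rest of the ambient lattice. The quantum mutation formulas therefore restrict compatibly under the embedding, and an element $f\in\Tc(\Lambda_{S_{\geq0}}(\tri;c))$ remains a Laurent polynomial under every sequence of mutations in the $S_{\geq0}$ quiver if and only if its image remains Laurent under every sequence of mutations in the extended Toda quiver. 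This gives the asserted isomorphism.

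The main obstacle I anticipate is the bookkeeping around the ``off-diagonal'' frozen vector $e_h=-\zeta-p_{\eps_1}$: after a mutation at $s_1$ or $t_1$ the individual basis vector $e_h$ and the abstract lattice element $\zeta$ transform differently, and one must track that the combination $e_h+p_{\eps_1}$ retains its intrinsic meaning even as its expansion in successive cluster bases becomes complicated. Conceptually this is resolved by observing that $Y_\zeta$ is an invariant element of the ambient algebra, fixed by every $\mu^q_k$, which follows from the skew-orthogonality of $\zeta$ established in the first paragraph.
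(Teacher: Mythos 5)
Your proof takes essentially the same route as the paper: for the first claim, the change of basis $e_h\mapsto -\zeta$ together with the centrality of $Y_\zeta$; for the second, identifying the image of the embedding with the skew-orthogonal complement of $p_{\omega_{n+1}}$ and then observing that this complement is mutation-stable because $P_{\omega_{n+1}}$ commutes with all mutable variables. The one small imprecision is that the skew-orthogonality of $\zeta$ to $\Lambda_{GL_{n+1}}$ is not literally \emph{a direct application of}~\eqref{eq:double-form} (that formula only defines the pairing on $\Lambda_{GL_{n+1}}$ itself), but rather follows from how the form on the extension $\Lambda_{GL_{n+1};\zeta}$ is set up so that $Y_\zeta$ plays the role of the central indeterminate $z$; once this is granted, the rest of your argument matches the paper's.
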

\begin{proof}
Since each $p_{\eps_i}$ is already contained in the lattice $\Lambda_{GL_{n+1}}$, we can perform a change of basis in $\Lambda_{GL_{n+1};\zeta_*}$ modifying only the frozen direction $e_h$ via $e_h\mapsto e_h+p_{\eps_1}=-\zeta$, and so the first claim of the Lemma follows. Since~\eqref{eq:pgl-embed} bijectively maps the set of mutable directions in $\Lambda_{S_{\geq0}}$ to those in $\Lambda_{GL_{n+1};\zeta_*}$, the second claim follows from observing that the image of~\eqref{eq:pgl-embed} is exactly the orthogonal complement to $p_{\omega_{n+1}}= \sum_{i=1}^{n+1}p_{\eps_i}$ in $\Lambda_{GL_{n+1};\zeta_*}$. Indeed, this becomes clear after passing to the basis of the latter given by all mutable directions together with $e_h,e_{t_{n+1}}+e_{s_{n+1}}$ and $e_{s_{n+1}}$ since
$$
(p_{\omega_{n+1}}, e_{s_i})=(p_{\omega_{n+1}}, e_{t_i}) = 0 , \quad i<n+1, \quad (\bs p, e_h)=0=(p_{\omega_{n+1}}, e_{t_{n+1}}+e_{s_{n+1}}),
$$
while $(e_{s_{n+1}},p_{\omega_{n+1}}) = 1$. So~\eqref{eq:pgl-embed} identifies $\mathcal{T}(\Lambda_{S_{\geq0}})$ with the centralizer of $P_{\omega_{n+1}}$ in $\mathcal{T}(\Lambda_{GL_{n+1};\zeta_*})$, and since $P_{\omega_{n+1}}$ commutes with all mutable variables the corresponding statement also holds for the universal Laurent rings.
\end{proof}
It follows from the isomorphism~\eqref{eq:ext-laurent} in the Lemma that the action of $\Lbb(\Lambda^{\mathrm{fr}}_{\tri;c})$ preserves the submodule 
$$
\Vcr[S_{\leq0}] = \mathcal{T}_{\leq0}(\tri;c)\otimes_{\mathbb{Z}[q^{\pm1}]}\Vcr \subset \Fcr[S_{\leq0}]
$$
of all functions vanishing outside the dominant cone. Indeed, this is clear for the subalgebra of multiplication operators $\mathcal{T}_{<0}(\tri;c)$, while by Lemma~\ref{lem:pres} the action of the algebra $\Lbb(\Lambda_{GL_{n+1};\zeta_*})$ (and hence that of $\Lbb_{S_{\geq0}}(\tri;c)$ by~\eqref{eq:ext-laurent}) also preserves $\Vcr[S_{\leq0}]$.
Now consider the action of $\Tc^{ext}_{\loc}(Q_{\tri';c_\pm})$ on 
$\mathcal{T}_{\leq0}(\tri';c_{\pm})\otimes\mathbb{Q}[q^{\pm1}](\bs w)$ induced by letting the subalgebra $\Tc_{<0}(Q_{\tri';c_\pm})$ act by left multiplication operators, and setting
\begin{align}
\label{eq:action-def-Sprime}
(Y_{c,\eps^\vee_{i}}\bullet f)(w) &= f(w_{i}\mapsto q^2w_{i}),\\
%$$
%Y_{c,\alpha_i^\vee}\bullet f = -qf(w_{i}\mapsto q^2w_{i},~w_{i+1}\mapsto q^{-2}w_{i+1}), \quad i=1,\ldots, n
%$$
%$$
(Y_{c,\eps^+_i}\bullet f)(w) &= -q^{-1}Y_{h_+}w_if(w),\\
%$$
%\end{align*}
%$$
(Y_{\zeta'}\bullet f)(w) &= Y_{h_+}Y_{h_-}^{-1}f(w).
\end{align}
It follows from Lemmas~\ref{lem:res-preserved} and~\ref{lem:cutloc} that the action of the extended residue torus $\Tc^{ext}_{\res}(Q_{\tri';c_\pm})$ preserves the subspace
$$
\Vcr[S'_{\leq0}]=\mathcal{T}_{\leq0}(\tri';c_{\pm})\otimes\mathbb{Z}[q^{\pm1}][\bs w]^{sym}. 
$$
Using the gluing diffeomorphism $S\simeq S'/\phi$ to identify the corresponding bicolored subgraphs on $S,S'$ we get an isomorphism of quantum tori 
$$
\iiota_\phi:  \mathcal{T}_{\leq0}(\tri;c)\simeq \mathcal{T}_{\leq0}(\tri';c_\pm).
$$
Combining it with the algebraic Whittaker transform~\eqref{eq:alg-whit}, we get an isomorphism of $\mathbb{Z}[q^{\pm1}]$-modules
\begin{align}
\label{eq:algWhitS}
\iiota_\phi\otimes \mathcal{W}\colon \Vcr[S_{\leq0}] \simeq \Vcr[S'_{\leq0}].
\end{align}

\begin{prop}
\label{prop:local-gluing-pgl}
The identification of faithful representations~\eqref{eq:algWhitS} induces an algebra isomorphism
\begin{align}
\label{eq:local-gluing-pgl}
\eta_c = \eta_{c;\tri}\colon \Lbb(\Lambda^{\mathrm{fr}}_{\tri;c})\simeq \Tc_{\res}(Q_{\tri';c_\pm})
\end{align}
intertwining the Dehn twist automorphisms:
\begin{align}
\label{eq:dehn-intertwining}
\eta_c\circ \tau_c(S)  = \tau_{c_-}(S')\circ \eta_c
\end{align}
\end{prop}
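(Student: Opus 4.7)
My plan is to establish the isomorphism by exploiting the faithful representations: each of $\Lbb(\Lambda^{\mathrm{fr}}_{\tri;c})$ and $\Tc_{\res}(Q_{\tri';c_\pm})$ embeds injectively into the endomorphisms of its respective module, so it suffices to show that every element $A \in \Lbb(\Lambda^{\mathrm{fr}}_{\tri;c})$, acting on $\Vcr[S_{\leq 0}]$ and transported via $\iiota_\phi\otimes\Wc$ to an operator on $\Vcr[S'_{\leq 0}]$, coincides with the action of a (necessarily unique) element $\eta_c(A) \in \Tc_{\res}(Q_{\tri';c_\pm})$, and conversely. Because the modules factor as tensor products and the identification $\iiota_\phi\otimes\Wc$ respects this factorization, I would decompose the argument according to the non-orthogonal splitting $\Lambda_S = \Lambda_{S_{\leq 0}}\oplus\Lambda_{S_{>0}}$ and its cut analog. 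The action of $\mathcal{T}_{<0}(\tri;c)$ consists of left multiplication operators which match tautologically via $\iiota_\phi$, so the essential work is in the ``isolating cylinder'' part where $\Wc$ is nontrivial.

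For that part, I would use Lemma~\ref{lem:glue-ext} to identify $\Lbb_{S_{\geq0}}(\tri;c)$ with the centralizer of $P_{\omega_{n+1}}$ inside $\Lbb(\Lambda_{GL_{n+1};\zeta_*}) \simeq \Lbb_{\Toda}(GL_{n+1})[Y_\zeta^{\pm 1}]$, and Lemma~\ref{lem:cutloc} to identify $\Tc_{\geq 0;\res}(\tri';c_\pm)$ with the centralizer of $Y_{e^+_{c,\omega_{n+1}}}$ inside $\Dres[Z^{\pm 1}]$. Then I would extend the algebraic Whittaker isomorphism $\Wbb$ from Proposition~\ref{prop:alg-whit} by the central variable $Y_\zeta \leftrightarrow Z$ and verify, using the explicit intertwiners~\eqref{eq:el-inter}, that $\Wbb(P_{\omega_{n+1}})$ coincides up to a scalar with $Y_{e^+_{c,\omega_{n+1}}}$, so that centralizers are interchanged. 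The intertwining of the respective module actions follows directly from~\eqref{eq:toda-intertwining}, together with a comparison of the formulas~\eqref{eq:toda-torus-action}--\eqref{eq:action-def-S} and~\eqref{eq:action-def-Sprime}: the $Y$-multiplication operators and $w$-multiplication operators on the two sides are matched by $\Wc$ exactly as in the intertwiners for the $\Rc_{\omega_k}$ and $\Rc_{\omega_k}[e_k\otimes 1]$ generators. Because all generators of $\Tc_{\res}$ can be expressed in terms of the elements from Lemma~\ref{lem:small-genset} together with $\mathcal{T}_{<0}$ and $Y_{\zeta'}$, this suffices to produce $\eta_c(A)$ and conclude it is an algebra isomorphism.

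For the Dehn twist intertwining~\eqref{eq:dehn-intertwining}, the key is that in the isolating cluster $Q_{\tri;c}$ the Dehn twist acts by the cluster sequence $\bs\mu_c$ of~\eqref{eq:short-Dehn}, whose restriction to the $S_{\geq 0}$ part is precisely (via the embedding of Remark~\ref{rmk:toda-embed}) the Toda automorphism $\tau$ of~\eqref{eq:tau}; meanwhile in the $c_\pm$-isolating cluster $\tau_{c_-}(S')$ acts by the shift $\varsigma_{c_-}$ of~\eqref{eq:cutdehn}, which in terms of the $\dot e$-basis translates the frozen directions $\dot e_{(c,i)}$ by the $\dot e_{v_i}$. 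Under the identification of $\Tc^{ext}_{\geq 0;\res}$ with $\Dres[Z^{\pm 1}]$ from Lemma~\ref{lem:cutloc}, this shift realizes exactly the automorphism $\widetilde\gamma\colon D_i \mapsto -w_iD_i$. Hence the relation $\Wbb\circ\tau = \widetilde\gamma\circ\Wbb$ from Proposition~\ref{prop:alg-whit} delivers the required equivariance, after checking that the Dehn twist acts trivially on the $<0$ part of the quiver (which is immediate since $\bs\mu_c$ is supported inside the isolating cylinder and the tack on $c_-$ is locally disjoint from the rest of $\tri'$).

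The main obstacle will be the careful bookkeeping in the second paragraph: verifying that the embedding $\Lambda_{S_{\geq 0}}\hookrightarrow \Lambda_{GL_{n+1};\zeta_*}$ is an isometry for the \emph{full} pairing with $\Lambda_{<0}$, so that the mixed commutation relations between Toda-chain generators and external multiplication operators match those between the residue-torus generators and the $\iiota_\phi$-transported operators. Concretely, one must confirm that the identifications $e_{h_\pm}(\tri;c) \leftrightarrow$ combinations involving $e_{v^\pm_{n+1}}$ and $\dot e_{c,i}$ (mediated by the $\overline{e}_{c_-,i}$ vectors of~\eqref{eq:bar-e-ci}) yield the prescribed actions~\eqref{eq:action-def-S} and~\eqref{eq:action-def-Sprime} of $Y_{h_\pm}$ and $Y_{\zeta'}$ on both modules; this is the only place where the detailed bicolored-graph geometry around the tacked circles enters, and it is what forces the introduction of the dotted basis $\{\dot e_\ell\}$ and the specific choice of extended lattice $\Lambda_{\geq 0}^{c;ext}$.
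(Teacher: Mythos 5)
Your proposal is correct and follows essentially the same route as the paper: both reduce to the faithful representations, note the tautological match of the $\Tc_{<0}$ factors, combine Lemmas~\ref{lem:glue-ext} and~\ref{lem:cutloc} with the intertwiners of Proposition~\ref{prop:alg-whit} to match the extended algebras as subrings of endomorphisms, cut down to the centralizers of $P_{\omega_{n+1}}$ and $Y_{e^+_{c,\omega_{n+1}}}$, and deduce the Dehn twist equivariance from the identification of $\tau_c(S)$ with $\tau$, of $\tau_{c_-}(S')$ with $\widetilde\gamma$, and the relation $\Wbb\circ\tau=\widetilde\gamma\circ\Wbb$ in~\eqref{eq:aut-inter}. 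The bookkeeping concerns you flag in the last paragraph are exactly what the paper resolves in the lead-up to the Proposition via the dotted bases and Lemmas~\ref{lem:dqt},~\ref{lem:cutloc}, and~\ref{lem:glue-ext}, so no additional work is required.
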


\begin{proof}
Recall that the algebras $\Lbb(\Lambda^{\mathrm{fr}}_{\tri;c})$ and $\Tc_{\res}(Q_{\tri';c_\pm})$ are generated by their factors $\Tc_{<0}(\tri,c)$ (resp. $\Tc_{<0}(\tri';c_\pm)$ ) and $\Lbb_{S_{\geq0}}(\tri;c)$ (resp. $\Tc_{\res}(Q_{\tri';c_\pm})$). Clearly $\Tc_{<0}(\tri,c)$ and $\Tc_{<0}(\tri';c_\pm)$ give the same subalgebra of the endomorphism ring of $\Vcr[S_{\leq0}]$ under the identification ~\eqref{eq:algWhitS}. On the other hand, combining Lemmas~\ref{lem:cutloc} and~\ref{lem:glue-ext} with the intertwining relations for the algebra generators in Proposition~\ref{prop:alg-whit}, we deduce that the same is true of the pair of extended algebras $\Lbb(\Lambda_{GL_{n+1};\zeta_*})$ and $\Tc^{ext}_{\res}(Q_{\tri';c_\pm})$. Since the action of $Y_{e^+_{c,\omega_{n+1}}}$ is identified with that of $P_{\omega_{n+1}}$, the  Proposition follows by the characterization of $ \Tc_{\res}(Q_{\tri';c_\pm})$ and  $\Lbb_{\geq0}(\tri;c)$ as the centralizers of these elements in their extended versions. 
For the last claim, we note that formula \eqref{eq:short-Dehn} identifies the Dehn twist automorphism $\tau_c(S)$ with the restriction to $\Lbb(\Lambda^{\mathrm{fr}}_{\tri;c})$ of the automorphism $\tau$ in~\eqref{eq:tau}, while \eqref{eq:cutdehn} identifies $\tau_c(S')$ with the the restriction to $\Tc_{\res}(Q_{\tri';c_\pm})$  of the automorphism $\widetilde\gamma$. Hence we deduce~\eqref{eq:dehn-intertwining} from the intertwining relation~\eqref{eq:aut-inter}.
\end{proof}

\begin{remark}
As observed in the proof of Proposition~\ref{prop:local-gluing-pgl}, it is immediate from the construction of the isomorphism $\eta_c$ that it is indeed `local' in the sense that
\begin{align}
\label{eq:local-is-local}
\ell\in I_{S}^{<0} \implies \eta_c(Y_\ell) = Y_{\iiota_\phi(\ell)},
\end{align}
and that $\eta_c$ restricts to an isomorphism 
\begin{align}
\label{eq:local-nontriv}
\eta_{c}^{\geq0}\colon \Lbb_{S_{\geq0}}(\tri;c)\simeq \Tc_{\geq0;\res}(\tri';c_\pm).
\end{align}

\end{remark}

\begin{remark}
The particular choice of faithful representation spaces $\Vcr[S_{\leq0}]$ and $\Vcr[S'_{\leq0}]$ used to construct $\eta_c$ is not essential: we get the same isomorphism~\eqref{eq:local-gluing-pgl} if we work instead with functions valued in the skew fraction fields or (partial) completions of the quantum tori associated to $\Lambda_{S_{\leq0}}$ and $\Lambda_{S'_{\leq0}}$ respectively. 
\end{remark}

The quivers $\quiver_{\tri;c}$ and $\quiver^\omega_{\tri';c_\pm}$ are a prototypical example of a more general notion that we call a \emph{locally glueable pair.} Consider the quivers $I^{--}_{\Toda},I^{+-}_{\Toda},I^{++}_{\Toda}$ illustrated in Figure~\ref{fig:Qmm}, and the ones ${I}^{--}_{\mathrm{hat}},{I}^{+-}_{\mathrm{hat}},{I}^{++}_{\mathrm{hat}}$ illustrated in Figure~\ref{fig:Qmmcut}.

\begin{figure}[h]
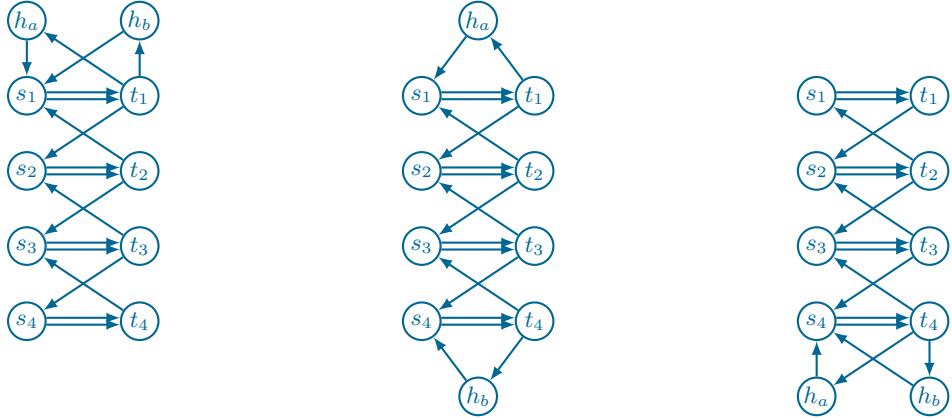

\subfile{fig-Qmm}
\caption{Possible local quivers $I^{--}_{\Toda},I^{+-}_{\Toda},I^{++}_{\Toda}$ in the definition of a locally glueable pair for $PGL_5$.}
\label{fig:Qmm}
\end{figure}

\begin{figure}
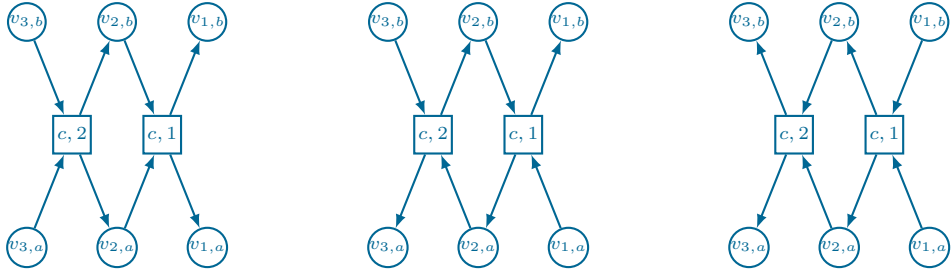

\subfile{fig-Qmmcut.tex}
\caption{Possible local quivers $I^{--}_{\mathrm{hat}},I^{+-}_{\mathrm{hat}},I^{++}_{\mathrm{hat}}$ in the definition of a locally glueable pair for $PGL_3$. }
\label{fig:Qmmcut}
\end{figure}

\begin{defn}
\label{def:glueable-pair}
%\blue{[Can you check I didn't fuck up this definition?]}
A \emph{locally glueable pair} of handle signature $\eps=(\eps_a,\eps_b)\in\{++,-+,--\}$ consists of a pair of quivers $(\quiver,\quiver_\cut)$ together with the following additional data:
\begin{enumerate}
\item
A decomposition of index sets $I = I_{<0}\sqcup I_{\geq0}$ of the quiver $\quiver$, and a bijection $I_{\geq0}\simeq I^{\eps}_{\Toda}$ such that the sublattice in $\Lambda=\Lambda_Q$ corresponding to directions $\{s_j,t_j\}_{j=1}^n$ is orthogonal to the sublattice $\Lambda_{<0}$ corresponding to $I_{<0}$.

\item 
A decomposition of index sets $I_{\cut} = I_{\cut}^{<0}\sqcup I_{\cut}^{\geq0}$ and a bijection  $I_{\cut}^{\geq0}\simeq I^\eps_{\mathrm{hat}}$ such that the sublattice in $\Lambda_{Q^{\cut}}$ corresponding to directions $\{e_{c,j}\}_{j=1}^n$ is orthogonal to the sublattice $\Lambda^{\cut}_{{<0}}$ corresponding to $I_{\cut}^{<0}$, 
%such that the sublattice $\Lambda_{\geq0;\cut}^{\omega}$ spanned by the vectors corresponding to $\{v_{j,a},v_{j,b}\}$ together with the vectors
%$$
%e_{c,j} = e_{c_+,j} - e_{c_-,j} + \eps_b (j-1)(v_{j+1,b}-v_{j,b})
%$$
%is isometric to $\overline{I}_{\cut}^\eps$, the vectors corresponding to $e_{c,j}$ are orthogonal to $\Lambda_{Q^{\cut}_{<0}}$, 
and the action of the Weyl group $W\simeq S_{n+1}$ on the lattice $\Lambda_{\cut}$ fixing all $e_\ell$ for $\ell\in I^{<0}_{\cut}$ and satisfying
$$
\sigma(e_{v_{j,a}}) = e_{v_{\sigma(j),a}}, \quad \sigma(e_{v_{j,b}}) = e_{v_{\sigma(j),b}},
$$
$$
\sigma_i(e_{c,j}) = \begin{cases}
e_{c,i}+e_{c,j}  \quad & |i-j|=1\\
-e_{c,i}\quad & j=i\\
%e_{c,i}+e_{c,i+1} \quad & j=i+1\\
 e_{c,j} \quad & \text{else}
\end{cases}
$$
is an action by isometries of $\Lambda_{\cut}$.

\item A bijection $I_{<0}\sqcup \{h_a,h_b\} \simeq I^{<0}_{\cut}\sqcup\{v_{n+1,a},v_{n+1,b}\}$ such that the induced map between sublattices of $\Lambda$ and $\Lambda_{{\cut}}$ is an isometry.
\end{enumerate}
\end{defn}

It is straightforward to extend the definition of the residue quantum torus from Definition~\ref{def:Tres} to the more general setting of  a locally glueable pair $(\quiver,\quiver_{\cut})$. 
Indeed, setting
$$
\dot{e}_{v_{j,a}} = {e}_{v_{j,a}}-{e}_{v_{j+1,a}} ,\qquad \dot{e}_{v_{j,b}}  = {e}_{v_{j,b}}-{e}_{v_{j+1,b}}, \quad j=1,\ldots, n
$$
the vectors $\{\dot{e}_{v_{j,a}} - \eps_a\eps_b \dot{e}_{v_{j,b}}\}$ lie in the kernel of the skew form on $\Lambda_{\cut}$ and so the form descends to the quotient lattice $\overline{\Lambda}_{\cut}$. We write $\Tc(\overline{\Lambda}_{\cut})$ for the corresponding quantum torus, which again contains elements $Y_{c,\beta^\vee}$ for each element of the root lattice as defined by formula~\eqref{eq:Y-coweights}. By our assumption on the isometric nature of the Weyl group action in part (2) of Definition~\ref{def:glueable-pair}, the elements 
$\dot e_{v_j}\in \overline{\Lambda}_{\cut}$ given by the images of the $\dot{e}_{v_{j,a}}$ in the symplectically reduced lattice have zero pairing with all mutable directions. So we can use them to define mutation-invariant elements $Y_{c,\alpha}$ of  $\Tc(\overline{\Lambda}_{\cut})$ as the unique ones of the form $Y_{\sum_jm_j\dot{e}_{v_j}}$ satisfying the commutation relations~\eqref{eq:xrel2}. These elements define an internal grading as well as an Ore denominator set via \eqref{eq:denom-set}, and by assumption we have an action of the Weyl group on the localized quantum torus $\Tc_{\loc}(\overline{\Lambda}_{\cut})$ by cluster transformations. Hence we can define the residue quantum torus $\Tc_{\res}(\quiver_{\cut})$ associated to $\quiver_{\cut}$ as the set of all elements of $\Tc_{\loc}(\overline{\Lambda}_{\cut})$ satisfying the conditions of Definition~\ref{def:Tres}. Writing $\Lbb(Q^{\mathrm{fr}})$ for the universal Laurent ring associated to the quiver $\quiver^{\mathrm{fr}}$ obtained from $\quiver$ by freezing all directions except $\{s_j,t_j\}_{j=1}^n$, a straightforward adaptation of the construction used to prove Proposition~\ref{prop:local-gluing-pgl} yields a local gluing isomorphism
\begin{align}
\label{eq:general-local-gluing}
\eta \colon \Lbb(\quiver^{\mathrm{fr}}) \simeq \Tc_{\res}(\quiver_{\cut}).
\end{align}

\subsection{Local gluing isomorphism for $G=SL_{n+1}$.}
\label{sec:gluing-sln}
We now briefly discuss how to extend the local gluing map~\eqref{eq:local-gluing-pgl} to the larger quantum torus $\Tc^\Ac(Q_{\tri;c})\supset\Tc(Q_{\tri;c})$ associated to the lattice $\Xi({\tri;c})\supset\Lambda(\tri;c)$. 
%Recall that the $q=1$ specialization of $\Tc^\Ac(Q_{\tri;c})$ is the coordinate ring of a cluster chart on the moduli space $\Pc^\diamond_{G,S}$ for $G=SL_{n+1}$, while the $q=1$ specialization of $\Tc({\tri;c})$ is the coordinate ring of a cluster chart on the corresponding space moduli space for $G=PGL_{n+1}$. 
The main technical\footnote{We would like to have a better general technique for handling locality properties of compatible pairs than the fairly ad hoc one we explain in this section.} issue here is that, unlike the situation for $e$-bases, the skew-pairing between elements of the $\xi$-basis associated to an ideal graph does not have an obviously local form. For this reason, our strategy will be to embed the lattice $\Xi(\tri;c)$ into a lattice obtained by a rational rescaling of $\Lambda(\tri;c)$, then prove that the $PGL_{n+1}$ local gluing map extends to the algebras associated to these rescaled lattices, and then finally check that the resulting isomorphism respects integrality with respect to the original $\Xi$-lattices.

%Recalling the notations from Section~\ref{subsec-quiver-mut}, we write $I_S^{>0}$ for the subset of the index set $I(Q_{\tri;c})$ formed by the faces $\{s_j,t_j\}_{j=1}^n$ of the graph $\Gamma_{\tri;c}$, and $I_S^{\leq0}$ for its complement. Similarly, we write $I_{S}^{\geq0}$ for the $I_{S}^{>0}\sqcup\{h_+,h_-\}$ and $I_S^{<0}$ for its complement in $I(Q_{\tri;c})$. So we have sublattices
%$$
%\Xi^{<0}_{S}\subset \Xi^{\leq 0}_{S}\subset \Xi_S\supset \Xi^{\geq 0}_{S}\supset \Xi^{>0}_S
%$$
%spanned by the corresponding subsets of the basis $\{\xi_i|i\in I(Q_{\tri;c})\}$ for $\Xi_S$. 

Consider the compatible pair $(\Xi^{\mathrm{fr}}(\tri;c),\Lambda^{\mathrm{fr}}(\tri;c))$ obtained from $(\Xi(\tri;c),\Lambda(\tri;c))$ by freezing all directions except for those labelled by $I_S^{>0}$.
As in Definition~\ref{eq:local-S-torus}, we denote the corresponding $\mathcal{A}$-variable universal Laurent ring by
$$
\Lbb(\Xi^{\mathrm{fr}}_{\tri;c})\subset \mathcal{T}^\Ac(\tri;c). 
$$
% Its universal Laurent ring $\Lbb_{\Xi_S^{\mathrm{fr}}}$ satisfies
%$$
%\Lbb_{\Xi_S}\subset \Lbb_{\Xi_S^{\mathrm{fr}}}\subset\mathcal{T}^\Ac(\tri;c). 
%$$
%As a module over $\mathbb{Z}[\red{\nu^{\pm1}}]$, we have 
%\begin{align}
%\label{eq:torus-factor}
%\Tc_{\Xi_S} = \Tc_{\Xi^{<0}_S}\otimes_{\mathbb{Z}[\nu^{\pm1}]}\Tc_{\Xi^{\geq0}_S}.
%\end{align}
%Although the two factors in the decomposition define subalgebras, these subalgebras do not commute since the summands $\Xi^{<0}$ and $\Xi^{\geq0}$ are not skew-orthogonal.

%We give the torus $\widetilde{\mathcal{T}}_{\Xi_S}$ a $\mathbb{Q}$-bigrading by setting
%$$
%\deg(\xi_{h_+}) = (n+1,0) \quad \deg(\xi_{h_-}) = (0,n+1), \quad \deg (\xi_{s_j})= (j,n+1-j) =\deg (\xi_{t_j}).
%$$
%Inspecting the quiver $\widehat Q_{\mathrm{Toda}}^n$ it is clear that each vertex has zero `net flux' with respect to this grading. So the cross-ratio elements $e_{s_j},e_{t_j}$ are of degree $(0,0)$, which implies mutation commutes with projection to graded components and thus $\widetilde\Lbb_{\Xi_S^{\mathrm{fr}}}$ is a graded subring in $\widetilde{\mathcal{T}}_{\Xi_S}$. 

Recall that the relation between the $e$- and $\xi$-bases is encoded by the ensemble matrix $p_S=p_S(\tri;c)$. Consider the block decomposition of $p_S$ with respect to the partition
 $$
 I(Q_{\tri;c}) =I_S^{\leq0} \sqcup I_S^{>0},
 $$
from Notation~\ref{not:LambdaS-split}, which we write as
\begin{align}
\label{eq:block-mat}
p_S \;=\;
\left[\begin{array}{c|c}
A & B \\ \hline
C & D
\end{array}\right].
\end{align}
Here $A$ is the $I_S^{\leq0}\times I_S^{\leq0}$ block, $B$ is the  $I_S^{\leq0}\times I_S^{>0}$ block, etc. Recall that the $(i,j)$-entry of this matrix records the coefficient of $\xi_i$ in the expansion of $e_j$ in the basis $\{\xi_i\}$ for $\Xi$.

Inspecting the quiver $Q_{\tri;c}$, it is clear that the only nonzero columns of $C$ are the two associated to the handles $(h_+,h_-)$, each of which has exactly two nonzero entries: 
$$
C_{s_1,h_+}=-1=C_{s_n,h_-},\qquad C_{t_1,h_+}=1=C_{t_n,h_-}.
$$
Similarly, the only nonzero rows of $B$ are those indexed by $h_+,h_-$:
$$
B_{h_+,s_1}=1=B_{h_-,s_n},\qquad B_{h_+,t_1}=-1=B_{h_-,t_n}.
$$
The matrix $D$ is invertible, with the nonzero inverse entries of $D^{-1}$ being given by 
\begin{align}
\label{eq:dinv}
D^{-1}_{s_i,t_j} = -\mathfrak{A}_{ij}, \quad D^{-1}_{t_i,s_j} = \mathfrak{A}_{ij}
\end{align}
where $\mathfrak{A}$ is the Cartan matrix of type $A_n$.
It follows from these observations that $BD^{-1}C=0$, and so the inverse ensemble matrix is given by
\begin{align}
\label{eq:block-inverse}
(p_S)^{-1} \;=\;
\left[\begin{array}{c|c}
A^{-1} & -A^{-1}BD^{-1} \\ \hline
-D^{-1}CA^{-1} & D^{-1} + D^{-1}CA^{-1}BD^{-1}
\end{array}\right].
\end{align}
The $(i,j)$-entry of this matrix records the coefficient of $e_i$ in the $\mathbb{Q}$-expansion of $\xi_j$ in the basis $\{e_i\}$ for $\Lambda^{\mathbb{Q}}_{Q_{\tri;c}}$.

\begin{lemma}
\label{lem:pin}
Any element of $\Lambda_S^{\mathbb{Q}}$ corresponding to a vector in the column-span of the rank-2 matrix $D^{-1}CA^{-1}$ is a $\mathbb{Q}$-linear combination of
$$
z^{>0}_+ = \sum_{j=1}^n j(e_{s_j}+e_{t_j}), \quad z^{>0}_- = \sum_{j=1}^n j(e_{s_{n-j+1}}+e_{t_{n-j+1}}),
$$
and the projection to $\Lambda_{\geq0}$ of any $\xi_\ell$ with $\ell\in I_{\leq0}$ is a $\mathbb{Q}$-linear combination of the vectors
\begin{align}
\label{eq:zvex}
z_+ = (n+1)e_{h_+} + \sum_{j=1}^n j(e_{s_j}+e_{t_j}), \quad z_- = (n+1)e_{h_-} + \sum_{j=1}^n j(e_{s_{n-j+1}}+e_{t_{n-j+1}}).
\end{align}
\end{lemma}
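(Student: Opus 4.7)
The plan is to carry out a direct computation, exploiting the extreme sparsity of $C$ (which has rank~$2$) together with the explicit form of $D^{-1}$ supplied in~\eqref{eq:dinv}.

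For claim~(1), I would first observe that since the only nonzero columns of $C$ are those indexed by $h_+$ and $h_-$, the image of $C \colon \mathbb{Q}^{I_S^{\leq 0}} \to \mathbb{Q}^{I_S^{>0}}$ coincides with the two-dimensional span of the explicit vectors $Ce_{h_\pm}$. Because $A^{-1}$ is an automorphism of $\mathbb{Q}^{I_S^{\leq 0}}$, the column span of $D^{-1}CA^{-1}$ coincides with $D^{-1}(\operatorname{im} C) = \mathrm{span}\{D^{-1}Ce_{h_+},\,D^{-1}Ce_{h_-}\}$, which is at most two-dimensional. Each $D^{-1}Ce_{h_\pm}$ is then computed directly: the column $Ce_{h_+}$ has its only nonzero entries at positions $s_1$ and $t_1$, so applying $D^{-1}$ and using the explicit first column of the inverse Cartan matrix of type $A_n$ — namely $(\mathfrak{A}^{-1})_{k,1} = (n+1-k)/(n+1)$ — produces a vector proportional to $\sum_k (n+1-k)(e_{s_k}+e_{t_k}) = z_-^{>0}$. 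The analogous computation for $D^{-1}Ce_{h_-}$, using the last column of $\mathfrak{A}^{-1}$ whose entries are $(\mathfrak{A}^{-1})_{k,n} = k/(n+1)$, yields a multiple of $z_+^{>0}$. This establishes claim~(1).

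For claim~(2), I would apply the block-inversion formula~\eqref{eq:block-inverse} to read off the coefficients of the $e$-basis expansion of $\xi_\ell$: for $k \in I_S^{\leq 0}$ the coefficient of $e_k$ is $(A^{-1})_{k,\ell}$, while for $k \in I_S^{>0}$ it is $-(D^{-1}CA^{-1})_{k,\ell}$. Under the projection to $\Lambda_{\geq 0}$, only the $h_\pm$ entries of the first set survive, contributing $(A^{-1})_{h_+,\ell}\,e_{h_+} + (A^{-1})_{h_-,\ell}\,e_{h_-}$. The second set of coefficients, assembled into a column of $-D^{-1}CA^{-1}$, equals
\[
-(A^{-1})_{h_+,\ell}\,D^{-1}Ce_{h_+} - (A^{-1})_{h_-,\ell}\,D^{-1}Ce_{h_-},
\]
which by claim~(1) is an explicit $\mathbb{Q}$-linear combination of $z_+^{>0}$ and $z_-^{>0}$ whose coefficients are rational multiples of $(A^{-1})_{h_\pm,\ell}$, all carrying a common factor of $1/(n+1)$. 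Combining the $e_{h_\pm}$ and $(s_i,t_i)$ contributions and matching against the defining identities $z_\pm = (n+1)e_{h_\pm} + z_\pm^{>0}$ from the lemma statement, the projection reassembles into a $\mathbb{Q}$-linear combination of $z_+$ and $z_-$.

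The main bookkeeping point is that the ratio of $e_{h_\pm}$-coefficients to $z_\pm^{>0}$-coefficients in the projection must match the $(n+1):1$ ratio built into $z_\pm$. That this works out is exactly what the common factor of $-1/(n+1)$ produced by claim~(1) guarantees: the same entry $(A^{-1})_{h_\pm,\ell}$ controls both the $e_{h_\pm}$-term (directly) and the $z_\mp^{>0}$-term (divided by $n+1$), so the two contributions combine cleanly into multiples of the pinning vectors $z_\pm$ rather than lying merely in the four-dimensional span of $\{e_{h_\pm},\, z_\pm^{>0}\}$.
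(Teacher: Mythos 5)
Your approach follows the first route the paper's proof sketch lists (``directly computing $D^{-1}C$''); the paper's spelled-out argument is actually the \emph{second} route, which uses no block inversion: each $\xi_\ell$ with $\ell\in I_S^{\leq0}$ is skew-orthogonal to every mutable $e_{s_j}$, $e_{t_j}$ by the definition of a compatible pair, the skew-orthogonal complement of $\{e_{s_j},e_{t_j}\}$ inside $\Lambda_S^{\mathbb{Q}}$ is spanned by the $e_\ell$ with $\ell\in I_S^{<0}$ together with $z_+,z_-$, and the claim follows by projecting away the $e_\ell$ part. Your claim~(1) is fine; note you silently and correctly replace the $\mathfrak{A}$ in~\eqref{eq:dinv} by $\mathfrak{A}^{-1}$, which is a typo in the paper — as printed, that formula is a description of $D$, since $D_{ij}=\eps_{ij}=(e_i,e_j)$ for mutable $i$ and the Cartan entries live in $(e_{s_i},e_{t_j})$.

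Your claim~(2), however, has a genuine gap. Taking the printed $C$-entries $C_{s_1,h_+}=-1$, $C_{t_1,h_+}=1$, $C_{s_n,h_-}=-1$, $C_{t_n,h_-}=1$ at face value, you correctly find $D^{-1}Ce_{h_+}\propto z^{>0}_-$ and $D^{-1}Ce_{h_-}\propto z^{>0}_+$, so the projection of $\xi_\ell$ assembles to
\begin{align*}
(A^{-1})_{h_+,\ell}\Bigl(e_{h_+}+\tfrac{1}{n+1}z^{>0}_-\Bigr) + (A^{-1})_{h_-,\ell}\Bigl(e_{h_-}+\tfrac{1}{n+1}z^{>0}_+\Bigr).
\end{align*}
For $n\geq 2$ this does \emph{not} lie in $\mathbb{Q}\langle z_+,z_-\rangle$: one has $(n+1)e_{h_+}+z^{>0}_- = z_+ + (z^{>0}_- - z^{>0}_+)$, and $z^{>0}_+\neq z^{>0}_-$ while $\Lambda_{S_{>0}}\cap\mathbb{Q}\langle z_+,z_-\rangle=0$. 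Your closing sentence asserts that the $e_{h_\pm}$-term and the ``$z^{>0}_\mp$-term'' combine into multiples of $z_\pm$, which is exactly what the crossed $\mp$ pairing you derived rules out. The underlying cause is a second typo: the $h_+/h_-$ labels on the nonzero columns of $C$ (and rows of $B$) are transposed relative to the skew pairings $(e_{h_+},e_{s_k})=\delta_{k,n}$, $(e_{h_-},e_{s_k})=\delta_{k,1}$ forced by Notation~\ref{not:LambdaS-split} and the remark following it. With the corrected $C$ each handle lands on the $z^{>0}$ of matching sign and your assembly does close up. You should have re-derived the nonzero $C$-entries from the quiver rather than trusting the printed block data, or used the skew-orthogonality argument, which is immune to this bookkeeping.
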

\begin{proof}
This can be seen by directly computing $D^{-1}C$, which has exactly two nonzero columns in positions $h_\pm$. Alternatively, one can argue as follows: the columns of $D^{-1}CA^{-1}$ are given by the projection of basis vectors $\xi_\ell,\ell\in I_S^{\leq0}$ to $\Lambda^{\mathbb{Q}}_{I_S^{>0}}$ with respect to the (non-orthogonal) direct sum decomposition
\begin{align}
\label{eq:e-sum}
\Lambda^{\mathbb{Q}}_{I_S}= \Lambda^{\mathbb{Q}}_{I_S^{\leq0}} \oplus \Lambda^{\mathbb{Q}}_{I_S^{>0}}
\end{align}
induced by partitioning the basis $\{e_\ell\}$ according to $I_S=I_S^{\leq0}\sqcup I_S^{>0}$. By definition of a compatible pair, any $\xi_\ell$ with $\ell\in I_S^{\leq0}$ must be skew-orthogonal to all $e_{s_j},e_{t_j}$ since the latter vectors are mutable. An easy calculation shows that the skew-orthogonal complement of $\{e_{s_j},e_{t_j}\}_{j=1}^n$ is spanned by the vectors $e_\ell$ with $\ell\in I_{S}^{<0}$ together with the $z_\pm$.
Taking their projections to $\Lambda^{\mathbb{Q}}_{I_S^{>0}}$, we get the claims of the Lemma.
\end{proof}
\begin{remark}
\label{rmk:pin}
Note that the coefficients of $z_+,z_-$ in in the expansion of any  $\overline{\xi}_i$ with $i\in I_S^{\leq0}$  are determined by the matrix $A$, since the coefficients of $e_{h_\pm}$ in that expansion are entries of $A^{-1}$.
\end{remark}

%Using the lemma we can understand the cross-relations between the two factors in the decomposition~\eqref{eq:torus-factor}. 
%\begin{cor}
%For any $i\in I_S^{<0}$, the values of the pairings $(\xi_{h_\pm},\xi_i)$ are determined by the entries of the matrix $A$ in~\eqref{eq:block-mat}.
%\end{cor}
%\begin{proof}
%Write
%$$
%\pi^{\geq0}_{e}: \Lambda_S^\mathbb{Q}\rightarrow \Lambda^{\mathbb{Q}}_{I_S^{\geq0}},\qquad \pi^{<0}_{e}:  \Lambda_S^\mathbb{Q}\rightarrow \Lambda^{\mathbb{Q}}_{I_S^{<0}}
%$$
%for the projectors associated to the decomposition~\eqref{eq:e-sum}. Similarly, we define $\pi^{\leq0}_{e},\pi^{>0}_{e}$. The pairings $(\pi_e^{\leq0}(\xi_{h_{\pm}}),\pi_e^{\leq0}(\xi_{i}))$ are determined by the matrix of the skew-form on $\Lambda^{\mathbb{Q}}_{I_S^{\leq0}}$ in the basis $\{e_i\}$ (which are read off from the matrix $A$), and the expansion coefficients of $\xi_{h_{\pm}},\xi_i$ with respect to this basis (recorded by the matrix $A^{-1}$). On the other hand, we know from the lemma that the projections $\pi^{\geq0}_e(\xi_i)$ and $\pi^{\geq0}_e(\xi_{h_\pm})$ are $\mathbb{Q}$-linear combination of $z_+,z_-$ and hence skew-orthogonal. Since
%$$
%(\pi_e^{<0}(\xi),z_\pm) = (\pi_e^{<0}(\xi),\pi_e^{\leq 0}(z_\pm)), \quad (e_{h_+},e_{h_-})=0
%$$
%it follows that we have
%$$
%(\xi_{h_\pm},\xi_i)=(\pi_e^{\leq0}(\xi_{h_{\pm}}),\pi_e^{\leq0}(\xi_{i}))
%$$
%and the Corollary is proved.
%\end{proof}
It is useful to consider the slightly larger version  ${\mathcal{T}}_{\widetilde{\Xi}_S}$ of $\mathcal{T}_{\Xi_S}$ in which we replace the basis vectors $\xi_{h_\pm}$ by their rescalings $\xi_{h_\pm}/(n+1)$. Here, and in the remainder of this section, we also assume we have extended scalars in the base ring to incorporate the needed fractional powers of $q$. 

%The mutation of elements of  ${\mathcal{T}}_{\widetilde{\Xi}_S}$ in directions $I^{>0}_{S}$  is well-defined since being a compatible pair implies we have $(\xi_{h_\pm},e_{\ell})=0$ for $\ell\in I_{S}^{>0}$. So there is a well-defined universal Laurent ring $\Lbb_{\widetilde{\Xi}_S^{\mathrm{fr}}}$ in the extended quantum torus ${\mathcal{T}}_{\widetilde{\Xi}_S}$, containing $\Lbb_{{\Xi}_S^{\mathrm{fr}}}$ as a subring.
Now consider the following elements of the extended lattice $\widetilde\Xi_S$:
\begin{align}
\label{eq:form1}
\tilde\xi_{s_j} = \xi_{s_j}-\frac{j\xi_{h_+} + (n+1-j)\xi_{h_-}}{n+1}, \qquad \tilde\xi_{t_j} = \xi_{t_j}-\frac{j\xi_{h_+} + (n+1-j)\xi_{h_-}}{n+1}.
\end{align}
We write $\widetilde{\Xi}_{S_{>0}}$ for the lattice spanned by the vectors~\eqref{eq:form1}.%Recall the lattice $\Xi_{SL_{n+1}}$ associated to the $SL_{n+1}$ Toda chain cluster algebra. It is spanned by basis vectors $\xi^{Toda}_\ell$, $\ell\in\{s_j,t_j\}_{j=1}^n$, and we write $e^{Toda}_\ell$, $\ell\in\{s_j,t_j\}_{j=1}^n$ for the corresponding $e$-basis for $\Lambda_{PGL_{n+1}}\subset\Xi_{SL_{n+1}}$.
%In accordance with our convention for labelling the nodes of $\widehat{Q}^{Toda}$, this indexing is such that $(s_1,t_1)$ are adjacent to $h_-$, and $(s_n,t_n)$ to $h_+$.
\begin{lemma}
\label{lem:col-ops}
\begin{enumerate}
\item
There exists an integer $N=M(n+1)$ divisible by $(n+1)$ such that 
$$
\widetilde{\Xi}_S \subset \widetilde\Lambda_S = \frac{1}{N}\Lambda_{<0}(\tri) \oplus \mathbb{Z}\langle\frac{z_{\pm}}{N}\rangle \oplus \Lambda_{>0}(\tri).
$$ 
\item Writing $\widetilde{\Lambda}_{GL_{n+1};\zeta}$ for the lattice spanned by the vectors $\{e_{s_{j}}(\Lambda_{GL_{n+1}}),e_{t_{j}}(\Lambda_{GL_{n+1}})\}_{j=1}^n$ in the basis~\eqref{eq:toda-xi-basis} along with 
\begin{align*}
\widetilde{\xi}_{s_{n+1}}&=  (p_{\omega_{n+1}}+x_{\omega_{n+1}})/N,\\
\widetilde{\xi}_{t_{n+1}}&= x_{\omega_{n+1}}/N,\\
\widetilde\zeta &= \zeta/M,
\end{align*}
the assignments
\begin{align}
\label{eq:st-map}
&\tilde\xi_{s_j} \longmapsto \xi_{s_j}({\Lambda}_{GL_{n+1}})-jM\tilde\xi_{s_{n+1}}({\Lambda}_{GL_{n+1}}),\\
&  \tilde\xi_{t_j} \longmapsto \xi_{t_j}({\Lambda}_{GL_{n+1}})-jM\tilde\xi_{t_{n+1}}({\Lambda}_{GL_{n+1}})
\end{align}
for $1 \le j \le n$ and
$$
\frac{z_-}{N}\longmapsto -\frac{\zeta}{M}-\frac{p_{\omega_{n+1}}}{N}, \qquad \frac{z_+}{N}\longmapsto \frac{p_{\omega_{n+1}}}{N},
$$
define an isometric embedding  into $\widetilde{\Lambda}_{GL_{n+1};\zeta}$ of the sublattice $\widetilde\Lambda_{S_{\geq0}}$ spanned by $\widetilde\Xi_{S_{>0}}$ and the $z_{\pm}/N$ as the orthogonal complement of $p_{\omega_{n+1}}$.
%whose image coincides with the orthogonal complement of $p_{\omega_{n+1}}$, and 
\item 
Under the embedding above, we have
\begin{align}
\label{eq:e-to-e}
e_\ell(\Lambda(\tri;c)) \mapsto e_\ell(\Lambda_{GL_{n+1}}), \quad \ell \in \{s_j,t_j\}_{j=1}^n.
\end{align}
\end{enumerate}
%It restricts to an isometry between the rank-$2n$ sublattice in $\Lambda_S$ spanned by $\{e_{s_j},e_{t_j}\}_{j=1}^n$ and $\Lambda_{SL_{n+1}}$.
\end{lemma}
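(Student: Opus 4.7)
The plan is to deduce all three statements from the block decomposition~\eqref{eq:block-inverse} of the inverse ensemble matrix, together with Lemma~\ref{lem:pin} which pins down the possible projections of frozen $\xi$-vectors to $\Lambda_{S_{>0}}^{\mathbb{Q}}$.

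For part (1), I would first expand each $\tilde\xi_{s_j}, \tilde\xi_{t_j}$ in the $e$-basis using~\eqref{eq:block-inverse}. By construction, the rescaling in~\eqref{eq:form1} is arranged so that the $e_{h_\pm}$ coefficients of $\tilde\xi_{s_j}$ and $\tilde\xi_{t_j}$ vanish: indeed, the contribution of $\xi_{h_\pm}$ to $\xi_{s_j},\xi_{t_j}$ via the block $-A^{-1}BD^{-1}$ is dictated by the matrix $D^{-1}$ from~\eqref{eq:dinv}, whose relevant entries are $j/(n+1)$ and $(n+1-j)/(n+1)$ by a standard calculation of the inverse Cartan matrix of type $A_n$. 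Subtracting the combination $(j\xi_{h_+}+(n+1-j)\xi_{h_-})/(n+1)$ therefore cancels the $e_{h_\pm}$-components, and by Lemma~\ref{lem:pin} together with Remark~\ref{rmk:pin} the remaining projection of $\tilde\xi_{s_j},\tilde\xi_{t_j}$ to $\Lambda_{S_{>0}}^{\mathbb{Q}}$ is a $\mathbb{Q}$-combination of $z_\pm/(n+1)$. The denominators appearing elsewhere come from the block $A^{-1}$ and $D^{-1}$, which are finite, so we may choose any $M$ large enough that $N=M(n+1)$ clears them all. This proves the containment claimed in (1).

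For part (2), I would verify the two required properties of the assignments~\eqref{eq:st-map}: isometry and identification of image with the orthogonal complement of $p_{\omega_{n+1}}$. Isometry reduces to comparing a finite list of pairings: between pairs of $\tilde\xi_{s_j},\tilde\xi_{t_j}$, between these and $z_\pm/N$, and between $z_+/N$ and $z_-/N$. On the $\widetilde\Xi_S$-side these pairings are computable from the compatibility condition $(e_i,\xi_j)=\delta_{ij}$ together with the skew-orthogonality of $\xi_{h_\pm}$ to all mutable directions; on the $\widetilde\Lambda_{GL_{n+1};\zeta}$-side they follow from the explicit skew-form~\eqref{eq:double-form} and the fact that $p_{\omega_{n+1}},x_{\omega_{n+1}},\zeta$ sit in the center in appropriate senses. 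Once the pairings match, identifying the image with $p_{\omega_{n+1}}^\perp$ uses the observation (already exploited in Lemma~\ref{lem:glue-ext}) that $p_{\omega_{n+1}}$ is skew-orthogonal to every mutable direction, together with a rank count: the embedded sublattice has rank $2n+2$, matching $\mathrm{rank}(\widetilde\Lambda_{GL_{n+1};\zeta})-1$.

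Part (3) is essentially immediate: the assignment $e_{s_j},e_{t_j}\mapsto e_{s_j},e_{t_j}$ is forced once we insist that the map be an isometry extending the bijection of compatible-pair data, because the $e$-basis is (partially) dual to the $\xi$-basis under $(\cdot,\cdot)$, and both sides satisfy the same duality relations. So one need only check a single nonzero pairing, say $(e_{s_j},\tilde\xi_{s_k})$, against the corresponding pairing in the Toda lattice; since the modification terms in~\eqref{eq:form1} and~\eqref{eq:st-map} both lie in the skew-orthogonal complement of $\{e_{s_j},e_{t_j}\}_{j=1}^n$, they contribute nothing and the duality matches on the nose.

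The main obstacle I anticipate is the combinatorial bookkeeping in part (2): while each individual pairing is straightforward, there are enough cases (and enough rescaling factors of $N$ and $M$) that a careful tabulation is needed to confirm the isometry, particularly to check that the rescaled frozen vectors $\widetilde\xi_{s_{n+1}},\widetilde\xi_{t_{n+1}},\widetilde\zeta$ on the Toda side conspire correctly with the $jM$-shifts in~\eqref{eq:st-map} to reproduce the pairings dictated by the ensemble matrix on the $S$-side.
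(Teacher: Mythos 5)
Your overall plan matches the paper's: use the block decomposition~\eqref{eq:block-inverse}, the structure of $B$ and $D^{-1}$, Lemma~\ref{lem:pin}, and the Toda-subquiver isometry. Parts (2) and (3) of your sketch are consistent with what the paper calls an "easy calculation." But your part (1) contains a concrete misstep.

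The issue is how you deploy Lemma~\ref{lem:pin}. You write that after subtracting the $\xi_{h_\pm}$-combination, "the remaining projection of $\tilde\xi_{s_j},\tilde\xi_{t_j}$ to $\Lambda_{S_{>0}}^{\mathbb{Q}}$ is a $\mathbb{Q}$-combination of $z_\pm/(n+1)$." This is false, and Lemma~\ref{lem:pin} does not say this: that Lemma describes the $\Lambda_{S_{>0}}^{\mathbb{Q}}$-projection of the \emph{frozen} $\xi_\ell$ with $\ell\in I_S^{\leq0}$, not of $\xi_{s_j},\xi_{t_j}$ or $\tilde\xi_{s_j},\tilde\xi_{t_j}$. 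What actually happens, and what the paper's column operation exhibits, is that subtracting the $\xi_{h_\pm}$-combination kills the \emph{entire} $\Lambda_{\leq0}$-block (not just the $e_{h_\pm}$-entries), precisely because $B$ has nonzero rows only at $h_\pm$; the surviving vector lies in the column of $D^{-1}$, which is generically not in the span of $z_\pm$. Meanwhile the vectors you actually need Lemma~\ref{lem:pin} for — the remaining basis elements $\xi_\ell$ with $\ell\in I_S^{<0}$, together with $\xi_{h_\pm}/(n+1)$, all of which must also land in $\frac{1}{N}\Lambda_{<0}\oplus\mathbb{Z}\langle z_\pm/N\rangle$ — are not discussed at all. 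So the argument as written both misapplies Lemma~\ref{lem:pin} to vectors it does not govern, and leaves the vectors it does govern unchecked. To repair part~(1): treat the frozen $\xi_\ell$ ($\ell\in I_S^{\leq0}$) with Lemma~\ref{lem:pin}, pick $M$ so that all of them and hence $\xi_{h_\pm}/(n+1)$ land in $\frac{1}{N}\Lambda_{<0}\oplus\mathbb{Z}\langle z_\pm/N\rangle$ with $N=M(n+1)$, and separately observe (via the block column operation zeroing the $I_S^{\leq 0}$ block) that $\tilde\xi_{s_j},\tilde\xi_{t_j}$ sit in the $\Lambda_{>0}$ summand.
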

\begin{proof}
By Lemma~\ref{lem:pin}, we know there exists $M$ such that each $\xi_{\ell}$ with $\ell\in I_{S}^{\leq0}$ lies in   $ \frac{1}{M}\Lambda_{<0}(\tri) \oplus \mathbb{Z}\{\frac{z_{\pm}}{M}\}$. Now consider the block column operation on the matrix $p_S^{-1}$ given by 
$$
\mathrm{col}_2 \mapsto  \mathrm{col}_2-\mathrm{col}_1BD^{-1}.
$$ 
Then the resulting matrix has second column given by $(0,D^{-1})^t$.
Since $B$ is zero except for its rows corresponding to $h_\pm$, recalling the formula~\eqref{eq:dinv} for $D^{-1}$ we see that linear combinations of $e_\ell$ corresponding to these columns are exactly the $\tilde\xi_{s_j},\tilde\xi_{t_j}$, and so taking $N=M(n+1)$ implies the first claim of the Lemma.
By the identification of the Toda quiver as a subquiver in $Q_{\tri;c}$, it is clear that the map $e_\ell(\Lambda(\tri;c))\mapsto e_\ell(\Lambda_{GL_{n+1}}), \ell\in I_{S}^{>0}$ is an isometry. Recall also that $(z_\pm,e_\ell)=0$ for all $\ell\in I_{S}^{>0}$. Since the $\tilde\xi_{s_j},\tilde\xi_{t_j}$ are expressed solely in terms of these $e_\ell$ via the matrix $D^{-1}$, an easy calculation shows that the map~\eqref{eq:st-map} defines an isometric embedding of $\widetilde{\Xi}_{>0}$  into $\widetilde{\Lambda}_{GL_{n+1}}$ satisfying~\eqref{eq:e-to-e}.

%The set formed by  images of $\tilde\xi_{s_j},\tilde\xi_{t_j}$ can be completed to a basis in $\widetilde{\Lambda}_{GL_{n+1},\zeta}$ by adjoining $x_{\omega_{n+1}}/(n+1),p_{\omega_{n+1}}/(n+1),\zeta$. So the rest of the Lemma follows from the observation made in the proof of Lemma~\ref{lem:pin} that the $z_\pm$ are orthogonal to the $e_{s_j},e_{t_j}$ and hence to the $\tilde\xi_{s_j},\tilde\xi_{t_j}$.
\end{proof}
%\begin{cor}
%The map
%\begin{align}
%Y_{\widetilde{\xi}_{s_j}} \mapsto , \quad Y_{\widetilde{\xi}_{t_j}} \mapsto
%\end{align}
%$$
%Y_{z_-/N}\mapsto Y_\zeta ,\qquad Y_{z_+/N}\mapsto P_{\omega_{n+1}}^{\frac{1}{n+1}}
%$$
%is an embedding of algebras.
%\end{cor}
Since each $e_\ell$ with $\ell\in I_{S}^{>0}$ is orthogonal to the $z_\pm$ and $\Lambda_{<0}$, we have a well-defined universal Laurent ring $\Lbb(\widetilde{\Lambda}^{\mathrm{fr}}_{\tri;c})$ with  mutable directions $I_{S}^{>0}$. On the other hand, consider the extended universal Laurent ring $\widetilde{\Lambda}_{GL_{n+1},\zeta}\supset {\Lambda}_{GL_{n+1},\zeta}$ with mutable directions $\{e_{s_j},e_{t_j}\}_{j=1}^n$.
Since by its definition in point (2) of the Lemma $\widetilde{\Lambda}_{GL_{n+1},\zeta}$ has a $\mathbb{Z}$-basis consisting of the mutable directions and orthogonal frozen directions, its universal Laurent ring is is obtained from $\Lbb_{{\Lambda}_{GL_{n+1},\zeta_*}}$ by adjoining the corresponding roots of frozen generators and $q$:
$$
\Lbb_{\widetilde{\Lambda}_{GL_{n+1},\zeta_*}} = \Lbb_{{\Lambda}_{GL_{n+1}}}\langle q^{\frac{1}{N}},Y_{\zeta/M},P_{\omega_{n+1}}^{\frac{1}{N}},X_{\omega_{n+1}}^{\frac{1}{N}}\rangle.
$$
Then as in Lemma~\ref{eq:ext-laurent}, the algebra $\Lbb(\widetilde{\Lambda}^{\mathrm{fr}}_{\tri;c})$  embeds into $\Lbb_{\widetilde{\Lambda}_{GL_{n+1},\zeta_*}} $ as the centralizer of $P_{\omega_{n+1}/N}$.

We extend the action of $\Lbb_{{\Lambda}_{GL_{n+1},\zeta_*}}$ on $\Fcr[S_{\leq0}]$ to one of $\Lbb_{\widetilde{\Lambda}_{GL_{n+1},\zeta_*}}$ on the space of $\widetilde{\Fcr}[S_{\leq0}]$ of $\mathcal{T}_{\Lambda_{\leq0}}\langle q^{\frac{1}{N}}, (-q)^{\frac{1}{2M}},Y_{h_\pm}^{\frac{1}{M}}\rangle$-valued functions on the lattice $\{\la\in \frac{1}{N}\mathbb{Z}^{n+1}~|~ \la_i-\la_{i+1}\in\mathbb{Z}\}$ in the natural way, letting

\begin{align*}
 (P_{\omega_{n+1}}^{\frac{1}{N}}\circ \phi)(\la) &=(-q)^{\frac{1}{M}}Y^{\frac{1}{M}}_{h_+}\phi(\la-\omega_{n+1}/N)\\
 (X_{\omega_{n+1}}^{\frac{1}{N}}\circ \phi)(\la) &=(-q)^{\frac{n}{2M}}q^{2|\lambda|/N}\phi(\la)\\
 (Y_{\zeta/M}\circ \phi)(\la) &=Y^{\frac{1}{M}}_{h_+}Y^{-\frac{1}{M}}_{h_-}\phi(\la).
\end{align*}
 This way we get an action of $\mathcal{T}_{\widetilde\Lambda_{<0}}\otimes \Lbb_{{\Lambda}_{GL_{n+1},\zeta_*}}$ and hence of its subalgebra $\Lbb_{\widetilde\Lambda_{S}}(I_S^{>0})$ on $\widetilde{\Fcr}[S_{\leq0}]$, preserving the subspace $\widetilde{\Vcr}[S_{\leq0}]$ of functions vanishing outside the cone of generalized partitions. The latter space has a Whittaker basis $W_\lambda$ obtained by extending the top Pieri rule to the shift by $\omega_{n+1}/N$, so that $W_{\omega_{n+1}/N}= W_{\omega_{n+1}}^{1/N}=\mathbf{w}^{1/N}$.

%Let $\Lbb_{\widetilde\Lambda^{\mathrm{fr}}_S}$ be the universal Laurent ring associated to the lattice $\widetilde\Lambda_S$ with mutable directions $\{e_\ell\}_{\ell\in I_S^{>0}}$. If we set 
%$$
%\widetilde\Lambda_{<0}(\tri) = \frac{1}{N}\Lambda_{<0}(\tri),\qquad  \widetilde\Lambda_{\geq0}(\tri) = \mathbb{Z}\{\frac{z_{\pm}}{N}\} \oplus \Xi_{S_{>0}},
%$$ 
%then as a module over the base ring we have
%\begin{align}
%\label{eq:ts-split}
%\Lbb_{\widetilde\Lambda^{\mathrm{fr}}_S} =  \Tc_{\widetilde\Lambda_{<0}(\tri)}\otimes \Lbb_{\widetilde\Lambda_{\geq0}(\tri)}.
%\end{align}
%%and by Lemma~\ref{eq:form1} we have
%%$$
%%\Lbb_{\widetilde\Lambda_{\geq0}(\tri)}\simeq \Lbb_{{\Lambda}_{GL_{n+1},\zeta_*}}[X_{\omega_{n+1}}^{\frac{1}{n+1}}]^{\langle P_{\omega_{n+1}} \rangle}.
%%$$
%\green{Can probably omit the stuff immediately above and say isomorphism comes as in $PGL_{n+1}$ setting from previous sec.}

Now let us discuss the symplectically-reduced lattice $\Xi^c({\tri';c_\pm})$ associated to a $c_\pm$-isolating cluster on $S'$. In the corresponding lattice $\Xi({\tri';c_\pm})$ before symplectic reduction, we make the following integral change of basis in accordance with~\eqref{eq:adot-to-a}:
\begin{align}
\label{eq:xidotbasis1}
\dot{\xi}_\ell = \begin{cases}
\sum_{r=1}^{i} \xi_{v^{\pm}_r}, \qquad & \ell = v^\pm_{i},\quad  1\leq i \leq n+1\\
\xi_{\ell},\qquad & \text{otherwise.}
\end{cases}
\end{align}
In this new basis we have
$$
(\dot{e}_{v_j^\pm},\dot{\xi}_{v_i^\pm}) = \delta_{+,-}\delta_{i,j}.
$$
Since the $e_{v_j^\pm}$ are mutable, for any $\ell \in I_{S'}^{\leq0}\setminus \{v_{n+1}^\pm\}$, we have $(e_{v_j^\pm},\xi_\ell)=0$ for all $j$ by definition of a compatible pair. In particular, such $\xi_\ell$ is orthogonal to all elements $\dot{e}_{v_j^\pm}$ and therefore to the elements $\dot{e}_{v_j^+}+\dot{e}_{v_j^-}$ defining the reduction constraint.
%Then the skew-orthogonal complement to the span of all $\{\xi_{c_\pm,i}\}_{i=1}^n$ is spanned by the vectors $\dot{\xi}_\ell$ for $\ell \notin \{v_i^\pm\}_{i=1}^n$, 
  Hence we can choose the following basis in the reduced lattice $\Xi_c=\Xi_{Q_{\tri';c_\pm}}^c$:
\begin{align}
\label{eq:xidotbasis}
{\overline{\xi}}_\ell = \begin{cases}
\dot{\xi}_{v^{\pm}_{n+1}}, \qquad & \ell = v_{n+1}^\pm\\
\dot{\xi}_{v^{+}_j}+\dot{\xi}_{v^{-}_{n+1}}-\dot{\xi}_{v^{-}_j} +j\dot\xi_{(c,j)}, \qquad & \ell = v_{i},\quad  1\leq j \leq n\\
\dot{\xi}_{\ell},\qquad & \text{otherwise.}
\end{cases}
\end{align}
Then we have the pairings $(\overline{\xi}_{v_i},\overline{\xi}_{v_j})=0$, while
$$
(\dot{e}_{v_i},\overline{\xi}_{v_j}) = \delta_{ij}, \quad 1\leq i,j\leq n. %\quad (\dot{e}_{v_i},\dot{e}_{v_j}) =0=(\overline{\xi}_{v_i},\overline{\xi}_{v_j}),\quad 1\leq i,j\leq n.
$$

Hence the degrees of elements of the $\overline{\xi}_\ell$ basis with respect to the internal grading by the $SL_{n+1}$ weight lattice are given by
$$
\deg(\overline{\xi}_\ell) = \begin{cases} 
\omega_j\quad &\ell = v_j, ~1\leq j\leq n\\
0\quad &\text{else.} 
\end{cases}
$$

\begin{remark}
\label{rmk:othersign}
\begin{enumerate}
\item 
Note that despite the minus sign in~\eqref{eq:xidotbasis}, the elements $\{\overline{\xi}_\ell \}$ are (images in $\Xi^c$ of) non-negative integer linear combinations of the original basis $\{\xi_\ell\}$ elements from the compatible pair $\Xi({\tri';c_\pm})$. In particular, the $Y_{\overline{\xi}_\ell}$ are elements of the universally Laurent ring $\Lbb^c$. 
\item Another natural choice of basis vectors in the $v_j$ directions comes from reversing the roles of $+/-$ by taking
$$
\overline{\xi}^o_{v_j} = \dot{\xi}_{v^{-}_j}+\dot{\xi}_{v^{+}_{n+1}}-\dot{\xi}_{v^{+}_j} -j\dot\xi_{(c,j)}.
$$
Like the $Y_{\overline{\xi}_{v_j}}$, the $Y_{\overline{\xi}^o_{v_j}}$  are universally Laurent. 
%We will use these vectors too later on.
\end{enumerate}
\end{remark}
\begin{notation}
\label{not:fr-wts}
For each fundamental weight $\omega_i$ of $SL_{n+1}$, we write $\omega_{c,i}$ for the element of $\Xi^c$ defined by
\begin{align}
\label{eq:fr-wts}
\omega_{c,i} = \xi_{c_-,i} = -\xi_{c_+,i}.
\end{align}
\end{notation}
Similarly to the $PGL_{n+1}$ case, we use the elements~\eqref{eq:fr-wts}  to associate to each element of the weight lattice of $SL_{n+1}$ a quantum torus element
$$
Y_{c,\lambda} =Y_{\sum m_k {\omega}_{c,k}} ,\qquad \lambda = \sum_{k=1}^n m_k\omega_k.
$$
%We also associate elements $Y_{c,\la^\vee}$ to \emph{dominant} weights $\la$ as follows:
%$$
%Y_{c,\lambda^\vee} =Y_{\sum m_k \overline{\xi}_{v_k}} ,\qquad \lambda = \sum_{k=1}^n m_k\omega_k, \quad m_k\geq0.
%$$
We define $\dot{p}_{S'}$ to be the change-of-basis matrix
$$
\dot{e}_j = \sum_i(\dot{p}_{S'})_{i,j} \overline{\xi}_i,
$$
and again consider its block decomposition with respect to the partition $I_{S'} = I_{S'}^{\leq0} \sqcup I_{S'}^{>0}$:
\[
\dot{p}_{S'} \;=\;
\left[\begin{array}{c|c}
\dot{A} & \dot{B} \\ \hline
\dot{C} & \dot{D}
\end{array}\right].
\]
 Now we have a canonical identification $I_S^{<0} \equiv I_{S'}^{<0}$ of index sets (coming from the identification of the corresponding sub graphs induced by the gluing diffeomorphism), which we extend to $I_S^{\leq0} \equiv I_{S'}^{\leq0}$ by identifying
$$
I_S\ni h_\pm \leftrightarrow v^\pm_{n+1}\in I_{S'}.
$$
 Since we have only changed the $e$-basis inside the  $I_{S'}^{>0}$ block, and in the $\xi$-basis have replaced $\xi_{v_{n+1}}^\pm$ by itself plus a linear combination of $\xi$-basis vectors indexed by $I_{S'}^{>0}$, it follows that under the identification of black-white graphs above we have $\dot{A}=A$, where $A$ is the block of $p_S$ from~\eqref{eq:block-mat}.  
 The nonzero entries of $\dot{B}$ and $\dot{C}$ are given by
 $$
 \dot{B}_{h_+,(c,n)} = -1, \qquad  \dot{B}_{h_-,(c,1)} = 1, \quad  \dot{C}_{(c,n),h_+} = 1, \qquad  \dot{C}_{(c,n),h_-} = -1
 $$
 The basis~\eqref{eq:xidotbasis} is chosen so that the only nonzero-entries of the block $\dot{D}$ are again encoded by the $A_n$-Cartan matrix:
$$
(\dot{D})_{v_i,(c,j)} = \mathfrak{A}_{ij}= -(\dot{D})_{(c,j),v_i}.
$$
In particular, we again have $\dot{B}\dot{D}^{-1}\dot{C}=0$ and it follows that 
\begin{align}
\label{eq:pinvdot}
\dot{p}_{S'}^{-1} \;=\;
\left[\begin{array}{c|c}
A^{-1} & -A^{-1}\dot{B}D^{-1} \\ \hline
-\dot{D}^{-1}\dot{C}A^{-1} & \dot{D}^{-1} + \dot{D}^{-1}\dot{C}A^{-1}\dot{B}\dot{D}^{-1}
\end{array}\right].
\end{align}
From the computation of $\dot{D}^{-1}\dot{C}$, it follows as in Lemma~\ref{lem:pin} that the projection to $\Lambda^c_{>0}$ of any $\overline\xi_{i}$ with $i\in I_{S}^{\leq0}$ is a rational multiple of the vector $\sum_{j=1}^nj\dot{e}_{v_j}$. Since all such $\overline\xi_{i}$ are fixed by the action of the Weyl group $W(c_\pm)$, we get
\begin{lemma}
\label{lem:pinprime}
The projection to $\Lambda^c_{\geq0}$ of any $\overline\xi_{i}$ with $i\in I_{S'}^{\leq0}$ is a rational linear combination of the vectors
\begin{align}
\label{eq:zprimevex}
z'_+ = (n+1)e_{h_+} + \sum_{j=1}^nj\dot{e}_{v_j}, \qquad z'_- = (n+1)e_{h_-} - \sum_{j=1}^nj\dot{e}_{v_j}.
\end{align}
\end{lemma}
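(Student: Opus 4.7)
The plan is to imitate the argument of Lemma~\ref{lem:pin}, exploiting the specific form of the blocks in $\dot{p}_{S'}^{-1}$ recorded immediately before the lemma statement. Recall that $\dot{C}$ has only two nonzero columns, indexed by $v_{n+1}^{\pm}$, each of which has a single nonzero entry located in the row $(c,n)$:
\[
\dot{C}_{(c,n),v_{n+1}^+} = 1, \qquad \dot{C}_{(c,n),v_{n+1}^-} = -1.
\]
Thus $\dot{C}A^{-1}$ is supported in the single row $(c,n)$, with
\[
(\dot{C}A^{-1})_{(c,n),\ell} = (A^{-1})_{v_{n+1}^+,\ell}-(A^{-1})_{v_{n+1}^-,\ell}, \qquad \ell\in I_{S'}^{\leq0}.
\]

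Next I would invert $\dot{D}$. Ordering the basis of $\Lambda^c_{>0}$ as $(\{(c,j)\}_{j=1}^{n},\{v_j\}_{j=1}^{n})$, the nonzero entries of $\dot{D}$ given in the text assemble into the block matrix
\[
\dot{D}=\begin{pmatrix} 0 & -\mathfrak{A}\\ \mathfrak{A} & 0 \end{pmatrix},\qquad \dot{D}^{-1}=\begin{pmatrix} 0 & \mathfrak{A}^{-1}\\ -\mathfrak{A}^{-1} & 0 \end{pmatrix},
\]
where $\mathfrak A$ is the Cartan matrix of type $A_n$. Using the classical formula $(\mathfrak{A}^{-1})_{i,n}=\frac{i}{n+1}$, one obtains
\[
-\dot D^{-1}e_{(c,n)} \;=\; \sum_{i=1}^{n}\frac{i}{n+1}\,\dot e_{v_i}.
\]

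Combining the two computations via~\eqref{eq:pinvdot}, the column of $-\dot{D}^{-1}\dot{C}A^{-1}$ at index $\ell\in I_{S'}^{\leq0}$ equals
\[
\bigl((A^{-1})_{v_{n+1}^+,\ell}-(A^{-1})_{v_{n+1}^-,\ell}\bigr)\sum_{i=1}^{n}\frac{i}{n+1}\,\dot e_{v_i}.
\]
On the other hand, the $v_{n+1}^{\pm}$-components of $\overline\xi_\ell$, which contribute the $e_{h_\pm}=\dot e_{v_{n+1}^{\pm}}$ part of the projection to $\Lambda^c_{\geq 0}$, come directly from the corresponding rows of $A^{-1}$. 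Writing $\lambda_\pm=(A^{-1})_{v_{n+1}^{\pm},\ell}$ and assembling both contributions yields
\[
\mathrm{proj}_{\Lambda^c_{\geq0}}\bigl(\overline\xi_\ell\bigr) \;=\; \lambda_+\,e_{h_+}+\lambda_-\,e_{h_-}+\frac{\lambda_+-\lambda_-}{n+1}\sum_{i=1}^{n}i\,\dot e_{v_i} \;=\; \frac{\lambda_+}{n+1}\,z'_+ \;+\;\frac{\lambda_-}{n+1}\,z'_-,
\]
which is the desired conclusion.

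There is no real obstacle here; the entire argument is a direct block-matrix calculation relying only on the explicit form of $\dot{C}$ (whose sparsity is read off the quiver near the pair of tacked circles) and the invertibility of the $A_n$-Cartan matrix. The only point requiring a little care is keeping track of signs in the antisymmetric block decomposition of $\dot{D}$ and correctly identifying $\dot e_{v_{n+1}^{\pm}}$ with $e_{h_\pm}$ under the gluing bijection $I_{S'}^{\leq0}\simeq I_{S}^{\leq0}$; this is essentially a bookkeeping issue, exactly parallel to the identification $\dot A = A$ made just before the statement.
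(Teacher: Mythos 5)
Your computation is correct, and it follows essentially the same route as the paper's: both arguments rest on the block form~\eqref{eq:pinvdot} of $\dot p_{S'}^{-1}$ together with the explicit sparsity of $\dot C$ and the closed form $(\mathfrak{A}^{-1})_{i,n}=\tfrac{i}{n+1}$. The one place you diverge is in how the coefficients of $e_{h_\pm}$ are pinned down. The paper first observes (via $\dot D^{-1}\dot C$) that the $\Lambda^c_{>0}$-projection is a rational multiple of $\sum_j j\dot e_{v_j}$, and then invokes $W(c_\pm)$-invariance of the $\overline\xi_i$ to force the full $\Lambda^c_{\geq0}$-projection into $\mathbb{Q}\langle z'_+,z'_-\rangle$. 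You instead extract the $e_{h_\pm}$-coefficients $\lambda_\pm=(A^{-1})_{v_{n+1}^\pm,\ell}$ directly from the diagonal block $A^{-1}$ and verify algebraically that $\lambda_+ e_{h_+}+\lambda_- e_{h_-}+\tfrac{\lambda_+-\lambda_-}{n+1}\sum_i i\,\dot e_{v_i}=\tfrac{\lambda_+}{n+1}z'_++\tfrac{\lambda_-}{n+1}z'_-$. Your version is more elementary and self-contained (it requires no knowledge of the $W(c_\pm)$-action on $\Lambda^c$ and actually produces the explicit coefficients of $z'_\pm$), while the paper's phrasing explains \emph{why} the three coefficients have to conspire, namely Weyl-equivariance. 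Either argument is acceptable; yours is the more pedestrian and checkable of the two.
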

As in Remark~\ref{rmk:pin},  the coefficients of $z'_+,z'_-$ in in the expansion of such a $\overline{\xi}_i$  are determined by the matrix $A$, since the coefficients of $e_{h_\pm}$ in that expansion are entries of $A^{-1}$.
% The entries of the matrix $\dot D$ are given as follows: writing $(\mathfrak{A}_{ij})$ for the Cartan matrix, we have
% $$
% \dot{e}_{v_i} = -\sum_j \mathfrak{A}_{ij}\overline{\xi}_{c_j}, \quad 1\leq i \leq n
% $$
% while
% $$
% \dot{e}_{c,i}= 
% $$

We again introduce the larger version $\widetilde\Xi_{c}$ of $\Xi_c$ in which we replace the basis vectors $\overline{\xi}_{h_\pm}$ by their rescalings $\overline{\xi}_{h_\pm}/(n+1)$.
% and write $\widetilde{\Tc}_{\res}^\Ac(\tri';c_\pm)$ for the corresponding residue quantum torus.
 By the form~\eqref{eq:pinvdot} of $\dot{p}_{S'}^{-1}$ , we see that $\widetilde{\Xi}_c$ sits inside the lattice
$$
 \widetilde\Lambda^c = \frac{1}{N}\Lambda^c_{<0}(\tri') \oplus \mathbb{Z}\langle\frac{z'_{\pm}}{N}\rangle \oplus \Lambda_{>0}(\tri').
$$
where $N$ is the same as in Lemma~\ref{lem:col-ops}, and the lattice $ \widetilde\Lambda^c_{\geq0} = \langle\frac{z'_{\pm}}{N}\rangle \oplus \Lambda_{>0}(\tri')$ has another basis given by the ${z'_{\pm}}/{N}$ together with the vectors $\{\omega_{c,j}\}_{j=1}^n$ along with the vectors
\begin{align}
\label{eq:form2}
%\nonumber \widetilde{e}_{c,\omega_j} &= \overline{\xi}_{c,j},\\
%\widetilde{e}_{c,\omega^\vee_{j}} &= \overline{\xi}_{v_j}- \frac{j\overline\xi_{h_+} + (n+1-j)\overline\xi_{h_-}}{n+1}, \qquad j=1,\ldots,n
\widetilde{\xi}_{v_j} &= \overline{\xi}_{v_j}- \frac{j\overline\xi_{h_+} + (n+1-j)\overline\xi_{h_-}}{n+1}, \qquad j=1,\ldots,n
\end{align}
obtained by applying the applying the same column operations used in Lemma~\ref{lem:col-ops}. Consider the lattice $\widetilde\Lambda_{\geq0}^{c;ext}(\tri')$ spanned by the vectors $\{e^+_{c,\omega_j},\widetilde{\xi}_{v_j}\}_{j=1}^n$ together with
\begin{align*}
\widetilde{e}^+_{c,\omega_{n+1}}&=  e^+_{c,\omega_{n+1}}/N,\\
\widetilde{e}_{c,\omega^\vee_{n+1}}&=  e_{c,\omega^\vee_{n+1}}/N,\\
\widetilde\zeta' &= \zeta'/M.
\end{align*}
The Weyl group action on  $\Lambda_{\geq0}^{c;ext}(\tri')$ naturally extends to $\widetilde\Lambda_{\geq0}^{c;ext}(\tri')$; note that the ${e}^+_{c,\omega_{n+1}},\widetilde{e}_{c,\omega^\vee_{n+1}}$ and $\zeta'$ are all fixed vectors. Using the formulas~\eqref{eq:braid-xi} for the braid group action, we easily derive:
\begin{lemma}
The map
\begin{align*}
{\omega}_{c,j} &\mapsto \widetilde{e}^+_{c,\omega_{j}}(\widetilde\Lambda_{\geq0}^{c;ext}) - jM\widetilde{e}^+_{c,\omega_{n+1}},\\
%\widetilde{\omega}^\vee_{c,j}
\widetilde{\xi}_{v_j}&\mapsto \widetilde{e}_{c,\omega^\vee_{j}}(\widetilde\Lambda_{\geq0}^{c;ext}) - jM\widetilde{e}^+_{c,\omega^\vee_{n+1}}
\end{align*}
$$
\frac{z'_-}{N}\mapsto -\frac{\zeta'}{M}-\widetilde{e}_{c,\omega^\vee_{n+1}},\quad \frac{z_+}{N}\mapsto \widetilde{e}_{c,\omega^\vee_{n+1}}
$$
defines a Weyl group-equivariant isometric embedding of $ \widetilde\Lambda^c_{\geq0}$ as the orthgonal complement to $\widetilde{e}_{c,\omega^\vee_{n+1}}$ in $\widetilde\Lambda_{\geq0}^{c;ext}(\tri')$.
\end{lemma}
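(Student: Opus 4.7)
\emph{Plan of proof.} The statement has four separable ingredients: (i) well-definedness, (ii) isometry onto the image, (iii) the image equals $\widetilde e_{c,\omega^\vee_{n+1}}^\perp$, and (iv) $W(c_\pm)$-equivariance. Well-definedness is automatic because the map is specified on a $\mathbb{Z}$-basis $\{\omega_{c,j},\widetilde\xi_{v_j}\}_{j=1}^n\cup\{z'_\pm/N\}$ of $\widetilde\Lambda^c_{\geq 0}$. The proof then proceeds by checking (ii)--(iv) separately by direct computation on generators, with the overall guiding principle that the correction terms $-jM\,\widetilde e^+_{c,\omega_{n+1}}$ and $-jM\,\widetilde e^+_{c,\omega^\vee_{n+1}}$ are tuned to the arithmetic identity $N=M(n+1)$ from Lemma~\ref{lem:col-ops}.

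For (ii), I would tabulate the skew-pairings between basis vectors on both sides and match them. On the source, $\omega_{c,j}=\xi_{c_-,j}=-\xi_{c_+,j}$ is frozen and the pairings $(\omega_{c,j},\widetilde\xi_{v_k})$ are unwound using the expression~\eqref{eq:form2} for $\widetilde\xi_{v_k}$ in terms of $\overline\xi_{v_k}$ and $\overline\xi_{h_\pm}$, together with the definition~\eqref{eq:xidotbasis} of the $\overline\xi$-basis and the compatible-pair identities $(\dot e_{v_i},\overline\xi_{v_j})=\delta_{ij}$. Pairings involving $z'_\pm/N$ are computed from~\eqref{eq:zprimevex}. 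On the target, all pairings reduce to $(\widetilde e^+_{c,\epsilon_i},\widetilde e_{c,\epsilon^\vee_j})=\delta_{ij}/N$ together with $\widetilde\zeta'$ being in the radical. The correction terms then balance exactly, because the difference $(e^+_{c,\omega_j},e_{c,\omega^\vee_k})-(\omega_{c,j},\widetilde\xi_{v_k})$ is absorbed by the $jM\cdot kM$ cross-correction with $M^2/N = M/(n+1)$.

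For (iii), I would verify directly that each image pairs trivially with $\widetilde e_{c,\omega^\vee_{n+1}}$. The key computation is
\[
(\widetilde e^+_{c,\omega_j}-jM\,\widetilde e^+_{c,\omega_{n+1}},\widetilde e_{c,\omega^\vee_{n+1}})
= \tfrac{j}{N}-\tfrac{jM(n+1)}{N^2}=0,
\]
and similarly for $\widetilde\xi_{v_j}$; the images of $z'_\pm/N$ are orthogonal since $\widetilde\zeta'$ is radical and $\widetilde e_{c,\omega^\vee_{n+1}}$ is skew to itself. Injectivity is then manifest: after projecting to the quotient by $\widetilde\zeta'$, the coefficient matrix of the $2n+2$ image vectors in the basis $\{\widetilde e^+_{c,\epsilon_i},\widetilde e_{c,\epsilon^\vee_i}\}$ is visibly of maximal rank. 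Finally, because $\widetilde e_{c,\omega^\vee_{n+1}}$ has nonzero pairing with $\widetilde e^+_{c,\omega_{n+1}}$, its skew-orthogonal complement has rank $2n+2$, so injectivity forces surjection onto this complement.

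The main technical obstacle is (iv). On the target, $W(c_\pm)\simeq S_{n+1}$ acts through the reflection representation permuting $\{\widetilde e^+_{c,\epsilon_i},\widetilde e_{c,\epsilon^\vee_i}\}_{i=1}^{n+1}$, while fixing $\widetilde\zeta'$, $\widetilde e^+_{c,\omega_{n+1}}$ and $\widetilde e_{c,\omega^\vee_{n+1}}$. On the source, the action is the one descended from the diagonally-embedded braid group via formulas~\eqref{eq:braid-e} and~\eqref{eq:braid-xi}: a simple reflection $s_i$ acts trivially away from a narrow band around $(c,i),(c,i+1)$, and there produces corrections involving $\dot e_{v_i}$ whose net effect one must match with the reflection permutation on the target. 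It suffices to check equivariance on the simple reflections $s_i$ applied to each basis vector, which I would do by expressing the result on both sides in the distinguished basis $\{\widetilde e^+_{c,\epsilon_k},\widetilde e_{c,\epsilon^\vee_k}\}$ of the target; the apparent $\pm$ asymmetry in~\eqref{eq:xidotbasis} (cf.\ Remark~\ref{rmk:othersign}) is precisely cancelled by the $\dot e_{v_i}$-corrections in~\eqref{eq:braid-e}, yielding the reflection permutation of fundamental weights and coweights as required.
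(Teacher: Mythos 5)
Your proposal is correct and follows exactly the route the paper implies. The paper's ``proof'' consists of the single phrase that one ``easily derives'' the Lemma using the braid-group formulas~\eqref{eq:braid-xi}, and your four-part plan — well-definedness, matching skew-pairings via $N=M(n+1)$, the orthogonality check against $\widetilde e_{c,\omega^\vee_{n+1}}$, and equivariance verified on simple reflections via~\eqref{eq:braid-e}/\eqref{eq:braid-xi} — is precisely that direct computation spelled out; your sample pairing calculation
$(\widetilde e^+_{c,\omega_j}-jM\,\widetilde e^+_{c,\omega_{n+1}},\widetilde e_{c,\omega^\vee_{n+1}})=j/N-jM(n+1)/N^2=0$
is correct.
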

Hence the residue quantum torus  $\widetilde{\Tc}_{\geq0;\res}(\tri';c_\pm)$ associated to the lattice $\widetilde\Lambda^c_{\geq0}$ spanned by the $\{\widetilde{e}_{c,\omega_j}, \widetilde{e}_{c,\omega^\vee_j}\}_{j=1}^n$ together with the $z'_\pm/N$ embeds as the centralizer of $Y_{{e}^+_{c,\omega_{n+1}}}$ in the residue quantum torus $\widetilde{\Tc}^{ext}_{\geq0;\res}(\tri';c_\pm)$ associated to $\Lambda_{\geq0}^{c;ext}(\tri')$. As an algebra,  the latter is just isomorphic  to $\Dres\langle q^{\frac{1}{N}},Y_{\zeta'/M},\mathbf{w}^{\frac{1}{n+1}},\mathbf{D}^{\frac{1}{n+1}}\rangle$. %
%
%Using~\eqref{eq:braid-xi} one can easily check that this basis transforms under the Weyl group action in accordance with the reflection representation: we have
%$$
%s_i(Y_{\widetilde{\xi}_{v_{j}}}) = \begin{cases}
%Y_{\widetilde{\xi}_{v_{j+1}}+\widetilde{\xi}_{v_{j-1}}-\widetilde{\xi}_{v_{j}}} & i=j\\
%Y_{\widetilde{\xi}_{v_{j}}} & \text{else,}
%\end{cases}
%$$
%where we understand $\widetilde{\xi}_{v_{n+1}}=\widetilde{\xi}_{v_{0}}=0$, and the vectors $\widetilde{e}^+_{c,\omega_{n+1}},\widetilde{e}_{c,\omega^\vee_{n+1}},\widetilde\zeta'$ are fixed points.
%So in parallel to Lemma~\ref{lem:cutloc}, the algebra 

%As modules over the base ring (though not as algebras), we have
%\begin{align}
%\label{eq:tsplit}
%{\Tc}_{\widetilde{\Lambda}^c;\res}(\tri';c_\pm) =  \Tc_{\widetilde\Lambda_{<0}(\tri')} \otimes \widehat{\Tc}_{\geq0;\res}^\Ac(\tri';c_\pm).
%\end{align}
%\begin{remark}
%\label{rmk:othersign2}
%If we set
%\begin{align}
%\label{eq:form3}
%\widetilde{\xi}^o_{v_j} = \overline{\xi}^o_{v_j}- \frac{(n+1-j)\overline\xi_{h_+} + j\overline\xi_{h_-}}{n+1},
%\end{align}
%then we compute in the same way that
%\begin{align}
%\label{eq:sumzero}
%\widetilde{\xi}^o_{v_j} = -\widetilde{\xi}_{v_j}.
%\end{align}
%\end{remark}
We again extend the action of $\Tc^{ext}_{\res}(Q_{\tri';c_\pm})$ on $\mathcal{T}_{\leq0}(\tri';c_{\pm})[\bs w^{\pm}]^{\sym}$ to an action of $\widetilde{\Tc}^{ext}_{\geq0;\res}(\tri';c_\pm)$ on 
$$
\widetilde{\Vcr}[S'_{\leq0}]=\mathcal{T}_{\leq0}(\tri';c_{\pm})\otimes\mathbb{Q}[q^{\pm1}][\bs w^{\pm}]^{\sym}[\mathbf{w}^{\frac{1}{N}},q^{\frac{1}{N}},(-q)^{\frac{1}{2M}},Y_{h_\pm}^{\frac{1}{M}}]
$$ 
by setting
\begin{align*}
 (Y_{e^+_{c,\omega_{n+1}}/N}\circ f)(w) &=(-q)^{1/M}Y^{\frac{1}{M}}_{h_+}w_1\ldots w_{n+1}f(w)\\
 (Y_{e_{c,\omega^\vee_{n+1}}/N}\circ f)(\la) &=f(\{w_i\mapsto q^{\frac{1}{N}}w_i\}_{i=1}^n)\\
 (Y_{\zeta'/M}\circ f)(w) &=Y^{\frac{1}{M}}_{h_+}Y^{-\frac{1}{M}}_{h_-}f(w).
\end{align*}
In this way we get an faithful representation of the residue quantum torus $\widetilde{\Tc}_{\geq0;\res}(\tri';c_\pm)$ associated to the lattice $\widetilde\Lambda^c$ on $\widetilde{\Vcr}[S'_{\leq0}]$.
Now as in the $PGL_{n+1}$ case, the algebraic Whittaker transform and the gluing identification $\iiota_\phi$ give rise to an isomorphism
$$
\widetilde{\Vcr}[S_{\leq0}]\simeq \widetilde{\Vcr}[S'_{\leq0}],
$$
and following Proposition~\ref{prop:local-gluing-pgl} this induces an isomorphism of algebras 
\begin{align}
\label{eq:big-iso}
\widetilde{\eta}_c^\Ac\colon\Lbb(\widetilde\Lambda^{\mathrm{fr}}_{\tri;c}) \simeq \widetilde{\Tc}_{\geq0;\res}(\tri';c_\pm)
\end{align}
extending~\eqref{eq:local-gluing-pgl}.
%Let $\widehat{\mathcal{D}}_{\res}=\Dres\langle \sqrt{-1}, q^{1/(n+1)},\mathbf{D}^{1/(n+1)},\mathbf{w}^{1/(n+1)}\rangle$ be the algebra obtained from $\Dres$ by adjoining the square root of -1 along with the  $(n+1)$-th roots of $q$ and the monomials
%$$
%\mathbf{D} = D_1\cdots D_{n+1},\quad \mathbf{w} = w_1\cdots w_{n+1}
%$$ 
%Now 
%Then in parallel to Lemmas~\ref{lem:cutloc} and~\ref{lem:col-ops} we have
%\begin{lemma}
%\label{lem:cut-hom}
%The Weyl group-invariant isomorphism of tori
%$$
%Y_{\widetilde{\xi}_{v_j}} \mapsto (D_1\cdots D_j)\mathbf{D}^{-j/n+1}, \quad Y_{\overline{\xi}_{c,j}} \mapsto (w_1\cdots w_j)\mathbf{w}^{-\frac{j}{n+1}}
%$$
%$$
%Y_{z_-'/N} \mapsto Z , \quad Y_{z_+'/N} \mapsto -q\mathbf{w}^{\frac{1}{n+1}}
%$$
%induces an algebra embedding of the residue quantum torus $\widehat{\Tc}_{\geq0;\res}^\Ac(\tri';c_\pm)$ into $\widehat{\mathcal{D}}_{\res}[Z^{\pm1}]$ \blue{as centralizer}.
%\end{lemma}
%\blue{Replace this bit below by statement about comparinson of extended actions and in parallel with $PGL_{n+1}$-story: passing to centrlizers we get an isomorphism of widetilde rings.}
%\blue{Final step is to show it respects $\Xi$-integrality.}
%We write 
%$$
%\widehat{\eta}\colon \Lbb_{{\Lambda}_{GL_{n+1},\zeta_*}}\langle \sqrt{-1},q^{\frac{1}{n+1}},P_{\omega_{n+1}}^{\frac{1}{n+1}},X_{\omega_{n+1}}^{\frac{1}{n+1}}\rangle \simeq \widehat{\mathcal{D}}_{\res}[Z^{\pm1}]
%$$ 
%for the natural extension of the isomorphism from Proposition~\ref{prop:alg-whit}.
Recall from our Convention~\ref{rmk:tori-roots} that the base ring over which both algebras $\Lbb(\Xi^{\mathrm{fr}}_{\tri;c})$ and  $\Tc^{\Ac}_{\res}(\tri';c_\pm)$ are defined is always taken to contain $(-q)^{\frac{n}{2}}$.
\begin{lemma}
\label{lem:integrality-preserved}
The map~\eqref{eq:big-iso} restricts to an isomorphism
\begin{align}
\label{eq:eta-A}
\eta^\Ac_c \colon \Lbb(\Xi^{\mathrm{fr}}_{\tri;c}) \simeq \Tc^{\Ac}_{\res}(\tri';c_\pm)
\end{align}
under which
\begin{align}
\label{eq:intxi1}
\eta^\Ac_c(H_k(c)) = \chi_{\omega_{k}}(c), \quad \chi_{\omega_k}(c) = \sum_{\nu\in W(\omega_k)} Y_{c,\nu}
\end{align}
and
\begin{align}
\label{eq:intxi2}
\eta^\Ac_c(\xi_{t_k}) = (-q)^{k(n+1-k)/2}\chi^{(0)}_{\omega^\vee_k}(\tri';c_\pm),
\end{align}
\begin{align}
\label{eq:intxi3}
\eta^\Ac_c(\xi_{s_k}) = (-q)^{k(n+1-k)/2}\chi^{(1)}_{\omega^\vee_k}(\tri';c_\pm),
\end{align}
where
$$
 \chi^{(j)}_{\omega_k}(\tri';c_\pm) = \sum_{\sigma\in W/\mathrm{Stab}(\omega_k)}\sigma\left(\prod_{\langle \alpha,\omega_k\rangle<0}\frac{1}{1-Y_{c,\alpha}}Y_{\overline{\xi}_{v_k}+j\omega_{c,k}}(\tri';c_\pm) \right)
$$
with the product being taken over all roots $\alpha$ having negative pairing with $\omega_k$.
%in~\eqref{eq:pres} restricts to give isomorphisms
%\[
%\label{eq:lociso}
%\begin{tikzcd}
%%\iota\otimes \eta_{SL_{n+1}}:
%  & \Lbb_{\Xi^{\mathrm{fr}}_S} 
%    \arrow[r,"\simeq" ]
%  & \mathcal{T}_{\Xi^c_{S'}}^{res} \\[1em]
%{} 
%  & \Lbb_{\Lambda^{\mathrm{fr}}_S} 
%    \arrow[u, hookrightarrow]
%    \arrow[r, "\simeq"]
%  & \mathcal{T}_{\Lambda^c_{S'}}^{res} 
%    \arrow[u, hookrightarrow]
%\end{tikzcd}
%\]
%isomorphically to $\mathcal{T}_{\Xi^c_{S'}}^{res}$.
\end{lemma}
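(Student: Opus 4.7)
The strategy is to first establish the three explicit formulas \eqref{eq:intxi1}--\eqref{eq:intxi3}, and then deduce from them that the isomorphism $\widetilde\eta_c^\Ac$ on the rescaled lattices restricts to one between the original integral subalgebras. The formulas themselves will follow by tracing the construction of $\widetilde\eta_c^\Ac$ through the algebraic Whittaker transform of Proposition~\ref{prop:alg-whit} and the dictionary of Lemma~\ref{lem:cutloc}, combined with the basis change relations recorded in Lemma~\ref{lem:col-ops} and its counterpart for the cut surface.

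First I would verify \eqref{eq:intxi2}--\eqref{eq:intxi3}. Under the embeddings of Lemma~\ref{lem:col-ops} and its analog for $\widetilde\Xi^c$, the rescaled vectors $\widetilde\xi_{t_k},\widetilde\xi_{s_k}\in\widetilde\Xi_S$ correspond to $\xi_{t_k}(\Lambda_{GL_{n+1}})-jM\widetilde\xi_{t_{n+1}}$ and $\xi_{s_k}(\Lambda_{GL_{n+1}})-jM\widetilde\xi_{s_{n+1}}$. The intertwinings in Proposition~\ref{prop:alg-whit} compute the images of these under $\mathbb W$ as scalar multiples of the minuscule elements $\mathcal R_{\omega_k}$ and $\mathcal R_{\omega_k}[e_k\otimes 1]$ of $\Dres$, multiplied by powers of the minuscule shift $D_{\omega_{n+1}}^{\pm j/N}$ accounting for the rescaling. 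Under the isomorphism of Lemma~\ref{lem:cutloc}, these minuscule operators in $\Dres$ translate to exactly the dressed symmetrizations $\chi^{(0)}_{\omega_k^\vee}(\tri';c_\pm)$ and $\chi^{(1)}_{\omega_k^\vee}(\tri';c_\pm)$, up to the Weyl group normalization prefactor. Keeping track of the sign $(-1)^k$ and the power $(-q)^{k(1-k)/2}$ from \eqref{eq:A-inter} together with the shift by $jM\widetilde\xi_{t_{n+1}}$ --- which becomes a factor of $q^{k(n-k+1)/2}$ via the Weyl ordering correction arising from our Convention~\ref{rmk:tori-roots} --- yields the claimed normalization $(-q)^{k(n+1-k)/2}$. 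Formula \eqref{eq:intxi1} then follows directly from \eqref{eq:el-inter} and Corollary~\ref{cor:basis}, which expresses the elementary symmetric function $e_k(\bs w)$ in the Whittaker basis as the character $\sum_{\nu\in W(\omega_k)} Y_{c,\nu}$ once the substitution $w_j\mapsto Y_{c,\eps_j}$ is made in the extended lattice $\widetilde\Lambda^{c;ext}_{\geq0}$.

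Having established the three formulas, one checks they imply the required integrality in both directions. For the direction $\eta_c^\Ac(\Lbb(\Xi^{\mathrm{fr}}_{\tri;c}))\subseteq\Tc^\Ac_{\res}(\tri';c_\pm)$, note that $\Lbb(\Xi^{\mathrm{fr}}_{\tri;c})$ is generated as an algebra over $\mathcal T_{<0}(\tri;c)$ by the frozen $\mathcal A$-variables together with the elements $\tau^m(A_{t_k})$ for $m\in\mathbb Z$, where $\tau$ is the Dehn twist automorphism. On the cut side, the elements $\chi^{(0)}_{\omega_k^\vee}(\tri';c_\pm)$ and $\chi^{(1)}_{\omega_k^\vee}(\tri';c_\pm)$ lie in $\Tc^\Ac_{\res}(\tri';c_\pm)$ by construction, and by the intertwining \eqref{eq:dehn-intertwining} combined with the Goodearl--Yakimov description of $\Lbb_{\mathrm{Toda}}(GL_{n+1})$ from Section~\ref{subsec:ltoda}, the images of the $\tau^m(A_{t_k})$ are polynomial expressions in these generators and hence also lie in $\Tc^\Ac_{\res}(\tri';c_\pm)$. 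For the reverse inclusion, one uses Lemma~\ref{lem:small-genset} together with Lemma~\ref{lem:minuscule-generation}: $\Tc^\Ac_{\geq0;\res}(\tri';c_\pm)$ is generated (over the frozen subring) by the minuscule elements $\chi^{(j)}_{\omega_k^\vee}$ and $\chi_{\omega_k}(c)$, each of which by the formulas above is the image of an integral element in $\Lbb(\Xi^{\mathrm{fr}}_{\tri;c})$; and the subalgebra $\mathcal T_{<0}(\tri';c_\pm)$ is the identical image of $\mathcal T_{<0}(\tri;c)$ by the locality property \eqref{eq:local-is-local}.

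The main technical obstacle will be bookkeeping of scalar factors in the first step: translating the identities from Proposition~\ref{prop:alg-whit} --- which live in the extended $GL_{n+1}$ Toda lattice with its own basis --- into the conventions of the cut surface cluster, where the basis is $\{\overline\xi_\ell\}$ of Section~\ref{sec:gluing-sln} and the dressings by simple root denominators $(1-Y_{c,\alpha})^{-1}$ have to be correctly identified. In particular, one must verify that the minuscule element $\mathcal R_{\omega_k}[e_k\otimes 1]$, which is the $\mathcal X$-variable of the Toda frozen direction on the Whittaker side, corresponds on the cluster side to the raising analog $\chi^{(1)}_{\omega_k^\vee}$ rather than $\chi^{(0)}_{\omega_k^\vee}$. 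This boils down to a careful comparison of the two natural $\xi$-bases on $\Xi^c$ described in Remark~\ref{rmk:othersign}, in conjunction with the Weyl group action \eqref{eq:braid-xi} on the $\mathcal A$-variables.
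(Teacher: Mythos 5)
Your proposal circles the same core mechanism as the paper (the algebraic Whittaker transform from Proposition~\ref{prop:alg-whit} and the basis-change machinery of Lemma~\ref{lem:col-ops}), but there is a genuine gap at the key step, and the second half of your argument is a more roundabout route than necessary.

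The crucial unaddressed point is the following. When you rewrite the integral vector $\xi_{t_k}\in\Xi_S$ as $\widetilde\xi_{t_k}$ plus the fractional part $\frac{k\xi_{h_+}+(n+1-k)\xi_{h_-}}{n+1}$ coming from~\eqref{eq:form1}, the image of that fractional part under $\widetilde\eta_c^\Ac$ is \emph{not} a scalar power of $q$: it is a monomial $Y_{\widetilde\eta_c^\Ac(\xi_{h_\pm})}$ in the cut-surface quantum torus. You write that ``the shift by $jM\widetilde\xi_{t_{n+1}}$\ldots becomes a factor of $q^{k(n-k+1)/2}$ via the Weyl-ordering correction,'' but this conflates a genuine lattice shift with a scalar correction. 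In order for the image $\eta^\Ac_c(\xi_{t_k})$ to be expressible purely in terms of cut-surface integral data as in~\eqref{eq:intxi2}, you need the fractional $\xi_{h_\pm}$-contribution from $S$ to be \emph{cancelled} by the corresponding fractional $\overline\xi_{h_\pm}$-contribution from~\eqref{eq:form2} on $S'$. This cancellation is not automatic; it requires establishing the identity $\widetilde\eta_c^\Ac(Y_{\xi_\ell})=Y_{\overline\xi_\ell}$ for all $\ell\in I^{\leq0}_{S}$, and in particular for $\ell=h_\pm$. That identity is what the paper's proof supplies, and it rests on the observation that the $I^{\leq0}$-blocks of the inverse ensemble matrices $p_S^{-1}$ and $\dot p_{S'}^{-1}$ are identical (formula~\eqref{eq:pinvdot}), which in turn uses Lemmas~\ref{lem:pin} and~\ref{lem:pinprime} and Remark~\ref{rmk:pin}. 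Without this step, you have no control over which cut-surface monomial absorbs the fractional piece, and the claimed normalization cannot be verified.

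The second part of your argument — showing the two Laurent rings are exchanged by invoking the Goodearl--Yakimov generating set $\{\tau^m(A_{t_1})\}$ on one side and Lemma~\ref{lem:small-genset} plus Lemma~\ref{lem:minuscule-generation} on the other — is a valid alternative route, but it buys no additional simplicity. Once the explicit formulas~\eqref{eq:intxi2}--\eqref{eq:intxi3} are in hand, together with $\widetilde\eta_c^\Ac(Y_{\xi_\ell})=Y_{\overline\xi_\ell}$ for frozen $\ell$, the integrality in both directions is immediate: $\eta^\Ac_c$ sends the generators of $\Lbb(\Xi^{\mathrm{fr}}_{\tri;c})$ directly to elements of $\Tc^\Ac_{\res}(\tri';c_\pm)$ with invertible image, so there is nothing left to prove. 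The paper's proof therefore does not need the generation argument at all. Note also that your claim that $\chi^{(j)}_{\omega_k^\vee}\in\Tc^\Ac_{\res}$ ``by construction'' requires substantiation: one needs to check the residue condition~\eqref{eq:Lresidue-condition}, which holds because these are symmetrizations of minuscule-type expressions in the sense of Section~\ref{sec:nildaha}, but this deserves at least a citation of Lemma~\ref{lem:small-genset} or an inspection of the definition.
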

%\green{Should we just make the extension of scalars by $(-q)^{n/2}$ part of the definition of the $\Ac$-quantum tori for the $SL_{n+1}$ moduli space when we first introduce it in Sec 5?}
\begin{proof}
%To see that ~\eqref{eq:local-A-iso} is in fact an algebra homomorphism, recall that 
%the $z_\pm$ (resp. $z'_\pm$) commute with all $e_\ell$ for $\ell\in I_{S}^{>0}$ (resp. $I_{S'}^{>0}$), and that the coefficient $e_{h_\pm}(S)$ in $z_\pm$ is the same as that of $e_{h_\pm}(S')$ in $z'_\pm$. Hence the map
%$$
%\iota\colon \widetilde\Lambda_{\leq0}(S)\simeq \widetilde\Lambda_{\leq0}(S'),\qquad e_\ell(S)\mapsto e_\ell(S'), ~\ell\in I_{S}^{<0}\equiv I_{S'}^{<0}, \quad z_\pm\mapsto z'_\pm
%$$ is an isometry, which implies~\eqref{eq:local-A-iso} is a homomorphism. 
%For the second statement, the image of any element of $\Lbb_{\widetilde\Lambda^{\mathrm{fr}}_S}$ (and hence that of any element of the subring $\Lbb^\Ac(Q^{\mathrm{fr}}_{\tri;c})$) satisfies the residue conditions from Definition~\ref{def:Tres}, and conversely the image of any element of $\Tc^{\Ac}_{\res}(\tri';c_\pm)$ under the inverse of~\ref{eq:local-A-iso} remains Laurent under mutation at $e_\ell$ with $\ell\in I_{S}^{>0}$. 
%Since $\eta(P_{\omega_{n+1}})=w_1\cdots w_{n+1}$,  it follows that it restricts to an isomorphism
What we need to check is that $\widetilde{\eta}_c^\Ac$ respects integrality with respect to the original $\Xi$-lattices: in other words, that after adjoining the indicated fractional powers of $q$ it maps an element of $\Lbb(\widetilde\Lambda^{\mathrm{fr}}_{\tri;c})$ which is a Laurent polynomial in the generators $\{\xi_\ell\}_{\ell\in I_S}$ to an element in $\Tc^{\Ac}_{\res}(\tri';c_\pm)$, and vice versa. It is clear from the constructions of the actions of the two algebras that we have
$$
\widetilde{\eta}_c^\Ac(Y_{z_\pm/N}) = Y_{z'_\pm/N}.
$$
Now recall that from Lemmas~\ref{lem:pin} and~\ref{lem:pinprime} and Remark~\ref{rmk:pin} that any $\xi_\ell$ with $\ell\in I_{\leq0}$ can be expressed as an element of $\Lambda_{S_{<0}}/N$ (resp. $\Lambda_{S'_{<0}}/N$) plus a multiple of $z_\pm/N$ (resp. $z'_\pm/N$), and that the coefficients of the latter vectors are determined by the entries of the  $I_{\leq0}$-blocks of the matrices $p_S^{-1},\dot p_{S'}^{-1}$.
Since we have observed in~\eqref{eq:pinvdot} that these blocks are identical , it follows that
$$
\widetilde{\eta}_c^\Ac(Y_{\xi_{\ell}}) = Y_{\overline{\xi}_{\ell}} \qquad \text{for all } \ell\in I_{\leq0}.
$$
In particular, we have $\widetilde{\eta}_c^\Ac(Y_{\xi_{h_\pm}}) = Y_{\overline{\xi}_{h_\pm}}$. So since the fractional coefficients of $\xi_{h_\pm}$ in formula~\eqref{eq:form1} are identical to those of $\overline{\xi}_{h_\pm}$ in~\eqref{eq:form2}, we may cancel the corresponding quantum torus elements to derive the intertwining formulas~\eqref{eq:intxi2} and~\eqref{eq:intxi3}, from which it follows that Laurent polynomials in the generators $\{\xi_\ell\}_{\ell\in I^{\geq0}_S}$ are mapped to elements of $\Tc^{\Ac}_{\res}(\tri';c_\pm)$ and vice versa. Finally, the intertwining relation for the fundamental Hamiltonians follows from the corresponding relation for the Toda Hamiltonians $H_k$ in Proposition~\ref{prop:alg-whit}.
\end{proof}

\begin{example}
\label{ex:sl2-hom}
If $G=SL_2$, then under the homomorphism from Lemma~\ref{lem:integrality-preserved} we have
\begin{align*}
(-q)^{-\frac{1}{2}}Y_{\widetilde\xi_{t_1}}&\longmapsto \frac{1}{1-Y_{c,\alpha_1}} Y_{\widetilde{\xi}_{v_1}} + \frac{1}{1-Y_{c,-\alpha_1}} Y_{-\widetilde{\xi}_{v_1}},\\
(-q)^{-\frac{1}{2}}Y_{\widetilde\xi_{s_1}}&\longmapsto \frac{1}{1-Y_{c,\alpha_1}} Y_{\widetilde{\xi}_{v_1}+\omega_{c,1}} + \frac{1}{1-Y_{c,-\alpha_1}} Y_{-\widetilde{\xi}_{v_1}-\omega_{c,1}}.
\end{align*}
Since $\eta_c(\xi_{h_\pm})=\overline{\xi}_{h_\pm}$, clearing the common fractional powers on both sides as in the proof of Lemma~\ref{lem:integrality-preserved} we see that
\begin{align*}
(-q)^{-\frac{1}{2}}Y_{\xi_{t_1}}&\longmapsto \frac{1}{1-Y_{c,\alpha_1}} Y_{\overline{\xi}_{v_1}} + \frac{1}{1-Y_{c,-\alpha_1}} Y_{\overline{\xi}_{v_1}^o} \\ %=\mathrm{Sym}\left(\frac{1}{1-Y_{{c,\alpha}}} D_{c,\omega^\vee}\right),\\
(-q)^{-\frac{1}{2}}Y_{\xi_{s_1}}&\longmapsto \frac{1}{1-Y_{c,\alpha_1}} Y_{\overline{\xi}_{v_1}+\omega_{c,1}} + \frac{1}{1-Y_{c,-\alpha_1}} Y_{\overline{\xi}_{v_1}^o-\omega_{c,1}},%=\mathrm{Sym}\left(\frac{Y_{{c,\omega}}}{1-Y_{{c,\alpha}}} D_{c,\omega^\vee}\right).
\end{align*}
where the lattice element $\overline{\xi}_{v_1}^o$ is defined as in Remark~\ref{rmk:othersign}.
%\red{Also do the universally Laurent element }
%$$
%Y_{\xi_s-\xi_t} + Y_{\xi_t-\xi_s}+ Y_{\xi_{h_+}+\xi_{h_-}-\xi_s-\xi_t}.
%$$
%Note that
%$$
% Y_{\xi_s - \xi_t} - Y_{-\xi_s - \xi_t} +Y_{\xi_t -\xi_s}\mapsto Y_{c,\omega} + Y_{c,-\omega}=e_1(\{Y_{c,\lambda}\}).
%$$

\end{example}

\begin{remark}
\label{rmk:laurent}
In the original formulation of the residue conditions~\eqref{eq:Lresidue-condition}, the elements $Y_{c,\alpha^\vee}$ do not lie in the universal Laurent ring $\Lbb(\Lambda^c_{S'})$. But in $\Ac$-variable residue quantum torus $\Tc^{\Ac}_{\res}(\tri';c_\pm)$ one can clear out the denominators in the cross-ratio coordinates $Y_{c,\alpha^\vee}$ to express the residue condition using only elements of the universal Laurent ring $\Lbb(\Xi^c_{S'})$. 
\end{remark}

%There is also an analog of Lemma~\ref{lem:integrality-preserved} for the algebras $\Lbb_{\Lambda^{\mathrm{fr}}_S} \simeq \mathcal{T}_{\Lambda^c_{S'}}^{res}$ associated to the adjoint group $G=PGL_{n+1}$. One straightforward way to do this is as follows: the ring $\Lbb_{\Xi^{\mathrm{fr}}_S}$ has a $\mathbb{Z}^2$-grading induced by the following grading on the quantum torus:
%$$
%\deg(\xi_{s_j})=\deg(\xi_{t_j})=(n+1-j,j),\quad \deg(\xi_{h_+}) = (n+1,0), \quad \deg(\xi_{h_-}) = (0,n+1),
%$$
%and $\deg(\xi_\ell)=0$ otherwise. The fact that this defines a grading on  $\Lbb_{\Xi^{\mathrm{fr}}_S}$ follows from the fact that each mutable cross-ratio $e_{s_j},e_{t_j}$ has degree zero.
\subsection{Global gluing isomorphism}
Now we extend the local gluing isomorphisms from the previous sections to global ones, i.e. to the level of universal Laurent rings. The first step is to define the global analog of the residue quantum torus associated to a $c_\pm$-isolating cluster on $S'$. 
%We write $\Lbb^c[P^+]\subset\Lbb^c$ for the subalgebra graded by the cone $P^+$ of dominant weights, as in the construction described by Remark~\ref{rmk:cones} of Section~\ref{sec:qcv}.

% Now let us set 
%\begin{align}
%\label{def:deltaplus}
%\Delta_{alg}(c_\pm) = \prod_{\alpha>0}\Psi_q(-qY_{\alpha_c})^{-1},
%\end{align}
%where the product is taken over all positive roots of $G$. By the difference equation~\eqref{eq:??}, conjugation by $\Delta_{alg}(c_\pm)$ defines an automorphism of $\mathrm{Frac}(\mathcal{T}^{c})$, and so we may conjugate $\mathbb{L}^{c}$ to an isomorphic subalgebra $\widetilde{\mathbb L}=\Delta_{alg}(c_\pm)^{-1}\mathbb{L}^{c}\Delta_{alg}(c_\pm)$ of the fraction field $\mathrm{Frac}(\mathcal{T}^{c})$ of the quantum torus
% $\mathbb{L}^{c}$. In fact, the algebra $\widetilde{\mathbb L}$  is contained in the Øre localization $\Lbb^c_{rat}$ of the universal Laurent ring $\Lbb^c$ at the multiplicative denominator set
%\begin{align}
%    \label{eq:denom-set}
%    Ø(c_\pm) = \left\{\prod_{r=1} (1-q^{2k_r}Y_{\beta_r}) \right\}_{\vec k,\vec\beta}
%\end{align}
%where $\vec k$ ranges over all finite length sequences of integers, and $\vec\beta$ over all sequences of positive roots (possibly with repetitions). 
%First \red{Explain that localization is mutation invariant.}
Since the elements of the denominator set~\eqref{eq:denom-set} are polynomials in frozen $\Ac$-variables, it follows that $ Ø(c_\pm)$ is invariant under mutation. Similarly, it is preserved under the action of the diagonally embedded Weyl group $W(c_\pm)$ by cluster transformations, since in any cluster Weyl group $W(c_\pm)$ acts by the reflection representation on commutative subalgebra generated by the elements $Y_{\omega_{c,j}}$. 
%Given an arbitrary cluster $Q$, we will write $\mathcal{T}^{\loc}({\Xi_Q^c})$ for the $ Ø(c_\pm)$-localized quantum torus associated to the lattice $\Xi_Q^c$, and $\mathcal{T}^{\loc}({\Lambda_Q^c})$ for the localized quantum torus associated to the lattice $\Lambda_Q^c$.

%$$
%a_{m\alpha}(Q_{\tri;c_\pm})  = \begin{cases}
%Y_{\overline{\xi}_{v_{\alpha_j}}} \quad & m>0\\
%Y_{\overline{\xi}_{-v_{\alpha_j}}} \quad & m<0
%\end{cases}
%$$
%where $\overline{\xi}_{v_{\alpha_j}}$ and $\overline{\xi}_{-v_{\alpha_j}}$ are the elements of $\Xi^c_{S'}$ defined by~\eqref{eq:dual-alpha-xis} and~\eqref{eq:dual-alpha-xis2} respectively.

\begin{defn}
\label{def:Lres}
Let $Q_{\tri';c_\pm}$ be an isolating cluster for $c_\pm$ subordinate to an ideal triangulation $\tri'$.
   The $G=SL_{n+1}$ (resp. $PGL_{n+1}$) \emph{residue universal Laurent ring} $\Lbb_{G,S';\phi}(\tri')$ based at $Q_{\tri';c_\pm}$ consists of all elements $A$ of the residue quantum torus $\Tc_{\res}^\Ac(\tri';c_\pm)$ (resp. $\Tc_{\res}(\tri';c_\pm)$) for which there exists an element $d=d(Y_{c,\alpha_1},\ldots Y_{c,\alpha_n})$ of the denominator set $Ø(c_\pm)$ such that $dA$ lies in the symplectically-reduced universal Laurent ring $\Lbb^c_{\Xi(\tri';c_\pm)}$ from~\eqref{eq:reduced-laurent-ring}.    
%      \begin{enumerate}
%            \item There exists an element $d=d(Y_{c,\alpha_1},\ldots Y_{c,\alpha_n})$ of the denominator set $Ø$ such that $dA^\nu$ is an element of the universal Laurent ring $\Lbb_{S';\Ac}^c$;
%            \item The element $A$ is invariant under the action of the diagonally embedded Weyl group $W_\Delta(c_\pm)$ by cluster transformations.
%            %Each graded component $A^\mu\in\mathcal{T}_{\Xi^c}^{loc}$
%        \item The element $A$ has at worst simple poles at the divisors $\{ q^{2k}Y_{e_{c,\alpha_1}}=1\}$ (i.e. the element $d$ above can be chosen squarefree), and for all weights $\mu$ and integers $k$, 
%\begin{align}
%\label{eq:Lresidue-condition}
%\mathrm{Res}_{\delta_{\alpha,k}}\left(A_\mu + A_{s_\alpha(\mu)+k\alpha}Y_{(\langle \mu,\alpha\rangle-k)\alpha^\vee}(Q_{\tri;c_\pm})\right)=0.
%%\left( A(Y_{2\overline\xi_{h_+}+\overline\xi_{h_-}}(Q_{\tri;c_\pm})^{|\langle\mu,\alpha_1^\vee\rangle-k|} + Y_{\overline\xi_{v_{\alpha_1}}}(Q_{\tri;c_\pm})^{|\langle\mu,\alpha_1^\vee\rangle-k|})\right)^{(\mu)} \quad \text{is regular on} \quad \delta_{\alpha_1,k}=\{ q^{2k}Y_{e_{c,\alpha_1}}=1\}.
%\end{align}
%            \end{enumerate}
\end{defn}
\begin{remark}
As observed in in Remark~\ref{rmk:laurent}, the condition~\eqref{eq:Lresidue-condition} can be equivalently expressed using elements of the universal Laurent ring in the isolating cluster $Q_{\tri;c_\pm}$. When expressed in terms of the coordinates of a general cluster, these elements will be complicated elements of the corresponding quantum torus, and hence the residue conditions will no longer take the simple binomial form from Definition~\ref{def:Tres}.
\end{remark}

\begin{remark}
\label{rmk:genLres}
More generally, given any locally glueable pair $(\quiver,\quiver_{\cut})$ we can define a corresponding residue universal Laurent ring $\Lbb_{\res}(\quiver_{\cut})$ as the set of all elements $A$ of the residue quantum torus $\Tc_{\res}(\quiver_{\cut})$ such that for some $d$ in the denominator set, $dA$ lies in the symplectically reduced universal Laurent ring associated to $\quiver_{\cut}$.
\end{remark}

%\begin{remark}
%\label{rmk:Lres-alg}
%    Condition (1) in the definition above says that $\Lbb_{res}^c$ is contained in the subalgebra $\mathcal{T}_{S\setminus c}\otimes \mathbb{SH}_{q,t=0}$ of $\mathcal{T}_{\bi_{S';c}}^{rat}\simeq \mathcal{T}_{S\setminus c}\otimes \mathcal{D}_q(T_{SL_{n+1}})^{rat}$. In particular, $\Lbb_{res}^c$ forms an algebra. 
%\end{remark}

The definition of $\Lbb_{G,S';\phi}$ is phrased in terms of a particular special isolating cluster coordinate system $Q_{\tri;c_\pm}$ for the pair of tacked circles $c_\pm$. 
%Indeed although they are quantum cluster monomials in the isolating cluster $Q_{\tri;c_\pm}$, in a general cluster the elements $Y_{2\overline\xi_{h_+}+\overline\xi_{h_-}}(Q_{\tri;c_\pm}),Y_{\overline\xi_{v_{\alpha_1}}}(Q_{\tri;c_\pm})$ will be complicated elements of the corresponding quantum torus.
 We now show that the definition is actually independent of the choice of isolating cluster $Q_{\tri;c_\pm}$.

\begin{prop}
\label{prop:residue-well-defined}
    The algebra $\Lbb_{G,S';\phi}$ does not depend on the choice of isolating cluster for the pair of tacked circles $c_\pm$: if $Q_{\tri'_1,c_\pm}$ and $Q_{\tri_2',c_\pm}$ are two such clusters and 
    $$
    \mu_{1,2} \colon Q_{\tri'_2,c_\pm}\rightarrow Q_{\tri'_1,c_\pm}
    $$
     the cluster transformation taking one to the other, we have
    $$
\mu_{1,2}\left(\Lbb_{G,S';\phi}(\tri'_2)\right) = \Lbb_{G,S';\phi}(\tri_1').
    $$
\end{prop}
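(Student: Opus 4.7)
The plan is to reduce the problem to showing that both defining conditions of $\Lbb_{G,S';\phi}(\tri')$ are intrinsic to the gluing data $(S',c_\pm,\phi)$ rather than to the auxiliary choice of isolating triangulation $\tri'$. Recall that membership in $\Lbb_{G,S';\phi}(\tri')$ requires: (i) membership in $\Tc^\Ac_{\res}(\tri';c_\pm)$, i.e.\ $W(c_\pm)$-invariance, simple poles at the divisors $d_{\alpha,k}$, and the residue relation~\eqref{eq:Lresidue-condition}; and (ii) the Laurent condition $dA \in \Lbb^c_{\Xi(\tri';c_\pm)}$ for some $d \in Ø(c_\pm)$. All three pieces of (i) are phrased using only the diagonal $W(c_\pm)$-action and the elements $Y_{c,\alpha}$, $Y_{c,\alpha^\vee}$ (equivalently, the lattice vectors $\dot e_{v_i}$, $\dot e_{c,i}$), while condition (ii) involves the $Ø(c_\pm)$-localization of the mutation-invariant algebra of Lemma~\ref{lem:reduced-laurent}.

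The key step is to verify that the elements $Y_{c,\alpha^\vee}$, $Y_{c,\alpha}$, together with the $W(c_\pm)$-action, are preserved under $\mu_{1,2}$. Using Lemma~\ref{lem:re-iso}, we factor $\mu_{1,2}$ as a composition of flips preserving the isolating cylinders of $c_\pm$ together with refined umbral moves. Flips of the first type leave the subquivers containing $c_\pm$ pointwise fixed, so the lattice vectors $\dot e_{c,i}$, $\dot e_{v_i^\pm}$ — and hence the monomials $Y_{c,\alpha^\vee}$, $Y_{c,\alpha}$ — are literally unchanged. For refined umbral moves, the corresponding claim follows from the explicit tropical analysis carried out in Section~\ref{subsec:refined-umbral}: inspection of the tropical cluster transformations (as in Lemmas~\ref{lem:8-left-9}--\ref{lem:13-right-9}) shows that the tropical coordinates dual to $\dot e_{c,i}$ and $\dot e_{v_i^\pm}$ are unaffected, and by Theorem~\ref{trop-criterion} this forces the quantum monomials themselves to be preserved. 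The $W(c_\pm)$-action, realized by the diagonal braid group action of Lemma~\ref{lem:braid}, is defined by cluster automorphisms in any isolating chart, and therefore commutes with $\mu_{1,2}$.

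Given these invariance statements, the denominator set $Ø(c_\pm)$, the internal grading of $\mathcal{T}^c_{\Xi_{S'}}$ by the root lattice of $G$ (defined by conjugation with the $Y_{c,\alpha}$), and the filtration of $\mathcal{T}^c_{\Xi_{S'};\rat}$ by pole order at each $d_{\alpha,k}$ are all preserved by $\mu_{1,2}$; hence so are the pole-order bound and the residue relation of Definition~\ref{def:Tres}. Combined with the $W(c_\pm)$-equivariance above this gives $\mu_{1,2}(\Tc^\Ac_{\res}(\tri_2';c_\pm)) = \Tc^\Ac_{\res}(\tri_1';c_\pm)$. Condition (ii) transports through $\mu_{1,2}$ directly from Lemma~\ref{lem:reduced-laurent} and the invariance of $Ø(c_\pm)$. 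Putting everything together yields $\mu_{1,2}(\Lbb_{G,S';\phi}(\tri_2')) = \Lbb_{G,S';\phi}(\tri_1')$. The main obstacle is the detailed verification, for refined umbral moves, that the cluster monomials $Y_{c,\alpha^\vee}$ and $Y_{c,\alpha}$ are genuinely preserved; although this is forced by the canonical geometric interpretation of these elements as monodromy data around the tacked circles, a direct proof requires the somewhat intricate tropical bookkeeping assembled in Section~\ref{subsec:refined-umbral}.
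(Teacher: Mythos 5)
The final step of your argument contains a genuine gap. You are correct that the elements $Y_{c,\alpha}$ are preserved under $\mu_{1,2}$, since the vectors $\dot e_{v_i}$ lie in the kernel of the reduced skew form and hence commute with every mutation; this is why the internal grading, the denominator set $Ø(c_\pm)$, and the order filtration at each $d_{\alpha,k}$ all transport correctly, and why the Weyl action (which is implemented by quasi-permutations in every isolating cluster) commutes with $\mu_{1,2}$. But the analogous claim for $Y_{c,\alpha^\vee}$ is false. The $\dot e_{c,i}$ are frozen directions with \emph{nonzero} pairing against mutable directions, so conjugation by the quantum dilogarithm factors $\Psi(Y_{e_k}^{\pm1})$ appearing in $\mu_{1,2}$ changes $Y_{\dot e_{c,i}}$ into a genuine Laurent polynomial rather than a monomial. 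Your appeal to Theorem~\ref{trop-criterion} is a non-sequitur here: that theorem says two composites of mutations are equal as cluster transformations iff they agree tropically; it does not say that a single quantum $\Xc$-variable whose tropical coordinate happens to be fixed by a transformation is itself fixed by it. In fact the tropical coordinate of $\dot e_{c,i}$ is generally not fixed either, since the truncated mutation rule for a frozen direction $\ell$ multiplies $\yrm_\ell$ by a nontrivial power of $\yrm_k$ whenever $\varepsilon_{k\ell}\ne0$.

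Since the residue condition~\eqref{eq:Lresidue-condition} explicitly involves the $Y_{c,\alpha^\vee}$, its invariance does not follow from the invariance statements you have established. This is precisely where the paper's proof has to do real work: it introduces the ``cocycle'' $f_{l\alpha} = \mu_{1,2}(Y'_{l\alpha^\vee})Y_{l\alpha^\vee}^{-1}$, notes that it is regular at each divisor $d_{\alpha,k}(c)$ because the frozen $Y_{c,\lambda}$ never occur as arguments of quantum dilogarithms, observes the cocycle identity $1 = f_{l\alpha} f_{-l\alpha}^{[l\alpha]}$ along with the Weyl-equivariance $s_\alpha\cdot f_{l\alpha} = f_{-l\alpha}$, and combines these via Remark~\ref{rmk:res-hom} to get $f_{l\alpha}^2\big|_{d_{\alpha,l}}=1$; finally the sign ambiguity is resolved by subtraction-freeness, yielding $f_{l\alpha}\big|_{d_{\alpha,l}}=1$ and hence the portability of the residue condition. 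Your proposal also misidentifies where the tropical computations of Section~\ref{subsec:refined-umbral} are used: as stated in Remark~\ref{rmk:props}(2), those give an \emph{alternative} route; the paper's own proof of Proposition~\ref{prop:residue-well-defined} does not rely on them, and in any case the alternative would require tracking the actual quantum Laurent expansions of the $Y_{c,\alpha^\vee}$ through each step, not merely observing that certain tropical coordinates are fixed.
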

\begin{proof}
We prove the proposition in the $G=SL_{n+1}$ case, which subsumes the $PGL_{n+1}$ one. Suppose that $A$ is an element of $\Lbb_{G,S';\phi}(\tri'_2)$. Then since the denominator set $Ø(c_\pm)$ is mutation invariant and $dA$ is universally Laurent for some $d\in Ø(c_\pm)$, it follows that $\mu_{1,2}(A)$ is an element of the $Ø(c_\pm)$-localized quantum torus $\Tc_{\res}(\tri';c_\pm)$. Moreover since $A$ is $W(c_\pm)$-invariant and the Weyl group acts by cluster transformations, the same is true of $\mu_{1,2}(A)$. So what remains to be proved is that if $A$ satisfies the residue conditions~\eqref{eq:Lresidue-condition} defined using isolating cluster  $Q_{\tri_2',c_\pm}$, then it also satisfies the analogous conditions defined using isolating cluster $Q_{\tri_1',c_\pm}$. 

Given a root $\alpha$, we abbreviate 
$$
Y_{\alpha^\vee} =Y_{c,\alpha^\vee}({\tri'_1;c})\in\Tc({\tri'_1;c}),\quad Y'_{\alpha^\vee} =Y_{c,\alpha^\vee}({\tri'_2;c_\pm}) \in \Tc(\tri'_2;c)
$$
for the elements defined by~\eqref{eq:Y-coweights} in clusters $Q_{\tri'_1;c}$ and $Q_{\tri'_2;c_\pm}$ respectively. 
%Note that since $\alpha$ is an element of the root (not just weight) lattice, $Y_{\alpha^\vee}$ is indeed an element of the torus $\mathcal{T}_{\Lambda^c}\subset\mathcal{T}_{\Xi^c}$ and not just the covering $\mathcal{T}_{\widetilde\Xi^c}$,  although it is not universally Laurent. 
For $l\in\mathbb{Z}$, set
$$
f_{l\alpha} = \mu_{1,2}(Y'_{l\alpha^\vee})Y_{l\alpha^\vee}^{-1}\in\Frac\left(\Tc_{{\tri_1';c}}\right)_0,
$$
where the subscript indicates the $f_{l\alpha}$ have degree zero with respect to the internal grading~\eqref{eq:internal-grading}.
Since the $Y_{\omega_{c,j}}$ are frozen $\mathscr{A}$-variables, no monomial $Y_{{c,\lambda}}$ can ever appear as the argument of a quantum dilogarithm in the automorphism part of $\mu_{1,2}$, and so $\mu_{1,2}(Y'_{l\alpha^\vee}) $ is regular and nonvanishing at each divisor $d_{\alpha,k}(c)$. In particular, the elements $f_{l\alpha}$ are regular at  all $d_{\alpha,k}(c)$. Note also that we have
\begin{align}
\label{eq:prod1}
1=f_0 = f_{l\alpha}f_{-l\alpha}^{[l\alpha]}. %= f_{l\alpha}[\lambda]f_{-l\alpha}[\lambda+l_\alpha],\quad \lambda\in P_{SL_{n+1}}.
\end{align}
Now the key point is that since the Weyl group $W(c_\pm)$ acts via the reflection representation~\eqref{eq:weyl-xi} in any isolating cluster, for each root $\beta$ and Weyl group element $w$ we have
$$
w\cdot Y_{\beta^\vee}= Y_{w(\beta)^\vee}, \qquad w\cdot Y'_{\beta^\vee} = Y'_{w(\beta)^\vee}.
$$
Since $s_\alpha(\alpha)=-\alpha$, this means that
 \begin{align}
 \label{eq:cocycle-equivariance}
% f_\nu(\bs w) = s_\alpha\cdot f_{s_\alpha(\nu)}(\bs w)=f_{s_\alpha(\nu)}(s_\alpha\cdot\bs w).
s_\alpha \cdot f_{l\alpha} = f_{-l\alpha}.
\end{align}
Recall from Remark~\ref{rmk:res-hom} that on the piece of internal degree zero, the restriction maps $|_{d_{\alpha,k}}$ are ring homomorphisms. Combining with~\eqref{eq:prod1}  and the property~\eqref{lem:trade} we get that for any $r\in\mathbb{Z}$,
% for any $\lambda\in P_{SL_{n+1}}$
\begin{align*}
1 &= f_{l\alpha}\big|_{d_{\alpha,r}}\cdot \left((s_\alpha f_{l\alpha})^{[l\alpha]}\right)\big |_{d_{\alpha,r}}\\
&=f_{l\alpha}\big|_{d_{\alpha,r}}\cdot  f_{l\alpha}^{[(l-r)\alpha]}\big |_{d_{\alpha,r}}
\end{align*}
and in particular $f_{l\alpha}^2\big|_{d_{\alpha,l}}=1.$
Since $f_{l\alpha}$ is obtained from a cluster monomial by conjugation by a product of quantum dilogarithms this forces the sign
\begin{align}
\label{eq:restricts-to-1}
f_{l\alpha}\big|_{d_{\alpha,l}}=1.
\end{align}

Now fix an element weight $\nu$ of the $SL_{n+1}$-weight lattice, and a pair $(\alpha,k)$. We set
$$
m = \langle \nu,\alpha\rangle-k,
$$
so that $s_\alpha(\nu)+k\alpha = \nu-m\alpha$.
%
%$$
%Y_{\xi_{v_{\alpha_1}}} = Y_{\overline\xi_{v_{\alpha_1}}}(Q_{\tri;c}),\qquad Y'_{\overline\xi_{v_{\alpha_1}}} = Y_{\overline\xi_{v_{\alpha_1}}}(Q_{\tri;c_\pm})
%$$
Then the condition~\eqref{eq:Lresidue-condition} for $\mu_{1,2}(A)$ defined relative to isolating cluster $Q(\tri'_1;c_\pm)$ says that
\begin{align*}
\Res_{\alpha,k}\left(\mu_{1,2}(A_{\nu})+\mu_{1,2}(A_{\nu-m\alpha}) Y_{m\alpha^\vee}\right) =0.
\end{align*}
%We suppose this condition holds and show it implies the corresponding one relative to isolating cluster $(\tri',c)$. 
If we take the difference of this condition and the image under $\mu_{1,2}$ of the residue condition for $A$ in the isolating cluster subordinate to  $\tri'_2$, and then multiply from the right by the $d_{\alpha,k}$-regular element $Y_{m\alpha^\vee}^{-1}$, we see that equivalence of the two residue conditions amounts to showing that
\begin{align}
\label{eq:cond-diff}
\Res_{\alpha,k}\left(\mu_{1,2}(A_{\nu-m\alpha})(f_{m\alpha}-1 ) \right) =0.
\end{align}
Working in the extended lattice $\tilde\Xi_c$ if necessary, we can factor
% Recall that $A_{\nu-m\alpha}$ has at worst simple poles at $\delta_{\alpha,k}$, and as observed above $Y_{m\alpha^\vee},\mu_{\tri,\tri'}(Y'_{l\alpha^\vee})\in \Tc^{loc;(\alpha,k)}_{0}$ are regular and nonvanishing there. 
%So splitting 
$
\mu_{1,2}(A_{\nu-m\alpha}) = BY_{(\nu-m\alpha)^\vee},
$
so that $ A_{\nu-m\alpha}f_{m\alpha} = Bf_{m\alpha}^{[\nu-m\alpha]}Y_{(\nu-m\alpha)^\vee}$.
Since $B$ has at most simple poles, formula~\eqref{eq:cond-diff} will follow provided we can show that
$$
f_{m\alpha}^{[\nu-m\alpha]}\big|_{d_{\alpha,k}}=1.
$$
But this follows from conjugating~\eqref{eq:restricts-to-1} by $Y_{(\nu-m\alpha)^\vee}$ and using~\eqref{eq:pi-ad}.
%\begin{align}
%\label{eq:pr1}
%\left(A\big |_{\delta_{\alpha,k}}\right)[\lambda]  =A[\lambda]\big |_{\delta_{\alpha,k+\langle\lambda,\alpha\rangle}}.
%\end{align}.
\end{proof}
\begin{remark}
\label{rmk:props}
\begin{enumerate}
\item The properties of the cluster transformation $\mu_{1,2}$ used in the proof of Proposition~~\ref{prop:residue-well-defined} are that it intertwines the reflection representation of $W(c_\pm)$ by monomial transformations in the two cutting clusters; that it commutes with all $Y_{c,\lambda}$ (and hence preserves the internal grading); and that the image of any $Y_{c,\beta^\vee}'$ is a subtraction-free noncommutative rational fraction regular at each divisor $d_{\alpha,k}(c)$.
 \item  Proposition~\ref{prop:residue-well-defined} can also be proved by a direct calculation using the factorization of the mutation sequence realizing the re-isolation move given in Section~\ref{sec:3-to-3}.
 \end{enumerate}
\end{remark}
Since diffeomorphisms send isolating triangulations to isolating triangulations, it follows from Proposition~\ref{prop:residue-well-defined} that the mapping class group $\Gamma_{S'}$ acts on $\Lbb_{G,S';\phi}$.  This action factors through the quotient by $\ha{\tau_{c_+}\tau_{c_{-}}}$ and thus gives rise to an action of the centralizer $\Gamma_{S;c}$ of $\tau_c$ in $\Gamma_S$ on $\Lbb_{G,S';\phi}$.

\begin{remark}
\label{rmk:Lcirc}
    The intersection of $\Lbb_{G,S';\phi}$ with the degree zero internal-graded piece of the residue quantum torus defines a subalgebra which we denote by $\Lbb_{G,S^\circ_c}$. If $A\in\Lbb_{S^\circ_c}$, the residue conditions imply that $A=A_0$ is regular at each divisor $d_{\alpha,k}$ and is thus an element of the non-localized quantum torus. 
\end{remark}

\begin{theorem}[Gluing universal Laurent rings in the case of an isolatable curve]
    \label{thm:alg-MF}

    % Then the homomorphism $\eta$ in~\eqref{eq:global-W-def} is an isomorphism onto its image $\Lbb^c_W$:
    Suppose $c$ is an oriented isolatable simple closed curve on $S$, and if $G$ has rank $n=1$ suppose that $S$ is not a torus with a single puncture or tacked circle. Then in any $c$-isolating cluster $Q_{S,\tri;c}$ the local isomomorphism $\eta_{c,\tri}^{\Ac}$ from~\eqref{eq:eta-A}  restricts to give isomorphisms of algebras
%    \begin{align}
%    \label{eq:boldW-def}
%        \eta_{c,\tri} \colon \Lbb_{G,S}[(-q)^{\frac{n}{2}}]\simeq \Lbb_{G,S';\phi}[(-q)^{\frac{n}{2}}].
%    \end{align}

\begin{equation}
\label{eq:boldW-def}
\begin{tikzcd}
\Lbb_{SL_{n+1},S} \arrow{r}{\eta_{c,\tri}}  & \Lbb_{SL_{n+1},S';\phi}  \\
\Lbb_{PGL_{n+1},S} \arrow{r}{\eta_{c,\tri}} \arrow[u,hook] & \Lbb_{PGL_{n+1},S';\phi}\arrow[u,hook]
\end{tikzcd}
\end{equation}

%    The isomorphism~\eqref{eq:boldW-def} does not depend on the choice of special coordinate system isolating the curve $c$, and is equivariant with respect to the actions by cluster transformations of the centralizer $\Gamma_{S;c}$ of the Dehn twist $D_c$ on the algebras $\Lbb_{S},\Lbb^c_{res}$ . It restricts to an isomorphism
%        \begin{align}
%    \label{eq:restricted-boldW}
%        \eta_c\colon \Lbb_S^{M_c^*\mathcal{O}_q(T/W)}\simeq \Lbb^c_{S^\circ},
%    \end{align}
%    where $\Lbb_S^{M_c^*\mathcal{O}_q(T/W)}$ is the subalgebra of $\Lbb_S$ consisting of elements which commute with all fundamental Hamiltonians $H_k(c)\in\Lbb_S$ associated to $c$.
\end{theorem}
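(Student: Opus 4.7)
The plan is to prove both containments using the one-step Laurent criterion (Theorem~\ref{thm:1-step} and Corollary~\ref{cor:1-step}), and then exploit the locality properties~\eqref{eq:local-is-local} and~\eqref{eq:local-nontriv} of $\eta_c$ to match mutations on $S$ with mutations on $S'$ plus residue data. Independence of the choice of $c$-isolating triangulation then follows from Proposition~\ref{prop:residue-well-defined} combined with the re-isolation Lemma~\ref{lem:re-iso}, which reduces the check to the behavior under umbral moves and flips preserving the isolating cylinder.

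For the forward containment, given $x \in \Lbb_{G,S}$, observe first that $x \in \Lbb(\Xi^{\mathrm{fr}}_{\tri;c})$ since being Laurent under all mutations in particular implies being Laurent under mutations in the subset $I_S^{>0}$. Hence $\eta_c(x)$ is an element of $\Tc^{\Ac}_{\res}(\tri';c_\pm)$ by Lemma~\ref{lem:integrality-preserved}. To conclude $\eta_c(x) \in \Lbb_{G,S';\phi}$ it remains to produce an element $d \in Ø(c_\pm)$ with $d\cdot \eta_c(x) \in \Lbb^c_{\Xi(\tri';c_\pm)}$. By Corollary~\ref{cor:cr} this is a 1-step check: for each mutable direction of $Q_{\tri';c_\pm}$, the image must remain Laurent. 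Mutable directions in $I_{S'}^{<0}$ correspond bijectively to mutable directions in $I_S^{<0}$ via $\iota_\phi$, and the locality formula~\eqref{eq:local-is-local} shows $\eta_c$ simply transports the cluster variables; so Laurentness under these mutations follows directly from the corresponding property of $x$. For mutable directions in the isolating cylinders (the $v_i^\pm$), the situation is more delicate: these mutations on $S'$ are identified under $\eta_c$ with specific composites of mutations on $S$ along the Toda directions and at the handles $h_\pm$, together with the Whittaker intertwining from Proposition~\ref{prop:alg-whit}; Laurentness is guaranteed by the fact that $x$ is Laurent under mutations at $h_\pm$ and at Toda directions on $S$. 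The denominator $d$ is extracted from the explicit factorization of these composite transformations through rational functions in the $Y_{c,\alpha}$.

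For the reverse containment, given $A \in \Lbb_{G,S';\phi}$, we similarly apply Corollary~\ref{cor:1-step} to $\eta_c^{-1}(A)$ and check Laurentness under single mutations of $Q_{\tri;c}$. Mutations at directions in $I_S^{<0}$ again transfer back to $S'$ by locality, where Laurentness in the reduced ring $\Lbb^c_{\Xi(\tri';c_\pm)}$ gives what we need. Mutations at Toda directions $s_j,t_j$ are covered by the local isomorphism~\eqref{eq:local-nontriv} which identifies the Toda universal Laurent ring with $\Tc_{\geq0;\res}$. The crucial case is mutation at the handles $h_\pm$: here we use that such a mutation is intertwined by $\eta_c$ with conjugation by the Baxter operator $\Qf^\pm(z)$ of Proposition~\ref{prop:comm-bax}, which in turn corresponds under $\mathbb{W}$ to conjugation by the product $\prod_j\Psi(-qzw_j)^{-1}$ on the $\Dres$-side. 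The Laurent property under this conjugation is precisely what the residue conditions~\eqref{eq:Lresidue-condition} are designed to guarantee --- this is the content of Lemma~\ref{lem:res-preserved} and the generation result Lemma~\ref{lem:minuscule-generation}.

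The main obstacle will be the handle-mutation step in the reverse direction: rigorously matching the abstract Laurentness at $h_\pm$ with the concrete residue conditions requires carefully tracking how the Baxter operator expansion~\eqref{eq:productformula} interacts with the filtration by internal weight, and showing that the $\Gamma_{S;c}$-equivariance of the construction is preserved. The $SL_{n+1}$ case is then deduced from the $PGL_{n+1}$ one by combining the extension of scalars used in Lemma~\ref{lem:integrality-preserved} with the explicit intertwining formulas~\eqref{eq:intxi2}--\eqref{eq:intxi3}, which exhibit the image of each $\xi_{s_k},\xi_{t_k}$ as a sum of residue-torus elements with denominators in $Ø(c_\pm)$, so that multiplying by a suitable $d$ lands in $\Lbb^c$.
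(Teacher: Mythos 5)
Your proposal correctly identifies the main ingredients of the paper's argument: the one-step Laurent criterion (Theorem~\ref{thm:1-step}/Corollary~\ref{cor:1-step}), the locality property~\eqref{eq:local-is-local} to dispatch mutations in $I^{<0}$, and the Baxter/Whittaker intertwining from Proposition~\ref{prop:alg-whit} to handle the remaining directions. However, there is a conceptual misattribution at the crux of the argument that leaves a real gap.

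You write that a single mutation at $h_\pm$ is ``intertwined by $\eta_c$ with conjugation by the Baxter operator $\Qf^\pm(z)$.'' That is not what happens. It is the full \emph{Baxter mutation sequence} $\mu_{\mathrm{Baxter},\pm}$ --- a composite of $2n+1$ mutations, of which $\mu_{h_\pm}$ is only the first --- whose automorphism part $\mu^\sharp_{\mathrm{Baxter},\pm}$ is conjugation by the Baxter operator. On the cut surface the corresponding transformation is the composite $\mu_{\mathrm{hat},\pm}$ of the $n+1$ commuting mutations at $v_1^\pm,\ldots,v_{n+1}^\pm$, with automorphism part~\eqref{eq:sharphat}. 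The working intertwining relation is~\eqref{eq:sharp-inter}, relating these two \emph{composites}, not individual mutations. This matters because the whole point of the argument is to deduce one-step Laurentness at $h_\pm$ (respectively at the $v_i^\pm$) from Laurentness under these specific composites, using the fact that the factors sit at distinct directions (and, on the $S'$ side, actually commute). Your proposal never makes that reduction, so as stated it does not actually close the one-step criterion at the handles.

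A second, related gap: in the reverse direction, before the intertwining relation can be invoked, one must verify that the automorphism part $\mu^\sharp_{\mathrm{hat},+}$ carries the residue quantum torus into itself --- that the residue conditions~\eqref{eq:Lresidue-condition} are preserved under conjugation by the product of dilogarithms~\eqref{eq:sharphat}. The paper does this concretely via a cocycle $g_{m\alpha}$ and a check that $g_{m\alpha}^{[\nu-m\alpha]}|_{d_{\alpha,k}}=1$, a direct descendant of the argument in Proposition~\ref{prop:residue-well-defined}. Your appeal to Lemma~\ref{lem:res-preserved} and Lemma~\ref{lem:minuscule-generation} addresses the module-level statement that $\Dres$ preserves symmetric polynomials, but not this algebra-level preservation statement; the observation you flag as the ``main obstacle'' (tracking the Baxter expansion against the internal weight filtration) is not where the actual work lies. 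Finally, a minor point: the paper treats $SL_{n+1}$ and $PGL_{n+1}$ uniformly after the scalar extension set up in Lemma~\ref{lem:integrality-preserved}, rather than deducing one case from the other as you suggest, though this is not a substantive difference.
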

\begin{proof}
%We first show that the homomorphism defined in~\eqref{eq:global-W-def} is an isomorphism onto its image $\Lbb^c_{res}$.
We prove the theorem using the intertwining relation for Baxter operators under Whittaker transform together with the 1-step criterion for universal Laurentness. Since the proof is the same for both groups $SL_{n+1}$ and $PGL_{n+1}$, in the following we suppress it in the notations, and assume that we have made the necessary extension of scalars in the simply-connected case. First we show that the image of $\eta_c$ is contained in the universal Laurent ring $\Lbb_{S';\phi}$. It is immediate from the local construction~\eqref{eq:algWhitS} that any element $\eta_{c,\tri}(A)$ in the image of $\Lbb_S$ remains Laurent under mutations at all quiver nodes in the cluster $Q_{S',\tri';c_\pm}$ except possibly for the `handles' $e_{v_j^\pm}$ for $j=1,\ldots, n+1$. On the other hand, unless $G$ is of type $A_1$ and $S$ is a torus with one puncture or tacked circle, we have in the cluster $Q_{S;\tri,c}$ two Baxter sequences of mutations associated to the Toda chain subquiver\footnote{The excluded case mentioned in the statement of the theorem is the degenerate one in which case the two vertices $h_\pm$ are amalgamated together.}. 
    For example, the Baxter sequence associated to the `positive' end of the Toda subquiver consists of the mutations 
    $$
\mu_{\mathrm{Baxter},+}=\mu_{s_1}\circ \mu_{t_1}\circ    \cdots \circ\mu_{s_n}\circ \mu_{t_n}\circ \mu_{h_+}.
    $$ 
    With respect to the sign convention~\eqref{eq:mon-mut} for mutation of bases, the cluster transformation $\mu_{\mathrm{Baxter},h_+}$ admits a factorization 
    $$
   \mu_{\mathrm{Baxter},h_+} = \mu_{\mathrm{Baxter},+}'\circ\mu^\sharp_{\mathrm{Baxter},+}.
    $$
%    where
%    $$
%    \mu^\sharp_{\mathrm{Baxter},h_+} = \mathrm{Ad}\left(\prod_{j=1}^{\substack{n \\ \longrightarrow}}\Psi(Y_{-e_{h_+}-\sum_{r=1}^{j-1}e_{s_r})\Psi(zP_{j}^{-1}). \circ \Psi(Y_{-e_{h_+}})\right)
%    $$
    On the side of the surface $S'$, consider the sequence of $n+1$ commuting mutations at the vertices $v^+_1,\ldots,v^+_{n+1}$ of the quiver $Q_{\tri';c_\pm}$. Using the same sign convention~\eqref{eq:mon-mut}, the corresponding cluster transformation factors as
     $$
   \mu_{\mathrm{hat},+} = \mu_{\mathrm{hat},+}'\circ\mu^\sharp_{\mathrm{hat},+},
    $$
    where 
\begin{align}
\label{eq:sharphat}
    \mu^\sharp_{\mathrm{hat},+} =\mathrm{Ad}\left( \prod_{j=1}^{n+1}\Psi(Y_{v^+_j}^{-1})\right).
\end{align}
   So recalling the action~\eqref{eq:action-def-S} of $Y_{h_+},Y_{s_k},Y_{t_k}$ on $M[S_{\leq0}]$ along with that of the $Y_{v_+}$ on $M[S'_{\leq0}]$ defined by~\eqref{eq:action-def-Sprime} and using the intertwining relation~\eqref{eq:aut-inter} for the inverse of the Baxter operator $Q^o(z)$ with $z=Y_{h_+}^{-1}$, we deduce that
\begin{align}
\label{eq:sharp-inter}
\mu^\sharp_{\mathrm{hat},+} \circ\eta_{c,\tri}=   \eta_{c,\tri}\circ \mu^\sharp_{\mathrm{Baxter},+}.
\end{align}
%   
%    the Baxter sequence of mutations $Q^{alg}(z_\pm)$ in the cluster $Q_{S;(\tri,c)}$ is intertwined under $\eta_c$ with  $R(z'_\pm)$, which we recognize as the product of inverses of compact $q$-dilogarithms $\prod_{j=1}^{n+1}\mu_{v_j^\pm}^\flat$ associated to the sequence of $n+1$ commuting mutations $\mu_{\bs v^\pm}$ at  vertices $v^\pm_1,\ldots,v^\pm_{n+1}$ in the cluster $Q_{S';(\tri,c)}$. 
%    
The mutated quiver $\mu_{Baxter,+}(Q_{S;\tri,c})$ again contains a Toda subquiver formed by a subset of basis vectors $\{e_\ell : \ell\in I'_{Toda}\}$, and recall from~\eqref{eq:longseq} that this set of vectors coincides with the image under $\mu^\sharp_{\mathrm{Baxter},+}$ of the vectors $\{e_\ell : \ell\in I^{>0}_{S}\}$ spanning the Toda subquiver of $Q_{S;\tri,c}$:
$$
\{e_\ell ~\big|~\ell\in I'_{Toda}\} = \{\mu_{\mathrm{Baxter},+}'(e_\ell) ~\big|~ \ell\in I^{>0}_{S}\}.
$$
Hence for any $A\in \Lbb_S(\tri;c)$, the fact that $\mu_{Baxter,+}(A)$ is universally Laurent implies that the element $\mu^\sharp_{\mathrm{Baxter},+}(A)$ of the quantum torus $\mathcal{T}(\tri;c)$ remains Laurent under mutation in each direction $\ell\in I^{>0}_{S}$, and so $\mu^\sharp_{\mathrm{Baxter},+}(A)$ lies in $\Lbb(\Xi^{\mathrm{fr}}_{\tri;c})$. Since $\eta_{c,\tri}$ maps $\Lbb(\Xi^{\mathrm{fr}}_{\tri;c})$ into the residue quantum torus, it follows from the intertwining relation~\eqref{eq:sharp-inter} that $\mu^\sharp_{\mathrm{hat},+}(\eta_{c,\tri}(A))$ and hence $\mu_{\mathrm{hat},+}(\eta_{c,\tri}(A))$ is again a Laurent polynomial after clearing its denominator $d\in Ø(c_\pm)$.
%    Moreover, recall from~\eqref{eq:longseq} that after performing the sequence  $\mu^\flat_{Baxter_\pm}$ the set of basis elements associated to the vertices of the Toda subquiver of  $\mu^\flat_{Baxter_\pm}(Q_{S;\tri,c})$ is identical (up to a permutation) with those in the initial Toda subquiver of $Q_{S;\tri,c}$. Hence $\mu^{\flat}_{Baxter_\pm}(A)$ again lies in $\Lbb_{{\Xi}_S^{\mathrm{fr}}}$, and therefore $\mu_{\bs v^\pm}(\eta_c(A))=\eta_c(\mu^\flat_{Baxter_\pm}(A))$ is again a Laurent polynomial.

But since $\mu_{\mathrm{hat},+}$ is a sequence of mutations in distinct commuting directions, it follows that $d\eta_c(A)$ must remain Laurent under each individual mutation $\mu_{v_i^+}$ in the sequence. 
The same argument with the Baxter operator associated to the negative end shows that $d\eta_{c,\tri}(A)$ remains Laurent under each mutation $\mu_{v_i^-}$, and is thus universally Laurent by the 1-step criterion.
This proves that image of the universal Laurent ring $\Lbb_{S}$ under $\eta_{c,\tri}$ is indeed contained in $\Lbb_{S';\phi}$.

In the other direction, it is again immediate from the construction of the local gluing isomorphism that for all $B\in \Lbb_{S';\phi}$, the element $\eta_{c;\tri}^{-1}(B)\in \Lbb(\Xi^{\mathrm{fr}}_{\tri;c})$ remains Laurent under all mutations except possibly for those in directions $h_{\pm}$. Moreover, for any such $B$ we claim that the element $\mu^\sharp_{\mathrm{hat},+}(B)$ is again contained in the residue quantum torus $\Tc^{\Ac}_{\res}(\tri';c_\pm)$. Indeed, by assumption $\mu^\sharp_{\mathrm{hat},+}(B)$ is Laurent after clearing denominators from $Ø(c_\pm)$, and its $W(c_\pm)$ symmetry is immediate from that of the product of quantum dilogarithms on the right-hand-side of~\eqref{eq:sharphat}. The fact that $\mu^\sharp_{\mathrm{hat},+}(B)$ satisfies the residue condition is proved similarly to Proposition~\ref{prop:residue-well-defined}: setting 
$$
g_{m\alpha} = \mu^\sharp_{\mathrm{hat},+}(Y_{m\alpha^\vee})Y^{-1}_{m\alpha^\vee},
$$
one checks directly using the $q$-difference equation~\eqref{eq:q-Gamma} that for any weight $\nu$ we have
$$
g_{m\alpha}^{[\nu-m\alpha]}\big|_{d_{\alpha,k}}=1, \quad m =\langle \nu,\alpha\rangle-k.
$$ 
Now since $\mu^\sharp_{\mathrm{hat},+}(B)$ lies in the residue quantum torus, it again follows from the intertwining relation~\eqref{eq:sharp-inter} that $\eta_{c;\tri}^{-1}(B)$ remains Laurent under the cluster transformation $\mu_{\mathrm{Baxter},+}$. Since the latter consists of mutations at distinct vertices, this implies that $\eta_{c;\tri}^{-1}(B)$ must remain Laurent under the first mutation $h_+$ of the sequence. By a similar argument with $\mu_{\mathrm{hat},-}$ we deduce that  $\eta_{c;\tri}^{-1}(B)$ remains Laurent under mutation at $h_-$, and so by the 1-step criterion $\eta_{c;\tri}^{-1}(B)$ lies in $\Lbb_{S}$, completing the proof of the theorem.
%The other direction is proved \red{-- observe that $\mu_{\mathrm{hat},+}(B)$ is again in residue torus: Weyl symmetry and same argument with $f$ from Prop 9.24 (clean notation). Then the intertwining above shows $\eta_c^{-1}(B)$  stays Laurent under Baxter, which is a sequence of mutations at distinct vertices. In partic, it must remain Laurent under the first mutation at $h_+$. End of proof.}
    \end{proof}
    
\begin{remark}
    The argument used to prove Theorem~\ref{thm:alg-MF} works without modification to show that the local gluing isomorphism associated to general locally glueable pair $(Q,Q_{\cut})$ restricts to an isomorphism
 $$
 \eta \colon \Lbb({\quiver}) \simeq \Lbb_{\res}({\quiver_{\cut}}),
 $$
 where $\Lbb_{\res}({\quiver_{\cut}})$ is defined as in Remark~\ref{rmk:genLres}.
\end{remark}

        % The fact that $\eta$ sends the subalgebra $\Lbb_S^{M_c^*\mathcal{O}_q(T/W)}$ of elements which commute with the $H_k(c)$ isomorphically to $\Lbb^c_{S^\circ}$ then follows from Corollary~\ref{cor:toda-commutant}.  

Our next goal is to show that the isomorphism from Theorem~\ref{thm:alg-MF} does not depend on the choice of isolating cluster $Q_{\tri;c}$ for $c$, and this way prove Theorem~\ref{thm:monodromy-well-defined}. The key technical ingredient is the following proposition on the intertwining of double Baxter operators under local gluing isomorphisms. 

Consider the quiver $\hat\quiver_{n,m}$ obtained from the quiver $\quiver_{n,m}$ in Section~\ref{subsec:bi-baxter} by adding a single additional vertex as indicated in the left pane of Figure~\ref{fig:Qmnhat}.  We decompose its index set as $I_{n,m} = I^{<0}_{n,m}\sqcup I^{\geq0}_{n,m}$, where $I_{n,m}^{<0}=\{s_{j,m},t_{j,m}\}_{j=1}^m$, and fix the identification $I^{\geq0}_{n,m}\simeq I_{\Toda}^{+,-}$ by sending $s_{j,n}\mapsto s_j,t_{j,n}\mapsto t_j$. Hence $\hat\quiver_{n,m}$ forms part of a locally glueable pair $(\hat\quiver_{n,m},\hat\quiver^{\cut}_{n,m})$ of handle signature $\eps = (+,-)$, where the quiver $\hat\quiver^{\cut}_{n,m}$ is illustrated in the left pane of Figure~\ref{fig:Qmnhatcut} for $(n,m)=(2,1)$. We write $\Lbb_{n,m}$ for the universal Laurent ring of $\hat\quiver_{n,m}$,  $\Lbb^{\res}_{n,m}$ for the residue universal Laurent ring for $\hat\quiver^{\cut}_{n,m}$, and $\eta_{n,m} \colon \Lbb_{n,m}\simeq \Lbb^{\res}_{n,m} $ for the corresponding local gluing isomorphism.

\begin{figure}[h]
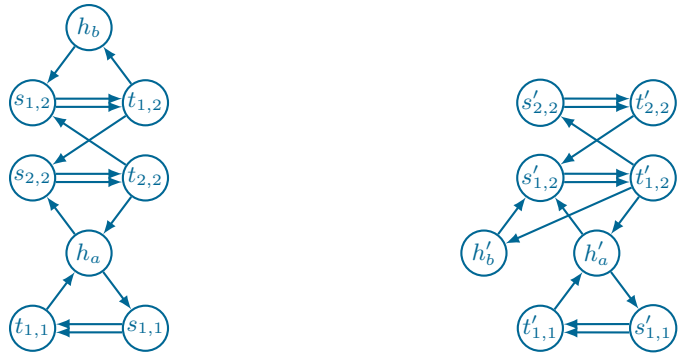

\subfile{fig-Qmnhat.tex}
\caption{The quivers $\hat\quiver_{2,1}$ and $\hat\quiver_{1,2}$. }
\label{fig:Qmnhat}
\end{figure}

%Then if we replace the subquiver corresponding to $I^{\geq0}_{n,m}$ by a copy of $I^{>0}_{S'}$ 
%connected as illustrated in Figure~\ref{fig:Qmnprime} for $(n,m)=(2,1)$, we obtain a quiver $\hat\quiver^{\cut}_{n,m}$.  The quivers with $(\hat\quiver_{n,m},\hat\quiver^{\cut}_{n,m})$ form a locally glueable pair, and choosing the orientation \blue{[i.e. which handle is positive] }  of $I_{n,m}^{\geq0}$ indicated in Figure~\ref{fig:Qmnhat} we get a local gluing isomorphism $\eta_{n,m}$. 
%We denote by $\Lbb_{n,m}$ the universal Laurent ring of $\hat\quiver_{n,m}$, and by $\Lbb^{\res}_{n,m}$ the residue universal Laurent ring for $Q^{\cut}_{n,m}$.
Now consider the quiver $\hat\quiver_{m,n} = \bs\mu_{n,m}(\hat\quiver_{n,m})$ obtained from $\hat\quiver_{m,n}$ by applying the double Baxter mutation sequence $\bs\mu_{n,m}$, illustrated in the right pane of Figure~\ref{fig:Qmnhat} for $(n,m)=(2,1)$. We decompose its index set as $I_{m,n} = I^{<0}_{m,n}\sqcup I^{\geq0}_{m,n}$, where $I_{n,m}^{<0}=\{s'_{j,m},t'_{j,m}\}_{j=1}^m$, and fix the identification $I^{\geq0}_{m,n}\simeq I^{--}_{\Toda}$ of $I^{\geq0}_{m,n}$ with the standard Toda index set with two negative handles as indicated in the Figure.

\begin{figure}[h]
\subfile{fig-Qmnhatcut.tex}
\caption{The quivers $\hat\quiver^{\cut}_{2,1}$ and $\hat\quiver^{\cut}_{1,2}$.  }
\label{fig:Qmnhatcut}
\end{figure}

Observe that the quiver $\hat\quiver^{\cut}_{m,n}$ in the corresponding locally glueable pair $(\hat\quiver_{m,n},\hat\quiver^{\cut}_{m,n})$ can be obtained from $\hat\quiver^{\cut}_{n,m}$ by applying the cluster transformation
$$
\bs\mu^{\cut}_{n,m} = \varsigma_{n,m} \circ \prod_{j=1}^{m+1}\bs\mu_{m,0}(v_{j,a}),
$$
where $\mu_{m,0}(v_{j,a})$ is the $GL_{m+1}\times GL_{1}$ Baxter mutation sequence starting from vertex $v_{j,a}$ in the left pane of Figure~\ref{fig:Qmnhatcut} and $\varsigma_{m,n}$ is the quasi-permutation which is the identity on all mutable basis vectors, and on the frozens $e_{c,i}$ is defined by 
$$
\varsigma_{m,n}\circ (\bs\mu^{\cut}_{n,m})'(e_{c,i})=  e'_{c,i},
$$
where  $(\bs\mu^{\cut}_{n,m})'$ is the composite of isometries~\eqref{eq:monpart} associated to the mutation sequence $\bs\mu^{\cut}_{n,m}$.
%
%Then setting $\hat\quiver^{\cut}_{m,n}= \bs\mu^{\cut}_{n,m}(\hat\quiver_{n,m})$, the quivers $(\hat\quiver_{m,n},\hat\quiver^{\cut}_{m,n})$ again form a locally gluable pair, and we write $\eta_{m,n}$ for the local gluing isomorphism corresponding to the choice of orientation of $I^{\geq0}_{m,n}$ indicated in Figure~\eqref{fig:Qmnhat}.

Writing $\Lbb_{m,n}$ for the universal Laurent ring for $\hat\quiver_{m,n}$ and $\Lbb^{\res}_{m,n}$ for the residue universal Laurent ring for $\hat\quiver^{\cut}_{m,n}$, the cluster transformations above give isomorphisms of algebras
$$
\bs\mu_{m,n}\colon \Lbb_{n,m}\rightarrow \Lbb_{m,n}, \qquad \bs\mu^{\cut}_{m,n}\colon \Lbb^{\res}_{n,m}\rightarrow \Lbb^{\res}_{m,n}.
$$
%consider the quiver $Q'_{n,m}$ illustrated in Figure~\ref{fig:Qnm-cut} for $(n,m)=(3,2)$, consisting of the disjoint union of 
\begin{prop}
\label{prop:double-bax-inter}
The local gluing isomorphisms $\eta_{n,m}$ and $\eta_{m,n}$ intertwine the double Baxter cluster transformation $\bs\mu_{n,m}$ with the cluster transformation $\bs\mu^{\cut}_{n,m}$: we have a commutative square 
    \begin{equation}
        \begin{tikzcd}[column sep=4em]
\Lbb_{n,m} \arrow[r,"\eta_{n,m}"] \arrow[d,"\bs\mu_{n,m}",swap] & \Lbb^{\res}_{n,m} \arrow[d,"\bs\mu^{\cut}_{n,m}"] \\
\Lbb_{m,n} \arrow[r,"\eta_{m,n}"]           & \Lbb^{\res}_{m,n}
\end{tikzcd}
    \end{equation}
\end{prop}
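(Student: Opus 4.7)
The plan is to prove the commutativity of the diagram by decomposing each of the two cluster transformations $\bs\mu_{n,m}$ and $\bs\mu^{\cut}_{n,m}$ into monomial and automorphism parts, and then intertwining the two pieces separately via the algebraic Whittaker transform underlying the local gluing maps. Since both $\eta_{n,m}$ and $\eta_{m,n}$ are constructed by transporting via the algebraic Whittaker transform on a chosen faithful representation (in parallel with Proposition~\ref{prop:local-gluing-pgl}), it is enough to verify the identity of endomorphisms of the common representation space after the Whittaker identification.

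First I would choose compatible faithful representations on both sides. On the $\Lbb_{n,m}$ side, use a space of $\mathcal{T}$-valued distributions on $\Prm_{GL_{n+1}}\oplus \Prm_{GL_{m+1}}$ as in Section~\ref{sec:AlgWhit}, and on the $\Lbb^{\res}_{n,m}$ side use the corresponding space of symmetric polynomial–valued functions in two independent alphabets $\bs w$ (of size $n{+}1$) and $\bs u$ (of size $m{+}1$). The map $\eta_{n,m}$ is realized by the tensor of the two Whittaker transforms coupled to the evident identification of the $I^{<0}$-tori (which comes from the fact that $\hat\quiver_{n,m}$ and $\hat\quiver^{\cut}_{n,m}$ are built from the same underlying ``black-white'' data outside the Toda subquivers). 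The map $\eta_{m,n}$ is realized in the same way with the roles of $\bs w$ and $\bs u$ swapped. The combinatorial identification $I^{>0}_{n,m}\simeq I^{--}_{\Toda}$ chosen in the definition of the pair $(\hat\quiver_{m,n},\hat\quiver^{\cut}_{m,n})$ makes the two representation spaces literally the same module, with only the labelling of the Toda generators reshuffled.

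Next I would separate the mutation sequences into their monomial and automorphism parts. Write $\bs\mu_{n,m} = \bs\mu'_{n,m}\circ \bs\mu^\sharp_{n,m}$, where $\bs\mu^\sharp_{n,m}$ is conjugation by the conjugate bi-fundamental Baxter operator $\Qf^\star_{n,m}(z)$ with $z=Y_\zeta$ (given in~\eqref{eq:Baxter-tilde-factor}). Similarly, write $\bs\mu^{\cut}_{n,m} = (\bs\mu^{\cut}_{n,m})'\circ(\bs\mu^{\cut}_{n,m})^\sharp$, where the automorphism part $(\bs\mu^{\cut}_{n,m})^\sharp$ is conjugation by the product of quantum dilogarithms arising from the $(n{+}1)$ commuting Baxter sequences $\prod_{j=1}^{m+1}\bs\mu_{m,0}(v_{j,a})$. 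The monomial parts are isometries of lattices; their compatibility with the gluing maps amounts to checking that the bijections between labelling sets induced by the definition of the quasi-permutation $\varsigma_{n,m}$ match, after local gluing, the bijection induced by $\bs\mu'_{n,m}$ between the index sets $I_{n,m}$ and $I_{m,n}$. This is a purely combinatorial check from the definitions of Section~\ref{subsec:bi-baxter} and the remark that $\varsigma_{n,m}$ was introduced precisely to absorb the relabelling of frozen variables on the cut side.

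For the automorphism parts, the key input is Proposition~\ref{prop:whit-bibax}, which states that under the combined Whittaker transform $\mathcal{W}_{\bs w}\otimes \mathcal{W}_{\bs u}$ the operator $\Qf^\star_{n,m}(z)$ is intertwined with multiplication by $\prod_{r,s}\Psi(zw_r/u_s)$. On the cut side, the automorphism part $(\bs\mu^{\cut}_{n,m})^\sharp$ is by construction conjugation by a product of quantum dilogarithms of the form $\Psi(Y_{e_\ell})$ whose arguments, after passing to the extended lattice and using Lemma~\ref{lem:cutloc} to identify the top pieces of the residue tori with $\mathcal{D}_q(T)\otimes \mathcal{D}_q(T)$, become precisely the multiplication operators $zw_r/u_s$. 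Assembling this identification over all factors in~\eqref{eq:Baxter-tilde-factor} and the corresponding factorization of $(\bs\mu^{\cut}_{n,m})^\sharp$ gives the desired intertwining of automorphism parts.

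The main obstacle will be the bookkeeping in the third step: matching the $2n+2m+1$ quantum-dilogarithm factors of $\Qf^\star_{n,m}(z)$ (each labelled by a vertex in the Baxter rhombus of Figure~\ref{fig:Baxter-rhombus}) with the quantum-dilogarithm factors of the composite $\prod_{j}\bs\mu_{m,0}(v_{j,a})\circ\varsigma_{n,m}$ on the cut side. I expect that the cleanest way to do this is to induct on $n+m$, using the recursive factorization $\Qf^\star_{n,m}(z) = T_{n+1}(z)\Qf^\star_{n,m-1}(z)$-style identity analogous to Lemma~\ref{lem:tlem1} to peel off one row at a time, and applying the induction hypothesis at each step. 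The base case $n=m=0$ is trivial, and the inductive step reduces exactly to the $(n,0)$ (single) Baxter intertwining that was already established implicitly in the proof of Theorem~\ref{thm:alg-MF}.
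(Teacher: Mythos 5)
Your overall strategy (decompose the two cluster transformations into monomial and automorphism parts and intertwine each piece through the Whittaker transform) is the right one, and you correctly identify Proposition~\ref{prop:whit-bibax} as the key analytic input. But there is a genuine gap arising from a misreading of the locally glueable pair $(\hat\quiver_{n,m},\hat\quiver^{\cut}_{n,m})$: in this pair only the $GL_{n+1}$-side Toda subquiver is cut; the $GL_{m+1}$-side stays intact. Consequently $\Lbb_{n,m}^{\res}$ acts on functions on $\mathbb{Z}^{m+1}$ valued in symmetric Laurent polynomials in a single alphabet $\bs w$ of size $n+1$ --- \emph{not} on symmetric polynomials in two independent alphabets as you propose --- and $\eta_{n,m}$ is realized by a single $GL_{n+1}$-Whittaker transform, not the tensor of two. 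This wrong picture propagates into your treatment of the automorphism part $(\bs\mu^{\cut}_{n,m})^\sharp$: you assert that the arguments of its quantum dilogarithms ``become precisely the multiplication operators $zw_r/u_s$'' after invoking Lemma~\ref{lem:cutloc}, but this is false. Those dilogarithms are built from the single-Baxter sequences $\bs\mu_{m,0}(v_{j,a})$, whose arguments involve genuine $q$-shift operators in the uncut $GL_{m+1}$-Toda direction; they are conjugations, not multiplications, in the representation on $\Vcr_{\cut}$.

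What the paper actually does to close the gap is apply a \emph{second}, auxiliary $GL_{m+1}$-Whittaker transform --- not part of the construction of $\eta_{n,m}$, but a purely technical device --- to identify both composites as the same multiplication operator $\prod_{r,s}\Psi(zw_r/u_s)^{-1}$ on the doubly symmetrized space. On the uncut side this is a single invocation of Proposition~\ref{prop:whit-bibax} with $(n,m)$; on the cut side it is $n+1$ invocations of the same proposition with parameters $(m,0)$, one for each vertex $v_{j,a}$. You gesture at an alternative inductive bookkeeping that would essentially re-derive these eigenvalue identities via the recursive factorization of $\Qf^\star$; that might work, but as written your proposal does not carry it out, and before it could be made to work you would first need to fix the representation-space setup so that you are not ``pre-Whittakering'' the $m$-direction at the level of the construction.
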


\begin{proof}
First, observe that the restriction of the local gluing isomorphism $\eta_{n,m}\colon\Lbb_{n,m}\simeq \Lbb_{n,m}^{\res}$ can equivalently be constructed using a different pair of faithful representations $\varrho_{n,m},\varrho_{n,m}^{\cut}$ of the two algebras. The representation space $\Vcr = \Vcr_{n,m}$ for $\varrho_{n,m}$ consists of all finitely supported $\mathbb{Q}(q)[t^{\pm1}]((z))$-valued functions of $(\lambda,\nu)\in\mathbb{Z}^{n+1}\times\mathbb{Z}^{m+1}$ which vanish unless both $\lambda$ and $\nu$ lie in their respective dominant cones of partitions, while the underlying space $\Vcr_{\cut}$ for $\varrho^{\cut}_{n,m}$ consists of all finitely supported $\mathbb{Q}(q)[\bs w^{\pm1},t^{\pm1}]^{S_{n+1}}((z))$-valued functions on $\mathbb{Z}^{m+1}$ which vanish unless $\nu$ is a partition. We define an action $\varrho_{n,m}$ of $\Lbb_{n,m}$ on $\Vcr$ by restricting the following action of the ambient quantum torus on the larger space of all $\mathbb{Q}(q)[t^{\pm1}]((z))$-valued functions: if we prepare two sets of canonical pairs $\{P_{j,n},X_{j,n}\}$ and $\{P_{j,m},X_{j,m}\}$ acting via the formulas~\eqref{eq:toda-torus-action}, the action of the generators $\{Y_{s_{j,n}},Y_{t_{j,n}}\}$ and $\{Y_{s_{j,m}},Y_{t_{j,m}}\}$ factors through the embeddings~\eqref{eq:toda-e-basis} to $\mathcal{T}_{GL_{n+1}}$ and $\mathcal{T}_{GL_{m+1}}$,
and we define the action of the handle generators by
$$
\varrho_{n,m}(Y_{h_a})  = z^{-1}P_{n+1,n}P_{1,m}^{-1},\quad \varrho_{n,m}(Y_{h_b})  =tP_{1,n}^{-1}.
$$
That $\Vcr$ is indeed preserved by $\Lbb_{n,m}$ follows as in Lemma~\ref{lem:pres}.
To construct the representation $\varrho^{\cut}_{n,m}$, we again let $\{Y_{s_{j,m}},Y_{t_{j,m}}\}$ act via~\eqref{eq:toda-torus-action}, and define
$$
\varrho_{n,m}(Y_{v_{j,a}})  =-q^{-1}zw_j, \quad  \varrho_{n,m}(Y_{v_{j,b}})  =-qtw^{-1}_j, \quad Y_{c,i} \mapsto D_iD_{i+1}^{-1},
$$
where $D_i$ the the $q$-shift operator from~\eqref{eq:Di}. That $\Vcr_{\cut}$ is preserved by the action of $\Lbb^{\res}_{n,m}$ follows by combining Lemmas~\ref{lem:res-preserved} and~\ref{lem:pres}. Then the $GL_{n+1}$ Whittaker transform 
$$
\mathcal{W}_{GL_{n+1}} \colon \mathbb{V}\simeq \mathbb{V}_{\cut}, \quad f(\lambda,\nu) \mapsto \hat{f}(\nu) = \sum_{\lambda} f(\lambda,\nu)\overline{W(\lambda)}
$$
identifies $\Lbb_{n,m}$ and $\Lbb^{\res}_{n,m}$ as subrings of the endomorphism ring of $\Vcr$, and it is easy to check this identification is the same as that induced by the original definition of the local gluing isomorphism.

In a similar way we define actions $\varrho_{m,n}$ and $\varrho^{\cut}_{m,n}$ of $\Lbb_{m,n}$ and $\Lbb^{\res}_{m,n}$ on the same spaces $\Vcr$ and $\Vcr_{\cut}$.
For $\varrho_{m,n}$, the $\{Y_{s'_{j,n}},Y_{t'_{j,n}}\}$ and $\{Y_{s'_{j,m}},Y_{t'_{j,m}}\}$ and $Y_{h'_b}$ act by the same operators as their un-primed counterparts, while we define the action of the handle generator $Y_{h'_a}$ by
$$
\varrho_{m,n}(Y_{h'_a})  = zP^{-1}_{1,n}P_{m+1,m}.
$$
For $\varrho_{m,n}$, all primed generators act by the same operators as their unprimed counterparts, except for
$$
\varrho_{m,n}(Y_{v_{j,a}}) = -qz^{-1}w_j^{-1}, \quad j=1,\ldots n+1.
$$
Now consider the factorization of the Baxter cluster transformation $\bs\mu_{m,n} =\bs\mu'_{m,n}\circ \bs\mu^\sharp_{m,n}$ where we factor each mutation as in~\eqref{eq:monpart}. As in Section~\ref{subsec:bi-baxter} we write $\Qop_{n,m}$ for the operator on $\mathbb{V}_{n,m}$ obtained by acting with the product of dilogarithms in $\bs\mu^\sharp_{m,n}$ using the representation $\varrho_{n,m}$ (that this operator indeed preserves $\Vcr$ follows from e.g. Corollary~\ref{cor:biToda-coeff}. Similarly, write $\Qop^{\cut}_{n,m}$ for the product of quantum dilogarithms in $(\bs\mu^{\cut}_{m,n})^\sharp$.
Now we observe that the quantum torus representations $\varrho_{n,m}$ and $\varrho_{m,n}$ and their cut counterparts are related via 
$$
\varrho_{n,m} = \varrho_{m,n}\circ \mu_{n,m}', \qquad \varrho^{\cut}_{n,m} = \varrho^{\cut}_{m,n}\circ \varsigma_{n,m}\circ\mu_{n,m}',
$$
so it follows that for all $A\in \Lbb_{n,m}$ and $B\in \Lbb^{\res}_{n,m}$ we have
\begin{align*}
\varrho_{m,n}(\bs\mu_{n,m}(A)) &= \Qop_{n,m}\circ \varrho_{n,m}(A) \circ  \Qop_{n,m}^{-1},\\
\varrho^{\cut}_{m,n}(\bs\mu_{n,m}(B)) &= \Qop^{\cut}_{n,m}\circ \varrho^{\cut}_{n,m}(B) \circ  (\Qop^{\cut}_{n,m})^{-1}.
\end{align*}
Hence the Proposition will follow provided we can show that $\mathcal{W}_{GL_{n+1}} \circ \Qop_{n,m} = \Qop^{\cut}_{n,m}\circ\mathcal{W}_{GL_{n+1}}$. To prove this, we use the $GL_{m+1}$-Whittaker transform: writing 
$$
\Vcr_{\cut,\cut}=\mathbb{Q}(q)[\bs w^{\pm1},\bs{u}^{\pm1},t^{\pm1}]^{S_{n+1}\times S_{m+1}}((z)),
$$ 
we have an isomorphism
$$
\mathcal{W}_{GL_{m+1}} \colon \Vcr_{\cut}\simeq \Vcr_{\cut,
\cut}, \quad \hat{f}(\nu) \mapsto \hat{\hat{f}} = \sum_{\nu} \hat{f}(\nu)\overline{W(\nu)},
$$
and it suffices to prove that $\mathcal{W}_{GL_{m+1}} \mathcal{W}_{GL_{n+1}} \Qop_{n,m} = \mathcal{W}_{GL_{m+1}}\Qop^{\cut}_{n,m}\mathcal{W}_{GL_{n+1}}$. But applying Proposition~\ref{prop:whit-bibax} for $(n,k) = (n,m)$ it follows that 
\begin{align}
\label{eq:cutcut}
\mathcal{W}_{GL_{m+1}} \mathcal{W}_{GL_{n+1}} \Qop_{n,m}  = \prod_{j=1}^{n+1}\prod_{k=1}^{m+1}\Psi(zw_j/u_k)^{-1}\circ \mathcal{W}_{GL_{m+1}} \mathcal{W}_{GL_{n+1}},
\end{align}
while applying the same Proposition $n+1$ times with $(n,k) = (m,0)$ to move $\Qop^{\cut}_{n,m}$ over $\mathcal{W}_{GL_{n+1}}$ also identifies the composite $\mathcal{W}_{GL_{m+1}}\Qop^{\cut}_{n,m}\mathcal{W}_{GL_{n+1}}$ with the right hand side of~\eqref{eq:cutcut}. This completes the proof of the Proposition.
\end{proof}

\begin{theorem}
    \label{thm:alg-MF-coherence}
    % Then the homomorphism $\eta$ in~\eqref{eq:global-W-def} is an isomorphism onto its image $\Lbb^c_W$:
      The isomorphism~\eqref{eq:boldW-def} does not depend on the choice of isolating cylinder for the curve $c$:
      if $\mu_S \colon Q_{\tri_1,c}\rightarrow Q_{\tri_2,c}$ is the cluster transformation connecting two isolating clusters for $c$ on $S$ and 
$\mu_{S'} \colon Q_{\tri'_1,c_\pm}\rightarrow Q_{\tri'_2,c_\pm}$ the one connecting the corresponding clusters on $S'$, then we have a commutative square
    \begin{equation}
    \label{eq:L-square}
        \begin{tikzcd}[column sep=4em]
\Lbb_{S,(\tri_1,c)} \arrow[r,"\eta_{c,\tri_1}"] \arrow[d,"\mu_S"] & \Lbb_{{S';\phi};(\tri_1,c)} \arrow[d,"\mu_{S'}"] \\
\Lbb_{S,(\tri_2,c)} \arrow[r,"\eta_{c,\tri_2}"]           & \Lbb_{{S';\phi};(\tri_2,c)}
\end{tikzcd}
    \end{equation}
    In particular, the fundamental Hamiltonians $H_k(c)$ do not depend on the choice of isolating cylinder for $c$. 
      \end{theorem}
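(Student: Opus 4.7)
The strategy is to reduce the commutativity of the square \eqref{eq:L-square} to a manageable list of generating morphisms in the Ptolemy groupoid, and then verify commutativity on each type of generator using the locality of $\eta_{c,\tri}$ together with the double Baxter intertwining result of Proposition~\ref{prop:double-bax-inter}.

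The first step is to apply Lemma~\ref{lem:re-iso}, which lets us factor the morphism connecting $\tri_1$ and $\tri_2$ in $\Pt_{|c|}(S)$ as a composition of (i) flips that preserve the $c$-isolating cylinder and (ii) umbral moves. After conjugating by $\Phi_{\tri;c}$ throughout, the corresponding cluster transformation $\mu_S$ factors into compositions of (i) flips and (ii) refined umbral moves $F_{c;e}$ in the sense of Section~\ref{subsec:refined-umbral}; under the cutting functor these are intertwined respectively with the analogous flips and refined umbral moves $F_{c_\pm;e_\pm}$ on $S'$. Thus it suffices to verify the commutativity of \eqref{eq:L-square} separately for each of these two generator types.

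For the first type, a flip preserving the isolating cylinder occurs strictly inside the region $S \smallsetminus C$, which under the cutting functor corresponds to the same region on $S'$. By construction the isolating quivers $Q_{\tri_1;c}$ and $Q_{\tri_2;c}$ differ only in the part of the quiver outside the sub-lattice $\Lambda_{S_{\geq 0}}$, and the same holds for their cut counterparts $Q_{\tri_1';c_\pm}$ and $Q_{\tri_2';c_\pm}$ outside of $\Lambda_{S'_{\geq 0}}$. The locality formula \eqref{eq:local-is-local}, namely $\eta_{c,\tri}(Y_\ell) = Y_{\iota_\phi(\ell)}$ for $\ell \in I^{<0}_S$, together with the fact that the cluster transformations implementing the flip on both sides act by identical formulas on $\Tc_{<0}$ via the identification $\iota_\phi$, gives commutativity of the square on this part; commutativity on the $\Lbb_{S_{\geq0}}$ piece is automatic since it is untouched by the flip.

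The main work is the second type. Here we use Proposition~\ref{prop:umbral-factorization} to write $F_{c;e} = \bs\mu_{c;e;1\circlearrowright 6}$ and $F_{c_+;e_+} = \bs\mu_{c_+;e_+;1\circlearrowright 6}$, i.e.\ both refined umbral moves are realized by traversing the 13-object clock face constructed in Section~\ref{subsec:refined-umbral}. The plan is to propagate the gluing isomorphism $\eta_{c,\tri}$ stage by stage along the clock, showing that at each of the 13 intermediate objects the corresponding subquiver fits into a locally glueable pair $(\wdt Q_k, \wdt Q_k^{\cut})$ in the sense of Definition~\ref{def:glueable-pair}, and that the cluster transformation $\bs\mu_{k+1 \circlearrowright k}$ on the $S$ side is intertwined with $\bs\mu_{c;e;k+1 \circlearrowright k}$ on the $S'$ side by the associated local gluing isomorphism $\eta_k$ of Remark~\ref{rmk:genLres}. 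For stages where the transformation is composed of flips or Dehn twists localized outside the Toda subquivers (such as $\bs\mu_{7 \circlearrowright 6}$, $\bs\mu_{8 \circlearrowright 7}$, $\bs\mu_{9 \circlearrowright 8}$, $\bs\mu_{11 \circlearrowright 10}$, $\bs\mu_{12 \circlearrowright 11}$), the intertwining is again a matter of locality and matches formulas, exactly as in the first type. The substantive stages are $\bs\mu_{10 \circlearrowright 9}$ and $\bs\mu_{13 \circlearrowright 12}$, whose definitions in Section~\ref{subsec:refined-umbral} are composites of bi-fundamental Baxter mutation sequences $\bs\mu_{n,k}$, and here the required intertwining is exactly the content of Proposition~\ref{prop:double-bax-inter} applied to the relevant locally glueable sub-pairs. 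The main obstacle will be to check that the various auxiliary quasi-permutations $\bs m_{k+1\circlearrowright k}$ appearing in the $S$-side definitions are compensated for on the $S'$ side precisely so that the resulting intertwining diagrams commute; but this is ultimately a tropical bookkeeping exercise identical in spirit to the tropical-variable comparisons of Lemmas~\ref{lem:10-right-9}--\ref{lem:13-right-9}, and can be reduced to those same comparisons via Theorem~\ref{trop-criterion}.

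The final assertion about the fundamental Hamiltonians is immediate once \eqref{eq:L-square} has been established. Indeed, by \eqref{eq:intxi1} we have $\eta_{c,\tri_i}(H_k(\tri_i;c)) = \chi_{\omega_k}(c)$, with the right-hand side defined purely in terms of the mutation-invariant data on the cut surface and hence the same in both isolating clusters. Commutativity of \eqref{eq:L-square} then forces $\mu_S(H_k(\tri_1;c)) = H_k(\tri_2;c)$, which is exactly the statement of Theorem~\ref{thm:monodromy-well-defined}.
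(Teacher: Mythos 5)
Your overall strategy is the same as the paper's: factor the morphism between isolating triangulations via Lemma~\ref{lem:re-iso}, dispatch the cylinder-disjoint flips by locality~\eqref{eq:local-is-local}, dispatch the refined umbral moves via the clock-face factorization of Proposition~\ref{prop:umbral-factorization}, and use Proposition~\ref{prop:double-bax-inter} for the two substantive steps $10\circlearrowright 9$ and $13\circlearrowright 12$. The deduction of the last claim from~\eqref{eq:intxi1} also matches the paper's.

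There is one genuine gap in your handling of ``flips preserving the isolating cylinder.'' You assert that such a flip ``occurs strictly inside the region $S\smallsetminus C$'' and hence that the $\Lbb_{S_{\geq0}}$ piece ``is untouched by the flip.'' This is false: the class of cylinder-preserving flips also includes the flips at the two internal arcs of $C$ which implement the Dehn twist $\tau_c$, and in a $c$-isolating cluster that Dehn twist is realized by the sequence~$\bs\mu_c$ of \eqref{eq:short-Dehn}, which mutates at the vertices $s_1,\dots,s_n$ \emph{inside} $\Lambda_{S_{\geq0}}$. The locality argument via~\eqref{eq:local-is-local} therefore does not apply to this subcase, and the commutativity of the square is not automatic there. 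The paper treats this as a third generator type and closes it by citing the intertwining relation $\eta_c\circ\tau_c(S)=\tau_{c_-}(S')\circ\eta_c$ established in Proposition~\ref{prop:local-gluing-pgl} (equation~\eqref{eq:dehn-intertwining}). Your proof should invoke that result explicitly rather than folding the Dehn twist into the locality argument for cylinder-disjoint flips.

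A minor note: you attribute the auxiliary quasi-permutations $\bs m_{10\circlearrowright 9}$, $\bs m_{13\circlearrowright 12}$ to the ``$S$-side definitions,'' but in Section~\ref{subsec:refined-umbral} these appear in the factorizations $\bs\mu_{10\circlearrowright 9}$, $\bs\mu_{13\circlearrowright 12}$ on the $S'$ side (the $S$-side counterparts $\bs\mu_{c;e;k}$ have no such compensating factors). This does not affect the substance of the argument, but the bookkeeping you sketch needs to be done in the correct direction.
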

      \begin{proof}
      Recall that any two $c$-isolating triangulations can be connected by a sequence of umbral moves, Dehn twists along $c$, and flips at edges disjoint from the isolating cylinder. The equivariance of $\eta_c$ under the Dehn twist has already been proved in Proposition~\ref{prop:local-gluing-pgl}. In the case of a flip outside the isolating cylinder, we use the fact that the corresponding cluster transformations can be realized as the same (relative to the identification $\iiota_\phi$) sequences of mutations in directions solely within $I_{S}^{<0}$ and $I_{S'}^{<0}$, and so the commutativity of~\eqref{eq:L-square} follows from the locality property~\eqref{eq:local-is-local} of the local gluing isomorphisms. 
To handle the case of an umbral move, we use the factorizations of the cluster transformation $U_{c;e}$ on $S$ and its counterpart $U_{c_+;e_+}$ on $S'$ from Proposition~\ref{prop:umbral-factorization}. The two factorizations have almost identical forms, with the only difference coming in steps $10\circlearrowright 9$ and $13\circlearrowright 12$ in which double Baxter sequences $\bs\mu_{n,n-1}$ and $\bs\mu_{n,n-2}$ in $\mu_S$ are replaced by in $\mu_{S'}$ by $(n+1)$-fold products of Baxter sequences $\bs\mu_{n-1,0}$ and  $\bs\mu_{n-2,0}$ respectively. The virtue of these factorizations is that the intermediate quivers $(Q_{c,k},Q_{c_+,k})$ form {locally glueable pairs} in the sense of Definition~\ref{def:glueable-pair}. As in the case of a flip outside the isolating cylinder, it is clear from the locality property~\eqref{eq:local-is-local} that any piece in the factorization of $\mu_S$ which is not one of the double Baxter operators mentioned above intertwines with its counterpart in $\mu_{S'}$ under the local gluing maps. On the other hand, the intertwining in the remaining cases follows from Proposition~\ref{prop:double-bax-inter}, completing the proof of the commutativity of~\eqref{eq:L-square} in the case of an umbral move.
       
%       Recall that the passage from an isolating cluster for $c$ on $S$ to a refined one is achieved by cluster transformations which act nontrivially only on the factor $\mathcal{T}_{\Xi_{S_{\leq0}}}$ in the factorization~\label{eq:torus-factor}, and similarly on $S'$. In particular, these mutations are also $\eta_c$-equivariant for the same reason as flips disjoint from the isolating cylinder. So combining the factorizations of $\mu_S,\mu_{S'}$ from Propositions~\ref{prop:3-factorization-S} and~\ref{prop:3-factorization-S'} with the intertwining relation~\eqref{eq:alg-q-inter}, we get the commutativity of~\eqref{eq:L-square}.        
             
       With this in hand, the independence of the $H_k(c)$ on the choice of isolating cylinder follows immediately: indeed, we have $\eta_{c,\tri}(H_k(c;\tri)) = \chi_{\omega_k}(c)$ is a Laurent polynomial in frozen $\mathcal{A}$-variables and thus commutes with all mutations on $S'$. So
       $$
       \eta_{c,\tri_2}(\mu_S(H_k(c;\tri_1))) = \mu_{S'}(\eta_{c,\tri_1}(H_k(c;\tri_1)))=\mu_{S'}(\chi_{\omega_k}(c)) = \chi_{\omega_k}(c)= \eta_{c,\tri_2}(H_k(c;\tri_2)),
       $$
and applying the inverse of $\eta_{c,\tri_2}$ the claim follows. 
      \end{proof}
      
      \begin{remark}
Thanks to Theorem~\ref{thm:alg-MF-coherence}, the cutting isomorphism $\eta_c$ makes sense in arbitrary cluster coordinate systems $\quiver,\quiver'$ associated to $S,S'$ respectively. Indeed, pick an isolating cluster $\quiver_{S,\tri,c}$ subordinate to an arbitrary $c$-isolating ideal triangulation $\tri$, and let $Q_{S',\tri,c}$ be the $c_\pm$-isolating cluster subordinate to the triangulation $\tri'=\mathcal{C}_c(\tri)$ of $S'$ obtained by applying the cutting functor to $\tri$. Then if $\mu\colon\quiver\rightarrow \quiver_{S,\tri,c}$ and $\mu_{\cut}\colon\quiver_{S',\tri,c}\rightarrow Q'$ are the corresponding cluster transformations, the theorem guarantees that $\mu_{\cut}\circ\eta_{c}(\tri)\circ \mu^{-1}$ does not depend on the choice of $c$-isolating triangulation $\tri$. 
\end{remark}
      
      \begin{theorem}
          \label{thm:alg-MF-equivariance}
      The isomorphism~\eqref{eq:boldW-def} is equivariant with respect to the actions by cluster transformations of the centralizer $\Gamma_{S;c}$ of the Dehn twist $\tau_c$ on the algebras $\Lbb_{S},\Lbb_{S';\phi}$. It restricts to an isomorphism
        \begin{align}
    \label{eq:restricted-boldW}
        \eta_c\colon \Lbb_S^{M_c^*\mathcal{O}_q(T/W)}\simeq \Lbb^c_{S^\circ},
    \end{align}
    where $\Lbb_S^{M_c^*\mathcal{O}_q(T/W)}$ is the subalgebra of $\Lbb_S$ consisting of elements which commute with all fundamental Hamiltonians $H_k(c)\in\Lbb_S$ associated to $c$.
\end{theorem}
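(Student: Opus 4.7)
The plan for the equivariance splits naturally into two ingredients already essentially in hand. First, we use that the cutting functor $\Cut_c \colon \Ptoh_{|c|}(S) \to \widehat\Pt_\phi(S')$ is $\Gamma_{S;c}$-equivariant (Proposition~\ref{prop:cut-glue}), so for any $c$-isolating triangulation $\tri$ the mapping class $\gamma \in \Gamma_{S;c}$ transforms $\tri$ and $\Cut_c(\tri)$ compatibly. Fix such a $\tri$ and identify $\Gamma_{S;c}$ with $\Aut(\tri) \subset \pi_1(\widehat\Pt(S))$ via Corollary~\ref{cor:MCG-pi1}. Since $\gamma$ preserves the isotopy class of $c$, the triangulation $\gamma^{-1}(\tri)$ is again $c$-isolating, so the enhanced morphism $A_\gamma \circ F_{\gamma^{-1}(\tri),\tri}$ implementing the action of $\gamma$ lies entirely within $\widehat\Pt_{|c|}(S)$.

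By Lemma~\ref{lem:re-iso}, the underlying 1-morphism $F_{\gamma^{-1}(\tri),\tri}$ factors into a sequence of umbral moves and flips preserving the isolating cylinder for $c$. Theorem~\ref{thm:alg-MF-coherence} ensures that each of these elementary cluster transformations intertwines with its image under the cutting functor via $\eta_c$, while the locality property~\eqref{eq:local-is-local} together with the explicit construction of the cluster-groupoid functor $\Qc$ from Section~\ref{subsec:Pt-Cl} implies that the quasi-permutation $A_\gamma$ is intertwined with its counterpart $A_{\Cut_c(\gamma)}$. Composing, I obtain that $\eta_c$ intertwines the cluster automorphism implementing $\gamma$ on $\Lbb_S$ with the cluster automorphism implementing the image of $\gamma$ in $\Gamma_{S'}/\langle\tau_{c_+}\tau_{c_-}\rangle \simeq \Gamma_{S;c}$ on $\Lbb_{S';\phi}$, yielding the desired equivariance.

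For the restriction~\eqref{eq:restricted-boldW}, recall from~\eqref{eq:intxi1} that $\eta_c(H_k(c)) = \chi_{\omega_k}(c) = \sum_{\nu \in W\cdot \omega_k} Y_{c,\nu}$, so since $\eta_c$ is an algebra isomorphism it suffices to identify the centralizer of $\{\chi_{\omega_k}(c)\}_{k=1}^n$ inside $\Lbb_{S';\phi}$ with the degree-zero subalgebra $\Lbb^c_{S^\circ}$ from Remark~\ref{rmk:Lcirc}. The containment $\Lbb^c_{S^\circ} \subseteq \Lbb_{S';\phi}^{\chi_\bullet}$ is immediate since degree-zero elements commute with every $Y_{c,\lambda}$. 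For the reverse inclusion, I would decompose $A \in \Lbb_{S';\phi}$ according to its internal grading $A = \sum_\mu A_\mu$ and use the rule $Y_{c,\alpha} A_\mu = q^{-2\langle \alpha,\mu\rangle} A_\mu Y_{c,\alpha}$ to rewrite $[\chi_{\omega_k}(c), A] = \sum_{w,\mu}(q^{-2\langle w(\omega_k),\mu\rangle}-1) A_\mu Y_{c,w(\omega_k)}$. The $\mathbb{Q}$-linear independence of the distinct monomials $Y_{c,w(\omega_k)}$ together with torsion-freeness over $\mathbb{Z}[q^{\pm1}]$ then forces $\langle w(\omega_k),\mu\rangle = 0$ for every $w \in W$ and $k$ whenever $A_\mu \ne 0$. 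Since the $W$-orbits $\{w(\omega_k)\}$ span the weight lattice over $\mathbb{Q}$, this pins $\mu = 0$, so $A \in \Lbb^c_{S^\circ}$.

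The main obstacle I anticipate is the careful book-keeping of the quasi-permutation part $A_\gamma$ in the equivariance argument: one must verify that the label-set bijection induced by a diffeomorphism in $\Gamma_{S;c}$ is compatible under the identification $\iiota_\phi$ used to define the local gluing isomorphism $\eta_c$. This should reduce to checking functoriality of the assignment $\gamma \mapsto A_\gamma$ under the cutting functor, combined with the observation that any such diffeomorphism acts on the combinatorial data inside the isolating cylinder only through Dehn twists along $c$, whose intertwining is already established in Proposition~\ref{prop:local-gluing-pgl}.
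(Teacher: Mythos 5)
Your proposal is correct and follows essentially the same route as the paper: decompose the action of $\gamma \in \Gamma_{S;c}$ into the flip composite $F_{\gamma^{-1}(\tri),\tri}$ (handled via Theorem~\ref{thm:alg-MF-coherence}) and the quasi-permutation $A_\gamma$ (handled via locality of $\eta_c$ and the fact that both relabelings come from the same diffeomorphism), then apply the intertwining of $H_k(c)$ with $\chi_{\omega_k}(c)$ to identify the commutant. Your invocation of Lemma~\ref{lem:re-iso} is harmless but redundant — coherence already absorbs it — and your explicit commutator computation in the restriction step usefully fills in a claim the paper leaves implicit, though you should note it relies on $\Lbb_{S';\phi}$ being a domain (inherited from its ambient localized quantum torus) so that left multiplication by $A_\mu\neq0$ is injective.
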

\begin{proof}
Fix a triangulation $\tri\in \widehat{\Pt}_c(S)$ of $S$ in which $c$ has a shadow, and consider $\Lbb_S$ as being embedded in the corresponding quantum torus. Similarly, we consider the triangulation $\tri'=\mathcal{C}_c(\tri)$ of $S'$ obtained by applying the cutting functor to $\tri$, and embed $\Lbb_{S';\phi}$ in the corresponding residue quantum torus. Recall from formula~\eqref{eq:Qc-flip} that the action of a mapping class $\gamma\in \Gamma_{S;c}$ on $\Lbb_S$ is given by a composite $\varsigma^S_\gamma\circ F_{\gamma^{-1}(\tri),\tri}$ where the cluster transformation $F_{\gamma^{-1}(\tri_S),\Delta}$ is the composite of mutations associated to a sequence of flips taking $\tri$ to $\gamma^{-1}(\tri)$, and $\varsigma^S_\gamma$ is the quasi-permutation morphism induced by the diffeomorphism $\gamma$. The action of $\gamma$ on $\Lbb_{S';\phi}$ is given by an analogous composite $\varsigma^{S'}_\gamma\circ F_{\gamma^{-1}(\tri'),\tri'}$. Commutativity of the square
%    \begin{equation}
%    \label{eq:L-square}
%        \begin{tikzcd}
%\Lbb_{S,(\tri,c)} \arrow[r,"\eta_{c,\tri}"] \arrow[d,"\mu_S"] & \Lbb_{{S';\phi};(\tri,c)} \arrow[d,"\mu_{S'}"] \\
%\Lbb_{S,(\gamma^{-1}(\tri),c)} \arrow[r,"\eta_{c,\gamma^{-1}(\tri)}"]           & \Lbb_{{S';\phi};(\gamma^{-1}(\tri),c)}
%\end{tikzcd}
%    \end{equation}
\begin{equation}
\label{eq:sigma-square}
    \begin{tikzcd}[column sep=5em]
\Lbb_{S,(\gamma^{-1}(\tri),c)} \arrow[r,"\eta_{c,\gamma^{-1}(\tri)}"] \arrow[d,"\varsigma^{S}_\gamma"] 
    & \Lbb_{{S';\phi};(\gamma^{-1}(\tri),c)} \arrow[d,"\varsigma^{S'}_\gamma"] \\
\Lbb_{S,(\tri,c)} \arrow[r,"\eta_{c,\tri}"]           
    & \Lbb_{{S';\phi};(\tri,c)}
\end{tikzcd}
\end{equation}
is immediate, since the morphisms $\varsigma^{S}_\gamma,\varsigma^{S'}_\gamma$ are simply relabelings of quiver index sets (which correspond to faces of the bicolored graphs on $S,S'$) induced by the same diffeomorphism $\gamma$. On the other hand, if we apply the previous theorem with 
$$
\tri_1=\tri,\quad\tri_2=\gamma^{-1}(\tri),\quad \mu_S =F_{\gamma^{-1}(\tri),\tri},\quad \mu_{S'}=F_{\gamma^{-1}(\tri'),\tri'},
$$
we get the commutativity of the square
    \begin{equation}
    \label{eq:flip-square}
        \begin{tikzcd}[column sep=5em]
\Lbb_{S,(\tri,c)} \arrow[r,"\eta_{c,\tri}"] \arrow[d,"F_{\gamma^{-1}(\tri),\tri}",swap] & \Lbb_{{S';\phi};(\tri',c)} \arrow[d,"F_{\gamma^{-1}(\tri'),\tri'}"] \\
\Lbb_{S,(\gamma^{-1}(\tri),c)} \arrow[r,"\eta_{c,\gamma^{-1}(\tri)}"]           & \Lbb_{{S';\phi};(\gamma^{-1}(\tri'),c)}
\end{tikzcd}
    \end{equation}
Stacking the second square on top of the first, it follows that the isomorphism $\eta_c$ is equivariant for the action of $\Gamma_{S;c}$ as claimed.

For the final statement about the commutant, recall that the $\Lbb_{S^\circ}^c$ the subalgebra in $\Lbb_{S';\phi}$ consisting of elements which commute with all $Y_{c,\lambda}$. This condition is equivalent to commutativity with all elementary symmetric polynomials $\chi_{\omega_k}(c)$, and the latter are interwined under $\eta_c$ with $H_k(c)\in\Lbb_S$, which proves~\eqref{eq:restricted-boldW}.
\end{proof}

%The proof of the previous Theorem shows that for any two objects $\tri_1,\tri_2$ of $\widehat{\Pt}_c(S)$ -- that is to say, two ideal triangulations of $S$ in which the curve $c$ has a shadow -- and any morphism $\mu:\tri_1\rightarrow\tri_2$ in $\widehat{\Pt}_c(S)$, we have a commutative square 
%    \[
%        \begin{tikzcd}[column sep=5em]
%\Lbb_{S,(\tri_1,c)} \arrow[r,"\eta_{c,\tri}"] \arrow[d,"F_{\gamma^{-1}(\tri),\tri}",swap] & \Lbb_{{S';\phi};(\tri',c)} \arrow[d,"F_{\gamma^{-1}(\tri'),\tri'}"] \\
%\Lbb_{S,(\gamma^{-1}(\tri),c)} \arrow[r,"\eta_{c,\gamma^{-1}(\tri)}"]           & \Lbb_{{S';\phi};(\gamma^{-1}(\tri'),c)}
%\end{tikzcd}
%    \]

\subsection{Non-isolatable curves and multiple cuts} We now discuss the extension of the previous theorems to  non-isolatable simple closed curves $c$, so that $S'=S_+\sqcup S_-$ where $S_-$ is a surface of genus $g>0$ with exactly one tacked circle. In this case, we have no a priori definition of the algebra $\Lbb_{res}^{c}$. Moreover, we do not have an action of the Weyl group by cluster transformations at the level of the cut surface $S'$, since one of its components is the exceptional surface $S_-$. To handle these problems we use the trick of first cutting $S$ along an auxiliary cycle. In the present article we do this for $g>1$, and leave the corner case $g=1$ (which requires separate techniques) for the companion paper.

We start with a quick discussion of how to iterate the constructions from the previous section, allowing us to make multiple cuts on a surface $S$. 
%The simplest such situation is that in which $S$ admits an ideal triangulation $\tri$ containing isolating cylinders for both curves $a$ and $c$, so that $S_{a,c}$ has an ideal triangulation $\tri''$ simultaneously isolating $a_\pm$ and $c_\pm$   In this case, we can apply the constructions of Definition~\ref{} and~\ref{} simulataneously in each isolating cylinder to produce a quivers $Q(\tri;a,c)$ on $S$ and $Q(\tri'';a_\pm,c_\pm)$ on $S_{a,c}$. Writing $\Tc^{a,c}(\tri'';a_\pm,c_\pm)$ the symplectic reduction of the corresponding quantum torus at both pairs of tacked circles $a_\pm,c_\pm$, and $\Tc^{a,c}_{\loc}(S_{a,c};c_\pm)$ the localization of the latter at the denominator set $Ø(a_\pm,c_\pm)=Ø(a_\pm)Ø(c_\pm)$, we define the residue universal Laurent ring 

% {def:c-isol-quiver} {def:cpm-isol-quiver}

\begin{defn}
\label{def:Lresmulticut}
Suppose $S_{a,c}$ is a surface with two pairs of tacked circles $a_\pm,c_\pm$ which admits both an $a_\pm$-isolating triangulation and a (possibly different) $c_\pm$-isolating tringulation.
Let $Q_{S_{a,c};c_\pm}$ be a $c_\pm$-isolating cluster on $S_{a,c}$, $\Tc^{a,c}(S_{a,c})$ the symplectic reduction of the corresponding quantum torus at both pairs of tacked circles $a_\pm,c_\pm$, and $\Tc^{a,c}_{\loc}(S_{a,c};c_\pm)$ the localization of the latter at the denominator set $Ø(a_\pm,c_\pm)=Ø(a_\pm)Ø(c_\pm)$. We define the residue universal Laurent ring $\Lbb_{S_{a,c};\phi_a,\phi_c}$ based in $c_\pm$-isolating cluster $Q_{S_{a,c};c_\pm}$ to be the set of elements $A\in \Tc^{a,c}_{\loc}(S_{a,c};c_\pm)$ such that:
\begin{enumerate}
\item There exists some $d\in Ø(a_\pm,c_\pm)$ such that $dA$ lies in the symplectically reduced universal Laurent ring $\Lbb^{a,c}_{S_{a,c}}$;
\item The element $A$ is invariant under the action by cluster transformations of the product of Weyl groups $W(a_\pm)\times W(c_\pm)$;
\item The element $A$ satisfies the residue conditions~\eqref{eq:Lresidue-condition} for the divisors $d_{\alpha,k}(c)$, and for any $a_\pm$-isolating cluster $Q_{S_{a,c};a_\pm}$ on $S_{a,c}$, its image $\mu(A)$ in $\Tc^{a,c}_{\loc}(S_{a,c};a_\pm)$ under the cluster transformation $\mu: Q_{S_{a,c};c_\pm}\rightarrow Q_{S_{a,c};a_\pm}$ satisfies the corresponding conditions for the divisors $d_{\alpha,k}(a)$.
\end{enumerate}
\end{defn}
\begin{remark}
\begin{enumerate}
\item It follows as in Proposition~\ref{prop:residue-well-defined} that $\Lbb_{S_{a,c};\phi_a,\phi_c}$ does not depend on the choice of $c_\pm$-isolating cluster $Q_{S_{a,c};c_\pm}$. Moreover, it is clear that we have $\Lbb_{S_{a,c};\phi_a,\phi_c}\simeq\Lbb_{S_{c,a};\phi_c,\phi_a}$, the isomorphism being induced by the $\mu$ in part (3) of Definition~\ref{def:Lresmulticut}. 
\item It is straightforward to extend Definition~\ref{def:Lresmulticut} to the case of a surface $S$ with an arbitrary number of pairs $\{a_\pm^{(1)},\ldots, a^{(m)}_\pm\}$ of  tacked circles such that $S$ admits an $a^{(i)}_\pm$-isolating triangulation for all $1\leq i\leq m$.
\end{enumerate}
%\item If $S$ admits an ideal triangulation $\tri$ containing isolating cylinders for both curves $a$ and $c$ (and hence $S_{a,c}$ has an ideal triangulation $\tri''$ simultaneously isolating $a_\pm$ and $c_\pm$), we can apply the constructions of Definition~\ref{def:c-isol-quiver} and~\ref{def:cpm-isol-quiver} simulataneously in each isolating cylinder to produce quivers $Q(\tri;a,c)$ on $S$ and $Q(\tri'';a_\pm,c_\pm)$ on $S_{a,c}$.  Then using the clusters on $S_a,S_c$ obtained from $Q(\tri;a,c)$ by cutting along $a,c$ as the basepoints in the definition of the residue universal Laurent rings, it is clear that we have $\Lbb_{S_{a,c};\phi_a,\phi_c}=\Lbb_{S_{c,a};\phi_c,\phi_a}$.  
\end{remark}
Now suppose that $a$ and $c$ are two disjoint essential simple closed curves on $S$, and let $S_a$ be the surface obtained by cutting $S$ along $a$. We denote by $S_{a,c}$ the surface obtained by further cutting $S_a$ along $c$.  
 If we write $\Lbb_S$ for the universal Laurent ring associated to the pair $(G,S)$, Theorem~\ref{thm:alg-MF} provides canonical isomorphisms of algebras $\eta_{a}\colon \Lbb_S\simeq \Lbb_{S_a;\phi_a}$ and $\eta_c(S_a)\colon \Lbb_{S_a}\simeq \Lbb_{S_{a,c};\phi_c}$. Moreover, since  $\eta_c(S_a)$ sends each $Y_{\xi_{a_\pm,i}}$ to its counterpart on $S_{a,c}$, it follows that $\eta_c(S_a)$ descends to an isomorphism $\Lbb^a_{S_a}\simeq \Lbb_{S_{a,c};\phi_c}$ between the corresponding symplectic reductions for the elements~\eqref{eq:reduced-lattice} associated to the pair $a_\pm$, and induces an isomorphism between as the localizations of the latter at the denominator set $Ø(a_\pm)$. We abuse notations and denote the corresponding maps by the same symbol $\eta_c(S_a)$.  Our goal is to prove that $\eta_c(S_a)$ restricts to an isomorphism of residue universal Laurent rings, which amounts to showing that it intertwines the actions of the Weyl group $W(a_\pm)$ on $S_a$ and $S_{a,c}$, as well as the residue conditions associated to the divisors $d_{\alpha,k}(a)$ on the two surfaces. We first observe this is indeed the true in the special case that $S$ admits an ideal triangulation $\tri$ containing isolating cylinders for both curves $a$ and $c$, where the needed intertwining properties follow immediately from the locality of both the Weyl group action and local gluing isomorphisms with respect to isolating cylinders:
\begin{lemma}
\label{lem:cutsquare}
Suppose that $S$ admits an ideal triangulation $\tri$ containing isolating cylinders for both curves $a$ and $c$. Then the isomorphism $\eta_c\colon \Lbb^a_{S_a}\simeq \Lbb_{S_{a,c};\phi_c}$ is equivariant with respect to the actions of $W(a_\pm)$ by cluster transformations, and induces an isomorphism
\begin{align}
\label{eq:eta-ca}
\eta_c(S_a) \colon \Lbb_{S_a;\phi_a}\simeq \Lbb_{S_{a,c};\phi_a,\phi_c},
\end{align}
The isomorphism~\eqref{eq:eta-ca} and its counterpart $\eta_a(S_c)$ fit into a commutative square
\begin{equation}
\label{eq:2-cut-square}
\begin{tikzcd}[row sep=2.2em, column sep=2.2em]
  & \Lbb_{S} \arrow[dl, "\eta_a(S)"'] \arrow[dr, "\eta_c(S)"] & \\
  \Lbb_{S_{a};\phi_a} \arrow[dr, "\eta_c(S_a)"'] & & \Lbb_{S_{c};\phi_c} \arrow[dl, "\eta_a(S_c)"] \\
  & \Lbb_{S_{a,c};\phi_a,\phi_c} &
\end{tikzcd}
\end{equation}
\end{lemma}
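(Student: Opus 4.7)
The plan is to exploit the locality property~\eqref{eq:local-is-local} of the local gluing isomorphism $\eta_c$ in the special clusters subordinate to the triangulation $\tri$, which by hypothesis contains isolating cylinders for both curves $a$ and $c$. Choose such a $\tri$ and let $Q_{\tri;a,c}$ denote the cluster obtained from $Q_{\tri}$ by applying both transformations $\Phi_a$ and $\Phi_c$ of Definition~\ref{def:c-isolating-quiver} (these commute, since they act on disjoint subquivers associated to the two isolating cylinders). After cutting $S$ along $a$ and/or $c$, the corresponding cutting functors produce triangulations $\mathcal{C}_a(\tri),\mathcal{C}_c(\tri),\mathcal{C}_a\mathcal{C}_c(\tri)$, each of which still contains isolating cylinders for the remaining uncut curve(s); the subordinate $c_\pm$-isolating cluster for $S_a$ and $a_\pm,c_\pm$-isolating cluster for $S_{a,c}$ fit together so that the index sets $I^{<0}$ used in the construction of $\eta_c(S_a)$ contain all frozen directions $e_{a_\pm,i}$ and neighbouring mutable vertices associated to the $a_\pm$ isolating cylinder.

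First I would verify the $W(a_\pm)$-equivariance of $\eta_c(S_a)$. In the chosen clusters, the braid group $B_{n+1}$ acts on $\Lbb^a_{S_a}$ and on $\Lbb_{S_{a,c};\phi_c}$ through the quasi-permutations given by formulas~\eqref{eq:braid-e} and~\eqref{eq:braid-ebar}, which modify only the $e_{a_\pm,i}$ and the vectors $\dot e_{v_i^\pm}(a_\pm)$ associated to the $a_\pm$-isolating cylinder. All of these lie in the $I_S^{<0}$ part relative to the $c$-isolating subquiver, so by the locality relation~\eqref{eq:local-is-local} the map $\eta_c$ identifies the two braid actions direction-by-direction. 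Since both descend to the Weyl group $W(a_\pm)$ by the same mechanism, the equivariance follows.

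Next I would check that $\eta_c(S_a)$ preserves the residue/localization conditions associated to the $a_\pm$ divisors $d_{\alpha,k}(a)$. Since the denominator set $Ø(a_\pm)$ is generated by elements $1-q^{2k}Y_{a,\beta}$ whose constituent $Y_{a,\beta}$ are again built solely from the basis vectors in $I^{<0}$ relative to the $c$-cut, locality implies that $\eta_c(S_a)$ maps $Ø(a_\pm)\subset \Tc^a(S_a)$ isomorphically onto its counterpart in $\Tc^{a,c}(S_{a,c})$, and the same is true for the associated filtrations and residue maps. Hence the pole-order bound and residue identity~\eqref{eq:Lresidue-condition} at each $d_{\alpha,k}(a)$ are transported bijectively. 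Combined with the $W(a_\pm)$-equivariance above and the definition of $\Lbb_{S_{a,c};\phi_a,\phi_c}$, this shows the induced map~\eqref{eq:eta-ca} is a well-defined isomorphism.

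Finally, for the commutativity of the diamond~\eqref{eq:2-cut-square}: by Theorem~\ref{thm:alg-MF-coherence} we may evaluate each $\eta$ in any isolating cluster for the curve being cut, so I use the cluster $Q_{\tri;a,c}$ throughout. In this cluster the local gluing isomorphisms $\eta_a$ and $\eta_c$ are built by applying the algebraic Whittaker transform from Proposition~\ref{prop:alg-whit} to the respective ``$\geq 0$'' subquivers, which are disjoint subquivers of $Q_{\tri;a,c}$; on the complementary quantum torus both maps act as the identity via the respective diffeomorphism identifications $\iiota_{\phi_a},\iiota_{\phi_c}$. Therefore the two Whittaker transforms commute on the nose, and the two compositions $\eta_c(S_a)\circ\eta_a(S)$ and $\eta_a(S_c)\circ\eta_c(S)$ coincide. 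The anticipated main obstacle is bookkeeping: one must check that choosing the isolating quivers for $a$ in $S$, $S_c$ and for $c$ in $S$, $S_a$ respects the identifications of frozen bases used in the definitions of $\Lbb_{S_a;\phi_a}$ and $\Lbb_{S_{a,c};\phi_a,\phi_c}$. This is handled by using the same triangulation $\tri$ and the compatibility of the cutting functors $\mathcal{C}_a,\mathcal{C}_c$ established in Section~\ref{subsec:cut-n-glue}.
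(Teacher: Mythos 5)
Your proposal takes essentially the same route as the paper. Both proofs exploit the simultaneous isolating clusters $Q(\tri;a,c)$ (and their counterparts on $S_a$, $S_c$, $S_{a,c}$) produced from the common triangulation $\tri$, then invoke the locality property~\eqref{eq:local-is-local} of the gluing isomorphisms to transport the $W(a_\pm)$-action and the $a_\pm$-residue conditions (which are concentrated inside the $a$-isolating cylinder, hence in $I^{<0}$ for the $c$-cut) across $\eta_c(S_a)$, and finally deduce the commutativity of the diamond~\eqref{eq:2-cut-square} from the disjointness of the two isolating cylinders.
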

\begin{proof}
Since $S$ admits an ideal triangulation $\tri$ containing isolating cylinders for both curves $a$ and $c$, we can apply the constructions of Definition~\ref{def:c-isol-quiver} and~\ref{def:cpm-isol-quiver} simultaneously in each isolating cylinder to produce quivers $Q(\tri;a,c)$ on $S$, $Q(\tri';a_\pm,c)$ on $S_a$, and $Q(\tri'';a_\pm,c_\pm)$ on $S_{a,c}$. Then the fact that $\eta_c(S_a)$ intertwines the $W(a_\pm)$ action follows from the locality property~\eqref{eq:local-is-local} of $\eta_c(S_a)$ with respect to the isolating cylinder for $c$, and fact that the $W(a_\pm)$ action is realized locally within the isolating cylinder for $a$. For the same reason, it follows that $\eta_c(S_a)$ intertwines the $a_\pm$-residue conditions on $S_a$ and $S_{a,c}$, and thus induces an isomorphism $\Lbb_{S_a;\phi_a}\simeq \Lbb_{S_{a,c};\phi_a,\phi_c}$. The commutativity of the square~\eqref{eq:2-cut-square} is again immediate from the locality of $\eta_a,\eta_c$ with respect to the disjoint isolating cylinders for $a,c$.
\end{proof}
\begin{remark}
Observe that $S$ admits an ideal triangulation containing isolating cylinders for curves $a$ and $c$ with $a\cap c=\emptyset$ if and only if each connected component of the surface $S_{a,c}$ has at least one special point which is not one of the tacks on $a_\pm,c_\pm$. 
\end{remark}
We can bootstrap this Lemma to obtain the following Corollary extending part of its conclusion to the case in which the curves $a$ and $c$ cannot be simultaneously isolated.
\begin{cor}
\label{cor:bootstrap}
Suppose that the surface $S_c$ obtained by cutting $S$ along $c$ has a connected component $S_-$ given by a surface of genus $g>1$ with a single tacked circle $c_-$, and as above let $a$ be a simple closed curve on $S$ with $a\cap c=\emptyset$. Then the isomorphism $\eta_c\colon \Lbb^a_{S_a}\simeq \Lbb_{S_{a,c};\phi_c}$ is equivariant with respect to the actions of $W(a_\pm;S_a)$ and  $W(a_\pm;S_{a,c})$ by cluster transformations, and induces an isomorphism
\begin{align}
\label{eq:eta-ca-2}
\eta_c(S_a) \colon \Lbb_{S_a;\phi_a}\simeq \Lbb_{S_{a,c};\phi_a,\phi_c}.
\end{align}
\end{cor}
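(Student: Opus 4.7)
The plan is to reduce the corollary to a local argument by constructing ideal triangulations on $S_a$ and $S_{a,c}$ which simultaneously support an $a_\pm$-isolating structure in the sense of Definition~\ref{def:cpm-isol-quiver} and a $c$-isolating cylinder in the sense of Definition~\ref{def:c-isol-quiver}, with the two regions disjoint. Since $g(S_-) > 1$, after cutting $S$ along $a$ the component of $S_a$ arising from $S_-$ has enough topological room to accommodate both structures: place the $a_\pm$-isolating triangles adjacent to the tacks on $a_\pm$, and use the remaining genus to produce a $c$-isolating cylinder disjoint from those triangles. Applying the cluster transformation $\Phi_c \circ \Phi_{a_+}\circ\Phi_{a_-}$ to the resulting quiver $Q_\tri$ then produces a cluster $Q^{a_\pm;c}_\tri$ on $S_a$ that is simultaneously $a_\pm$-isolating and $c$-isolating, and the triangulation $\Cut_c(\tri)$ of $S_{a,c}$ inherits the combined structure, yielding an analogous cluster $Q^{a_\pm;c_\pm}_{\Cut_c(\tri)}$ on $S_{a,c}$ via $\Phi_{c_+}\circ\Phi_{c_-}\circ\Phi_{a_+}\circ\Phi_{a_-}$.

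With these clusters in hand, the first claim will follow from the disjointness of the regions supporting the $W(a_\pm)$-action and the nontrivial part of $\eta_c$. Explicitly, the generators $\sigma_i\in W(a_\pm)$ act by the quasi-permutations~\eqref{eq:braid-e} and~\eqref{eq:braid-ebar}, each of which only modifies basis vectors $e_{a_\pm,i}$ and $\dot{e}_{v_i^\pm}(a)$ located in $I^{<0}_{S_a}$ relative to the $c$-isolating decomposition. By the locality property~\eqref{eq:local-is-local}, the gluing isomorphism $\eta_c$ acts as the identity (via the bijection $\iiota_\phi$) on the corresponding quantum torus generators. The same local description of $W(a_\pm)$ applies in the cluster $Q^{a_\pm;c_\pm}_{\Cut_c(\tri)}$ on $S_{a,c}$, so $\eta_c$ intertwines the two Weyl group actions.

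For the second claim, I would apply the same locality principle to the residue conditions. The monomials $Y_{a,\alpha}$ and $Y_{a,\alpha^\vee}$ defining the divisors $d_{\alpha,k}(a)$ and entering the residue conditions~\eqref{eq:Lresidue-condition} are built from the frozen basis vectors $e_{a_\pm,i}$ and the vectors $\dot{e}_{v_i^\pm}(a)$, all of which again lie in $I^{<0}_{S_a}$ with respect to the $c$-isolating structure. By~\eqref{eq:local-is-local}, $\eta_c$ is the identity on these generators, and therefore matches the residue conditions at $d_{\alpha,k}(a)$ on $S_a$ exactly with those at $d_{\alpha,k}(a)$ on $S_{a,c}$. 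The localization at the mutation-invariant denominator set $Ø(a_\pm)$ is transparently preserved by $\eta_c$ since its elements are polynomial in frozen $\Ac$-variables at $a_\pm$. Combining the equivariance from the previous paragraph, these residue matches, and Theorem~\ref{thm:alg-MF} applied to $(S_a,c)$ then yields the induced isomorphism~\eqref{eq:eta-ca-2}. Well-definedness of the target relative to the choice of $c_\pm$-isolating cluster on $S_{a,c}$ in Definition~\ref{def:Lresmulticut} is supplied by an application of the argument of Proposition~\ref{prop:residue-well-defined} to each pair of tacked circles.

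The main obstacle I anticipate is the topological construction in the first paragraph, namely producing the triangulation $\tri$ with its simultaneous $a_\pm$- and $c$-isolating structures placed in disjoint subsurfaces. This is where the hypothesis $g(S_-) > 1$ is essential: when $g(S_-) = 1$ the component of $S_a$ containing the modified $S_-$ simply lacks the topological room to accommodate both structures independently, which is exactly the corner case deferred to the companion paper~\cite{SS25}. Once this topological step is in place, the remaining parts of the argument are essentially dictated by the locality of $\eta_c$ and of the $W(a_\pm)$-action, and do not require further computation.
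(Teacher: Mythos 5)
The topological construction in your first paragraph does not go through, and this is precisely the obstruction the corollary is designed to work around. You are claiming that on $S_a$ there is an ideal triangulation supporting disjoint isolating cylinders for $a_+$, $a_-$, and $c$ simultaneously. But via the gluing functor, such a triangulation of $S_a$ would give an ideal triangulation of $S$ containing disjoint isolating cylinders for the curves $a$ and $c$. By the Remark following Lemma~\ref{lem:cutsquare}, this exists if and only if each component of $S_{a,c}$ has a special point that is not a tack on $a_\pm, c_\pm$. In the present situation the component of $S_{a,c}$ containing $c_-$ is $S_-$ cut along $a$: a genus $g-1$ surface whose \emph{only} special points are the tacks of $a_\pm$ and $c_-$. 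The criterion therefore fails, regardless of how large $g$ is, and no such triangulation exists. The whole reason Corollary~\ref{cor:bootstrap} is stated and proved separately from Lemma~\ref{lem:cutsquare} is that the two curves cannot be isolated simultaneously here --- if they could, Lemma~\ref{lem:cutsquare} would apply directly.

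The hypothesis $g(S_-)>1$ is used for something different. The paper chooses an auxiliary non-separating curve $d$ on the component of $S_{a,c}$ containing $c_-$, disjoint from $c_-$ and $a_\pm$; this requires that component to have genus $g-1\geq 1$. Cutting along $d$ creates two new tacked circles $d_\pm$ with their own tacks, which \emph{are} special points not on $a_\pm, c_\pm$. Consequently the pairs $(a,d)$, $(c,d)$, and $(a_\pm,c)$-on-$S_{a,d}$ \emph{can} all be simultaneously isolated, and the three gluing maps $\eta_d(S_a)$, $\eta_c(S_{a,d})$, $\eta_d(S_{a,c})$ are $W(a_\pm)$-equivariant and intertwine the $a$-residue conditions by Lemma~\ref{lem:cutsquare}. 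The commutative square from that Lemma then factors $\eta_c(S_a)$ as $\eta_d(S_{a,c})^{-1}\circ\eta_c(S_{a,d})\circ\eta_d(S_a)$, transferring equivariance and the residue conditions to $\eta_c(S_a)$. Your second and third paragraphs --- the locality argument for the $W(a_\pm)$-action and the residue matching --- would be fine \emph{conditional on} simultaneous isolation, but since that fails, the indirection through $d$ is unavoidable.
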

\begin{proof}
Since $g(S_-)>1$ we can choose a non-separating curve $d$ on  the connected component of $S_{a,c}$ containing $c_-$ which is disjoint from $c_-$ and $a_\pm$. We abuse notation and denote by the same symbol the corresponding curve on $S_a$, and write $S_{a,d}$ for the surface obtained by cutting $S_a$ along $d$. Then it follows from Lemma~\ref{lem:cutsquare} that we have a commutative square of isomorphisms 
\[
\begin{tikzcd}[row sep=2.2em, column sep=2.2em]
  & \Lbb^a_{S_a} \arrow[dl, "\eta_c(S_a)"'] \arrow[dr, "\eta_d(S_a)"] & \\
  \Lbb^a_{S_{a,c};\phi_c} \arrow[dr, "\eta_d(S_{a,c})"'] & & \Lbb^a_{S_{a,d};\phi_d} \arrow[dl, "\eta_c(S_{a,d})"] \\
  & \Lbb^a_{S_{a,c,d};\phi_c,\phi_d} &
\end{tikzcd}
\]
Moreover, the same Lemma implies that $\eta_d(S_a),\eta_c(S_{a,d}),\eta_d(S_{a,c})$ are all $W(a_\pm)$ equivariant and intertwine residue conditions for the divisors $d_{\alpha,k}(a)$, and so we deduce the same is true of $\eta_c(S_a)$ as claimed. 
%With this established, in order to prove that~\eqref{eq:eta-ca-2} is an isomorphism all that remains to be proved is that the $a_\pm$ residue conditions on $S_a$ are intertwined with the corresponding ones on $S_{a,c}$. But it again follows from Lemma~\ref{lem:cutsquare} that after localizing the square above at $Ø(a_\pm)$, the isomorphisms $\eta_d(S_a),\eta_c(S_{a,d}),\eta_d(S_{a,c})$ intertwine $Ø(c_\pm)$ residue conditions for their source and target algebras, so we again deduce the same is true of $\eta_c(S_a)$, which completes the proof.
\end{proof}

Now we can define the residue universal Laurent ring $\Lbb_{S';\phi_c}$ in the exceptional case that $S'=S_+\sqcup S_-$ where $S_-$ is a surface of genus $g>1$ with exactly one tacked circle. Write $\Lbb^c_{S'}$ for the symplectically reduced universal Laurent ring associated to the surface $S'$ relative to the pair of tacked circles $c_\pm$, and  $\Lbb^{c;\loc}_{S'}$ for its localization at the denominator set $Ø(c_\pm)$. Then given a non-separating simple closed curve $a$ on $S$ disjoint from $c$ which becomes a curve on $S_-$ after cutting along $c$, we have an isomorphism of $Ø(c_\pm)$ localizations
$$
\eta_a(S_c)\colon \Lbb^{c;\loc}_{S'}\simeq \Lbb^{c;\loc}_{S_{a,c};\phi_c}.
$$

\begin{defn}
\label{def:exceptional-lres}
Let $(S,a,c)$ be as in the paragraph above. We define the residue universal Laurent ring $\Lbb_{S';\phi}$ relative to $a$ by
$$
\Lbb_{S';\phi}(a) = \eta_{a}(S_{c})^{-1}\left(\Lbb_{S_{a,c};\phi_a,\phi_c}\right).
$$
\end{defn}
With this definition, it follows from Corollary~\ref{cor:bootstrap} that we have an isomorphism of algebras
\begin{align}
\label{def:exceptional-iso}
\eta_{c;a}  \colon \Lbb_S \simeq \Lbb_{S';\phi}(a), \quad  \eta_{c;a}= \eta_{a}^{-1}(S_c)\circ\eta_c(S_a)\circ\eta_{a}(S).
\end{align}

\begin{theorem}
\label{thm:secondlast}
The algebra $\Lbb_{S';\phi}(a)$ in Definition~\ref{def:exceptional-lres} does not depend on the choice of auxiliary curve $a$: for all non-separating simple closed curves $a_1,a_2$ on $S$ disjoint from $c$ which become curves on $S_-$ after cutting along $c$, we have $\Lbb_{S';\phi}(a_1)=\Lbb_{S';\phi}(a_2)$ and $\eta_{c;a_1}=\eta_{c;a_2}$.
\end{theorem}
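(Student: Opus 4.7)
My plan is to prove this in two steps: first reduce to the case when $a_1$ and $a_2$ are disjoint, then handle the disjoint case via a triple-cut commutative cube obtained by iterating Lemma~\ref{lem:cutsquare} and Corollary~\ref{cor:bootstrap}.

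\textbf{Reduction to the disjoint case.} I would introduce the graph $\Gamma_c$ whose vertices are (isotopy classes of) non-separating simple closed curves on $S$ disjoint from $c$ which descend to non-separating curves on $S_-$ after cutting, with edges given by disjointness. Connectedness of $\Gamma_c$ reduces to connectedness of the analogous disjointness graph on the subsurface $S_-$, which holds for $g(S_-)\ge 2$ by standard surface topology (it is a version of the connectedness of the non-separating-curve graph, closely related to Harvey's theorem on the curve complex). Thus any two auxiliary curves $a_1,a_2$ as in the theorem are connected by a chain $a_1=b_0,b_1,\ldots,b_k=a_2$ of such curves with $b_{i-1}\cap b_i=\varnothing$, and it suffices to prove that $\eta_{c;b_{i-1}}=\eta_{c;b_i}$ for all $i$.

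\textbf{The disjoint case.} Assume $a_1,a_2,c$ are pairwise disjoint. I would then establish that the iterated cutting maps generated by these three curves fit into a commutative cube of eight algebras of the form $\Lbb_S$, $\Lbb_{S_{a_i};\phi_{a_i}}$, $\Lbb^{c;\loc}_{S_c}$, $\Lbb_{S_{a_1,a_2};\phi_{a_1},\phi_{a_2}}$, $\Lbb^{c;\loc}_{S_{a_i,c};\phi_{a_i}}$ and $\Lbb^{c;\loc}_{S_{a_1,a_2,c};\phi_{a_1},\phi_{a_2}}$, with all twelve edges given by the relevant cutting isomorphisms. Each of the six square faces either involves only the isolatable cuts $a_1,a_2$ (and follows from iterating Lemma~\ref{lem:cutsquare}), or involves the non-isolatable cut $c$ paired with an isolatable auxiliary cut $a_i$ (and follows from Corollary~\ref{cor:bootstrap}). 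The needed simultaneous isolating triangulations on the intermediate surfaces exist because of pairwise disjointness of $a_1,a_2,c$ together with $g(S_-)\ge 2$, which guarantees each relevant component carries enough tacked circles or punctures for an isolating triangulation.

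Once commutativity of the cube is in hand, one has in particular
\begin{align*}
\eta_{a_2}(S_{a_1,c})\circ\eta_c(S_{a_1})\circ\eta_{a_1}(S) &= \eta_{a_1}(S_{a_2,c})\circ\eta_c(S_{a_2})\circ\eta_{a_2}(S),\\
\eta_{a_2}(S_{a_1,c})\circ\eta_{a_1}(S_c) &= \eta_{a_1}(S_{a_2,c})\circ\eta_{a_2}(S_c).
\end{align*}
Unwinding the defining expression $\eta_{c;a_i}=\eta_{a_i}(S_c)^{-1}\circ\eta_c(S_{a_i})\circ\eta_{a_i}(S)$ and composing both sides with $\eta_{a_j}(S_{a_i,c})\circ\eta_{a_i}(S_c)$ for $\{i,j\}=\{1,2\}$, the first identity shows the resulting post-compositions of $\eta_{c;a_1}$ and $\eta_{c;a_2}$ into $\Lbb^{c;\loc}_{S_{a_1,a_2,c};\phi_{a_1},\phi_{a_2}}$ coincide, while the second identity shows the two post-composing maps agree on $\Lbb^{c;\loc}_{S_c}$. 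Since both post-composing maps are isomorphisms, cancelling gives $\eta_{c;a_1}=\eta_{c;a_2}$, whence also the equality of images $\Lbb_{S';\phi}(a_1)=\Lbb_{S';\phi}(a_2)$.

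\textbf{Main obstacle.} The principal technical point I expect is verifying commutativity of the cube, which requires extending the two-cut results of Lemma~\ref{lem:cutsquare} and Corollary~\ref{cor:bootstrap} to the three-cut setting and carefully tracking the symplectic reductions, $W$-invariants, and localizations through each face. One must also confirm that all intermediate surfaces (notably $S_{a_1}, S_{a_2}, S_{a_1,a_2}, S_c, S_{a_i,c}$) admit triangulations with simultaneous isolating cylinders for the pair of remaining curves being cut; the exceptional component coming from $S_-$ requires the genus hypothesis $g(S_-)\ge 2$ together with the pairwise disjointness of $a_1,a_2,c$ to ensure enough auxiliary tacked circles or punctures at every intermediate stage.
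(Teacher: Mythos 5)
Your argument takes essentially the same route as the paper's. The paper likewise reduces to the disjoint case and then unwinds a short chain of equalities built from the two-cut commuting squares of Lemma~\ref{lem:cutsquare} and Corollary~\ref{cor:bootstrap} (your ``commutative cube'' is a more verbose packaging of the same composites of $\eta$'s). The one real difference is in the topological reduction: you invoke connectedness of the disjointness graph of non-separating curves on $S_-$ and traverse a chain $a_1=b_0,\ldots,b_k=a_2$, whereas the paper uses the sharper fact that for $g(S_-)>1$ any two such curves already have a \emph{common} disjoint curve $b$, so a single intermediate suffices; both suffice for the argument, and your version is somewhat more robust at the cost of a slightly heavier input.
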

\begin{proof}
Consider first the case that  $a_1 \cap a_2 = \varnothing$, so that we have a surface $S_{c,a_1,a_2}$ obtained by cutting $S$ along all three curves. Then we can repeatedly apply Lemma~\eqref{lem:cutsquare}  to get the following chain of equalities
\begin{align}
\label{eq:long-chain}
\nonumber \eta_{c;a_1} = \eta_{a_1}^{-1}\eta_c\eta_{a_1} &= \eta_{a_1}^{-1}\eta_{a_2}^{-1}\eta_{a_2}\eta_c\eta_{a_1} = \eta_{a_1}^{-1}\eta_{a_2}^{-1}\eta_c\eta_{a_2}\eta_{a_1} \\
&= \eta_{a_2}^{-1}\eta_{a_1}^{-1}\eta_c\eta_{a_1}\eta_{a_2} = \eta_{a_2}^{-1}\eta_{a_1}^{-1}\eta_{a_1}\eta_c\eta_{a_2} = \eta_{a_2}^{-1}\eta_c\eta_{a_2} = \eta_{c;a_2}.
\end{align}
On the other hand, if $a_1 \cap a_2 \ne \varnothing$, the assumption that $S_-$ has genus greater than 1 guarantees that there exists a non-separating closed simple curve $b$ such that $a_1 \cap b = a_2 \cap b = \varnothing$. So we have $\eta_{c;a_1} = \eta_{c;b} = \eta_{c;a_2}$, which completes the proof.

\end{proof}

Hence we have a single canonical isomorphism 
\begin{align}
\label{eq:exceptional-iso-canonical}
\eta_c\colon \Lbb_S \simeq \Lbb_{S';\phi}.
\end{align}

\begin{theorem}
\label{thm:last}
The isomorphism~\eqref{eq:exceptional-iso-canonical} is equivariant with respect to the action of the mapping class group $\Gamma_{S;c}$.
\end{theorem}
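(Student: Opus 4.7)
Fix $\gamma\in\Gamma_{S;c}$ and an auxiliary simple closed curve $a$ on $S$ as in Definition~\ref{def:exceptional-lres}; the image $a':=\gamma(a)$ is again a valid auxiliary curve. Write $\gamma_*$ for the canonical isomorphism of universal Laurent rings induced by $\gamma$ between each of the pairs of surfaces $S\to S$, $S_a\to S_{a'}$, $S_{a,c}\to S_{a',c}$, and $S_c\to S_c$ (the last two using that $\gamma$ preserves $c$ up to isotopy and hence descends to a diffeomorphism of $S_c$). The strategy is to establish the three naturality squares
\begin{equation}
\label{eq:three-nat-squares}
\gamma_*\circ\eta_a(S)=\eta_{a'}(S)\circ\gamma_*,\quad\gamma_*\circ\eta_c(S_a)=\eta_c(S_{a'})\circ\gamma_*,\quad\gamma_*\circ\eta_a(S_c)=\eta_{a'}(S_c)\circ\gamma_*,
\end{equation}
expressing the compatibility of each of the three constituent gluing maps with the diffeomorphism-induced identifications. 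Stacking them with the factorization $\eta_{c;a}=\eta_a(S_c)^{-1}\circ\eta_c(S_a)\circ\eta_a(S)$ from~\eqref{def:exceptional-iso} yields $\gamma_*\circ\eta_{c;a}=\eta_{c;a'}\circ\gamma_*$, and Theorem~\ref{thm:secondlast} identifies both $\eta_{c;a}$ and $\eta_{c;a'}$ with the canonical map $\eta_c$, giving the desired $\Gamma_{S;c}$-equivariance.

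Each square in~\eqref{eq:three-nat-squares} is proved by the strategy used in Theorem~\ref{thm:alg-MF-equivariance}, extended to allow $\gamma$ to take the cutting curve to a different curve on the same ambient surface. For the first square, fix an $a$-isolating triangulation $\tri$ of $S$; its image $\gamma(\tri)$ is $a'$-isolating, and the cutting functor produces correspondingly isolating triangulations $\Cc_a(\tri)$ and $\Cc_{a'}(\gamma(\tri))=\gamma(\Cc_a(\tri))$ of $S_a$ and $S_{a'}$. Taking these as the basepoint clusters on source and target of the square, the map $\gamma_*$ acts purely as a quasi-permutation $\varsigma_\gamma$ coming from the bijection of bicolored graph faces induced by $\gamma$, with no flip component appearing. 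The local gluing formulas of Proposition~\ref{prop:local-gluing-pgl} depend tautologically only on the combinatorial data of the bicolored subgraph in the isolating cylinder, and since this data is preserved under $\varsigma_\gamma$, the first square commutes. The third square is identical modulo replacing $S$ by $S_c$ throughout. The middle square is handled similarly inside $S_a\to S_{a'}$ with cutting curve $c$: here we use a $c$-isolating triangulation of $S_a$, whose existence is ensured in the exceptional regime by the fact that in $S_a$ the component of $S_-$ cut along $a$ acquires the extra tacks $a_\pm$, so that each component of $S_{a,c}$ has at least one special point beyond the tacks on $c_\pm$ and Theorem~\ref{thm:alg-MF} applies to $\eta_c(S_a)$ directly.

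The principal subtlety lies in the middle square, where the universal Laurent rings $\Lbb_{S_a;\phi_a}$ and $\Lbb_{S_{a,c};\phi_a,\phi_c}$ carry additional $W(a_\pm)$-symmetry and residue conditions inherited from the first cut, and $\gamma$ acts on these nontrivially by carrying the tacked circles $a_\pm$ to $a'_\pm$. Checking that the quasi-permutation $\varsigma_\gamma$ respects all of this structure ultimately amounts to verifying that in the doubly-cut surface $S_{a,c}$ the local bicolored graph data within the $c$-isolating cylinder is disjoint from---and hence independent of---the relabeling of the frozen data at the $a_\pm$-tacks. Once this bookkeeping is set up, the three squares in~\eqref{eq:three-nat-squares} assemble formally as sketched, and the appeal to Theorem~\ref{thm:secondlast} to collapse $\eta_{c;a}=\eta_{c;a'}$ completes the argument.
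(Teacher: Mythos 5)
Your proof is correct, but it takes a genuinely different route from the paper. The paper's own argument proves a \emph{factorization lemma}: any morphism $\mu$ in $\widehat\Pt_c(S)$ can be written as $\mu_k\cdots\mu_1$ where each factor $\mu_j$ lies in some $\Ptoh_{a_j,c}(S)$, i.e.\ stays within the domain where both $c$ and a fixed auxiliary curve $a_j$ are simultaneously isolatable; for each such piece the equivariance of $\eta_{c,a_j}$ follows from Theorem~\ref{thm:alg-MF-equivariance} applied to the constituent cuts, and Theorem~\ref{thm:secondlast} then replaces each $\eta_{c,a_j}$ by the single canonical $\eta_c$. The carrying cost is a fairly delicate combinatorial factorization (choosing the $a_j$ as shadows of diagonals in a flip sequence on $S_-(e'_c)$, and factoring $\gamma=\gamma_-\gamma_+$ with each piece auxiliary-compatible). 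You instead handle an arbitrary $\gamma\in\Gamma_{S;c}$ in one shot: rather than insisting that each piece of the morphism preserves some auxiliary curve, you allow $\gamma$ to move $a$ to $a'=\gamma(a)$ and prove three naturality squares for the constituent gluing maps under this change. This shifts the burden from a factorization result to a strengthened naturality statement --- namely that $\eta_a$ is natural under diffeomorphisms that \emph{move} the auxiliary curve, not merely those preserving it as in Theorem~\ref{thm:alg-MF-equivariance}. Your justification for these squares (localize in clusters $Q_{\tri;a}$ and $Q_{\gamma(\tri);a'}$ so the vertical maps become pure quasi-permutations $\varsigma_\gamma$, and observe that the Whittaker/local-gluing formula is determined by bicolored graph data that $\gamma$ carries over) is exactly the kind of tautological locality argument used for the $\varsigma$-square~\eqref{eq:sigma-square} in the proof of Theorem~\ref{thm:alg-MF-equivariance}, so it is sound, though the paper stops short of stating that stronger naturality explicitly. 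In short: the paper pushes the difficulty into a Ptolemy-groupoid factorization, while you push it into a modest extension of the locality arguments underlying Theorem~\ref{thm:alg-MF-equivariance}; both endpoints appeal to Theorem~\ref{thm:secondlast} in essentially the same way.
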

\begin{proof}
The idea of the proof is to use the freedom to express $\eta_c=\eta_{c,a}$ for any choice of auxiliary curve $c$. Indeed, it follows from Theorem~\ref{thm:alg-MF-equivariance} that $\eta_{c,a}$ intertwines any morphism in $\Ptoh_{a_j,c}(S)$ with the corresponding morphism on the $S'$ side. So the Proposition will follow if we can show that any morphism $\mu$ in $\widehat\Pt_c(S)$ can be factored into a composition $\mu = \mu_k \dots \mu_1$, where for each factor $1 \le j \le k$ there exists an auxiliary curve $a_j$ such that $\mu_j$ is a morphism in $\Ptoh_{a_j,c}(S)$.  To see this, recall that a general morphism $\mu \colon \tri \to \tri'$ in $\widehat\Pt_c(S)$ takes the form $\mu = A_\gamma F_{\gamma^{-1}(\tri'),\tri}$. If $e_c, e'_c$ are the shadows of $c$ in $\tri, \tri'$, then since $\gamma$ preserves $e'_c$ the latter is a shadow of $c$ in $\gamma^{-1}(\tri')$ as well. Consider a non-separating simple closed curve $a \subset S_-(e_c)$ such that $\tri$ is an object in $\Pt^o_{a,c}(S)$, meaning that there exists an arc $e_a \in \tri$ which is a shadow of $a$. Then take any triangulation $\tri''$ of $S$ which agrees with $\gamma^{-1}(\tri')$ on the subsurface $S_+(e_c')$, and in the subsurface $S_-(e_c')$ contains a shadow of $a$ among its arcs: since $a$ and $c$ are disjoint, such a triangulation certainly exists.
% It is not hard to see that there exists another shadow of $a$, an arc $e'_a \notin \tri$, which is compatible with $e_a$ and $e'_c$ but not with $e_c$. 
% Now, for any triangulation $\tri''$ which contains arcs $e'_a, e'_c$ and the subtriangulation $\tri_+(e'_c)$, 
This way we get a morphism $F_{\tri'',\tri}$ in $\Pt_{a,c}(S)$. On the other hand, the morphism $F_{\gamma^{-1}(\tri'),\tri''}$ can be factored into a product of flips $F_{d_r} \dots F_{d_1}$ on $S_-(e'_c)$. Given an arc $d_j$, let $e_j \ne e'_c$ be an edge of the quadrilateral, containing $d_j$ is a diagonal, and $a_j$ be a simple closed curve on $S_-(e'_c)$ of which $e_j$ is a shadow. Then $F_{d_j}$ is a morphism in $\Pt^o_{a_j,c}(S)$. We now turn to the morphism $A_\gamma$. Factoring $\gamma = \gamma_-\gamma_+$, where $\gamma_\pm \in \Gamma_{S_\pm(e'_c)} \subset \Gamma_S$, we see that $A_{\gamma_+}$ is a morphism in $\Ptoh_{a_r,c}$, while $A_{\gamma_-}$ can be further factored into a composition of Dehn twists $\tau_{a_{r+s}} \dots \tau_{a_{r+1}}$ on $S_-(e'_c)$. Evidently, $\tau_{a_j}$ is a morphism in $\Ptoh_{a_j,c}$, and we arrive at the desired factorization
$$
\mu = \tau_{a_{r+s}} \dots \tau_{a_{r+1}} A_{\gamma_+} F_{d_r} \dots F_{d_1} F_+.
$$

\end{proof}

\bibliographystyle{alpha}

\end{document}